\theoremstyle{plain}
\newtheorem{theorem}{Theorem}[section]
\newtheorem{lemma}[theorem]{Lemma}
\newtheorem{proposition}[theorem]{Proposition}
\newtheorem{corollary}[theorem]{Corollary}
\theoremstyle{definition}
\newtheorem{definition}[theorem]{Definition}
\newtheorem{example}[theorem]{Example}
\theoremstyle{remark}
\newtheorem{remark}[theorem]{Remark}
\numberwithin{equation}{section}
\newenvironment{theoremprime}[1]
  {%
   \addtocounter{theorem}{-1}%
   \begin{theorem}}
  {\end{theorem}}
\def\E{{\mathop{\rm E}}}
\def\Im{\mathop{\rm Im}}
\def\Re{\mathop{\rm Re}}
\def\Hom{\mathop{\rm Hom}}
\newcommand{\beqnn}{\begin{equation}}
\newcommand{\eeqnn}{\end{equation}}
\newcommand{\eb}{\begin{enumerate}}
\newcommand{\ee}{\end{enumerate}}
\newcommand{\bbm}{\begin{bmatrix}}
\newcommand{\ebm}{\end{bmatrix}}
\newcommand{\bpm}{\begin{pmatrix}}
\newcommand{\epm}{\end{pmatrix}}
\newcommand{\bi}{\begin{itemize}}
\newcommand{\ei}{\end{itemize}}
\newcommand{\beq}{\begin{eqnarray*}}
\newcommand{\eeq}{\end{eqnarray*}}
\def\Cos{\mathop{\rm Cos}}
\def\Sin{\mathop{\rm Sin}}
\newcommand{\beqq}{\begin{eqnarray}}
\newcommand{\eeqq}{\end{eqnarray}}
\newcommand{\beqn}{\begin{eqnarray}}
\newcommand{\eeqn}{\end{eqnarray}}
\title[Holonomic  Bessel modules and generating functions]{Holonomic  Bessel modules and generating functions}
\author[Y. M. Chiang]{Yik Man Chiang}
\email{machiang@ust.hk}
\address{Department of Mathematics, The Hong Kong University of Science and Technology, Clear Water Bay, Kowloon, Hong Kong SAR.}
\author[A. Ching]{Avery Ching}
\email{aching001@dundee.ac.uk}
\address{Department of Mathematics, University of Dundee, Nethergate, Dundee DD1 4HN, Scotland, UK.}
\author[X. Lin]{Xiaoli Lin}
\email{xlinaw@connect.ust.hk}
\address{Department of Mathematics, The Hong Kong University of Science and Technology, Clear Water Bay, Kowloon, Hong Kong SAR.}
\thanks{
%The first author was partially supported by a fellowship award from the Research Grants Council of the Hong Kong Special Administrative Region, China (Project No. HKUST PDFS2021-6S04). 
%The second author was partially supported by the Research Grants Council of Hong Kong (GRF600609).
}
\keywords{$D$-modules, Bessel modules, Generating functions, Difference Bessel polynomials, Glaisher's modules, difference Bessel polynomials.}\subjclass[2020]{Primary 32S40, 33E99, 30D10; Secondary 47B37 , 47B47, 12H05, 12H10, 13N10.}
\begin{document}

\begin{abstract}
We have solved a number of holonomic PDEs derived from the Bessel modules which are related to the generating functions of classical Bessel functions and the difference Bessel functions recently discovered by Bohner and Cuchta. This $D$-module approach both unifies and extends generating functions of the classical and the difference Bessel functions. It shows that the algebraic structures of the  Bessel modules and related modules determine the possible formats of Bessel's generating functions studied in this article. As a consequence of these $D$-modules structures, a 
 number of new recursion formulae, integral representations and new difference Bessel polynomials have been discovered. The key ingredients of our argument involve new transmutation formulae related to the Bessel modules and the construction of $D$-linear maps between different appropriately constructed submodules. This work can be viewed as $D$-module approach to Truesdell\rq{}s $F$-equation theory specialised to Bessel functions. The framework presented in this article can be applied to other  special functions.
\end{abstract}

\maketitle
\setcounter{tocdepth}{2}
\tableofcontents

%\vfill\eject
\section{Introduction} Generating functions play important roles in both pure and applied mathematics \cite{FS_2009, Wilf_2006}.  In particular, generating functions of Bessel functions play pivotal roles in the mathematical formulation of electromagnetic wave and acoustic scattering problems, see for examples,  \cite{Bowman_et_al_1987, Dunster_2013}. Other application includes the study of Rogers-Ramanujan identities \cite{Ehrenpreis_1990, Ehrenpreis_1993}.
%\rk{Recently, there are also complex analytic studies of the PDEs satisfied by generating functions of some classical special functions \cite{Hu_Yang_2009, Hu_Yang_2010, Hu_Li_2017}.} 
In this paper, we present a systematic study of generating functions of Bessel functions based on the Bessel module \eqref{E:Bessel_mod_intro} defined below. This is the first of a series of papers originates from our study of generating functions of classical special functions using $D$-module methodology.   It is known that  $D$-modules is an efficient language in exhibiting algebraic structures and in carrying out explicit computation of identities of special functions in general, see for examples \cite{Chyzak_2000, Chyzak_Salvy_1998, Ehrenpreis_1990, Ehrenpreis_1993, kashiwara, Kashiwara_Kawai_1981, Kashiwara_Schapira_1997,   Mansour_Schork_2016, Paule_Schorn_1995, Wilf_Zeilberger_1992, Zeilberger_1990}.  We demonstrate that this is indeed the case that the $D$-modules approach not only allows us to better understand some of the classical formulae about special functions but also to derive their new difference analogues. This work can be considered as a continuation of the previous works of \cite{Lommel, Lommel_1871, Nielson, Sonine,  Schlfafli_1873,  Truesdell_1947, Truesdell_1948} and to a less extent of \cite{Burchnall_1953, Ehrenpreis_1990, Ehrenpreis_1993} by algebraic methods.
A noteworthy feature of this study is that the main results are first obtained in the $D$-modules framework before their $D$-modules structures are being manifested in complex domains differently. Thus the study unifies the classical Bessel functions and the recently discovered difference Bessel functions amongst other possible manifestations. The fact that the main results have been derived entirely within the appropriate $D$-modules that is subject to different manifestations in the complex plane is a vindication of the viewpoint from umbral calculus discovered centuries ago.

\subsection{Bessel\rq{}s modules}
We extend Bessel\rq{}s classical generating function\footnote{It was known to Hansen in 1845 and earlier to Jacobi in the 1836 in a modified form \cite[\S 2.1]{Watson1944}.}
\begin{equation}\label{E:bessel_classical_gf_0}
		e^{\frac{x}{2}\big(t-\frac1t\big)}=\footnote{The series converges absolutely in each compact subset of $\mathbb{C}\times\mathbb{C}\backslash\{0\}$.} \sum_{n=-\infty}^\infty J_{n}(x)\, t^n
	\end{equation}
 for the Bessel coefficients $(J_n(x))$  which denotes the bilateral sequence of \textit{Bessel functions of the first kind} of integer orders, to the case of non-integer orders $(J_{\nu+n}(x))$ where $\nu\not=0$, amongst other results obtained, including their difference analogues, by studying
% in \eqref{E:bessel_classical_gf_1} below and their new difference analogues. We have also made a detailed study of classical generating functions for Bessel functions of half-integer orders and their difference analogues.  This half-integer orders generating function study also includes the classical Bessel polynomials and their new difference analogue.
the quotient module 
	\begin{equation}\label{E:Bessel_mod_intro}
		\mathcal{B}_\nu:= \frac{\mathcal{A}_2}
		{\mathcal{A}_2(X_1\partial_1+(\nu+X_2\partial_2)-X_1X_2)+
				\mathcal{A}_2(X_1\partial_1-(\nu+X_2\partial_2)+{X_1}/{X_2})},
	\end{equation}be called the \textit{Bessel module of order} $\nu\in \mathbb{C}$ in this article, where the $\mathcal{A}_2$ denotes the $D$-modules of the Weyl-algebra generated by  $\partial_i,\, X_i\ (j=1,\, 2)$ subject to the standard commutation relations 
	\begin{equation}\label{E:comm}
		[\partial_j, X_k]=\delta_{j, k}, \qquad [\partial_j, \partial_k]=0,
	\qquad [X_j, X_k]=0,\qquad \mbox{for } j,\, k=1,\, 2.
	\end{equation}
Here the two relations 
	\begin{equation}\label{E:2_elements}
		X_1\partial_1+\nu+X_2\partial_2-X_1X_2,
		\qquad 
		X_1\partial_1-(\nu+X_2\partial_2)+{X_1}/{X_2},
	\end{equation}
which define the Bessel module $\mathcal{B}_\nu$,  are abstractions of the well-known recursions \footnote{Lommel \cite[1868]{Lommel}.} 
%\begin{equation}\label{E:2_PDE_dd}
%		\begin{split}
%			& xJ'_{\nu}(x)-\nu J_{\nu}(x)=-xJ_{\nu+1}(x),
%			\\
%			& xJ'_{\nu}(x)+\nu J_{\nu}(x)=xJ_{\nu-1}(x)
%		\end{split}
%	\end{equation}
\begin{equation}\label{E:2_PDE_d_delta}
		\begin{split}
			& xJ'_{\nu+n}(x)-(\nu+n) J_{\nu+n}(x)=-xJ_{\nu+n+1}(x),
			\\
			& xJ'_{\nu+n}(x)+(\nu+n) J_{\nu+n}(x)=xJ_{\nu+n-1}(x),
		\end{split}
	\end{equation}about Bessel functions, known as Gauss\rq{} contiguous relations\footnote{They and other generalised ones will be called PDEs in a board sense in this article. The list of all holonomic PDEs are listed in Appendix \S\ref{SS:holo_modules}.} in special functions, which hold for every $n\in \mathbb{Z}$, where the $\partial_1$ and $\partial_2$ in \eqref{E:2_elements} play the roles of differentiation with respect to $x$ and shift of subscripts $n$ respectively.  As we shall see that the $\mathcal{B}_\nu$ 
is holonomic in the sense of Bernstein-Kashiwara-Sato's holonomic PDEs theory (see, e.g., \cite{coutinho, kashiwara}), and that solving $\mathcal{B}_0$ gives raise to the \eqref{E:bessel_classical_gf_0} as a special case of $\nu=0$. The two relations from \eqref{E:2_elements} forms a system of two PDEs from this viewpoint. Thus, we have derived
		\begin{equation}\label{E:bessel_classical_gf_1}
		t^{-\nu}e^{\frac{x}{2}\big(t-\frac1t\big)}\sim\footnote{The infinite sum is divergent, and the sign ``$\sim$" denotes \textit{Borel-resummation}; see the paragraph below the \eqref{gf-db-02}.} \sum_{n=-\infty}^\infty J_{\nu+n}(x)\, t^n,
	\end{equation} 
	for general $\nu\not=0$, and its difference analogue
		\begin{equation}\label{gf-db-02}
			e^{i\pi\nu}
			\frac{\sin(x-\nu)\pi}{\sin(\pi x)}\,
			t^{-\nu}\big[\frac{1}{2}(t-\frac{1}{t})+1\big]^{x}
			\sim\sum_{n=-\infty}^{\infty}
				J^{\Delta}_{n+\nu}(x)\,t^n,
		\end{equation}
%under the restriction $\Re (x)<\Re (\nu)$, 
for the \textit{difference Bessel functions} $(J_{\nu+n}^\Delta)$  discovered by Bohner and Cuchta  \cite{BC_2017} to be discussed below. Here  the ``$\sim$" signs used above denote the \textit{Borel-resummation} on the right-sides of the corresponding formulae since both the sequences $(J_{\nu+n})$  and the  $(J^\Delta_{\nu+n}$) are $1$-Gervey\footnote{See Theorem \ref{T:Bessel_Gevrey} and Theorem \ref{T:delta_Bessel_Gevrey}.}\footnote{For an up-to-date discussion about the \textit{resurgence theory}, please see \cite[Part II]{Mitschi_Sauzin_2016}, \cite{Shawyer_Watson_1994} and \cite{Delabaere_Pham_1999}. } in the sense that in each compact set, $|J_{\nu+n}(x)|\le C^nn!$ and $|J^\Delta_{\nu+n}(x)|\le D^nn!$ for some positive constants $C,\ D$, see Example \ref{D:borel}. 
%Under the additional assumption $ |(t-\frac{1}{t})|<2$, the \eqref{gf-db-02} becomes the equality
Both sides of \eqref{E:2_PDE_d_delta} and \eqref{gf-db-02} satisfy the systems of PDEs\footnote{We use the terminology ``PDEs\rq\rq{} in a generalised sense that may include difference or shift operators or simply members in the $\mathcal{A}_2$ in this paper.}
	\begin{equation}\label{E:PDE_gf_bessel}
%	\begin{tabular}{l}
		\begin{split}
			&y_x(x,t)+\big(1/t-t\big)/2\, y(x,t)=0,\\
			&ty_t(x,t)+\nu y(x,t)-{x}/{2}\,\big(1/t+t\big)\, y(x,t)=0,
%		  \footnote{The right-side of \eqref {E:bessel_classical_gf_1}  satisfies the PDEs formally.}
%	\end{tabular}
		\end{split}
	\end{equation}
and 
	\begin{equation}\label{E:PDE_gf_dBessel}
		\begin{split}
			&y(x+1, t)-y(x, t)+(1/t-t)/2 y(x, t)=0,\\
			&ty_t(x,t)+\nu y(x,t)-x(1/t+ t)/2y(x-1, t)=0
		\end{split}
	\end{equation}
respectively.

The method of derivation of the above results and other results in this paper is based on construction of $D$-linear maps from the Bessel modules $\mathcal{B}_\nu$ (and other $D$-modules) to appropriately \textit{manifested}  $D$-module analytic function spaces of two variables (e.g., $\mathcal{O}_{dd},\ \mathcal{O}_{\Delta d}$). Other quotient $D$-modules  related to the half-Bessel module $\mathcal{B}_{1/2}$ studied in this paper includes \textit{Bessel polynomial modules} $\Theta, \mathcal{Y}$ and \textit{Glaisher modules} $\mathcal{G}_\pm$ which also are listed in Appendix  \S\ref{SS:holo_modules}. This paper can be considered as a $D$-module study of Truesdell\rq{}s $F$-equation theory \cite{Truesdell_1948}, which summarised and extended research works about generating functions of predecessors from the nineteenth century, when applied to Bessel\rq{}s generating functions.\footnote{The authors only got to know Truesdell's work after obtaining the main results of this paper.}  However, unlike the analytic approach used in \cite{Truesdell_1948}, the existence and uniqueness of the $F$-equation theory are replaced by the holonomicity of the $D$-modules concerned.  
The paper aims to show that $D$-modules is not only a natural language but also an efficient computational tool to study special functions. We have built up our applications of $D$-modules that represent some of the simplest holonomic equations which give raise to the exponential, trigonometric functions, etc from scratch  in order to deal with the quotient modules studied in this paper. This is because we could reach our main results efficiently with our self-contained approach instead of utilising the full force of $D$-modules theory from, for examples, \cite{kashiwara, SST_2000}. This is especially the case when we derive difference analogues of the classical results. Closer to our approach but differ substantially in details were the pioneering works of Wilf and Zeilberger \cite{Zeilberger_1990, Wilf_Zeilberger_1992} in which the authors used holonomic $D$-modules methodology to derive hypergeometric type identities. Our approach is also different from the Lie algebraic approach  of generating functions  used by Weisner \cite{Weisner_1959}, see also \cite{Mcbride}. 
%Generating functions play important roles in both pure and applied mathematics \cite{FS_2009, Wilf_2006} in general.  The particular generating functions of Bessel functions play pivotal roles in the mathematical formulation of electromagnetic wave and acoustic scattering problems \cite{Bowman_et_al_1987, Dunster_2013}. \rk{Recently, there are also complex analytic studies of the PDEs satisfied by generating functions of some classical special functions \cite{Hu_Yang_2009, Hu_Yang_2010, Hu_Li_2017}.} We present a systematic study of generating functions of Bessel functions based on the Bessel module \eqref{E:Bessel_mod_intro} defined above.
% as the first example of this $D$-module study of special functions in this paper.
%This is the first of a series of papers originates from our study of generating functions of classical special functions using $D$-module methodology.   It is known that  $D$-modules is an efficient language in exhibiting algebraic structures and in carrying out explicit computation of identities of special functions in general, see for examples \cite{Chyzak_2000, kashiwara, Kashiwara_Kawai_1981, Kashiwara_Schapira_1997, Kashiwara_Schapira_2016, Paule_Schorn_1995, Wilf_Zeilberger_1992, Zeilberger_1990}.  We demonstrate that this is indeed the case that the $D$-modules approach not only allows us to explain some of the classical formulae about special functions but also to derive their new difference analogues. 
The choice to start with Bessel functions is a natural one in regard to them being ubiquity in many research areas and wide range of applications, see for examples \cite{AAR, EAM2, Lommel,  Wang_Guo1989, WW, Watson1944}. 

%We emphasis that most of the main results regarding Bessel's generating functions obtained in this article were first computed entirely using $D$-modules language before they being interpreted (i.e., via $D$-linear maps) in analytic functions spaces with different manifestations (i.e., operational conventions with usual differentiation or forward difference operator, etc.).

For easy of computation of the relevant generating functions, we add and subtract the two relations \eqref{E:2_elements} to obtain, respectively, the two alternative relations
	\begin{equation}\label{E:2_more_elements}
		2X_1\partial_1-X_1(X_2-1/X_2),\qquad 2\nu+2X_2\partial_2-X_1(X_2+1/X_2)
	\end{equation}
for which we can, without loss of generality, eliminate the term ``$X_1$\rq\rq{} altogether from the first relation  above. As an example with $\nu=0$, after a ``change of variables"
\footnote{This change of variables corresponds to the ``characteristic method\rq\rq{}.},
 the \eqref{E:2_more_elements} is rewritten as
	\[
		2\hat{X}_1\hat{\partial}_1-\hat{X}_1,\quad
		2\hat{X}_2\hat{\partial}_2+\hat{X}_2,
	\]where the $\{\hat{\partial}_1, \hat{X}_1, \hat{\partial}_2, \hat{X}_2\}$ satisfy  similar commutation relations as those in the  \eqref{E:comm}, and which when \textit{solved simultaneously}
%\footnote{The new algebra $\mathbb{C}\langle \hat{\partial}_1, \hat{\partial}_2, \hat{X}_1, \hat{X}_2\rangle$ is a $D$-modules.}
	 leads us to the \textit{solution}
	\begin{equation}\label{E:bessel_exp}
		\E \Big[\frac{X_1}{2}(X_2-\frac{1}{X_2})\Big],
	\end{equation}
where the $\E$ denotes the \textit{Weyl exponential} (see Example \ref{Eg:exp}). Right multiplication by $e^{\frac{x}{2}(x-1/x)}$ gives
the map $\mathcal{B}_0\to \mathcal{O}_{dd}$ where the $ \mathcal{O}_{dd}$ has the \textit{manifestation}, as it is called in this paper,
	\begin{equation}\label{E:manifestation_1}
		\begin{aligned}[l]
			&(\partial_1 f)(x, t)=f_x(x,t),\\
			&(X_1f)(x,t)=xf(x,t),
		\end{aligned}
		\qquad
		\begin{aligned}[l]
			&(\partial_2 f)(x, t)=f_t(x,t),\\
			&(X_2f)(x,t)=tf(x,t)
		\end{aligned}
	\end{equation}
	so that the $\mathcal{O}_{dd}$ becomes an $\mathcal{A}_2$-module, and the map itself is clearly well-defined.   It turns out that the system of PDEs \eqref{E:2_more_elements} become, when $\nu=0$,  the following system of PDEs
	\begin{equation}\label{E:PDE_gf_bessel_0}
%	\begin{tabular}{l}
		\begin{split}
			&y_x(x,t)+\big(1/t-t\big)/2\, y(x,t)=0,\\
			&ty_t(x,t)-x\,\big(1/t+t\big)/2\, y(x,t)=0,
%		  \footnote{The right-side of \eqref {E:bessel_classical_gf_1}  satisfies the PDEs formally.}
%	\end{tabular}
		\end{split}
	\end{equation}via an $A_2$-linear map, and that both sides of the \eqref{E:bessel_classical_gf_0} satisfy. Indeed, the $e^{\frac{x}{2}\big(t-\frac1t\big)}$ is an image of  \eqref{E:bessel_exp} via $\mathcal{A}_2/(\mathcal{A}_2\partial_1+\mathcal{A}_2\partial_2)\to \mathcal{O}_{dd}$. 
 On the other hand, we may equip the $\mathcal{O}^{\mathbb{Z}}$ consisting of sequences of analytic functions $(f_n)_{-\infty}^\infty$ all defined on an appropriate domain with a $D$-module structure by the \textit{manifestation}
\begin{equation}
	\begin{aligned}[l]\label{E:manifestation_2}
		&(\partial_1 f)_n(x)=f^\prime_n(x),\\
		&(X_1 f)_n(x)=xf_n(x),
	\end{aligned}
	\qquad
	\begin{aligned}[l]
		&(\partial_2 f)_n(x)=(n+1)f_{n+1}(x),\\
		&(X_2 f)_n(x)=f_{n-1}(x).
	\end{aligned}
\end{equation}It is easily verified that the $\mathcal{O}^{\mathbb{Z}}$ becomes an $\mathcal{A}_2$-module which is denoted by $\mathcal{O}^{\mathbb{Z}}_d$, so that right multiplication by $(J_n)$ shows that the map $\mathfrak{j}: \mathcal{B}_0\to \mathcal{O}^{\mathbb{Z}}_d$ is also $\mathcal{A}_2$-linear.  Moreover, we have the systems\footnote{According to Watson \cite[p. 18, p. 45]{Watson1944} these formulae were discovered by Bessel in 1824 and later the \eqref{E:2_PDE_d_delta}by Lommel in 1868 \cite{Lommel}  for general $\nu\not=0$.}
	\begin{equation}\label{E:2_PDE_d_delta_0}
		\begin{split}
			& xJ'_{n}(x)-n J_{n}(x)=-xJ_{n+1}(x),
			\\
			& xJ'_{n}(x)+n J_{n}(x)=xJ_{n-1}(x),
		\end{split}
	\end{equation}that lie in the kernel of the $\mathcal{A}_2$-linear map $\mathfrak{j}: \mathcal{B}_0\to \mathcal{O}^{\mathbb{Z}}_d$, which are special cases of the \eqref{E:2_PDE_d_delta} when $\nu=0$. Then following  diagram commutes
	\begin{equation}\label{E:commute-0}
			\begin{tikzcd} [row sep=large, column sep=huge]
			\mathcal{B}_{0} \arrow{r}{\mathfrak{j}} 
			 % \arrow[swap]{dr}{\mathfrak{g}} 
			\arrow[swap]{d}{\times \E [\frac{X_1}{2}(X_2-\frac{1}{X_2})]} 
			&   \mathcal{O}^{\mathbb{Z}}_d \arrow{d}{\mathfrak{z}}\\
 			 \widetilde{\mathcal{A}}_2
\arrow{r}{\times 1} &  \mathcal{O}_{dd},
			\end{tikzcd}
		\end{equation}where $\widetilde{\mathcal{A}}_2:= \overline{\mathcal{A}_2/\big[\mathcal{A}_2\partial_1+\mathcal{A}_2\partial_2\big]}$ is the completion of $\mathcal{A}_2/\big[\mathcal{A}_2\partial_1+\mathcal{A}_2\partial_2]$ and where the map $\mathfrak{z}$ is the $z$-transform, from which the classical formula \eqref{E:bessel_classical_gf_0} essentially follows because of the holonomicity of $\mathcal{B}_0$. A more general consideration gives the formula \eqref{E:bessel_classical_gf_1} for $\mathcal{B}_\nu$ (Theorem \ref{T:Bessel_gf}).

 The above description illustrates how the $D$-module approach allows us to see each generating function is a solution for the different holonomic $D$-modules studied (see the table in Appendix \ref{SS:holo_modules}).  They are obtained by solving the systems of holonomic PDEs from, for examples,  the \eqref{E:PDE_gf_dBessel}, \eqref{E:PDE_gf_bessel_0},  
\eqref{E:PDE_gf_dBessel_0},  \eqref{E:PDeltaE_neg_glaisher}, \eqref{E:delta_rev_bessel_pde} which are different manifestations inherited from the \textit{same} holonomic modules listed in Appendix \ref{SS:holo_modules}.

Before we discuss the system \eqref{E:PDE_gf_dBessel}, let us introduce the \textit{difference Bessel function} of order $\nu$ recently discovered by Bohner and Cuthta \cite{BC_2017}
	\begin{equation}\label{E:dBessel}
		J^{\Delta}_{\nu}(x):=\sum_{k=0}^{\infty} \frac{(-1)^k}{2^{\nu+2k}k!\Gamma(\nu+k+1)}(x)_{\nu+2k},
	\end{equation}where $(x)_\nu=\Gamma(x+1)/\Gamma(x+1-\nu)$ for an arbitrary $\nu$ from our $D$-module perspective. Bohner and Cuthta \cite{BC_2017} shows that the Newton series solves the \textit{difference Bessel equation}
	\begin{equation}\label{E:difference_bessel_eqn_0}
		x(x-1)\triangle^2 y(x-2)+x\triangle y(x-1)+x(x-1)y(x-2)-\nu^2y(x)=0.
	\end{equation}
The first two authors and Tsang  have shown in \cite{CCT3} that the function \eqref{E:dBessel} and the equation \eqref{E:difference_bessel_eqn_0} are manifestations of the Weyl-algebraic Bessel \eqref{E:pos_nu_map} and the Weyl-Bessel operator
	\[
		(X\partial)^2+ (X^2-\nu^2)
	\]
in the analytic function space $\mathcal{O}_\Delta$, where the subscript $\Delta$ indicates that the $\partial$ in $\mathbb{C}\langle\partial, X\rangle$ subjects to $[\partial, X]=\partial X-X\partial=1$, that is in $\mathcal{A}_1$, is interpreted as a forward difference operator instead, that is, $\partial f(x)=\Delta f(x)=f(x+1)-f(x)$ and $Xf(x)=xf(x-1)$ (see Example \ref{Eg:delta_Bessel_fn}.). Indeed, one recovers the classical Bessel equation
		\begin{equation}\label{E:Bessel_eqn}
		x^2y^{\prime\prime}(x)+xy^\prime(x)+(x^2-\nu^2)y(x)=0.
	\end{equation}and classical Bessel function with the well-known manifestation  $\partial f(x)= f^\prime (x),\ Xf(x)=xf(x)$.

Historically, both Sonine \cite{Sonine} and Nielsen \cite{Nielson}  (see also \cite[\S3.9]{Watson1944}) amongst others from the second half of nineteenth century  studied Bessel functions from the system of partial differential equations \eqref{E:2_PDE_d_delta} instead of \eqref{E:Bessel_eqn}.  Indeed, Nielson distinguished the solutions to the system \eqref{E:2_PDE_d_delta} called \textit{cylindrical functions} against those from the \eqref{E:Bessel_eqn}. These earlier studies were in line with Truesdell's  ``$F$-equation theory"  \cite{Truesdell_1948} as noted by Truesdell himself, that we have already mentioned.

An advantage of our $D$-modules approach to the generating function \eqref{E:bessel_classical_gf_1} is that 
we could modify the $\mathcal{A}_2$-module structures \eqref{E:manifestation_1} and \eqref{E:manifestation_2}, which represent one such \textit{manifestation} that we have mentioned above,  into different ones, say to 
	\begin{equation}\label{E:O_delta_d_0}
		(\partial_1f)(x,t)=\Delta_x f(x, t)=f(x+1,t)-f(x,t),\quad
		(X_1f)(x,t)=xf(x-1,t)
	\end{equation}
	and
	\[
		(\partial_1f)_n(x,t)=\Delta_x f_n(x, t)=f_n(x+1,t)-f_n(x,t),\quad
		(X_1f)_n(x,t)=xf_n(x-1,t)
	\]
	on $\mathcal{O}_{\Delta d}$ and $\mathcal{O}^{\mathbb{Z}}_\Delta$ respectively,
	while keeping the defining properties of the corresponding $\partial_2,\, X_2$ in \eqref{E:manifestation_1} and \eqref{E:manifestation_2} respectively unchanged. 	As a result, we arrive at the commutative diagram
	\begin{equation*}%\label{E:commute-1}
			  \begin{tikzcd} [row sep=huge, column sep=huge]
					    \mathcal{B}_0 \arrow{r}{\mathfrak{j}_\Delta} 
					    \arrow[swap]{d}{\times\E\big[\frac{X_1}{2}\big(X_2-\frac{1}{X_2}\big)\big]}
					    %\arrow[swap]{dr}{\mathfrak{g}_\Delta} 
					    & \mathcal{O}^{\mathbb{Z}}_\Delta 
					    \arrow{d}{\mathfrak{z}_\Delta} \\
     						\widetilde{A}_2 \arrow{r}{\times 1}& \mathcal{O}_{\Delta d}
				  \end{tikzcd}
				\end{equation*} similar to the \eqref{E:commute-0} for this new $\mathcal{A}_2$-module manifestation, which implies the new generating function
	\begin{equation}\label{gf-db-01}
		\Big[\frac{1}{2}(t-\frac{1}{t})+1\Big]^{x}=\sum_{n=-\infty}^{\infty}
		J^{\vartriangle}_{n}(x)\, t^n
		\end{equation} 
		when $\nu=0$.  The full statements will be given in Theorem \ref{T:delta_bessel_gf} and Theorem \ref{T:delta_bessel_0_gf} respectively. Here, both sides of \eqref{gf-db-02} satisfy the system of PDEs
		\begin{equation}\label{E:PDE_gf_dBessel_0}
		\begin{split}
			&y(x+1, t)-y(x, t)+(1/t-t)/2 y(x, t)=0,\\
			&ty_t(x,t)-x(1/t+ t)/2y(x-1, t)=0
		\end{split}
	\end{equation}as images of the \eqref{E:2_more_elements}, with $\nu=0$, which are the difference analogues of those in \eqref{E:PDE_gf_bessel_0} in our $D$-modules interpretation. They are the special cases of \eqref{E:PDE_gf_dBessel} when $\nu=0$. The  $(J^\triangle_{n}(x))$
in  \eqref{gf-db-01} are precisely the difference Bessel functions $(J^\triangle_{\nu+n}(x))$ discovered by Bohner and Cuchta \cite{BC_2017}. The reason that we can derive the infinite sum representation of the generating function on the right-hand side of \eqref{gf-db-01} is because of the following formulae
	\begin{align*}
				x\Delta J^{\Delta}_{n}(x-1)+n J^{\Delta}_{n}(x)-
		xJ^{\Delta}_{n-1}(x-1)&=0,%\label{E:delta_bessel_PDE_1}
		\\
				x\Delta J^{\Delta}_{n}(x-1)-\nu J^{\Delta}_{n}(x)+
		xJ^{\Delta}_{n+1}(x-1)&=0,%\label{E:delta_bessel_PDE_2}
%\\
	%			2\nu J_{\nu}^\Delta(x)-xJ_{\nu-1}^\Delta(x-1)-xJ_{\nu+1}^\Delta(x-1)&=0.\label{E:delta_bessel_3_term}
			\end{align*}derived by brute-force verification in \cite{BC_2017} are manifestation of the \eqref{E:2_elements} in the kernel of the $\mathcal{A}_2$-linear map $\mathcal{B}_0\to \mathcal{O}^{\mathbb{Z}}_\Delta$ as similar to the case of the  \eqref{E:2_PDE_d_delta}. 

If we modify the the above commutator to arbitrary step size $h$, i.e., $\partial f(x)=\triangle_h f(x)=\frac1h\big(f(x+h)-f(x)\big)$ and $Xf(x)=xf(x-h)$, then the generating function  \eqref{gf-db-01} is replaced by
	\begin{equation}\label{E:gf-db_h-01}
		\Big[\frac{h}{2}(t-\frac{1}{t})+1\Big]^{x/h}
		=\sum_{n=-\infty}^{\infty}
		J^{\vartriangle_h}_{n}(x)\, t^n.
	\end{equation}Clearly, we recover the classical formula \eqref{E:bessel_classical_gf_0} from the \eqref{E:gf-db_h-01} after letting $h\to 0$ on both sides of the above formula. See Theorem \ref{T:h_limit} in the Appendix A for more details.  By a similar consideration, the formulae \eqref{E:2_PDE_d_delta} become
	\begin{equation}\label{P:delta_bessel_PDE}
		\begin{split}
			&x\triangle \mathscr{C}^{\Delta}_{\nu+n}(x-1)+(\nu+n) \mathscr{C} ^{\Delta}_{\nu+n}(x)-
		x \mathscr{C} ^{\Delta}_{\nu+n-1}(x-1)=0,%\label{E:delta_bessel_PDE_1}
		\\
			&x\triangle \mathscr{C} ^{\Delta}_{\nu+n}(x-1)-(\nu+n) \mathscr{C} ^{\Delta}_{\nu+n}(x)+
		x \mathscr{C}^{\Delta}_{\nu+n+1}(x-1)=0.%\label{E:delta_bessel_PDE_2}
\\
%				2 &(n+\nu)J_{\nu+n}^\Delta(x)-xJ_{\nu+n-1}^\Delta(x-1)-xJ_{\nu+n+1}^\Delta(x-1)=0.\label{E:delta_bessel_3_term}
		\end{split}
	\end{equation}we the difference manifestation of $\mathcal{A}_2$. If we choose $\mathscr{C}_\nu(x)=J^\Delta_\nu(x)$ in the above two formulae for $J^\Delta_\nu(x)$, then we recover the two corresponding formulae that Bohner and Cuthta derived in \cite{BC_2017}  .  Combining the PDEs \eqref{E:PDE_gf_dBessel} and \eqref{P:delta_bessel_PDE} and a  commutative diagram similar to the last commutative diagram above implies the new generating function \eqref{gf-db-02}.

The difference Bessel functions are not discussed in Nikifornov \textit{et al}. We mention that the difference Bessel functions have not been discussed in recent major works on difference special functions such as \cite{Nikiforov_Suslov_1986} and \cite{Nikiforov_1991}, which mostly focus on orthogonal polynomials. 

%%%%%%%%%%%%%%%%%%%%%%%%%%%%%%%%%%%%%%%%%%%%%
%\vfill\eject 
Before we discuss similar formulae that we have obtained for the new difference analogues of Bessel polynomials and Glaisher's trigonometric type generating functions below, let us first return to the discussion mentioned in the first paragraph about the aspect of our Weyl-algebraic methodology in \textit{solving}\footnote{See Definition \ref{D:soln}.} the two ``PDEs\rq\rq{} \eqref{E:2_elements} in the context of the Bessel module $\mathcal{B}_\nu$.  In fact, the process of a priori computation for the ``generating functions" greatly simplifies the entire process of 
finding the generating function that are classically written within  suitably defined analytic function spaces. This a priori computation is therefore \textit{no less significant} than the conventional approach done in a complex domain.

\medskip

As a by-product of \eqref{gf-db-02} of ``extracting the residue" from the formula \eqref{gf-db-02}, we obtain the integral representation 
	\[
		J^{\Delta}_{\nu}(x)=e^{i\pi\nu} \dfrac{\sin(x-\nu)\pi}{\sin(\pi x)}
		\dfrac{1}{2\pi i}\int_\infty^{(0+)}
		t^{-\nu-1} \Big[ \frac{1}{2}(t-\frac{1}{t})+1\Big]^x dt,\quad \Re(x)<\Re(\nu)
	\]
	where the integration path is a Hankel-type contour (Theorem \ref{T:integral rep of db}). This integral representation is the difference analogue of the classical Schl\"afli-Sonine integral of Corollary \ref{C:Sonine} \cite[p. 176]{Watson1944}. Thus, the classical Schl\"afli-Sonine integral can be considered as ``residue extraction" from the formula \eqref{E:bessel_classical_gf_1}.
\medskip

The rest of this paper will focus on two main variants of the Bessel module of order half $\mathcal{B}_{1/2}$. They are related to classical Glaisher's trigonometric type generating functions and the classical Bessel polynomials. As a result we have also obtained their difference analogues. The solving of these variants of $\mathcal{B}_{1/2}$ (Theorem \ref{T:bessel_poly_gf_map}, Theorem \ref{T:bessel_poly_delta_gf}, Theorem \ref{T:neg_glaisher_cosine}, and Theorem \ref{T:pos_glaisher_sine})
are more complicated in the sense that certain \textit{quadratic extensions} on the base Weyl-algebra are needed.

\subsection{Glaisher's modules}

The treatment of Glaisher's Poisson type generating functions	\cite[p. 140]{Watson1944}
\[
		\sqrt{\frac{2}{\pi x}} \cos\sqrt{x^2-2xt}
		=\sum_{n=0}^\infty J_{n-\frac{1}{2}}(x)\, \frac{t^n}{n!},
%\quad
%		\sqrt{\frac{2}{\pi x}} \sin\sqrt{x^2+2xt}
%		=\sum_{n=0}^\infty J_{-n+\frac{1}{2}}(x)\, \frac{t^n}{n!},
\] which is valid for $2|t|<|x|$, requires a change of characteristics variables 
%\begin{equation}\label{E:classical_glaisher}
%		\begin{array}{ll}
%			\Xi_1=\partial_1, & W_1=X_1,\\
%	 		\Xi_2={1}/{X_2}, & W_2=X_2\partial_2 X_2
%		\end{array}
%	\end{equation}
to \eqref{E:2_elements} with $\nu=-\frac12$ different from that for obtaining the \eqref{E:bessel_exp}, so that the alternative $\mathcal{A}_2$-module, called the negative Glaisher's module is given by
	\[
		\mathcal{G}_{-}=
		\dfrac{\mathcal{A}_{2}}
		{\mathcal{A}_{2}
	\big(W_1 \Xi_1+(W_2\Xi_2-\frac{1}{2})- W_1\Xi_2^{-1}\big)
			+\mathcal{A}_{2}\big(W_1 \Xi_1-(W_2 \Xi_2-\frac{1}{2})+ W_1 \Xi_2\big)},
	\]in which the $\Xi_1, W_1$ and $\Xi_2,\, W_2$ play the roles of $\partial_1,\, X_1$ and $\partial_2,\, X_2$ respectively, as defined in \eqref{E:comm}.

	As an example, after a ``further change of variables" and solving the resulting PDEs finally results in the following difference analogue of Glaisher's generating functions for difference Bessel functions with half-integer orders (Theorem \ref{T:Delta_glaisher_gf}):
%	\begin{equation}\label{E:commute-half besssel-2}
%	\begin{equation}\label{E: gf-Delta_glaisher_cos}
	\[
		\sqrt{\frac{2}{\pi}}\dfrac{e^{-i\pi x}} {2i \sin(\pi x)}
		\int_{-\infty}^{(0+)} \frac{e^{\lambda}
		(-\lambda)^{-x-1}\lambda^{-\frac{1}{2}} 
		 \cos\sqrt{\lambda ^2-2\lambda t}}{\Gamma(-x)}\,  d\lambda
		=\sum_{n=0}^{\infty}J^{\Delta}_{n-\frac{1}{2}}(x)\frac{t^n}{n!},
%	\end{equation}
%	\]and
	%\begin{equation}\label{E: gf-Delta_glaisher_sin}
%	\[
%		\sqrt{\frac{2}{\pi}}\dfrac{e^{-i\pi x}} {2i \sin(\pi x)}
%		\int_{-\infty}^{(0+)} \frac{e^{\lambda}
%		(-\lambda)^{-x-1}\lambda^{-\frac{1}{2}} 
%		 \sin\sqrt{\lambda ^2+2\lambda t}}{\Gamma(-x)}\,  d\lambda
%		=\sum_{n=0}^{\infty}J^{\Delta}_{-n+\frac{1}{2}}(x)\frac{t^n}{n!}
%	\end{equation}
	\]
valid on the whole $\mathbb{C}\times\mathbb{C}$. 	In particular, the generating function is a solution to the system of delay-differential equations\footnote{These equations are generally called PDEs in a generalised sense in this paper.}  (Theorem \ref{T:difference_glaisher_gf})
			\begin{equation}\label{E:PDeltaE_neg_glaisher}
				\begin{split}
				& tf_{tt}(x,\, t)+\big(x+1/2\big)f_t(x,\, t)-xf_t(x-1,\, t)-xf(x-1,\, t)=0,\\
				&  tf_{t}(x,\, t)-xf_t(x-1,\, t)+xf(x-1,\, t)-\big(x+1/2\big) f(x,\, t)=0.
				\end{split}
			\end{equation}
\medskip	

\subsection{Bessel polynomial modules}
Next we consider half Bessel-module $\mathcal{B}_{1/2}$ which is more natural  to treat the classical reverse polynomials $\theta_n(x)$ instead of the $y_n(x)=x^n\theta_n(1/x)$. If $X\partial$ is replaced by $X\partial-X-1/2$ in \eqref{E:2_elements}, then the  $\mathcal{B}_{1/2}$ is replaced by the \textit{reverse Bessel polynomial module}
	\[
%		\begin{equation}\label{E:Theta}
\Theta=\dfrac{\mathcal{A}_2}{\mathcal{A}_2[\partial_1-1+X_1/\partial_2]+\mathcal{A}_2[X_1\partial_1-2X_2\partial_2-1-X_1+\partial_2]}.
%	\end{equation}
	\]such that the two generators $\partial_1-1+X_1/\partial_2$ and $X_1\partial_1-2X_2\partial_2-1-X_1+\partial_2$ would generate a third element
	$X_1\partial_1^2-2(X_1+X_2\partial_2)\partial_1+2X_2\partial_2$ which is a Weyl-algebraic analogue of the classical Bessel polynomial equation. In particular, we have derived the difference equation (Theorem \ref{T:difference_Bessel_Poly_DD})
	\begin{equation}
		(x-2n)\,\theta_n^\Delta(x+1)-4(x-n)\,\theta_n^\Delta(x)+3x\, \theta_n^\Delta (x-1)=0,
	\end{equation} which is satisfied by \textit{difference reverse  Bessel (Newton) polynomial of degree} $n$
%	\begin{equation}\label{E:delta_Bessel_poly}
	\[
		\theta^\Delta_n(x) :=\sum_{k=0}^n \frac{(n+k)!}{2^k (n-k)!\, k!}\, (x)_{n-k}.
	\]	
	In a similar fashion, the two generators $\partial_1-1+X_1/\partial_2$ and  $X_1\partial_1-2X_2\partial_2-1-X_1+\partial_2$ implies two delay-difference equations for  difference reversed Bessel polynomials for each $n$ (Theorem \ref{T:difference_Bessel_Poly_PDE}):
	\[
		\theta^\Delta_n(x+1)-2\theta^\Delta_n(x)+x\theta^\Delta_{n-1}(x-1)=0,
	\]
	and
	\[
		 \theta^\Delta_{n+1}(x)+(x-2n-1)\theta^\Delta_n(x)-2x\, \theta^\Delta_n(x-1)=0.
	\]
A ``characteristic change of variables" to the module $\Theta$ happens to be $\rho^2=1-2X_2$ and we formulate a ``quadratic extension of $\Theta$" in Theorem \ref{T:rev_bessel_poly_gf}
	\[
			\Theta(\rho):=\dfrac{\mathcal{A}_2(\rho)}{\mathcal{A}_2(\rho)[-\rho^2\partial_2^2
			+3\partial_2+X_1^2]
			+\mathcal{A}_2(\rho)[X_1\partial_1+\rho^2\partial_2-1-X_1]}.
		\]so that the commutative diagram of $\mathcal{A}_2$-linear maps enable us to derive a generating function for the difference reverse Bessel polynomials
%		\begin{equation}\label{E:bessel_difference_poly_gf}
	\[
			\frac{e^{-i\pi x}}{2\pi i\sin\pi x\Gamma(-x)}\int_{-\infty}^{(0+)}e^\lambda(-\lambda)^{-x-1} \frac{1}{\sqrt{1-2t}} \exp\big[\lambda(1-\sqrt{1-2t})\big]\,d\lambda=\sum_{n=0}^\infty
			\theta_n^\Delta (x)\, \frac{t^n}{n!}
%		\end{equation}
	\]that is valid  in $\mathbb{C}\times B(0,\, \frac12)$ in Theorem \ref{T:bessel_poly_delta_gf}. The generating function for the difference reverse Bessel polynomials satisfies the system of delay-differential equations (Theorem \ref{E:delta_bessel_poly_PDE})
	\begin{equation}\label{E:delta_rev_bessel_pde}
		\begin{split}
			&f_t(x+1,\, t)-2f_t(x,\, t)+xf(x-1,\, t)=0,\\
			&(1-2t) f_t(x,t)+(x-1)f(x, t)-2xf(x-1, t)=0. %\label{E:delta_rev_bessel_pde_2}
		\end{split}
		\end{equation}

This paper is organised as follows. Section  \ref{S:preliminaries}  explains some examples of  
 $D$-modules structures on certain spaces of analytic functions  and spaces of sequences of analytic functions  $\mathcal{O}^{\mathbb{Z}}$, as well as  $D$-linear maps between them. A list summarising these entities is given in Appendix \ref{SS:holo_modules}. The remaining parts of the \S\ref{S:preliminaries} gives a rudiment of holonomic $D$-modules theory needed in this article. A kind of generalised Borel resummation is also introduced. \S\ref{S:bessel} introduces the Bessel modules $\mathcal{B}_\nu$ for arbitrary $\nu$ and the method of characteristic in solving  a system of PDEs. Before discussing about the Bessel modules, we also discuss analogues of ``elementary functions" such as the Weyl-exponentials, Weyl-trigonometry, and Weyl-Bessel. 
 This section also contains the statements and proofs of generating functions for the classical Bessel functions and difference Bessel functions, plus some immediate applications of these results to yield difference analogue of  the classical  Schl\"afli-Sonine integral representation from classical Bessel functions to difference Bessel functions. In section \S\ref{S:half_bessel_I} we modify the Bessel module $\mathcal{B}_{1/2}$ to the ``\textit{reverse Bessel module}" $\Theta$ by the introducing an integrating factor. We work out a characteristic for $\Theta$  as well as  a variant $\mathcal{Y}$ corresponding to the usual Bessel polynomials $y_n$. These characteristics will be applied to find (Poisson type)  generating functions in due course. As applications of the $D$-modules approach, we derive, in a \textit{uniform way}, the well-known three-term recursion formulae for Bessel polynomials as well as the corresponding ones for the difference reverse Bessel polynomials.  Section \S\ref{S:Newton} introduces the \textit{Newton transform},
 a kind of generalised Mellin transform but with very different purpose as a $D$-linear map  from $\mathcal{O}_d$ to $\mathcal{O}_\Delta$, namely the $D$-modules of analytic functions endowed with a  forward difference operator. The Newton transformation serves as an effective tool to obtain generating functions for difference Bessel functions. Utilising results obtained from \S\ref{S:half_bessel_I}, we 
  give detailed proofs of generating functions for the reverse Bessel polynomials as well as their difference analogues in   \S\ref{SS:delta_reverse_bessel_poly}. Section \S\ref{S:half_bessel_II} take the half-Bessel modules from \S\ref{S:half_bessel_I} a step further to study trigonometric-type generating functions for the Bessel functions first obtained by Glaisher \cite{Watson1944}. Before the concluding section \S\ref{S:conclusion},  \S\ref{S:discussion}
 discusses our main results against previous results of Truesdell amongst others with some historical notes. In particular, it focus the philosophy that Truesdell pursued, and similarly of Wilf and Zeilberger, to find general methods of discovery of what kinds of formulae should exist rather than by ad hoc manipulations of formulae, a viewpoint shared by this study. 
Appendix \ref{A:proofs} contains proofs of various Theorems and Propositions stated but not fully verified in all aspects in the main text. Appendix \ref{Append:arbitrary} extends the study of the manifestation of the Bessel modules $\mathcal{B}_\nu$ to $\mathcal{O}_{\Delta_h}$, that is finite difference with arbitrary step size $h$. In the case of $\mathcal{B}_0$, the generating function derived unifies the case when $h=1$ for the difference Bessel functions and the classical Bessel functions when $h\to 0$.  This presents a further unification of the generating functions of the classical Bessel functions and that of difference Bessel functions of arbitrary step sizes.  Appendix \S\ref{A:list} gives lists of $D$-modules used, transmutation formulae, the systems of PDEs encountered for various generating functions defined in their appropriate analytic functions space of two variables. 
 
\section{Preliminaries}\label{S:preliminaries}
\subsection{Linear maps between $D$-modules} 

\begin{definition}[\cite{CCT3}]  Let $n\in \mathbb{N}.$ The Weyl-algebra $\mathcal{A}_n$ is the $\mathbb{C}$-algebra with $2n$ generators 
	$X_1, \ldots, X_n, \partial_1, \ldots, \partial_n$ 
	subject to the relations
	\[
		[X_i, X_j]=0;\quad[\partial_i, \partial_j]=0; \quad[\partial_i, X_j]=\delta_{ij}.
	\]
\end{definition}
In this paper, we denote $\mathcal{A}_1=\mathbb{C} \langle X, \partial\rangle$ or by $\mathcal{A}$  when this is clear from the context.
\medskip

We gather here preliminary material about  $D$-module that we shall employ to exhibit a unified theory for the classical Bessel functions and generating functions for the recently discovered difference Bessel functions by Bohner and Cuchta  \cite{BC_2017} in the next sections.  More specifically, amongst the generating functions studied include	
%	\begin{enumerate}
	\begin{itemize}
		\item extensions of generating functions of various kinds for the classical Bessel functions of arbitrary order $\nu$;
%		\item a new quasi-generating function of interpolation type for classical Bessel functions;
		\item new generating functions for difference Bessel functions \cite{BC_2017}; 
		\item new generating functions of Glaisher (Poisson)-type for difference Bessel functions;
		\item new generating functions for new difference Bessel polynomials.
%		\item a new generating function of interpolation type for difference Bessel functions.
%	\end{enumerate}
	\end{itemize}	
\medskip

The primary concern of this paper is on ${D}$-linear maps defined on the Bessel modules and their induced generating functions to be introduced in the following two sections. Hence some generating functions found for difference Bessel functions may not converge in the same way as those of their classical counterparts.  
%according to their algebraic and singularity structures. Thus we adopt standard terminology from Poincar\'e's theory of asymptotic expansions \cite[Chap. 2]{Copson_1965} to describe those divergent infinite sums derived from the study of the corresponding generating functions. 

%\begin{definition} A sequence of function $\phi_n(t)$ is called an asymptotic sequence as $x\to x_0$ if there is a neighbourhood of $x_0$ such that none of the functions vanish except at the point $x_0$ and if
%	\[
%		\phi_{n+1}=o(\phi_n(x)),\quad x\to x_0.
%	\]We say that the formal series
%	\[
%		\sum_{-\infty}^\infty a_n\phi(t)
%	\]is an \textit{asymptotic expansion} of $f(t)$ in Poincar\'e's sense, with respect to the asymptotic sequence $\{\phi_n(t)\}$ if, for every integer $m$,
%	\[
%		f(t)-\sum_{-(m-1)}^{m-1}a_n\phi_n(t)=\phi_m(t),\quad t\to t_0.
%	\]
%\end{definition} 
\medskip

The principle of obtaining these generating functions behind is based on the finite dimensionality  of the space of $D$-linear maps from  $\mathcal{B}_\nu$ to different $\mathcal{A}_2$-modules. The first step is the introduction of some examples of $\mathcal{A}_n$-modules and linear maps between them which are frequently used in this article.

\begin{definition} We denote by $\mathcal{A}(1/X)$  the Weyl-algebra $\mathbb{C}\langle\partial,X\rangle (1/X)=
\mathbb{C}\langle\partial,X,{1}/{X}\rangle$ modulo the aforementioned relations.
\end{definition}
\medskip

\begin{example}\label{Eg:O_deleted_d}
Let $\mathcal{O}$ be the space of analytic functions on a certain domain. Then the  $\mathcal{O}$  becomes a left $\mathcal{A}$-module denoted by $\mathcal{O}_d$, if one endows it with the structure
	\[
		\begin{array}{l}
		(\partial f)(t)=f^\prime (t),\\
		(Xf)(t)=tf(t)\quad\mbox{for all }f\in\mathcal{O}\mbox{ and }t\mbox{ in this domain}.
		\end{array}
\]Similarly, let $\mathcal{O}_2$ be the space of analytic functions on a certain domain in $\mathbb{C}\times \mathbb{C}$. Then the   $\mathcal{O}_2$  becomes a left $\mathcal{A}_2$-module denoted by $\mathcal{O}_{dd}$, if one endows it with the structure
	\begin{equation}\label{E:O_dd_endow}
			\begin{split}
				(\partial_1 f)(x,\, t) &=f_x (x,\, t), \quad (X_1f)(x,\, t)=xf(x,\, t),\\
				(\partial_2 f)(x,\, t) &=f_t(x,\, t), \quad (X_2f)(x,\, t)=tf(x,\, t).
			\end{split}
	\end{equation}
\end{example}
\medskip

\begin{example}\label{Eg:O_delta}
Let $\mathcal{O}$ be the space of analytic functions. Then the $\mathcal{O}$ becomes a left $\mathcal{A}$-module, denoted by $\mathcal{O}_\Delta$, if one endows  it 
with the structure
	\[
		\begin{split}
		&(\partial f)(t) =f(t+1)-f(t), \\
		&(Xf)(t)=tf(t-1).
		\end{split}
	\]
\end{example}
\medskip

\begin{example}\label{Eg:two-seq-numbers} Let $\mathbb{C}^\mathbb{Z}$ be the space of all functions $\mathbb{Z}\to\mathbb{C}$ (bilateral sequences). Let 
	\[
		a=(a_n)=(\cdots, a_{-2},\, a_{-1},\, a_0,\, a_1,\, a_2,\, \cdots)
	\]denote a bilateral sequence where $a_k\in\mathbb{C}$.  Then the $\mathbb{C}^\mathbb{Z}$ becomes a left $\mathcal{A}$-module by defining
\[
\begin{array}{l}
(\partial a)_n=(n+1)a_{n+1}\\
(Xa)_n=a_{n-1}\quad\mbox{ for all bilateral sequences }(a_n).
\end{array}
\]
Various growth conditions can be imposed to such bilateral sequences. 
\end{example}

\subsubsection{$z$-transform I}
\begin{theorem}[\textbf{(Generating function of numeric sequences)}]\label{T:z-transform-1} 
	Suppose that $\mathbb{C}^\mathbb{Z}$ consists only of sequences $\{a_n\}$ satisfying that
	\[
		0\le\limsup_{n\to\infty}{|a_{-n}|^{1/n}}<\frac{1}{\limsup_{n\to\infty}{|a_n|^{1/n}}}\le+\infty,
	\]
	and let $\mathcal{O}$ be the space of analytic functions defined on a certain annulus centred at $0$. Then the \textit{z-transform}
			\[
		\begin{array}{rcl}
		\mathfrak{z}:\mathbb{C}^\mathbb{Z}&\longrightarrow  &\mathcal{O}_d,\\
		(a_n)& \longmapsto & \displaystyle\sum_{n=-\infty}^\infty a_nt^n.
		\end{array}
	\]
	is left $\mathcal{A}$-linear.
\end{theorem}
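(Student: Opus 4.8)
The plan is to prove two things: that the Laurent series $\mathfrak{z}(a)=\sum_{n=-\infty}^\infty a_n t^n$ genuinely defines an element of $\mathcal{O}_d$ (so the map is well-defined), and that it intertwines the two $\mathcal{A}$-module structures of Example~\ref{Eg:two-seq-numbers} and Example~\ref{Eg:O_deleted_d}. Since $\mathbb{C}$-linearity of $\mathfrak{z}$ is immediate from linearity in the coefficients, the substance is a convergence statement followed by two operator identities.

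First I would settle well-definedness using the Cauchy--Hadamard theory of Laurent series. Writing $r:=\limsup_{n\to\infty}|a_{-n}|^{1/n}$ and $R:=\big(\limsup_{n\to\infty}|a_n|^{1/n}\big)^{-1}$ (with $R=+\infty$ when the $\limsup$ vanishes), the regular part $\sum_{n\ge 0}a_n t^n$ converges for $|t|<R$ and the principal part $\sum_{n<0}a_n t^n$ converges for $|t|>r$. The hypothesis $r<R$ is exactly the assertion that the annulus $\{\,r<|t|<R\,\}$ is non-empty, and on it the series converges locally uniformly to an analytic function; thus $\mathfrak{z}(a)\in\mathcal{O}_d$ for a suitable annulus centred at $0$.

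Next, because the claim of $\mathcal{A}$-linearity presupposes that $\partial a$ and $Xa$ lie in the same space, I would note that the growth condition is preserved by both generators: since $(n+1)^{1/n}\to 1$ we have $\limsup_{n\to\infty}|(n+1)a_{n+1}|^{1/n}=\limsup_{n\to\infty}|a_{n+1}|^{1/n}$, and a shift of the index alters neither of the two $\limsup$'s, so $\partial a$ and $Xa$ again satisfy the annulus inequality. The module identities then reduce to reindexing. For $X$ one has $\mathfrak{z}(Xa)=\sum_n a_{n-1}t^n$, while $X\mathfrak{z}(a)=t\sum_n a_n t^n=\sum_n a_n t^{n+1}$, and the substitution $m=n+1$ shows these agree. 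For $\partial$, term-by-term differentiation (legitimate since the series and its termwise derivative converge locally uniformly on the annulus) gives $\partial\mathfrak{z}(a)=\sum_n n a_n t^{n-1}=\sum_m (m+1)a_{m+1}t^m=\mathfrak{z}(\partial a)$. Hence $\mathfrak{z}$ commutes with $\partial$ and $X$, and being $\mathbb{C}$-linear it is left $\mathcal{A}$-linear.

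There is no deep obstacle here: the entire content is the classical correspondence between the annulus-convergence condition and the two Cauchy--Hadamard radii. The only points requiring a little care are justifying term-by-term differentiation on the open annulus and checking that $\partial$ and $X$ preserve the coefficient growth condition, so that the intertwining identities are relations between genuine elements of the two modules; both become routine once the annulus of convergence has been identified.
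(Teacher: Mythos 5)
Your proof is correct and follows essentially the same route as the paper, which simply declares the $\mathbb{C}$-linearity obvious and the two intertwining identities $\mathfrak{z}\,\partial=\partial\,\mathfrak{z}$ and $\mathfrak{z}\,X=X\,\mathfrak{z}$ routine to verify. You have merely written out the details the paper omits — the Cauchy--Hadamard identification of the annulus, the preservation of the growth condition under $\partial$ and $X$, and the reindexing computations — all of which are accurate.
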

\medskip

\begin{proof} The fact that the map $\mathfrak{z}$ is $\mathbb{C}$-linear is obvious. The verification  that both 
	\[
		\mathfrak{z}\, \partial (a_n)=\partial\,  \mathfrak{z} (a_n),
		\quad \mbox{and}\quad 
		\mathfrak{z}\, X (a_n)=X\, \mathfrak{z} (a_n)
	\] hold are  routine, with the understanding that the maps $\partial$ and $X$ are interpreted appropriately in $\mathbb{C}^\mathbb{Z}$ and $\mathcal{O}_d$ respectively. Hence the map $\mathfrak{z}$ is $\mathcal{A}$-linear. Clearly, it is both injective and surjective.
\end{proof}
\medskip

In fact, Theorem~\ref{T:z-transform-1} can be extended to Theorem~\ref{T:z-transform-1}$'$ below, which handles a larger class of sequences $\{a_n\}$ so that $\sum_{n=-\infty}^\infty a_nt^n$ may not converge.
\medskip

\begin{definition}[\textbf{(Borel resummation)}]\label{D:borel} Suppose that $\{a_n\}\in\mathbb{C}^{\mathbb{Z}}$ satisfies both
	\begin{equation}\label{E:G1}
		\limsup_{n\to\infty}{|a_n|^{1/n}}<+\infty \quad \mbox{and} \quad \limsup_{n\to\infty}{\left|\frac{a_{-n}}{n!}\right|^{1/n}}<+\infty.
	\end{equation}
 Then the \textit{Borel resummation} of the Laurent series $\sum_{n=-\infty}^\infty {a_nt^n}$ is defined by taking the Laplace transform of the Borel transform of its principal part, i.e.,
	\begin{equation}\label{E:BL}
	\int_0^{+\infty}{e^{-st}\left(\sum_{n=0}^{\infty}{\frac{a_{-n-1}}{n!}}s^n\right)\,ds} + \sum_{n=0}^\infty {a_nt^n}.
	\end{equation}
\end{definition}
\medskip

\begin{remark}\label{R:gervey}
A power series or Laurent series whose sequence of coefficients $\{a_n\}$ satisfy that \[\limsup_{n\to\infty}{\left|\frac{a_n}{n!}\right|^{1/n}}<+\infty\] is said to be \textit{$1$-Gevrey}. Note that the Borel resummation of a $1$-Gevrey Laurent series always exists as an analytic function, and the Borel resummation of a convergent Laurent series is just its ordinary sum, which is again an analytic function. We refer the reader to \cite[Part II]{Mitschi_Sauzin_2016}, \cite{Shawyer_Watson_1994} and \cite{Delabaere_Pham_1999} for general discussions of $1$-Gevrey or $m$-Gevrey series and their Borel-resummation.
\end{remark}
%\medskip

\subsubsection{$z$-transform II}
\begin{theoremprime}{T:z-transform-1}[\textbf{(Generating function of numeric sequences)}]
	Suppose that $\mathbb{C}^\mathbb{Z}$ consists only of sequences $\{a_n\}$ satisfying both
	\[
		\limsup_{n\to\infty}{|a_n|^{1/n}}<+\infty \quad \mbox{and} \quad \limsup_{n\to\infty}{\left|\frac{a_{-n}}{n!}\right|^{1/n}}<+\infty,
	\]
	and let $\mathcal{O}$ be the space of analytic functions defined on a certain region. Then the \textit{z-transform}
			\[
		\begin{array}{rcl}
		\mathfrak{z}:\mathbb{C}^\mathbb{Z}&\longrightarrow  &\mathcal{O}_d,\\
		(a_n)& \longmapsto & \mathfrak{B} \displaystyle\sum_{n=-\infty}^\infty a _nt^n,
		\end{array}
	\]where the $\mathfrak{B} \sum_{n=-\infty}^\infty a _nt^n$ denotes the Borel-resummation of $\sum_{n=-\infty}^\infty a _nt^n$, is left $\mathcal{A}$-linear.
\end{theoremprime}
\medskip

\begin{proof} Since the Borel and Laplace transforms are both left $\mathcal{A}$-linear, the Borel resummation is left $\mathcal{A}$-linear also.
\end{proof}
\medskip

\begin{example}\label{Eg:O_delta_d} Let $\mathcal{O}_2$ be a space of  analytic functions in two variables. Then $\mathcal{O}_2$ becomes a left $\mathcal{A}_2$-module by
	\begin{equation}\label{E:O_delta_d}
%		\begin{split}
%			\partial_1f(x,\, t) =xf_x(x,\, t), \quad X_1 f(x,\, t)=xf(x,\, t);\\
%			\partial_2f(x,\, t)&=f(x,\, t+1)-f(x,\, t),\quad X_2f(x,\, t)=tf(x,\, t-1).
%		\end{split}
		\begin{array}{ll}
			\partial_1f(x,\, t)=f(x+1,\, t)-f(x,\, t),   & X_1f(x,\, t)=xf(x-1,\, t);\\
			\partial_2f(x,\, t) =f_t(x,\, t), 		& X_2 f(x,\, t)=tf(x,\, t).
		\end{array}	
	\end{equation}
Such a left $\mathcal{A}_2$-module is denoted by $\mathcal{O}_{\Delta d}$.
\end{example}
\medskip

\begin{remark} In addition to the above notation $\mathcal{O}_{\Delta d}$, we shall liberally adopt notations such as $\mathcal{O}_{d d}$, $\mathcal{O}_{d \Delta}$, $\mathcal{O}_{\Delta\Delta}$  to represent different left $\mathcal{A}_2$-module structures endowed on $\mathcal{O}_2$ with respect to the corresponding operators.
\end{remark}
\medskip

We next extend the idea of space of bilateral sequences of numbers $\mathbb{C}^\mathbb{Z}$ in Example \ref{Eg:two-seq-numbers} to the space of bilateral sequences of analytic functions $\mathcal{O}^\mathbb{Z}$.
 \medskip
 
\begin{example}\label{Eg:two-seq-functions} Let $\mathcal{O}^\mathbb{Z}$ be the space of all  bilateral sequences of analytic functions with appropriately imposed growth restriction. Let 
	\[
		(f_n)=(\cdots, \, f_{-1},\, f_0,\, f_1,\,  \cdots)
	\]denote a bilateral infinite sequence of analytic functions.  Then $\mathcal{O}^\mathbb{Z}$ becomes a left $\mathcal{A}_2$-module by
	\[
		\begin{array}{ll}
		(\partial_1 f)_n(x)=f^\prime_n(x), &  (X_1 f)_n(x)=xf_n(x)\\
		(\partial_2 f)_n(x)=(n+1)f_{n+1}(x),& (X_2 f)_n(x)=f_{n-1}(x),
		\end{array}
	\]for all $n$ and for all $x$. This left $\mathcal{A}_2$-module is denoted by $\mathcal{O}_d^\mathbb{Z}$.
\end{example}
\medskip

\begin{example}\label{Eg:two-seq-functions_2} Let $\mathcal{O}_\Delta^\mathbb{Z}$ be the space of all  bilateral sequences of analytic functions with an appropriately imposed growth restriction. Let 
	\[
		(f_n)=(\cdots, \, f_{-1},\, f_0,\, f_1,\,  \cdots)
	\]denote a bilateral infinite sequence of $f_k\in\mathcal{O}$.  Then $\mathcal{O}^\mathbb{Z}$ becomes a left $\mathcal{A}_2$-module, by
	\begin{equation}\label{E:two-seq-functions_2}
		\begin{array}{ll}
		(\partial_1 f)_n(x)=\Delta f_n(x)=f_n(x+1)-f_n(x), &  (X_1 f)_n(x)=xf_n(x-1)\\
		(\partial_2 f)_n(x)=(n+1)f_{n+1}(x),& (X_2 f)_n(x)=f_{n-1}(x),
		\end{array}
	\end{equation}for all bilateral sequences of analytic functions $(f_n)$.  This left $\mathcal{A}_2$-module is denoted by $\mathcal{O}_\Delta^\mathbb{Z}$.
\end{example}
\medskip

Similar to  Theorem \ref{T:z-transform-1}, we have the following:
\medskip

%\subsubsection{$z$-transform III}
\begin{theorem}[\textbf{(Generating function of sequences  in $\mathcal{O}_d$)}]%\label{T:Z-trans} 
\label{T:z-transform-2} 
The following \textit{z-transform}
	\begin{equation}\label{E:z-transform-2} 
		\begin{array}{rcl}
		\mathfrak{z}:\mathcal{O}^\mathbb{Z}_d &\longrightarrow &\mathcal{O}_{dd},\\
		(f_n) &\longmapsto&\mathfrak{B}\displaystyle\sum_{n=-\infty}^\infty f_n(x)\, t^n
		\end{array}
	\end{equation}is left $\mathcal{A}_2$-linear.
\end{theorem}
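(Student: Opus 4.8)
The plan is to prove $\mathcal{A}_2$-linearity by verifying that $\mathfrak{z}$ commutes with each of the four generators $\partial_1,X_1,\partial_2,X_2$, the underlying $\mathbb{C}$-linearity being immediate from the linearity of both the formal Laurent summation and of the Borel resummation $\mathfrak{B}$. Since the resummation $\mathfrak{B}$ in the definition of $\mathfrak{z}$ is taken in the variable $t$ alone, I would organise the four checks according to the variable in which each generator operates.

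First I would dispose of the pair $\partial_1,X_1$, which act in the $x$-direction. On $\mathcal{O}^{\mathbb{Z}}_d$ they act termwise by $(\partial_1 f)_n=f_n'$ and $(X_1 f)_n=xf_n$, so on the formal series $\sum_n f_n(x)t^n$ they reproduce exactly the $\mathcal{O}_{dd}$-operations $\partial/\partial x$ and multiplication by $x$. Because these are operations in $x$ whereas $\mathfrak{B}$ resums in $t$, they pass through the resummation, yielding $\mathfrak{z}\,\partial_1=\partial_1\,\mathfrak{z}$ and $\mathfrak{z}\,X_1=X_1\,\mathfrak{z}$.

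Next I would treat $\partial_2,X_2$, which act in the $t$-direction. The identities $(\partial_2 f)_n=(n+1)f_{n+1}$ and $(X_2 f)_n=f_{n-1}$ give, after the reindexings $\sum_n (n+1)f_{n+1}(x)t^n=\sum_m m\,f_m(x)t^{m-1}$ and $\sum_n f_{n-1}(x)t^n=t\sum_m f_m(x)t^m$, that on formal Laurent series these generators correspond to $\partial/\partial t$ and to multiplication by $t$. These are precisely the two generators of the copy of $\mathcal{A}$ acting in the $t$-variable, so the two remaining commutations follow from the $\mathcal{A}$-linearity of $\mathfrak{B}$ proved in Theorem~\ref{T:z-transform-1}$'$, applied to the numeric sequence $(f_n(x))_n$ for each fixed admissible $x$.

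The step I expect to be the main obstacle is making the use of $\mathfrak{B}$ rigorous rather than merely formal. One must verify that the sequences involved — and those produced by applying the generators — satisfy the Gevrey-type growth hypotheses \eqref{E:G1} uniformly enough in $x$ for $\mathfrak{B}\sum_n f_n(x)t^n$ to define a genuine element of $\mathcal{O}_{dd}$, and that differentiation in $x$ may be interchanged with the Laplace integral in $t$ that defines $\mathfrak{B}$. This is exactly where the \emph{appropriately imposed growth restriction} on $\mathcal{O}^{\mathbb{Z}}_d$ of Example~\ref{Eg:two-seq-functions} must be pinned down and invoked; granting it, the interchanges are justified by standard differentiation-under-the-integral estimates, and all four commutations then hold after resummation just as they do at the formal level.
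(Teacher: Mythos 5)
Your proof is correct and follows essentially the same route as the paper, which states this result without proof as being "similar to Theorem~\ref{T:z-transform-1}": a routine check that $\mathfrak{z}$ commutes with the four generators, with the $t$-direction generators reducing to the $\mathcal{A}$-linearity of the Borel resummation from Theorem~\ref{T:z-transform-1}$'$. Your explicit flagging of the growth hypotheses needed to make $\mathfrak{B}$ and the interchange of $x$-differentiation with the Laplace integral rigorous is a reasonable elaboration of what the paper leaves implicit in the phrase ``appropriately imposed growth restriction.''
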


\begin{theorem}[\textbf{(Generating function of  sequences  in $\mathcal{O}_\Delta$)}]\label{T:z-transform-3} 
The following \textit{z-transform}
	\begin{equation}\label{E:z-transform-3} 
		\begin{array}{rcl}
		\mathfrak{z}_\triangle:\mathcal{O}^\mathbb{Z}_\Delta &\longrightarrow &\mathcal{O}_{\Delta d},\\
		(f_n) &\longmapsto&\mathfrak{B}\displaystyle\sum_{n=-\infty}^\infty f_n(x)\, t^n
		\end{array}
	\end{equation}is left $\mathcal{A}_2$-linear.	
\end{theorem}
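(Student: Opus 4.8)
The plan is to follow the same template as the proofs of Theorem~\ref{T:z-transform-1}, its primed version, and Theorem~\ref{T:z-transform-2}, since $\mathfrak{z}_\triangle$ differs from the map in Theorem~\ref{T:z-transform-2} only in how the operators $\partial_1$ and $X_1$ are manifested: a forward difference and a multiplication-with-shift in the $x$-variable rather than a derivative and a plain multiplication. The $\mathbb{C}$-linearity of $\mathfrak{z}_\triangle$ is immediate, so it remains to check that $\mathfrak{z}_\triangle$ intertwines each of the four generators $\partial_1, X_1, \partial_2, X_2$ of $\mathcal{A}_2$.

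First I would dispose of $\partial_2$ and $X_2$. These act on the index $n$ exactly as in Example~\ref{Eg:two-seq-numbers}, namely $(\partial_2 f)_n = (n+1) f_{n+1}$ and $(X_2 f)_n = f_{n-1}$, and they correspond on $\mathcal{O}_{\Delta d}$ to $\partial_t$ and to multiplication by $t$. Since the Borel resummation $\mathfrak{B}$ is left $\mathcal{A}$-linear in the $t$-variable — precisely the content of the proof of Theorem~\ref{T:z-transform-1}$'$ — the verifications $\mathfrak{z}_\triangle \partial_2 = \partial_2 \mathfrak{z}_\triangle$ and $\mathfrak{z}_\triangle X_2 = X_2 \mathfrak{z}_\triangle$ reduce, after the reindexing $m = n+1$ (resp.\ $m = n-1$) inside $\sum_n f_n(x)\, t^n$, to the routine identities already used in Theorem~\ref{T:z-transform-1}.

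The genuinely new point is the pair $\partial_1, X_1$. The key observation is that both operators act solely on the $x$-variable: on the source, $(\partial_1 f)_n(x) = f_n(x+1) - f_n(x)$ and $(X_1 f)_n(x) = x f_n(x-1)$; on the target, $\partial_1 g(x,t) = g(x+1,t) - g(x,t)$ and $X_1 g(x,t) = x g(x-1,t)$. Because $\mathfrak{B}$ and the summation $\sum_n (\cdot)\, t^n$ operate on the $t$-variable and the index $n$, they commute with the forward difference in $x$ and with multiplication by $x$ followed by the shift $x \mapsto x-1$. Concretely, $\mathfrak{z}_\triangle(\partial_1 f)(x,t) = \mathfrak{B}\sum_n [f_n(x+1) - f_n(x)]\, t^n = \mathfrak{z}_\triangle(f)(x+1,t) - \mathfrak{z}_\triangle(f)(x,t) = (\partial_1 \mathfrak{z}_\triangle(f))(x,t)$, and likewise $\mathfrak{z}_\triangle(X_1 f)(x,t) = \mathfrak{B}\sum_n x f_n(x-1)\, t^n = x\, \mathfrak{z}_\triangle(f)(x-1,t) = (X_1 \mathfrak{z}_\triangle(f))(x,t)$, where in each case the scalar $x$ and the shifts $x\mapsto x\pm 1$ pass through $\mathfrak{B}\sum_n$ untouched.

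The step I expect to require the most care is not any of the algebraic intertwinings but the well-definedness of $\mathfrak{z}_\triangle$, that is, checking that the growth restriction imposed on $\mathcal{O}^\mathbb{Z}_\Delta$ (the $1$-Gevrey condition of Definition~\ref{D:borel} and Remark~\ref{R:gervey}, applied locally uniformly in $x$) is preserved under each generator, so that the image indeed lands in $\mathcal{O}_{\Delta d}$. For $\partial_1$ and $X_1$ this is benign, since these operators touch only the $x$-argument of each $f_n$ and never mix the indices $n$: a local bound of the type $|f_n(x)| \le C^n n!$ is inherited, up to a harmless change of constant and a slight shrinking of the $x$-domain, by $f_n(x\pm 1)$ and by $x f_n(x-1)$, so the Gevrey condition in $n$ that guarantees existence of the Borel resummation on the principal part is retained. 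For $\partial_2$ and $X_2$ the preservation of growth is exactly as in the numeric case treated in Theorem~\ref{T:z-transform-1}$'$. Granting this, all four intertwining relations hold and $\mathfrak{z}_\triangle$ is left $\mathcal{A}_2$-linear.
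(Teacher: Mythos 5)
Your proof is correct and follows exactly the route the paper intends: the paper states Theorem~\ref{T:z-transform-3} without a written proof, introducing it (together with Theorem~\ref{T:z-transform-2}) as "similar to Theorem~\ref{T:z-transform-1}", i.e.\ a routine generator-by-generator intertwining check combined with the $\mathcal{A}$-linearity of the Borel resummation from Theorem~\ref{T:z-transform-1}$'$. Your write-up simply fills in those routine verifications (and the well-definedness remark), so there is nothing to flag.
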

\medskip

One needs to make a slight modification of defining rules for $\mathcal{A}$-module in order to study one-sided (ordinary) sequences. The motivation of this rule will be explained in Remark~\ref{R:poisson} after we introduce the Poisson transform.

\begin{example}\label{Eg:seq-functions_poisson}
Let $\mathbb{C}^\mathbb{N}$ be the space of sequences 
	\[	
		a=(a_n)=(a_1,\, a_2,\, a_3,\, \cdots)
	\] of complex numbers. It is equipped with the structure of a left $\mathcal{A}$-module by defining
\[
\begin{array}{l}
(Xa)_n=na_{n-1}\\
(\partial a)_n=a_{n+1}\quad\mbox{for all sequences }(a_n).
\end{array}
\]
Similarly, let $\mathcal{O}^{\mathbb{N}_0}$ be the space of all sequences of analytic functions with appropriately imposed growth restriction. Let 
	\[
		(f_n)=(f_{0},\, f_1,\, f_2,\,  \cdots)
	\]denote an infinite sequence of $f_k\in\mathcal{O}$.  Then $\mathcal{O}^{\mathbb{N}_0}$ becomes a left $\mathcal{A}_2$-module by
	\begin{equation}\label{E:seq-functions_poisson}
		\begin{array}{ll}
		(\partial_1 f)_n(x)=f^\prime_n(x), &  (X_1 f)_n(x)=xf_n(x)\\
		(\partial_2 f)_n(x)=f_{n+1}(x),& (X_2 f)_n(x)=nf_{n-1}(x),
		\end{array}
	\end{equation}for all sequences of analytic functions $(f_n)$. This left $\mathcal{A}_2$-module is usually denoted as $\mathcal{O}_d^{\mathbb{N}_0}$.
\end{example}

\begin{example}\label{Eg:seq-functions_poisson_delta}
 Let $\mathcal{O}^{\mathbb{N}_0}$ be the space of all sequences of analytic functions with appropriately imposed growth restriction. Let 
	\[
		(f_n)=(f_0,\, f_1,\, f_2,\,  \cdots)
	\]denote an infinite sequence of $f_k\in\mathcal{O}$.  Then $\mathcal{O}^{\mathbb{N}_0}$ becomes a left $\mathcal{A}_2$-module by
	\begin{equation}\label{E:seq-functions_poisson_delta}
		\begin{array}{ll}
		(\partial_1 f)_n(x)=f_n(x+1)-f_n(x), &  (X_1 f)_n(x)=xf_n(x-1)\\
		(\partial_2 f)_n(x)=f_{n+1}(x),& (X_2 f)_n(x)=nf_{n-1}(x),
		\end{array}
	\end{equation}for all  sequences of analytic functions $(f_n)$. This left $\mathcal{A}_2$-module is usually denoted as $\mathcal{O}_\Delta^{\mathbb{N}_0}$.
\end{example}
\medskip

We first demonstrate how we use the theory set up so far to derive a classical generating function for Bessel functions from the Bessel module $\mathcal{B}_\nu$ defined in Definition \ref{E:bessel_mod_gf} which is isomorphic to Definition
 \ref{E:bessel_mod}.
%\medskip

A table summarising all the $D$-module structures relevant to this article is included in the Appendix \ref{SS:common_D_list}.

\subsection{Integrability criteria}\label{SS:integrable}

This section deals with the idea of holonomic modules, which can be found in \cite{coutinho}. A quick revision is included here so that this article is self-contained.
For each multi-index $\alpha=(\alpha_1,\ldots,\alpha_n)\in\mathbb{N}^n$, $X_1^{\alpha_1}\cdots X_n^{\alpha_n}$ will be abbreviated as $X^\alpha$. A similar abbreviation is adopted for the $\partial$'s. It is also a standard notation that $|\alpha|=|(\alpha_1,\ldots,\alpha_n)|=\sum_j\alpha_j$.

\medskip

\begin{definition}
The \textit{Bernstein filtration} of $\mathcal{A}_n$ is the filtration $F^0\mathcal{A}_n\subset F^1\mathcal{A}_1\subset\cdots\subset\mathcal{A}_n$ defined by
\[
F^k\mathcal{A}_n=\mbox{the }\mathbb{C}\mbox{-vector space spanned by }\{X^\alpha\partial^\beta:|\alpha|+|\beta|\leq k\}.
\]
\end{definition}

\medskip

\begin{remark}
The Bernstein filtration enjoys the following properties.
\begin{itemize}
\item
$F^i\mathcal{A}_n\cdot F^j\mathcal{A}_n\subset F^{i+j}\mathcal{A}_n$,
\item
$[F^i\mathcal{A}_n,F^j\mathcal{A}_n]\subset F^{i+j-1}\mathcal{A}_n$,
\item
the $k^{\footnotesize\mbox{th}}-$graded piece $\mathrm{Gr}^k\mathcal{A}_n=F^k\mathcal{A}_n/F^{k+1}\mathcal{A}_n$ has finite $\mathbb{C}$-dimension for all $k$.
\end{itemize}
In fact, in the following treatment, the Bernstein filtration can be replaced by any other filtration having the these properties.
\end{remark}

\medskip

\begin{lemma}
The graded ring of the Bernstein filtration is a commutative $\mathbb{C}$-algebra. Indeed,
\[
\mathrm{Gr}\,\mathcal{A}_n=\bigoplus_k\mathrm{Gr}^k\mathcal{A}_n\cong\mathbb{C}[X_1,\ldots,X_n,\partial_1,\ldots,\partial_n].
\]
\end{lemma}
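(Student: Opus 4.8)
The plan is to reduce everything to two ingredients: a Poincar\'e--Birkhoff--Witt type basis of $\mathcal{A}_n$, and the commutator estimate $[F^i\mathcal{A}_n, F^j\mathcal{A}_n] \subset F^{i+j-1}\mathcal{A}_n$ recorded in the preceding remark. First I would establish that the ordered monomials $X^\alpha \partial^\beta$, with $\alpha, \beta \in \mathbb{N}^n$, form a $\mathbb{C}$-basis of $\mathcal{A}_n$. Using the defining relation $[\partial_i, X_j] = \delta_{ij}$, every word in the generators can be rewritten (``normally ordered'') as a $\mathbb{C}$-linear combination of such monomials, so they span; their linear independence is the substantive point, discussed below. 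Granting this, $F^k \mathcal{A}_n$ is precisely the span of those $X^\alpha \partial^\beta$ with $|\alpha| + |\beta| \le k$, and consequently $\dim_{\mathbb{C}} \mathrm{Gr}^k \mathcal{A}_n$ equals the number of monomials with $|\alpha|+|\beta| = k$, where here $\mathrm{Gr}^k\mathcal{A}_n = F^k\mathcal{A}_n / F^{k-1}\mathcal{A}_n$.

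Next I would record the graded multiplication and deduce commutativity. For $a \in F^i\mathcal{A}_n$ and $b \in F^j\mathcal{A}_n$ write $\sigma_i(a)$ for the image of $a$ in $\mathrm{Gr}^i\mathcal{A}_n$; the product in $\mathrm{Gr}\,\mathcal{A}_n$ is $\sigma_i(a)\,\sigma_j(b) := \sigma_{i+j}(ab)$, which is well defined because $F^i\mathcal{A}_n \cdot F^j\mathcal{A}_n \subset F^{i+j}\mathcal{A}_n$ and altering a representative changes $ab$ only by an element of $F^{i+j-1}\mathcal{A}_n$. Commutativity is then immediate: $\sigma_i(a)\sigma_j(b) - \sigma_j(b)\sigma_i(a) = \sigma_{i+j}(ab - ba) = \sigma_{i+j}([a,b])$, and since $[a,b] \in F^{i+j-1}\mathcal{A}_n$ its class in $F^{i+j}\mathcal{A}_n/F^{i+j-1}\mathcal{A}_n$ vanishes. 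In particular the symbols $\sigma_1(X_i)$ and $\sigma_1(\partial_j)$ all commute in $\mathrm{Gr}\,\mathcal{A}_n$.

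I would then build the isomorphism. Introduce $2n$ commuting indeterminates and define a $\mathbb{C}$-algebra map
\[
\phi : \mathbb{C}[X_1,\ldots,X_n,\partial_1,\ldots,\partial_n] \longrightarrow \mathrm{Gr}\,\mathcal{A}_n,
\qquad X_i \mapsto \sigma_1(X_i),\quad \partial_j \mapsto \sigma_1(\partial_j).
\]
Because the target is commutative, $\phi$ is a well-defined homomorphism, and by associativity of the graded product it sends the degree-$k$ monomial $X^\alpha\partial^\beta$ to $\sigma_k(X^\alpha\partial^\beta)$. As $|\alpha|+|\beta|$ ranges over $k$, these elements are exactly the symbols of the PBW basis monomials, hence form a $\mathbb{C}$-basis of $\mathrm{Gr}^k\mathcal{A}_n$ by the dimension count above. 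Thus $\phi$ restricts to a bijection in each degree and is therefore a graded $\mathbb{C}$-algebra isomorphism, proving the claim.

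The main obstacle is the linear independence of the monomials $X^\alpha\partial^\beta$, i.e.\ the genuine content of the PBW statement; spanning, commutativity, and the construction of $\phi$ are all formal. The cleanest way to secure independence is to let $\mathcal{A}_n$ act on the polynomial module $\mathbb{C}[x_1,\ldots,x_n]$ by $X_i \mapsto (\text{multiplication by } x_i)$ and $\partial_i \mapsto \partial/\partial x_i$, and to observe that distinct monomials $X^\alpha\partial^\beta$ act $\mathbb{C}$-independently, for instance by testing against the monomials $x^\gamma$ and comparing leading terms. This faithful representation forces any nontrivial $\mathbb{C}$-relation among the $X^\alpha\partial^\beta$ to be trivial, completing the only non-formal step.
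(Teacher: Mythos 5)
Your proof is correct. The paper itself states this lemma without proof, as part of its quick review of standard material from the holonomic $D$-module literature (it cites Coutinho for this section), and your argument — normal ordering plus faithfulness of the polynomial representation to get the PBW basis, the commutator estimate $[F^i\mathcal{A}_n,F^j\mathcal{A}_n]\subset F^{i+j-1}\mathcal{A}_n$ to get commutativity of the associated graded, and a degree-by-degree dimension count to show the symbol map is an isomorphism — is exactly the standard proof. One minor point: you take $\mathrm{Gr}^k\mathcal{A}_n=F^k\mathcal{A}_n/F^{k-1}\mathcal{A}_n$, whereas the paper's remark writes $F^k\mathcal{A}_n/F^{k+1}\mathcal{A}_n$; for an increasing filtration your convention is the correct one, so you have silently fixed a typo in the paper rather than introduced an error.
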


\medskip

\begin{definition}
Let $M$ be a left $\mathcal{A}_n$-module. A \textit{good filtration} of $M$ is a filtration $\Gamma$ which satisfies
\begin{itemize}
\item
$F^i\mathcal{A}_n\cdot \Gamma^jM\subset\Gamma^{i+j}M$,
\item
there exists $J\in\mathbb{N}$ such that
$F^i\mathcal{A}_n\cdot \Gamma^jM=\Gamma^{i+j}M$ for all $i$, $j>J$.
\end{itemize}
\end{definition}

\medskip

\begin{lemma}
Let $M$ be a finitely generated left $\mathcal{A}_n$-module and $\Gamma$ be a good filtration of $M$. Then the graded ring of $\Gamma$, that is,
\[
\mathrm{Gr}\,M=\bigoplus_k{\Gamma^kM/\Gamma^{k+1}M},
\]
is a commutative algebra over the commutative ring $\mathrm{Gr}\,\mathcal{A}_n$.
\end{lemma}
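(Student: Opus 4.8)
The plan is to endow $\mathrm{Gr}\,M$ with a $\mathrm{Gr}\,\mathcal{A}_n$-action defined on graded pieces through representatives, then to verify that this action is well defined and satisfies the module axioms. Commutativity of the coefficient ring will not need to be re-proved: the preceding lemma already identifies $\mathrm{Gr}\,\mathcal{A}_n$ with the polynomial ring $\mathbb{C}[X_1,\ldots,X_n,\partial_1,\ldots,\partial_n]$, so once $\mathrm{Gr}\,M$ is seen to be a module over $\mathrm{Gr}\,\mathcal{A}_n$ it is automatically a module over a commutative ring, which is the assertion.

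First I would define the action. Given a homogeneous symbol $\bar{P}\in\mathrm{Gr}^i\mathcal{A}_n$ represented by $P\in F^i\mathcal{A}_n$, and a homogeneous class $\bar{m}$ in the graded piece of $M$ represented by $m\in\Gamma^jM$, set $\bar{P}\cdot\bar{m}:=\overline{Pm}$, the class of $Pm$ one filtration level up. The first good-filtration axiom $F^i\mathcal{A}_n\cdot\Gamma^jM\subset\Gamma^{i+j}M$ guarantees that $Pm$ lands in the correct piece $\Gamma^{i+j}M$, so the target group is the right one; the action is then extended to all of $\mathrm{Gr}\,M$ by $\mathbb{C}$-bilinearity.

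The one place where care is genuinely needed is independence of the choice of representatives, and this is the step I would single out as the crux (though I do not expect a true obstacle). If $P$ is replaced by $P+Q$ with $Q$ in the next-lower filtration level $F^{i-1}\mathcal{A}_n$, then $Qm\in F^{i-1}\mathcal{A}_n\cdot\Gamma^jM\subset\Gamma^{i+j-1}M$, so $\overline{(P+Q)m}=\overline{Pm}$ in the graded piece; and if $m$ is replaced by $m+m'$ with $m'$ in the next-lower level $\Gamma^{j-1}M$, then $Pm'\in F^i\mathcal{A}_n\cdot\Gamma^{j-1}M\subset\Gamma^{i+j-1}M$, which again dies in the target. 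Thus both ambiguities are absorbed into the lower graded piece, precisely because multiplication by a filtered operator adds filtration degrees. It is worth noting that the noncommutativity of $\mathcal{A}_n$ never intervenes in this verification: it is controlled entirely by the previous lemma, where the estimate $[F^i\mathcal{A}_n,F^j\mathcal{A}_n]\subset F^{i+j-1}\mathcal{A}_n$ makes the symbols commute.

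It then remains only to record the module axioms. Distributivity in each slot is immediate from the $\mathbb{C}$-bilinearity of the definition. For compatibility with ring multiplication I would take $\bar{P}\in\mathrm{Gr}^i\mathcal{A}_n$, $\bar{Q}\in\mathrm{Gr}^j\mathcal{A}_n$ and a homogeneous $\bar{m}$, and compute $(\bar{P}\bar{Q})\cdot\bar{m}=\overline{(PQ)m}=\overline{P(Qm)}=\bar{P}\cdot(\bar{Q}\cdot\bar{m})$, using associativity of the original $\mathcal{A}_n$-action on $M$, together with the fact that the product $\bar{P}\bar{Q}$ in $\mathrm{Gr}\,\mathcal{A}_n$ is represented by $PQ$ (again the content of the preceding lemma). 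Since $\mathrm{Gr}\,\mathcal{A}_n$ is commutative, this exhibits $\mathrm{Gr}\,M$ as a graded module over the commutative ring $\mathrm{Gr}\,\mathcal{A}_n$. If a sharper conclusion is wanted later, the hypothesis that $M$ is finitely generated together with the second good-filtration axiom, namely $F^i\mathcal{A}_n\cdot\Gamma^jM=\Gamma^{i+j}M$ for $j$ large, shows moreover that $\mathrm{Gr}\,M$ is \emph{finitely generated} over $\mathrm{Gr}\,\mathcal{A}_n$, generated by the classes arising from the finitely many low-degree pieces $\Gamma^{\le J}M$.
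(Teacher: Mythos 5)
The paper states this lemma without proof: it appears in the background subsection on integrability criteria, which is explicitly a ``quick revision'' of standard material quoted from Coutinho, so there is no in-paper argument to compare against. Your proof is correct and is the standard one: defining the action on symbols via representatives, using $F^i\mathcal{A}_n\cdot\Gamma^jM\subset\Gamma^{i+j}M$ both to land in the right graded piece and to absorb the ambiguity of representatives, and deferring commutativity of the coefficient ring to the preceding lemma. Your closing observation that finite generation of $M$ plus the second good-filtration axiom yields finite generation of $\mathrm{Gr}\,M$ over $\mathrm{Gr}\,\mathcal{A}_n$ is in fact the substantive point needed for the Hilbert polynomial theorem that follows, so it is worth keeping. (One cosmetic note: you read the graded pieces as $F^i/F^{i-1}$ and $\Gamma^j M/\Gamma^{j-1}M$, which is the correct convention for the increasing filtration; the paper's displayed quotients $F^k/F^{k+1}$ and $\Gamma^kM/\Gamma^{k+1}M$ are evidently typos.)
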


\medskip

\begin{theorem}[(Hilbert)]
Let $M$ be a finitely generated left $\mathcal{A}_n$-module equipped with a good filtration $\Gamma$. Then there exists a polynomial $P$ such that

		\[
			P(k)=\dim_{\mathbb{C}}\Gamma^kM\quad\mbox{for every sufficiently large }k.
		\]
\end{theorem}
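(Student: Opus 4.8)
The plan is to reduce the statement to the classical Hilbert polynomial theorem for finitely generated graded modules over a commutative polynomial ring, applied to the associated graded module $\mathrm{Gr}\,M$. First I would establish that $\mathrm{Gr}\,M$ is a finitely generated graded module over $\mathrm{Gr}\,\mathcal{A}_n\cong\mathbb{C}[X_1,\ldots,X_n,\partial_1,\ldots,\partial_n]$. This is exactly what the good filtration hypothesis buys us: the second defining property, $F^i\mathcal{A}_n\cdot\Gamma^jM=\Gamma^{i+j}M$ for all $j>J$, says precisely that the finitely many classes coming from $\Gamma^0M,\ldots,\Gamma^JM$ generate $\mathrm{Gr}\,M$ over the polynomial ring. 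Since that ring is Noetherian (Hilbert basis theorem), each graded piece $\mathrm{Gr}^kM=\Gamma^kM/\Gamma^{k-1}M$ is then a finite-dimensional $\mathbb{C}$-vector space, so the Hilbert function $H(k)=\dim_{\mathbb{C}}\mathrm{Gr}^kM$ is well defined.

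Next I would invoke the classical Hilbert theorem for a finitely generated graded module $N=\bigoplus_k N_k$ over $\mathbb{C}[y_1,\ldots,y_m]$ (here $m=2n$): the function $k\mapsto\dim_{\mathbb{C}}N_k$ agrees with a polynomial $Q(k)$ for all sufficiently large $k$. The standard argument is induction on $m$, the base case $m=0$ being a finite-dimensional vector space whose Hilbert function is eventually $0$. For the inductive step, multiplication by $y_m$ yields a degree-preserving four-term exact sequence of graded $\mathbb{C}[y_1,\ldots,y_{m-1}]$-modules
\[
0\to K\to N(-1)\xrightarrow{\ y_m\ }N\to C\to 0,
\]
where $K$ and $C$ are annihilated by $y_m$ and hence are finitely generated graded modules over $\mathbb{C}[y_1,\ldots,y_{m-1}]$. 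Taking the alternating sum of dimensions in each internal degree gives $H_N(k)-H_N(k-1)=H_C(k)-H_K(k)$; by the inductive hypothesis the right-hand side is eventually polynomial, and a function whose first difference is eventually polynomial is itself eventually polynomial (one antidifferences, using that finite summation of a polynomial returns a polynomial of degree one higher).

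Finally I would transfer the conclusion from $\mathrm{Gr}\,M$ back to $M$. Because $M$ is finitely generated its filtration may be taken bounded below, say $\Gamma^kM=0$ for $k<0$, so the telescoping identity
\[
\dim_{\mathbb{C}}\Gamma^kM=\sum_{j=0}^{k}\dim_{\mathbb{C}}\mathrm{Gr}^jM=\sum_{j=0}^{k}H(j)
\]
holds. Since $H(j)=Q(j)$ for $j\geq N$, for $k\geq N$ the right-hand side equals a fixed constant plus $\sum_{j=N}^{k}Q(j)$, and the summation of the polynomial $Q$ is again a polynomial in $k$; hence $\dim_{\mathbb{C}}\Gamma^kM=P(k)$ for all sufficiently large $k$. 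The main obstacle is the inductive step in the graded Hilbert theorem—controlling the kernel and cokernel of multiplication by a variable and verifying the principle that an integer-valued function with eventually polynomial first difference is eventually polynomial—while the only point requiring care on the $\mathcal{A}_n$ side is checking that the good filtration really makes $\mathrm{Gr}\,M$ finitely generated, which is where the hypothesis is used in full.
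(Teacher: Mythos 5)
The paper does not prove this statement; it is recalled verbatim from the cited reference (Coutinho) as part of a self-contained revision of holonomic module theory. Your argument is the standard proof from that source --- pass to $\mathrm{Gr}\,M$ as a finitely generated graded module over $\mathrm{Gr}\,\mathcal{A}_n\cong\mathbb{C}[X,\partial]$, prove the graded Hilbert theorem by induction on the number of variables via the four-term sequence for multiplication by a variable, and antidifference back to the filtration --- and it is correct.

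One point worth making explicit: your step ``the finitely many classes coming from $\Gamma^0M,\ldots,\Gamma^JM$ generate $\mathrm{Gr}\,M$'' tacitly assumes each $\Gamma^jM$ is a finite-dimensional $\mathbb{C}$-vector space. The paper's stated definition of a good filtration omits this requirement (under the literal definition given, the constant filtration $\Gamma^jM=M$ on an infinite-dimensional $M$ would qualify and the theorem would fail), so you are implicitly using the standard convention that the $\Gamma^jM$ are finitely generated over $F^0\mathcal{A}_n=\mathbb{C}$; with that convention in place, finite-dimensionality of every $\mathrm{Gr}^kM$ follows as you say, and the rest of the argument --- including the boundedness below of the filtration needed for the telescoping identity --- goes through.
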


\medskip

\begin{definition}\label{D:hilbert}
The polynomial $P$ in the last theorem is called the \textit{Hilbert polynomial} of $M$. (It turns out that $P$ is independent of the choice of the good filtration chosen.). If $P$ is of degree $d$ and its leading coefficient is $m/d!$, then we call $d$ the \textit{dimension} of $M$ and $m$ the \textit{multiplicity} of $M$.
\end{definition}

\medskip

\begin{example}
Consider $\mathcal{A}_n$ as a left $\mathcal{A}_n$-module and the Bernstein filtration is a good one. Then,
\[
\dim_{\mathbb{C}}\Gamma^k\mathcal{A}_n=\binom{k+2n}{2n}
\]
so that the dimension of $\mathcal{A}_n$ is $2n$ and its multiplicity is $1$.
\end{example}

\medskip

\begin{example}\label{E:prime_integrable_eg}
Consider
\[
M=\mathcal{A}_n/\mathcal{A}_n\partial_1+\cdots+\mathcal{A}_n\partial_n,
\]
which inherits a good filtration $\Gamma$ from the Bernstein filtration. Then,
\[
\dim_{\mathbb{C}}\Gamma^kM=\binom{k+n}{n}
\]
so that the dimension of $M$ is $n$ and its multiplicity is $1$.
\end{example}
\medskip

\begin{example}\label{E:prime_integrable_eg_2}
Consider
\[
M=\mathcal{A}_2/\mathcal{A}_2\partial_1^2+\mathcal{A}_2\partial_2,
\]
which inherits a good filtration $\Gamma$ from the Bernstein filtration. Then,
\[
\dim_{\mathbb{C}}\Gamma^kM=(k+1)^2
\]
so that the dimension of $M$ and its multiplicity are both  equal to $2$.
\end{example}

\medskip

\begin{theorem}[(\textbf{Bernstein inequality})]\label{T:bernstein}
For each non-trivial finitely generated left $\mathcal{A}_n$-module $M$, its dimension $d(M)$ satisfies
\[
n\leq d(M)\leq 2n.
\]
\end{theorem}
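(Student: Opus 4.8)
The plan is to prove the two inequalities separately: the upper bound $d(M)\le 2n$ is soft, while the lower bound $n\le d(M)$ is the genuine Bernstein inequality and carries the real work. Because $d(M)$ is independent of the chosen good filtration (Definition \ref{D:hilbert}), I am free to fix one convenient good filtration $\Gamma$ on $M$. For the upper bound, since $M$ is finitely generated I would choose a surjection $\mathcal{A}_n^{\,r}\twoheadrightarrow M$ and take $\Gamma^kM$ to be the image of $\big(F^k\mathcal{A}_n\big)^{\oplus r}$, which is a good filtration; then $\dim_\mathbb{C}\Gamma^kM\le r\,\dim_\mathbb{C}F^k\mathcal{A}_n=r\binom{k+2n}{2n}$, a polynomial in $k$ of degree $2n$, so the Hilbert polynomial $P$ of $M$ has $\deg P\le 2n$, i.e.\ $d(M)\le 2n$.

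For the lower bound, the heart of the matter is the following injectivity statement, which I would isolate and prove first: for every sufficiently large $k$ the $\mathbb{C}$-linear map
\[
\theta_k\colon F^k\mathcal{A}_n\longrightarrow \Hom_\mathbb{C}\big(\Gamma^kM,\ \Gamma^{2k}M\big),\qquad \theta_k(a)(u)=au,
\]
is injective; note $\theta_k$ is well defined because $F^k\mathcal{A}_n\cdot\Gamma^kM\subset\Gamma^{2k}M$. Granting this, the inequality follows by comparing growth rates: the domain has $\dim_\mathbb{C}F^k\mathcal{A}_n=\binom{k+2n}{2n}\sim k^{2n}/(2n)!$, whereas the codomain has dimension $\big(\dim_\mathbb{C}\Gamma^kM\big)\big(\dim_\mathbb{C}\Gamma^{2k}M\big)=P(k)P(2k)$, which grows like a constant times $k^{2d(M)}$. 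Injectivity forces $\binom{k+2n}{2n}\le P(k)P(2k)$ for all large $k$, so $2n\le 2\,d(M)$ and hence $d(M)\ge n$.

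It remains to prove the injectivity of $\theta_k$, which is where the argument is subtle. I would show, by induction on the order of the operator (the least $r$ with $a\in F^r\mathcal{A}_n$), that any $a$ with $a\cdot\Gamma^kM=0$ must vanish. The base case $r=0$ is immediate: a scalar killing the nonzero space $\Gamma^kM$ must be $0$. For the inductive step the mechanism is that commuting with a generator strictly lowers the order; the explicit computation
\[
[\partial_i,\,X^\alpha\partial^\beta]=\alpha_i\,X^{\alpha-e_i}\partial^\beta,\qquad [X_i,\,X^\alpha\partial^\beta]=-\beta_i\,X^{\alpha}\partial^{\beta-e_i}
\]
shows $[\partial_i,a],[X_i,a]\in F^{r-1}\mathcal{A}_n$ whenever $a\in F^r\mathcal{A}_n$. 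Now for any $u\in\Gamma^{k-1}M$ one has $au\in a\cdot\Gamma^{k-1}M\subset a\cdot\Gamma^kM=0$ and $\partial_iu\in F^1\mathcal{A}_n\cdot\Gamma^{k-1}M\subset\Gamma^kM$, whence
\[
[\partial_i,a]\,u=\partial_i(au)-a(\partial_iu)=0,
\]
and likewise $[X_i,a]u=0$. Thus each $[\partial_i,a]$ and $[X_i,a]$ is an operator of order $\le r-1$ annihilating $\Gamma^{k-1}M$ (still nonzero for $k$ large), so by the induction hypothesis all these commutators vanish. Therefore $a$ commutes with every generator, i.e.\ $a$ lies in the centre of $\mathcal{A}_n$, which is $\mathbb{C}$; being a scalar that kills $\Gamma^kM\ne0$, it must be $0$.

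The main obstacle is precisely this injectivity lemma — the rest is bookkeeping with Hilbert polynomials. Within it, the two points needing care are the order-dropping property of commutators (which I would obtain from the explicit monomial formulas above rather than the weaker general bound $[F^i,F^j]\subset F^{i+j-1}$) and the fact that the centre of the Weyl algebra is exactly $\mathbb{C}$ (an element commuting with all $\partial_i$ is free of the $X$'s, and then commuting with all $X_i$ forces it constant). One should also keep the filtration indices aligned, which is why injectivity is asserted only for $k$ large enough that $\Gamma^{k}M$ and $\Gamma^{k-1}M$ are nonzero; since $M\ne0$ and $\bigcup_k\Gamma^kM=M$, this holds for all large $k$, and only large $k$ enter the asymptotic comparison.
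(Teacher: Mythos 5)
The paper does not actually prove this theorem: it is quoted as part of the background on holonomic modules and referred to \cite{coutinho}, so there is no in-paper argument to compare against. Your proposal is the standard proof (Joseph's argument, as in Coutinho's book): a soft upper bound from the image filtration of a surjection $\mathcal{A}_n^{\,r}\twoheadrightarrow M$, and the lower bound via injectivity of $F^k\mathcal{A}_n\to\Hom_\mathbb{C}(\Gamma^kM,\Gamma^{2k}M)$ followed by a degree comparison $\binom{k+2n}{2n}\le P(k)P(2k)$. The structure is right and the two computational ingredients you isolate (the order-dropping commutator identities and the fact that the centre of $\mathcal{A}_n$ is $\mathbb{C}$) are exactly what is needed.

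One point in the injectivity lemma needs tightening. Your induction is on the order $r$ of $a$, and each step trades an operator killing $\Gamma^kM$ for commutators of order $r-1$ killing $\Gamma^{k-1}M$; iterating, the base case is a scalar killing $\Gamma^{k-r}M$, and since $a\in F^k\mathcal{A}_n$ may have order exactly $k$, the descent bottoms out at $\Gamma^{0}M$. Requiring only that $\Gamma^kM$ and $\Gamma^{k-1}M$ be nonzero is therefore not enough: if $\Gamma^0M=0$ the base case concludes nothing. The standard fix is to re-index the good filtration so that $\Gamma^0M\neq 0$ (this shifts $P(k)$ to $P(k+c)$ and changes neither $d(M)$ nor the asymptotic comparison), or equivalently to state the inductive claim as ``if $a\in F^r\mathcal{A}_n$ kills $\Gamma^mM$ and $\Gamma^{m-r}M\neq 0$, then $a=0$'' and apply it with $m=k$. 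With that one adjustment the proof is complete and correct.
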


\medskip

\begin{definition}
A finitely generated left $\mathcal{A}_n$-module is \textit{holonomic} if it is nontrivial and its dimension is $n$.
\end{definition}

\medskip

\begin{example}
$\mathcal{A}_n$ is not a holonomic left $\mathcal{A}_n$-module. However, $M=\mathcal{A}_n/(\mathcal{A}_n\partial_1+\cdots+\mathcal{A}_n\partial_n)$ is a holonomic left $\mathcal{A}_n$-module.
\end{example}

\medskip

\begin{example}[(\textbf{Integrability})] \label{EG:integrable}
Let $p(X_1,X_2)$, $q(X_1,X_2)\in\mathbb{C}[X_1,X_2]$ be polynomials in two variables. Denote their formal partial derivatives by subscripts. Consider the left $\mathcal{A}_2$-module
\[
M=\dfrac{\mathcal{A}_2}{\mathcal{A}_2(\partial_1+p(X_1,X_2))+\mathcal{A}_2(\partial_2+q(X_1,X_2))}.
\]
\begin{itemize}
\item If $p_2(X_1,X_2)\neq q_1(X_1,X_2)$, then $M$ 
is trivial and is therefore not holonomic.
\item If $p_2(X_1,X_2)=q_1(X_1,X_2)$, then $M$ can be analyzed in a way similar to the previous examples, and it turns out that $M$ is holonomic with its (Hilbert) dimension two and multiplicity one in this case as described in Example \ref{E:prime_integrable_eg}.
\end{itemize}
The condition $p_2(X_1,X_2)=q_1(X_1,X_2)$ is known as an \textit{integrability condition}. Integrability conditions give rise to holonomic $\mathcal{A}_n$-modules, which are important because their ``solution space" (see below) has finite $\mathbb{C}$-dimension. Such a $D$-module thus ensembles the basic features of an ODE. The examples of $D$-modules which we study in this article are going to be holonomic.
\end{example}

\subsection{Banach algebra and Poincar\'e-Perron theory} 
In order to study the convergence issue of Bessel's generating functions that are results of the $z$-transforms introduced in the last subsection,  we extend the classical Poincar\'e-Perron theory from sequences of complex numbers  to sequences in Banach algebra \cite{Yosida}. In our applications  we only need to consider sequences in Banach algebra satisfying second order linear difference equations. However, our adaption can clearly be extended to higher order difference equations.
\medskip

\begin{theorem}[(\bf{Poincar\'e-Perron theorem in Banach algebras})] \label{T:PP}
Let ($\mathbb{B},\|\cdot\|$) be a Banach algebra. Suppose that the $a$, $b\in\mathbb{B}$ and $(p_n)$, $(q_n)$ are sequences in $\mathbb{B}$ converging to zero as $n\to\infty$. If the characteristic equation of the associated difference equation
	\begin{equation}\label{E:PP}
		y_{n+2}+ay_{n+1}+by_n=0,
	\end{equation}
of the  difference equation
	\begin{equation}\label{E:PPP}
y_{n+2}+(a+p_n)y_{n+1}+(b+q_n)y_n=0,
	\end{equation}
has roots $\lambda_1,\, \lambda_2$ distinct  in norms, then for each solution $y_n$ to \eqref{E:PPP}, the limit
	\[
		\lim_{n\to\infty} \frac{\|y_{n+1}\|}{\| y_n\|}=\|\lambda_j\|,
	\]holds, for some $j=1,\, 2$.
\end{theorem}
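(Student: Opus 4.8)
The plan is to adapt the classical factorisation proof of Poincar\'e's theorem, replacing moduli of complex numbers by norms in $\mathbb{B}$ and using the gap $\|\lambda_1\|\neq\|\lambda_2\|$ exactly where the classical argument uses $|\lambda_1|\neq|\lambda_2|$. First I would factor the perturbed second-order recursion into two coupled first-order recursions: introduce an auxiliary sequence $z_n:=y_{n+1}-\alpha_n y_n$ and seek $(\alpha_n)$, $(\beta_n)$ in $\mathbb{B}$ with $z_{n+1}=\beta_n z_n$. Substituting $y_{n+2}=-(a+p_n)y_{n+1}-(b+q_n)y_n$ and matching the coefficients of $y_{n+1}$ and $y_n$ forces $\beta_n=(b+q_n)\alpha_n^{-1}$ together with $\beta_n=-(a+p_n)-\alpha_{n+1}$, which combine into the discrete Riccati recursion $\alpha_{n+1}=-(a+p_n)-(b+q_n)\alpha_n^{-1}$ (assuming the $\alpha_n$ remain invertible). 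The fixed points of the limiting map $\phi(\alpha)=-a-b\alpha^{-1}$ satisfy $\alpha^2+a\alpha+b=0$, so they are precisely the roots $\lambda_1,\lambda_2$.

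The technical heart, and what I expect to be the main obstacle, is to show that this Riccati iteration converges, say $\alpha_n\to\lambda_1$ where $\lambda_1$ is the root of larger norm. In the scalar case this is immediate because $\phi'(\lambda_1)=b\lambda_1^{-2}=\lambda_2/\lambda_1$ has modulus $<1$, making $\lambda_1$ attracting; the distinct-moduli hypothesis is exactly what produces the contraction. I would reproduce this in $\mathbb{B}$ by computing the Fr\'echet derivative of $\phi$ at $\lambda_1$, namely $h\mapsto b\,\lambda_1^{-1}h\,\lambda_1^{-1}$, estimating its operator norm, and then running a contraction-and-continuity argument on a small ball about $\lambda_1$; since $p_n,q_n\to0$ the perturbed maps converge to $\phi$, which would pin down $\alpha_n\to\lambda_1$ and hence $\beta_n\to\lambda_2$. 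The delicate point is that, unlike in $\mathbb{C}$, submultiplicativity gives only $\|b\,\lambda_1^{-1}h\,\lambda_1^{-1}\|\le\|\lambda_1\|\,\|\lambda_2\|\,\|\lambda_1^{-1}\|^{2}\,\|h\|$, so a bare gap $\|\lambda_2\|<\|\lambda_1\|$ need not force contraction unless it dominates the condition factor $\|\lambda_1\|\,\|\lambda_1^{-1}\|\ge1$. I expect one must either phrase the gap spectrally (via the resolvent of $\lambda_1$) or restrict to the commutative uniform algebras arising in the Bessel application, where the norm is governed by the Gelfand transform; this is the step I would treat most carefully.

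Once the factorisation is in hand, the conclusion follows by reading off the growth rates of the two first-order equations. From $z_{n+1}=\beta_n z_n$ with $\beta_n\to\lambda_2$ the ratio $\|z_{n+1}\|/\|z_n\|$ is trapped between $1/\|\beta_n^{-1}\|$ and $\|\beta_n\|$, so it tends to $\|\lambda_2\|$; identifying the two bounds with the single value $\|\lambda_2\|$ again invokes the metric structure of $\mathbb{B}$ and is automatic once $\lambda_2$ is scalar or the algebra is a commutative uniform algebra. The remaining equation $y_{n+1}=\alpha_n y_n+z_n$ with $\alpha_n\to\lambda_1$ and $\|\lambda_1\|>\|\lambda_2\|$ then exhibits $y_n$ as a dominant homogeneous part of rate $\|\lambda_1\|$ plus a forced part of rate $\|\lambda_2\|$; according as the initial data do or do not excite the dominant mode, $\lim_n\|y_{n+1}\|/\|y_n\|$ equals $\|\lambda_1\|$ or $\|\lambda_2\|$, which is the asserted dichotomy. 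An alternative organisation I would keep in reserve is to pass to the companion system $Y_{n+1}=A_nY_n$ in $\mathbb{B}^2$ with $A_n\to A$ and invoke a Levinson-type asymptotic theorem for perturbed first-order systems, but the factorisation route is closer to the self-contained spirit of the paper and isolates the spectral-gap hypothesis most transparently.
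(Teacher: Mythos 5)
Your plan is the Perron factorisation route, and you have put your finger on exactly the step that does not survive the passage from $\mathbb{C}$ to a Banach algebra; but having named the obstacle you do not overcome it, so the proposal has a genuine gap. The paper itself offers no help here: its ``proof'' consists of the assertion that the classical argument of Gel'fond or Milne--Thomson goes through ``almost verbatim'' with $\|\cdot\|$ in place of $|\cdot|$, which is precisely the claim your analysis calls into question. The classical proofs use multiplicativity of the absolute value ($|\lambda^{-1}|=|\lambda|^{-1}$, $|\lambda_1\lambda_2|=|b|$) at the two places you identify: the contraction estimate for the Riccati map $\phi(\alpha)=-a-b\alpha^{-1}$ near $\lambda_1$, and the final squeeze $1/\|\beta_n^{-1}\|\le\|z_{n+1}\|/\|z_n\|\le\|\beta_n\|$. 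In a Banach algebra one only has $\|\lambda\|\,\|\lambda^{-1}\|\ge 1$, so neither estimate closes. Your proposed escape --- restricting to commutative uniform algebras --- does not actually work: for the algebra $\mathbb{B}_K$ of the paper's application one has $\|f\|_K\,\|f^{-1}\|_K=\sup_K|f|/\inf_K|f|$, which exceeds $1$ unless $|f|$ is constant on $K$, so the two bounds in your squeeze still converge to different values and the limit $\|y_{n+1}\|/\|y_n\|$ is not pinned down to $\|\lambda_j\|$. (This suggests the clean conclusion should be an upper bound $\limsup_n\|y_{n+1}\|/\|y_n\|\le\|\lambda_1\|$, or a statement about the spectral radius rather than the norm; either would still suffice for the $1$-Gevrey estimate in Theorem \ref{T:Bessel_Gevrey}, which only needs $\|J_{\nu+n}/n!\|_K=O(C^n)$.)

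Two further points you should address before the factorisation can even be set up. First, ``the roots of the characteristic equation'' $\lambda^2+a\lambda+b=0$ is not a well-defined notion in a general (even commutative) Banach algebra; one must either hypothesise a factorisation $\lambda^2+a\lambda+b=(\lambda-\lambda_1)(\lambda-\lambda_2)$ with $\lambda_1\lambda_2=b$, $\lambda_1+\lambda_2=-a$, or work in the concrete algebra at hand. Second, in the paper's actual application the roots are $0$ and $2/x$, so one of them is not invertible: your Riccati recursion $\alpha_{n+1}=-(a+p_n)-(b+q_n)\alpha_n^{-1}$ must be arranged so that the invertible root $\lambda_1=2/x$ sits in the slot that gets inverted (here $b=0$, which in fact helps, since the recursion degenerates to $\alpha_{n+1}=-(a+p_n)-q_n\alpha_n^{-1}$ and one only needs $\alpha_n$ bounded away from non-invertibility, not $\alpha_n^{-1}$ bounded by $\|\lambda_1\|^{-1}$). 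A proof that actually works for the paper's purposes would most likely abandon the exact limit statement, replace it by a one-sided estimate obtained from the Gelfand transform (pointwise Poincar\'e--Perron on each evaluation $x\mapsto y_n(x)$, followed by a uniform-on-compacta bound), and that is a different argument from either your factorisation or the paper's claimed verbatim adaptation.
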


\medskip
Proofs of  Poincar\'e's original statement  can be found from either Gel'fond \cite[\S5.5.2]{Gelfond} or Milne-Thomson \cite[XVII]{MT}. 
The proof of the above theorem is a straightforward adaptation  in the
 proof of Poincar\'e's original statement with $\mathbb{B}$ in place of $\mathbb{C}$.
Since the proof of Poincar\'e's theorem in full generality is long and the its adaption in Banach algebra $\mathbb{B}$ is almost verbatim except for the aforementioned replacement of ``$|\cdot |$\rq\rq{} by ``$\|\cdot \|$\rq\rq{} in all the computation and estimates,
 one can  verify that a shorter proof of Poincar\'e's theorem available for third order linear difference equations
discussed in \cite{MT} can easily be adopted to the Banach algebra $\mathbb{B}$ setting.
 Indeed, there are different adaptations of Poincar\'e-Perron theory in the literature. For example, the author in  \cite{Schafke} considered objects that assume values in certain abelian groups in place of 
 complex numbers. Although the asymptotic of the classical Bessel function $J_{\nu+n}$ as $n\to\infty$ is well-known, a corresponding result for the difference Bessel function $J^\Delta_{\nu+n}$ is unknown and appears highly non-trivial. The adaption of   Poincar\'e's theorem in a Banach algebra setting can easily produce the needed, though rough, asymptotic for the generating function of difference Bessel functions in to be stated in Theorem \ref{T:delta_Bessel_Gevrey}.

%\medskip

\section{Bessel modules and Bessel functions}\label{S:bessel}

In this section, we investigate the generating functions of the Bessel functions. We will construct an important holonomic left $\mathcal{A}_2$-module, called the Bessel module, from which we obtain a pair of PDEs that the generating function satisfy.
\medskip

\begin{definition}[\cite{CCT3}]\label{D:soln}  Let $M, N$ be left $\mathcal{A}_n$-modules. A \textit{solution} of $M$ in $N$ is a left $\mathcal{A}_n$-linear maps from $M$ to $N$. In particular, the set of \textit{all solutions} of $M$ in $N$ is denoted by 
	\[
		\Hom (M,  N).
	\]
\end{definition}

%\rk{
%\begin{definition}[\cite{CCT3} (Remainder Map)] Let $\mathcal{A}$ be equipped with an order. Fix $L, K \in \mathcal{A}$ in which $K $ is a monic first order operator. For each 
%	$s\in F^0 \mathcal{A}$, there exist unique $Q \in \mathcal{A}$ and 
%	$r \in F^0 \mathcal{A}$ such that 
%	$$Ls=QK+r.$$
%end{definition}
%}
\medskip

\begin{theorem}[\cite{CCT3}] Let $\mathcal{D}=\langle X\partial, X\rangle$ be the subalgebra of $\mathcal{A}$ generated by $X\partial$ and $X$. Then there exists an element $S=\sum_{k=0}^{\infty} a_kX^k$  in the $X$-adic completion $\mathbb{C}[[X]]$ so that the map
	\[
		\mathcal{D}/\mathcal{D}L
		\stackrel{\times S}{\longrightarrow}
	\mathcal{D}/\mathcal{D}(X\partial-\lambda)
	\]is left $\mathcal{A}$-linear, i.e., it gives a solution of $\mathcal{D}/\mathcal{D}L$ in $\mathcal{D}/\mathcal{D}(X\partial-\lambda)$.
\end{theorem}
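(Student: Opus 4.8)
The plan is to realise $\times S$ as the Frobenius (indicial) construction in disguise, reducing the existence of $S$ to solving a single scalar recurrence. Throughout I would write $\theta := X\partial$, so that $\mathcal{D}=\langle \theta, X\rangle$ is generated by $\theta$ and $X$ subject to the one relation $[\theta,X]=X$; equivalently $\theta X=X(\theta+1)$, and more generally $f(\theta)\,X^k=X^k f(\theta+k)$ for every polynomial $f$. First I would record the resulting normal form: every element of $\mathcal{D}$ is uniquely $\sum_{k\ge 0}X^k\,p_k(\theta)$ with $p_k\in\mathbb{C}[\theta]$ and only finitely many $p_k\neq 0$, the $X$-adic completion $\overline{\mathcal{D}}$ being obtained by dropping the finiteness requirement. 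In this language I would compute the class of $[X^k]$ in $N:=\mathcal{D}/\mathcal{D}(\theta-\lambda)$ and observe that $\{[X^k]\}_{k\ge 0}$ is a $\mathbb{C}$-basis of $N$ (a coefficient $p_k(\theta)$ survives in the quotient only through its value $p_k(\lambda)$); consequently the infinite combination $[S]=\sum_k a_k[X^k]$ must be read in the completion $\overline{N}=\overline{\mathcal{D}}/\overline{\mathcal{D}}(\theta-\lambda)$.

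Next I would reduce well-definedness to a divisibility condition. Since right multiplication commutes with the left action of the acting algebra $\mathcal{D}$, the map $\times S$ is automatically left $\mathcal{D}$-linear once it is well-defined, so the only thing to verify is that it descends from $M=\mathcal{D}/\mathcal{D}L$: as $M$ is cyclic on $[1]$ with annihilator the left ideal $\mathcal{D}L$, the assignment $[1]\mapsto[S]$ extends to a homomorphism precisely when $L[S]=0$, i.e.\ when $[LS]=0$ in $\overline{N}$, i.e.\ $LS\in\overline{\mathcal{D}}(\theta-\lambda)$. Writing $L=\sum_{j=0}^{d}X^j p_j(\theta)$ and using $p_j(\theta)X^k=X^k p_j(\theta+k)$, I would expand
\[
LS=\sum_{m\ge 0}X^m\,c_m(\theta),\qquad c_m(\theta)=\sum_{j+k=m}a_k\,p_j(\theta+k).
\]
Because multiplying on the right by $(\theta-\lambda)$ preserves the $X$-graded normal form, membership $LS\in\overline{\mathcal{D}}(\theta-\lambda)$ is equivalent to $(\theta-\lambda)\mid c_m(\theta)$ for every $m$, that is, to the scalar equations $c_m(\lambda)=0$.

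Finally I would solve these equations as a recurrence in the $a_k$. Let $p_0$ be the coefficient of $X^0$ in $L$, the indicial polynomial (for the motivating Weyl--Bessel operator $\theta^2+X^2-\nu^2$ this is $\theta^2-\nu^2$). The equation $c_0(\lambda)=a_0\,p_0(\lambda)=0$ forces the choice of $\lambda$ as a root of $p_0$; normalising $a_0=1$, the equation $c_m(\lambda)=0$ reads
\[
a_m\,p_0(\lambda+m)=-\sum_{j=1}^{\min(m,d)}a_{m-j}\,p_j(\lambda+m-j),
\]
which determines $a_m$ uniquely provided $p_0(\lambda+m)\neq 0$ for every integer $m\ge 1$. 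I expect the main (and essentially the only genuine) obstacle to be exactly this hypothesis: the construction needs $\lambda$ to be an exponent of $L$ at which no positive-integer shift $\lambda+m$ is again a root of $p_0$ — the classical non-resonance condition of Frobenius theory, under which the indicial equation carries no logarithmic coupling. Granting it, the recurrence produces $S=\sum_{k\ge 0}a_k X^k\in\mathbb{C}[[X]]$, and the reduction above shows that $\times S$ is a well-defined left $\mathcal{D}$-linear map, i.e.\ a solution of $\mathcal{D}/\mathcal{D}L$ in (the completion of) $\mathcal{D}/\mathcal{D}(X\partial-\lambda)$, as claimed.
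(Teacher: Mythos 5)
Your proposal is correct and follows essentially the same route as the paper: the formal proof is deferred to \cite{CCT3}, but the paper carries out exactly your normal-form-plus-recurrence computation in its worked examples (the Weyl exponential, the Weyl sine and cosine, and the Weyl Bessel of Example \ref{Eg:bessel_map}), where $LS\equiv 0 \bmod \mathcal{D}(X\partial-\lambda)$ is reduced to the same coefficient recurrence driven by the indicial polynomial. You also correctly isolate the non-resonance condition $p_0(\lambda+m)\neq 0$ for $m\geq 1$, which the theorem statement leaves implicit and which surfaces in the paper as the standing hypothesis that $2\nu$ is not an integer in the Bessel example.
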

\medskip

The element $S$ in the above theorem is a Weyl-algebraic analogue of a \textit{Frobenius series expansion} of a solution of $L$. 
In fact, each left $\mathcal{A}$-linear map $\mathcal{A}/\mathcal{A}L \longrightarrow \mathcal{A}/\mathcal{A}K$ induces a map between solution sets 
	\[
		\Hom (\mathcal{A}/\mathcal{A}K,  N)\longrightarrow
		\Hom (\mathcal{A}/\mathcal{A}L,  N).
	\]
Thus finding solutions of a differential operator $L$ in terms of solutions of another differential operator $K$ is equivalent to identifying an $\mathcal{A}$-linear map 
	\[
		\mathcal{A}/\mathcal{A}L
		\stackrel{\times S }{\longrightarrow}
		\mathcal{A}/\mathcal{A}K.
	\]
 
For instance, finding ``power series solutions" of a differential operator $L$ is equivalent to a left $\mathcal{A}$-linear map $\mathcal{A}/\mathcal{A}L\to\mathcal{A}/\mathcal{A}\partial$; finding a ``Frobenius series solution" of $L$ is equivalent to a left $\mathcal{A}$-linear map $\mathcal{A}/\mathcal{A}L\to\mathcal{A}/\mathcal{A}(X\partial-\nu)$; and so on. We refer to \cite{CCT3} for the details. We illustrate the above discussion with the following examples that will be needed for the rest of this paper.
\medskip

\subsection{Weyl series solutions}\label{SS:d-mod}

\begin{example}[(\textbf{Weyl-exponential})]\label{Eg:exp}
	Let $a\in \mathbb{C}$.  A solution of $L:=\partial-a$, i.e. a solution of $\mathcal{A}/\mathcal{A}(\partial-a)$ in any other $D$-module, is called a \textit{Weyl-exponential}. In this example, we consider solutions of $\mathcal{A}/\mathcal{A}(\partial-a)$ in $\mathcal{A}/\mathcal{A}\partial$, that is, we look for solutions as ``power series\rq\rq{} of $X$, which are called \textit{Weyl power series} to the Weyl exponential.  For this purpose we compute the element $S=\sum_k c_kX^k$ which gives rise to a 
left $\mathcal{A}$-linear map
	\[
		\mathcal{A}/\mathcal{A}(\partial-a)
		\stackrel{\times S }{\longrightarrow}
		\overline{\mathcal{A}/\mathcal{A}\partial}
	\]
guaranteed by the study in \cite[\S4.1]{CCT3}. For each $k\ge 1$, since $\partial X^k=X^k \partial+k X^{k-1}$, we have
that 
%$\sum_{k=0}^\infty c_k X^k$ is a solution (map), then
	\[
		\begin{split}
		0= (\partial-a )\left(\sum_{k=0}^\infty  c_k X^k\right) 
		 &= \sum_{k=0}^\infty c_k X^k \partial+\sum_{k=1}^\infty c_k k X^{k-1}-a\sum_{k=0}^\infty c_k X^k\\
	 	&= \sum_{k=0}^\infty (c_{k+1}(k+1)-a c_k) X^k. \quad \qquad \mod \mathcal{A}\partial
		\end{split}
	\]
The above formula gives recurrence of $c_k$ from the relation
	\[
		kc_k -ac_{k-1}=0,\qquad k\ge 1.
	\]
By choosing $c_0=1,$ we obtain $c_{k}=a^k/k!$. Thus we denote
	\begin{equation}\label{E:exp}
		S(X)=\E(aX):=\sum_{k=0}^{\infty} \frac{a^k}{k!} X^k
	\end{equation}
to be the \textit{Weyl exponential series}.
\end{example}
\medskip

\begin{example}\label{Eg:trigo}
(\textbf{Weyl Sine and Cosine}) The \textit{Weyl-sine} and the \textit{Weyl-cosine} are solutions of $L:=\partial^2+1$, i.e., solutions of the $D$-module $\mathcal{A}/\mathcal{A}(\partial^2+1)$. To express them as ``Weyl power series", i.e., in terms of powers of $X$, we again let $K=\partial$,  and compute the element $S =\sum c_{k} X^k$ which yields the left $\mathcal{A}$
	\begin{equation}\label{E:X^2+1}
		\mathcal{A}/\mathcal{A}(\partial^2+1)
		\stackrel{\times S }{\longrightarrow}
		\overline{\mathcal{A}/\mathcal{A}\partial},
	\end{equation}
which are solutions of $\mathcal{A}/\mathcal{A}(\partial^2+1)$ in  $\mathcal{A}/\mathcal{A}\partial$.
	Since $\partial^2+1=(\partial-i)(\partial+i)$, we apply Example \ref{Eg:exp} with the choice $a=-i$ from the Example \ref{Eg:exp}. Then it follows immediately that
	\[
		(\partial-i)(\partial+i) \E(-iX)=0\qquad \mod \partial
	\]
	and since 	$\partial^2+1=(\partial+i)(\partial-i)$ so that 
	\[
		(\partial+i)(\partial-i) \E(iX)=0\qquad \mod \partial,
	\]where we have chosen $a=i$ from the Example \ref{Eg:exp}. Thus both $\E(iX)$ and $\E(-iX)$ in $\mathcal{A}$ are possible choices of $S$ in \eqref{E:X^2+1}. Hence both 
	\begin{equation}\label{E:sine-map}
		\Sin (X):=\frac{1}{2i}\big(\E(iX)-\E(-iX)\big)
	\end{equation}
	and 
	\begin{equation}\label{E:cosine-map}
		\Cos (X):=\frac{1}{2}\big(\E(iX)+\E(-iX)\big)
	\end{equation}
	are different choices of elements of $\mathcal{A}$, called the \textit{Weyl-Sine and Cosine series}, that can also serve for the $S$ in \eqref{E:X^2+1}. 
corresponding 

 Alternatively, we may compute the Weyl Sine and Cosine series $S =\sum c_{k} X^k$ \textit{directly} by
	\[
		\begin{split}
		0&=(\partial-i)(\partial+i)\sum_{k=0}^\infty c_{k} X^k\\
		&=\sum_{k=0}^\infty c_{k} X^k \partial^2+2\sum_{k=1}^\infty c_{k} kX^{k-1}\partial
	+\sum_{k=2}^\infty c_{k}k(k-1) X^{k-2}+\sum_{k=0}^\infty c_{k} X^k\\
	&=\sum_{k=0}^\infty c_{k+2}(k+1)(k+2) X^{k}+\sum_{k=0}^\infty c_{k} X^k.\qquad\mod\partial
		\end{split}
	\]That is,
	\[
		c_{k+2}=\frac{-c_{k}}{(k+1)(k+2)},\quad k\ge 0.
	\]
	Thus, if $c_0=0, c_{1}=1$, we obtain Weyl Sine series
	$$\Sin X=X-\frac{X^3}{3!}+\frac{X^5}{5!}-\cdots+
	\frac{(-1)^{k+1}}{(2k-1)!}X^{2k-1}+\cdots.$$
	If $c_0=1, c_{1}=0$, we obtain Weyl (power) Cosine series
	$$\Cos X=1-\frac{X^2}{2!}+\frac{X^4}{4!}-\cdots+
	\frac{(-1)^{k}}{(2k)!}X^{2k}+\cdots.$$
Hence they serve as $S$ with ``appropriately imposed initial conditions".
\end{example} 
%\medskip
\medskip

We now introduce one of the main subjects of study in this paper.

\begin{example}[(\textbf{Weyl Bessel of order $\nu$})]\label{Eg:bessel_map} Let $\nu\in\mathbb{C}$ such that $2\nu$ is not an integer, and let
	\begin{equation}\label{E:bessel-eqn-map}
		L=(X\partial)^2+X^2-\nu^2.
	\end{equation}
Let $\mathcal{D}=\langle X\partial, X\rangle$ be the subalgebra of $\mathcal{A}$ generated by $X\partial $ and $X$. Then the \textit{Weyl Bessel of order $\nu$} is a solution of $L$ in a left $\mathcal{D}$-module $N$ , i.e., a solution of $\mathcal{D}/\mathcal{D}L$ in $N$. To find the ``Frobenius series expansion" of this Weyl Bessel, we look for a solution of $\mathcal{D}/\mathcal{D}L$ in the $D$-module $\overline{\mathcal{D}/\mathcal{D}(X\partial-\nu)}$, i.e., we find the element $S=\sum_{k=0}^\infty c_kX^k$ that gives the left $\mathcal{A}$-linear map
	\begin{equation}\label{E:bessel_map}
		\mathcal{D}/\mathcal{D}L
		\stackrel{\times S}{\longrightarrow}
	\overline{\mathcal{D}/\mathcal{D}(X\partial-\nu)}.
	\end{equation}
	Since
	\[
		(X\partial+\nu)(X\partial-\nu)X^k
		=X^k[(X\partial+\nu)+2k](X\partial-\nu)+(2\nu k+k^2)X^k,
	\]holds for every $k\ge 0$ (as well as trivially when $k=0$), we have
	\[
		\begin{split}
		0&=L\sum_{k=0}^\infty c_{k} X^k = \sum_{k=0}^\infty c_{k}[(X\partial+\nu)(X\partial-\nu)+X^2]X^k\\
		&=\sum_{k=0}^\infty  c_k  X^k[(X\partial+\nu)+2k](X\partial-\nu) + \sum_{k=1}^\infty{c_kk(2\nu +k)X^k}+\sum_{k=0}^\infty c_k X^{k+2} \\
		&=\sum _{k=1}^\infty c_k  k(2\nu+k)X^k+\sum _{k=0}^\infty c_kX^{k+2} \hfill \mod X\partial-\nu\\
		&= c_1(1+2\nu)X+\sum _{k=2}^\infty (c_k k (2\nu+k)+ c_{k-2})X^{k}, \hfill \mod X\partial-\nu
		\end{split}
	\]which implies that $c_1=c_3=c_5=\cdots= 0$, and by choosing $c_0 \Gamma(\nu+1)=2^{-\nu},$ we must have
	\[
		c_{2k}=\frac{(-1)^k }{2^{\nu+2k} k!\Gamma(\nu+k+1)},\quad k\ge 0.
	\]This shows that
	\[
		S=\sum_{k=0}^{\infty} \frac{(-1)^kX^{2k}}{2^{\nu+2k}k!\Gamma(\nu+k+1)}
	\]
can be chosen as such a Weyl series $S$ in \eqref{E:bessel_map} that serves as a solution for \eqref{E:bessel_map}. We denote this specific Weyl Bessel series by $\mathcal{J}_\nu$ as solution in $\mathcal{D}/\mathcal{D}(X\partial-\nu)$, or more generally, in a left $\mathcal{A}$-module $N$, which can be considered as the Weyl algebraic version of the classical Bessel $J_\nu$. We caution that the reader not to confuse the series form of $\mathcal{J}_\nu$ in contrast to the original Bessel function of order $J_\nu$ since there is no factor ``$X^\nu$" that would appear there unless when the $\nu$ reduces to an integer.
%We also denote the general left $\mathcal{D}$-linear map in \eqref{E:bessel_map} by the notation $\mathfrak{C}_\nu$ which serves as the Weyl algebraic version of the general solution in the left $\mathcal{D}$-module $\mathcal{O}$, or more generally in a left $\mathcal{A}$-module, to the classical notation $\mathscr{C}_\nu$.
\end{example}
\medskip

\subsection{Transmutation formulae}\label{SS:transmutation_1}

After defining the Weyl Bessel as a solution of $L=L_\nu:=(X\partial)^2+X^2-\nu^2$ in the last subsection, we observe the following transmutation formulae for $L$. They can be verified directly, but they do not appear to be found immediately amongst the formulae in Derezi\'nski and Majewski \cite{Derezinski2014,Derezinski2020,Der_Maj}\footnote{Please see \S\ref{S:conclusion} Conclusion for discussion related to the work of Infeld and Hull \cite{Infeld_Hull_1951}.}. As we shall see that they are instrumental in the development of Bessel module to be defined  in the next subsection.
\medskip

\begin{proposition}\label{P:bessel_transmutation}
For each $\nu\in\mathbb{C}$, we have
\begin{align}
	[(X\partial)^2+X^2-(\nu-1)^2]\left(\partial+\dfrac{\nu}{X}\right)&=\left(\partial+\dfrac{\nu-2}{X}\right)[(X\partial)^2+X^2-\nu^2],\label{E:bessel_transmutation_1}\\
[(X\partial)^2+X^2-(\nu+1)^2]\left(\partial-\dfrac{\nu}{X}\right)&=\left(\partial-\dfrac{\nu+2}{X}\right)[(X\partial)^2+X^2-\nu^2],\label{E:bessel_transmutation_2}
\end{align}
%Here the second transmutation formula above can be obtained from the first one by changing the signs of $\nu$. We immediately deduce from these transmutation formulae the following left $\mathcal{A}$-linear maps.
which induce the following left $\mathcal{A}$-linear maps
\[
\begin{array}{rcl}\mathcal{A}/\mathcal{A}((X\partial)^2+X^2-(\nu-1)^2)&\stackrel{\times(\partial+\frac{\nu}{X})}{\longrightarrow}&\mathcal{A}/\mathcal{A}((X\partial)^2+X^2-\nu^2),\\
\mathcal{A}/\mathcal{A}((X\partial)^2+X^2-(\nu+1)^2)&\stackrel{\times(\partial-\frac{\nu}{X})}{\longrightarrow}&\mathcal{A}/\mathcal{A}((X\partial)^2+X^2-\nu^2).
\end{array}
\]
\end{proposition}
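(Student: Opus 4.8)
The plan is to verify both identities by direct computation in the localized Weyl algebra $\mathcal{A}(1/X)$, exploiting the Euler operator $\vartheta := X\partial$, which spans a commutative subalgebra and interacts with powers of $X$ through the single relation $\vartheta X = X(\vartheta+1)$. The observation that streamlines everything is that the first-order factors collapse: since $\partial = \frac{1}{X}\vartheta$ in $\mathcal{A}(1/X)$, one has
\[
	\partial + \frac{\nu}{X} = \frac{1}{X}(\vartheta+\nu), \qquad \partial + \frac{\nu-2}{X} = \frac{1}{X}(\vartheta+\nu-2),
\]
and likewise $\partial - \frac{\nu}{X} = \frac{1}{X}(\vartheta - \nu)$ and $\partial - \frac{\nu+2}{X} = \frac{1}{X}(\vartheta-\nu-2)$. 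Writing $L_\mu = \vartheta^2 + X^2 - \mu^2$, the first claimed identity \eqref{E:bessel_transmutation_1} becomes the assertion that $L_{\nu-1}\cdot \frac{1}{X}(\vartheta+\nu) = \frac{1}{X}(\vartheta+\nu-2)\cdot L_\nu$ in $\mathcal{A}(1/X)$.

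Next I would push every occurrence of $X^{\pm 1}$ to the left using the intertwining relations $\vartheta X^k = X^k(\vartheta+k)$ and $\vartheta\, \frac{1}{X} = \frac{1}{X}(\vartheta-1)$, so that each side is rewritten as $\frac{1}{X}$ times a polynomial in $\vartheta$, together with a correction arising from the $X^2$ summand. Concretely $L_{\nu-1}\frac{1}{X} = \frac{1}{X}\big[(\vartheta-1)^2 - (\nu-1)^2\big] + X$, whereas on the right $(\vartheta+\nu-2)X^2 = X^2(\vartheta+\nu)$. The two resulting $X(\vartheta+\nu)$ terms cancel between the sides, and the identity collapses to the purely $\vartheta$-polynomial statement
\[
	\big[(\vartheta-1)^2 - (\nu-1)^2\big](\vartheta+\nu) = (\vartheta+\nu-2)(\vartheta^2 - \nu^2).
\]
Because every factor is a polynomial in the single operator $\vartheta$ they commute, and factoring the differences of squares as $(\vartheta-1)^2-(\nu-1)^2 = (\vartheta-\nu)(\vartheta+\nu-2)$ and $\vartheta^2-\nu^2 = (\vartheta-\nu)(\vartheta+\nu)$ exhibits both sides as $(\vartheta-\nu)(\vartheta+\nu-2)(\vartheta+\nu)$, equal after the reordering that commutativity permits.

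The second identity \eqref{E:bessel_transmutation_2} is treated identically, or deduced from the first by the substitution $\nu\mapsto-\nu$: after cancelling the matching $X(\vartheta-\nu)$ terms it reduces to $\big[(\vartheta-1)^2-(\nu+1)^2\big](\vartheta-\nu) = (\vartheta-\nu-2)(\vartheta^2-\nu^2)$, and the factorization $(\vartheta-1)^2-(\nu+1)^2 = (\vartheta-\nu-2)(\vartheta+\nu)$ again forces agreement by commutativity. The induced left $\mathcal{A}$-linear maps then follow formally: an operator identity of the shape $L_{\nu-1}P = Q\,L_\nu$ says that right multiplication by $P$ sends $\mathcal{A}L_{\nu-1}$ into $\mathcal{A}L_\nu$, hence descends to a well-defined map on the quotients $\mathcal{A}/\mathcal{A}L_{\nu-1}\to\mathcal{A}/\mathcal{A}L_\nu$, which is left $\mathcal{A}$-linear because left multiplications commute with right multiplications. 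The only step demanding genuine care is the bookkeeping of the non-commutation of $\vartheta$ with $X$; the principal obstacle is conceptual rather than computational, namely to recognize that once all powers of $X$ are cleared to a common position the surviving expression lies in the commutative ring $\mathbb{C}[\vartheta]$, collapsing an apparent Weyl-algebra identity to an elementary polynomial factorization.
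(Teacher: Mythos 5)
Your proof is correct. The paper itself offers no argument for Proposition \ref{P:bessel_transmutation} beyond the remark that the formulae ``can be verified directly,'' and your computation is exactly such a direct verification, organized efficiently: writing the first-order factors as $\frac{1}{X}(\vartheta\pm\nu)$ with $\vartheta=X\partial$, checking that the $X(\vartheta\pm\nu)$ contributions from the $X^2$ summand cancel between the two sides, and reducing the remainder to the commutative factorizations $(\vartheta-1)^2-(\nu\mp1)^2=(\vartheta\mp\nu)(\vartheta\pm\nu-2)$ in $\mathbb{C}[\vartheta]$. The passage from the operator identities to well-defined left $\mathcal{A}$-linear maps on the quotients is also handled correctly (with the understood caveat, consistent with the paper's own conventions, that the computation takes place in the localization $\mathcal{A}(1/X)$).
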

\medskip

%Here the second transmutation formula above can be obtained from the first one by changing the signs of $\nu$. We immediately deduce from these transmutation formulae the following left $\mathcal{A}$-linear maps.
\begin{corollary}\label{C:bessel_formulae}
Let  $\nu\in\mathbb{C}$, the classical Bessel functions satisfy the following recurrence relations.
	\begin{align}
		J'_\nu(x)+\dfrac{\nu J_\nu(x)}{x}&=J_{\nu-1}(x),\label{E:bessel_formula_1}\\
		J'_\nu(x)-\dfrac{\nu J_\nu(x)}{x}&=-J_{\nu+1}(x).\label{E:bessel_formula_2}
	\end{align}
\end{corollary}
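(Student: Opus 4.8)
The plan is to read the two transmutation identities of Proposition~\ref{P:bessel_transmutation} in the classical manifestation $\mathcal{O}_d$ of Example~\ref{Eg:O_deleted_d}, where $\partial f=f'$ and $Xf=xf$. Under this manifestation the Weyl Bessel operator $L_\nu=(X\partial)^2+X^2-\nu^2$ becomes the classical Bessel operator $x^2\tfrac{d^2}{dx^2}+x\tfrac{d}{dx}+(x^2-\nu^2)$, so the classical Bessel function $J_\nu$ lies in the kernel of $L_\nu$, i.e. $L_\nu J_\nu=0$. The corollary should then drop out of the proposition by evaluating the induced $\mathcal{A}$-linear maps on this distinguished kernel element.

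First I would apply the identity \eqref{E:bessel_transmutation_1} to $J_\nu$. Since its right-hand side factors through $L_\nu$, it annihilates $J_\nu$, and the left-hand side then forces $L_{\nu-1}\bigl(J'_\nu+\tfrac{\nu}{x}J_\nu\bigr)=0$; that is, $J'_\nu+\tfrac{\nu}{x}J_\nu$ solves the Bessel equation of order $\nu-1$. The same argument applied to \eqref{E:bessel_transmutation_2} shows $J'_\nu-\tfrac{\nu}{x}J_\nu$ solves the Bessel equation of order $\nu+1$. In the language of the paper, right-multiplication by $\partial+\tfrac{\nu}{X}$ (respectively $\partial-\tfrac{\nu}{X}$) induces the contravariant map $\Hom(\mathcal{A}/\mathcal{A}L_\nu,\mathcal{O}_d)\to\Hom(\mathcal{A}/\mathcal{A}L_{\nu-1},\mathcal{O}_d)$ sending the solution $J_\nu$ to a solution of the shifted order.

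The main obstacle is that each combination is so far only shown to be \emph{a} solution of a second-order equation with a two-dimensional solution space; I must pin it down as the specific solution $J_{\nu-1}$, respectively $-J_{\nu+1}$. I would resolve this by comparing Frobenius/indicial behaviour as $x\to0$: for non-integer order the two independent solutions of the order-$\mu$ equation behave like $x^{\mu}$ and $x^{-\mu}$, and using $J_\nu(x)\sim\tfrac{1}{\Gamma(\nu+1)}(x/2)^\nu$ one checks that $J'_\nu+\tfrac{\nu}{x}J_\nu\sim\tfrac{1}{\Gamma(\nu)}(x/2)^{\nu-1}$, which carries no $x^{-(\nu-1)}$ component and matches $J_{\nu-1}$ exactly, giving \eqref{E:bessel_formula_1}. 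For \eqref{E:bessel_formula_2} the leading $x^{\nu-1}$ contributions cancel, so I would instead match the coefficient of the surviving $x^{\nu+1}$ term, which supplies the minus sign.

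Alternatively, and most cleanly for \emph{all} $\nu$ simultaneously (including integer and half-integer orders, where the dimension argument above must be amended), I would bypass the solution-space count entirely and verify both recurrences by termwise differentiation of the defining series $J_\nu(x)=\sum_{k\ge0}\tfrac{(-1)^k}{k!\,\Gamma(\nu+k+1)}(x/2)^{\nu+2k}$. In the first case the factor $\nu+k$ produced by $J'_\nu+\tfrac{\nu}{x}J_\nu$ cancels a $\Gamma(\nu+k+1)=(\nu+k)\Gamma(\nu+k)$ and reindexes the series directly to $J_{\nu-1}$; in the second case the factor $k$ from $J'_\nu-\tfrac{\nu}{x}J_\nu$ kills the $k=0$ term and the shift $k\mapsto k-1$ produces $-J_{\nu+1}$. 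I expect the transmutation route to be conceptually preferable in keeping with the $D$-module philosophy of the paper, with the series computation serving to fix the constant and sign that the abstract argument leaves undetermined.
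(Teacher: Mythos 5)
Your proposal follows essentially the same route as the paper: apply the transmutation formulae of Proposition~\ref{P:bessel_transmutation} in the manifestation $\mathcal{O}_d$ to conclude that $J'_\nu\pm\frac{\nu}{x}J_\nu$ is annihilated by the shifted Bessel operator, hence equals $CJ_{\nu\mp1}+DJ_{-\nu\pm1}$, and then fix $C$ and $D$ from the asymptotic behaviour as $x\to0^+$. You are in fact slightly more careful than the paper on two points it glosses over — the cancellation of the leading $x^{\nu-1}$ terms in the second identity (forcing a comparison at order $x^{\nu+1}$) and the degenerate integer-order case where the two-dimensional solution-space argument needs amending — and your series-verification fallback is precisely the "classical proof" the paper says it is offering an alternative to.
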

\medskip

\begin{proof} This proof serves to provide an alternative to the classical proof found in most literature. 
It is a direct consequence from the previous Proposition with the $D$-module structure on $\mathcal{O}$ introduced from the Example \ref{Eg:O_deleted_d}. Let $J_\nu$ be the classical Bessel function to $L_\nu$ with the asymptotic behaviour $\lim_{x\to 0^+}{J_\nu(x)}/{x^\nu}={1}/{2^\nu\Gamma(\nu+1)}$ \cite[p. 43]{Watson1944}. Then one applies the both sides of \eqref{E:bessel_transmutation_1}.  It implies that  $(\partial+\nu/X)J_\nu=CJ_{\nu-1}+DJ_{-\nu+1}$ holds for some constants $C, D$. One can easily verify that the $C=1,\ D=0$ from the asymptotic  $J^\prime_\nu(x)/x^{\nu-1}\sim 1/2^{\nu-1}\Gamma(\nu)$ \cite[p. 43]{Watson1944}. This verifies the \eqref{E:bessel_formula_1}. The verification of \eqref{E:bessel_formula_2} is similar.

%the choice that $\mathscr{C}_\nu=J_\nu$, where the $J_\nu$ is the classically defined as the solution of $(X\partial)^2+X^2-\nu^2$ with $\partial f(x)=f^\prime(x)$ and $Xf(x)=xf(x)$. But then the  $J_\nu(x)$ has asymptotic $\lim_{x\to 0^+}{J_\nu(x)}/{x^\nu}={1}/{2^\nu\Gamma(\nu+1)}$.
\end{proof}
\medskip

\begin{corollary}\label{C:Bessel_trans}
Let $\nu\in\mathbb{C}$ and a left $\mathcal{D}$-module $N$ be given. Then there exists a sequence $\mathrm{(}\mathfrak{J}_{\nu+n}\mathrm{)}$ in $N^\mathbb{Z}$ and for each $n$ the $\mathfrak{J}_{\nu+n}$ is a solution to $L_n=(X\partial)^2+X^2-(\nu+n)^2$ in $N$, such that we have
	\begin{align}
		X\partial\, \mathfrak{J}_{\nu+n} +(\nu+n) \mathfrak{J}_{\nu+n} - X\mathfrak{J}_{\nu+n-1}=0,\label{E:Bessel_trans_1}
		\\
		X\partial\, \mathfrak{J}_{\nu+n} -(\nu+n) \mathfrak{J}_{\nu+n} + X\mathfrak{J}_{\nu+n+1}=0.\label{E:Bessel_trans_2}
	\end{align}
\end{corollary}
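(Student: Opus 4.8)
The plan is to construct the sequence $(\mathfrak{J}_{\nu+n})_{n\in\mathbb{Z}}$ by repeatedly applying the two first‑order ``ladder'' operators $\partial+\frac{\mu}{X}$ and $\partial-\frac{\mu}{X}$ that occur in the transmutation formulae of Proposition~\ref{P:bessel_transmutation}, and then to read off \eqref{E:Bessel_trans_1}--\eqref{E:Bessel_trans_2} from the two elementary identities
\[
X\partial + \mu = X\Big(\partial + \frac{\mu}{X}\Big), \qquad X\partial - \mu = X\Big(\partial - \frac{\mu}{X}\Big),
\]
valid in $\mathcal{A}(1/X)$ for every $\mu\in\mathbb{C}$. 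These are exactly what convert the desired recurrences, which involve only the $\mathcal{D}$‑operators $X\partial$ and $X$, into ladder relations, and they reduce in the manifestation $\mathcal{O}_d$ of Example~\ref{Eg:O_deleted_d} to Corollary~\ref{C:bessel_formulae}.

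First I would fix one solution $\mathfrak{J}_\nu$ of $L_\nu=(X\partial)^2+X^2-\nu^2$ in $N$ (for instance the image of the Weyl Bessel $\mathcal{J}_\nu$ of Example~\ref{Eg:bessel_map}) and define the remaining terms inductively, raising upward and lowering downward:
\[
\mathfrak{J}_{\nu+n+1} := -\Big(\partial - \tfrac{\nu+n}{X}\Big)\mathfrak{J}_{\nu+n}\ (n\ge 0), \qquad \mathfrak{J}_{\nu+n-1} := \Big(\partial + \tfrac{\nu+n}{X}\Big)\mathfrak{J}_{\nu+n}\ (n\le 0).
\]
Substituting $\nu\mapsto\nu+n$ in \eqref{E:bessel_transmutation_1}--\eqref{E:bessel_transmutation_2} shows that each operator sends a solution of $L_{\nu+n}$ to a solution of $L_{\nu+n\mp1}$; hence by induction every $\mathfrak{J}_{\nu+n}$ is a solution of $L_{\nu+n}$, as required. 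The signs are chosen so that the relations match Corollary~\ref{C:bessel_formulae}.

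With these definitions one direction of each recurrence is immediate. Using the first identity above, $(X\partial+(\nu+n))\mathfrak{J}_{\nu+n}=X(\partial+\frac{\nu+n}{X})\mathfrak{J}_{\nu+n}=X\mathfrak{J}_{\nu+n-1}$ holds by construction whenever $\mathfrak{J}_{\nu+n-1}$ was obtained by lowering, and likewise $(X\partial-(\nu+n))\mathfrak{J}_{\nu+n}=-X\mathfrak{J}_{\nu+n+1}$ holds whenever $\mathfrak{J}_{\nu+n+1}$ was obtained by raising. What remains is the \emph{consistency} of the two constructions: that lowering undoes raising, and conversely, on solutions. This I would reduce to the single second‑order identity
\[
X^2\Big(\partial + \tfrac{\mu+1}{X}\Big)\Big(\partial - \tfrac{\mu}{X}\Big) = L_\mu - X^2 = X^2\Big(\partial - \tfrac{\mu-1}{X}\Big)\Big(\partial + \tfrac{\mu}{X}\Big),
\]
obtained by expanding in $\mathcal{A}(1/X)$ with $[\partial,X^{-1}]=-X^{-2}$. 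Applied to a solution $\mathfrak{J}_\mu$ it gives $X^2(\partial+\frac{\mu+1}{X})(\partial-\frac{\mu}{X})\mathfrak{J}_\mu=-X^2\mathfrak{J}_\mu$, whence $(\partial+\frac{\mu+1}{X})(\partial-\frac{\mu}{X})\mathfrak{J}_\mu=-\mathfrak{J}_\mu$; this says precisely that lowering the raised term $\mathfrak{J}_{\mu+1}$ returns $\mathfrak{J}_\mu$, and the twin identity gives the reverse. Feeding this back closes the remaining index ranges in both \eqref{E:Bessel_trans_1} and \eqref{E:Bessel_trans_2}.

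The step I expect to be the main obstacle is the cancellation of $X^2$ in the previous paragraph: the ladder operators genuinely involve $1/X$, so to pass from $X^2(\cdots)\mathfrak{J}_\mu=-X^2\mathfrak{J}_\mu$ to $(\cdots)\mathfrak{J}_\mu=-\mathfrak{J}_\mu$ I must know that $X$ acts injectively on the relevant elements. The clean route is to run the construction over $\mathcal{A}(1/X)$, where $X$ is invertible, and then to check that the outputs are nevertheless honest elements of $N$ because the apparent poles at the origin cancel on solutions (exactly as in Corollary~\ref{C:bessel_formulae}, where $\partial+\frac{\nu}{X}$ carries $J_\nu$ to $J_{\nu-1}$ with no singularity). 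A secondary point to watch is the degenerate range $2\nu\in\mathbb{Z}$ excluded in Example~\ref{Eg:bessel_map}: the transmutation formulae of Proposition~\ref{P:bessel_transmutation} hold for all $\nu\in\mathbb{C}$, so the construction and the recurrences go through verbatim, but one should confirm that a starting solution $\mathfrak{J}_\nu$ is still available there.
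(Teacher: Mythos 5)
Your proposal is correct and rests on the same key lemma as the paper, namely the transmutation formulae of Proposition~\ref{P:bessel_transmutation}, but it is substantially more complete than the paper's own argument. The paper's proof simply observes that $(\partial+\tfrac{\nu+n}{X})\mathfrak{J}_{\nu+n}$ is annihilated by $L_{\nu+n-1}$ and then asserts it ``must be'' $\pm X\mathfrak{J}_{\nu+n-1}$, fixing the sign by analogy with the classical $J_\nu$; in a general left $\mathcal{D}$-module $N$ this does not by itself pin the element down, and it leaves unaddressed whether a \emph{single} sequence satisfies both \eqref{E:Bessel_trans_1} and \eqref{E:Bessel_trans_2} simultaneously. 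You resolve exactly this by constructing the sequence from a seed $\mathfrak{J}_\nu$ via the raising and lowering operators and then proving compatibility through the factorization
\[
X^2\Bigl(\partial+\tfrac{\mu+1}{X}\Bigr)\Bigl(\partial-\tfrac{\mu}{X}\Bigr)=(X\partial)^2-\mu^2=L_\mu-X^2,
\]
which I have checked and which is correct (expand using $[\partial,X^{-1}]=-X^{-2}$ and $(X\partial)^2=X^2\partial^2+X\partial$). Your flagged obstacle --- cancelling the $X^2$, hence needing $X$ to act injectively or passing to $\mathcal{A}(1/X)$ --- is a genuine issue, but it is equally present (and unacknowledged) in the paper's proof, which also writes $(\partial+\nu/X)\mathfrak{J}_{\nu+n}=(X\partial+\nu)/X\,(X\mathfrak{J}_{\nu+n})$; so your treatment is, if anything, the more honest one. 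In short: same ladder-operator route, but you supply the construction of the sequence and the consistency check that the paper omits.
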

\medskip

\begin{remark} Reader are cautioned not to confuse the notation $\mathfrak{J}_\nu$ introduced in the last Corollary with notation $\mathcal{J}_\nu$ that denotes the Weyl-Bessel series introduced in Example \ref{Eg:bessel_map} that is defined as a solution with respect to the \eqref{E:bessel_map}.

\end{remark}
\medskip

\begin{proof} Recall that $L_\nu=(X\partial)^2+X^2-\nu^2$. Let us replace $\nu$ by $\nu+n$ in \eqref{E:bessel_transmutation_1}. Then it is clear that the right-side of \eqref{E:bessel_transmutation_1} annihilates $\mathfrak{J}_{\nu+n}$.  However the expression  $(\partial+\nu/X)\mathfrak{J}_{\nu+n}=(X\partial+\nu)/X\, (X\mathfrak{J}_\nu$) on the left-side must be annihilated by $L_{\nu+n-1}$. Hence $X\partial\, \mathfrak{J}_{\nu+n} +(\nu+n) \mathfrak{J}_{\nu+n} = \pm X\mathfrak{J}_{\nu+n-1}$. This gives the \eqref{E:Bessel_trans_1} except for the ``$\mp$" in front of the third term $ X\mathfrak{J}_{\nu+n-1}$. We have chosen the minus sign over the plus sign in accordance with the corresponding formula for the classical Bessel function $J_\nu$. This establishes the \eqref{E:Bessel_trans_1}. The proof for \eqref{E:Bessel_trans_2} is similar.
\end{proof}
\medskip

\begin{corollary} [(\textbf{Three-term recurrence})] \label{C:Bessel_3term}
Let $\nu\in\mathbb{C}$ and a left $\mathcal{D}$-module $N$ be given. Then there exists a sequence $\mathrm{(}\mathfrak{J}_{\nu+n}\mathrm{)}$ such that for each $n$ the $\mathfrak{J}_{\nu+n}$ is a solution to $L_n=(X\partial)^2+X^2-(\nu+n)^2$ in $N$, such that we have
	\begin{equation}\label{E:Bessel_3term}
			2(\nu+n)\,\mathfrak{J}_{\nu+n}-X\mathfrak{J}_{\nu+n-1}-X\mathfrak{J}_{\nu+n+1}=0
	\end{equation}
as an element in $N$.
\end{corollary}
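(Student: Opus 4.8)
The plan is to obtain the three-term recurrence as an immediate algebraic consequence of the two first-order relations already established in Corollary \ref{C:Bessel_trans}, rather than by any fresh construction. I would take \emph{the same} sequence $(\mathfrak{J}_{\nu+n})$ produced there, so that no new existence argument is needed: its existence, together with the fact that each $\mathfrak{J}_{\nu+n}$ solves $L_n=(X\partial)^2+X^2-(\nu+n)^2$ in $N$, is exactly what Corollary \ref{C:Bessel_trans} supplies. The entire task then reduces to combining \eqref{E:Bessel_trans_1} and \eqref{E:Bessel_trans_2} so as to eliminate the ``derivative'' term $X\partial\,\mathfrak{J}_{\nu+n}$.

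Concretely, I would observe that both \eqref{E:Bessel_trans_1} and \eqref{E:Bessel_trans_2} carry the term $X\partial\,\mathfrak{J}_{\nu+n}$ with coefficient $+1$, so subtracting the second relation from the first cancels it outright. The two copies of $(\nu+n)\mathfrak{J}_{\nu+n}$, which appear with opposite signs in the two relations, then add to give $2(\nu+n)\mathfrak{J}_{\nu+n}$, while the shifted terms $-X\mathfrak{J}_{\nu+n-1}$ and $+X\mathfrak{J}_{\nu+n+1}$ combine to $-X\mathfrak{J}_{\nu+n-1}-X\mathfrak{J}_{\nu+n+1}$. The outcome is precisely the identity \eqref{E:Bessel_3term}, holding as an element of $N$.

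There is essentially no obstacle here: the mathematical content resides entirely in Corollary \ref{C:Bessel_trans} (and ultimately in the transmutation formulae of Proposition \ref{P:bessel_transmutation}), and the present statement is a one-line linear elimination in the free left $N$-module on the symbols $\mathfrak{J}_{\nu+n-1},\mathfrak{J}_{\nu+n},\mathfrak{J}_{\nu+n+1},X\partial\,\mathfrak{J}_{\nu+n}$. The only point I would flag for the reader is conceptual rather than technical, namely that the cancellation of $X\partial\,\mathfrak{J}_{\nu+n}$ is exactly what turns a pair of mixed relations into a genuine three-term recurrence purely in the shift direction, with no differentiation surviving. This is the Weyl-algebraic counterpart of the classical derivation of $2\nu J_\nu(x)=x\bigl(J_{\nu-1}(x)+J_{\nu+1}(x)\bigr)$ from Corollary \ref{C:bessel_formulae}.
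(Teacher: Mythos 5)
Your proof is correct and is exactly the argument the paper intends: the corollary follows by subtracting \eqref{E:Bessel_trans_2} from \eqref{E:Bessel_trans_1}, which cancels $X\partial\,\mathfrak{J}_{\nu+n}$ and yields \eqref{E:Bessel_3term} for the same sequence furnished by Corollary \ref{C:Bessel_trans}. No gap; the sign bookkeeping checks out.
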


%\begin{corollary}\label{C:bessel_gauge}
%For each $\nu\in\mathbb{C}$, the following are well-defined left $\mathcal{A}$-linear maps.
%\[
%\begin{array}{rcl}
%\mathcal{A}/\mathcal{A}((X\partial)^2+X^2-(\nu+1)^2)&\stackrel{\times(\partial-\frac{\nu}{X})}{\longrightarrow}&\mathcal{A}/\mathcal{A}((X\partial)^2+X^2-\nu^2),\\ \mathcal{A}/\mathcal{A}((X\partial)^2+X^2-(\nu-1)^2)&\stackrel{\times(\partial+\frac{\nu}{X})}{\longrightarrow}&\mathcal{A}/\mathcal{A}((X\partial)^2+X^2-\nu^2).
%\end{array}
%\]
%\end{corollary}
\medskip

%\rk{We illustrate here an immediate application of the above Corollary \ref{C:Bessel_trans} to classical Bessel functions $J_\nu$. We shall discuss how the same $\mathcal{A}$-linear maps can also lead to the corresponding formulae for difference Bessel functions \rk{in due course}. }
%\medskip

%\rk{\begin{corollary}\label{C:bessel_formulae}
%For each $\nu\in\mathbb{C}$, the classical Bessel functions $J_\nu$ satisfy the following recurrence relations.
%	\begin{align*}
%		J'_\nu(x)-\dfrac{\nu J_\nu(x)}{x}&=-J_{\nu+1}(x),\\
%		J'_\nu(x)+\dfrac{\nu J_\nu(x)}{x}&=J_{\nu-1}(x).\\
%	\end{align*}
%\end{corollary}
%\medskip
%}

%\rk{\begin{proof}
%It is a direct consequence of the Corollary \ref{C:Bessel_trans} as the  $J_\nu$ is classically defined as the solution of $(X\partial)^2+X^2-\nu^2$ with $\partial f(x)=f^\prime(x)$ and $Xf(x)=xf(x)$. But then the  $J_\nu(x)$ has asymptotic $\lim_{x\to 0^+}{J_\nu(x)}/{x^\nu}={1}/{2^\nu\Gamma(\nu+1)}$.
%\end{proof}
%}
%\medskip

%\rk{Reader can easily obtain, by similar procedures as illustrated in the above Corollary, the corresponding formulae for the Bessel functions of the second-kind $Y_\nu$, Hankel functions $H_\nu^{(1)},\, H_\nu^{(2)}$ and the Bessel functions of the third-kind $I_\nu,\, K_\nu$ as  seen, for example, on p. 66, p. 74 and p. 79 in  Watson \cite{Watson1944} respectively.}
%\medskip

\subsection{Bessel modules}

Recall that  the Weyl algebra $\mathcal{A}_2$ is the free $\mathbb{C}$-algebra $\mathbb{C}\langle \partial_1,\partial_2, X_1, X_2\rangle$ subject to 
\[
\partial_i\partial_j-\partial_j\partial_i=0,\quad X_iX_j-X_jX_i=0,\quad\partial_iX_j-X_j\partial_i=\delta_{ij},
\]
for $i,j=1,2$. Then we define

\begin{definition}\label{D:A_2_mod} We denote the algebra obtained by adjoining the new elements $1/X_1,\, 1/X_2$ to the Weyl algebra $\mathcal{A}_2$ by
	\[
		\mathcal{A}_2(1/X_1,\, 1/X_2)
		:=\mathbb{C}\langle\partial_1,\partial_2, X_1, \dfrac{1}{X_1},X_2,\dfrac{1}		{X_2}\rangle.
	\]
\end{definition}

\medskip

	Let $\nu\in\mathbb{C}$. Motivated by Corollary \ref{C:Bessel_trans}, we now consider the left $\mathcal{A}_2$-module generanted by the following elements 
	\begin{equation}\label{E:bessel_PDE}
%			\begin{split}
			X_1\partial_1+(X_2\partial_2+\nu)-X_1X_2,\qquad
			X_1\partial_1-(X_2\partial_2+\nu)+\frac{X_1}{X_2}.
%			\end{split}
		\end{equation}
%and their difference
%	\begin{equation}\label{E:bessel_3term_recursion}
%			X_2\partial_2+\nu-\frac12 X_1\big(X_2+\frac{1}{X_2}\big)
%	\end{equation}
% all belong to $\mathcal {A}_{2}(1/X_1,\, 1/X_2)$. The last element \eqref{E:bessel_3term_recursion} naturally gives raise to the well-known three-term recursion formula
%	\[
%		J_{\nu +1}(x)+J_{\nu-1}(x) = \frac{2\nu}{x} J_\nu(x).
%	\]
\medskip

With the \eqref{E:bessel_PDE} in mind, we define
\medskip

\begin{definition}[(\textbf{Bessel  module of order $\nu$})]\label{D:bessel_mod}
       Let $\nu\in\mathbb{C}$. The left $\mathcal {A}_{2}$-module\footnote{The $\mathcal{B}_\nu$ is in fact a quotient module. We have left out the word ``quotient" when no confusion can arise.} 
       		\begin{equation}\label{E:bessel_mod}
			\mathcal{B}_\nu=\frac{\mathcal{A}_2}
			{\mathcal{A}_2(X_1\partial_1+(\nu+X_2\partial_2)-X_1X_2)+
			\mathcal{A}_2(X_1\partial_1-(\nu+X_2\partial_2)+{X_1}/{X_2})}
		\end{equation} is called the \textit{Bessel module of order} $\nu$.
\end{definition}
\medskip

First we check that the $\mathcal{B}_\nu$ is indeed holonomic.
\medskip

\begin{proposition}\label{P:bessel_holonomic} Let $\nu\in\mathbb{C}$. The Bessel module 
$\mathcal{B}_\nu$ is a left $\mathcal{A}_2$-module
\footnote{We omit the description that $\mathcal{B}_\nu$ is a left $\mathcal{A}_2$-quotient module for the sake of simplicity.} 
with dimension $2$ and multiplicity  $1$. In particular, it is  holonomic.
\end{proposition}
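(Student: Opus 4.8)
The plan is to bring the defining left ideal of $\mathcal{B}_\nu$ into the integrability normal form of Example \ref{EG:integrable}, where holonomicity with dimension $2$ and multiplicity $1$ has already been recorded (compare Example \ref{E:prime_integrable_eg}), and then to verify the integrability condition by a one-line calculation.

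First I would replace the two generators in \eqref{E:bessel_PDE} by their sum and difference. Since this is an invertible $\mathbb{C}$-linear change of generators, the left ideal — and hence the quotient $\mathcal{B}_\nu$ — is unchanged, and it reproduces exactly the two relations \eqref{E:2_more_elements}, namely $r_1 = 2X_1\partial_1 - X_1(X_2 - 1/X_2)$ and $r_2 = 2X_2\partial_2 + 2\nu - X_1(X_2 + 1/X_2)$. Working over the localized algebra $\mathcal{A}_2(1/X_1,1/X_2)$ of Definition \ref{D:A_2_mod}, in which $X_1$ and $X_2$ are units, I would factor $r_1 = 2X_1\bigl(\partial_1 + p\bigr)$ and $r_2 = 2X_2\bigl(\partial_2 + q\bigr)$; because $2X_1$ and $2X_2$ are units, this rewrites the ideal as generated by
\[
\partial_1 + p, \qquad \partial_2 + q, \qquad p = -\tfrac12\Bigl(X_2 - \tfrac{1}{X_2}\Bigr), \quad q = \tfrac{\nu}{X_2} - \tfrac{X_1}{2} - \tfrac{X_1}{2X_2^2},
\]
which is precisely the shape treated in Example \ref{EG:integrable}. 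The next step is to check the integrability condition $p_2 = q_1$, where the subscripts denote partial differentiation with respect to $X_2$ and $X_1$ respectively: a direct computation gives $p_2 = -\tfrac12\bigl(1 + 1/X_2^2\bigr) = q_1$, so the condition holds, and Example \ref{EG:integrable} yields the claimed dimension $2$ and multiplicity $1$.

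The main obstacle I anticipate is the passage to the localized algebra: both the appearance of $1/X_2$ in the original second generator and the cancellation of the $X_1$ factor from $r_1$ force one to argue over $\mathcal{A}_2(1/X_1,1/X_2)$ rather than $\mathcal{A}_2$, and one must confirm that the Hilbert dimension and multiplicity counts of Example \ref{EG:integrable} survive the replacement of polynomial coefficients $p,q \in \mathbb{C}[X_1,X_2]$ by Laurent coefficients, i.e.\ that inverting the unit $X_2$ does not disturb the graded count. This is the one point where the cited example is applied slightly outside its literal hypotheses, and it is where I would spend the real effort.

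As a safeguard that pins down the dimension independently, I would cross-check by a direct symbol computation. In $\mathrm{Gr}\,\mathcal{A}_2 = \mathbb{C}[x_1,x_2,\xi_1,\xi_2]$ the principal symbols are $\sigma(g_1) = x_1\xi_1 + x_2\xi_2 - x_1 x_2$ and, after clearing the denominator, $\sigma(X_2 g_2) = x_2(x_1\xi_1 - x_2\xi_2)$; their common zero locus, once the spurious component $\{x_2 = 0\}$ produced by clearing denominators is discarded, is the two-dimensional graph $\{\xi_1 = x_2/2,\ \xi_2 = x_1/2\}$. Since the variety cut out by the symbols of a generating set contains the characteristic variety, this forces $d(\mathcal{B}_\nu) \leq 2$, while the Bernstein inequality (Theorem \ref{T:bernstein}) gives $d(\mathcal{B}_\nu) \geq 2$; hence $d(\mathcal{B}_\nu) = 2$, confirming that $\mathcal{B}_\nu$ is holonomic, with the multiplicity reading off from the normalized integrability form above.
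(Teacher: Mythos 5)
Your proposal is correct and follows essentially the same route as the paper's own proof, which simply observes that the two generators satisfy the integrability criterion of Example \ref{EG:integrable}; you have in fact supplied the normalization to the form $\partial_1+p$, $\partial_2+q$ over the localized algebra and the explicit check $p_2=q_1$ that the paper leaves implicit, and your caveat about transporting the Hilbert-polynomial count to Laurent coefficients is a gap the paper itself does not address either. The supplementary symbol computation combined with the Bernstein inequality is a sound independent confirmation of the dimension, though it is not needed for the main line of argument.
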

\smallskip

\begin{proof} The  two generators from Definition \ref{D:bessel_mod}  that generate the Bessel module $\mathcal{B}_\nu$ satisfy the integrability criterion stated in  Example \ref{E:prime_integrable_eg}. Hence  the Bessel module has dimension $2$ and multiplicity $1$.
\end{proof}
\medskip

The following theorem shows that one can recover from $\mathcal{B}_\nu$ the ODE satisfied by Bessel functions.
\medskip
 
 \begin{proposition}[(\textbf{Bessel ODE module of order $\nu$})] \label{P:bessel_ode_mod}

The map
\[
	\dfrac{\mathcal{A}_2}{\mathcal{A}_2[(X_1\partial_1)^2+X_1^2-(\nu+X_2\partial_2)^2]}\longrightarrow\mathcal{B}_\nu
\]
is a well-defined left $\mathcal{A}_2$-linear surjection.
\end{proposition}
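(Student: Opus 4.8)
The plan is to realise the displayed arrow as the map induced by $\mathrm{id}_{\mathcal{A}_2}$, i.e.\ as ``multiplication by $1$'', sending the class of $Q$ to the class of $Q$. For such a map to be well defined it suffices that the single generator
\[
	L:=(X_1\partial_1)^2+X_1^2-(\nu+X_2\partial_2)^2
\]
lie in the defining left ideal $I=\mathcal{A}_2 P_1+\mathcal{A}_2 P_2$ of $\mathcal{B}_\nu$, where I abbreviate $P_1=X_1\partial_1+(\nu+X_2\partial_2)-X_1X_2$ and $P_2=X_1\partial_1-(\nu+X_2\partial_2)+X_1/X_2$. Once $\mathcal{A}_2L\subseteq I$, which for a left ideal is equivalent to $L\in I$, the canonical map $\mathcal{A}_2/\mathcal{A}_2L\to\mathcal{A}_2/I=\mathcal{B}_\nu$ is automatically $\mathcal{A}_2$-linear, and it is onto because it is the quotient map composed with a further quotient. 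All computations take place in the localisation $\mathcal{A}_2(1/X_2)$ of Definition~\ref{D:A_2_mod}, since $P_2$ already involves $1/X_2$. Thus the entire proposition reduces to the single membership $L\in I$.

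The key structural observation is that $X_1\partial_1$ involves only the first pair of generators while $\nu+X_2\partial_2$ involves only the second, so by the commutation relations \eqref{E:comm} these two operators commute. Consequently the difference of squares factors \emph{exactly}, with no correction term:
\[
	(X_1\partial_1)^2-(\nu+X_2\partial_2)^2=\bigl(X_1\partial_1-(\nu+X_2\partial_2)\bigr)\bigl(X_1\partial_1+(\nu+X_2\partial_2)\bigr).
\]
Writing $X_1\partial_1+(\nu+X_2\partial_2)=P_1+X_1X_2$ and $X_1\partial_1-(\nu+X_2\partial_2)=P_2-X_1/X_2$, substituting into the factorisation, and using the elementary relation $(X_1/X_2)(X_1X_2)=X_1^2$ (valid because all the $X$'s commute) to absorb the extra $+X_1^2$ demanded by $L$, I obtain
\[
	L=P_2P_1+P_2\,(X_1X_2)-(X_1/X_2)\,P_1.
\]

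It remains to convert the middle summand, in which $P_2$ stands on the \emph{left}, into a genuine left multiple of a generator. The one calculation that makes this work is the vanishing of the commutator $[\,P_2,\,X_1X_2\,]=0$: in this bracket the contribution of $X_1\partial_1$ is $+X_1X_2$, that of $-X_2\partial_2$ is $-X_1X_2$, while the scalar $-\nu$ and the term $X_1/X_2$ contribute nothing, so the two survivors cancel. Hence $P_2(X_1X_2)=(X_1X_2)P_2$ and
\[
	L=(P_2-X_1/X_2)\,P_1+(X_1X_2)\,P_2\in\mathcal{A}_2 P_1+\mathcal{A}_2 P_2=I,
\]
which establishes the membership and therefore the proposition. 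I expect the only real obstacle to be the bookkeeping forced by noncommutativity: the naive factorisation produces summands with $P_1$ or $P_2$ on the wrong side, and the argument hinges on the (slightly fortunate) fact that the one commutator one cannot avoid, $[P_2,X_1X_2]$, vanishes identically. The symmetric identity $[P_1,X_1/X_2]=0$ provides an equally good alternative route through the other ordering $(P_1+X_1X_2)(P_2-X_1/X_2)$, which is a convenient consistency check.
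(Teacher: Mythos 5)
Your proof is correct and is essentially the paper's argument: both reduce the proposition to showing that $(X_1\partial_1)^2+X_1^2-(\nu+X_2\partial_2)^2$ lies in the defining left ideal of $\mathcal{B}_\nu$, exploiting the commutativity of $X_1\partial_1$ with $\nu+X_2\partial_2$ and the cancellation $[X_1\partial_1,X_1X_2]=[X_2\partial_2,X_1X_2]$. The only cosmetic difference is that you package the membership as the explicit left combination $L=(P_2-X_1/X_2)P_1+(X_1X_2)P_2$, whereas the paper performs the equivalent step-by-step reduction of $L$ to $0$ modulo the ideal.
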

\medskip

\begin{proof} Recall that $\mathcal{B}_\nu$ is generated by the two elements
\[
X_1\partial_1+X_2\partial_2-X_1X_2+\nu,\qquad X_1\partial_1-X_2\partial_2+\frac{X_1}{X_2}-\nu
\]
as in \eqref{E:bessel_PDE}, so as an element in $\mathcal{B}_\nu$, we have
\begin{align*}
	(X_1\partial_1)^2+X_1^2-(\nu+X_2\partial_2)^2 &= (X_1\partial_1)^2+X_1^2+(\nu+X_2\partial_2)(X_1\partial_1-X_1X_2) \\
	&= (X_1\partial_1)(X_1\partial_1+\nu+X_2\partial_2)+(X_1X_2)(\frac{X_1}{X_2}-\nu-\partial_2X_2) \\
	&= (X_1\partial_1)(X_1X_2)+(X_1X_2)(X_2\partial_2-X_1\partial_1-\partial_2X_2)\\
	&= (X_1\partial_1)(X_1X_2)+(X_1X_2)(-\partial_1X_1)\\
	&= 0.
\end{align*}
Hence the map is well defined.
\end{proof}
\medskip

%Thus the Bessel ODE module of order $\nu$ is more general than the Bessel module of order $\nu$. In fact, it consists of both the $\mathcal{B}_\nu$ and $\mathcal{B}_{-\nu}$.  When we drop the ``$\partial_2X_2$" from $(X_1\partial_1)^2+X_1^2-(\nu+X_2\partial_2)^2$, we also recover the Bessel operator $L$ from Example \ref{Eg:bessel_map}.

We may rephrase the above result in the following form.
\medskip

\begin{proposition} 
Let $\nu\in\mathbb{C}$ and $M$ be a left $\mathcal{A}$-module. If the sequence $(f_n)$ is a solution of $\mathcal{B}_\nu$ in $M^\mathbb{Z}$, then for each $n\in\mathbb{Z}$, $f_n$ is a solution of $(X\partial)^2+X^2-(\nu+n)^2$ in $M$. In other words, the element $(X_1\partial_1)^2+X_1^2-(\nu+X_2\partial_2)^2$ is equivalent to $0$ in $\mathcal{B}_\nu$.
\end{proposition}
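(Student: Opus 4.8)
The plan is to deduce this solution-level assertion from the module-level identity already contained in Proposition \ref{P:bessel_ode_mod}, by exploiting the functoriality of solutions recorded in Definition \ref{D:soln}. By definition, a solution of $\mathcal{B}_\nu$ in $M^\mathbb{Z}$ is a left $\mathcal{A}_2$-linear map $\phi\colon\mathcal{B}_\nu\to M^\mathbb{Z}$; since $\mathcal{B}_\nu$ is the cyclic module generated by the class of $1$, such a $\phi$ is determined by $\phi([1])=(f_n)$, and the sequence $(f_n)$ in the statement is precisely this image. The relevant $\mathcal{A}_2$-module structure on $M^\mathbb{Z}$ is the one generalizing Example \ref{Eg:two-seq-functions}: the operators $\partial_1$ and $X_1$ act componentwise through the given $\mathcal{A}$-module structure of $M$, while $(\partial_2 f)_n=(n+1)f_{n+1}$ and $(X_2 f)_n=f_{n-1}$.

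First I would record the computation that $X_2\partial_2$ acts on $M^\mathbb{Z}$ as multiplication by the index: indeed $(X_2\partial_2 f)_n=(\partial_2 f)_{n-1}=n\,f_n$. Consequently $(\nu+X_2\partial_2)$ multiplies the $n$-th component by $(\nu+n)$, and $(\nu+X_2\partial_2)^2$ multiplies it by $(\nu+n)^2$. Since $\partial_1,X_1$ act componentwise, the element $P:=(X_1\partial_1)^2+X_1^2-(\nu+X_2\partial_2)^2\in\mathcal{A}_2$ therefore sends the $n$-th component of $(f_n)$ to $\big[(X\partial)^2+X^2-(\nu+n)^2\big]f_n$, where $X,\partial$ now denote the action on $M$.

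Next I would invoke Proposition \ref{P:bessel_ode_mod}, whose proof shows exactly that $P=(X_1\partial_1)^2+X_1^2-(\nu+X_2\partial_2)^2$ is congruent to $0$ in $\mathcal{B}_\nu$, i.e. $P\cdot[1]=0$. Applying the $\mathcal{A}_2$-linear map $\phi$ and using linearity gives
\[
	P\cdot(f_n)=P\cdot\phi([1])=\phi\big(P\cdot[1]\big)=\phi(0)=0
\]
in $M^\mathbb{Z}$. Comparing this with the componentwise formula of the previous paragraph forces $\big[(X\partial)^2+X^2-(\nu+n)^2\big]f_n=0$ in $M$ for every $n\in\mathbb{Z}$, which is exactly the statement that each $f_n$ solves $L_n=(X\partial)^2+X^2-(\nu+n)^2$.

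The argument is essentially bookkeeping, so I do not expect a genuine obstacle. The only point demanding care is fixing the module structure on $M^\mathbb{Z}$ and verifying that $X_2\partial_2$ manifests as multiplication by $n$, since it is precisely this interpretation of the second pair of Weyl generators as index shifts that turns the single operator $P$ into the $\mathbb{Z}$-indexed family $(L_n)_n$. I would also note that the \emph{in other words} clause is not a separate claim but literally the content of Proposition \ref{P:bessel_ode_mod}, so it requires no independent verification.
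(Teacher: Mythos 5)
Your proof is correct and matches the paper's treatment: the paper states this proposition as a mere rephrasing of Proposition \ref{P:bessel_ode_mod} (whose proof establishes that $(X_1\partial_1)^2+X_1^2-(\nu+X_2\partial_2)^2$ vanishes in $\mathcal{B}_\nu$) and offers no further argument. Your computation that $X_2\partial_2$ acts on $M^\mathbb{Z}$ as multiplication by the index $n$, followed by pushing the relation through the $\mathcal{A}_2$-linear solution map, is exactly the bookkeeping that justifies that rephrasing.
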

\medskip

\subsection{Characteristics of Bessel modules}

We will see later that the Bessel module $\mathcal{B}_\nu$ gives rise to PDEs satisfied by the generating functions of the Bessel functions $(J_{\nu+n})$ in \S\ref{SS:classical_gf} as well as of the recently discovered difference Bessel functions $(J^\Delta_{\nu+n})$ in \S\ref{SS:delta_gf} below. The following theorem is an algebraic analogue of the \textit{method of characteristics}, which will help in solving such PDEs.
\medskip

\begin{theorem}\label{T:bessel_gen_map_2} Let $\nu\in\mathbb{C}$ and $\mathcal{B}_\nu$ be defined above. Then there exists a left $\mathcal{A}_2(1/X_{j=1,2})$-linear map $S$
	\begin{equation}\label{E:bessel_gen_map_1}
		\mathcal{B}_\nu
		\xrightarrow 
		{\times S}
		\overline{
		\mathcal{A}_2/\big[\mathcal{A}_2\partial_1+\mathcal{A}_2(X_2\partial_2+\nu)\big]}.
	\end{equation} In fact, we can take
	\begin{equation}\label{E:bessel_gen_map_2}
			S=\E\Big[\frac{X_1}{2}\Big(X_2-\frac{1}{X_2}\Big)\Big],
	\end{equation}where $\E$ denotes the Weyl exponential series introduced in Example \ref{E:exp}.
\end{theorem}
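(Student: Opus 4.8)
The plan is to use that right multiplication by a fixed element automatically commutes with the left action, so that $\times S$ is $\mathcal{A}_2(1/X_1,1/X_2)$-linear as soon as it is well defined; the entire content is therefore to show that $\times S$ descends to the quotient $\mathcal{B}_\nu$. Writing $I$ for the defining left ideal of $\mathcal{B}_\nu$ and $J:=\mathcal{A}_2\partial_1+\mathcal{A}_2(X_2\partial_2+\nu)$ for the target relations, well-definedness amounts to $IS\subseteq\overline{J}$, and since $I$ is generated as a left ideal by two elements, it suffices to check the two generators. Rather than the generators of \eqref{E:bessel_PDE}, I would work with their sum and difference
\[
	Q_1=2X_1\partial_1-X_1\big(X_2-1/X_2\big),\qquad
	Q_2=2\nu+2X_2\partial_2-X_1\big(X_2+1/X_2\big)
\]
from \eqref{E:2_more_elements}, which generate the same left ideal but make the cancellations transparent.

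The one computational input I would record first concerns how $\partial_1,\partial_2$ act on $S$. Since the argument $w:=\tfrac{X_1}{2}(X_2-1/X_2)$ lies in the commutative subalgebra generated by $X_1,X_2,1/X_2$, the element $S=\E(w)$ is the genuine exponential series $\sum_k w^k/k!$ there; its coefficient of $X_1^k$ is the Laurent polynomial $\tfrac{1}{2^kk!}(X_2-1/X_2)^k$, so $S$ lives in the relevant completion. For any power series $f$ in the $X$'s the relation $\partial_iX_i=X_i\partial_i+1$ yields $\partial_i f=f\partial_i+\partial f/\partial X_i$ with $\partial f/\partial X_i$ the formal derivative; applying this to $S$ and using the chain rule for the exponential gives
\[
	\partial_1 S=S\partial_1+\tfrac12\big(X_2-1/X_2\big)S,\qquad
	\partial_2 S=S\partial_2+\tfrac{X_1}{2}\big(1+1/X_2^2\big)S.
\]

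With these in hand the verification is a one-line cancellation in each case. Multiplying the first identity by $2X_1$ and the second by $2X_2$ (both commute with $S$) and substituting, the off-diagonal terms cancel exactly, so that
\[
	Q_1 S=2X_1\partial_1 S-X_1\big(X_2-1/X_2\big)S=2SX_1\partial_1,
\]
\[
	Q_2 S=\big(2\nu+2X_2\partial_2-X_1(X_2+1/X_2)\big)S=2S\big(X_2\partial_2+\nu\big).
\]
Thus $Q_1 S\in\overline{\mathcal{A}_2}\,\partial_1$ and $Q_2 S\in\overline{\mathcal{A}_2}\,(X_2\partial_2+\nu)$, both lying in $\overline{J}$; hence $IS\subseteq\overline{J}$, the map $\times S$ descends to $\mathcal{B}_\nu$, and being right multiplication it is left $\mathcal{A}_2(1/X_1,1/X_2)$-linear, with $S$ as in \eqref{E:bessel_gen_map_2}.

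The step I expect to require the most care, and the genuine obstacle, is the bookkeeping around the completions: one must fix the topology (the $X_1$-adic completion is the natural one here, given that the $X_1^k$-coefficients of $S$ are Laurent polynomials in $X_2$), confirm that $S$ is a bona fide element of it, and check that $\partial_1,\partial_2$ act continuously so that the termwise differentiation formulas above are legitimate. One also wants to confirm that $\overline{\mathcal{A}_2/J}$ is precisely the completion in which $2SX_1\partial_1$ and $2S(X_2\partial_2+\nu)$ vanish. Once the topology is pinned down each of these is routine, and the algebraic cancellations displayed above are exact, not merely asymptotic.
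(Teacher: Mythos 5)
Your proof is correct, and the two cancellations you display are exact: with $w=\tfrac{X_1}{2}(X_2-1/X_2)$ one indeed has $\partial_1 S=S\partial_1+\tfrac12(X_2-1/X_2)S$ and $\partial_2 S=S\partial_2+\tfrac{X_1}{2}(1+1/X_2^2)S$, whence $Q_1S=2SX_1\partial_1$ and $Q_2S=2S(X_2\partial_2+\nu)$, both visibly in the target ideal; and $Q_1,Q_2$ do generate the same left ideal as the two generators of $\mathcal{B}_\nu$ since they are the sum and difference. However, your route is genuinely different from the paper's. You perform a direct \emph{verification}: you take the formula for $S$ as given and check, via the Leibniz-type commutation rule $\partial_i f=f\partial_i+\partial f/\partial X_i$ for power series in the commuting generators, that right multiplication by $S$ kills the defining relations. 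The paper instead performs a \emph{derivation}: it introduces a characteristic change of variables $Y_1=X_1X_2$, $Y_2=X_1/X_2$, $2\Theta_1=\partial_1/X_2+\partial_2/X_1$, $2\Theta_2=X_2\partial_1-X_2^2\partial_2/X_1$ satisfying the canonical commutation relations, in which the two generators decouple into the first-order elements $2Y_1\Theta_1-Y_1+\nu$ and $2Y_2\Theta_2+Y_2-\nu$, each solved by a Weyl exponential in one variable, yielding $S=\E(Y_1/2)\E(-Y_2/2)$. Your argument is shorter and cleaner for this particular theorem, but it presupposes knowing $S$; the paper's method of characteristics is the template it reuses for the harder modules $\Theta(\rho)$, $\mathcal{Y}(\eta)$ and $\mathcal{G}_\pm(\omega_\pm)$, where the answer requires quadratic extensions and could not simply be guessed and checked. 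Your caution about pinning down the completion and the continuity of $\partial_1,\partial_2$ is well placed, though the paper itself treats this point no more carefully than you do.
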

\smallskip

\begin{proof}  Define new symbols $\Theta_1, \Theta_2, Y_1, Y_2$ by
\[
	Y_{1}=X_{1}X_{2},\qquad Y_{2}=\frac{X_{1}}{X_{2}},\qquad
	2\Theta_{1}=\frac{\partial_1}{X_2}+\frac{\partial_{2}}{X_{1}},\qquad
	2\Theta_{2}=X_{2}\partial_{1}-\frac{X^2_{2}\partial_{2}}{X_{1}}.
\]

Direct verification yields 
	\begin{equation}\label{E:computation_0}
		[\Theta_{1}, Y_{1}]=1, \quad [\Theta_{2}, Y_{2}]=1,
		\quad
		[\Theta_{1}, \Theta_{2}]=0,\quad [Y_1, Y_{2}]=0,
		\quad [\Theta_i,\, Y_j]=0,\ i\not=j.
	\end{equation}
\medskip
The elements in \eqref{E:bessel_PDE} become
	\[
		2Y_1\Theta_{1}-Y_1+{\nu}\qquad \mbox{and}\qquad 2Y_2\Theta_{2}+Y_2-{\nu}. 
	\]

%Then the PDEs \eqref{E:bessel_PDE} become
%	\[
%		\begin{split}
%		L_1: &=2\Theta_{1}-1+\frac{\nu}{Y_1} ,\\ 
%		L_2: &=2\Theta_{2}+1-\frac{\nu}{Y_2}. 
%		\end{split}
%	\]
 
 It follows from the Weyl exponential introduced in Example \ref{Eg:exp} for $a=\nu/2$ that
 		\[
		\begin{split}
			(2Y_1\Theta_1-Y_1+\nu)\, \E(Y_1/2) 
			&= 2Y_1\E(Y_1/2)\Theta_1 + \nu\E(Y_1/2)\\
			&= \E(Y_1/2)(2Y_1\Theta_1+\nu)\\
			&=0. \qquad\qquad\quad  \mod \mathcal{A}_2 (Y_1\Theta_1+\nu/2)
		\end{split}
		\]
	In general we have $S=F(Y_2)\cdot \E(Y_1/2)$ for some factor $F(Y_2)$ depending on $Y_2$ only. Next we require $F(Y_2)$ to satisfy that
	\[
			(2Y_2\Theta_2+Y_2-\nu)\,F(Y_2)\cdot \E(Y_1/2) =0.
			\qquad\mod \mathcal{A}_2(Y_2\Theta_2-\nu/2)
		\]
		That is,
	\[
			(2Y_2\Theta_2+Y_2-\nu)\,F(Y_2)=0,\qquad\qquad \mod \mathcal{A}_2(Y_2\Theta_2-\nu/2).
	\]

It follows from a similar consideration as above that $F(Y_2)=c\E(-Y_2/2)$ for some complex number $c$.
Without loss of generality we may choose $c=1$ so that the above map becomes
%	\[
%	\overline{
%	\mathbb{C}[[Y_i\Theta_{i}, Y_{i}]]/\mathbb{C}][[Y_i\Theta_{i}, Y_{i}]]L_{i}} \xrightarrow{\times \E(\pm Y_i/2)}
%\overline{
%	\mathbb{C}[[Y_i\Theta_{i}, Y_{i}]]/\mathbb{C}[[Y_i\Theta_{i}, Y_{i}]](Y_i\Theta_i\pm \nu/2)}, \quad i=1,\, 2
%	\]are well-defined. 
% \medskip
%That is, apart from a non-zero constant, we have. because of $[\Theta_i,\, Y_j]=0,\ i\not=j$ and $[X_1,\, X_2]=0$ in \eqref{E:computation_0}, that
	\begin{equation}\label{E:bessel_gen_map_3}
		S:=\E(Y_1/2)\cdot \E(-Y_2/2) 
		=\sum_{k=0}^\infty  \frac{X_1^kX_2^k}{2^kk!} \cdot
		\sum_{\ell=0}^\infty \frac{(-1)^\ell X_1^\ell}{X_2^\ell 2^\ell \ell !}
		=\E\Big[\frac{X_1}{2}\Big(X_2-\frac{1}{X_2}\Big)\Big]
	\end{equation}
which	is the map  asserted in \eqref{E:bessel_gen_map_1}.
\end{proof}
\medskip

\begin{remark} We may rewrite  the product of two series in $S$  in \eqref{E:bessel_gen_map_3} to get
	\[
		\sum_{k=0}^{\infty} \frac{X^k_{1}X^k_{2}}{2^k k!} 
\sum_{\ell=0}^{\infty}\frac{(-1)^l X^\ell_{1}}{X^\ell_{2}2^\ell !}
=\sum_{n=-\infty}^{\infty}\left(\sum_{\ell=0}^{\infty}\frac{(-1)^\ell X^{n+2\ell}_{1}}{2^{n+2\ell} (n+\ell)! \ell!} \right) X_{2}^{n}.
	\]
The ``inner sum\rq\rq{}  
	\[
		\mathcal{J}_{n} (X)=\sum_{\ell =0}^{\infty}
		\frac{(-1)^\ell}{ (n+\ell)!l!} 
		\left(\frac{X}{2}\right)^{n+2\ell}
	\]
agrees with Example~\ref{Eg:bessel_map} after composing with the multiplication by $X^n$ from $\mathcal{A}/\mathcal{A}(X\partial-n)$ to $\mathcal{A}/\mathcal{A}\partial$. It gives a ``power series" of the Weyl Bessel of order $n$. Following the arguments used in Watson \cite[pp. 15-16]{Watson1944} that
	\begin{equation}\label{E:neg_Bessel}
		\mathcal{J}_{-n} (X)=(-1)^n \mathcal{J}_{n} (X)
	\end{equation}
	holds for each integer $n\ge 1$.
\end{remark}

%\section{Generating functions: preliminaries}  
\subsection{Generating function of classical Bessel functions}\label{SS:classical_gf}
As was explained by Watson \cite[Chap. 2., \S2,.1]{Watson1944} that the convergence of the infinite expansion of the generating function 
	\begin{equation}\label{E:bessel_classical_gf}
		e^{\frac{x}{2}\big(t-\frac1t\big)}=\sum_{n=-\infty}^\infty J_n(x)\, t^n
	\end{equation} 
of Bessel function is absolute in powers of $t$, and valid for all $x$ and $t\not=0$.   We shall take this expansion as the basis of our study of various extensions of this generating function for various types of Bessel functions in this section.

\medskip

\begin{example}[(\textbf{Frobenius series of classical Bessel functions of order $\nu$})]\label{Eg:Bessel} Let $\nu$ be a complex number such that $2\nu$ is not an integer, and consider the the Bessel operator $L=(X\partial)^2+X^2-\nu^2$ as in Example~\ref{Eg:bessel_map}. The classical Bessel function of order $\nu$, $J_\nu(x)$, is a solution of $\mathcal{D}/\mathcal{D}L$ in the $D$-module $\mathcal{O}_d$. To obtain a Frobenius series expansion of $J_\nu(x)$, we
 recall from Example \ref{Eg:bessel_map}  the element 
	\begin{equation}\label{E:pos_nu_map}
		S=\mathcal{J}_\nu(X)=\sum_{k=0}^{\infty} \frac{(-1)^kX^{2k}}{2^{\nu+2k}k!\Gamma(\nu+k+1)}
	\end{equation}
gives the composition map 
	 of left $\mathcal{A}$-linear maps 
	\begin{equation}\label{E:pos_bessel_comp_map}
		\mathcal{D}/\mathcal{D}L
		\stackrel{\times S}{\longrightarrow}
	\overline{\mathcal{D}/\mathcal{D}(X\partial-\nu)}
	\stackrel{\times x^{\nu}}{\longrightarrow} \mathcal{O}_d,
	\end{equation}
	and so
	\begin{equation}\label{E:pos_bessel-soln}
		J_\nu(x)=x^\nu \sum_{k=0}^{\infty} \frac{(-1)^kx^{2k}}{2^{\nu+2k}k!\Gamma(\nu+k+1)},
	\end{equation}
gives raise to the classical Bessel function of order $\nu$.
\medskip

On the other hand, if we replace the $\nu$ in \eqref{D:bessel_mod} by $-\nu$, and proceed in a similar manner for an analogous set up, then we would have obtained a  ``linearly-independent" solution of $\mathcal{D}/\mathcal{D}L$ 
in $\mathcal{O}_d$ given by
	\begin{equation}\label{E:neg_bessel_comp_map}
		\mathcal{D}/\mathcal{D}L
		\stackrel{\times S}{\longrightarrow}
	\overline{\mathcal{D}/\mathcal{D}(X\partial+\nu)}
	\stackrel{\times x^{-\nu}}{\longrightarrow} \mathcal{O}_d,
	\end{equation}
	where
	 \begin{equation}\label{E:neg_nu_map}
		S=\mathcal{J}_{-\nu}(X)=\sum_{k=0}^{\infty} \frac{(-1)^kX^{2k}}{2^{-\nu+2k}k!\Gamma(-\nu+k+1)}.
	\end{equation}
The \eqref{E:neg_bessel_comp_map} gives rise to the Frobenius series expansion
	\begin{equation}\label{E:neg_bessel-soln}
		J_{-\nu}(x)=x^{-\nu} \sum_{k=0}^{\infty} \frac{(-1)^kx^{2k}}{2^{-\nu+2k}k!\Gamma(-\nu+k+1)}.
	\end{equation}
\end{example}
\medskip

The following example illustrates how sequences of analytic functions are realized from $D$-modules.
\medskip

\begin{example}\label{Eg:bilateral_J} Let $\nu\in \mathbb{C}$ and consider the bilateral  sequence of classical Bessel functions $(J_{\nu+n})_n$. Then the maps
	\begin{equation}\label{E:j-map-bessel}
		\begin{array}{rcl}
		\mathfrak{j}_+:\mathcal{B}_\nu & \stackrel{\times (J_{\nu+n})}{\longrightarrow} &\mathcal{O}^\mathbb{Z}
		\end{array}
	\end{equation}
and
	\begin{equation}%\label{E:j-map-bessel}
		\begin{array}{rcl}
		\mathfrak{j}_-:\mathcal{B}_\nu & \stackrel{\times (J_{-\nu-n})}{\longrightarrow} &\mathcal{O}^\mathbb{Z}
		\end{array}
	\end{equation}  
are both left $\mathcal{A}_2$-linear.  

More generally, a general solution of $\mathcal{B}_\nu$ in $\mathcal{O}^\mathbb{Z}$ will be denoted by  $\times \mathrm{(}\mathscr{C}_{\nu+n}\mathrm{)}$. That is,
%be an arbitrary sequence of solution of $\mathcal{B}_\nu$ in $\mathcal{O}^\mathbb{Z}$. Suppose
	\begin{equation}\label{E:general-map-bessel}
		\begin{array}{rcl}
		\mathfrak{j}: \mathcal{B}_\nu & \stackrel{\times (\mathscr{C}_{\nu+n})}{\longrightarrow} &\mathcal{O}^\mathbb{Z}
		\end{array}
	\end{equation}
%\footnote{ Watson used $ \mathscr{C}_{\nu}(x)$ to denote an arbitrary solution to the system of  Bessel equation \eqref{E:Bessel_eqn}.}.
%Then the map
%	\begin{equation}\label{E:j-map-bessel-0}
%		\begin{array}{rcl}
%		\mathfrak{j}:\mathcal{B}_\nu & \stackrel{\times (\mathscr{C}_{\nu+n})}{\longrightarrow} &\mathcal{O}^\mathbb{Z}
%		1&\longmapsto& (\mathscr{C}_{\nu+n})
%		\end{array}
%	\end{equation}is left $\mathcal{A}_2$-linear.
\end{example}
\medskip

The following proposition and corollary are direct consequences of Corollary~\ref{C:Bessel_trans} with $M=\mathcal{O}_d$ and $\mathscr{C}_{\nu+n}$.
\medskip

\begin{proposition}[(\textbf{Differential-difference formulae})]\label{P:PDE_cylinderical_bessel} 
Let $\mathcal{B}_\nu\ \substack{
\times (\mathcal{C}_{\nu+n})\\ \longrightarrow} \ \mathcal{O}^\mathbb{Z}_d$  $\mathrm{(}\mathscr{C}_{\nu+n}\mathrm{)}$ be a solution of $\mathcal{B_\nu}$ in $\mathcal{O}^\mathbb{Z}_d$. Then the $\mathrm{(}\mathscr{C}_{\nu+n}\mathrm{)}$
 satisfies the following classical differential-difference formulae
%Let $\mathcal{O}_{d \Delta}$ be a $\mathcal{A}_2$-module as introduced in \eqref{Eg:two-seq-functions} and $\nu\in \mathbb{C}\backslash\{-\mathbb{N}\}$. Then the following two maps 
%		\[
%		\begin{array}{lrl}
%		&\displaystyle\frac{\mathcal{A}_2}
%		{\mathcal{A}_2(X_1\partial_1+(\nu+X_2\partial_2)-X_1X_2)}	
%		&\longrightarrow
%		\mathcal{O}^{\mathbb{Z}}\\
%		&&\\
%		&1 &\mapsto  \big(\mathcal{C}_{\nu+n}(x)\big),
%		\end{array}
%		\]
%		\[
%		\begin{array}{rrl}
%%		&\displaystyle \frac{\mathcal{A}_2}
%		{\mathcal{A}_2(X_1\partial_1-(\nu+X_2\partial_2)+X_1/X_2)}	
%		&\longrightarrow
%		\mathcal{O}^{\mathbb{Z}}\\
%		&&\\
%		&1 &\mapsto
%			 \big(\mathcal{C}_{\nu+n}(x)\big)
%		\end{array} 
%		\]are $\mathcal{A}_2$-linear maps. As a result, we have the (well-known) differential-difference formulae for the classical Bessel functions,
		\begin{equation}\label{E:any_bessel_recus_1}
				x\mathscr{C}_{\nu}^\prime(x)+\nu\mathscr{C}_{\nu}(x)-
		x\mathscr{C}_{\nu-1}(x)=0,
			\end{equation}
and
		\begin{equation}\label{E:any_bessel_recus_2}
			x\mathscr{C}^{\prime}_{\nu}(x)-\nu\mathscr{C}_{\nu+n}(x)+
		x\mathscr{C}_{\nu+1}(x)=0.
		\end{equation}
	\end{proposition}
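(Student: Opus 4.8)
The plan is to obtain both identities by unwinding the defining relations \eqref{E:bessel_PDE} of $\mathcal{B}_\nu$ through the module structure that $\mathcal{O}^\mathbb{Z}_d$ carries (Example~\ref{Eg:two-seq-functions}). By Definition~\ref{D:soln} a solution $\mathfrak{j}\colon\mathcal{B}_\nu\to\mathcal{O}^\mathbb{Z}_d$ is a left $\mathcal{A}_2$-linear map; writing $(\mathscr{C}_{\nu+n})$ for the image of the cyclic generator, $\mathcal{A}_2$-linearity forces the two generators of the defining left ideal to annihilate $(\mathscr{C}_{\nu+n})$. Thus it suffices to evaluate $(X_1\partial_1+X_2\partial_2+\nu-X_1X_2)(\mathscr{C}_{\nu+n})$ and $(X_1\partial_1-X_2\partial_2-\nu+X_1/X_2)(\mathscr{C}_{\nu+n})$ componentwise and set each component to zero.

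First I would compute the action on the $n$-th slot. From $(\partial_1 f)_n=f_n'$ and $(X_1 f)_n=xf_n$ one gets $\big(X_1\partial_1(\mathscr{C}_{\nu+n})\big)_n=x\mathscr{C}_{\nu+n}'$; from $(\partial_2 f)_n=(n+1)f_{n+1}$ followed by the backward shift $(X_2 f)_n=f_{n-1}$ one gets $\big(X_2\partial_2(\mathscr{C}_{\nu+n})\big)_n=n\,\mathscr{C}_{\nu+n}$; and $\big(X_1X_2(\mathscr{C}_{\nu+n})\big)_n=x\,\mathscr{C}_{\nu+n-1}$. Substituting into the first relation, the $n$ and $\nu$ terms fuse into $(\nu+n)\mathscr{C}_{\nu+n}$ and yield exactly \eqref{E:any_bessel_recus_1} for each member of the sequence. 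The second relation is identical except that it uses the adjoined inverse $1/X_2$, which acts as the forward shift $(X_2^{-1}f)_n=f_{n+1}$ so that $\big(X_1X_2^{-1}(\mathscr{C}_{\nu+n})\big)_n=x\,\mathscr{C}_{\nu+n+1}$; collecting the terms gives \eqref{E:any_bessel_recus_2}.

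Alternatively, and in line with the remark preceding the statement, I would simply specialise Corollary~\ref{C:Bessel_trans} to $N=\mathcal{O}_d$. Interpreting the Weyl symbols through the manifestation of Example~\ref{Eg:O_deleted_d}, namely $(X\partial\,\mathscr{C})(x)=x\mathscr{C}'(x)$ and $(X\mathscr{C})(x)=x\mathscr{C}(x)$, the abstract transmutation identities \eqref{E:Bessel_trans_1} and \eqref{E:Bessel_trans_2} become \eqref{E:any_bessel_recus_1} and \eqref{E:any_bessel_recus_2} verbatim. This costs nothing beyond the already-established existence of the sequence $(\mathfrak{J}_{\nu+n})$ and its two relations at the abstract level.

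Since the computation is routine, the only point demanding care is the index bookkeeping in Example~\ref{Eg:two-seq-functions}: one must keep straight that $X_2$ is a \emph{backward} shift whereas its adjoined inverse $1/X_2$ is the \emph{forward} shift, and observe that the factor $n$ produced by $X_2\partial_2$ is precisely what promotes the constant $\nu$ to $\nu+n$. A small expository caveat is that the term $X_1/X_2$ lives in $\mathcal{A}_2(1/X_2)$ rather than in $\mathcal{A}_2$ itself; since $X_2$ acts invertibly on $\mathcal{O}^\mathbb{Z}_d$, this causes no difficulty, and I would flag it at the outset so that $(\mathscr{C}_{\nu+n})$ is understood as a solution over the localised algebra consistent with the definition of $\mathcal{B}_\nu$.
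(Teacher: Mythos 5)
Your proposal is correct, and your second route---specialising Corollary~\ref{C:Bessel_trans} to $N=\mathcal{O}_d$ via the manifestation of Example~\ref{Eg:O_deleted_d}---is exactly the argument the paper relies on, which it records only by the remark that the proposition is a direct consequence of that corollary. Your first route, the componentwise unwinding of the two generators in $\mathcal{O}^\mathbb{Z}_d$ (with $X_2\partial_2$ producing the factor $n$ that promotes $\nu$ to $\nu+n$, and $1/X_2$ acting as the forward shift), is just an explicit verification of the same fact and your index bookkeeping checks out.
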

	\medskip

The usage of the notation $\mathscr{C}_{\nu+n}$ follows Sonine's notation  (see \cite[p. 83]{Watson1944}) for functions that satisfy the system \eqref{E:any_bessel_recus_1} and \eqref{E:any_bessel_recus_2}.

Here is an application of the proposition to recover two well-known formulae about the classical Bessel functions. We shall  see how this will lead to new results for difference Bessel  functions and half-Bessel modules for difference reverse Bessel polynomials in \S\ref{SS:delta_reverse_bessel_poly}.
\medskip

One recovers the well-known formulae \eqref{E:bessel_formula_1} and \eqref{E:bessel_formula_2} by choosing $\mathscr{C}_\nu=J_\nu$ in the above Corollary.
\medskip

%Reader can easily obtain, by similar procedures as illustrated in the above Corollary, the corresponding formulae for the Bessel functions of the second-kind $Y_\nu$, Hankel functions $H_\nu^{(1)},\, H_\nu^{(2)}$ and the Bessel functions of the third-kind $I_\nu,\, K_\nu$ as  seen, for example,  in  Watson \cite[ pp. 66, 74, 79]{Watson1944} respectively.

The following classical three-term recursion formula is a consequence of the above differential-difference formulae \eqref{E:any_bessel_recus_1} and \eqref{E:any_bessel_recus_2}.
\medskip

\begin{corollary} [(\textbf{Three-term recurrence})]\label{C:any_bessel_3_term}
 Let $\mathcal{B}_\nu\ \substack{
\times (\mathcal{C}_{\nu+n})\\ \longrightarrow} \ \mathcal{O}^\mathbb{Z}_d$  $\mathrm{(}\mathscr{C}_{\nu+n}\mathrm{)}$ be a solution of $\mathcal{B_\nu}$ in $\mathcal{O}^\mathbb{Z}_d$.  Then
%Let $\mathcal{O}_{d \Delta}$ be a $\mathcal{A}_2$-module as introduced in \eqref{Eg:two-seq-functions} and $\nu\in \mathbb{C}\backslash\{-\mathbb{N}\}$. Then the following map 
%	\[
%		\begin{array}{rrl}
%		\dfrac{\mathcal{A}_2}
%		{\mathcal{A}_2[2(X_2\partial_2+\nu)-X_1X_2-X_1/X_2]}&	
%	\longrightarrow&
%	\mathcal{O}^{\mathbb{Z}}\\
%	 1&\mapsto & \big(\mathscr{C}_{\nu+n}(x)\big)
%		\end{array} 
%	\]is $\mathcal{A}_2-$linear. As a result, we have the (well-known) three-term recursion
		\begin{equation}\label{E:any_bessel_3_term}
			2\nu\mathscr{C}_{\nu}(x)-x\mathscr{C}_{\nu-1}(x)-x\mathscr{C}_{\nu+1}(x)=0.
		\end{equation}
\end{corollary}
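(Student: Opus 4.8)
The plan is to eliminate the derivative term between the two differential-difference formulae of Proposition~\ref{P:PDE_cylinderical_bessel}, since the statement announces \eqref{E:any_bessel_3_term} as a direct consequence of \eqref{E:any_bessel_recus_1} and \eqref{E:any_bessel_recus_2}. First I would record the two identities for the solution $(\mathscr{C}_{\nu+n})$ at the same order:
\begin{align*}
x\mathscr{C}_\nu'(x) + \nu\mathscr{C}_\nu(x) - x\mathscr{C}_{\nu-1}(x) &= 0,\\
x\mathscr{C}_\nu'(x) - \nu\mathscr{C}_\nu(x) + x\mathscr{C}_{\nu+1}(x) &= 0.
\end{align*}
The crucial observation is that the undifferentiated-with-respect-to-$x$ derivative term $x\mathscr{C}_\nu'(x)$ occurs with identical coefficient in both relations, so subtracting the second from the first annihilates it. The two copies of $\nu\mathscr{C}_\nu(x)$ then combine to $2\nu\mathscr{C}_\nu(x)$, while the remaining contributions give $-x\mathscr{C}_{\nu-1}(x)-x\mathscr{C}_{\nu+1}(x)$, producing exactly \eqref{E:any_bessel_3_term}.

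Since no differentiation survives the subtraction, the computation is purely algebraic and requires no convergence or regularity input beyond the hypothesis that $(\mathscr{C}_{\nu+n})$ is a solution of $\mathcal{B}_\nu$ in $\mathcal{O}^\mathbb{Z}_d$. Because both formulae hold for every member of the sequence—that is, with $\nu$ replaced by $\nu+n$ for each $n\in\mathbb{Z}$—the identical subtraction yields $2(\nu+n)\mathscr{C}_{\nu+n}(x)-x\mathscr{C}_{\nu+n-1}(x)-x\mathscr{C}_{\nu+n+1}(x)=0$, so the result holds at every order simultaneously.

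There is essentially no obstacle here: the only point needing care is the sign bookkeeping in the cancellation, namely that the $\nu\mathscr{C}_\nu$ terms reinforce while the $x\mathscr{C}_\nu'$ terms cancel. As an alternative route that avoids even this, I note one could instead push the abstract element \eqref{E:Bessel_3term} of Corollary~\ref{C:Bessel_3term} through the solution map $\mathfrak{j}$ of Example~\ref{Eg:bilateral_J}, interpreting $X$ as multiplication by $x$ under the manifestation of $\mathcal{O}^\mathbb{Z}_d$; this recovers \eqref{E:any_bessel_3_term} at once and makes transparent that the three-term recurrence is already encoded in the Bessel module $\mathcal{B}_\nu$ itself.
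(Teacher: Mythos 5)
Your proof is correct and matches the paper's own (implicit) argument: the paper states the corollary as "a consequence of the above differential-difference formulae," i.e.\ precisely the subtraction of \eqref{E:any_bessel_recus_2} from \eqref{E:any_bessel_recus_1} that you carry out, and your sign bookkeeping is right. Your alternative route via Corollary~\ref{C:Bessel_3term} and the manifestation in $\mathcal{O}^\mathbb{Z}_d$ is also exactly the second option the paper invokes for the analogous statements elsewhere, so nothing further is needed.
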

\medskip

Now we come to the main result of this subsection, which is about the generating function of the classical Bessel functions $(J_{\nu+n}(x))$. It is a two-variable analytic solution of the holonomic system of PDEs obtained from the Bessel module $\mathcal{B}_\nu$ \eqref{E:bessel_mod}.
\medskip

\begin{theorem}[(\bf{Asymptotic behaviour})]\label{T:Bessel_Gevrey}
The sequence $(J_{n+\nu})$ is a uniformly 1-Gevrey compacta solution of $\mathcal{B}_\nu$ in $\mathcal{O}_d^\mathbb{Z}$.
\end{theorem}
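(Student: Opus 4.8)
The plan is to separate the two assertions bundled into the statement: that $(J_{\nu+n})$ is a \emph{solution} of $\mathcal{B}_\nu$ in $\mathcal{O}_d^{\mathbb{Z}}$, and that it is \emph{uniformly $1$-Gevrey on compacta}. The first is already in hand. By Example~\ref{Eg:bilateral_J} the map $\mathfrak{j}_+\colon \mathcal{B}_\nu \to \mathcal{O}^{\mathbb{Z}}$ given by right multiplication by $(J_{\nu+n})$ is $\mathcal{A}_2$-linear, which is precisely the assertion that $(J_{\nu+n})$ solves $\mathcal{B}_\nu$ in the sense of Definition~\ref{D:soln}; equivalently, under the manifestation of $\mathcal{O}^{\mathbb{Z}}_d$ the two generators of $\mathcal{B}_\nu$ act as the classical differential–difference relations of Corollary~\ref{C:bessel_formulae}. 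Thus the genuine content of the theorem (as its title indicates) is the growth estimate, and everything reduces to verifying the two conditions of \eqref{E:G1} uniformly for $x$ on each compact subset $K$ of the common domain of analyticity. Since $2\nu$ need not be an integer, each $J_{\nu+n}$ carries a branch point at the origin, so $K$ may be taken inside $\mathbb{C}\setminus\{0\}$, and in particular $r:=\min_{x\in K}|x|>0$.

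The \emph{regular part} ($n\to+\infty$) is immediate. From the Frobenius series \eqref{E:pos_bessel-soln} the leading term of $J_{\nu+n}(x)$ is $(x/2)^{\nu+n}/\Gamma(\nu+n+1)$ and, uniformly on $K$, the series is dominated by a fixed multiple of its first term; hence $|J_{\nu+n}(x)|\le M_K\,|x/2|^{\,n}/|\Gamma(\nu+n+1)|$, and since $|\Gamma(\nu+n+1)|^{1/n}\to\infty$ we obtain $\limsup_{n\to\infty}|J_{\nu+n}(x)|^{1/n}=0$ uniformly for $x\in K$. This secures the first condition in \eqref{E:G1}.

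The \emph{principal part} ($n\to+\infty$ with order $\nu-n\to-\infty$) is the crux, and is where I would concentrate the work. The tool is the three-term recurrence \eqref{E:any_bessel_3_term} of Corollary~\ref{C:any_bessel_3_term} applied to $\mathscr{C}=J$, which after relabeling the running integer $n\mapsto -n$ and solving for the lowest index reads $x\,J_{\nu-n-1}(x)=2(\nu-n)J_{\nu-n}(x)-x\,J_{\nu-n+1}(x)$. Writing $b_n(x):=|J_{\nu-n}(x)|$ this yields, for every $x\in K$, the majorant recurrence $b_{n+1}\le \tfrac{2(|\nu|+n)}{r}\,b_n+b_{n-1}$. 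I would then prove by induction the bound $b_n(x)\le M C^{n}\,n!$ with $C:=4/r$ and $M$ chosen large. The inductive step reduces to the elementary inequality $\tfrac{2(|\nu|+n)}{r}\,C\,n+1\le C^2 n(n+1)$, whose leading coefficients force only $C\ge 2/r$; with $C=4/r$ it holds for all $n$ past a threshold $n_0(\nu,r)$, while the finitely many indices $n\le n_0$ are absorbed by enlarging $M$ (each $b_n$ being bounded on the compact $K$ by continuity). Consequently $\limsup_{n\to\infty}\bigl|J_{\nu-n}(x)/n!\bigr|^{1/n}\le C<\infty$ uniformly on $K$, which is the second condition in \eqref{E:G1}. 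Combining the two parts shows $(J_{\nu+n})$ is uniformly $1$-Gevrey on compacta in the sense of Remark~\ref{R:gervey}, so its Borel resummation exists as an analytic function and the $z$-transform of Theorem~\ref{T:z-transform-2} applies.

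The main obstacle is exactly this uniform factorial bound on the negative-order Bessel functions: the middle coefficient $2(\nu-n)/x$ of the recurrence grows linearly in $n$, so the classical Poincar\'e--Perron theorem — and its Banach-algebra version Theorem~\ref{T:PP}, which requires coefficients converging to constants — does not apply directly, and one must instead run the explicit induction above, keeping every constant dependent only on $r=\min_K|x|$ in order to preserve uniformity. An alternative would be to quote the known large-order asymptotics of classical Bessel functions from Watson; but the self-contained recurrence argument is preferable, and it is the one that will generalise (with Theorem~\ref{T:PP} in place of the elementary induction) to the difference case of Theorem~\ref{T:delta_Bessel_Gevrey}, where such asymptotics are not classically available.
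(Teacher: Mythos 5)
Your proposal is correct, but it diverges from the paper's argument at the key step. For the growth of the principal part, the paper does not run an induction on the raw three-term recurrence: it first normalises, setting $Y_n=J_{\nu+n}/n!$, so that the recurrence \eqref{E:any_bessel_3_term} becomes \eqref{E:modified_bessel_3_term}, whose coefficients converge to $-2/x$ and $0$ as $n\to\infty$. It then applies the Banach-algebra Poincar\'e--Perron theorem (Theorem~\ref{T:PP}) on the Banach algebra $\mathbb{B}_K$ of functions analytic on a compact $K\subset\mathbb{C}\setminus\{0\}$ with the sup norm; the characteristic roots $0$ and $2/x$ have distinct norms, yielding $\|J_{n+\nu}/n!\|_K=O((2/d(K,0))^n)$ in one stroke, with the negative-order direction handled ``similarly'' in a remark. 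This means the obstacle you identify --- that Theorem~\ref{T:PP} ``does not apply directly'' because the middle coefficient grows linearly in $n$ --- is precisely what the $n!$ normalisation removes, so your stated reason for abandoning Poincar\'e--Perron mischaracterises the situation rather than exposing a genuine limitation. That said, your explicit induction $b_{n+1}\le \tfrac{2(|\nu|+n)}{r}b_n+b_{n-1}\Rightarrow b_n\le MC^n n!$ with $C=4/r$ is sound (the inductive inequality $\tfrac{2(|\nu|+n)}{r}Cn+1\le C^2n(n+1)$ does hold for large $n$ once $C>2/r$, and the finitely many initial indices are absorbed into $M$), and your separate treatment of the regular part via the Frobenius series is fine, if more than is needed --- the paper's single recurrence estimate covers both directions. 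What your route buys is self-containedness and explicit constants without invoking Theorem~\ref{T:PP}, whose proof the paper only sketches by reference; what the paper's route buys is brevity and a template that it reuses (via a Newton-transform-induced norm, not via a fresh induction as you anticipate) in the proof of Theorem~\ref{T:delta_Bessel_Gevrey}.
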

\medskip

\begin{proof} It follows from the the Corollary \ref{C:any_bessel_3_term} that $J_{\nu+n}$  satisfies 
	\[
		y_{n+2}-\dfrac{2(n+\nu+1)}{x}y_{n+1}+y_n=0.
	\]
As a result, the modified sequence ($Y_n$) where $Y_n:=J_{\nu+n}/n!$ satisfies the equation
	\begin{equation}\label{E:modified_bessel_3_term}
		Y_{n+2}-\dfrac{2(n+\nu+1)}{(n+2)x}Y_{n+1}+\dfrac{1}{(n+2)(n+1)}Y_n=0.
	\end{equation}
Now one fixes a compact set $K\subset\mathbb{C}\backslash\{0\}$. The space of functions analytic on $K$ becomes a Banach algebra $\mathbb{B}_K$ equipped with the usual sup norm $\|\cdot\|_K$.Therefore, we may apply the generalized Poincar\'e-Perron theory stated in Theorem \ref{T:PP} on the Banach algebra $\mathbb{B}_K$ for the   asymptotic behaviour of ($J_{\nu+n}$) on the compact set $K$. Since the characteristic equation of \eqref{E:modified_bessel_3_term}  is given by
	\[
		\lambda^2-2\lambda/x=0,
	\]so that its roots are $0$ and $2/x$ whose norms are obviously different. 
Consequently 	
	\begin{equation}\label{E:uniform_gervey}
		\|J_{n+\nu}/n!\|_K=O((2/d(K,0))^n).
	\end{equation}That is, according to the Remark \ref{R:gervey}, the sequence $(J_{n+\nu})$ is $1$-Gevrey.
\end{proof}
\medskip

\begin{remark}
	\begin{enumerate}
		\item It is instructive to compare the estimate \eqref{E:uniform_gervey}
 with the well-known asymptotic
	\[
		J_{\nu+n}(x)\sim \frac{e^{-\nu}}{\sqrt{2\pi n}}\Big(\frac{ex}{2n}\Big)^{\nu+n}
	\]as given, see for example, in \cite[\S5]{Gautschi}. An application of Stirling formula to \eqref{E:uniform_gervey} shows that we have the same order of growths from the two asymptotic estimates.  \footnote{In fact, this can be seen from the elementary growth estimate \cite[p. 225]{Watson1944}
	\[
		J_\nu(x)\approx \exp\big\{\nu+\nu\log(x/2) -(\nu+\frac12)\log \nu\big\}
	\cdot \big[1/\sqrt{2\pi}+\frac{c_1}{\nu}+\frac{c_2}{\nu^2}+\cdots\big]
	\]holds for a fixed $x$ as $\nu\to\infty$, 
so that although the ``analystic part\rq\rq{}, i.e., the ``$\sum_0^\infty$\rq\rq{} part of \eqref{E:gf_bessel} is convergent, while the ``principle part\rq\rq{}, or the ``$\sum_{-1}^{-\infty}$\rq\rq{} is divergent. }
		\item Similarly, one sees that the sequence $(J_{-n+\nu})$ is ``uniformly 1-Gevrey compacta" also. The conclusion remains valid if the standard Bessel functions are replaced by an arbitrary sequence $(\mathscr{C}_{n+\nu})$ as formulated above.
		\item It follows from the last remark item that the $z$-transform of the sequence $(J_{n+\nu})$, or the Borel resummation of $\sum_n J_{n+\nu}t^n$, is a well-defined analytic functions in two variables as defined in the Definition \ref{D:borel}.
		\item We also note that the conclusion of Theorem \ref{T:Bessel_Gevrey} remains valid if the uniform compacta norm is replaced by any other norm.
	\end{enumerate}
\end{remark}

\medskip

\begin{theorem}[(\textbf{Generating function of Bessel functions})]\label{T:Bessel_gf}  Let $\nu\in\mathbb{C}$. 
	\begin{enumerate}
		\item Let $(\mathscr{C}_{n+\nu})_n$ be a bilateral sequence of analytic functions which is a solution of the Bessel module $\mathcal{B}_\nu$ in $\mathcal{O}^\mathbb{Z}$. Then there exists a complex number $C_\nu$ such that
		\begin{equation}\label{E:gf_bessel}
				C_\nu\, t^{-\nu}\exp\Big[\dfrac{x}{2}\big(t-\dfrac{1}{t}\big)\Big]
				\sim\sum_{n=-\infty}^\infty \mathscr{C}_{\nu+n}(x)\, t^n,
		\end{equation}
	\end{enumerate}where the symbol ``$\sim$\rq\rq{} above means that the left-side is the Borel resummation\footnote{Borel resummation is defined in Definition \ref{D:borel}.} of the right-hand side.
	\item Moreover,  the holonomic system of PDEs \eqref{E:bessel_PDE} when realized in 
$\mathcal{O}_{dd}$ in Example \ref{Eg:O_deleted_d} defined by \eqref{E:O_dd_endow}  is given by
		\begin{equation}\label{E:PDE_bessel}
			y_x+(1/t-t)/2\, y=0,\quad
			\nu y+ty_t-{x}/2\,(1/t+t)\, y=0\footnote{See also the Appendix \ref{SS:PDE_list}.}.
		\end{equation}
\end{theorem}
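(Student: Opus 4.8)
The plan is to establish part (2) by a direct computation and then to deduce part (1) by solving the resulting first-order system. For part (2), I would realize the two generators in \eqref{E:bessel_PDE} inside $\mathcal{O}_{dd}$ through the manifestation \eqref{E:O_dd_endow}, under which $X_1,\partial_1$ act as multiplication by $x$ and as $\partial_x$, while $X_2,\partial_2$ act as multiplication by $t$ and as $\partial_t$. It is cleaner to replace the two generators by the equivalent pair \eqref{E:2_more_elements}, namely $2X_1\partial_1-X_1(X_2-1/X_2)$ and $2\nu+2X_2\partial_2-X_1(X_2+1/X_2)$, which span the same left ideal. Applying the manifestation to a function $y(x,t)$ and dividing out the common factors, the first element produces $y_x+(1/t-t)/2\,y=0$ and the second produces $\nu y+ty_t-(x/2)(1/t+t)\,y=0$, which is exactly \eqref{E:PDE_bessel}. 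This step is routine bookkeeping.

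For part (1), the solution $(\mathscr{C}_{\nu+n})$ corresponds, by Definition~\ref{D:soln}, to a left $\mathcal{A}_2$-linear map $\mathfrak{j}\colon\mathcal{B}_\nu\to\mathcal{O}^{\mathbb{Z}}_d$ sending the class of $1$ to $(\mathscr{C}_{\nu+n})$. By Corollary~\ref{C:any_bessel_3_term} the sequence obeys the three-term recurrence, so the Poincar\'e--Perron argument of Theorem~\ref{T:Bessel_Gevrey} (valid for an arbitrary solution, as noted in the remark following it) shows that $(\mathscr{C}_{\nu+n})$ is uniformly $1$-Gevrey on compacta. Hence the Borel resummation $g(x,t):=\mathfrak{B}\sum_{n}\mathscr{C}_{\nu+n}(x)\,t^n$ is a well-defined analytic function of two variables, and Theorem~\ref{T:z-transform-2} guarantees that the $z$-transform $\mathfrak{z}\colon\mathcal{O}^{\mathbb{Z}}_d\to\mathcal{O}_{dd}$ is left $\mathcal{A}_2$-linear. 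Therefore $g=(\mathfrak{z}\circ\mathfrak{j})(1)$ is the image of a generator annihilated by the two elements of \eqref{E:bessel_PDE}, so $g$ is annihilated by their realizations; that is, $g$ solves the system \eqref{E:PDE_bessel} obtained in part (2).

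It then remains to solve \eqref{E:PDE_bessel}. The first equation is a first-order linear ODE in $x$ with $t$ as a parameter, whose solution is $g(x,t)=A(t)\,\exp[\tfrac{x}{2}(t-1/t)]$ for some function $A$ of $t$ alone. Substituting this into the second equation, the terms carrying the factor $x$ cancel identically, and what survives is $tA'(t)+\nu A(t)=0$, giving $A(t)=C_\nu\,t^{-\nu}$ for a constant $C_\nu$. This yields \eqref{E:gf_bessel}. Equivalently, one may run the characteristic map $S$ of Theorem~\ref{T:bessel_gen_map_2}: realizing $\overline{\mathcal{A}_2/[\mathcal{A}_2\partial_1+\mathcal{A}_2(X_2\partial_2+\nu)]}$ in $\mathcal{O}_{dd}$ forces the base solution to be a multiple of $t^{-\nu}$, while multiplication by the image $\E[\tfrac{X_1}{2}(X_2-1/X_2)]=\exp[\tfrac{x}{2}(t-1/t)]$ recovers the same closed form. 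Either route shows the solution space of \eqref{E:PDE_bessel} in $\mathcal{O}_{dd}$ is one-dimensional, so $g$ must be the asserted multiple.

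I expect the main obstacle to be analytic rather than algebraic: verifying that the Borel resummation $g$ is well-defined and that it genuinely commutes with the $\mathcal{A}_2$-action, so that the passage from the annihilation of $1\in\mathcal{B}_\nu$ to the PDEs for $g$ is legitimate. This is precisely the role of the Gevrey estimate of Theorem~\ref{T:Bessel_Gevrey} together with the $\mathcal{A}_2$-linearity of the $z$-transform in Theorem~\ref{T:z-transform-2}; once these are in hand, the derivation of \eqref{E:PDE_bessel} and its integration are elementary.
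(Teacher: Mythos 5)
Your proposal is correct and follows essentially the same route as the paper: the $\mathcal{A}_2$-linearity of $\mathfrak{j}$ and of the Borel-resummed $z$-transform (Theorem \ref{T:z-transform-2}), backed by the Gevrey estimate of Theorem \ref{T:Bessel_Gevrey}, forces the resummed series to satisfy the system \eqref{E:PDE_bessel}, after which the explicit integration of the first-order system (first equation gives $A(t)\exp[\tfrac{x}{2}(t-1/t)]$, second gives $A(t)=C_\nu t^{-\nu}$) is exactly the content of the paper's Lemma \ref{C:classical_app}. The only cosmetic difference is that you obtain one-dimensionality of the solution space by this direct integration rather than quoting the holonomicity/multiplicity-one argument, but the paper does both, so nothing is gained or lost.
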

\medskip

%\begin{remark} Note that the series on the left-hand side is divergent in general as it is $1$-Gevrey as defined in Remark \ref{R:gervey}. \footnote{In fact, this can be seen from the elementary growth estimate \cite[p. 225]{Watson1944}
%	\[
%		J_\nu(x)\approx \exp\big\{\nu+\nu\log(x/2) -(\nu+\frac12)\log \nu\big\}
%	\cdot \big[1/\sqrt{2\pi}+\frac{c_1}{\nu}+\frac{c_2}{\nu^2}+\cdots\big]
%	\]holds for a fixed $x$ as $\nu\to\infty$, 
%so that although the ``analystic part\rq\rq{}, i.e., the ``$\sum_0^\infty$\rq\rq{} part of %\eqref{E:gf_bessel} is convergent, while the ``principle part\rq\rq{}, or the ``$\sum_{-1}^{-\infty}$\rq\rq{} is divergent. }
%\end{remark}
%\medskip

To prove Theorem~\ref{T:Bessel_gf}, we need the following two lemmas, which serve the purpose of simplifying and solving the holonomic system of PDEs \eqref{E:bessel_PDE} in $\mathcal{O}_{dd}$.

\begin{lemma}\label{T:bessel_gf_mod}
The Bessel module $\mathcal{B}_\nu$ is isomorphic to
	\begin{equation}\label{E:bessel_mod_gf}
		\dfrac{\mathcal{A}_2}{\mathcal{A}_2[\partial_1+\frac{1}{2}({1}/{X_2}-X_2)]+\mathcal{A}_2[(\nu+X_2\partial_2)-\frac12 X_1(1/X_2+X_2)]}.%\longrightarrow\mathcal{B}_\nu.
	\end{equation}
\end{lemma}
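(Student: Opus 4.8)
The plan is to show that the two quotient presentations are built from the \emph{same} defining left ideal, so that the identity on the ambient algebra descends to the asserted isomorphism. Write the two generators of $\mathcal{B}_\nu$ from \eqref{E:bessel_PDE} as
\[
g_1 = X_1\partial_1+(\nu+X_2\partial_2)-X_1X_2, \qquad g_2 = X_1\partial_1-(\nu+X_2\partial_2)+X_1/X_2,
\]
and the two generators of the target module \eqref{E:bessel_mod_gf} as
\[
h_1 = \partial_1+\tfrac12(1/X_2-X_2), \qquad h_2 = (\nu+X_2\partial_2)-\tfrac12 X_1(1/X_2+X_2).
\]
First I would recombine $g_1,g_2$ exactly as in the passage from \eqref{E:2_elements} to \eqref{E:2_more_elements}: adding cancels the $\nu+X_2\partial_2$ terms while subtracting cancels the $X_1\partial_1$ term.

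Concretely, the key computation is the pair of identities $g_1 = X_1 h_1 + h_2$ and $g_2 = X_1 h_1 - h_2$, which I would verify by expanding $X_1 h_1 = X_1\partial_1 + \tfrac12 X_1/X_2 - \tfrac12 X_1 X_2$ and collecting terms: the $\tfrac12 X_1/X_2$ and $\tfrac12 X_1 X_2$ contributions cancel in the sum and double in the difference, reproducing $g_1$ and $g_2$. Equivalently $g_1+g_2 = 2X_1 h_1$ and $g_1 - g_2 = 2h_2$. Since $X_1 h_1 = \tfrac12(g_1+g_2)$ and $h_2 = \tfrac12(g_1-g_2)$ lie in the left ideal generated by $g_1,g_2$, and conversely $g_1,g_2$ lie in the left ideal generated by $X_1 h_1$ and $h_2$, the two left ideals
\[
\mathcal{A}_2 g_1 + \mathcal{A}_2 g_2 = \mathcal{A}_2(X_1 h_1) + \mathcal{A}_2 h_2
\]
coincide. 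Because $\mathcal{A}_2(X_1 h_1) + \mathcal{A}_2 h_2 \subseteq \mathcal{A}_2 h_1 + \mathcal{A}_2 h_2$, this already furnishes a natural surjection from $\mathcal{B}_\nu$ onto the target module.

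The remaining, and genuinely essential, step is to upgrade this surjection to an isomorphism by showing that $h_1$ itself lies in $\mathcal{A}_2(X_1 h_1) + \mathcal{A}_2 h_2$. Here I would invoke that all these modules are understood over the localized algebra $\mathcal{A}_2(1/X_1,1/X_2)$ of Definition \ref{D:A_2_mod} — a setting already forced by the factor $1/X_2$ occurring in $g_2$, and the one in which the characteristic map of Theorem \ref{T:bessel_gen_map_2} is linear. Over this ring $X_1$ is a unit, so $h_1 = X_1^{-1}(X_1 h_1) \in \mathcal{A}_2(1/X_1,1/X_2)\cdot(X_1 h_1)$, whence $\mathcal{A}_2(X_1 h_1)+\mathcal{A}_2 h_2 = \mathcal{A}_2 h_1 + \mathcal{A}_2 h_2$ and the two presentations define the identical quotient. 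The main obstacle is precisely this last point: with only left multiplications available one cannot strip the left factor $X_1$ from $X_1 h_1$, so the inclusion $\mathcal{A}_2 h_1 \subseteq \mathcal{A}_2(X_1 h_1)+\mathcal{A}_2 h_2$ would fail over the non-localized Weyl algebra. It is therefore crucial to record explicitly that one works over $\mathcal{A}_2(1/X_1,1/X_2)$; granting this, invertibility of $X_1$ closes the argument.
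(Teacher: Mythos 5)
Your proposal is correct, and it follows the route the paper itself sketches: the identities $g_1=X_1h_1+h_2$, $g_2=X_1h_1-h_2$ (equivalently $g_1+g_2=2X_1h_1$, $g_1-g_2=2h_2$) are exactly the "add and subtract the two relations \eqref{E:2_elements} to obtain \eqref{E:2_more_elements}" manipulation from the introduction, and your expansion of $X_1h_1$ checks out. Note that the paper does not actually print a proof of Lemma \ref{T:bessel_gf_mod}; its only justification for passing from $X_1h_1$ to $h_1$ is the remark that one may "without loss of generality, eliminate the term $X_1$" from the first relation. You have correctly identified that this is the one nontrivial point — in a left ideal one cannot strip a left factor of $X_1$ by left multiplications alone, since $\partial_1(X_1h_1)=X_1h_1\partial_1+h_1$ only produces $h_1$ modulo a \emph{right} multiple of $X_1h_1$ — and your resolution (work over $\mathcal{A}_2(1/X_1,1/X_2)$ as in Definition \ref{D:A_2_mod}, where $X_1$ is a unit, consistent with the $\mathcal{A}_2(1/X_{j=1,2})$-linearity asserted in Theorem \ref{T:bessel_gen_map_2}) is the natural way to make the paper's "without loss of generality" precise. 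So your argument is, if anything, more careful than the paper's; the only caveat worth recording is that the isomorphism is then an isomorphism of modules over the localized Weyl algebra, which is the setting the paper is implicitly using throughout this section.
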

\medskip

\begin{lemma}%[\textbf{(Classical appearance of the generating function)}]
\label{C:classical_app} The map
	\[
		\begin{array}{rcl}
\dfrac{\mathcal{A}_2}{\mathcal{A}_2[\partial_1+\frac{1}{2}({1}/{X_2}-X_2)]+\mathcal{A}_2[(\nu+X_2\partial_2)-\frac12 X_1(1/X_2+X_2)]}
		&\stackrel{\times t^{-\nu}\exp[\frac{x}{2}(t-\frac{1}{t})]}{\longrightarrow}
		&\mathcal{O}_{dd}
		\end{array}
	\]
	is well-defined and left $\mathcal{A}_2$-linear. In fact, this linear map is unique up to a complex multiple.
\end{lemma}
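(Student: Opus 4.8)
The plan is to treat the stated map as an instance of the following general principle: for any left $\mathcal{A}_2$-module $N$ and any $g\in N$, the assignment sending the class of $a$ to $a\cdot g$ is a well-defined left $\mathcal{A}_2$-linear map out of the cyclic quotient $\mathcal{A}_2/I$ exactly when every generator of the left ideal $I$ annihilates $g$; the linearity itself is then automatic from associativity of the module action, since $(ba)\cdot g = b\cdot(a\cdot g)$. Here $N=\mathcal{O}_{dd}$ with the structure \eqref{E:O_dd_endow}, the element is $f(x,t):=t^{-\nu}\exp[\frac{x}{2}(t-1/t)]$ (understood on a simply connected domain of $\mathbb{C}\times(\mathbb{C}\setminus\{0\})$ on which $t^{-\nu}$ is single-valued), and $I$ is generated by $P_1:=\partial_1+\frac{1}{2}(1/X_2-X_2)$ and $P_2:=(\nu+X_2\partial_2)-\frac{1}{2}X_1(1/X_2+X_2)$. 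Thus the whole statement reduces to two checks: that $P_1$ and $P_2$ kill $f$, and that the joint kernel of $P_1,P_2$ in $\mathcal{O}_{dd}$ is one-dimensional.

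First I would verify well-definedness by direct computation. Since $f_x=\frac{1}{2}(t-1/t)f$, acting by $P_1$ gives $\frac{1}{2}(t-1/t)f+\frac{1}{2}(1/t-t)f=0$; and since $t f_t=-\nu f+\frac{x}{2}(t+1/t)f$, acting by $P_2$ gives $\nu f+(-\nu f+\frac{x}{2}(t+1/t)f)-\frac{x}{2}(1/t+t)f=0$. Because $I=\mathcal{A}_2P_1+\mathcal{A}_2P_2$ is generated by $P_1,P_2$ as a left ideal, this suffices to show that $I$ annihilates $f$, so the map descends to the quotient and, by the general principle above, is left $\mathcal{A}_2$-linear.

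For uniqueness up to a complex multiple, I would observe that any left $\mathcal{A}_2$-linear map out of the cyclic module is determined by the image $g\in\mathcal{O}_{dd}$ of the class of $1$, and that this $g$ must lie in the joint kernel of $P_1,P_2$, i.e.\ must solve the system $g_x+\frac{1}{2}(1/t-t)g=0$ together with $\nu g+tg_t-\frac{x}{2}(1/t+t)g=0$. Integrating the first equation in $x$ (with $t$ a parameter) yields $g(x,t)=C(t)\exp[\frac{x}{2}(t-1/t)]$ for some factor $C$ depending only on $t$. Substituting this into the second equation, the two $x$-dependent contributions $\frac{x}{2}(t+1/t)$ and $-\frac{x}{2}(1/t+t)$ cancel, leaving the ordinary differential equation $tC'(t)+\nu C(t)=0$, whose solutions are $C(t)=Kt^{-\nu}$ for a single constant $K$. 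Hence $g=Kf$, the solution space is exactly one-dimensional, and the linear map is unique up to a scalar.

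The individual computations here are elementary, so there is no genuinely hard step; the one point demanding care is the uniqueness argument, where one must ensure the reduction to $tC'+\nu C=0$ is carried out over a \emph{connected} domain so that the "constant of integration" really is a single scalar $K$ rather than a locally constant function. This is reassuringly consistent with the a priori finite-dimensionality of the solution space guaranteed by the holonomicity of $\mathcal{B}_\nu$ established in Proposition \ref{P:bessel_holonomic}; the explicit integration simply pins that dimension down to one.
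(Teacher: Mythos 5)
Your proposal is correct and follows essentially the same route as the paper's own proof: direct verification that the two generators annihilate $t^{-\nu}\exp[\frac{x}{2}(t-\frac{1}{t})]$, followed by integrating the first equation to get $g(t)\exp[\frac{x}{2}(t-\frac{1}{t})]$ and substituting into the second to force $g(t)=Ct^{-\nu}$. Your added remarks on the general cyclic-module principle and on connectedness of the domain are sound refinements of what the paper leaves implicit.
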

\medskip

\begin{proof} It is straightforward to verify that the function $ t^{-\nu}\exp[\frac{x}{2}(t-\frac{1}{t})]$ satisfies the system of PDEs
	\begin{equation}\label{E:bessel_PDE_fn}
		 \frac{\partial y}{\partial x} +\frac{1}{2}\big(\dfrac{1}{t}-t\big)y=0,\qquad
		 \nu y +t \frac{\partial y}{\partial t} -\frac{x}{2}\big(\dfrac{1}{t}+t\big)y=0.
	\end{equation}
	To see the uniqueness, for a general analytic function $y=y(x,\, t)$. The first equation yields $y(x,t)=g(t)\exp\big[(\dfrac{x}{2}(t-\dfrac{1}{t})\big]$ for some $g$. Substituting this $y$ into the second differential equation yields $g(t)=Ct^{-\nu}$ for some $C\in\mathbb{C}$.
\end{proof}
\medskip

\noindent\textit{Proof of Theorem~\ref{T:Bessel_gf}.}
	Recall the left $\mathcal{A}_2$-linear maps $\mathfrak{j}$ and $\mathfrak{z}$ as defined in \eqref{E:general-map-bessel} and \eqref{E:z-transform-2} respectively.
%		\begin{equation}\label{E:g-bessel-map_0}
%			\begin{array}{rrlcl}
%		\mathfrak{g}:&\mathcal{B}_\nu & \xrightarrow[]{\times \E [\frac{X_1}{2}(X_2-\frac{1}{X_2})]} 
%& \widetilde{\mathcal{A}}_2:= \overline{\mathcal{A}_2/\big[\mathcal{A}_2\partial_1+\mathcal{A}_2(X_2\partial_2+\nu)\big]} &
% \xrightarrow[]{\times t^{-\nu}}	\mathcal{O}_{dd}\\
%			\end{array}
%			\end{equation}
	We have the diagram
		\begin{equation}\label{E:commute-1}
			\begin{tikzcd} [row sep=large, column sep=huge]
			\mathcal{B}_{\nu} \arrow{r}{\mathfrak{j}} 
			 % \arrow[swap]{dr}{\mathfrak{g}} 
			\arrow[swap]{d}{\times \E [\frac{X_1}{2}(X_2-\frac{1}{X_2})]} 
			&   \mathcal{O}^{\mathbb{Z}} \arrow{d}{\mathfrak{z}}\\
 			 \widetilde{\mathcal{A}}_2 \arrow{r}{\times t^{-\nu}} &  \mathcal{O}_{d d}
			\end{tikzcd}
		\end{equation}
	in which $\widetilde{\mathcal{A}}_2:=\mathcal{A}_2/\big[\mathcal{A}_2\partial_1+\mathcal{A}_2(X_2\partial_2+\nu)\big]$. Here the left vertical map $\mathcal{B}_\nu\longrightarrow \widetilde{\mathcal{A}}_2$ of \eqref{E:commute-1} is given by \eqref{E:bessel_gen_map_2} from Theorem \ref{T:bessel_gen_map_2}. Since the above diagram commutes up to a complex multiple, the sum $\sum_n \mathscr{C}_{\nu+n}(x)\, t^n$, being the image of $1$ in $\mathcal{O}_{dd}$ via the top-right path, is also a formal solution to the system of PDEs \eqref{E:bessel_PDE}. Now the bottom-left path of \eqref{E:commute-1} gives $t^{-\nu}\exp\big[\dfrac{x}{2}(t-\dfrac{1}{t})\big]$ which is a solution to \eqref{E:bessel_PDE} because of Lemma~\ref{T:bessel_gf_mod} and Lemma~\ref{C:classical_app}. Since $\mathcal{B}_\nu$ is holonomic, the $\mathbb{C}$-dimension of the local solution space of \eqref{E:bessel_PDE} equals the multiplicity of $\mathcal{B}_\nu$, which is one. So, in summary, we may regard \eqref{E:gf_bessel} holds up to a complex scalar multiple $C_\nu$ which can be identically zero.			
\hfill\qed
\medskip

The next corollary determines the constant $C_\nu$ that appears in \eqref{E:gf_bessel} 
for various classes of Bessel functions. Moreover, a connection between these generating functions and Sonine integrals, see e.g. Watson \cite[Chap. VI]{Watson1944} is established.
\medskip

\begin{corollary}
			\begin{enumerate}
				\item  Let $\nu\in\mathbb{C}$. Then
					\begin{equation}\label{E:gf_J_nu}
				t^{-\nu}\exp\big[\dfrac{x}{2}\big(t-\dfrac{1}{t})\big]\sim\sum_{n=-\infty}^\infty J_{\nu+n}(x)\, t^n.
				\end{equation}
				\item  Let $\nu\in\mathbb{C}^\ast$. Then
					\begin{equation}\label{E:gf_Y_nu}
						e^{-i\pi/2}\,  t^{-\nu}\exp\big[\dfrac{x}{2}\big(t-\dfrac{1}{t})\big]\sim \sum_{n=-\infty}^\infty Y_{\nu+n}(x)\, t^n.
					\end{equation}
				\item  Let $\nu\in\mathbb{C}$. Then
					\begin{equation}\label{E:gf_I_nu}
						 t^{-\nu}\exp\big[\dfrac{x}{2}\big(t+\dfrac{1}{t})\big]\sim \sum_{n=-\infty}^\infty I_{\nu+n}(x)\, t^n.
					\end{equation}
				\item  Let $\nu\in\mathbb{C}^\ast$. Then
					\begin{equation}\label{E:gf_K_nu}
						i\pi \,  t^{-\nu}\exp\big[-\dfrac{x}{2}\big(t+\dfrac{1}{t})\big]\sim \sum_{n=-\infty}					^\infty K_{\nu+n}(x)\, t^n.
					\end{equation}
%			\item 
%				\begin{equation}
%					\textcolor{red}{C_2??}t^{-\nu}\exp\big[\dfrac{x}{2}\big(t-\dfrac{1}{t})\big]\textcolor{red}{\approx} \sum_{n=-\infty}^\infty H^{(1)}_{\nu+n}(x)\, t^n;
%			\end{equation}\textcolor{red}{over the sector ???}
%			\item 
%				\begin{equation}
%					\textcolor{red}{C_3??}t^{-\nu}\exp\big[\dfrac{x}{2}\big(t-\dfrac{1}{t})\big]=\sum_{n=-\infty}^\infty K_{\nu+n}(x)\, t^n.
%				\end{equation}
		\end{enumerate}
	where the notation ``$\sim$" denotes the Borel resummation as defined in Definition \ref{D:borel}.
\end{corollary}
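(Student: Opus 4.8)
The plan is to treat all four identities as instances of the single mechanism behind Theorem~\ref{T:Bessel_gf}: once a bilateral sequence is known to solve the relevant holonomic module, the theorem produces a generating function of the asserted shape, unique up to one scalar, so the entire content of the corollary is the determination of that scalar in each case. Parts (1) and (2) live over the Bessel module $\mathcal{B}_\nu$ itself, whereas parts (3) and (4) live over the sign-variant module obtained by replacing the coupling $X_1X_2$ (resp.\ $X_1/X_2$) by its negative --- equivalently the module attached to the modified Bessel operator $(X\partial)^2-X^2-\nu^2$ --- for which the verbatim argument of Theorem~\ref{T:Bessel_gf} yields a $t+1/t$ exponent in place of $t-1/t$. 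So first I would record that each of $(J_{\nu+n})$, $(Y_{\nu+n})$ solves $\mathcal{B}_\nu$ and each of $(I_{\nu+n})$, $(K_{\nu+n})$ solves the sign-variant module, simply because each family obeys the differential--difference recurrences \eqref{E:any_bessel_recus_1}--\eqref{E:any_bessel_recus_2} (with the appropriate sign for $I,K$), which are verbatim the two defining relations of the module in the $\mathcal{O}^{\mathbb Z}_d$ manifestation; for $J$ this is Corollary~\ref{C:bessel_formulae}, and $Y,I,K$ satisfy the analogous classical recurrences.

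For part (1) I would then invoke Theorem~\ref{T:Bessel_gf} to get $C_\nu\,t^{-\nu}e^{\frac x2(t-1/t)}\sim\sum_n J_{\nu+n}(x)t^n$ for some $C_\nu$, and pin $C_\nu=1$ by a Frobenius normalization check. Concretely, the universal solution coming from the left-then-bottom path of the commuting square \eqref{E:commute-1} is built from $S=\E[\frac{X_1}{2}(X_2-\frac1{X_2})]$, whose expansion (the Remark after Theorem~\ref{T:bessel_gen_map_2}) has $n$-th inner coefficient the Weyl--Bessel series $\mathcal{J}_n$; under the Frobenius manifestation $\times\, x^{\nu+n}$ this reproduces a cylinder function with leading behaviour $x^{\nu+n}/(2^{\nu+n}\Gamma(\nu+n+1))$ as $x\to0$. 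Since the classical $J_{\nu+n}$ carries the same normalization $\lim_{x\to0}J_\mu(x)/(x/2)^\mu=1/\Gamma(\mu+1)$, matching the leading term of a single coefficient forces $C_\nu=1$.

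For part (2) the same theorem gives some constant $C'_\nu$ for $(Y_{\nu+n})$. The point to stress is that $(J_{\nu+n})$ and $(Y_{\nu+n})$ are genuinely independent sequence-solutions (the ODE is second order), yet both map under the Borel-resummation $\mathfrak z$ to scalar multiples of the same $t^{-\nu}e^{\frac x2(t-1/t)}$; this is consistent with the multiplicity-one count because $\mathfrak z$ identifies the two lateral Borel sums of one multivalued function, so $C'_\nu$ is a Stokes-type multiplier rather than a naive coefficient ratio. I would compute $C'_\nu=e^{-i\pi/2}=-i$ either from the connection formula expressing $Y_\mu$ through $J_\mu$ and $J_{-\mu}$ (equivalently through $H^{(1)}_\mu$), or by matching the large-$x$ asymptotics $J_\mu\sim\sqrt{2/\pi x}\,\cos(x-\tfrac{\mu\pi}2-\tfrac\pi4)$ and $Y_\mu\sim\sqrt{2/\pi x}\,\sin(x-\tfrac{\mu\pi}2-\tfrac\pi4)$, whose $\pi/2$ phase shift produces exactly the factor $e^{-i\pi/2}$.

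Finally, parts (3) and (4) I would deduce from (1) and (2) by the standard rotations $I_\mu(x)=i^{-\mu}J_\mu(ix)$ and $K_\mu(x)=\tfrac{\pi i}2 e^{i\mu\pi/2}H^{(1)}_\mu(ix)$: substituting $x\mapsto ix$ together with a compensating rotation $t\mapsto it$ carries $t-1/t$ into $\pm(t+1/t)$ and transports the already-determined scalars, the accumulated powers of $i$ and of $\pi$ collapsing to $1$ for $I$ and to $i\pi$ for $K$ (the opposite exponent sign for $K$ being exactly the $+$ versus $-$ ambiguity produced by the rotation); alternatively one repeats the asymptotic-matching of the previous paragraph directly over the sign-variant module. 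The main obstacle throughout is this last, constant-determining, step: existence of the identities and of the PDE systems is automatic from holonomicity, but because the bilateral series genuinely diverge the constants cannot be read off by comparing coefficients --- they encode the branch of $t^{-\nu}$ and the sector (lateral sum) in which the Borel resummation is taken, and keeping these choices consistent across the four families, especially for the Stokes multipliers $-i$ and $i\pi$, is the delicate part.
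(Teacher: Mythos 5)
Your treatment of parts (2)--(4) is essentially the paper's: part (2) is obtained from part (1) via the connection formula $Y_{\nu+n}=\cot\nu\pi\,J_{\nu+n}-(-1)^n\csc\nu\pi\,J_{-\nu-n}$ (the factor $e^{-i\pi/2}=-i$ drops out of the elementary identity $\cot\nu\pi-e^{i\nu\pi}\csc\nu\pi=-i$, no Stokes analysis needed), and parts (3)--(4) follow by the rotations $I_\nu(x)=J_\nu(ix)$, $K_\nu=\frac{\pi}{2}(I_{-\nu}-I_\nu)/\sin\nu\pi$. The existence-up-to-a-scalar step via Theorem~\ref{T:Bessel_gf} is also as in the paper.

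The gap is in your determination of $C_\nu=1$ in part (1). You propose to read the constant off by matching the leading small-$x$ behaviour of ``the $n$-th inner coefficient'' of the universal solution against the normalization of $J_{\nu+n}$. But for non-integer $\nu$ the left-hand side $t^{-\nu}\exp[\frac{x}{2}(t-\frac1t)]$ has no Laurent coefficient of $t^n$ to compare: its convergent expansion is $t^{-\nu}\sum_m J_m(x)t^m$, i.e.\ in the powers $t^{m-\nu}$, while the inner coefficients of $\E[\frac{X_1}{2}(X_2-\frac1{X_2})]$ appearing in the Remark after Theorem~\ref{T:bessel_gen_map_2} are the \emph{integer}-order Weyl--Bessel series $\mathcal{J}_m$; the map ``$\times x^{\nu+n}$'' you invoke is not applied to these, so there is nothing of the form $x^{\nu+n}/(2^{\nu+n}\Gamma(\nu+n+1))$ to match. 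You yourself note later that the constants ``cannot be read off by comparing coefficients'' because the series diverge --- and that objection applies exactly to this step. What is actually needed, and what the paper supplies, is a genuine coefficient-extraction functional: subtract the $n=0$ term, divide by $t$, and integrate over a (truncated) Hankel contour $\Gamma_{R,\delta}$. The right-hand side then yields $2\pi i\,J_\nu(x)$ by residues, while the left-hand side converges to
\[
C_\nu\int_{-\infty}^{(0+)}t^{-\nu-1}\exp\Big[\frac{x}{2}\big(t-\frac1t\big)\Big]\,dt,
\]
which equals $2\pi i\,J_\nu(x)$ by the classical Schl\"afli--Sonine integral (or, as the paper notes, by checking directly that this Hankel integral solves the Bessel equation and has the asymptotic $\sim 2\pi i\,x^\nu/(2^\nu\Gamma(\nu+1))$ as $x\to0$). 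Only after this contour extraction does a small-$x$ normalization comparison make sense. Since parts (2)--(4) all inherit their constants from part (1), this step needs to be repaired before the rest of your argument stands.
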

\medskip

\begin{remark} We note that the \eqref{E:gf_J_nu} and \eqref{E:gf_I_nu} above become well-known equalities when $\nu$ is an integer. 
\end{remark}
\medskip

\begin{proof} \begin{enumerate}
	\item Suppose $\mathscr{C}_{\nu+n}=J_{\nu+n}$ in \eqref{E:gf_bessel}. We aim to show that $C_\nu=1$. In particular, we have
	\begin{equation}\label{E:gf_J_nu_1}
				C_\nu t^{-\nu-1}\exp\big[\dfrac{x}{2}\big(t-\dfrac{1}{t})\big]-\frac{J_{0}(x)}{t}\sim\sum_{n=-\infty \atop n\not=0}^{+\infty} J_{\nu+n}(x)\, t^{n-1}.
	\end{equation}
	That is, there is a $F\in \mathcal{O}_{dd}$\footnote{
	It is clear that
	\[
		\sum_{n=-\infty \atop n\not=0}^{+\infty} \frac{J_{\nu+n}(x)}{n}
		\, t^{n}
	\]is a formal primitive of the right-side of \eqref{E:gf_J_nu_1}.} such that
	\begin{equation}\label{E:gf_J_nu_2}
		C_\nu t^{-\nu-1}\exp\big[\dfrac{x}{2}\big(t-\dfrac{1}{t})\big]-\frac{J_{0}(x)}{t}
		=\mathfrak{z}\big( \partial_2(J_{\nu+n}/n)\big)=\partial_2 F
%				\sum_{n=-\infty \atop n\not=0}^{+\infty} J_{\nu+n}(x)\, t^{n-1}.
	\end{equation}where the $z$-transform given by the Borel-resummation is defined in \eqref{E:z-transform-2}, and the $\partial_2$'s are defined in \eqref{E:two-seq-functions_2}, where $\lim_{t\to -\infty}F(x, t)=0$ if $\Re(x)>0$. According to Theorem \ref{T:z-transform-2}, the Borel transform $\mathfrak{B}$ representing the $\mathfrak{z}$-transform is $\mathcal{A}$-linear, it follows that the left-side of \eqref{E:gf_J_nu_1} also has a primitive, which can be written as
	\[
		\int  \Big\{C_\nu t^{-\nu-1}\exp\big[\dfrac{x}{2}\big(t-\dfrac{1}{t})\big]-\frac{J_{\nu}(x)}{t}\Big\}\, dt.
	\]  	
Let $0<\delta< R$ be given and $|\arg x|<\pi/2$. Consider the contour 
		\[
			\Gamma_{R, \delta}=(Re^{-i \pi},\, \delta e^{-i \pi}) \cup \mathcal{C}_\delta\cup (\delta e^{i \pi},\, Re^{i \pi}),
		\]where $\mathcal{C}_\delta$ denotes the circle centred at the origin with radius $\delta$. Thus the contour $\Gamma_{R, \delta}$ can be considered a \textit{truncated Hankel\rq{}s contour} which emanates from $-\infty$ below the negative real-axis and then back to $-\infty$ above the negative real-axis after circulating the origin once in an anti-clockwise direction.

		Hence
	\[
%		\begin{split}
		C_\nu\int_{\Gamma_{R, \delta}} t^{-\nu-1}\exp\big[\dfrac{x}{2}\big(t-\dfrac{1}{t})\big]\, dt=\int_{\Gamma_{R, \delta}} \frac{J_{\nu}(x)}{t}\, dt
	\]
	
	We now take the Cauchy principal values  on both sides of this equation under the limits that $R\to+\infty$ and $\delta\to 0$. The right-side yields
		\[
			C_\nu\int_{\Gamma_{\infty, 0}} t^{-\nu-1}\exp\big[\dfrac{x}{2}\big(t-\dfrac{1}{t})\big]\, dt
		=2\pi i\, J_\nu (x).
	\]However, since the integral on the left-side converges to a Hankel contour, so we obtain
%		\begin{equation}\label{E:gf_J_nu_2}
		\[
			\int_{\Gamma_{R, \delta}} t^{-\nu-1}\exp\big[\dfrac{x}{2}\big(t-\dfrac{1}{t})\big]\, dt
			\longrightarrow
			\int_{-\infty}^{(0+)}  t^{-\nu-1}\exp\big[\dfrac{x}{2}\big(t-\dfrac{1}{t})\big]\, dt=2\pi i\, J_\nu(x)
%		\end{equation}
		\]as  $R\to+\infty$ and $\delta\to 0$, which actually equals to  $2\pi i J_\nu(x)$ from  the classical  Schl\"afli-Sonine integrals which is valid for $|\arg x|<\frac{\pi}{2}$, see e.g., Watson \cite[Chap. VI, p. 176, (2)]{Watson1944}. This proves that $C_\nu=1$ after comparing the above calculations. Alternatively, one can verify directly that the Hankel-type contour as the limit of $R\to 0,\, \delta\to 0$ in \eqref{E:gf_J_nu_2} satisfies the Bessel equation \eqref{E:Bessel_eqn} and the following asymptotic
		 \[
		 	\begin{split}
				\int_{-\infty}^{(0+)}  t^{-\nu-1}\exp\big[\dfrac{x}{2}\big(t-\dfrac{1}{t})\big]\, dt& \stackrel{t=\frac{2x}{u}}{
				=} \frac{x^\nu}{2^\nu} 
				\int_{-\infty}^{(0+)}  u^{-\nu-1}\exp\big[u-\dfrac{x^2}{4u}\big]\, du\\
				&\approx \frac{x^\nu}{2^\nu}\frac{2^\nu}{\Gamma(\nu+1)}=\frac{2^\nu}{\Gamma(\nu+1)}
			\end{split}
		\]when $x\to 0$. See also Watson \cite[pp. 175-176]{Watson1944}. Hence 
			\[
				\int_{-\infty}^{(0+)}  t^{-\nu-1}\exp\big[\dfrac{x}{2}\big(t-\dfrac{1}{t})\big]\, dt=2\pi i\, J_\nu(x).
			\]Therefore, we deduce $C_\nu=1$. This proves \eqref{E:gf_J_nu}.
	\item
Recall that 
	\[
		Y_{\nu+n}(x)=\frac{\cos (\nu+n)\pi\, J_{n+\nu}-J_{-\nu-n}}{\sin \nu\pi}
		=\cot\nu\pi\, J_{\nu+n}(x)-(-1)^n \csc\nu \pi\, J_{-\nu-n}(x).
	\]Since $\big(J_{-\nu-n}(x)\big)_n$ corresponds to PDEs  \eqref{E:bessel_PDE} with $\nu$ replaced by $-\nu$. Hence 
	\[
		t^{\nu}\exp\big[\dfrac{x}{2}\big(t-\dfrac{1}{t})\big]\sim\sum_{n=-\infty}^\infty J_{-\nu-n}(x)\, t^n.
	\]Thus
	\[
		\begin{split}
			\sum_{n=-\infty}^\infty Y_{\nu+n}(x)
			&\sim\sum_{n=-\infty}^\infty 
			\big[\cot \nu\pi J_{\nu+n} (x)-(-1)^n\csc\nu \pi J_{-\nu-n}(x)\big]\, t^n\\
			&=\sum_{n=-\infty}^\infty \cot \nu\pi J_{\nu+n}(x)\, t^n
			 - \csc\nu\pi \sum_{n=-\infty}^\infty  (-1)^nJ_{-\nu-n}(x) \, t^n\\
			 &=\cot \nu\pi\sum_{n=-\infty}^\infty  J_{\nu+n}(x)\, t^n
			 - \csc\nu\pi \sum_{n=-\infty}^\infty  J_{-\nu+n}(x) \, (-1/t)^n\\
			 &=\cot \nu\pi  t^{-\nu}\exp\big[\dfrac{x}{2}\big(t-\dfrac{1}{t})\big]
			 	-\csc\nu\pi  (-1/t)^\nu \exp\big[\dfrac{x}{2}\big(-\dfrac{1}{t}+t)\big]\\
			&=\big(\cot \nu\pi-(-1)^\nu \csc\nu\pi\big) t^{-\nu}\exp\big[\dfrac{x}{2}\big(t-\dfrac{1}{t})\big]\\
			&= e^{-i\pi/2} t^{-\nu}\exp\big[\dfrac{x}{2}\big(t-\dfrac{1}{t})\big]
		\end{split}
	\]as asserted.
	\item The proofs of (iii) and (iv) can be derived in a similar way as the proof of (ii) with the definitions $I_\nu(x)=J_\nu(ix)$ and $K_\nu(x)=\frac{\pi}{2}(I_{-\nu}(x)-I_\nu(x))/\sin\nu\pi$.
	\end{enumerate}
\end{proof}

\medskip

%\textcolor{red}{
%\begin{remark} Two alternative methods to prove the constant $C_\nu=1$ for the $(J_{\nu+n})$ will be discussed in the Appendix ??
%\end{remark}
%}
\medskip

By an argument using the Cauchy integral formula, we can now "extract" the $n$-th coefficient from the generating function \eqref{E:gf_J_nu} above to give a way to explain why the existence of the following integral representation of Bessel functions avoids using series expansions as in \cite[\S17.231]{WW}, \cite{Watson1944}, see also \cite[\S 10.10-11]{Derezinski2020}.
\medskip

\begin{corollary}[(\textbf{Schl\"afli-Sonine integrals})]\label{C:Sonine}
 Let $\nu\in\mathbb{C}$ be given and $|\arg x|<\displaystyle\frac{\pi}{2}$. Then we have
	\begin{equation}\label{E:Sonine}
		J_{\nu+n}(x)=\frac{1}{2\pi i}\int_{-\infty}^{(0+)}  t^{-\nu-1-n}\exp\big[\dfrac{x}{2}\big(t-\dfrac{1}{t})\big]\, dt
	\end{equation}
	holds for every $n\in \mathbb{Z}$.
\end{corollary}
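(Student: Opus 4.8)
The plan is to realise the \emph{residue extraction} that the surrounding text advertises: multiply the generating function \eqref{E:gf_J_nu} by $t^{-n-1}$ and integrate over the truncated Hankel contour $\Gamma_{R,\delta}$ of the preceding proof, then let $R\to\infty$ and $\delta\to0$. Formally this is just the Cauchy formula recovering the $n$-th Laurent coefficient, but \eqref{E:gf_J_nu} carries two features that must be respected. First, the factor $t^{-\nu}$ on the left creates a branch point at the origin as soon as $2\nu\notin\mathbb{Z}$, so a Hankel contour around the cut $(-\infty,0]$ must replace a circle; this is the source of the contour in \eqref{E:Sonine}. Second, the right-hand side is only a Borel resummation, so the interchange of summation and integration is not automatic and has to be argued.

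Multiplying \eqref{E:gf_J_nu} by $t^{-n-1}$ gives
\[
    t^{-\nu-n-1}\exp\Big[\frac{x}{2}\big(t-\frac{1}{t}\big)\Big]
    \sim\sum_{m=-\infty}^{\infty} J_{\nu+m}(x)\, t^{\,m-n-1}.
\]
On the right every exponent is an integer, and the single term $m=n$, namely $J_{\nu+n}(x)\,t^{-1}$, is the only one lacking a single-valued primitive; each term with $m\neq n$ equals $\partial_2$ of $t^{m-n}/(m-n)$. Peeling off the $m=n$ term and writing the remainder as $\partial_2 F$, with $F$ the Borel resummation of $\sum_{m\neq n}\frac{J_{\nu+m}(x)}{m-n}\,t^{m-n}$ (legitimate because, by Theorem~\ref{T:z-transform-2}, Borel resummation commutes with $\partial_2$), I would repeat the endpoint argument of \eqref{E:gf_J_nu_2}: since $F(x,t)\to0$ as $t\to-\infty$ along the contour whenever $\Re(x)>0$, the integral $\int_{\Gamma_{R,\delta}}\partial_2 F\,dt$ tends to $0$, while $\int_{\Gamma_{R,\delta}}J_{\nu+n}(x)\,t^{-1}\,dt=2\pi i\,J_{\nu+n}(x)$. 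Equating with the left-hand integral yields $\int_{-\infty}^{(0+)}t^{-\nu-n-1}\exp[\frac{x}{2}(t-\frac1t)]\,dt=2\pi i\,J_{\nu+n}(x)$, which is \eqref{E:Sonine}.

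The step I expect to be the crux is exactly this extraction from a \emph{divergent} series: one must show the Hankel integral kills everything except the residue term, and the vanishing-primitive device on the contour is the clean way to do so, mirroring the $n=0$ computation. In fact the most economical rigorous route dispenses with the coefficient bookkeeping entirely, since \eqref{E:Sonine} is merely the $n=0$ identity $J_\mu(x)=\frac{1}{2\pi i}\int_{-\infty}^{(0+)}t^{-\mu-1}\exp[\frac{x}{2}(t-\frac1t)]\,dt$ at order $\mu=\nu+n$. That $n=0$ integral was already obtained for every complex order in the proof accompanying \eqref{E:gf_J_nu} (via the classical Schl\"afli--Sonine evaluation and the $x\to0$ asymptotic that forced $C_\nu=1$). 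As $(J_{(\nu+n)+m})_m$ is a solution of the Bessel module $\mathcal{B}_{\nu+n}$ whose generating function is $t^{-(\nu+n)}\exp[\frac{x}{2}(t-\frac1t)]$ by Theorem~\ref{T:Bessel_gf}, applying that order-$(\nu+n)$ identity delivers \eqref{E:Sonine} for every $n\in\mathbb{Z}$ at once.
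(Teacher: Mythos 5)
Your proposal is correct and follows essentially the route the paper intends: the paper offers no separate proof, only the remark that \eqref{E:Sonine} is obtained by Cauchy-type extraction of the $n$-th coefficient from \eqref{E:gf_J_nu}, and your Hankel-contour argument with the vanishing-primitive device is exactly the mechanism already used in establishing $C_\nu=1$. Your closing observation — that \eqref{E:Sonine} is just that order-$(\nu+n)$ identity, already proved for every complex order — is the cleanest way to finish and is fully consistent with the paper's setup.
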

\medskip

\medskip

\subsection{Generating function of difference Bessel functions}\label{SS:delta_gf}

	 Let us recall the Example \ref{Eg:O_delta} that  $\mathcal{O}_{\vartriangle}$  represents the space of analytic functions defined on a domain is a left $\mathcal{A}_1$-module  with
	\[
	(\partial f)(x)=f(x+1)-f(x), \qquad
	(Xf)(x)=xf(x-1),\quad f\in \mathcal{O}_{\vartriangle}.
	\]	
\medskip

We will occasionally use the well-known \textit{Newton polynomials} written in falling factorial notations: 
	\[
		(x)_0=1,\quad 
		(x)_n=
		\begin{cases} 
			x(x-1)\cdots (x-n+1), & n\ge 1,\\
			1/[(x+1)\cdots (x+n)] & n\le -1.
		\end{cases}
	\]For non-integer $\nu$, we adopt the notation
	\begin{equation}\label{E:fractional_Newton}
		(x)_\nu=\frac{\Gamma (x+1)}{\Gamma (x+1-\nu)}.
	\end{equation}
\medskip

\begin{example}[(\textbf{Difference Exponential function})]\label{Eg:delta_exp} 
Let $a\in\mathbb{C}$. We have seen in Example~\ref{Eg:exp} that the Weyl exponential is any solution of the $D$-module $\mathcal{A}/\mathcal{A}(\partial-a)$. Now in particular, the \textit{difference exponential function} $\exp_\Delta(\cdot;a)$ is defined as a solution of $\mathcal{A}/\mathcal{A}(\partial-a)$ in the $D$-module $\mathcal{O}_\Delta$, i.e.,
\[
	\mathcal{A}/\mathcal{A}(\partial-a) \stackrel{\times\exp_\Delta(\cdot;a)}{\longrightarrow} \mathcal{O}_{\Delta}.
\]
To obtain a binomial (or ``Newton") series expansion of $\exp_\Delta(\cdot;a)$, we factorize the above left $\mathcal{A}$-linear map as
\[
	\mathcal{A}/\mathcal{A}(\partial-a) \stackrel{\times S}{\longrightarrow} \mathcal{A}/\mathcal{A}\partial \stackrel{\times 1}{\longrightarrow} \mathcal{O}_{\Delta},
\]
in which $S=\sum{\frac{a_k}{k!}X^k}$ as in \eqref{E:exp}. We can illustrate the above by the 
commutative diagram
\begin{equation}
	\begin{tikzcd}%[row sep=large, column sep=large]
		\mathcal{A}/\mathcal{A}(\partial-a) \arrow{r}{\times S}
		\arrow[swap]{dr}{\times \exp_\Delta(\cdot;a)}
		& \mathcal{A}/\mathcal{A}\partial
		\arrow{d}{\times 1} \\
		& \mathcal{O}_{\Delta}
	\end{tikzcd}
	\end{equation}
As a result, we obtain
	\[
	      \exp_\Delta(x;a)=\sum_{k=0}^{\infty} \frac{a^k}{k!} x(x-1)\cdots(x-k+1).
	\]	
It is well-known that this binomial series converges absolutely to $(a+1)^x$ for any complex $x$ and any $|a|<1$, when an appropriate branch of logarithm is chosen.
\end{example}
\medskip

\begin{example}[(\textbf{Difference trigonometric functions})]\label{Eg:dtrigo} We continue the list of producing difference versions of trigonometric functions from Example \ref{Eg:trigo}, where the Weyl sine and the Weyl cosine are solutions of the $D$-module $\mathcal{A}/\mathcal{A}(\partial^2+1)$ in a $D$-module $N$. Now in particular, we define the \textit{difference sine function} and the \textit{difference cosine function} to be solutions of $\mathcal{A}/\mathcal{A}(\partial^2+1)$ in the $D$-module $\mathcal{O}_\Delta$. To obtain binomial series expansions of these two functions, we use the factorization
	\begin{equation}
		\mathcal{A}/\mathcal{A}(\partial^2+1) \stackrel{\times S}{\longrightarrow} \mathcal{A}/\mathcal{A}\partial \stackrel{\times 1}{\longrightarrow} \mathcal{O}_{\Delta}
	\end{equation} in which we respectively take $S=\sin X$ and $S=\cos X$ as in \eqref{E:sine-map} and \eqref{E:cosine-map}. This factorization can be illustrated by the 
commutative diagram
	\begin{equation}
	\begin{tikzcd}%[row sep=large, column sep=large]
		\mathcal{A}/\mathcal{A}(\partial^2+1) \arrow{r}{\times S}
		\arrow[swap]{dr}{\times \mathrm{Sin\, }X/\times \mathrm{Cos\, }X}
		& \mathcal{A}/\mathcal{A}\partial
		\arrow{d}{\times 1} \\
		& \mathcal{O}_{\Delta}
	\end{tikzcd}
	\end{equation} 
	Thus we obtain
	\[
		\sin_\Delta x:=\frac{1}{2i}[(1+i)^x-(1-i)^x]=\sum_{k=1}^{\infty}\frac{(-1)^{k+1}}{(2k-1)!}(x)_{2k-1}
	\]
	and 
	\[
	\cos_\Delta x:=\frac{1}{2}[(1+i)^x+(1-i)^x]=\sum_{k=0}^{\infty}\frac{(-1)^{k}}{(2k)!}(x)_{2k}.
	\]
	These two series converge absolutely if and only if either $\Re(x) > 0$ or $x = 0$, which  are unlike the power series of the usual sine and cosine functions.
\end{example}
\medskip

\begin{example}[(\textbf{Difference Bessel functions})] \label{Eg:delta_Bessel_fn}
Let $\mathcal{A}=\mathcal{A}_1$ as before. Recall that the Weyl Bessel of order $\nu$ \eqref{E:bessel_map} is any solution of the Bessel operator
	\[
		L=(X\partial)^2+X^2-\nu^2
	\]
in some $D$-module $N$, in particular, we define the \textit{difference Bessel function of order $\nu$}, denoted by $J^{\Delta}_{\nu}(x)$, to be a solution of $\mathcal{A}/\mathcal{A}L$ in the $D$-module $\mathcal{O}_\Delta$. Recall the factorization from \eqref{E:pos_bessel_comp_map}
	\begin{equation}\label{E:composition}
			\mathcal{A}/\mathcal{A}L\stackrel{\times S}{\longrightarrow} \mathcal{A}/\mathcal{A}(X\partial-\nu)\stackrel{\times (x)_\nu}{\longrightarrow}
			\mathcal{O}_\Delta,
	\end{equation}
	in which
	\[
		S=\mathcal{J}_\nu=\sum_{k=0}^{\infty} \frac{(-1)^kX^{2k}}{2^{\nu+2k}k!\Gamma(\nu+k+1)}.
	\]
%The corresponding commutative diagram can be given by
%\begin{equation}
%	\begin{tikzcd}%[row sep=large, column sep=large]
%		\mathcal{A}/\mathcal{A}L \arrow{r}{\times S}
%		\arrow[swap]{dr}{\times J^\Delta_\nu(X)}
%		& \mathcal{A}/\mathcal{A}(X\partial-\nu)
%		\arrow{d}{\times (x)_\nu} \\
%		& \mathcal{O}_{\Delta}
%	\end{tikzcd}
%	\end{equation} 
	
Thus we have obtained the Newton series expansion
	\begin{equation}\label{E:difference_bessel_fn}
		J^{\Delta}_{\nu}(x): =S((x)_\nu)
		=\sum_{k=0}^{\infty} \frac{(-1)^k}{2^{\nu+2k}k!\Gamma(\nu+k+1)}(x)_{\nu+2k}.
	\end{equation}
$J^{\Delta}_{\nu}(x)$ solves the second order difference equation \eqref{E:difference_bessel_eqn_0} or in its equivalent form
	\begin{equation}\label{E:difference_bessel_eqn}
		[(x+2)^2-\nu^2] y(x+2)-(x+2)(2x+3)y(x+1)+2(x+2)(x+1)y(x)=0,
	\end{equation}
	which is obtained from the action of the element $L=(X\partial)^2+(X^2-\nu^2)$ in $\mathcal{O}_\Delta$, see  Example \ref{Eg:O_delta}.

When $x\in \mathbb{Z}$, \eqref{E:difference_bessel_eqn} and \eqref{E:difference_bessel_fn} are precisely the difference Bessel equation and the difference Bessel function  $ J^{\Delta}_{\nu}(x)$ that appear to be first studied by Bohner and Cuchta \cite{BC_2017}\footnote{Bohner and Cuchta \cite{BC_2017} called $J^\Delta_\nu(x)$ the ``Bessel difference function\rq\rq{} and the equation \eqref{E:difference_bessel_eqn} the ``Bessel difference equation\rq\rq{} instead.}. Either by replacing $\nu$ in \eqref{E:difference_bessel_fn} by $-\nu$ or by considering \eqref{E:neg_nu_map}, we easily obtain
	\begin{equation}\label{E:difference_bessel_fn_2}
			J^{\Delta}_{-\nu}(x):=\sum_{k=0}^{\infty} \frac{(-1)^k}{2^{-\nu+2k}k!\Gamma(-\nu+k+1)}(x)_{-\nu+2k},
	\end{equation}
	so that $\{J^{\Delta}_{\nu}, J^{\Delta}_{-\nu}\}$ are $\mathbb{C}$-linearly independent solutions of \eqref{E:difference_bessel_eqn} in $\mathcal{O}_\Delta$, provided that  $2\nu\notin\mathbb{Z}$.
	\end{example}

\medskip
We now provide a statement about an half-plane convergence of the  $ J^{\Delta}_{\nu}(x)$.

\begin{proposition}\label{P:bessel_convergence} Let $\nu\in \mathbb{C}$ be arbitrary. Then the difference Bessel function  $ J^{\Delta}_{\nu}(x)$ \eqref{E:difference_bessel_fn} converges uniformly in each compact subset of $\Re(x)>0$.
\end{proposition}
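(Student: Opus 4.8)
The plan is to obtain an explicit power-type decay bound for the general term of \eqref{E:difference_bessel_fn} that is uniform on compacta, and then invoke the Weierstrass $M$-test. Write the $k$-th term as $a_k(x) = \frac{(-1)^k}{2^{\nu+2k}k!\,\Gamma(\nu+k+1)}(x)_{\nu+2k}$. The first step is to tame the falling factorial $(x)_{\nu+2k} = \Gamma(x+1)/\Gamma(x+1-\nu-2k)$ from \eqref{E:fractional_Newton}, whose denominator has argument tending to $-\infty$ along the real direction as $k\to\infty$. Applying the reflection formula $1/\Gamma(w) = \pi^{-1}\sin(\pi w)\,\Gamma(1-w)$ with $w = x+1-\nu-2k$, and using $\sin(\pi(x+1-\nu-2k)) = \sin(\pi(x+1-\nu))$ by $2\pi$-periodicity, I would rewrite $(x)_{\nu+2k} = C(x)\,\Gamma(\nu+2k-x)$, where $C(x) = \pi^{-1}\Gamma(x+1)\sin(\pi(x+1-\nu))$ is independent of $k$ and, being analytic for $\Re x>0$, is bounded on any compact $K\subset\{\,\Re x>0\,\}$.

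The second step is to control the growing Gamma factor against $2^{2k}k!\,\Gamma(\nu+k+1)$. Here the Legendre duplication formula $\Gamma(2z) = \pi^{-1/2}2^{2z-1}\Gamma(z)\Gamma(z+\tfrac12)$ with $z = k + (\nu-x)/2$ cancels the $2^{2k}$ exactly and turns $\Gamma(\nu+2k-x)$ into a product $\Gamma(k+\tfrac{\nu-x}{2})\,\Gamma(k+\tfrac{\nu-x+1}{2})$. The remaining expression is then a ratio of four Gamma functions each of the shape $\Gamma(k+c)$, so the classical ratio asymptotics $\Gamma(k+a)/\Gamma(k+b) = k^{a-b}\bigl(1+O(1/k)\bigr)$ apply. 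Bookkeeping the four exponents yields the net power $(\tfrac{\nu-x}{2}-1) + (\tfrac{\nu-x+1}{2}-\nu-1) = -x - \tfrac32$, whence $|a_k(x)| \le M_K\,k^{-\Re x - 3/2}$ for all $x\in K$ and all large $k$, with $M_K$ depending only on $K$; the $O(1/k)$ errors are uniform because the shift parameters range over a compact set as $x$ ranges over $K$.

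The final step is routine. Setting $\sigma := \min_{x\in K}\Re x$, which is attained and strictly positive by compactness, the bound becomes $|a_k(x)| \le M_K\,k^{-\sigma-3/2}$, and since $\sigma+\tfrac32 > 1$ the dominating series $\sum_k M_K\,k^{-\sigma-3/2}$ converges. The Weierstrass $M$-test then gives absolute and uniform convergence of \eqref{E:difference_bessel_fn} on $K$, as claimed.

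I expect the decay estimate, not the $M$-test, to be the main obstacle. A naive ratio test is inconclusive: one computes $|a_{k+1}/a_k| = \tfrac14\,|(\nu+2k+1-x)(\nu+2k-x)|\,/\,|(k+1)(\nu+k+1)| \to 1$, so convergence is invisible at leading order. One genuinely needs the sharper estimate $|a_{k+1}/a_k| = 1 - (\Re x + \tfrac32)/k + O(1/k^2)$ (a Raabe-type criterion, equivalent to the $k^{-\Re x - 3/2}$ bound above) to detect convergence, and some care is required to make the Gamma-ratio asymptotics uniform over $K$. Once those uniform estimates are secured, reflection, duplication, and the $M$-test are all standard.
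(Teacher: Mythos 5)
Your proof is correct, and its overall strategy coincides with the paper's own argument in Appendix~\S\ref{SS:bessel_convergence}: both reduce the claim to the termwise decay estimate $|a_k(x)|\le M_K\,k^{-\Re x-3/2}$ uniformly on a compact $K$, followed by the $M$-test. Where you differ is in how that estimate is extracted. The paper applies a Stirling-type expansion (with the Binet remainder $g$) directly to all three Gamma factors in the rewritten term \eqref{E:difference_bessel_fn_2_a}, including $\Gamma(x+1-\nu-2k)$, whose argument recedes to $-\infty$ along the real direction --- precisely the region where Stirling's formula is delicate and the bookkeeping of the factors $(-1)^{x-\nu-2k+1/2}$ amounts to an implicit use of the reflection formula. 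You instead make the reflection explicit up front, converting $1/\Gamma(x+1-\nu-2k)$ into $\pi^{-1}\sin(\pi(x+1-\nu))\,\Gamma(\nu+2k-x)$, then use Legendre duplication to cancel $2^{2k}$ and land on a ratio of four Gammas of the form $\Gamma(k+c)$, to which the standard uniform asymptotic $\Gamma(k+a)/\Gamma(k+b)\sim k^{a-b}$ applies. The exponent bookkeeping ($-x-\tfrac32$) matches the paper's $u_k\approx \mathrm{Const}\cdot k^{-x-3/2}$ exactly, and your route keeps every Gamma in the right half-plane, so the uniformity over $K$ is easier to certify; your remark that the naive ratio test is inconclusive and that a Raabe-type refinement is needed is also accurate. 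As a minor aside, your bound (like the paper's) actually yields uniform convergence on compacta of $\Re x>-\tfrac12$, which is the sharper region the paper alludes to after the proposition; the stated hypothesis $\Re x>0$ is simply a weaker sufficient condition.
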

\medskip

A justification of the above statement will be given in the Appendix \S\ref{SS:bessel_convergence}. However, it was demonstrated in \cite[p. 1569]{BC_2017} that although the series $ J^{\Delta}_{n}(x)$ diverges for $x<-1$, it converges at $x=-1$. Thus, a natural question is about the convergence in the strip $\{x: -1<\Re x<-\frac12\}$.
\medskip

Similar to Example \ref{Eg:bilateral}, we have

\begin{example}\label{Eg:bilateral} Let $\nu\in \mathbb{C}$ and consider the bilateral  sequence of classical Bessel functions $(J^\Delta_{\nu+n})_n$. Then the maps
	\begin{equation}\label{E:j-delta-map-bessel}
		\begin{array}{rcl}
		\mathfrak{j}_\Delta^+:\mathcal{B}_\nu & \stackrel{\times (J^\Delta_{\nu+n})}{\longrightarrow} &\mathcal{O}^\mathbb{Z}
		\end{array}
	\end{equation}
and
	\begin{equation}%\label{E:j-map-bessel}
		\begin{array}{rcl}
		\mathfrak{j}_\Delta^-:\mathcal{B}_\nu & \stackrel{\times (J^\Delta_{-\nu-n})}{\longrightarrow} &\mathcal{O}^\mathbb{Z}
		\end{array}
	\end{equation}  
are both left $\mathcal{A}_2$-linear.  
More generally, a general solution of $\mathcal{B}_\nu$ in $\mathcal{O}_\Delta^\mathbb{Z}$ will be  denoted by  $\times \mathrm{(}\mathscr{C}^\Delta_{\nu+n}\mathrm{)}$. That is,
%be an arbitrary sequence of solution of $\mathcal{B}_\nu$ in $\mathcal{O}^\mathbb{Z}$. Suppose
	\begin{equation}\label{E:general-delta-map-bessel}
		\begin{array}{rcl}
		\mathfrak{j}_\Delta: \mathcal{B}_\nu & \stackrel{\times (\mathscr{C}^\Delta_{\nu+n})}{\longrightarrow} &\mathcal{O}_\Delta^\mathbb{Z}.
		\end{array}
	\end{equation}
%\footnote{ Watson used $ \mathscr{C}_{\nu}(x)$ to denote an arbitrary solution to the system of  Bessel equation \eqref{E:Bessel_eqn}.}.
%Then the map
%	\begin{equation}\label{E:j-map-bessel-0}
%		\begin{array}{rcl}
%		\mathfrak{j}:\mathcal{B}_\nu & \stackrel{\times (\mathscr{C}_{\nu+n})}{\longrightarrow} &\mathcal{O}^\mathbb{Z}
%		1&\longmapsto& (\mathscr{C}_{\nu+n})
%		\end{array}
%	\end{equation}is left $\mathcal{A}_2$-linear.
\end{example}
\medskip

The following proposition and corollary are direct consequences of Corollary~\ref{C:Bessel_trans} with $N=\mathcal{O}_\Delta$ and $\mathscr{C}^\Delta_{\nu+n}$.

\begin{proposition}[(\bf{Delay-difference formulae})]\label{P:bilateral_Delta_PDE} 
Let $\nu\in\mathbb{C}$ and $\mathrm{(}\mathscr{C}^\Delta_{\nu+n}\mathrm{)}$ be an arbitrary (bilateral sequence of) solution of $\mathcal{B}_\nu\to \mathcal{O}_\Delta^\mathbb{Z}$ as defined in \eqref{E:general-delta-map-bessel}. Then the $\mathrm{(}\mathscr{C}^\Delta_{\nu+n}\mathrm{)}$
 satisfies
 % the following classical differential-difference formulae
%Let $\mathcal{O}_\Delta^\mathbb{Z}$ be endowed with the  $\mathcal{A}_2$-module structure described in Example~\ref{Eg:two-seq-functions_2}.  Let $\nu\in \mathbb{C}$, and let $(\mathscr{C}^\Delta_{\nu+n})_n$ denote an arbitrary bilateral sequence of analytic functions that satifies 
the following delay-difference formulae
	\begin{equation}\label{E:delta_bessel_PDE_a}
				x\Delta \mathscr{C}^{\Delta}_{\nu}(x-1)+\nu\mathscr{C}^{\Delta}_{\nu}(x)-
		x\mathscr{C}^{\Delta}_{\nu-1}(x-1)=0
			\end{equation}
and
		\begin{equation}\label{E:delta_bessel_PDE_b}
			x\Delta \mathscr{C}^{\Delta}_{\nu}(x-1)-\nu\mathscr{C}^{\Delta}_{\nu}(x)+
		x\mathscr{C}^{\Delta}_{\nu+1}(x-1)=0.
		\end{equation}
for each $n$. 
%Then each $\mathscr{C}^\Delta_{\nu+n}$ is a solution of the difference Bessel equation \eqref{E:difference_bessel_eqn} of order $\nu+n$, and the map
%	\begin{equation}\label{E:j-delta-map-bessel}
%		\begin{array}{rcl}
%		\mathfrak{j}_\Delta:\mathcal{B}_\nu &\stackrel{\times (\mathscr{C}_{\nu+n}^\Delta)}{\longrightarrow} &\mathcal{O}^\mathbb{Z}_\Delta
%		\end{array}
%	\end{equation}is left $\mathcal{A}_2$-linear.
\end{proposition}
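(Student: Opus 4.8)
The plan is to obtain both formulae as the $\mathcal{O}_\Delta$-manifestation of the abstract transmutation relations \eqref{E:Bessel_trans_1} and \eqref{E:Bessel_trans_2} supplied by Corollary~\ref{C:Bessel_trans}. By the definition \eqref{E:general-delta-map-bessel} of the solution map $\mathfrak{j}_\Delta$, a solution $(\mathscr{C}^\Delta_{\nu+n})$ of $\mathcal{B}_\nu$ in $\mathcal{O}_\Delta^\mathbb{Z}$ is a bilateral sequence annihilated by the two generators \eqref{E:bessel_PDE} under the $\mathcal{A}_2$-structure of Example~\ref{Eg:two-seq-functions_2}; there the second-variable operators $\partial_2,X_2$ already act as the sequence shift and multiplication by $n$. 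Hence, taking $N=\mathcal{O}_\Delta$ and identifying the sequence $(\mathfrak{J}_{\nu+n})$ of Corollary~\ref{C:Bessel_trans} with our $(\mathscr{C}^\Delta_{\nu+n})$, the relations \eqref{E:Bessel_trans_1}--\eqref{E:Bessel_trans_2} hold in $\mathcal{O}_\Delta$, and they involve only the operators $X\partial$ and $X$. It therefore suffices to transcribe those two operators into the $\mathcal{O}_\Delta$-manifestation.

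First I would compute the action of $X\partial$ and $X$ using Example~\ref{Eg:O_delta}, where $(\partial f)(x)=f(x+1)-f(x)=\Delta f(x)$ and $(Xf)(x)=xf(x-1)$. Setting $g=\partial f$, so that $g(x)=f(x+1)-f(x)$, the composite becomes
\[
(X\partial f)(x)=(Xg)(x)=x\,g(x-1)=x\bigl(f(x)-f(x-1)\bigr)=x\,\Delta f(x-1),
\]
whereas $X$ acts by $(Xf)(x)=xf(x-1)$. Substituting these into \eqref{E:Bessel_trans_1}, evaluating at $x$, and relabelling the index $\nu+n$ as $\nu$ (the identity holding for every $n$) reproduces \eqref{E:delta_bessel_PDE_a}, and the same substitution into \eqref{E:Bessel_trans_2} reproduces \eqref{E:delta_bessel_PDE_b}.

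The argument is thus a transport of structure along the solution map, with no analytic content beyond the module definitions. The single point that repays care --- and the step I would display explicitly --- is the composite $X\partial$ in $\mathcal{O}_\Delta$: unlike the classical manifestation $X\partial\mapsto x\,d/dx$ on $\mathcal{O}_d$, here the forward difference $\partial=\Delta$ is followed by the shifted multiplication $X\colon f\mapsto xf(x-1)$, so that $X\partial$ becomes $f\mapsto x\,\Delta f(x-1)$ rather than $x\,\Delta f(x)$. Keeping this shift-then-difference bookkeeping aligned with the third term $X\colon f\mapsto xf(x-1)$ is the only place where an index or sign slip could enter, and it is precisely this bookkeeping that converts the abstract relations into the stated delay-difference form.
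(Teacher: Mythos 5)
Your proposal is correct and follows exactly the paper's route: the paper derives this proposition as a direct consequence of Corollary~\ref{C:Bessel_trans} with $N=\mathcal{O}_\Delta$, which is precisely your transport-of-structure argument. Your explicit computation $(X\partial f)(x)=x\,\Delta f(x-1)$ in the $\mathcal{O}_\Delta$-manifestation is the one detail the paper leaves implicit, and you have it right.
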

\medskip

The following three-term recurrence is a direct consequence of Corollary~\ref{C:Bessel_3term}  with $N=\mathcal{O}_\Delta$, or directly from the above two corollary.
\medskip

\begin{proposition}[(\textbf{Three-term recurrence})] For every $\nu\in\mathbb{C}$ and every $x\in\mathbb{C}$, we have
		\begin{equation}\label{E:delta_bessel_3_term_a}
			2\nu\mathscr{C}_{\nu}^\Delta(x)-x\mathscr{C}_{\nu-1}^\Delta(x-1)-x\mathscr{C}_{\nu+1}^\Delta(x-1)=0.
		\end{equation}
\end{proposition}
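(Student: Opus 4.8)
The plan is to obtain the identity as an immediate algebraic consequence of the abstract three-term recurrence already established in Corollary~\ref{C:Bessel_3term}, specialised to the manifestation $N=\mathcal{O}_\Delta$. Recall that Corollary~\ref{C:Bessel_3term} produces, for any left $\mathcal{D}$-module $N$, a sequence $(\mathfrak{J}_{\nu+n})$ of solutions of $L_n=(X\partial)^2+X^2-(\nu+n)^2$ in $N$ satisfying
\[
	2(\nu+n)\,\mathfrak{J}_{\nu+n}-X\mathfrak{J}_{\nu+n-1}-X\mathfrak{J}_{\nu+n+1}=0.
\]
Taking $N=\mathcal{O}_\Delta$ and letting $(\mathscr{C}^\Delta_{\nu+n})$ be the sequence produced by the solution $\mathfrak{j}_\Delta$ of \eqref{E:general-delta-map-bessel}, I would evaluate this abstract relation using the $\mathcal{A}$-module structure of Example~\ref{Eg:O_delta}. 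The only nonroutine point is the action of $X$: in $\mathcal{O}_\Delta$ one has $(Xg)(x)=xg(x-1)$, so the terms $X\mathfrak{J}_{\nu+n\pm 1}$ become $x\,\mathscr{C}^\Delta_{\nu+n\pm 1}(x-1)$, while the scalar multiplication by $2(\nu+n)$ introduces no shift. After relabelling $\nu+n$ as $\nu$, this yields exactly
\[
	2\nu\mathscr{C}^\Delta_{\nu}(x)-x\mathscr{C}^\Delta_{\nu-1}(x-1)-x\mathscr{C}^\Delta_{\nu+1}(x-1)=0.
\]

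Alternatively, and perhaps more transparently, I would derive the recurrence directly from the two delay-difference formulae of Proposition~\ref{P:bilateral_Delta_PDE}. Subtracting \eqref{E:delta_bessel_PDE_b} from \eqref{E:delta_bessel_PDE_a}, the common term $x\Delta\mathscr{C}^\Delta_\nu(x-1)$ cancels, the contributions $+\nu\mathscr{C}^\Delta_\nu(x)$ and $-(-\nu\mathscr{C}^\Delta_\nu(x))$ combine to give $2\nu\mathscr{C}^\Delta_\nu(x)$, and the two remaining terms give $-x\mathscr{C}^\Delta_{\nu-1}(x-1)-x\mathscr{C}^\Delta_{\nu+1}(x-1)$, producing the claimed identity at once.

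There is essentially no obstacle here: the full content of the statement is already carried by Corollary~\ref{C:Bessel_3term} (equivalently by Proposition~\ref{P:bilateral_Delta_PDE}), and the argument amounts to correctly transporting the Weyl-algebraic relation through the difference manifestation $\mathcal{O}_\Delta$. The one place demanding care is keeping track of the shift $x\mapsto x-1$ induced by the action of $X$ in $\mathcal{O}_\Delta$; this is precisely what distinguishes the present difference recurrence from its classical counterpart \eqref{E:any_bessel_3_term}, where $X$ acts by plain multiplication by $x$ and no shift appears.
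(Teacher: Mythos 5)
Your proposal is correct and matches the paper's own argument, which likewise obtains the identity either by specialising Corollary~\ref{C:Bessel_3term} to $N=\mathcal{O}_\Delta$ or by subtracting the two delay-difference formulae \eqref{E:delta_bessel_PDE_a} and \eqref{E:delta_bessel_PDE_b}. Your attention to the shift $x\mapsto x-1$ coming from the action $(Xg)(x)=xg(x-1)$ is exactly the point that distinguishes this recurrence from the classical one \eqref{E:any_bessel_3_term}.
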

%{\color{blue}
%\begin{proof}
%{\color{red}Given $\nu\in\mathbb{C}$, the element $2(X_2\partial_2+\nu)-X_1X_2-X_1/X_2$ is equivalent to $0$ in $\mathcal{B}_\nu$ as it is the difference of the two generators. Thus the left $\mathcal{A}_2$-linear map \eqref{E:j-delta-map-bessel} implies that
%	\[
%			2(n+\nu)\mathscr{C}_{\nu+n}^\Delta(x)-x\mathscr{C}_{\nu+n-1}^\Delta(x-1)-x\mathscr{C}_{\nu+n+1}^\Delta(x-1)=0
%	\]
%	for every $n\in\mathbb{Z}$ and $x\in\mathbb{C}$. So the result holds.}
%\end{proof}
%}
%\medskip

One deduces the following formulae originally derived by Bohner and and Cuchta \cite[Theorems 5-6, Corollary 7]{BC_2017} about $J_\nu^\Delta$ as  consequences of the above when $\mathscr{C}_{\nu+n}^\Delta(x)=J_{\nu+n}^\Delta(x)$ for all $n$.

\begin{corollary}[({\cite[Theorems 5-6, Corollary 7]{BC_2017}})]  For every $\nu\in\mathbb{C}$ and every $x\in\mathbb{C}$, we have
			\begin{align}
				x\Delta J^{\Delta}_{\nu}(x-1)+\nu J^{\Delta}_{\nu}(x)-
		xJ^{\Delta}_{\nu-1}(x-1)&=0,\label{E:delta_bessel_PDE_1}
		\\
				x\Delta J^{\Delta}_{\nu}(x-1)-\nu J^{\Delta}_{\nu}(x)+
		xJ^{\Delta}_{\nu+1}(x-1)&=0,\label{E:delta_bessel_PDE_2}
\\
				2\nu J_{\nu}^\Delta(x)-xJ_{\nu-1}^\Delta(x-1)-xJ_{\nu+1}^\Delta(x-1)&=0.\label{E:delta_bessel_3_term}
			\end{align}
\end{corollary}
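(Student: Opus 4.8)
The plan is to obtain all three identities as direct specialisations of the two preceding propositions, taking the solution sequence to be $\mathscr{C}^\Delta_{\nu+n}=J^\Delta_{\nu+n}$. The one genuine input needed is that the bilateral sequence $(J^\Delta_{\nu+n})_n$ really is a solution of $\mathcal{B}_\nu$ in $\mathcal{O}^\mathbb{Z}_\Delta$, i.e.\ that the map $\mathfrak{j}_\Delta^+$ of Example \ref{Eg:bilateral} is well-defined and left $\mathcal{A}_2$-linear. First I would secure this by invoking Corollary \ref{C:Bessel_trans} with the $\mathcal{D}$-module $N=\mathcal{O}_\Delta$ and $\mathfrak{J}_{\nu+n}=J^\Delta_{\nu+n}$ as constructed in Example \ref{Eg:delta_Bessel_fn}: since each $J^\Delta_{\nu+n}$ is by construction a solution of $L_n=(X\partial)^2+X^2-(\nu+n)^2$ in $\mathcal{O}_\Delta$, the corollary guarantees that the sequence is annihilated by the two generators of $\mathcal{B}_\nu$ listed in \eqref{E:bessel_PDE}, so $\mathfrak{j}_\Delta^+$ descends to a well-defined $\mathcal{A}_2$-linear map on the quotient.

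Next I would read off the first two identities. Applying Proposition \ref{P:bilateral_Delta_PDE}, which records the $\mathcal{O}_\Delta$-manifestation of the abstract transmutation relations \eqref{E:Bessel_trans_1}--\eqref{E:Bessel_trans_2}, with $\mathscr{C}^\Delta_{\nu+n}=J^\Delta_{\nu+n}$ turns \eqref{E:delta_bessel_PDE_a} and \eqref{E:delta_bessel_PDE_b} into exactly \eqref{E:delta_bessel_PDE_1} and \eqref{E:delta_bessel_PDE_2}. The only translation to keep straight is the manifestation itself: under $\partial f(x)=\Delta f(x)$ and $Xf(x)=xf(x-1)$ one computes $X\partial f(x)=x\Delta f(x-1)$, which is precisely what produces the shifted arguments $x-1$ in the difference term and in the neighbouring-order terms, while the undifferentiated middle term $\nu\mathfrak{J}_\nu$ retains the argument $x$.

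For the three-term recurrence \eqref{E:delta_bessel_3_term} I would simply subtract: forming \eqref{E:delta_bessel_PDE_1} minus \eqref{E:delta_bessel_PDE_2}, the two copies of $x\Delta J^\Delta_\nu(x-1)$ cancel and the $\pm\,\nu J^\Delta_\nu(x)$ terms combine to $2\nu J^\Delta_\nu(x)$, leaving $2\nu J^\Delta_\nu(x)-xJ^\Delta_{\nu-1}(x-1)-xJ^\Delta_{\nu+1}(x-1)=0$ as claimed. Equivalently, one may cite the three-term recurrence \eqref{E:delta_bessel_3_term_a} under the same substitution $\mathscr{C}^\Delta_{\nu+n}=J^\Delta_{\nu+n}$.

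The step I expect to require the most care is not the algebra but the domain of validity. By Proposition \ref{P:bessel_convergence} the Newton series defining $J^\Delta_\nu(x)$ converges only on $\Re(x)>0$, whereas the corollary asserts the identities for every $x\in\mathbb{C}$. The clean resolution is that the relations hold first in the $D$-module $\mathcal{B}_\nu$ and hence between the functions on the common convergence half-plane $\Re(x)>0$; each term is then continued in $x$ (the functions are assembled from the $\Gamma$-quotients $(x)_{\nu+2k}$, which are meromorphic in $x$), so by the identity theorem the identities persist wherever the continuations are regular. I would make this analytic-continuation remark explicit rather than leave the phrase ``for every $x\in\mathbb{C}$'' unqualified.
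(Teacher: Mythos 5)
Your overall route --- specialise Proposition \ref{P:bilateral_Delta_PDE} and the three-term recurrence to $\mathscr{C}^\Delta_{\nu+n}=J^\Delta_{\nu+n}$ --- is the one the paper gestures at in the sentence preceding the corollary, but it is not the proof the paper actually gives, and the reason is precisely the step you pass over. Corollary \ref{C:Bessel_trans} is an \emph{existence} statement: it produces \emph{some} sequence $(\mathfrak{J}_{\nu+n})$ of solutions of the operators $L_n$ satisfying \eqref{E:Bessel_trans_1}--\eqref{E:Bessel_trans_2}, with a sign/normalisation chosen along the way (``we have chosen the minus sign \dots in accordance with the corresponding formula for the classical Bessel function''). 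It does not assert that the \emph{specific} Newton series $J^\Delta_{\nu+n}$ of \eqref{E:difference_bessel_fn} satisfies those relations with coefficient exactly $1$. Since each $L_n$ has a two-dimensional solution space in $\mathcal{O}_\Delta$ (spanned by $J^\Delta_{\nu+n}$ and $J^\Delta_{-\nu-n}$ when $2\nu\notin\mathbb{Z}$), the transmutation formula only tells you that $(\partial+\nu/X)J^\Delta_\nu=CJ^\Delta_{\nu-1}+DJ^\Delta_{-\nu+1}$ for some constants $C,D$; identifying $C=1$, $D=0$ is exactly the content of \eqref{E:delta_bessel_PDE_1}. Relatedly, asserting that $\mathfrak{j}^+_\Delta$ in Example \ref{Eg:bilateral} is a well-defined $\mathcal{A}_2$-linear map on the quotient $\mathcal{B}_\nu$ is logically equivalent to the first two identities of the corollary, so invoking it here is circular unless you first verify the constants independently.

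The paper closes this gap by a direct argument: it applies the transmutation formula \eqref{E:bessel_transmutation_1} to the Newton series, computes the leading behaviour of $(\partial+\nu/X)J^\Delta_\nu(x)$ as $x\approx 0$, namely $\Gamma(x+1)/\bigl(2^{\nu-1}\Gamma(\nu)\Gamma(x+2-\nu)\bigr)\bigl(1+O(1/x)\bigr)$, compares it with the lowest term of $J^\Delta_{\nu-1}(x)$, and concludes $C=1$ (with no $J^\Delta_{-\nu+1}$ component). Your proof would be complete if you inserted this normalisation computation --- or any equivalent comparison of lowest-order Newton coefficients --- before citing Proposition \ref{P:bilateral_Delta_PDE}. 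The remainder of your argument is sound: the translation $X\partial f(x)=x\Delta f(x-1)$ under the $\mathcal{O}_\Delta$ manifestation is correctly carried out, the subtraction yielding \eqref{E:delta_bessel_3_term} matches the paper, and your closing remark on extending the identities from $\Re(x)>0$ to all of $\mathbb{C}$ by meromorphic continuation of the $\Gamma$-quotients is a sensible precaution that the paper leaves implicit.
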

\medskip

\medskip

\begin{proof} We provide an alternative proof based on the transmutation formulae from Proposition \ref{P:bessel_transmutation}. This more direct approach is similar to the derivation of the Corollary \ref{C:bessel_formulae}. Thus it suffices to prove the \eqref{E:delta_bessel_PDE_1}. Since the right-side of \eqref{E:bessel_transmutation_1} annihilates the $CJ^\Delta_\nu(x)$ for some constant $C$, and the fact that on the Newton series \eqref{E:difference_bessel_fn} gives
	\[
		\begin{split}
		(\partial+\nu/X)J^\Delta_\nu(x)
		&= 
		J_{\nu}^\Delta(x) -J_{\nu}^\Delta(x-1)+\nu J_{\nu}^\Delta(x+1)/(x+1)\\
		&= \frac{\Gamma(x+1)}{2^\nu\Gamma(\nu+1)\Gamma(x+2-\nu)}
		\Big( \frac2\nu+O\big(\frac1x\big)\Big)\\
		&=\frac{\Gamma(x+1)}{2^{\nu-1}\Gamma(\nu)\Gamma(x+2-\nu)}
		\Big( 1+O\big(\frac1x\big)
%		\frac{2}{x-\nu}-\frac{x-\nu}{2(2!) \nu(\nu+1)}-
%		\frac{(x-\nu-1)(x-\nu-2)}{2^2(2!) \nu(\nu+1)}+\cdots
\Big)\\
%		& \approx
%			\frac{C}{2^\nu\Gamma(\nu)\Gamma(-\nu)}\
%			\Big\{ -\frac{1}{\nu}+\frac{1}{2(2!)(\nu+1)}-\frac{\nu+2}{2^2(2!)}+\cdots\Big\}\\
%		&=\frac{C}{2^\nu\Gamma(\nu)\Gamma(1-\nu)}\Big\{1-\frac{\nu}{2(2!)(\nu+1)}+\frac{\nu(\nu+1)}{2^2(2!)}+\cdots\Big\}
		\end{split}
	\]as $x\approx 0$. On the other hand, according to the left-side of \eqref{E:bessel_transmutation_1}, the $(\partial+\nu/X)J^\Delta_\nu(x)$ must be equal to $CJ^\Delta_{\nu-1}(x)$ for some constant $C$. But 
		\[
			 J_{\nu-1}^\Delta(x)\approx \frac{\Gamma(x+1)}{2^{\nu-1}\Gamma(\nu)\Gamma(x+2-\nu)}		
		\]as $x\approx 0$. One immediately deduces that $C=1$ by comparing the above two asymptotes. This establishes the \eqref{E:delta_bessel_PDE_1}. The verification of  \eqref{E:delta_bessel_PDE_2} is similar. One can derive the  \eqref{E:delta_bessel_3_term} by subtracting the  \eqref{E:delta_bessel_PDE_1} and  \eqref{E:delta_bessel_PDE_2}.
		\end{proof}
\medskip

Now we establish that a difference analogue of the generating function $e^{\frac{x}{2}(t-\frac1t)}$ of the classical Bessel function for difference Bessel functions \eqref{E:bessel_classical_gf} is given by the function
	\[
		\big[\frac12\big(t-\frac1t\big)+1\big]^x
	\]and to show that it serves as a generating function to integer order difference Bessel function to \eqref{E:difference_bessel_eqn} in $\mathcal{O}_\Delta$.
\medskip

\begin{theorem}\label{T:delta_bessel_0_gf}
 Let $ |\frac{1}{2}(t-\frac{1}{t})|<1$ and $x\in\mathbb{C}$. Then
		\begin{equation}\label{gf-db-1}
		\Big[\frac{1}{2}(t-\frac{1}{t})+1\Big]^{x}=\sum_{n=-\infty}^{\infty}
		J^{\vartriangle}_{n}(x)\, t^n.
		\end{equation}
\end{theorem}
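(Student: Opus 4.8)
The plan is to rerun the proof of Theorem~\ref{T:Bessel_gf} at the level of $D$-modules, specialising to $\nu=0$ and replacing the ordinary manifestations $\mathcal{O}_{dd}$, $\mathcal{O}^\mathbb{Z}_d$ throughout by the difference manifestations $\mathcal{O}_{\Delta d}$ (Example~\ref{Eg:O_delta_d}) and $\mathcal{O}^\mathbb{Z}_\Delta$ (Example~\ref{Eg:two-seq-functions_2}). The whole identity is read off the commutative square
\begin{equation*}
\begin{tikzcd}[row sep=large, column sep=huge]
\mathcal{B}_0 \arrow{r}{\mathfrak{j}_\Delta} \arrow[swap]{d}{\times\,\E[\frac{X_1}{2}(X_2-\frac{1}{X_2})]} & \mathcal{O}^\mathbb{Z}_\Delta \arrow{d}{\mathfrak{z}_\Delta}\\
\widetilde{\mathcal{A}}_2 \arrow{r}{\times 1} & \mathcal{O}_{\Delta d},
\end{tikzcd}
\end{equation*}
where $\widetilde{\mathcal{A}}_2=\mathcal{A}_2/[\mathcal{A}_2\partial_1+\mathcal{A}_2X_2\partial_2]$, the left arrow is the characteristic map of Theorem~\ref{T:bessel_gen_map_2}, the right arrow is the $z$-transform of Theorem~\ref{T:z-transform-3}, and $\mathfrak{j}_\Delta$ is the map $\times(J^\Delta_n)$ of Example~\ref{Eg:bilateral}. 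Chasing $1\in\mathcal{B}_0$ around the two paths and invoking holonomicity will give the statement.

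Along the top--right path, $\mathfrak{j}_\Delta$ is well defined precisely because $(J^\Delta_n)$ solves $\mathcal{B}_0$ in $\mathcal{O}^\mathbb{Z}_\Delta$: the two generators \eqref{E:bessel_PDE} (with $\nu=0$) annihilate $(J^\Delta_n)$ under the $\mathcal{O}^\mathbb{Z}_\Delta$ structure, which is exactly the content of Proposition~\ref{P:bilateral_Delta_PDE} and the Bohner--Cuchta relations \eqref{E:delta_bessel_PDE_1}--\eqref{E:delta_bessel_PDE_2}; applying $\mathfrak{z}_\Delta$ then produces $\sum_n J^\Delta_n(x)\,t^n$. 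Along the bottom--left path, the characteristic element $S=\E[\frac{X_1}{2}(X_2-\frac{1}{X_2})]$ is a purely algebraic object, so only its manifestation changes: in $\mathcal{O}_{\Delta d}$ the operator $X_1$ acts by $f\mapsto xf(x-1)$ and $\partial_1$ by $\Delta_x$, hence by the difference-exponential computation of Example~\ref{Eg:delta_exp} the Weyl exponential $\E(aX_1)$ with the ``$t$-scalar'' $a=\tfrac12(t-\tfrac1t)$ manifests as the binomial series $(1+a)^x=\big[\tfrac12(t-\tfrac1t)+1\big]^x$, convergent exactly in the region $|\tfrac12(t-\tfrac1t)|<1$ of the hypothesis. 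A one-line check --- the difference analogue of Lemma~\ref{C:classical_app} --- confirms this function solves \eqref{E:PDE_gf_dBessel_0}: writing $u=\tfrac12(t-\tfrac1t)+1$, the first relation reduces to $(u-1)u^x+\tfrac12(\tfrac1t-t)u^x=0$ and the second to $\tfrac{x}{2}(t+\tfrac1t)u^{x-1}-\tfrac{x}{2}(\tfrac1t+t)u^{x-1}=0$, both identities. Since $\mathcal{B}_0$ is holonomic of multiplicity one (Proposition~\ref{P:bessel_holonomic}), the local solution space of \eqref{E:PDE_gf_dBessel_0} in $\mathcal{O}_{\Delta d}$ is one dimensional, so the two images coincide up to a constant $C_0$.

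It remains to fix $C_0$ and to replace the ``$\sim$'' returned by $\mathfrak{z}_\Delta$ by a genuine equality. The scalar is pinned by evaluating at $x=0$: the Newton series \eqref{E:difference_bessel_fn} with $\nu=0$ gives $J^\Delta_0(0)=1$ because $(0)_{2k}=0$ for $k\ge1$, while the left side is also $1$ at $x=0$, so matching the coefficient of $t^0$ forces $C_0=1$. I expect the genuine-convergence step to be the main obstacle, since $\mathfrak{z}_\Delta$ a priori only produces a Borel resummation. The way around it is that, for fixed $x$, $\big[1+\tfrac12(t-\tfrac1t)\big]^x$ is a single-valued analytic function of $t$ on the annulus bounded by the two zeros $t=-1\pm\sqrt2$ of $1+\tfrac12(t-\tfrac1t)$, i.e. on $\sqrt2-1<|t|<\sqrt2+1$ (the winding number of $1+\tfrac12(t-\tfrac1t)$ about the origin vanishes there), and the hypothesis region $|\tfrac12(t-\tfrac1t)|<1$, whose boundary passes through those two points, lies inside this annulus. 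Hence the function has a Laurent expansion converging on the region, whose coefficients decay geometrically and satisfy the same delay-difference recurrences as $(J^\Delta_n)$; combined with $C_0=1$ these coefficients equal $J^\Delta_n(x)$, so the Borel resummation collapses to the ordinary sum and \eqref{gf-db-1} holds as stated. As a corroborating shortcut one may also read the coefficients off directly: expanding $\big(1+\tfrac12(t-\tfrac1t)\big)^x=\sum_{k}\binom{x}{k}2^{-k}(t-\tfrac1t)^k$ and collecting $t^n$ returns $\sum_{\ell\ge0}\tfrac{(-1)^\ell(x)_{n+2\ell}}{2^{n+2\ell}(n+\ell)!\,\ell!}=J^\Delta_n(x)$, exactly \eqref{E:difference_bessel_fn}.
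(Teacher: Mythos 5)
Your closing ``corroborating shortcut'' is in fact the paper's entire proof: the paper establishes \eqref{gf-db-1} by expanding $\big[\frac12(t-\frac1t)+1\big]^x$ as the absolutely convergent binomial series $\sum_k\binom{x}{k}\big[\frac12(t-\frac1t)\big]^k$, expanding each power of $\frac12(t-\frac1t)$, and interchanging the order of summation to read off the coefficient of $t^n$ as the Newton series \eqref{E:difference_bessel_fn}. This is the same series-manipulation argument used for the classical identity in Watson, and it is elementary and self-contained. Your main $D$-module route is instead the argument the paper reserves for general $\nu$ (Theorem \ref{T:difference_Bessel_gf}), and it buys uniformity across orders at the cost of a weaker conclusion, which is where the gap lies.

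The gap is in the step ``the local solution space of \eqref{E:PDE_gf_dBessel_0} in $\mathcal{O}_{\Delta d}$ is one dimensional, so the two images coincide up to a constant $C_0$,'' followed by pinning $C_0$ at $x=0$. In the difference manifestation the first generator acts as a shift equation in $x$, so the general solution of the system is $P(x)\big[\frac12(t-\frac1t)+1\big]^x$ with $P$ an arbitrary $1$-periodic function of $x$: the solution set is one dimensional over the ring of $1$-periodic functions, not over $\mathbb{C}$. The paper is explicit about this — Theorem \ref{T:difference_Bessel_gf} asserts only ``there exists a $1$-periodic function $C_\nu(x)$.'' Evaluating at $x=0$ (or matching the $t^0$ coefficient there) determines $C_0$ only at the integers and cannot exclude a nonconstant $1$-periodic factor for general complex $x$. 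Your annulus/Laurent-coefficient argument does not close this either as written, since identifying the Laurent coefficients with $J^\Delta_n(x)$ from the recurrences again requires matching a whole function of $x$, not a single value. The paper's logical order is the reverse of yours: it proves Theorem \ref{T:delta_bessel_0_gf} by the direct expansion first, and only then uses it (in the remark after Theorem \ref{T:delta_bessel_gf}) to conclude $C_0(x)\equiv1$. So the ``shortcut'' should be promoted to the proof itself; the commutative-diagram argument alone does not deliver the stated equality.
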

\medskip

\begin{proof} Since 	$| \frac{1}{2}(t-\frac{1}{t})|<1$, so the series
		\[
		\Big[\frac{1}{2}(t-\frac{1}{t})+1\Big]^{x}=\sum_{k=0}^{\infty} \binom{x}{k} 
		\big[ \frac{1}{2}(t-\frac{1}{t})\big]^{k}
		\]is  absolutely convergent. 
		Writing
		\[
		\Big[ \frac{1}{2}(t-\frac{1}{t})\Big]^{k}= \sum_{s=0}^{k}\binom{k}{s}
		(-\frac{1}{2t})^s (\frac{t}{2})^{k-s}.
		\]
		Then
		\[
		\Big[\frac{1}{2}(t-\frac{1}{t})+1\Big]^{x}=
		\sum_{k=0}^{\infty}(x)_{k} \sum_{s=0}^{k}\frac{1}{(k-s)! s!}
		(-\frac{1}{2t})^s (\frac{t}{2})^{k-s}.
		\]
		It is clear that the power of $t$ goes from $-\infty$ to $\infty$, as $k$ goes to infinity. Then we exchange the order of summations to arrive at
		\[
		\sum_{-\infty}^{\infty} t^n\sum_{s=0}^{\infty}(-1)^{s}
		\frac{(x)_{n+2s}}{s!(n+s)!2^{n+2s}}
		=\sum_{-\infty}^{\infty} J^{\Delta}_{n}(x)t^n.
		\]
	\end{proof}
\medskip

The series manipulation technique used in the above proof is in the same spirit of that for the classical Bessel generating function \eqref{E:bessel_classical_gf} presented in \cite[\S2.1]{Watson1944}. In order to deal with the generating function when $\nu\not=0$, we first need to study the asymptotic behaviour of $J^\Delta_{\nu+n}$ as $n\to\infty$. We apply the same technique of Banach algebra to $(J^\Delta_{n+\nu})$ as in the proof of Theorem \ref{T:Bessel_Gevrey} with suitable modification. 
\medskip

\begin{theorem}[(\bf{Asymptotic behaviour})]\label{T:delta_Bessel_Gevrey}
The sequence $(J^\Delta_{n+\nu})$ is a uniformly 1-Gevrey compacta solution of $\mathcal{B}_\nu$ in $\mathcal{O}_\Delta^\mathbb{Z}$.
\end{theorem}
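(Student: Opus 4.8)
The plan is to transcribe the proof of Theorem~\ref{T:Bessel_Gevrey} almost verbatim, the one genuinely new ingredient being that the commutative algebra of functions on a compact set must be replaced by a \emph{non-commutative} operator algebra carrying the shift inherited from $\mathcal{O}_\Delta$. That $(J^\Delta_{\nu+n})$ is a solution of $\mathcal{B}_\nu$ in $\mathcal{O}_\Delta^\mathbb{Z}$ is already recorded in Example~\ref{Eg:bilateral} via the map \eqref{E:j-delta-map-bessel}, so only the growth estimate needs proof. I would start from the three-term recurrence \eqref{E:delta_bessel_3_term} (the specialisation of \eqref{E:delta_bessel_3_term_a} to $\mathscr{C}^\Delta_{\nu+n}=J^\Delta_{\nu+n}$); after replacing $\nu$ by $\nu+n+1$ it reads
\[
2(\nu+n+1)\,J^\Delta_{\nu+n+1}(x)=x\big[J^\Delta_{\nu+n}(x-1)+J^\Delta_{\nu+n+2}(x-1)\big].
\]
Writing $y_n:=J^\Delta_{\nu+n}$ and letting $E$ denote the forward shift $(Eg)(x)=g(x+1)$, I solve for $y_{n+2}$ and send $x\mapsto x+1$ to obtain the clean second-order recurrence in $n$
\[
y_{n+2}=\tfrac{2(\nu+n+1)}{x+1}\,E\,y_{n+1}-y_n .
\]
The decisive contrast with the classical case is that the leading coefficient is now the \emph{operator} $\tfrac{2(\nu+n+1)}{x+1}E$, not a multiplication by a function; fixing $x$ no longer closes the recurrence, and this is exactly why the operator-valued Poincar\'e--Perron theorem, rather than a scalar one, is indispensable.

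Next I normalise as in Theorem~\ref{T:Bessel_Gevrey}. Setting $Y_n:=y_n/n!$ converts the recurrence into
\[
Y_{n+2}=\tfrac{2(\nu+n+1)}{(n+2)(x+1)}\,E\,Y_{n+1}-\tfrac{1}{(n+1)(n+2)}\,Y_n ,
\]
whose operator coefficients converge in norm, as $n\to\infty$, to $A:=\tfrac{2}{x+1}E$ and to $0$. The limiting equation is therefore $Y_{n+2}-A\,Y_{n+1}=0$, with characteristic factorisation $\lambda(\lambda-A)=0$ and commuting roots $0$ and $A$. To run Theorem~\ref{T:PP} I fix a compact $K\subset\{\Re x>0\}$, enclose it in a shift-stable half-strip $\Omega=\{\Re x>\sigma,\ |\Im x|<M\}$ with $0<\sigma<\min_K\Re x$, and let $\mathbb{B}$ be the closed subalgebra of bounded operators on the space of functions on $\Omega$ generated by $E$ and the bounded multipliers. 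On a half-strip one checks $\|E\|\le1$ and $\|A\|\le 2/(\sigma+1)$, and since $A\neq 0$ the two roots $0,A$ are distinct in norm. Applying the Banach-algebra Poincar\'e--Perron theorem (Theorem~\ref{T:PP}, whose proof carries over verbatim to a sequence in the Banach module of functions on $\Omega$ with coefficients in $\mathbb{B}$) to $(Y_n)$ gives $\|Y_{n+1}\|/\|Y_n\|\to\|\lambda_j\|\in\{0,\|A\|\}$, hence $\limsup_n\|Y_n\|^{1/n}<\infty$ and $\|J^\Delta_{\nu+n}\|_\Omega=O(D^n\,n!)$ for some $D>0$. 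Restricting to $K\subset\Omega$ yields the uniform $1$-Gevrey bound on $K$ in the sense of Remark~\ref{R:gervey}, and as $K$ was arbitrary the sequence is uniformly $1$-Gevrey compacta.

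The main obstacle is not the algebra but ensuring the $y_n=J^\Delta_{\nu+n}$ are genuine elements of the chosen space: the shift $E$ pushes us off every bounded set, and iterating the recurrence expresses $y_n(x)$ through values at $x,x+1,\dots,x+O(n)$, so a shift-stable region is unavoidable. By Proposition~\ref{P:bessel_convergence} the $J^\Delta_{\nu+n}$ are only guaranteed to converge locally uniformly on $\{\Re x>0\}$, and a plain sup norm over $\Omega$ will in general be infinite because of exponential growth in the imaginary direction. I would therefore either compare the Newton series \eqref{E:difference_bessel_fn} with the classical $J_{\nu+n}$ as $\Re x\to+\infty$ to bound the growth along the strip, or, more robustly, replace the sup norm on $\Omega$ by a polynomially weighted norm $\|f\|_w=\sup_\Omega|f(x)|(1+|x|)^{-s}$, under which $E$ and the multiplier $1/(x+1)$ stay bounded while the $J^\Delta_{\nu+n}$ acquire finite norm once their at-most-polynomial growth on $\Omega$ is verified. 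With this technical point settled, the Poincar\'e--Perron step is a direct transcription of the proof of Theorem~\ref{T:Bessel_Gevrey}, with $\|\cdot\|$ in place of $|\cdot|$ and $E$ accounting for the backward shift in the argument.
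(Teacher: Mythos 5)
Your route is genuinely different from the paper's, and as written it has an unresolved gap at exactly the point you flag as ``the main obstacle.'' The paper does not touch the difference three-term recurrence at all. Instead it exploits the Newton transform of Theorem~\ref{T:Newton_trans}: since $\mathfrak{N}$ is left $\mathcal{A}$-linear and sends $J_{\nu+n}$ to $J^\Delta_{\nu+n}$, one fixes a compact $K$, defines a transported norm $|\|f|\|:=\|\mathfrak{N}(f)\|_{L^\infty,K}$ on $\mathcal{O}_d$, and simply re-runs the proof of Theorem~\ref{T:Bessel_Gevrey} --- i.e.\ the Poincar\'e--Perron argument applied to the \emph{classical} recurrence \eqref{E:modified_bessel_3_term}, whose coefficients are ordinary multiplication operators --- under this new norm. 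The conclusion $|\|J_{\nu+n}|\|\le Cn!\,D^n$ then reads off as $\|J^\Delta_{\nu+n}\|_{L^\infty,K}\le Cn!\,D^n$. The shift operator never appears, no unbounded domain is needed, and finiteness of all norms is automatic because $K$ is compact.

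By confronting the shifted recurrence directly you are forced onto a shift-stable unbounded region $\Omega$, and there the argument is not closed: Proposition~\ref{P:bessel_convergence} only gives locally uniform convergence of the Newton series on $\Re x>0$, so you have no a priori bound guaranteeing $\sup_\Omega|J^\Delta_{\nu+n}|<\infty$ (indeed the binomial-type behaviour of $(x)_{\nu+2k}$ grows exponentially in $|\Im x|$), and neither of your two proposed remedies (comparison with $J_{\nu+n}$ as $\Re x\to+\infty$, or a polynomially weighted norm) is carried out; the weighted-norm fix in particular still requires you to \emph{prove} at most polynomial growth of every $J^\Delta_{\nu+n}$ on $\Omega$, which is essentially the estimate you are trying to establish. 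A second, smaller gap is structural: Theorem~\ref{T:PP} is stated for a sequence $(y_n)$ \emph{in} the Banach algebra $\mathbb{B}$, whereas your $y_n$ are functions acted on by an operator algebra containing $E$; identifying $y_n$ with the multiplication operator $M_{y_n}$ does not commute with $E$ (one has $EM_f=M_{Ef}E$, not $M_{Ef}$), so you genuinely need a Banach-module version of Poincar\'e--Perron, asserted but not checked. If you want to salvage the direct approach, the cleanest repair is in effect to rediscover the paper's: pull the whole problem back through $\mathfrak{N}$ so that the recurrence loses its shift and the domain can be taken compact.
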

\medskip

\begin{proof}
The proof makes use of the Newton transform $\mathfrak{N}$ which is described in Theorem \ref{T:Newton_trans}. Fix a compact $K\subset\mathbb{C}$ and for each $f\in\mathcal{O}_d$, one defines a norm by
\[
	|\| f |\| =\|\mathfrak{N}(f)\|_{L^\infty,K},
\]
where the right-side is the sup norm on the compact set $K$.
Then $(\mathcal{O}_d,|\|\cdot |\|)$ becomes a Banach algebra which can be verified easily.

The proof of Theorem \ref{T:Bessel_Gevrey} under this alternative norm shows that there exists $C>0$ such that
\[
	|\|J_{\nu+n} |\| \leq Cn!(\dfrac{2}{|\|x |\|})^n\quad\mbox{for large }n.
\]
Consequently, for large $n$,
\[
\|J^\Delta_{\nu+n}\|_{L^\infty,K}=\|\mathfrak{N}(J_{\nu+n})\|_{L^\infty,K}=|\|J_{\nu+n}|\|\leq Cn!(\dfrac{2}{|\|x|\|})^n.
\]
\end{proof}

\begin{remark}
Similarly, the sequence $(J^\Delta_{\nu-n})$ is also 1-Gevrey. The same is true if $J^\Delta$ is replaced by $\mathscr{C}^\Delta$. As a consequence, the $z$-transform of the sequence $(J^\Delta_{\nu+n})$ is a well-defined analytic function in two variables.
\end{remark}
\medskip

\begin{theorem}\label{T:difference_Bessel_gf} Let $\nu\in\mathbb{C}$. 
	\begin{enumerate}
		\item Let $(\mathscr{C}^\Delta_{n+\nu})_n$ be a bilateral sequence of analytic functions which is a solution of the Bessel module $\mathcal{B}_\nu$ in $\mathcal{O}_{\Delta}^\mathbb{Z}$. Then there exists a $1$-periodic function $C_\nu$ such that
	\begin{equation}\label{E:delta_bessel_gf}
				C_\nu(x)\, t^{-\nu}\big[\dfrac{1}{2}(t-\dfrac{1}{t})+1\big]^x
				\sim\sum_{n=-\infty}^\infty \mathscr{C}_{\nu+n}^\Delta (x)\, t^n,
			\end{equation}
			where the symbol $\sim$ means that the left-hand side is the Borel resummation \footnote{Definition \ref{D:borel}} of the right-hand side whenever it diverges.
		\item Moreover,  the manifestation of the holonomic system of PDEs \eqref{E:bessel_PDE} in $\mathcal{O}_{\Delta d}$ in Example \ref{Eg:O_delta_d}  		defined by \eqref{E:O_delta_d}  is given by
	\begin{equation}\label{E:PDE_dbessel}
		y(x+1, t)-y(x, t)=
		\frac{1}{2}(t-\frac{1}{t}) y(x, t),\quad
		\nu y(x,t) +ty_t(x,t)=\frac{x}{2}\big(t+\frac{1}{t}\big)y(x-1, t).
	\end{equation}
	\end{enumerate}
\end{theorem}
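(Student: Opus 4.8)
The plan is to follow the proof of Theorem~\ref{T:Bessel_gf} verbatim, replacing the classical manifestation $\mathcal{O}^\mathbb{Z}\to\mathcal{O}_{dd}$ by the difference one $\mathcal{O}_\Delta^\mathbb{Z}\to\mathcal{O}_{\Delta d}$. Part (ii) comes first and is the bookkeeping that justifies the rest: I would apply the difference manifestation \eqref{E:O_delta_d} of Example~\ref{Eg:O_delta_d} to the two generators of $\mathcal{B}_\nu$ in the reduced form of Lemma~\ref{T:bessel_gf_mod}, namely $\partial_1+\tfrac12(1/X_2-X_2)$ and $(\nu+X_2\partial_2)-\tfrac12 X_1(1/X_2+X_2)$. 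Using $\partial_1 f(x,t)=f(x+1,t)-f(x,t)$, $X_1 f(x,t)=xf(x-1,t)$, $\partial_2 f=f_t$ and $X_2 f=tf$, the first generator yields $y(x+1,t)-y(x,t)+\tfrac12(1/t-t)\,y(x,t)=0$ and the second yields $\nu y(x,t)+ty_t(x,t)-\tfrac{x}{2}(t+1/t)\,y(x-1,t)=0$, which after rearrangement are exactly the equations \eqref{E:PDE_dbessel}.

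For part (i) I would assemble the commutative square
\[
\begin{tikzcd} [row sep=large, column sep=huge]
\mathcal{B}_\nu \arrow{r}{\mathfrak{j}_\Delta} \arrow[swap]{d}{\times\E[\frac{X_1}{2}(X_2-\frac{1}{X_2})]} & \mathcal{O}_\Delta^\mathbb{Z} \arrow{d}{\mathfrak{z}_\Delta}\\
\widetilde{\mathcal{A}}_2 \arrow{r}{\times t^{-\nu}} & \mathcal{O}_{\Delta d}
\end{tikzcd}
\]
with $\widetilde{\mathcal{A}}_2=\mathcal{A}_2/[\mathcal{A}_2\partial_1+\mathcal{A}_2(X_2\partial_2+\nu)]$, using the characteristic $S=\E[\frac{X_1}{2}(X_2-\frac{1}{X_2})]$ from Theorem~\ref{T:bessel_gen_map_2}, the solution map $\mathfrak{j}_\Delta$ of Example~\ref{Eg:bilateral}, and the Borel-resummed $z$-transform $\mathfrak{z}_\Delta$ of Theorem~\ref{T:z-transform-3}. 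The top-right branch sends $1\in\mathcal{B}_\nu$ to the Borel resummation of $\sum_n \mathscr{C}^\Delta_{\nu+n}(x)\,t^n$, which is a genuine element of $\mathcal{O}_{\Delta d}$ precisely because of the uniform $1$-Gevrey bound of Theorem~\ref{T:delta_Bessel_Gevrey}; being the image of the annihilated element $1$ under an $\mathcal{A}_2$-linear map, it solves \eqref{E:PDE_dbessel}. Along the bottom-left branch, since $X_2$ and $1/X_2$ act in $\mathcal{O}_{\Delta d}$ as multiplication by $t$ and $1/t$ and commute with the shift operator $X_1$, the difference-exponential computation of Example~\ref{Eg:delta_exp} (applied with parameter $\tfrac12(t-1/t)$) shows that $S$ acting on $t^{-\nu}$ equals $t^{-\nu}[\tfrac12(t-1/t)+1]^x$; a one-line substitution, the difference analogue of Lemma~\ref{C:classical_app}, confirms this function also solves \eqref{E:PDE_dbessel}.

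It then remains to compare the two solutions, and here lies the genuine departure from the classical argument: in the difference manifestation the kernel of $\partial_1$ is not $\mathbb{C}$ but the ring $\mathcal{P}$ of $1$-periodic analytic functions, so the multiplicity-one holonomicity of $\mathcal{B}_\nu$ must be read as saying the local solution space of \eqref{E:PDE_dbessel} is free of rank one over $\mathcal{P}$. Concretely, the first equation of \eqref{E:PDE_dbessel} is a first-order difference equation in $x$ whose general solution is $y=P(x,t)\,[\tfrac12(t-1/t)+1]^x$ with $P$ $1$-periodic in $x$; substituting into the second equation, the two terms carrying the explicit factor $x$ cancel by periodicity $P(x-1,t)=P(x,t)$, leaving $tP_t+\nu P=0$, whence $P(x,t)=C_\nu(x)\,t^{-\nu}$ with $C_\nu$ necessarily $1$-periodic. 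Applying this to the two solutions produced above gives \eqref{E:delta_bessel_gf}.

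The main obstacle I anticipate is not any single computation but the correct identification of the ring of ``constants'': one must take it to be $\mathcal{P}$ rather than $\mathbb{C}$ so that the holonomicity/uniqueness step delivers a $1$-periodic prefactor $C_\nu(x)$ instead of a scalar, and one must check that $\mathfrak{z}_\Delta$ genuinely lands in $\mathcal{O}_{\Delta d}$, which rests entirely on the Gevrey estimate of Theorem~\ref{T:delta_Bessel_Gevrey}. Once these two points are in place, the remainder parallels Theorem~\ref{T:Bessel_gf} almost word for word.
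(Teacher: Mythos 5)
Your proposal is correct and follows essentially the same route as the paper: the same commutative square built from the characteristic $S=\E[\frac{X_1}{2}(X_2-\frac{1}{X_2})]$, the maps $\mathfrak{j}_\Delta$ and $\mathfrak{z}_\Delta$, the Gevrey estimate of Theorem~\ref{T:delta_Bessel_Gevrey} to make the Borel resummation land in $\mathcal{O}_{\Delta d}$, and the explicit solution of the two-equation system (the paper's Lemma~\ref{E:classical_app_2}) to identify the closed form. Your explicit observation that the kernel of $\partial_1$ in the difference manifestation is the ring of $1$-periodic functions, so that the holonomicity argument yields a $1$-periodic prefactor $C_\nu(x)$ rather than a scalar, is a point the paper handles somewhat tersely, and your treatment of it is a welcome clarification rather than a deviation.
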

\medskip

To prove the above theorem, we need the following lemma, which serves the purpose of solving the holonomic system of PDEs that arise from $\mathcal{B}_\nu$ in $\mathcal{O}_{\Delta d}$.
\begin{lemma}%[\textbf{(Classical appearance of the generating function)}]
\label{E:classical_app_2}
	\[
		\begin{array}{rcl}
\dfrac{\mathcal{A}_2}{\mathcal{A}_2[\partial_1+\frac{1}{2}(1/{X_2}-X_2)]+\mathcal{A}_2[2(\nu+X_2\partial_2)-X_1(X_2+1/{X_2})]}&\stackrel{\times t^{-\nu}[\frac{1}{2}(t-\frac{1}{t})+1]^x }{\longrightarrow}
&\mathcal{O}_{\Delta d}
		\end{array}
	\]is a left $\mathcal{A}_2$-linear map.
\end{lemma}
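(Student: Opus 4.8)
The plan is to verify that the generating function $y(x,t) := t^{-\nu}\big[\tfrac12(t-\tfrac1t)+1\big]^{x}$, regarded as an element of $\mathcal{O}_{\Delta d}$, is annihilated by each of the two generators of the left ideal in the denominator, where the operators act through the $\mathcal{A}_2$-structure \eqref{E:O_delta_d} of Example~\ref{Eg:O_delta_d}. Granting this, the assignment $1\mapsto y$ extends uniquely to a left $\mathcal{A}_2$-linear map $\mathcal{A}_2\to\mathcal{O}_{\Delta d}$, and because the two generators send $y$ to $0$, this map factors through the quotient; the asserted $\mathcal{A}_2$-linearity is then immediate. Throughout I abbreviate $w=w(t):=\tfrac12(t-\tfrac1t)+1$, so that $y(x,t)=t^{-\nu}w^{x}$, and I record the two shift identities $y(x\pm1,t)=w^{\pm1}y(x,t)$, which turn the forward-difference and $X_1$-delay actions into multiplication by powers of $w$.

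First I would dispatch the first generator $\partial_1+\tfrac12(1/X_2-X_2)$. Under \eqref{E:O_delta_d} the operator $\partial_1$ is the forward difference, so $\partial_1 y=y(x+1,t)-y(x,t)=(w-1)y=\tfrac12(t-\tfrac1t)\,y$, while $\tfrac12(1/X_2-X_2)$ acts by multiplication by $\tfrac12(\tfrac1t-t)$. These two contributions are negatives of each other, so the first generator kills $y$ at once.

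Next I would treat the second generator $2(\nu+X_2\partial_2)-X_1(X_2+1/X_2)$. A direct computation of $y_t$ using $2t\,w_t=t+\tfrac1t$ gives $2t\,y_t=\big(-2\nu+x(t+\tfrac1t)w^{-1}\big)y$, whence $2(\nu+X_2\partial_2)y=2\nu y+2t\,y_t=x(t+\tfrac1t)w^{-1}y$, the $\nu$-terms cancelling. On the other hand $(X_2+1/X_2)$ multiplies by $t+\tfrac1t$, and then $X_1$ both inserts the factor $x$ and applies the delay $x\mapsto x-1$, so $X_1(X_2+1/X_2)y=x(t+\tfrac1t)y(x-1,t)=x(t+\tfrac1t)w^{-1}y$ by the shift identity. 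The two expressions coincide, so the second generator also annihilates $y$, completing the verification.

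The delicate point — and the step I expect to require the most care — is the correct bookkeeping of the non-standard, order-sensitive action of $X_1(X_2+1/X_2)$: one must apply $(X_2+1/X_2)$ as multiplication by $t+1/t$ first and only then apply $X_1$, which simultaneously produces the prefactor $x$ and the \emph{delay} $y(x,t)\mapsto y(x-1,t)$. Matching this delayed term against the $w^{-1}$ coming from the derivative computation (via $2t\,w_t=t+1/t$) is exactly what makes both the $\nu$-dependent and the $x$-dependent parts cancel; any slip in the action order or in the shift identity $y(x-1,t)=w^{-1}y$ would break the cancellation.
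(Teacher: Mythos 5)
Your verification is correct and follows essentially the same route as the paper: both reduce the claim to checking that $t^{-\nu}\big[\tfrac12(t-\tfrac1t)+1\big]^x$ satisfies the two delay-differential equations obtained by manifesting the generators in $\mathcal{O}_{\Delta d}$, and your bookkeeping of the order-sensitive action of $X_1(X_2+1/X_2)$ and the shift identity $y(x-1,t)=w^{-1}y(x,t)$ is exactly right. The only difference is that the paper solves the system from scratch (first equation forces $y=g(t)w^x$, second forces $g(t)=C_\nu t^{-\nu}$), thereby also recording uniqueness up to a constant, whereas you verify the given candidate directly — which suffices for the statement as written.
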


\begin{proof} It follows from the two generators of the $\mathcal{B}_\nu$ defined above that we solve the system of PDEs
	\[
		 \partial_1 y+\frac{1}{2}(\dfrac{1}{X_2}-X_2)y=0,\qquad
		 (\nu+ X_2\partial_2)y-\frac{X_1}{2}(\dfrac{1}{X_2}+X_2)y=0
	\]for an analytic function $y=y(x,\, t)$ for the current choices of $\partial_j,\, X_j,\ j=1,\, 2$. That is,
	\begin{equation}\label{E:PDE_dBessel_gf}
		y(x+1, t)-y(x, t)=
		\frac{1}{2}(t-\frac{1}{t}) y(x, t),\qquad
		 \nu y(x,t) +t\frac{\partial y}{dt}(x,t)=\frac{x}{2}( t+\frac{1}{t})y(x-1, t).
	\end{equation}
	The first equation yields $y(x,t)=g(t)\big[\dfrac{1}{2}(t-\dfrac{1}{t})+1\big]^x$ for some $g$. Substituting this $y$ into the second differential-difference equation yields $g(t)=C_\nu t^{-\nu}$ for some $C_\nu\not=0$.
\end{proof}

\noindent\textit{Proof of Theorem~\ref{T:difference_Bessel_gf}.} Recall the maps $\mathfrak{j}_\Delta$ and $\mathfrak{z}_\Delta$ as defined in \eqref{E:general-delta-map-bessel} and \eqref{E:z-transform-3} respectively. We have the diagram
			\begin{equation}%\label{E:commute-1}
			  \begin{tikzcd} [row sep=huge, column sep=huge]
					    \mathcal{B}_\nu \arrow{r}{\mathfrak{j}_\Delta} 
					    \arrow[swap]{d}{\times\E\big[\frac{X_1}{2}\big(X_2-\frac{1}{X_2}\big)\big]}
					    %\arrow[swap]{dr}{\mathfrak{g}_\Delta} 
					    & \mathcal{O}^{\mathbb{Z}}_\Delta 
					    \arrow{d}{\mathfrak{z}_\Delta} \\
     						\widetilde{A}_2 \arrow{r}{\times t^{-\nu}}& \mathcal{O}_{\Delta d}
				  \end{tikzcd}
				\end{equation}
in which $\widetilde{A}_2:=\overline{\mathcal{A}_2/\big[\mathcal{A}_2\partial_1+\mathcal{A}_2(X_2\partial_2+\nu)\big]}$. Since the above diagram commutes up to a periodic multiple, the sum $\sum_n \mathscr{C}_{\nu+n}^\Delta (x)\, t^n$, being the image of $1$ in $\mathcal{O}_{\Delta d}$ via the top right path, is also a formal solution to the system of PDEs \eqref{E:bessel_PDE}. Now $t^{-\nu}\big[\dfrac{1}{2}(t-\dfrac{1}{t})+1\big]^x $ is a solution to \eqref{E:bessel_PDE} because of Lemma~\ref{E:classical_app_2}. Since $\mathcal{B}_\nu$ is holonomic, the $\mathbb{C}$-dimension of the local solution space of \eqref{E:bessel_PDE} equals the multiplicity of $\mathcal{B}_\nu$, which is one. So \eqref{E:delta_bessel_gf} holds up to a complex $1$-periodic function $C_\nu(x)$.		
\hfill\qed
\medskip

\begin{theorem}\label{T:delta_bessel_gf}
	\begin{enumerate}
		\item 	Let $\Re(x)<\Re(\nu)$. Then
\begin{equation}\label{gf-db-2}
e^{i\pi\nu}
\frac{\sin(x-\nu)\pi}{\sin(\pi x)}\,
t^{-\nu}\big[\frac{1}{2}(t-\frac{1}{t})+1\big]^{x}
{\color{blue}\sim} \sum_{n=-\infty}^{\infty}
J^{\Delta}_{n+\nu}(x)\,t^n,
\end{equation}
where the notation ``$\sim$" means that the left-side is the Borel-resummation of the right-side.
	\item Moreover, the the generating function from part \textrm{(i)} satisfies the system of PDE \eqref{E:PDE_dBessel_gf}\footnote{See also the Appendix \ref{SS:PDE_list}.}
	 \end{enumerate}
\end{theorem}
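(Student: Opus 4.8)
The plan is to handle the two parts separately, reducing the whole theorem to the single task of pinning down the $1$-periodic prefactor. Part (ii) requires no new work: the sequence $(J^\Delta_{\nu+n})$ is a solution of $\mathcal{B}_\nu$ in $\mathcal{O}^\mathbb{Z}_\Delta$ by Example~\ref{Eg:bilateral} and Proposition~\ref{P:bilateral_Delta_PDE}, so Theorem~\ref{T:difference_Bessel_gf}(ii) immediately gives that its $z$-transform satisfies the manifested system \eqref{E:PDE_dBessel_gf} in $\mathcal{O}_{\Delta d}$. For part (i), Theorem~\ref{T:difference_Bessel_gf}(i) already supplies
\[
	\sum_{n=-\infty}^\infty J^\Delta_{\nu+n}(x)\,t^n \sim C_\nu(x)\,t^{-\nu}\Big[\tfrac12\big(t-\tfrac1t\big)+1\Big]^x
\]
for some $C_\nu(x)$ that is $1$-periodic in $x$. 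Thus the entire content of the theorem is the identification $C_\nu(x)=e^{i\pi\nu}\tfrac{\sin(x-\nu)\pi}{\sin(\pi x)}$, which I would establish by an honest analytic computation rather than by the algebra of the commutative diagram, since the diagram alone cannot detect the resummation constant.

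The route I would take is to push the classical result through the Newton transform $\mathfrak{N}$ of Theorem~\ref{T:Newton_trans}. Recall that $\mathfrak{N}\colon\mathcal{O}_d\to\mathcal{O}_\Delta$ is left $\mathcal{A}$-linear and that $\mathfrak{N}(J_{\nu+n})=J^\Delta_{\nu+n}$, the identity that underlies the proof of Theorem~\ref{T:delta_Bessel_Gevrey}. Applying $\mathfrak{N}$ in the Bessel-argument variable $x$ to the classical generating function \eqref{E:gf_J_nu} and its Schl\"afli--Sonine coefficient representation \eqref{E:Sonine}, and using that $\mathfrak{N}$ sends the Weyl exponential $\E(bx)$ (as a function of $x$) to the difference exponential $(1+b)^{x}$ of Example~\ref{Eg:delta_exp} with $b=\tfrac12(t-\tfrac1t)$, transforms $t^{-\nu}\exp[\tfrac{x}{2}(t-\tfrac1t)]$ formally into $t^{-\nu}[\tfrac12(t-\tfrac1t)+1]^x$. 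Extracting the $n$-th coefficient on the difference side along a Hankel contour, forced by the branch cut of $t^{-\nu}$, then produces a candidate integral for $J^\Delta_{\nu+n}(x)$; comparing it with the by-product integral obtained by extracting the residue from the claimed generating function (Theorem~\ref{T:integral rep of db}) isolates $C_\nu(x)$.

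The main obstacle is exactly the step where $\mathfrak{N}$ is interchanged with the contour integral. The difference exponential $(1+b)^x$ is represented by a convergent Newton series only where $|b|=\bigl|\tfrac12(t-\tfrac1t)\bigr|<1$, whereas the Hankel contour in $t$ necessarily passes through the region $|b|\ge 1$; hence the naive interchange is invalid, and the correct evaluation requires deforming the contour and accounting for the branch discontinuities of the multivalued integrand $t^{-\nu-1}[\tfrac12(t-\tfrac1t)+1]^x$. Comparing the two admissible Hankel contours — the one emanating from $+\infty$ (as in the resulting representation of Theorem~\ref{T:integral rep of db}) against the one emanating from $-\infty$ (as in \eqref{E:Sonine}) — the jump of the non-integer powers across the cut is precisely what manufactures the factor $e^{i\pi\nu}\tfrac{\sin(x-\nu)\pi}{\sin(\pi x)}$, and the hypothesis $\Re(x)<\Re(\nu)$ is what secures convergence of the $+\infty$ Hankel integral.

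Finally I would record two consistency checks to corroborate the answer. Setting $\nu=0$ gives $C_0(x)\equiv 1$, which recovers Theorem~\ref{T:delta_bessel_0_gf}; and the integer shift $\nu\mapsto\nu+1$, which forces $C_{\nu+1}(x)=C_\nu(x)$ directly from the form of the generating function, is compatible with the fact that $e^{i\pi\nu}\tfrac{\sin(x-\nu)\pi}{\sin(\pi x)}$ is $1$-periodic in both $x$ and $\nu$. These do not by themselves determine $C_\nu(x)$ at non-integer $\nu$, which is why the contour analysis above is genuinely needed, but they provide a useful check that the extracted factor is the correct one.
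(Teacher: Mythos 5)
Your reduction is sound and matches the paper's: part (ii) is immediate from Theorem~\ref{T:difference_Bessel_gf}, part (i) reduces to identifying the $1$-periodic prefactor $C_\nu(x)$, and the ultimate arbiter is the difference Schl\"afli--Sonine integral of Theorem~\ref{T:integral rep of db} (whose proof is independent, so no circularity). The problem is the bridge you propose. Your plan is to apply $\mathfrak{N}$ in the $x$-variable to the classical generating function \eqref{E:gf_J_nu} and to \eqref{E:Sonine}, note that the naive interchange of $\mathfrak{N}$ with the Hankel integral is invalid, and assert that "deforming the contour and accounting for the branch discontinuities" manufactures $e^{i\pi\nu}\tfrac{\sin(x-\nu)\pi}{\sin(\pi x)}$. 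That assertion is exactly the entire analytic content of the theorem, and you have not shown it. Worse, the naive computation is internally consistent and gives the \emph{wrong} answer: since $\mathfrak{N}(J_{\nu+n})=J^\Delta_{\nu+n}$ termwise and $\mathfrak{N}$ sends $e^{bx}$ to $(1+b)^x$, a formal application of $\mathfrak{N}$ to both sides of \eqref{E:gf_J_nu} yields $C_\nu(x)\equiv 1$. So the factor must come entirely from the \emph{failure} of the interchange (equivalently, from the region of the $t$-contour where $\Re\bigl(\tfrac12(t-\tfrac1t)+1\bigr)\le 0$ and the $\mathfrak{N}$-integral of $e^{\frac{x}{2}(t-1/t)}$ diverges or changes branch), and quantifying that failure is at least as hard as proving Theorem~\ref{T:integral rep of db} from scratch. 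Without that computation your argument does not close.

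For contrast, the paper's proof sidesteps the interchange problem entirely by working on the difference side from the start: starting from $C_\nu(x)\,t^{-\nu}[\tfrac12(t-\tfrac1t)+1]^x\sim\sum_n J^\Delta_{\nu+n}(x)t^n$, it subtracts the $n=0$ term, divides by $t$, observes that the right side has an anti-$\partial_2$ and hence (by $\mathcal{A}$-linearity of Borel resummation) so does the left, so the closed Hankel-contour integral of the non-pole part vanishes and only the simple pole $J^\Delta_\nu(x)/t$ survives. This gives $C_\nu(x)\int t^{-\nu-1}[\tfrac12(t-\tfrac1t)+1]^x\,dt=2\pi i\,J^\Delta_\nu(x)$ in one stroke, and \eqref{E:dBessel integral rep} then reads off $C_\nu(x)$. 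The factor itself is produced inside the proof of Theorem~\ref{T:integral rep of db} not by tracking branch jumps of the full multivalued integrand, but by isolating the lowest-order term $t^{-\nu-1}(1+t/2)^x$, evaluating it via the classical beta-function Hankel integral after $t=2e^{i\pi}u$, and matching against the lowest term of the Newton series \eqref{E:difference_bessel_fn} using Euler's reflection formula. If you want to salvage your route, you should replace the vague contour-deformation step by this residue-extraction argument (or by an equivalent asymptotic matching); your closing consistency checks ($\nu=0$ and periodicity in $\nu$) are correct but, as you note, cannot substitute for it.
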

\medskip

\begin{proof}
From Theorem~\ref{T:difference_Bessel_gf}, we have
	\[
		C_{\nu}(x)t^{-\nu}\big[\dfrac{1}{2}\big(t-\dfrac{1}{t})+1\big]^x\sim\sum_{n=-\infty}^\infty J^\Delta_{\nu+n}(x)\, t^n.
	\]
	Subtracting $J_\nu(x)$ from both sides and then dividing both sides by $t$, we obtain
	\begin{equation}\label{Eq:integral3}
	C_{\nu}(x)t^{-\nu-1}\big[\dfrac{1}{2}\big(t-\dfrac{1}{t})+1\big]^x-\frac{J_\nu(x)}{t}\sim\sum_{n=-\infty,n\ne 0}^\infty J^\Delta_{\nu+n}(x)\, t^{n-1}.
	\end{equation}
	Now note that the right-side of \eqref{Eq:integral3} has an anti-$\partial_2$ (e.g.,  $\partial_2(\sum_{n=-\infty,n\ne 0}^\infty \frac{J^\Delta_{\nu+n}(x)}{n}\, t^n)$). Since the Borel resummation is $\mathcal{A}$-linear, it follows that the LHS of \eqref{Eq:integral3} has an antiderivative. 
	
	Now let $0<\delta< R$ be given and $|\arg x|<\pi/2$, and consider the contour 
		\[
			\Gamma_{R, \delta}=(Re^{-i \pi},\, \delta e^{-i \pi}) \cup \mathcal{C}_\delta\cup (\delta e^{i \pi},\, Re^{i \pi}),
		\]where $\mathcal{C}_\delta$ denotes the circle centred at the origin with radius $\delta$. Thus the contour $\Gamma_{R, \delta}$ can be considered a truncated Hankel\rq{}s contour which emanates from $-\infty$ below the negative real-axis and then back to $-\infty$ above the negative real-axis after circulating the origin once in an anti-clockwise direction. By Cauchy integral theorem, the integral of the left-side of \eqref{Eq:integral3} along $\Gamma_{R, \delta}$ is zero, i.e.,
	\begin{equation}\label{Eq:integral4}
		C_{\nu}(x)\int_{\Gamma_{R, \delta}} t^{-\nu-1}\big[\dfrac{1}{2}\big(t-\dfrac{1}{t})+1\big]^x\, dt = \int_{\Gamma_{R, \delta}}\frac{J^\Delta_\nu(x)}{t} \,dt = 2\pi i J^\Delta_\nu(x).
	\end{equation}
	By \eqref{E:dBessel integral rep} which is an analytic result to be obtained in the next subsection, we have
		\[
			C_\nu(x)=e^{i\pi\nu}\frac{\sin(x-\nu)\pi}{\sin(\pi x)}.
		\]
\end{proof}
\medskip

We remark that when $\nu=0$ in the above theorem,  one can recover the infinite sum in \eqref{E:delta_bessel_gf} from the generating function $\big[\dfrac{1}{2}(t-\dfrac{1}{t})+1\big]^x$ by Theorem \ref{T:delta_bessel_0_gf}, thus showing that $C_0(x)\equiv 1$.

\subsection{Integral representations}
Now we aim to find an integral representation of the difference Bessel functions that is analogous to the Sonine integral representation in Corollary~\ref{C:Sonine}.	
\medskip
	
	\begin{theorem}\label{T:solutions of db equ}
		Let $\nu\in \mathbb{C}$ and $\Re (x)<\Re (\nu)$. Let $U$ be a Hankel-type contour in the $t$-plane that starts from, $+\infty$ above the real-axis, circles around the origin in the counter-clockwise direction, and returns to $+\infty$ below the real-axis, and in particular
		\[
		t^{-\nu} \big[ \frac{1}{2}(t-\frac{1}{t})+1\big]^{x}\bigg\vert_{\partial U}
		=0.
		\]
		 Then the integral
	\[
		y_\nu(x)=\int_{U} t^{-\nu-1} \big[ \frac{1}{2}(t-\frac{1}{t})+1\big]^{x}\, dt
	\]
	\begin{enumerate}
	\item 	satisfies the delay-difference equations  \eqref{E:delta_bessel_PDE_1} and  \eqref{E:delta_bessel_PDE_2} above,
		\item and, in particular, the integral solves the  Bessel difference equation of order $\nu$ \eqref{E:difference_bessel_eqn}:
	\[
	x(x-1)\triangle^2 y(x-2)+x\triangle y(x-1)+x(x-1)y(x-2)-\nu^2y(x)=0.
	\]
	\end{enumerate}
	\end{theorem}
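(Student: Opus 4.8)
The plan is to prove part~(i) directly from the integral by a fundamental-theorem-of-calculus argument along $U$, and then to obtain part~(ii) as an algebraic consequence of the two relations together with the Bessel ODE module identity of Proposition~\ref{P:bessel_ode_mod}. Write $w(t)=\tfrac12(t-\tfrac1t)+1$, so that $y_\nu(x)=\int_U t^{-\nu-1}w(t)^x\,dt$, and set $G_\nu(x,t)=t^{-\nu}w(t)^x$; the standing hypothesis is precisely $G_\nu\big|_{\partial U}=0$. Two elementary identities drive the whole argument: since $w$ does not depend on $x$, the forward difference in $x$ acts multiplicatively, $w^{x+1}-w^x=\tfrac12(t-\tfrac1t)w^x$, and a direct computation from $w'(t)=\tfrac12(1+\tfrac1{t^2})$ gives $\partial_t G_\nu(x,t)=-\nu\,t^{-\nu-1}w^x+\tfrac{x}{2}(t+\tfrac1t)\,t^{-\nu-1}w^{x-1}$.

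First I would establish the lowering relation \eqref{E:delta_bessel_PDE_1} for $y_\nu$. Because the finite difference passes through the integral by linearity, $x\,\Delta y_\nu(x-1)=\int_U \tfrac{x}{2}(t-\tfrac1t)\,t^{-\nu-1}w^{x-1}\,dt$. Adding $\nu\,y_\nu(x)$ and subtracting $x\,y_{\nu-1}(x-1)=x\int_U t^{-\nu}w^{x-1}\,dt$, I combine the first and third integrands using $\tfrac12(t-\tfrac1t)-t=-\tfrac12(t+\tfrac1t)$; the resulting single integrand is $\nu\,t^{-\nu-1}w^x-\tfrac{x}{2}(t+\tfrac1t)t^{-\nu-1}w^{x-1}$, which is exactly $-\partial_t G_\nu(x,t)$. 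Hence the left-hand side of \eqref{E:delta_bessel_PDE_1} equals $-\int_U\partial_t G_\nu\,dt=-\big[G_\nu\big]_{\partial U}=0$. The raising relation \eqref{E:delta_bessel_PDE_2} is identical in form: the combination $x\,\Delta y_\nu(x-1)-\nu\,y_\nu(x)+x\,y_{\nu+1}(x-1)$, after using $\tfrac12(t-\tfrac1t)+\tfrac1t=\tfrac12(t+\tfrac1t)$, has integrand $+\partial_t G_\nu(x,t)$ and so equals $\big[G_\nu\big]_{\partial U}=0$.

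For part~(ii) I would substitute $y_\nu$ into the difference Bessel equation in the form \eqref{E:difference_bessel_eqn} (equivalently \eqref{E:difference_bessel_eqn_0}), which is the manifestation of $L_\nu=(X\partial)^2+X^2-\nu^2$ in $\mathcal{O}_\Delta$ (Example~\ref{Eg:O_delta}). Iterating the multiplicative rule for $\Delta_x$ collapses the whole expression into a single contour integral $\int_U t^{-\nu-1}w^{x-2}\,Q(w,t)\,dt$, the reduction using $\tfrac14(t-\tfrac1t)^2+1=\tfrac14(t+\tfrac1t)^2$. That the bracket $Q$ assembles into a perfect $t$-derivative is guaranteed by Proposition~\ref{P:bessel_ode_mod}: there the element $(X_1\partial_1)^2+X_1^2-(\nu+X_2\partial_2)^2$ is exhibited as a left-module combination of the two generators \eqref{E:bessel_PDE}, so applying that same combination to the identities \eqref{E:delta_bessel_PDE_1}, \eqref{E:delta_bessel_PDE_2}---each of which we have just written as $\int_U\partial_t(\cdots)\,dt$---again produces an integral of a total $t$-derivative. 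The boundary term vanishes on $\partial U$ by hypothesis, giving \eqref{E:difference_bessel_eqn}.

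The main obstacle, and the only genuinely analytic point, is to justify the passage $\int_U\partial_t(\cdots)\,dt=\big[\,\cdot\,\big]_{\partial U}=0$: one must check that every integral written above converges and that the relevant primitive decays at the two ends of $U$. All of these reduce to the single estimate $t^{-\nu}w(t)^x\sim 2^{-x}t^{\,x-\nu}\to 0$ as $t\to+\infty$ along $U$, which holds exactly when $\Re(x)<\Re(\nu)$---precisely the standing hypothesis, and precisely the boundary condition imposed on $U$. In particular $y_\nu(x)$, $y_\nu(x-1)$, $y_\nu(x-2)$, and likewise $y_{\nu-1}(x-1)$ and $y_{\nu+1}(x-1)$, all converge in this range. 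Once this decay is secured, every remaining step is the routine algebraic simplification indicated above.
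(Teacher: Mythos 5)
Your proof is correct and follows essentially the same route as the paper's: part (i) is a direct computation on the contour integral in which the relevant combination of integrands is a total $t$-derivative of $t^{-\nu}\big[\frac12(t-\frac1t)+1\big]^x$, killed by the boundary condition on $U$ (the paper packages this as one purely algebraic identity plus a single integration by parts applied to the sum and difference of the two relations, but the content is identical), and part (ii) is deduced from part (i) via the left-ideal membership of the Bessel operator recorded in Proposition~\ref{P:bessel_ode_mod}, which is exactly the first argument the paper offers before its optional direct verification. The one analytic point---convergence and decay of $t^{-\nu}w(t)^x$ at the two ends of $U$ under $\Re(x)<\Re(\nu)$---is correctly identified in your last paragraph and matches the standing hypothesis.
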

\medskip

	\begin{proof} We first note that because of $\Re (x)>\Re (-\nu)$, so that integral is a well-defined function of $x$.
		\begin{enumerate}
		\item We rewite the \eqref{E:delta_bessel_PDE_1} and \eqref{E:delta_bessel_PDE_2} in the equivalent forms
		\[
		\begin{split}
			&2\triangle y_{\nu}(x)-y_{\nu-1}(x)+y_{\nu+1}(x)=0,\\
			&2\nu y_{\nu}(x)-xy_{\nu-1}(x-1)-xy_{\nu+1}(x-1)=0.
		\end{split}
		\]
%		Let $y_{\nu}(x)=\int_{U} t^{-\nu-1} \big[ \frac{1}{2}(t-\frac{1}{t})+1\big]^{x} dt.$
	Then, it is easy to see that
		\[
		2\triangle 	y_{\nu}(x)
		=\int_{U}( t^{-\nu} -t^{-\nu-2})
		\big[ \frac{1}{2}(t-\frac{1}{t})+1\big]^{x} dt
		=y_{\nu-1}(x)-y_{\nu+1}(x)
		\]holds. To verify the second equation, we substitute the $y_n$ into the equation and apply integration-by-parts once to get
		\[
		\begin{split}
		xy_{\nu-1}(x-1)+xy_{\nu+1}(x-1)&=
		x\int_{U}( t^{-\nu} +t^{-\nu-2})
		\big[ \frac{1}{2}(t-\frac{1}{t})+1\big]^{x-1} dt\\
		&=2\int_{U} t^{-\nu}
		d\big[ \frac{1}{2}(t-\frac{1}{t})+1\big]^{x} \\
		&=-2\int_{U} 
		\big[ \frac{1}{2}(t-\frac{1}{t})+1\big]^{x}  dt^{-\nu}\\
		&=2\nu y_{\nu}(x),
		\end{split}
		\]as required
		\item 
				The proof to this part actually follows directly from the part (i) and Proposition \ref{P:bessel_ode_mod}. However, we offer a direct verification.
Applying $\triangle$ to the  \eqref{E:delta_bessel_PDE_1} and replacing $\nu$ by $\nu-1$ in the  \eqref{E:delta_bessel_PDE_2} respectively yield
		\[
		\begin{split}
		&x\triangle^2 y_{\nu}(x-1)+\triangle y_{\nu}(x)-x\triangle y_{\nu-1}(x-1)
		-y_{\nu-1}(x)+\nu \triangle y_{\nu}(x)=0,\\
		&x\triangle y_{\nu-1}(x-1)+xy_{\nu}(x-1)-(\nu-1) y_{\nu-1}(x)=0.
		\end{split}
		\]
		Adding the above two equations yields
		\[
		x\triangle^2 y_{\nu}(x-1)+\triangle y_{\nu}(x)+xy_{\nu}(x-1)-\nu y_{\nu-1}(x)+\nu \triangle y_{\nu}(x)=0.
		\]
		
		Replacing $x$ by $x-1$, and multiplying $x$ throughout both sides of the above equation yield
		\[
		x(x-1)\triangle^2 y_{\nu}(x-2)+x\triangle y_{\nu}(x-1)+x(x-1)y_{\nu}(x-2)-x\nu y_{\nu-1}(x-1)+x\nu \triangle y_{\nu}(x-1)=0.
		\]
		Multiplying   by $\nu$ on both sides of \eqref{E:delta_bessel_PDE_1} and substitute the resulting equation into the last expression yield precisely the  Bessel difference equation of order $\nu$ \eqref{E:difference_bessel_eqn}.
%		Hence, the Equation \eqref{PDEs for J_nu} is equivalent to Bessel difference equation of order $\nu$.		
		\end{enumerate}
	\end{proof}
	\medskip

	\begin{theorem}[(\textbf{Difference Schl\"afli-Sonine integrals})]\label{T:integral rep of db}
		Let $\Re(x)<\Re(\nu)$.  	Let $U$ be an Hankel-type contour parametrised by $t$ that starts at $+\infty$, above the real-axis, circles around the origin in the counter-clockwise direction, and returns to $+\infty$ below the real-axis, and such that $|2t+t^2|\geq1$ on $U$. Then
		\begin{equation}\label{E:dBessel integral rep}
		J^{\Delta}_{\nu}(x)=e^{i\pi\nu} \dfrac{\sin(x-\nu)\pi}{\sin(\pi x)}
		\dfrac{1}{2\pi i}\int_\infty^{(0+)}
		t^{-\nu-1} \Big[ \frac{1}{2}(t-\frac{1}{t})+1\Big]^x dt
		\end{equation}
\end{theorem}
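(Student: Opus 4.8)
The plan is to evaluate the loop integral on the right of \eqref{E:dBessel integral rep} \emph{directly}: expand the integrand into a series that converges precisely on the contour $U$, integrate term by term, and recognise the outcome as a constant multiple of the Newton series \eqref{E:difference_bessel_fn} defining $J^\Delta_\nu$. The classical Schl\"afli--Sonine identity (Corollary \ref{C:Sonine}) serves as the exact guide, with the entire analytic factor $e^{\frac{x}{2}(t-1/t)}$ replaced by the algebraic factor $[\frac12(t-\frac1t)+1]^x$, and with the Hankel evaluation of $1/\Gamma$ replaced by a Hankel evaluation of a Beta-type integral.

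First I would rewrite the integrand. Since
\[
\tfrac12\bigl(t-\tfrac1t\bigr)+1=\frac{t^2+2t-1}{2t}=\frac{t+2}{2}\Bigl(1-\frac{1}{t^2+2t}\Bigr),
\]
the constraint $|2t+t^2|\ge 1$ imposed on $U$ is exactly the condition $\bigl|\tfrac{1}{t^2+2t}\bigr|\le 1$ under which the binomial series
\[
\Bigl[\tfrac12\bigl(t-\tfrac1t\bigr)+1\Bigr]^{x}=2^{-x}\sum_{k=0}^{\infty}\binom{x}{k}(-1)^{k}(t+2)^{x-k}\,t^{-k}
\]
converges on $U$ (the contour stays bounded away from both branch points $0$ and $-2$, so $|t^2+2t|$ is bounded below along $U$). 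Substituting and interchanging sum and integral — justified by uniform convergence on $U$ together with convergence of each tail integral at $+\infty$, which is guaranteed by the hypothesis $\Re(x)<\Re(\nu)$ since the $k$-th integrand decays like $t^{x-\nu-1-2k}$ — reduces the problem to the family of loop integrals $I_k:=\int_{\infty}^{(0+)}t^{-\nu-1-k}(t+2)^{x-k}\,dt$.

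Next I would evaluate $I_k$. The substitution $t=-2s$ sends $U$ to a Hankel loop around $s=0$ that does not enclose $s=1$ (because $U$ does not enclose $t=-2$), and converts $I_k$, up to explicit powers of $-2$, into the Hankel--Beta loop integral $\int s^{-\nu-1-k}(1-s)^{x-k}\,ds$, whose value is a ratio of Gamma functions times a branch-dependent constant. Collecting all the powers of $2$ from the expansion, from $(-2s)^{-\nu-1-k}$, from $[2(1-s)]^{x-k}$, and from $dt=-2\,ds$ produces exactly $2^{-\nu-2k}$; the identities $\binom{x}{k}\Gamma(x-k+1)=\Gamma(x+1)/k!$ and $\Gamma(-\nu-k)=(-1)^{k}\Gamma(-\nu)\Gamma(\nu+1)/\Gamma(\nu+k+1)$, together with the reflection formula $\Gamma(z)\Gamma(1-z)=\pi/\sin\pi z$, then collapse the $k$-th term into
\[
\frac{(-1)^{k}\,\Gamma(x+1)}{2^{\nu+2k}\,k!\,\Gamma(\nu+k+1)\,\Gamma(x+1-\nu-2k)}
\]
times a single $k$-independent factor proportional to $\sin\pi x/\sin(x-\nu)\pi$. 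Since the displayed expression is precisely the $k$-th term of \eqref{E:difference_bessel_fn}, the sum reassembles $J^\Delta_\nu(x)$, and one reads off $\tfrac{1}{2\pi i}\int_{\infty}^{(0+)}t^{-\nu-1}[\cdots]^x\,dt=e^{-i\pi\nu}\tfrac{\sin\pi x}{\sin(x-\nu)\pi}J^\Delta_\nu(x)$, which is the asserted identity.

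The main obstacle is the branch bookkeeping in the evaluation of $I_k$: one must fix consistent branches of $t^{-\nu-1-k}$ and $(t+2)^{x-k}$ along $U$, track the phase accumulated in the substitution $t=-2s$ and in traversing the loop, and verify that the loop factor (of type $e^{-2\pi i\nu}-1$, which is $k$-independent because $e^{-2\pi i k}=1$) combines with the residual $(-1)^{-\nu}$ branch factor to give exactly the $e^{i\pi\nu}\sin(x-\nu)\pi/\sin\pi x$ appearing in the statement — and, crucially, that this prefactor is independent of $k$ so that it factors out of the series. A reassuring cross-check is available through Theorem \ref{T:solutions of db equ}: the integral already solves the difference Bessel equation \eqref{E:difference_bessel_eqn}, hence is a period-one-coefficient combination of $J^\Delta_\nu$ and $J^\Delta_{-\nu}$; the direct computation above both selects the $J^\Delta_\nu$ component (the Gamma denominators produce $\Gamma(x+1-\nu-2k)$, the signature of the order-$\nu$ solution) and fixes its coefficient.
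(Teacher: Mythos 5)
Your proposal is correct and follows essentially the same route as the paper's proof: the same factorisation $[\tfrac12(t-\tfrac1t)+1]^x=(1+\tfrac t2)^x[1-\tfrac1{2t(1+t/2)}]^x$, the same binomial expansion justified by $|2t+t^2|\ge 1$ on $U$, and the same reduction of the resulting loop integrals to the Hankel--Beta formula followed by the reflection formula. The only difference is one of completeness: the paper evaluates just the $k=0$ term and pins down the identity by matching the lowest-order coefficient as $x\to 0$ (implicitly leaning on Theorem \ref{T:solutions of db equ}, which already shows the integral solves the difference Bessel equation), whereas you carry out the term-by-term evaluation for all $k$ and reassemble the full Newton series \eqref{E:difference_bessel_fn} -- a more self-contained execution of the same idea, subject to the branch bookkeeping you correctly flag.
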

\begin{proof} The integrand has branch points $0,\, -1\pm\sqrt{2}$ and $ \infty $ when $\nu , x$ are not integers. We pick the branch cuts  from $[-1-\sqrt{2},\, 0]$ and $[-1+\sqrt{2},\, +\infty]$ on the real-axis. Since $|2t+t^2|\geq1$ on $U$, the integrand allows an absolutely convergent expansion 
	\[
	\begin{split}
     	t^{-\nu-1}\big[\frac{1}{2}(t-\frac{1}{t})+1\big]^{x}
     	&
	=t^{-\nu-1}(1+\frac{t}{2})^x\big[1-\frac{1}{2t(1+t/2)}\big]^x\\
	&=t^{-\nu-1}(1+\frac{t}{2})^x\sum_{k=0}^{\infty}\binom{x}{k}(-1)^k [\frac{1}{2t(1+t/2)}]^k.\\
	\end{split}
	\]
	Observe that the \textit{lowest} term of \eqref{E:difference_bessel_fn} has
		\[
			\frac{2^{-\nu}\Gamma(x+1)}{\Gamma(\nu+1)\Gamma(x+1-\nu)}
			\approx \frac{1}{2^\nu \Gamma(\nu+1)\Gamma(1-\nu)},
		\]as $x\approx 0$. 
		
		Next we consider the lowest term of
 \[
 \begin{split}
 &\dfrac{1}{2\pi i}\int_{\infty}^{(0+)}
 t^{-\nu-1} \big[ \frac{1}{2}(t-\frac{1}{t})+1\big]^x dt\\
 &=\dfrac{1}{2\pi i}\int_{\infty}^{(0+)}t^{-\nu-1}(1+\frac{t}{2})^x
 \sum_{k=0}^{\infty}\binom{x}{k}(-1)^k [\frac{1}{2t(1+t/2)}]^k.\\
 \end{split}
 \]
That is, 
\[
\dfrac{1}{2\pi i}\int_{\infty}^{(0+)}t^{-\nu-1}(1+\frac{t}{2})^xdt.
\] 
Let $\frac{t}{2}=e^{i\pi} u$ in above integration. Then we have 
\[
\dfrac{1}{2\pi i}e^{-\nu\pi i} 2^{-\nu}
\int_{-\infty}^{(0+)}u^{-\nu-1}(1-u)^{x}\, du.
\]
We recall a classical result about beta function, see for example \cite[p. 104 (12)]{Wang_Guo1989}, in an Hankel-type contour which happens to match exactly with the change of variable $t=2e^{i\pi} u$ above, namely that
	\[
		\dfrac{\Gamma(p+q+1)}{\Gamma(p+1)\Gamma(q+1)}
		=\dfrac{1}{2\pi i} \int_{-\infty}^{(0+)}
		t^{-p-1}(1-t)^{-q-1}dt,\quad \Re(p+q+1)>0
	\]holds. Hence we obtain 
\[
\dfrac{1}{2\pi i}e^{-\nu\pi i} 2^{-\nu}
\int_{-\infty}^{(0+)}u^{-\nu-1}(1-u)^{x}du
=e^{-\nu\pi i} 2^{-\nu}\dfrac{\Gamma(\nu-x)}{\Gamma(\nu+1)\Gamma(-x)}
\]
which is 
valid for $\Re(x-\nu)<0$. Since 
\[
\dfrac{\Gamma(\nu-x)}{\Gamma(-x)}
=\dfrac{\sin(-x)\pi\,\Gamma(x+1)}{\sin(\nu-x)\pi \,\Gamma(x+1-\nu)}.
\]
Hence we obtain
\[
e^{i\pi\nu} \dfrac{\sin(x-\nu)\pi}{\sin(\pi x)}
\dfrac{1}{2\pi i}\int_{\infty}^{(0+)}
t^{-\nu-1}  (1+\frac{t}{2})^xdt
=\frac{2^{-\nu}\Gamma(x+1)}{\Gamma(\nu+1)\Gamma(x+1-\nu)}
	\approx  \frac{2^{-\nu} }{\Gamma(\nu+1)\Gamma(1-\nu)},
\]as $x\approx 0$ which matches  the coefficient of the lowest term of the other side. This implies that 
both sides of \eqref{E:dBessel integral rep} are the same and \eqref{E:dBessel integral rep} holds. \end{proof}
\medskip

We can modify the Hankel-type contour $U$ in the integral considered above to obtain a
 second linearly independent solution to the difference Bessel equation  \eqref{E:difference_bessel_eqn}.
 \medskip

\begin{theorem}\label{T:integral rep of db -nu}
	Let $\Re(x)>\Re(\nu+1)$. Let $U^\prime$ be an Hankel-type contour parametrised by $s=-1/t$ where $t$ parametrised by the contour $U$ as is defined in Theorem \ref{T:integral rep of db}.  Then
	\begin{equation}\label{E:dBessel integral rep-nu}
	J^{\Delta}_{-\nu}(x)=e^{i\pi} e^{-2i\pi\nu} \dfrac{\sin(x+\nu)\pi}{\sin\pi x}
	\dfrac{1}{2\pi i}\int_{U^\prime}
	s^{-\nu-1} \Big[ \frac{1}{2}(s-\frac{1}{s})+1\Big]^x ds.
	\end{equation}
\end{theorem}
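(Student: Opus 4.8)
The plan is to reduce the claim to Theorem~\ref{T:integral rep of db} by the involutive change of variables $s=-1/t$. First I would apply Theorem~\ref{T:integral rep of db} with $\nu$ replaced by $-\nu$, which gives
\[
J^{\Delta}_{-\nu}(x)=e^{-i\pi\nu}\,\frac{\sin(x+\nu)\pi}{\sin(\pi x)}\,\frac{1}{2\pi i}\int_{U} t^{\nu-1}\Big[\tfrac12\big(t-\tfrac1t\big)+1\Big]^{x}\,dt,
\]
the exponent $-(-\nu)-1=\nu-1$ and the factor $\sin(x+\nu)\pi$ being the only changes. The linchpin is that the base of the power is invariant under the inversion: writing $t=-1/s$ one has $1/t=-s$, hence $\tfrac12(t-\tfrac1t)+1=\tfrac12(s-\tfrac1s)+1$, so the transformed integrand again has the form $s^{\bullet}\big[\tfrac12(s-\tfrac1s)+1\big]^{x}$ and is recognizable as a Sonine-type integrand of order $-\nu$.

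Next I would carry out the substitution itself. Since $dt=s^{-2}\,ds$ and $t^{\nu-1}=(-1/s)^{\nu-1}$, the differential $t^{\nu-1}\,dt$ becomes a phase multiple of $s^{-\nu-1}\,ds$, so up to a scalar $c(\nu)$ the $U$-integral equals $\int_{U'}s^{-\nu-1}\big[\tfrac12(s-\tfrac1s)+1\big]^{x}\,ds$, where $U'=\{-1/t:t\in U\}$ is exactly the contour named in the statement. Here one checks that the inversion permutes the four branch points $\{0,\infty,-1-\sqrt{2},-1+\sqrt{2}\}$, interchanging $0\leftrightarrow\infty$ and $-1-\sqrt{2}\leftrightarrow-1+\sqrt{2}$, and correspondingly maps the cut $[-1-\sqrt2,0]$ onto $[-1+\sqrt2,+\infty]$ and vice versa. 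This self-duality of the cut system is what guarantees that $U'$ is again an admissible Hankel-type contour, that the boundary term $s^{-\nu}\big[\tfrac12(s-\tfrac1s)+1\big]^{x}$ vanishes on $\partial U'$, and that Theorem~\ref{T:solutions of db equ} continues to apply to $U'$.

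The delicate point — and the main obstacle — is fixing the scalar $c(\nu)$, because $(-1/s)^{\nu-1}$ must not be split naively into $(-1)^{\nu-1}s^{1-\nu}$; one has to follow $\arg t$ against $\arg s$ along $U'$ through the relocated cuts. Tracking the winding with the determination $-1=e^{i\pi}$ yields $c(\nu)=e^{i\pi(1-\nu)}$, whence $e^{-i\pi\nu}c(\nu)=e^{i\pi}e^{-2i\pi\nu}$, the constant asserted. As the most robust way to pin down this branch (and to avoid relying on a delicate argument count), I would match leading behaviour exactly as in the proof of Theorem~\ref{T:integral rep of db}: expand the integrand for $|2s+s^{2}|\ge 1$, reduce the lowest-order term to a Hankel-type beta integral, and compare it with the lowest Newton-series term of $J^{\Delta}_{-\nu}(x)$ in \eqref{E:difference_bessel_fn_2}. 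This simultaneously determines $c(\nu)$ and produces the half-plane of validity stated, the reversal of the inequality relative to the $\Re(x)<\Re(\nu)$ of Theorem~\ref{T:integral rep of db} being forced by the interchange of the endpoints $0$ and $\infty$ under the inversion.
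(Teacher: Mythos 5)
Your proposal is correct and follows essentially the same route as the paper: substitute $s=-1/t$, exploit the invariance of $\frac12(t-\frac1t)+1$ under the inversion, track the phase of $(-1/s)^{\nu-1}$, and pin down the constant by comparing the lowest-order term of the expanded integrand with the leading Newton-series coefficient of $J^{\Delta}_{-\nu}(x)$ in \eqref{E:difference_bessel_fn_2}. The paper likewise imposes the temporary overlap condition $\Re(x)<\Re(-\nu)$ for convergence at $+\infty$ before removing it, exactly as your final paragraph anticipates.
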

\medskip

\begin{proof}Notice that both ends of the contour $U^\prime$ would be at the origin which corresponds to $U$ at $+\infty$. The restriction $\Re(x)>\Re(\nu+1)$ ensures that the integral converges at the origin. Substituting $t=-1/s$ into the contour integral yields
%	\begin{equation}\label{E:ad_hoc_1}
	\[
		\frac{1}{2\pi i}\int_U (-t)^{\nu-1}  \Big[ \frac{1}{2}(t-\frac{1}{t})+1\Big]^x dt
		=\frac{-e^{i\nu\pi}}{2\pi i}\int_U t^{\nu-1} \Big[ \frac{1}{2}(t-\frac{1}{t})+1\Big]^x dt
	\]where we need to be more restrictive under the extra temporary assumption  that $\Re(x)<\Re(-\nu)$  holds in order to ensure that the integral converges at $+\infty$.
Then the proof of the \eqref{E:dBessel integral rep-nu} follows closely that of the last theorem. One then considers the \textit{lowest term} from the integral and to compare it with that of the $J^\Delta_{-\nu}(x)$ in \eqref{E:difference_bessel_fn_2}.
We can now remove the extra assumption $\Re(x)<\Re(-\nu)$.
\end{proof}

\section{Half-Bessel modules I: Bessel polynomials}\label{S:half_bessel_I}

We recall the formula
	\begin{equation}\label{E:K_bessel_poly}
		K_{n+\frac12}(x)=\sqrt{\frac{\pi}{2}}\, e^{-x} x^{-n-\frac12} \theta_n(x)
	\end{equation}that connects the modified Bessel function (or Macdonald function) of order $n+\frac12$ and the \textit{reversed Bessel polynomials} $\theta_n(x)$.
%%%%%%%%%%%%%%%%%%%%%%%%%%%%%%%%%%%%%%%%%%%%%%%%%%
%	\footnote{We shall see later that it is more natural from our $D-$modulus perspective to work with the reversed Bessel polynomials $\theta_n(x)$ than the Bessel polynomials $y_n(x)=x^{n}\theta_n(1/x)$. Obviously one can derive all relevant formulae for $y_n(x)$ from those of $\theta_n(x)$ and their difference counterparts.}
%%%%%%%%%%%%%%%%%%%%%%%%%%%%%%%%%%%%%%%%%%%%%%%%%%
	The modified Bessel functions $I_\nu(x)$ and $K_\nu(x)$ are essentially related to $J_\nu(ix)$ and $Y_\nu(ix)$  respectively. We refer to Watson \cite{Watson1944} for their precise definitions so that both $I_\nu(x)$ and $K_\nu(x)$ are real on the real axis. We first modify the adopt the transmutation formulae for the Bessel modules from subsection  \S\ref{SS:transmutation_1} to half-Bessel modules that better suit our purpose of discussion in this section. Thus, it is more natural to discuss the reverse Bessel polynomials $\theta_n(x)$ than the Bessel polynomials $y_n(x)=x^{n}\theta_n(1/x)$ from our ${D}$-modules viewpoint. So the discussion of the $y_n(x)$ and its difference analogue will be postponed to the end of this section.

\subsection{Transmutation formulae}

Replacing  $\partial$ by $-i\partial$ and  $X$  by $iX$, we still have $[-i\partial,\, iX]=1$. So the transmutation formulae in Proposition \ref{P:bessel_transmutation} 
becomes

\begin{lemma}\label{L:Kbessel_transmutation}
For each $\nu\in\mathbb{C}$,
\begin{align}\label{E:Kbessel_transmutation}
[(X\partial)^2-(X^2+(\nu+1)^2)]\left(\partial-\dfrac{\nu}{X}\right)&=\left(\partial-\dfrac{\nu+2}{X}\right)[(X\partial)^2-(X^2+\nu^2)],\\
[(X\partial)^2-(X^2+(\nu-1)^2)]\left(\partial+\dfrac{\nu}{X}\right)&=\left(\partial+\dfrac{\nu-2}{X}\right)[(X\partial)^2-(X^2+\nu^2)],
\end{align}
which induce the left $\mathcal{A}$-linear maps
\[
\begin{array}{rcl}
\mathcal{A}/\mathcal{A}((X\partial)^2-(X^2+(\nu+1)^2))&\stackrel{\times(\partial-\frac{\nu}{X})}{\longrightarrow}&\mathcal{A}/\mathcal{A}((X\partial)^2-(X^2+\nu^2)),\\
\mathcal{A}/\mathcal{A}((X\partial)^2-(X^2+(\nu-1)^2))&\stackrel{\times(\partial+\frac{\nu}{X})}{\longrightarrow}&\mathcal{A}/\mathcal{A}((X\partial)^2-(X^2+\nu^2)).
\end{array}
\]
\end{lemma}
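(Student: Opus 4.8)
The plan is to deduce both identities directly from Proposition~\ref{P:bessel_transmutation} rather than re-verifying them from scratch, exploiting precisely the substitution flagged immediately before the statement. Consider the assignment $\sigma\colon \partial\mapsto -i\partial,\ X\mapsto iX$. Since $[-i\partial,\,iX]=(-i)(i)[\partial,X]=[\partial,X]=1$, the map $\sigma$ respects the defining commutation relation, so by the universal property of the Weyl algebra it extends to a $\mathbb{C}$-algebra endomorphism of $\mathcal{A}$; it is in fact an automorphism, with inverse $\partial\mapsto i\partial,\ X\mapsto -iX$. Moreover it extends to $\mathcal{A}(1/X)$ via $1/X\mapsto 1/(iX)=-i/X$, which is the only point requiring a moment's care. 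The strategy is simply to apply $\sigma$ to each of \eqref{E:bessel_transmutation_1} and \eqref{E:bessel_transmutation_2} and read off the result.

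First I would record the action of $\sigma$ on the building blocks that occur. One has $X\partial\mapsto (iX)(-i\partial)=X\partial$, so $(X\partial)^2$ is fixed; $X^2\mapsto (iX)^2=-X^2$, so the sign of the $X^2$-term flips; the scalar terms $(\nu\pm1)^2$ and $\nu^2$ are untouched; and the first-order factors transform as $\partial+\tfrac{\nu}{X}\mapsto -i(\partial+\tfrac{\nu}{X})$ and $\partial-\tfrac{\nu}{X}\mapsto -i(\partial-\tfrac{\nu}{X})$, the common scalar $-i$ being the key bookkeeping point. Applying $\sigma$ to \eqref{E:bessel_transmutation_2} therefore turns its left-hand side into $[(X\partial)^2-X^2-(\nu+1)^2]\cdot(-i)(\partial-\tfrac{\nu}{X})$ and its right-hand side into $(-i)(\partial-\tfrac{\nu+2}{X})\cdot[(X\partial)^2-X^2-\nu^2]$; cancelling the common factor $-i$ yields exactly the first asserted identity. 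Applying $\sigma$ to \eqref{E:bessel_transmutation_1} and again cancelling $-i$ produces the second asserted identity, after regrouping $-X^2-(\nu-1)^2$ as $-(X^2+(\nu-1)^2)$ and similarly on the right.

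Finally, the two induced left $\mathcal{A}$-linear maps follow from the operator identities by the same well-definedness argument used in Proposition~\ref{P:bessel_transmutation}: writing the first identity schematically as $P\,(\partial-\tfrac{\nu}{X})=(\partial-\tfrac{\nu+2}{X})\,Q$ with $P=(X\partial)^2-(X^2+(\nu+1)^2)$ and $Q=(X\partial)^2-(X^2+\nu^2)$, right multiplication by $\partial-\tfrac{\nu}{X}$ sends $\mathcal{A}P$ into $\mathcal{A}Q$, hence descends to a well-defined map $\mathcal{A}/\mathcal{A}P\to\mathcal{A}/\mathcal{A}Q$; the second map is handled identically. I expect no genuine obstacle here — the only thing to watch is the sign bookkeeping in the image of $1/X$ under $\sigma$ together with the uniform factor $-i$ on the first-order operators, after which the statement is immediate.
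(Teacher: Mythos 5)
Your proposal is correct and follows essentially the same route as the paper: the text immediately preceding the lemma states that one obtains it from Proposition~\ref{P:bessel_transmutation} by replacing $\partial$ with $-i\partial$ and $X$ with $iX$, which preserves $[\partial,X]=1$, and the paper gives no further detail. You have simply carried out the sign and $1/X$ bookkeeping explicitly (correctly), and your well-definedness argument for the induced maps matches the paper's standard usage.
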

\medskip

As a result we deduce from this corollary the following well-known formulae with the help of the asymptotic of $K_\nu(x)$ as $x\to 0$. 

\begin{corollary}
For each $\nu\in\mathbb{C}$, the modified Bessel functions $K_\nu(x)$ satisfies the following recurrence relations.
\begin{align*}
	K'_\nu(x)-\dfrac{\nu K_\nu(x)}{x}&=-K_{\nu+1}(x),\\
	K'_\nu(x)+\dfrac{\nu K_\nu(x)}{x}&=-K_{\nu-1}(x).
\end{align*}
\end{corollary}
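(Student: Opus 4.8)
The plan is to mimic the proof of Corollary \ref{C:bessel_formulae}, replacing the Bessel transmutation formulae of Proposition \ref{P:bessel_transmutation} by their modified counterparts in Lemma \ref{L:Kbessel_transmutation}. I would work throughout in the manifestation $\mathcal{O}_d$ of Example \ref{Eg:O_deleted_d}, so that $X\partial$ acts as $x\,\tfrac{d}{dx}$, $X$ as multiplication by $x$, and $\partial$ as $\tfrac{d}{dx}$. Under this manifestation the operator $M_\mu:=(X\partial)^2-(X^2+\mu^2)$ becomes the modified Bessel operator of order $\mu$, which annihilates both $K_\mu$ and $I_\mu$.

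First I would prove the first recurrence. Taking the first identity of Lemma \ref{L:Kbessel_transmutation}, namely
\[
	M_{\nu+1}\Bigl(\partial-\tfrac{\nu}{X}\Bigr)=\Bigl(\partial-\tfrac{\nu+2}{X}\Bigr)M_\nu,
\]
and applying both sides to $K_\nu$, the right-hand side annihilates $K_\nu$ because $M_\nu K_\nu=0$. Hence $M_{\nu+1}\bigl[(\partial-\tfrac{\nu}{X})K_\nu\bigr]=0$, so the function $K'_\nu-\tfrac{\nu}{x}K_\nu$ is a solution of the modified Bessel equation of order $\nu+1$. Consequently there are constants $A,B$ with
\[
	K'_\nu(x)-\tfrac{\nu}{x}K_\nu(x)=A\,K_{\nu+1}(x)+B\,I_{\nu+1}(x).
\]
Likewise, the second identity of Lemma \ref{L:Kbessel_transmutation} applied to $K_\nu$ gives $K'_\nu+\tfrac{\nu}{x}K_\nu=C\,K_{\nu-1}+D\,I_{\nu-1}$ for some constants $C,D$.

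It remains to pin down the constants, and this is the only real obstacle. To eliminate the $I$-contribution I would argue via the behaviour at infinity: $K_\nu$ and $K'_\nu$ decay exponentially as $x\to+\infty$, whereas $I_{\nu\pm1}$ grows exponentially there; since the left-hand sides decay, we must have $B=D=0$. To fix the surviving coefficient I would compare the leading singular term at the origin. Using $K_\mu(x)\sim\tfrac12\Gamma(\mu)\,(2/x)^{\mu}$ as $x\to0^+$ (valid for $\Re\mu>0$), a short computation gives $K'_\nu-\tfrac{\nu}{x}K_\nu\sim-\Gamma(\nu+1)\,2^{\nu}x^{-\nu-1}\sim-K_{\nu+1}$, whence $A=-1$; the same comparison for the second relation yields $C=-1$.

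Finally, the computation above is valid for $\Re\nu>0$ (and $\nu$ not an integer, so that the two solutions are genuinely distinguished by their growth at both ends). Both sides of each recurrence are analytic in $\nu$ for fixed $x>0$, so by the identity theorem the relations extend to all $\nu\in\mathbb{C}$, in particular to integer orders where $K_n$ carries a logarithmic term. The hard part is thus entirely the constant-determination step: once the exponential growth at infinity rules out the $I$-solutions and the leading $x^{-\nu-1}$ coefficient at the origin is matched, the stated formulae follow, exactly paralleling the derivation of \eqref{E:bessel_formula_1}--\eqref{E:bessel_formula_2}.
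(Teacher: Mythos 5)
Your overall route is exactly the one the paper intends: the paper offers no written proof of this corollary beyond the remark that it is deduced from Lemma \ref{L:Kbessel_transmutation} ``with the help of the asymptotic of $K_\nu(x)$ as $x\to 0$'', in direct parallel with the proof of Corollary \ref{C:bessel_formulae}. Your use of the exponential decay of $K_\nu$ at $+\infty$ to eliminate the $I_{\nu\pm1}$ components is a clean (and arguably necessary) addition, and your determination of $A=-1$ from the leading term at the origin is correct.

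The one step that does not go through as written is ``the same comparison for the second relation yields $C=-1$''. For the combination $K'_\nu+\frac{\nu}{x}K_\nu$ the two leading terms \emph{cancel}: with $K_\nu(x)\sim 2^{\nu-1}\Gamma(\nu)x^{-\nu}$ one gets $K'_\nu\sim-\nu\,2^{\nu-1}\Gamma(\nu)x^{-\nu-1}$ and $\frac{\nu}{x}K_\nu\sim+\nu\,2^{\nu-1}\Gamma(\nu)x^{-\nu-1}$, so the coefficient of $x^{-\nu-1}$ on the left-hand side is $0$, whereas $K_{\nu-1}$ is of order $x^{-(\nu-1)}$ (for $\Re\nu>1$), not $x^{-\nu-1}$; the leading-order comparison therefore yields no information about $C$. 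You need either (a) the next term of the expansion of $K_\nu$ near the origin --- the $k=1$ term $(x/2)^{2-\nu}/\Gamma(2-\nu)$ of $I_{-\nu}$ survives the operator $\partial+\nu/X$ and, via the reflection formula, produces exactly $-K_{\nu-1}$ --- or, more economically, (b) deduce the second identity from the first by replacing $\nu$ with $-\nu$ and invoking $K_{-\mu}=K_\mu$. With either repair the argument is complete, and the analytic continuation in $\nu$ at the end is fine.
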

\medskip

    The formula \eqref{E:K_bessel_poly} suggests the following ``change of variable" of $X\partial $ to $X\partial-X-\nu$ and keep the $X$ unchanged that the transmutation formulae in Lemma \ref{L:Kbessel_transmutation} become
     
\begin{proposition}[(\textbf{Transmutation formulae})]
\label{P:bessel_poly_transmutation_1} For each $\nu\in\mathbb{C}$, we have
%	\begin{equation}\label{E:bessel_poly_transmutation_1} 
		\begin{align}
			\Big(X\partial^2-2(\nu+\frac12+X)\partial & +2(\nu +\frac12)\Big)(X\partial-X-2\nu)\notag\\
			&=\big(X\partial-X-2(\nu+\frac12)\big)\Big(X\partial^2-2(\nu-\frac12+X)\partial+2(\nu -\frac12)\Big),\label{E:bessel_poly_transmutation_1} \\ 
			\Big(X\partial^2-2(\nu-\frac12+X)\partial &+2(\nu-\frac12)\Big)\frac1X(\partial-1)\notag\\
			&=\frac1X (\partial-1-\frac1X)\Big(X\partial^2-2(\nu+\frac12+X)\partial+2(\nu+\frac12)\Big),\label{E:bessel_poly_transmutation_2} 
		\end{align}
%	\end{equation}
which induce the left $\mathcal{A}$-linear maps
\[
\begin{array}{rcl}
\mathcal{A}/\mathcal{A}\Big(X\partial^2-2(\nu+\frac12+X)\partial +2(\nu +\frac12)\Big)&\stackrel{\times(X\partial-X-2\nu)}{\longrightarrow}&\mathcal{A}/\mathcal{A}\Big(X\partial^2-2(\nu-\frac12+X)\partial+2(\nu -\frac12)\Big),\\
\mathcal{A}/\mathcal{A}\Big(X\partial^2-2(\nu-\frac12+X)\partial +2(\nu -\frac12)\Big)&\stackrel{\times\frac{1}{X}(\partial-1)}{\longrightarrow}&\mathcal{A}/\mathcal{A}\Big(X\partial^2-2(\nu+\frac12+X)\partial+2(\nu +\frac12)\Big).
\end{array}
\]
\end{proposition}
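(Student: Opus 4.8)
The plan is to read both displayed identities as equalities of operators in the Weyl algebra $\mathcal{A}(1/X)=\mathbb{C}\langle X,\partial,1/X\rangle$; once they are established, the two asserted left $\mathcal{A}$-linear maps follow at once, since they assert precisely that $M_\nu(X\partial-X-2\nu)\in\mathcal{A}(1/X)\,M_{\nu-1}$ and $M_{\nu-1}\tfrac1X(\partial-1)\in\mathcal{A}(1/X)\,M_\nu$, where I abbreviate $M_\mu:=X\partial^2-2(X+\mu+\tfrac12)\partial+2(\mu+\tfrac12)$ for the reverse Bessel polynomial operator and $\widetilde L_\mu:=(X\partial)^2-(X^2+\mu^2)$ for the modified Bessel operator of Lemma~\ref{L:Kbessel_transmutation}. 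In this notation the two claims read $M_\nu(X\partial-X-2\nu)=(X\partial-X-2\nu-1)M_{\nu-1}$ and $M_{\nu-1}\tfrac1X(\partial-1)=\tfrac1X(\partial-1-\tfrac1X)M_\nu$.

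Following the cue of \eqref{E:K_bessel_poly}, I would realise the suggested change of variable $X\partial\mapsto X\partial-X-\mu$ as conjugation by the invertible factor $g_\mu:=e^{X}X^{\mu}$ in the manifestation $\mathcal{O}_d$ (Example~\ref{Eg:O_deleted_d}) on a domain where $e^X,X^\mu$ are defined, so that $g_\mu\partial g_\mu^{-1}=\partial-1-\mu/X$ and $g_\mu X g_\mu^{-1}=X$. The argument then rests on three one-line building blocks: first, $g_\mu\widetilde L_\mu g_\mu^{-1}=X\,M_{\mu-1}$ (expand $(X\partial-X-\mu)^2-X^2-\mu^2$ and factor $X$ out on the left); second, $g_{\nu+1}(\partial-\tfrac{\nu}{X})g_\nu^{-1}=X\partial-X-2\nu$; third, $g_\nu(\partial+\tfrac{\nu+1}{X})g_{\nu+1}^{-1}=\tfrac1X(\partial-1)$. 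To get the first claim I would start from the first formula of Lemma~\ref{L:Kbessel_transmutation}, $\widetilde L_{\nu+1}(\partial-\tfrac{\nu}{X})=(\partial-\tfrac{\nu+2}{X})\widetilde L_\nu$, and insert the conjugations: using $M_\nu=\tfrac1X g_{\nu+1}\widetilde L_{\nu+1}g_{\nu+1}^{-1}$ and the second building block gives $M_\nu(X\partial-X-2\nu)=\tfrac1X g_{\nu+1}\,\widetilde L_{\nu+1}(\partial-\tfrac{\nu}{X})\,g_\nu^{-1}$; applying the Lemma and then $g_\nu\widetilde L_\nu g_\nu^{-1}=XM_{\nu-1}$ leaves $\tfrac1X g_{\nu+1}(\partial-\tfrac{\nu+2}{X})g_\nu^{-1}X\cdot M_{\nu-1}$, whose prefactor computes to $X\partial-X-2\nu-1$. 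The second claim comes identically from the second formula of Lemma~\ref{L:Kbessel_transmutation} taken at $\nu+1$, i.e. $\widetilde L_\nu(\partial+\tfrac{\nu+1}{X})=(\partial+\tfrac{\nu-1}{X})\widetilde L_{\nu+1}$, the surplus factor recombining into $\tfrac1X(\partial-1-\tfrac1X)$.

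The step I expect to be the main obstacle is the index bookkeeping rather than any hard estimate: a single uniform substitution $X\partial\mapsto X\partial-X-\nu$ applied to a whole transmutation relation does \emph{not} work, because the two modified Bessel operators in it carry orders differing by one and become reverse Bessel polynomial operators only after conjugation by the \emph{different} factors $g_\nu$ and $g_{\nu+1}$; the mismatch $g_{\nu+1}g_\nu^{-1}=X$ (resp. $g_\nu g_{\nu+1}^{-1}=1/X$) must be tracked and shown to be absorbed exactly into the intertwiner $X\partial-X-2\nu$ (resp. $\tfrac1X(\partial-1)$). A secondary technical point is that $g_\mu\notin\mathcal{A}$; I would handle this by running the whole computation as an identity of operators on $\mathcal{O}_d$ and then invoking faithfulness of the action of $\mathcal{A}(1/X)$ on analytic functions to descend to an identity in $\mathcal{A}(1/X)$. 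If one prefers to stay inside the algebra, both identities can instead be proved by brute force: expand each side into normal-ordered form $\sum_j c_j(X)\partial^j$ using $[\partial,X]=1$ and $[\partial,1/X]=-1/X^2$, and compare the coefficients of $\partial^3,\partial^2,\partial,1$; this is routine and gives an independent check.
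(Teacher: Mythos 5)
Your proof is correct and follows the route the paper itself indicates but does not write out: realizing the substitution $X\partial\mapsto X\partial-X-\mu$ as conjugation by $e^{X}X^{\mu}$ and transporting the two transmutation formulae of Lemma \ref{L:Kbessel_transmutation}; I checked your three building blocks and both prefactor computations (the prefactors do come out to $X\partial-X-2\nu-1$ and $\tfrac1X(\partial-1-\tfrac1X)$ after absorbing the extra $X$, resp.\ $1/X$). Your observation that the two modified Bessel operators in each formula must be conjugated by the \emph{different} factors $g_\nu$ and $g_{\nu+1}$, with the mismatch folded into the intertwiner, is precisely the point the paper glosses over, and your fallback of normal-ordered coefficient comparison in $\mathcal{A}(1/X)$ is a valid independent check.
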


%Similarly, we have

%\begin{corollary}\label{C:bessel_poly_gauge}
%For each $\nu\in\mathbb{C}$, the following are well-defined left $\mathcal{A}$-linear maps:
%\[
%\begin{array}{rcl}
%\mathcal{A}/\mathcal{A}\Big(X\partial^2-2(\nu+\frac12+X)\partial +2(\nu +\frac12)\Big)&\stackrel{\times(X\partial-X-2\nu)}{\longrightarrow}&\mathcal{A}/\mathcal{A}\Big(X\partial^2-2(\nu-\frac12+X)\partial+2(\nu -\frac12)\Big),\\
%\mathcal{A}/\mathcal{A}\Big(X\partial^2+2(\nu-\frac12-X)\partial -2(\nu -\frac12)\Big)&\stackrel{\times(X\partial-X+2\nu)}{\longrightarrow}&\mathcal{A}/\mathcal{A}\Big(X\partial^2+2(\nu+\frac12-X)\partial-2(\nu +\frac12)\Big).
%\end{array}
%\]
%\end{corollary}
\medskip

\subsection{Reverse Bessel polynomial modules} \label{SS:reverse_bessel_poly_mod}
	
\subsubsection{Reverse Bessel polynomial operator}\label{SSS:bessel_poly_oper}
	Some of Burchnall\rq{}s formulae found in \cite{Burchnall_1953} are applications of the Weyl-algebraic formulae to be derived in this section. They are special cases in the manifestation of $D$-modules  $\mathcal{O}_d$. We shall change the  manifestation of $D$-modules to  $\mathcal{O}_\Delta$ when we treat the difference Bessel polynomials later.  We are ready to  set $\nu=n+\frac12$ and $\nu=n-\frac12$ for $n\in \mathbb{Z}$ in \eqref{E:bessel_poly_transmutation_1}  and \eqref{E:bessel_poly_transmutation_2} respectively to obtain
	
\begin{corollary}\label{C:bessel_poly_gauge_2}
For each $n\in\mathbb{Z}$, the following are well-defined left $\mathcal{A}$-linear maps.
\[
\begin{array}{rcl}
	\mathcal{A}/\mathcal{A}\big(X\partial^2-2(n+1+X)\partial +2(n+1)\big)&\stackrel{\times(X\partial-X-2n-1)}{\longrightarrow}&\mathcal{A}/\mathcal{A} \big(X\partial^2-2(n+X)\partial +2n\big),\\
	\mathcal{A}/\mathcal{A}\big(X\partial^2-2(n-1+X)\partial +2(n-1)\big) & \stackrel{\times\frac{1}{X}(\partial-1)}{\longrightarrow} & \mathcal{A}/\mathcal{A} \big(X\partial^2-2(n+X)\partial +2n\big).
\end{array}
\]
\end{corollary}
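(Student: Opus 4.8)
The plan is to read off both maps as specializations of the transmutation identities already established in Proposition~\ref{P:bessel_poly_transmutation_1}: those identities hold for \emph{every} $\nu\in\mathbb{C}$, whereas the present corollary merely restricts $\nu$ to the half-integers $n\pm\tfrac12$. The underlying general principle is that right-multiplication gives a well-defined left $\mathcal{A}$-linear map $\mathcal{A}/\mathcal{A}L_1\to\mathcal{A}/\mathcal{A}L_2$, $[a]\mapsto[aS]$, exactly when $L_1S\in\mathcal{A}L_2$; any factorization $L_1S=PL_2$ with $P\in\mathcal{A}$ supplies this containment, and left $\mathcal{A}$-linearity is then automatic from the left-module structure. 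Thus the whole proof reduces to producing the two factorizations after substitution, and the verification of each is nothing more than coefficient bookkeeping.

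For the first map I would put $\nu=n+\tfrac12$ in \eqref{E:bessel_poly_transmutation_1}. Then $\nu+\tfrac12=n+1$, $\nu-\tfrac12=n$ and $2\nu=2n+1$, so that identity becomes
\[
\big(X\partial^2-2(n+1+X)\partial+2(n+1)\big)\,(X\partial-X-2n-1)=\big(X\partial-X-2n-2\big)\,\big(X\partial^2-2(n+X)\partial+2n\big).
\]
This is precisely the required $L_1S=PL_2$ with $L_1=X\partial^2-2(n+1+X)\partial+2(n+1)$, $S=X\partial-X-2n-1$, $L_2=X\partial^2-2(n+X)\partial+2n$ and $P=X\partial-X-2n-2$, which establishes well-definedness of the first asserted map.

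For the second map I would put $\nu=n-\tfrac12$ in \eqref{E:bessel_poly_transmutation_2}, giving $\nu-\tfrac12=n-1$ and $\nu+\tfrac12=n$, so that identity reads
\[
\big(X\partial^2-2(n-1+X)\partial+2(n-1)\big)\,\tfrac1X(\partial-1)=\tfrac1X\big(\partial-1-\tfrac1X\big)\,\big(X\partial^2-2(n+X)\partial+2n\big).
\]
Again this is $L_1S=PL_2$, now with $S=\tfrac1X(\partial-1)$ and $P=\tfrac1X(\partial-1-\tfrac1X)$, yielding the second map. The single point deserving a word of care is that this second map, like the proposition itself, involves $1/X$ and so is really to be understood inside $\mathcal{A}(1/X)$; this is not a genuine obstacle, since the identity is an equality of elements of $\mathcal{A}(1/X)$ already proved in Proposition~\ref{P:bessel_poly_transmutation_1}. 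In short there is no hard step: the entire content is the substitution $\nu\mapsto n\pm\tfrac12$, and the restriction to integral $n$ is cosmetic, serving only to place these operators in the Bessel-polynomial regime where the corresponding solutions are genuine reversed Bessel polynomials $\theta_n$.
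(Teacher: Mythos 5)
Your proof is correct and follows exactly the paper's route: the corollary is obtained by substituting $\nu=n+\tfrac12$ into \eqref{E:bessel_poly_transmutation_1} and $\nu=n-\tfrac12$ into \eqref{E:bessel_poly_transmutation_2}, the resulting factorizations $L_1S=PL_2$ giving well-definedness of right multiplication. Your explicit remarks on why such a factorization suffices and on the need to work in $\mathcal{A}(1/X)$ for the second map are sound additions but do not change the argument.
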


		Let $\Theta_0=1$ and consider for each $n\in\mathbb{Z}$ the gauge transformations defined by right multiplication by
		\[
			G_n :=-(X\partial-X-2n-1),\quad n\in\mathbb{Z}.  \mod X\partial
		\]
Then we define
		\begin{equation}\label{E:Theta_N}
			\Theta_n(X)= G_{n-1}\, \cdots  G_1\, G_0\, \Theta_0(X)\mod X\partial
		\end{equation}
to be the \textit{reversed Bessel polynomial of degree} $n$. 
\medskip

\begin{theorem}\label{T:theta_bessel_poly_map} For each $n\in\mathbb{N}\cup \{0\}$, the map
	\[
		\mathcal{A}/\mathcal{A}\big(X\partial^2-2(n+X)\partial +2n)\big)\stackrel{\times
		\Theta_n(X)}{\longrightarrow}\mathcal{A}/\mathcal{A} \big(X\partial^2-2X\partial \big)
	\]
	where $\Theta_n(X)$ is defined in \eqref{E:Theta_N} is  a well-defined left $\mathcal{A}$-linear map.
\end{theorem}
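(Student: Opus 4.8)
The plan is to prove the statement by induction on $n$, exhibiting the map $\times\Theta_n$ as a composition of the elementary transmutation maps already furnished by Corollary~\ref{C:bessel_poly_gauge_2}. Throughout write $L_k := X\partial^2-2(k+X)\partial+2k$, so that $L_0=X\partial^2-2X\partial$ is the target operator and $L_n$ the source operator, and recall the two structural facts I intend to use repeatedly: first, right multiplication $[a]\mapsto[aS]$ descends to a well-defined map $\mathcal{A}/\mathcal{A}L\to\mathcal{A}/\mathcal{A}K$ exactly when $LS\in\mathcal{A}K$, and this map is automatically left $\mathcal{A}$-linear because left multiplication by $\mathcal{A}$ commutes with right multiplication by $S$; second, such right-multiplication maps compose by multiplying the multipliers on the right, so that applying $\times S$ and then $\times T$ gives $\times(ST)$.

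For the base case $n=0$ one has $\Theta_0=1$, and $\times 1$ is the identity of $\mathcal{A}/\mathcal{A}L_0$, which is trivially well-defined and $\mathcal{A}$-linear. For the inductive step I would invoke Corollary~\ref{C:bessel_poly_gauge_2} with parameter $n$: it asserts precisely that right multiplication by $X\partial-X-2n-1$, equivalently by $G_n=-(X\partial-X-2n-1)$, yields a well-defined left $\mathcal{A}$-linear map $\mathcal{A}/\mathcal{A}L_{n+1}\xrightarrow{\times G_n}\mathcal{A}/\mathcal{A}L_n$ (the sign is immaterial to well-definedness). Composing this arrow with the inductively constructed map $\times\Theta_n\colon\mathcal{A}/\mathcal{A}L_n\to\mathcal{A}/\mathcal{A}L_0$ and applying the composition rule produces a well-defined $\mathcal{A}$-linear map $\mathcal{A}/\mathcal{A}L_{n+1}\to\mathcal{A}/\mathcal{A}L_0$ equal to $\times(G_n\Theta_n)$. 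Since the defining recursion \eqref{E:Theta_N} gives $\Theta_{n+1}=G_nG_{n-1}\cdots G_0\Theta_0=G_n\Theta_n$, this composite is exactly $\times\Theta_{n+1}$, which closes the induction.

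Unwinding the recursion, the map of the theorem is the composite of the chain
\[
\mathcal{A}/\mathcal{A}L_n\xrightarrow{\times G_{n-1}}\mathcal{A}/\mathcal{A}L_{n-1}\xrightarrow{\times G_{n-2}}\cdots\xrightarrow{\times G_0}\mathcal{A}/\mathcal{A}L_0,
\]
each arrow being a single instance of Corollary~\ref{C:bessel_poly_gauge_2}, whose accumulated multiplier is the ordered product $G_{n-1}\cdots G_0=\Theta_n$ from \eqref{E:Theta_N}.

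The step deserving the most care is the bookkeeping in the composition rule, together with the reading of $\Theta_n$. Because right multiplication composes so that applying $\times G_{n-1}$ first and $\times G_{n-2}$ second gives $\times(G_{n-1}G_{n-2})$, the noncommuting factors $G_k$ accumulate on the right in exactly the order in which \eqref{E:Theta_N} arranges them, and no reordering is required; I would verify this ordering explicitly to avoid sign or transposition errors. I would also make explicit that $\Theta_n$ is to be understood as the element of $\mathcal{A}$ assembled from the gauge factors $G_{n-1}\cdots G_0$ (its reduction modulo $X\partial$ being the polynomial representative that one interprets as the reversed Bessel polynomial $\theta_n$): it is this gauge-product element, carried along the chain above, that makes each multiplier well defined. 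Once the ordering and the identification $\Theta_{n+1}=G_n\Theta_n$ are pinned down, the entire content of the proof reduces to the two formal facts stated at the outset plus the already-established Corollary~\ref{C:bessel_poly_gauge_2}, so no further computation is needed.
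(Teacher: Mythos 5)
Your proof is correct and is exactly the argument the paper intends: the theorem is stated without an explicit proof immediately after Corollary~\ref{C:bessel_poly_gauge_2} and the definition \eqref{E:Theta_N}, and composing the gauge maps $\times G_k$ supplied by that corollary, with the multipliers accumulating on the right as $G_{n-1}\cdots G_0=\Theta_n$, is the intended justification. Your explicit caveat that $\Theta_n$ must here be read as the unreduced gauge product rather than its polynomial reduction modulo $X\partial$ is well taken --- for the target $\mathcal{A}/\mathcal{A}(X\partial^2-2X\partial)$ (unlike the target $\mathcal{A}/\mathcal{A}\partial$ of Theorem~\ref{T:theta_bessel_poly_map_2}) the reduction does affect well-definedness, so this is the reading under which both the statement and your composition argument hold.
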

\medskip

	In particular, we have
	\begin{equation}
		\begin{split}
			\Theta_1(X)&=G_0\, \Theta_0 (X) = (-X\partial+X+1)=X+1\quad \mod X\partial\\ 
			\Theta_2(X)&= G_1\, G_0\, \Theta_0(x) = 
			(-X\partial +X+3)\, (X+1)=X^2+3X+3,\quad \mod X\partial\\
			\Theta_3(X)&=G_2\, G_1\, G_0 \, \Theta_0(x)
			(-X\partial +x-5) X^2+3X+3=X^3+6X^2+15X+15,\quad \mod X\partial\\
			\cdots & \cdots
		\end{split}
	\end{equation}
\medskip
			
\begin{theorem}\label{T:theta_bessel_poly_map_2}  Let $n\in\mathbb{N}_0$  and $\Theta_n(X)= G_{n-1} \cdots  G_1 G_0 \Theta_0(X)$. Then we have
	\begin{equation}
		\Theta_n(X)=\sum_{k=0}^n \frac{(n+k)!}{2^k (n-k)!\, k!}X^{n-k},
	\end{equation}
and the map
		\begin{equation}
		\mathcal{A}/\mathcal{A}\big(X\partial^2-2(n+X)\partial +2n)\big)
\stackrel{\times\Theta_n(X)}	{\longrightarrow} \mathcal{A}/ \mathcal{A}\partial
	\end{equation}
is left $\mathcal{A}$-linear.
\end{theorem}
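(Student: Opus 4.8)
The plan is to establish the closed form by induction on $n$, and then to obtain the $\mathcal{A}$-linearity of the map in a single line as a composition with a result already proved.

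First I would unwind the definition \eqref{E:Theta_N} into a one-step recursion. Since $\Theta_n = G_{n-1}\Theta_{n-1}$ with $G_{n-1} = -(X\partial - X - 2(n-1)-1) = -X\partial + X + 2n-1$, and since in the relevant quotient the symbol $\partial$ acts on the polynomial representative as $d/dX$, this reads
\[
\Theta_n(X) = (X + 2n - 1)\,\Theta_{n-1}(X) - X\,\Theta_{n-1}'(X), \qquad \Theta_0 = 1 .
\]
One checks that this reproduces the displayed values $\Theta_1 = X+1$, $\Theta_2 = X^2 + 3X + 3$, $\Theta_3 = X^3 + 6X^2 + 15X + 15$, which fixes the reading of the ``$\bmod\,X\partial$'' in \eqref{E:Theta_N}.

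Next, writing $a_{n,k} = \frac{(n+k)!}{2^k(n-k)!\,k!}$ for the conjectured coefficient of $X^{n-k}$, I would substitute $\Theta_{n-1} = \sum_{k} a_{n-1,k}X^{n-1-k}$ into the recursion. Collecting the coefficient of $X^{n-j}$ from the three contributions $X\Theta_{n-1}$, $(2n-1)\Theta_{n-1}$ and $-X\Theta_{n-1}'$ gives the two-term relation
\[
[X^{n-j}]\,\Theta_n = a_{n-1,j} + \big[(2n-1)-(n-j)\big]\,a_{n-1,j-1} = a_{n-1,j} + (n+j-1)\,a_{n-1,j-1},
\]
with the convention $a_{n-1,k}=0$ outside $0\le k\le n-1$. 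It then remains to verify the Pascal-type identity $a_{n,j} = a_{n-1,j} + (n+j-1)\,a_{n-1,j-1}$; pulling out the factor $\frac{(n+j-1)!}{2^j}$ and putting the bracket over the common denominator $(n-j)!\,j!$ reduces it to $(n-j)+2j = n+j$, which is immediate. This coefficient identity is the only genuinely computational step, and it is elementary.

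Finally, for the $\mathcal{A}$-linearity of $\times\Theta_n : \mathcal{A}/\mathcal{A}\big(X\partial^2 - 2(n+X)\partial + 2n\big) \to \mathcal{A}/\mathcal{A}\partial$, I would compose the map of Theorem \ref{T:theta_bessel_poly_map} with the quotient map $\times 1 : \mathcal{A}/\mathcal{A}(X\partial^2 - 2X\partial) \to \mathcal{A}/\mathcal{A}\partial$. The latter is well defined and left $\mathcal{A}$-linear because $X\partial^2 - 2X\partial = (X\partial - 2X)\partial \in \mathcal{A}\partial$, so right multiplication by $1$ carries the defining left ideal into $\mathcal{A}\partial$; and right multiplication is automatically left $\mathcal{A}$-linear when well defined. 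The composite is right multiplication by $\Theta_n\cdot 1 = \Theta_n$, which is precisely the asserted map, and a composite of well-defined left $\mathcal{A}$-linear maps is left $\mathcal{A}$-linear. Equivalently, one may bypass Theorem \ref{T:theta_bessel_poly_map} and verify directly that the $\partial$-free part of $L_n\Theta_n$, namely $X\Theta_n'' - 2(n+X)\Theta_n' + 2n\Theta_n$, vanishes identically; this ODE follows from the recursion by a second, parallel induction. Either way, the main obstacle is purely the bookkeeping of the coefficient recurrence in the induction, and everything surrounding it is formal.
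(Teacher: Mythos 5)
Your proof is correct and follows essentially the route the paper intends (the paper states this theorem without writing out a proof): the $\mathcal{A}$-linearity comes from composing the gauge maps of Corollary \ref{C:bessel_poly_gauge_2} (equivalently Theorem \ref{T:theta_bessel_poly_map}) with the projection $\mathcal{A}/\mathcal{A}(X\partial^{2}-2X\partial)\to\mathcal{A}/\mathcal{A}\partial$, which is legitimate since $X\partial^{2}-2X\partial=(X\partial-2X)\partial\in\mathcal{A}\partial$, and the closed form follows by induction from the one-step recursion $\Theta_n=(X+2n-1)\Theta_{n-1}-X\Theta_{n-1}'$ via the Pascal-type identity $a_{n,j}=a_{n-1,j}+(n+j-1)a_{n-1,j-1}$, which I have checked, including the boundary cases $j=0$ and $j=n$.
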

\medskip

On the other hand, let $\Theta_0=1$ and consider the gauge transformation $H$ defined by a left multiplication by the element $-\frac1X(\partial-1)$. Then we have
		\begin{equation}%\label{E:Theta_N}
			1=\Theta_0(X)= H^{n}(\Theta_n(X))=(H \, \cdots \, H)(\Theta_n(X)).
		\end{equation}
\medskip

That is,
	\begin{equation}
		\begin{split}
			\Theta_0(X)&=H(\Theta_1 (X)) = 1/X(1-\partial) (X+1)\quad \mod \mathcal{A}\partial\\ 
			\Theta_0(X)&=H^2(\Theta_2(X)) =\big[ 1/X(1-\partial)\big]^2( X^2+3X+3),\quad \mod \mathcal{A}\partial\\
			\Theta_0(X)&=H^3(\Theta_3(X))=\big[ 1/X(1-\partial)\big]^3(X^3+6X^2+15X+15),\quad \mod \mathcal{A}\partial\\
			\cdots & \cdots
		\end{split}
	\end{equation}
\medskip

\begin{corollary}\label{C:Besselpoly_trans}
For each $n$, let $\Theta_n$ be defined as above, which is a solution of $X\partial^2-2(n+X)\partial+2n$ in the left $\mathcal{A}$-module $\mathbb{C}[X]$. Then
\begin{align*}
	(X\partial-X-2n-1)\Theta_n + \Theta_{n+1}=0,\\
	(\partial-1) \Theta_n + X\Theta_{n-1}=0.
\end{align*}
as elements in $M$.
\end{corollary}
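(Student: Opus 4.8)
The plan is to read both identities off the gauge-transformation definition of the $\Theta_n$ in \eqref{E:Theta_N}, so that neither an induction nor an appeal to uniqueness of polynomial solutions is required. Throughout I would write $L_m:=X\partial^2-2(m+X)\partial+2m$, so that $\Theta_m$ is annihilated by $L_m$, and I would keep in play both $G_m=-(X\partial-X-2m-1)$ and the lowering operator $H=-\frac1X(\partial-1)$.

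The first identity should be immediate: by \eqref{E:Theta_N} we have $\Theta_{n+1}=G_n\Theta_n=-(X\partial-X-2n-1)\Theta_n$, which rearranges at once to $(X\partial-X-2n-1)\Theta_n+\Theta_{n+1}=0$.

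For the second identity I would show that $H$ is a left inverse of $G_{n-1}$ on the solution space of $L_{n-1}$. The single computation needed is the operator identity in $\mathcal{A}(1/X)$
\[
	(\partial-1)(X\partial-X-2n+1)=L_{n-1}+X,
\]
obtained by expanding the product using the Weyl relation $\partial X=X\partial+1$; dividing by $X$ then gives $HG_{n-1}=\tfrac1X L_{n-1}+1$. Applying this to $\Theta_{n-1}$ and using $L_{n-1}\Theta_{n-1}=0$ makes the $L_{n-1}$-term vanish, leaving $HG_{n-1}\Theta_{n-1}=\Theta_{n-1}$. Since $\Theta_n=G_{n-1}\Theta_{n-1}$ by \eqref{E:Theta_N}, this says $H\Theta_n=\Theta_{n-1}$, that is $-\frac1X(\partial-1)\Theta_n=\Theta_{n-1}$; clearing the factor $1/X$ produces exactly $(\partial-1)\Theta_n+X\Theta_{n-1}=0$.

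I expect the only delicate point to be the commutator identity $(\partial-1)(X\partial-X-2n+1)=L_{n-1}+X$: the signs and the index shift $m=n-1$ must be tracked precisely, and the appearance of the extra term $X$ on the right-hand side --- which is exactly what makes $\tfrac1X L_{n-1}+1$ act as the identity on solutions --- is the crux of the argument. This identity is in any event a restatement of the transmutation formula \eqref{E:bessel_poly_transmutation_2} underlying Corollary \ref{C:bessel_poly_gauge_2}, so it may be either recomputed directly or quoted from there; every other step is purely formal.
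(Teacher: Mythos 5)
Your proof is correct, but it takes a genuinely different route from the paper's. The paper proves the first identity by specializing the transmutation formula \eqref{E:bessel_poly_transmutation_1} to $\nu=n+\frac12$, concluding that $(X\partial-X-(2n+1))\Theta_n$ is annihilated by $L_{n+1}$ and hence equals $C\Theta_{n+1}$, and then fixing $C=-1$ by comparing leading coefficients (it implicitly uses that the polynomial solution space of $L_{n+1}$ is one-dimensional); the second identity is dismissed as ``similar.'' You instead read the first identity straight off the definition \eqref{E:Theta_N}, $\Theta_{n+1}=G_n\Theta_n$, and prove the second via the exact operator identity $(\partial-1)(X\partial-X-2n+1)=L_{n-1}+X$, which I have checked and which makes $H$ a genuine left inverse of $G_{n-1}$ on $\ker L_{n-1}$ with no normalization step at all. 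What your argument buys is the elimination of both the uniqueness appeal and the leading-term comparison; what the paper's buys is that its argument, resting only on the intertwining relations, applies to an arbitrary solution of the chain of operators $L_n$ in any $\mathcal{D}$-module (in the spirit of Corollary \ref{C:Bessel_trans}), not just to the specific polynomials singled out by \eqref{E:Theta_N}. Two small remarks: your identity is a factorization-up-to-lower-order relation $HG_{n-1}=\tfrac1X L_{n-1}+1$ rather than literally a restatement of the intertwining formula \eqref{E:bessel_poly_transmutation_2} (which is of the form $L_{n-1}H=H'L_n$), so it is better presented as the direct computation you describe than as a quotation of Corollary \ref{C:bessel_poly_gauge_2}; and the division by $X$ is avoidable entirely, since applying $(\partial-1)(X\partial-X-2n+1)=L_{n-1}+X$ to $\Theta_{n-1}$ and using $(X\partial-X-2n+1)\Theta_{n-1}=-\Theta_n$ already yields $(\partial-1)\Theta_n+X\Theta_{n-1}=0$ without ever leaving $\mathcal{A}$.
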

\medskip

\begin{proof} We only proof the first expression. The second expression can be dealt with similarly. Let $\nu=n+\frac12$ in \eqref{E:bessel_poly_transmutation_1}. Then we note that the right-side of \eqref{E:bessel_poly_transmutation_1} annihilates the $\Theta_N(X)$, while the left-side of \eqref{E:bessel_poly_transmutation_1} therefore implies that 
	\[
		(X\partial-X-(2n+1))\Theta_n(X)=C\Theta_{n+1}(X),\mod \mathcal{A}\partial
	\]for some constant $C$. A simple comparison of the highest terms on both sides yields that $C=-1$. This proves the desired result.
\end{proof}
\medskip

We immediately deduce from the above Corollary the following recursion.

\begin{corollary} [(\textbf{Three-term recurrence})] \label{C:Besselpoly_3term}
For each $n$, let $\Theta_n$ be defined as above, which is a solution of $X\partial^2-2(n+X)\partial+2n$ in the left $\mathcal{A}$-module $\mathbb{C}[X]$. Then
		\[
			\Theta_{n+1}-(2n+1)\Theta_n-X^2\Theta_{n-1}=0
		\]
as an element in $M$.
\end{corollary}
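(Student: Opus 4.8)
The plan is to derive the three-term recurrence purely algebraically from the two first-order relations supplied by Corollary~\ref{C:Besselpoly_trans}, working inside the left $\mathcal{A}$-module $\mathbb{C}[X]$ in which each $\Theta_n$ is an honest polynomial and $X$, $\partial$ act as multiplication by $X$ and as differentiation. No new structural input is needed; the whole statement is a consequence of combining the two relations already established.

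First I would rewrite the two relations of Corollary~\ref{C:Besselpoly_trans} so as to isolate $\Theta_{n+1}$ and $\partial\Theta_n$ respectively. The first relation $(X\partial-X-2n-1)\Theta_n+\Theta_{n+1}=0$ gives
\[
\Theta_{n+1}=(-X\partial+X+2n+1)\Theta_n,
\]
while the second relation $(\partial-1)\Theta_n+X\Theta_{n-1}=0$ gives
\[
\partial\Theta_n=\Theta_n-X\Theta_{n-1}.
\]

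Next I would substitute the second identity into the first in order to eliminate the derivative. Expanding,
\[
\Theta_{n+1}=-X\partial\Theta_n+X\Theta_n+(2n+1)\Theta_n=-X\big(\Theta_n-X\Theta_{n-1}\big)+X\Theta_n+(2n+1)\Theta_n,
\]
whereupon the two $\mp X\Theta_n$ terms cancel, leaving
\[
\Theta_{n+1}=X^2\Theta_{n-1}+(2n+1)\Theta_n,
\]
which is precisely the asserted recurrence $\Theta_{n+1}-(2n+1)\Theta_n-X^2\Theta_{n-1}=0$.

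I do not expect any genuine obstacle here: the argument is essentially a one-line substitution. The only point requiring care is that $X\partial\Theta_n$ must be read as the operator $X\partial$ applied to the polynomial $\Theta_n$, that is $X\cdot\Theta_n'$, so that $\partial\Theta_n$ may legitimately be replaced using the second relation \emph{before} multiplying through by $X$. Since both relations of Corollary~\ref{C:Besselpoly_trans} are genuine identities in $\mathbb{C}[X]$, the resulting recurrence holds as a polynomial identity; as a sanity check one can confirm it directly against the explicit values $\Theta_1=X+1$, $\Theta_2=X^2+3X+3$, and $\Theta_3=X^3+6X^2+15X+15$ recorded after Theorem~\ref{T:theta_bessel_poly_map}.
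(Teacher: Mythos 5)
Your proof is correct and follows exactly the route the paper intends: the paper states that the recurrence is "immediately deduced" from Corollary~\ref{C:Besselpoly_trans}, and your elimination of $\partial\Theta_n$ between the two relations is precisely that deduction (confirmed by the explicit low-degree cases).
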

\medskip

We next show that the reverse Bessel polynomial equation \cite[p. 8]{Grosswald} or more generally the \textit{half-order Bessel polynomial operator}
		\[
			L_n=X\partial^2-2(n+X)\partial +2n
		\]
		can be studied as a special case of the \textit{half-Bessel module} below. The construction of this module is motivated by Corollary~\ref{C:Besselpoly_trans} and the $\mathcal{A}_2$-module structure endowed on $\mathcal{O}^{\mathbb{N}_0}$
		as in Example~\ref{Eg:seq-functions_poisson}.

\begin{definition}\label{D:Theta} We define the \textit{reverse Bessel polynomial module} to be the
modified half-Bessel module to be the left $\mathcal{A}_2$-module
	\begin{equation}\label{E:Theta}
\Theta=\dfrac{\mathcal{A}_2}{\mathcal{A}_2(\partial_1-1+{X_1}/{\partial_2})+\mathcal{A}_2(X_1\partial_1-2X_2\partial_2-1-X_1+\partial_2)}.
	\end{equation}
\end{definition}
\medskip

We first verify that the Bessel polynomial module $\Theta$ defined above is holomonic.
\medskip

\begin{theorem} \label{T:B_poly_mod_holonomic}
The left $\mathcal{A}_2$-module $\Theta$ has dimension two and multiplicity two. In particular, $\Theta$ is holonomic.
\end{theorem}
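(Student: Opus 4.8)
The plan is to reduce the two defining relations of $\Theta$ to the canonical form of Example~\ref{E:prime_integrable_eg_2}, where one generator is second order and transversal to a first-order one, so that the Hilbert polynomial has leading term $k^2$ and both the dimension and the multiplicity equal two.

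First I would replace the formal generators by honest elements of $\mathcal{A}_2$. Clearing the factor $1/\partial_2$ in the first generator (right multiplication by $\partial_2$) gives $\tilde{g}_1=\partial_1\partial_2-\partial_2+X_1$, while the second generator is $g_2=X_1\partial_1-2X_2\partial_2-1-X_1+\partial_2$. The first relation lets one solve $\partial_1\equiv 1-X_1\partial_2^{-1}$, and substituting this into $g_2$ and clearing denominators yields the single second-order element
\[
h=(2X_2-1)\partial_2^2+3\partial_2+X_1^2,
\]
which is precisely the operator of the quadratic extension $\Theta(\rho)$ with $\rho^2=1-2X_2$. (The same two relations also generate the reverse Bessel polynomial operator $X_1\partial_1^2-2(X_1+X_2\partial_2)\partial_1+2X_2\partial_2$, which is second order in $\partial_1$.) Thus $\Theta$ is presented by one first-order relation, which removes $\partial_1$, together with one genuinely second-order relation, the structural analogue of the pair $\{\partial_1^2,\partial_2\}$ of Example~\ref{E:prime_integrable_eg_2}.

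For the dimension I would pass to the commutative graded ring $\mathrm{Gr}\,\mathcal{A}_2\cong\mathbb{C}[X_1,X_2,\partial_1,\partial_2]$ and compute the principal symbols $\sigma(\tilde{g}_1)=\partial_1\partial_2$ and $\sigma(g_2)=X_1\partial_1-2X_2\partial_2$. Their common zero locus is a union of coordinate planes, each of dimension two, so the characteristic variety of $\Theta$ has dimension at most two. The module is nontrivial, since the reverse Bessel polynomials provide a nonzero solution of $\Theta$ (Corollary~\ref{C:Besselpoly_trans} together with Theorem~\ref{T:theta_bessel_poly_map}); hence Bernstein's inequality (Theorem~\ref{T:bernstein}) gives dimension at least two. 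Therefore the dimension is exactly two and $\Theta$ is holonomic.

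The multiplicity is the delicate point, and the main obstacle. Equipping $\Theta$ with the good filtration inherited from the Bernstein filtration, I would compute the Hilbert polynomial $P(k)=\dim_{\mathbb{C}}\Gamma^k\Theta$ and read off its leading coefficient as in Definition~\ref{D:hilbert}. After the first-order relation is used to eliminate $\partial_1$, the remaining structure is governed entirely by the second-order operator $h$: since $h$ has order two in $\partial_2$ (equivalently, the reverse Bessel polynomial operator is second order in $\partial_1$), the local solution space is two-dimensional and $P(k)$ acquires leading term $k^2$, giving multiplicity two, exactly as for the value $(k+1)^2$ in Example~\ref{E:prime_integrable_eg_2}. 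The care required here is in handling the denominator $1/\partial_2$, i.e. the localization at $\partial_2$: it is essential that the relation eliminating $\partial_1$ be first order and transversal to $h$, so that only the order of $h$ contributes to the leading coefficient and the multiplicity is two, rather than the larger value one would obtain by treating the uncleared generators $\tilde{g}_1,g_2$ as a complete intersection.
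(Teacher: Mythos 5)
Your proposal is correct and follows essentially the same route as the paper's own proof: clear the $1/\partial_2$ by right-multiplying by $\partial_2$, use the first-order relation to eliminate $\partial_1$ and produce the single second-order element $(2X_2-1)\partial_2^2+3\partial_2+X_1^2$, and then read off the Hilbert polynomial from the spanning set $\{X_1^aX_2^b\}\cup\{X_1^{a'}X_2^{b'}\partial_2\}$ exactly as in Example~\ref{E:prime_integrable_eg_2}, giving dimension two and multiplicity two. The extra principal-symbol/characteristic-variety computation you add for the dimension bound is a harmless (and correct) supplement that the paper omits, since the Hilbert-polynomial count already delivers both the dimension and the multiplicity at once.
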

\medskip

\begin{proof}
Multiply  $\partial_2$ throughout the first generator from \eqref{E:Theta} yields
\[
		\partial_2\partial_1-\partial_2+X_1
	\]
before eliminating $\partial_1$ from the second generator from the \eqref{E:Theta}
	\[
		X_1\partial_1-2X_2\partial_2-1-X_1+\partial_2.
	\]This yields
	\begin{equation}\label{E:B_poly_3th_generator}
		\begin{split}
			\partial_2\partial_1-\partial_2+{X_1}&=
			\partial_2\big(\frac{2X_2}{X_1}\partial_2-\frac{1}{X_2}\partial_2+1+1/X_1\big)-\partial_2+X_1\\
		&=((2X_1-1)\partial_2^2+3\partial_2+X_1^2)/X_1
		\end{split}	
	\end{equation}	
Thus we can replace the original two generators by 
	\[
		((2X_1-1)\partial_2^2+3\partial_2+X_1^2)/X_1,
		\quad
		\partial_1-(2X_2+1)/X_1\partial_2-1-1/X_1.
	\]This renders the elements in the $\Gamma^k\Theta$ are spanned by $\{X_1^aX_2^b: a+b\le k\}$ and 
$\{X_1^{a\rq{}}X_2^{b\rq{}}\partial_2: a+b\le k-1\}$. Hence the $\Theta$ shares the same Hilbert polynomial with that studied in Example \ref{E:prime_integrable_eg_2} so that 
%Define a connection $\nabla$ in the trivial line bundle on $\mathbb{C}\times\mathbb{C}$ by having matrix relative to the standard frame
%	\[
%		\omega=[x_1x_2-x_2]dx_1+\frac{1}{2}[2x_1-x_1^2]dx_2.
%\]
%Verify that $d\omega-\omega\wedge\omega=0$ so that $\nabla$ is a flat connection. It induces a holonomic $\mathcal{A}_2-$module.
\[
\dim_{\mathbb{C}}\mathrm{Gr}^kM=k^2.
\]It follows that the $\Theta$ is holonomic, hence dimension two and with multiplicity two.
\end{proof}
\medskip

Eliminating $\partial_1$ again from the above two relations of $\mathcal{A}_2$ from the Definition $\Theta$ \eqref{D:Theta} yields the following element of $\mathcal{A}_2$-module. 
\medskip

\begin{proposition}[(\textbf{Three-term recursion})] \label{P:Weyl_mod_half_bessel_3term}
 The natural map
	\[
\dfrac{\mathcal{A}_2}{\mathcal{A}_2[(2X_1-1)\partial_2^2+3\partial_2+X_1^2]}\longrightarrow\Theta
	\]
is a well-defined left $\mathcal{A}_2$-linear surjection.
\end{proposition}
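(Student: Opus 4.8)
The plan is to recognise the displayed map as the canonical projection induced by an inclusion of left ideals, so that the entire statement collapses to a single membership computation. Write $I$ for the left ideal defining $\Theta$ in \eqref{E:Theta} and put $L:=(2X_1-1)\partial_2^2+3\partial_2+X_1^2$. The ``natural map'' sends a coset $a+\mathcal{A}_2L$ to $a+I$, and since $I$ is a left ideal this is well defined exactly when $\mathcal{A}_2L\subseteq I$, i.e.\ when $L\equiv 0$ in $\Theta$. Granting that single fact, left $\mathcal{A}_2$-linearity is automatic for a projection of cyclic quotients, and surjectivity is immediate: both the source $\mathcal{A}_2/\mathcal{A}_2L$ and the target $\Theta=\mathcal{A}_2/I$ are generated as left $\mathcal{A}_2$-modules by the class of $1$, and the map carries one generator onto the other. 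So everything reduces to proving the membership $L\in I$.

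To establish $L\in I$ I would simply re-run the elimination already carried out in the proof of Theorem~\ref{T:B_poly_mod_holonomic}. First I clear the denominator in the first generator of \eqref{E:Theta}: left multiplication by $\partial_2$ sends $\partial_1-1+X_1/\partial_2$ to
\[
	g_1:=\partial_2\partial_1-\partial_2+X_1\in I,
\]
using that $X_1$ commutes with $\partial_2$, so that $\partial_2\cdot X_1\partial_2^{-1}=X_1$. With $g_2:=X_1\partial_1-2X_2\partial_2-1-X_1+\partial_2$ the second generator, I then form the combination $X_1g_1-\partial_2g_2$ and expand it using the only nontrivial relation $\partial_2X_2=X_2\partial_2+1$. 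The two occurrences of $X_1\partial_2\partial_1$ cancel, as do the two copies of $X_1\partial_2$, and what survives is precisely the third generator recorded in \eqref{E:B_poly_3th_generator}, namely $(2X_2-1)\partial_2^2+3\partial_2+X_1^2$. Because $I$ is a left ideal we have $X_1g_1\in I$ and $\partial_2g_2\in I$, hence this operator lies in $I$; identifying it with $L$ gives $L\in I$, as required. (One should note in passing that the coefficient $2X_1-1$ in the statement is a slip for $2X_2-1$: the corrected operator is exactly $-\rho^2\partial_2^2+3\partial_2+X_1^2$ under the substitution $\rho^2=1-2X_2$ used afterward in $\Theta(\rho)$.)

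The only point demanding genuine care — the main, if modest, obstacle — is the bookkeeping around the formal symbol $X_1/\partial_2=\partial_2^{-1}X_1$ in the first generator, which does not literally belong to $\mathcal{A}_2$. I would verify explicitly that left multiplication by $\partial_2$ produces an \emph{honest} element $g_1\in\mathcal{A}_2$ lying in $I$, and that the final identity $L=X_1g_1-\partial_2g_2$ involves only genuine members of $\mathcal{A}_2$, so that no localisation of the Weyl algebra is secretly invoked. Everything else is the routine but order-sensitive noncommutative expansion; since it merely reproduces the elimination step already performed in the holonomicity proof, the proposition is in effect a restatement of that computation together with the observation that the class of $1$ generates $\Theta$.
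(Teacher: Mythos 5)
Your proposal is correct and follows essentially the same route as the paper: the statement is reduced to the membership of the second-order operator in the defining left ideal of $\Theta$, which is exactly the elimination of $\partial_1$ already carried out in the proof of Theorem~\ref{T:B_poly_mod_holonomic} (your combination $X_1g_1-\partial_2 g_2$ is just a denominator-free repackaging of the computation in \eqref{E:B_poly_3th_generator}), with well-definedness, linearity and surjectivity then automatic for a projection of cyclic quotients. You are also right that the coefficient $2X_1-1$ in the displayed operator is a slip for $2X_2-1$, as confirmed by the substitution $\rho^2=1-2X_2$ and the later occurrences of $(2X_2-1)\partial_2^2+3\partial_2+X_1^2$.
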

\medskip

The following records the relationship between the half-Bessel module $\Theta$ defined above and the second order half-Bessel operator $L_n$.
\medskip

\begin{proposition}[(\textbf{Weyl-algebraic reverse Bessel-polynomial sequence of $\Theta$})]\label{P:Weyl_Bessel_Poly_Eqn} The natural map
\[
\dfrac{\mathcal{A}_2}{\mathcal{A}_2[X_1\partial_1^2-2(X_1+X_2\partial_2)\partial_1+2X_2\partial_2]}\longrightarrow\Theta
\]
is a well-defined left $\mathcal{A}_2$-linear surjection.
\end{proposition}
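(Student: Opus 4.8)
The map in question is induced by the identity on $\mathcal{A}_2$, sending the class of $1$ to the class of $1$; it is therefore automatically left $\mathcal{A}_2$-linear, and surjective, since its image is an $\mathcal{A}_2$-submodule of $\Theta$ containing the cyclic generator $1+I_\Theta$. Here I write $I_\Theta:=\mathcal{A}_2 g_1+\mathcal{A}_2 g_2$ for the defining left ideal of $\Theta$ from Definition~\ref{D:Theta}, with $g_1=\partial_1-1+X_1/\partial_2$ and $g_2=X_1\partial_1-2X_2\partial_2-1-X_1+\partial_2$. Consequently the whole content of the statement is the \emph{well-definedness} of the map, which reduces to the single inclusion $P\in I_\Theta$, where $P:=X_1\partial_1^2-2(X_1+X_2\partial_2)\partial_1+2X_2\partial_2$ is the relator defining the domain. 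Because $I_\Theta$ is a left ideal, $P\in I_\Theta$ immediately yields $\mathcal{A}_2P\subseteq I_\Theta$, which is exactly what is needed for the natural projection to descend.

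The plan is to exhibit $P$ explicitly as a left-$\mathcal{A}_2$-combination of the two generators, in the same spirit as the proof of Proposition~\ref{P:bessel_ode_mod}. Working with the cleared form $h_1:=\partial_2 g_1=\partial_2\partial_1-\partial_2+X_1\in\mathcal{A}_2$ (so that $\mathcal{A}_2 h_1\subseteq\mathcal{A}_2 g_1$, the same device already used in the holonomicity proof of Theorem~\ref{T:B_poly_mod_holonomic}), I would verify the identity
\[
	P=(\partial_1-1)\,g_2-h_1 .
\]
This is a direct computation in $\mathcal{A}_2$ using only $[\partial_1,X_1]=1$, $[\partial_2,X_2]=1$, and the commutativity of all cross terms. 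Concretely, I would first expand $\partial_1 g_2$ via $\partial_1X_1=X_1\partial_1+1$ and $\partial_1X_2=X_2\partial_1$, obtaining $\partial_1 g_2=X_1\partial_1^2-2X_2\partial_2\partial_1-X_1\partial_1-1+\partial_2\partial_1$; subtracting this from $P$ cancels the second-order term and the $X_2\partial_2\partial_1$ term and leaves $P-\partial_1 g_2=-X_1\partial_1+2X_2\partial_2+1-\partial_2\partial_1$; finally substituting $\partial_2\partial_1=h_1+\partial_2-X_1$ reveals the remaining first-order part to be precisely $-g_2$, so that $P-\partial_1 g_2=-g_2-h_1$, which rearranges to the displayed identity and places $P$ in $I_\Theta$.

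The one point requiring genuine care is the noncommutative bookkeeping: since $\Theta$ is a \emph{left} module, every generator must be kept as the rightmost factor throughout, so that each manipulation adds only a left multiple of $g_2$ or of $h_1$ (equivalently $g_1$) and never a right multiple — this is the identical subtlety resolved in Proposition~\ref{P:bessel_ode_mod}. No obstacle arises beyond ordering the cancellations correctly, and the crucial substitution $\partial_2\partial_1=h_1+\partial_2-X_1$ is exactly the relation already employed to produce the third generator in Theorem~\ref{T:B_poly_mod_holonomic}. Once the identity $P=(\partial_1-1)g_2-h_1$ is confirmed, the inclusion $P\in I_\Theta$ follows and the natural map is well-defined, finishing the proof. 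As a consistency check one may verify the conclusion manifestation-theoretically: under the structure of Example~\ref{Eg:seq-functions_poisson} the operator $X_2\partial_2$ acts as multiplication by the index $n$, so $P$ realises the reverse Bessel polynomial operator $L_n=X\partial^2-2(n+X)\partial+2n$, whose annihilation of $\Theta_n(X)$ is already recorded in Theorem~\ref{T:theta_bessel_poly_map}.
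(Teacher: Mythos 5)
Your proof is correct and follows essentially the same route as the paper's (Appendix \ref{A:Weyl_Bessel_Poly_Eqn}): both reduce well-definedness to exhibiting $X_1\partial_1^2-2(X_1+X_2\partial_2)\partial_1+2X_2\partial_2$ in the defining left ideal by a direct Weyl-algebra computation with $\partial_1 g_2$ and $\partial_2 g_1$. Your explicit identity $P=(\partial_1-1)g_2-\partial_2 g_1$ checks out and is just a cleaner one-line packaging of the paper's chain of congruences.
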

\medskip

The derivation of this result is given in the Appendix \ref{A:Weyl_Bessel_Poly_Eqn}.

\begin{proof} See the Appendix.
\end{proof}

\subsubsection{Classical reverse Bessel polynomials} 
To study Poisson generating functions, i.e., generating functions of the form $\displaystyle\sum_\mathbb{N} a_n {t^n}/{n!}$, one uses the $D$-module structure endowed on spaces $\mathbb{C}^{\mathbb{N}_0}$ and $\mathcal{O}^{\mathbb{N}_0}$ of (one-sided) sequences as in Example~\ref{Eg:seq-functions_poisson}. The purpose of the section is to illustrate that many classically known defining formulae about the reverse Bessel polynomials can be derived and presented in a coherent manner as consequences of manifestation of  the $D$-module structures discussed in the last section. We shall show that the same $D$-module structure with a different manifestation would allow us to reach an analogous theory for difference reverse Bessel polynomials in the next subsection. Thus except for the transmutation formulae derived from above, the remaining formulae that we derive here are well-known in the classical Bessel polynomial literature, e.g., \cite{Grosswald}. Reader who are looking for new formulae about difference Bessel polynomials may  proceed directly to the next section.

We deduce from Theorem \ref{T:theta_bessel_poly_map_2} the expected result:

\begin{proposition} Let $n\in \mathbb{N}_0$. Then the left $\mathcal{A}$-linear map
		\[
			\begin{array}{rclll}
			\mathcal{A}/\mathcal{A}\big(X\partial^2-2(n+X)\partial +2n)\big) 
			&\stackrel{\times\Theta_n(X)}{\longrightarrow} 
			& \mathcal{A}/ \mathcal{A}\partial
			&\stackrel{\times 1}{\longrightarrow} 
			&\mathcal{O}%,\\
%			1      & \mapsto    & \Theta_n{\color{blue}(X)} &  \mapsto  &\theta_n(x)
			\end{array}
		\]
		sends $1$ to the classical reversed Bessel polynomial
		\[
			\theta_n(x)=\sum_{k=0}^n \frac{(n+k)!}{2^k (n-k)!\, k!}\, x^{n-k}.
		\]
\end{proposition}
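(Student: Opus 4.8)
The plan is to obtain the result as a direct transcription of Theorem~\ref{T:theta_bessel_poly_map_2} into the analytic manifestation $\mathcal{O}_d$ of Example~\ref{Eg:O_deleted_d}. The map in the statement is the composition
\[
\mathcal{A}/\mathcal{A}\big(X\partial^2-2(n+X)\partial+2n\big)
\xrightarrow{\times\Theta_n(X)}
\mathcal{A}/\mathcal{A}\partial
\xrightarrow{\times 1}
\mathcal{O},
\]
so, since both arrows are already known to be left $\mathcal{A}$-linear, it suffices to track the image of the generator $1$ through each arrow in turn.

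First I would invoke Theorem~\ref{T:theta_bessel_poly_map_2}, which simultaneously guarantees that $\times\Theta_n(X)$ is a well-defined left $\mathcal{A}$-linear map into $\mathcal{A}/\mathcal{A}\partial$ and supplies the explicit closed form
\[
\Theta_n(X)=\sum_{k=0}^n \frac{(n+k)!}{2^k (n-k)!\, k!}\,X^{n-k}.
\]
Thus $1$ is carried to the class of this element modulo $\mathcal{A}\partial$. No completion is needed here, since $\Theta_n(X)$ is a genuine polynomial of degree $n$ in $X$ alone; in particular it is already the canonical representative of its class modulo $\mathcal{A}\partial$, because it involves no factor of $\partial$.

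Next I would analyse the second arrow. In $\mathcal{O}=\mathcal{O}_d$ the operator $\partial$ acts as $d/dx$ and $X$ acts as multiplication by $x$, so the constant function $1$ is annihilated by $\partial$; hence sending $1\in\mathcal{A}/\mathcal{A}\partial$ to $1\in\mathcal{O}$ is compatible with the defining relation $\mathcal{A}\partial$ and yields a left $\mathcal{A}$-linear map. By $\mathcal{A}$-linearity one has $X^{j}\cdot 1\mapsto x^{j}$ for each $j$, so the map simply replaces the formal symbol $X$ by the variable $x$. Composing, the image of $1$ is obtained from $\Theta_n(X)$ by the substitution $X^{n-k}\mapsto x^{n-k}$, giving exactly
\[
\theta_n(x)=\sum_{k=0}^n \frac{(n+k)!}{2^k (n-k)!\, k!}\,x^{n-k}.
\]
There is no substantive obstacle: the one point meriting a moment's care is the bookkeeping at the second arrow, namely verifying that the manifestation sends the class of $X^{j}$ to $x^{j}$ and that $\Theta_n(X)$, being purely polynomial in $X$, needs no reduction modulo $\mathcal{A}\partial$ before evaluation. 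Once this is noted the proposition follows at once.
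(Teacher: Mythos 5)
Your proposal is correct and follows exactly the route the paper itself takes: it records this proposition as an immediate deduction from Theorem~\ref{T:theta_bessel_poly_map_2} (which supplies the closed form of $\Theta_n(X)$ and the $\mathcal{A}$-linearity into $\mathcal{A}/\mathcal{A}\partial$), followed by the evaluation map $\times 1$ into $\mathcal{O}_d$, which replaces $X^{j}$ by $x^{j}$. Your extra remark that $\Theta_n(X)$ is already a polynomial in $X$ alone, so no reduction modulo $\mathcal{A}\partial$ is needed, is a correct and harmless elaboration of the same argument.
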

\medskip

\begin{example}\label{Eg:bilateral_theta} Consider the  sequence of classical Bessel polynomials $(\theta_n)$. Then the maps
	\begin{equation}\label{E:j-map-bessel_poly}
		\begin{array}{rcl}
		\mathfrak{j}:\Theta & \stackrel{\times (\theta_n)}{\longrightarrow} &\mathcal{O}^{\mathbb{N}_0}
		\end{array}
	\end{equation}
is well-defined and left $\mathcal{A}_2$-linear.  
\end{example}
\medskip

The following differential-difference equations and three-term recurrence, as well as Corollaries~\ref{C:Besselpoly_trans} and \ref{C:Besselpoly_3term} about classical Bessel polynomials below present solution of the module $\Theta$ in $\mathcal{O}_d$. They follow from Example \ref{E:j-map-bessel_poly}, hence giving new proofs to these classical results. We shall see how solutions of the module $\Theta$ in $\mathcal{O}_\Delta$ gives their difference analogues below.
	
\begin{theorem}[(\textbf{differential-difference equations} \cite{Grosswald}, p. 19)]\label{T:differential-differential}
The reverse classical Bessel polynomials $\{\theta_n(x)\}$ satisfy the following (well-known) formulae
	\begin{equation}\label{E:bessel_poly_trans}
	\begin{split}
		&\theta_n^\prime(x)-\theta_n(x)+x\theta_{n-1}(x)=0,\\
		&x\theta_n^\prime(x)-(x+2n+1)\theta_n(x)+\theta_{n+1}(x)=0
	\end{split}
	\end{equation}
for all $n\ge 1$ and $x$. 
%The generating function $f(x,\, t)$ for the reverse classical Bessel polynomials satisfies the system of partial differential equations
%	\begin{equation}\label{E:bessel_poly_PDE}
%		\begin{split}
%			&f_{xt}(x,\, t)-f_t(x,\, t)+xf(x, \, t)=0,\\
%			&xf_x (x,\, t)+(1-2t)f_t(x,\, t)-(1+x)f(x,\, t)=0
%		\end{split}
%	\end{equation}
%for all $x$ and $t$.
\end{theorem}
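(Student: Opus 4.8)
The plan is to exploit the fact, recorded in Example~\ref{Eg:bilateral_theta}, that the sequence $(\theta_n)$ is a solution of the reverse Bessel polynomial module $\Theta$ in the manifestation $\mathcal{O}_d^{\mathbb{N}_0}$. Concretely, the left $\mathcal{A}_2$-linearity of the map $\mathfrak{j}$ in \eqref{E:j-map-bessel_poly} means precisely that each of the two defining generators of $\Theta$ in \eqref{E:Theta} annihilates $(\theta_n)$ once the operators $\partial_1, X_1, \partial_2, X_2$ are interpreted through the rules of Example~\ref{Eg:seq-functions_poisson}. Thus the whole proof reduces to applying these two generators to $(\theta_n)$ and reading off the resulting identities componentwise; no analytic input beyond the explicit action $(\partial_1 f)_n = f_n'$, $(X_1 f)_n = x f_n$, $(\partial_2 f)_n = f_{n+1}$, $(X_2 f)_n = n f_{n-1}$ is required.

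First I would treat the second generator $X_1\partial_1 - 2X_2\partial_2 - 1 - X_1 + \partial_2$, which is the easier of the two since it is a genuine element of $\mathcal{A}_2$. Applying it to $(\theta_n)$ and evaluating the $n$-th component gives $x\theta_n' - 2n\theta_n - \theta_n - x\theta_n + \theta_{n+1}$; setting this equal to zero produces the second asserted formula $x\theta_n' - (x+2n+1)\theta_n + \theta_{n+1} = 0$ at once.

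The only genuine subtlety lies in the first generator $\partial_1 - 1 + X_1/\partial_2$, because of the formal inverse $1/\partial_2$. I would clear this denominator exactly as in the proof of Theorem~\ref{T:B_poly_mod_holonomic}: since $[\partial_2, X_1]=0$, left multiplication by $\partial_2$ turns the first generator into the honest element $\partial_2\partial_1 - \partial_2 + X_1$, which therefore also lies in the defining ideal and hence annihilates $(\theta_n)$. Computing its $n$-th component with the manifestation rules yields $\theta_{n+1}' - \theta_{n+1} + x\theta_n = 0$ for all $n\ge 0$; a shift of index $n \mapsto n-1$ then gives the first asserted formula $\theta_n' - \theta_n + x\theta_{n-1} = 0$ for all $n\ge 1$.

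I expect this index bookkeeping around the $1/\partial_2$ term to be the main (and essentially only) obstacle; everything else is direct substitution. As a cross-check, and an alternative route that bypasses the manifestation computation entirely, one may instead transcribe the two Weyl-algebraic transmutation relations of Corollary~\ref{C:Besselpoly_trans} --- namely $(X\partial - X - 2n-1)\Theta_n + \Theta_{n+1}=0$ and $(\partial-1)\Theta_n + X\Theta_{n-1}=0$ --- into the manifestation $\mathcal{O}_d$ in which $X$ acts as multiplication by $x$ and $\partial$ as differentiation; these translate verbatim into the second and first displayed identities respectively, confirming the result.
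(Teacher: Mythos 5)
Your proposal is correct, and your ``cross-check'' route --- transcribing the two relations of Corollary~\ref{C:Besselpoly_trans} into the manifestation $\mathcal{O}_d$ of Example~\ref{Eg:O_deleted_d} --- is literally the paper's own (one-line) proof. Your primary route via the generators of $\Theta$ in $\mathcal{O}_d^{\mathbb{N}_0}$, including the clearing of $1/\partial_2$ by left multiplication by $\partial_2$ and the index shift $n\mapsto n-1$, is a correct and essentially equivalent unwinding of the same content, since $\Theta$ was built precisely to abstract those two relations.
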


\begin{proof} This follows from Corollary \ref{C:Besselpoly_trans} with the manifestation of analytic function space $\mathcal{O}_d$ as a $\mathcal{A}_1$-module  given by the Example \ref{Eg:O_deleted_d}. 
\end{proof}
%{\color{blue}
%\begin{proof}
%Construct the fundamental left $\mathcal{A}_2$-linear maps:
%	\begin{equation}\label{E:bessel_poly_seq}
%			\begin{array}{rcl}
%		\Theta & \stackrel{\times(\theta_n(x))}{\longrightarrow} & \mathcal{O}^{\mathbb{N}_0}
%			\end{array}
%	\end{equation}
%	and
%	\begin{equation}\label{E:bessel_poly_gf}
%			\begin{array}{rcl}
%		\Theta & \stackrel{\times f(x,\, t)}{\longrightarrow} & \mathcal{O}_{d d}
%			\end{array}
%	\end{equation}
%	Since $\partial_1-1+X_1/\partial_2$ and $X_1\partial_1-2X_2\partial_2-1-X_1+\partial_2$ are generators of $\Theta$, they are both equivalent to $0$ in $\Theta$. Thus \eqref{E:bessel_poly_trans} follows from the left $\mathcal{A}_2$-linear map \eqref{E:bessel_poly_seq}, and \eqref{E:bessel_poly_PDE} follows from the left $\mathcal{A}_2$-linear map \eqref{E:bessel_poly_gf}.
%\end{proof}
%}
\medskip

%It follows from Proposition \ref{P:Weyl_mod_half_bessel_3term} and the structure of the above $\mathcal{A}_2-$module the well-known three-term recursion formula for reversed Bessel polynomials.
%\medskip

\begin{proposition}[(\textbf{Three-term recurrence} \cite{Grosswald}, p. 18)]  The sequence $(\theta_n)$ satisfies the (well-known) three-term recurrence formula
	\begin{equation}\label{E:bessel_poly_3term}
		 \theta_{n+2}(x)-(2n+3)\theta_{n+1}(x)-x^2\theta_n(x)=0
	\end{equation}
	for every $n\in\mathbb{N}_0$ and every $x$.
\end{proposition}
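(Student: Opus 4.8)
The plan is to obtain \eqref{E:bessel_poly_3term} by eliminating the derivative $\theta_n'$ between the two differential-difference formulae \eqref{E:bessel_poly_trans} just established in Theorem~\ref{T:differential-differential}. First I would solve the first identity of \eqref{E:bessel_poly_trans} for $\theta_n'(x)=\theta_n(x)-x\theta_{n-1}(x)$ and substitute this into the second identity $x\theta_n'(x)-(x+2n+1)\theta_n(x)+\theta_{n+1}(x)=0$. The term $x\theta_n(x)$ produced by the substitution cancels the $x\theta_n(x)$ coming from $-(x+2n+1)\theta_n(x)$, leaving the three-term relation $\theta_{n+1}(x)-(2n+1)\theta_n(x)-x^2\theta_{n-1}(x)=0$. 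Replacing $n$ by $n+1$ then yields exactly \eqref{E:bessel_poly_3term}, the coefficient $2n+3$ arising as $2(n+1)+1$.

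Alternatively, and more in keeping with the $D$-module philosophy of this section, I would read off \eqref{E:bessel_poly_3term} directly as the $\mathcal{O}_d$-manifestation of the Weyl-algebraic three-term recurrence $\Theta_{n+1}-(2n+1)\Theta_n-X^2\Theta_{n-1}=0$ recorded in Corollary~\ref{C:Besselpoly_3term}. Under the $\mathcal{A}$-module structure of Example~\ref{Eg:O_deleted_d}, in which $X$ acts as multiplication by $x$ and $\Theta_m(X)$ is sent to $\theta_m(x)$, the symbol $X^2$ becomes multiplication by $x^2$, so the abstract identity transports verbatim to $\theta_{n+1}(x)-(2n+1)\theta_n(x)-x^2\theta_{n-1}(x)=0$; the same index shift $n\mapsto n+1$ again gives the claimed form.

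There is essentially no analytic obstacle here, since both routes are finite algebraic manipulations valid as polynomial identities in $x$ for each fixed $n$. The only points requiring attention are purely bookkeeping: keeping the index shift consistent and, if one wants an independent sanity check, confirming that the leading terms match. As $\theta_m$ is monic of degree $m$, the highest-degree contribution to $\theta_{n+2}-(2n+3)\theta_{n+1}-x^2\theta_n$ is $x^{n+2}-x^2\cdot x^n=0$, which is consistent and fixes the normalisation of the constants appearing in \eqref{E:bessel_poly_trans}. I would present the elimination argument as the main proof, as it is self-contained given Theorem~\ref{T:differential-differential}, and note the manifestation viewpoint as the conceptual explanation of why the recurrence takes this particular shape.
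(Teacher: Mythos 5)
Your proposal is correct and matches the paper's own (one-line) proof, which likewise derives \eqref{E:bessel_poly_3term} either from the differential-difference formulae \eqref{E:bessel_poly_trans} or directly as the $\mathcal{O}_d$-manifestation of Corollary~\ref{C:Besselpoly_3term}; your elimination computation and index shift are accurate, and your leading-coefficient check is a reasonable extra sanity test. You have simply written out in full the two routes the paper invokes by reference.
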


\begin{proof} This follows from the last Corollary or directly from the  Corollary \ref{C:Besselpoly_3term}  with the manifestation of the $\mathcal{A}_1$-module $\mathcal{O}_d$ as given by the Example \ref{Eg:O_deleted_d}. 
\end{proof}
\medskip

%\begin{theorem}[(\textbf{Generating function})] \label{T:gf_bessel_poly} Let $f(x,\, t)$ be a solution of $\Theta$ in $\mathcal{O}_{dd}$. Then it satisfies the system of partial differential equations
%	\begin{equation}\label{E:bessel_poly_PDE}
%		\begin{split}
%			&f_{xt}(x,\, t)-f_t(x,\, t)+xf(x, \, t)=0,\\
%			&xf_x (x,\, t)+(1-2t)f_t(x,\, t)-(1+x)f(x,\, t)=0
%		\end{split}
%	\end{equation}
%for all $x$ and $t$.
%\end{theorem}

%\begin{proof} This follows from applying the two relations from \eqref{E:delta_Bessel_poly_2_more_elements} to $f(x,\, t)$ in the $D$-module $\mathcal{O}_{dd}$. 
%\end{proof}
\medskip

%\begin{proof} It is straightforward to check that any function of two variables $f(x,t)$ in
%\begin{equation}\label{E:gf-map-bessel_poly}
%		\begin{array}{rcl}
%		\mathfrak{j}:\Theta & \stackrel{\times f(x,t)}{\longrightarrow} &\mathcal{O}_{dd}
%		\end{array}
%	\end{equation}
%is left $\mathcal{A}_2$-linear. Moreover the $f(x,t)$ must satisfy \eqref{E:bessel_poly_PDE}.
%\end{proof}
%\medskip

Recently, there are also complex analytic studies of the PDEs satisfied by generating functions of some classical special functions, including the Bessel polynomials \cite{Hu_Yang_2009, Hu_Li_2017}.

\subsubsection{Difference reverse Bessel polynomials}	
Now we consider the left $\mathcal{A}_2$-module structure of $\mathcal{O}_\Delta^{\mathbb{N}_0}$ as in Example~\ref{Eg:seq-functions_poisson_delta}. Recalling that $\Theta_n$ is a solution of $L_n:=X\partial^2-2(n+X)\partial+2n$, we immediately obtain the following\medskip

\begin{theorem}[(\textbf{Difference equation})]\label{T:difference_Bessel_Poly_DD} For each integer $n\ge 0$, let $\theta^\Delta_n$ be the \textit{difference reverse Bessel polynomial of degree} $n$ defined by
	\begin{equation}\label{E:delta_Bessel_poly}
		\theta^\Delta_n(x) :=\sum_{k=0}^n \frac{(n+k)!}{2^k (n-k)!\, k!}\, (x)_{n-k}.
	\end{equation}
Then for each integer $n\ge 0$ and each $x$, we have
	\begin{equation}\label{E:delta_Bessel_poly_eqn}
		(x-2n)\,\theta_n^\Delta(x+1)-4(x-n)\,\theta_n^\Delta(x)+3x\, \theta_n^\Delta (x-1)=0.
	\end{equation}
\end{theorem}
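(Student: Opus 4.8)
The plan is to read off the difference equation \eqref{E:delta_Bessel_poly_eqn} as nothing more than the image of the Weyl-algebraic operator $L_n$ under the difference manifestation $\mathcal{O}_\Delta$, in exactly the spirit in which the difference Bessel equation \eqref{E:difference_bessel_eqn} was extracted from $(X\partial)^2+X^2-\nu^2$ in Example \ref{Eg:delta_Bessel_fn}. So the whole content is to identify $\theta^\Delta_n$ as a solution of $L_n$ in $\mathcal{O}_\Delta$ and then to spell out what that annihilation says.

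First I would recall from Theorem \ref{T:theta_bessel_poly_map_2} that the Weyl reverse Bessel polynomial $\Theta_n(X)=\sum_{k=0}^n \frac{(n+k)!}{2^k(n-k)!\,k!}X^{n-k}$ gives a left $\mathcal{A}$-linear map $\mathcal{A}/\mathcal{A}L_n \xrightarrow{\times \Theta_n(X)} \mathcal{A}/\mathcal{A}\partial$, i.e. $\Theta_n(X)$ is a solution of $L_n=X\partial^2-2(n+X)\partial+2n$. Composing with the manifestation $\mathcal{A}/\mathcal{A}\partial \xrightarrow{\times 1}\mathcal{O}_\Delta$ of Example \ref{Eg:O_delta}, under which $X^m\cdot 1 = x(x-1)\cdots(x-m+1)=(x)_m$, the image of the generator $1$ is precisely $\theta^\Delta_n(x)=\sum_{k=0}^n \frac{(n+k)!}{2^k(n-k)!\,k!}(x)_{n-k}$, the sequence of \eqref{E:delta_Bessel_poly}. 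Since both maps are $\mathcal{A}$-linear and $L_n$ kills $1$ in $\mathcal{A}/\mathcal{A}L_n$, it follows in the sense of Definition \ref{D:soln} that $\theta^\Delta_n$ is a solution of $L_n$ in $\mathcal{O}_\Delta$; that is, the element $L_n$, once realized in $\mathcal{O}_\Delta$, annihilates $\theta^\Delta_n$.

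The remaining step is to make this annihilation explicit. Using $(\partial f)(x)=f(x+1)-f(x)$ and $(Xf)(x)=xf(x-1)$ from Example \ref{Eg:O_delta}, I would expand term by term: $\partial^2\theta^\Delta_n(x)=\theta^\Delta_n(x+2)-2\theta^\Delta_n(x+1)+\theta^\Delta_n(x)$, then apply $X$ (which multiplies by $x$ and shifts the argument down by one) to get $X\partial^2\theta^\Delta_n(x)=x[\theta^\Delta_n(x+1)-2\theta^\Delta_n(x)+\theta^\Delta_n(x-1)]$, and likewise $-2X\partial\theta^\Delta_n(x)=-2x[\theta^\Delta_n(x)-\theta^\Delta_n(x-1)]$, $-2n\partial\theta^\Delta_n(x)=-2n[\theta^\Delta_n(x+1)-\theta^\Delta_n(x)]$, together with the constant term $2n\theta^\Delta_n(x)$. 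Collecting the coefficients of $\theta^\Delta_n(x+1)$, $\theta^\Delta_n(x)$, $\theta^\Delta_n(x-1)$ yields $x-2n$, $-4(x-n)$ and $3x$ respectively, which is exactly \eqref{E:delta_Bessel_poly_eqn}.

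The conceptual point, that $\theta^\Delta_n$ solves $L_n$ in $\mathcal{O}_\Delta$, is immediate from the $D$-module formalism, so the only genuine work and the main place to slip is the noncommutative bookkeeping of the third step: because $X$ carries the argument shift $x\mapsto x-1$ while $\partial$ is the forward difference, the composite $X\partial^2$ must be assembled in the right order (first the two forward differences, then multiply by $x$ and shift back), and it is easy to misplace an argument. I would also flag the purely structural alternative: the same identity falls out of Proposition \ref{P:Weyl_Bessel_Poly_Eqn}, since $X_1\partial_1^2-2(X_1+X_2\partial_2)\partial_1+2X_2\partial_2$ vanishes in $\Theta$, and in the $\mathcal{O}_\Delta^{\mathbb{N}_0}$ manifestation of Example \ref{Eg:seq-functions_poisson_delta} the Euler operator $X_2\partial_2$ acts on the $n$-th component as the scalar $n$, reducing that operator to $L_n$ acting on $\theta^\Delta_n$.
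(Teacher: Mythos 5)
Your proposal is correct and follows exactly the route the paper takes: the paper's (terse) justification is precisely that $\Theta_n(X)$ solves $L_n=X\partial^2-2(n+X)\partial+2n$, so that composing with the manifestation $\mathcal{A}/\mathcal{A}\partial\xrightarrow{\times 1}\mathcal{O}_\Delta$ sends $1$ to $\theta^\Delta_n$ and the realization of $L_n$ in $\mathcal{O}_\Delta$ annihilates it. Your explicit expansion of $X\partial^2$, $-2n\partial$, $-2X\partial$, $2n$ in the difference manifestation, with coefficients $x-2n$, $-4(x-n)$, $3x$, is the correct bookkeeping that the paper leaves to the reader.
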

\medskip

The following delay-differential relations and three-term recurrence are direct consequences of the module $\Theta$, as well as Corollaries~\ref{C:Besselpoly_trans} and \ref{C:Besselpoly_3term} with $N=\mathcal{O}_\Delta$.
\medskip

\begin{theorem}[(\textbf{Delay-difference relations})]\label{T:difference_Bessel_Poly_PDE}
The sequence of difference reverse Bessel polynomials $(\theta^\Delta_n)$ satisfy the following two delay-difference equations
	\begin{equation}\label{E:delta_bessel_poly_trans}
	\begin{split}
		\theta^\Delta_n(x+1)-2\theta^\Delta_n(x)+x\theta^\Delta_{n-1}(x-1)=0, \\
		 \theta^\Delta_{n+1}(x)+(x-2n-1)\theta^\Delta_n(x)-2x\, \theta^\Delta_n(x-1)=0,
		\end{split}
	\end{equation}
	for every $n\ge 1$ and every $x$. 
\end{theorem}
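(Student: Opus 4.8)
The plan is to obtain both equations as the image of the two abstract gauge relations recorded in Corollary~\ref{C:Besselpoly_trans} under the difference manifestation $\mathcal{O}_\Delta$ of Example~\ref{Eg:O_delta}. Those relations, namely
\[
(X\partial-X-2n-1)\Theta_n+\Theta_{n+1}=0,\qquad (\partial-1)\Theta_n+X\Theta_{n-1}=0,
\]
hold modulo $\mathcal{A}\partial$, i.e. as identities in $\mathcal{A}/\mathcal{A}\partial$; this is exactly how they were derived, by comparing highest-order terms after applying the transmutation formulae of Proposition~\ref{P:bessel_poly_transmutation_1}. Since they live in $\mathcal{A}/\mathcal{A}\partial$, any left $\mathcal{A}$-linear map out of $\mathcal{A}/\mathcal{A}\partial$ preserves them, so the whole argument reduces to pushing them forward into $\mathcal{O}_\Delta$.

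Next I would fix the relevant map. The constant function $1$ is annihilated by the difference operator $(\partial f)(x)=f(x+1)-f(x)$, so $1\mapsto 1$ determines a left $\mathcal{A}$-linear map $\mathcal{A}/\mathcal{A}\partial\to\mathcal{O}_\Delta$. Under the rules $(\partial f)(x)=f(x+1)-f(x)$ and $(Xf)(x)=xf(x-1)$ one checks inductively that $X^m\cdot 1=(x)_m$, the falling factorial; consequently $\Theta_n(X)=\sum_{k=0}^{n}\frac{(n+k)!}{2^k(n-k)!\,k!}X^{n-k}$ is sent to precisely $\theta^\Delta_n(x)=\sum_{k=0}^{n}\frac{(n+k)!}{2^k(n-k)!\,k!}(x)_{n-k}$, the difference reverse Bessel polynomial of \eqref{E:delta_Bessel_poly}. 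This is the same sequence produced by the solution map $\mathfrak{j}\colon\Theta\to\mathcal{O}_\Delta^{\mathbb{N}_0}$, so $(\theta^\Delta_n)$ really is the $\mathcal{O}_\Delta$-manifestation of $(\Theta_n)$.

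It then remains to read off the two identities. For the first delay-difference equation I would apply the second relation of Corollary~\ref{C:Besselpoly_trans}: here $(\partial-1)\theta^\Delta_n(x)=\theta^\Delta_n(x+1)-2\theta^\Delta_n(x)$ and $(X\theta^\Delta_{n-1})(x)=x\theta^\Delta_{n-1}(x-1)$, whose sum being zero is exactly $\theta^\Delta_n(x+1)-2\theta^\Delta_n(x)+x\theta^\Delta_{n-1}(x-1)=0$. For the second delay-difference equation I would apply the first relation of Corollary~\ref{C:Besselpoly_trans}; the only point requiring care is the noncommutative composite $X\partial$: since $(\partial\theta^\Delta_n)(x)=\theta^\Delta_n(x+1)-\theta^\Delta_n(x)$, applying $X$ gives $(X\partial\,\theta^\Delta_n)(x)=x\bigl[\theta^\Delta_n(x)-\theta^\Delta_n(x-1)\bigr]$, and combined with $(X\theta^\Delta_n)(x)=x\theta^\Delta_n(x-1)$ and the scalar term one obtains $(X\partial-X-2n-1)\theta^\Delta_n(x)=(x-2n-1)\theta^\Delta_n(x)-2x\theta^\Delta_n(x-1)$; adding $\theta^\Delta_{n+1}(x)$ yields the second equation.

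The computation is entirely routine, so the only genuine subtlety—and hence the step I would be most careful about—is the bookkeeping in the manifestation: remembering that $X$ acts with a unit backward shift, so $X\partial$ produces the \emph{backward} difference $\theta^\Delta_n(x)-\theta^\Delta_n(x-1)$ (not a forward one) and that the argument shifts accumulate correctly across the product. Confirming that the gauge relations genuinely hold modulo $\mathcal{A}\partial$ (rather than merely in the $\mathcal{O}_d$ realization used to phrase Corollary~\ref{C:Besselpoly_trans}) is what legitimizes transporting them verbatim to $\mathcal{O}_\Delta$; everything else is substitution.
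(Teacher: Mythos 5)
Your proposal is correct and follows the same route as the paper, which simply invokes Corollary~\ref{C:Besselpoly_trans} together with the $\mathcal{O}_\Delta$ manifestation of Example~\ref{Eg:O_delta}; you have merely written out the substitutions explicitly. Your computations of $(X\partial)\theta^\Delta_n(x)=x[\theta^\Delta_n(x)-\theta^\Delta_n(x-1)]$ and the remaining terms are accurate and reproduce both equations of \eqref{E:delta_bessel_poly_trans} exactly.
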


\begin{proof} This follows from Corollary \ref{C:Besselpoly_trans} with the manifestation of the analytic function space $\mathcal{O}_\Delta$ as a $\mathcal{A}_1$-module  given by the Example \ref{Eg:O_delta}. 
\end{proof}

%\medskip
 
 It follows from Proposition \ref{P:Weyl_mod_half_bessel_3term} and the structure of the above $\mathcal{A}_2$-module the new three-term recursion formula for the reverse difference Bessel polynomials.
%\medskip

\begin{theorem}[(\textbf{Three-term recurrence})] \label{T:3-term}
The sequence of difference reversed Bessel polynomials  $(\theta^\Delta_n)$ satisfies the following recurrence relation
		\begin{equation}\label{E:delta_reverse_bessel_poly_3term}
			 \theta^\Delta_{n+2}(x)-(2n+3)\theta^\Delta_{n+1}(x)-x(x-1)\theta^\Delta_n(x-2)=0
		\end{equation}
		for every $n\in\mathbb{N}_0$ and every $x$.
\end{theorem}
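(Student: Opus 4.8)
The plan is to read off the recurrence \eqref{E:delta_reverse_bessel_poly_3term} as the manifestation in $\mathcal{O}_\Delta^{\mathbb{N}_0}$ of the ``third generator'' of the module $\Theta$. The first step is to record that $(\theta^\Delta_n)$ is a \emph{solution} of $\Theta$ in $\mathcal{O}_\Delta^{\mathbb{N}_0}$, i.e. that right multiplication $\mathfrak{j}_\Delta\colon\Theta\to\mathcal{O}_\Delta^{\mathbb{N}_0}$ sending $1\mapsto(\theta^\Delta_n)$ is a well-defined left $\mathcal{A}_2$-linear map (the difference analogue of Example~\ref{Eg:bilateral_theta}). This amounts to checking that the two defining generators $\partial_1-1+X_1/\partial_2$ and $X_1\partial_1-2X_2\partial_2-1-X_1+\partial_2$ of \eqref{E:Theta} annihilate $(\theta^\Delta_n)$ under the structure \eqref{E:seq-functions_poisson_delta}; but these two annihilation identities are precisely the two delay-difference relations of Theorem~\ref{T:difference_Bessel_Poly_PDE}, so this step is already available.

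Next I would invoke Proposition~\ref{P:Weyl_mod_half_bessel_3term}. Eliminating $\partial_1$ between the two generators of $\Theta$ produces the element $(2X_2-1)\partial_2^2+3\partial_2+X_1^2$, which lies in the defining left ideal and hence represents $0$ in $\Theta$. Because $\mathfrak{j}_\Delta$ is $\mathcal{A}_2$-linear, this element must then annihilate $(\theta^\Delta_n)$ in $\mathcal{O}_\Delta^{\mathbb{N}_0}$ as well, and the whole problem collapses to translating this one operator identity through the rules \eqref{E:seq-functions_poisson_delta}.

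Carrying out that translation: under \eqref{E:seq-functions_poisson_delta} the operator $\partial_2$ raises the index, so $(\partial_2^2\theta^\Delta)_n=\theta^\Delta_{n+2}$ and $(3\partial_2\theta^\Delta)_n=3\theta^\Delta_{n+1}$; the combination $X_2\partial_2$ acts as multiplication by $n$, whence $(X_2\partial_2^2\theta^\Delta)_n=n\theta^\Delta_{n+1}$ and therefore $\bigl((2X_2-1)\partial_2^2\theta^\Delta\bigr)_n=2n\theta^\Delta_{n+1}-\theta^\Delta_{n+2}$; and since $(X_1f)_n(x)=xf_n(x-1)$ is a weighted backward shift in $x$, iterating once gives $(X_1^2\theta^\Delta)_n(x)=x(x-1)\theta^\Delta_n(x-2)$. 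Collecting the three contributions and setting the total to zero yields
\[
	-\theta^\Delta_{n+2}(x)+(2n+3)\theta^\Delta_{n+1}(x)+x(x-1)\theta^\Delta_n(x-2)=0,
\]
which is exactly \eqref{E:delta_reverse_bessel_poly_3term} up to an overall sign.

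The one place demanding care, and the step I expect to be the main obstacle, is the non-commutative elimination underlying Proposition~\ref{P:Weyl_mod_half_bessel_3term}: one must track the commutator $[\partial_2,X_2]=1$ when pushing $\partial_2$ past $X_2\partial_2$, and confirm that the eliminant is an honest left $\mathcal{A}_2$-combination of the two generators, namely $X_1\partial_2\cdot(\partial_1-1+X_1/\partial_2)-\partial_2\cdot(X_1\partial_1-2X_2\partial_2-1-X_1+\partial_2)$, rather than merely a combination after localizing at $X_1$. Once that ideal membership is secured, the passage to the recurrence is a routine substitution needing no analytic or convergence input, since each $\theta^\Delta_n$ is a genuine Newton polynomial.
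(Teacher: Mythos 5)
Your proof is correct and follows essentially the same route as the paper: the text immediately preceding the theorem derives it from Proposition~\ref{P:Weyl_mod_half_bessel_3term} together with the $\mathcal{A}_2$-module structure \eqref{E:seq-functions_poisson_delta}, which is exactly your argument, and the paper's one-line proof via Corollary~\ref{C:Besselpoly_3term} manifested in $\mathcal{O}_\Delta$ is the same computation carried out at the $\mathcal{A}_1$ level. Your explicit ideal-membership check $X_1\partial_2\cdot(\partial_1-1+X_1/\partial_2)-\partial_2\cdot(X_1\partial_1-2X_2\partial_2-1-X_1+\partial_2)=(2X_2-1)\partial_2^2+3\partial_2+X_1^2$ is a genuine left $\mathcal{A}_2$-combination (no localization at $X_1$ needed), and it confirms that the eliminant involves $2X_2-1$, as used in the paper's later ``characteristics'' subsection, rather than the $2X_1-1$ appearing in the statement of Proposition~\ref{P:Weyl_mod_half_bessel_3term}.
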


\begin{proof} This follows from the last Corollary or directly from the  Corollary \ref{C:Besselpoly_3term}  with the manifestation of the analytic function space $\mathcal{O}_d$ as a $\mathcal{A}_1$-module  given by the Example \ref{Eg:O_delta}. 
\end{proof}
%\medskip
\subsection{Characteristics of modified half-Bessel modules} 

We recall from \eqref{E:Theta} the definition of the half-Bessel module. We may change one of the two generators from the half-Bessel module  \eqref{E:Theta}  by $(2X_2-1)\partial_2^2+3\partial_2+X_1^2$, as suggested by Corollary \ref{C:Besselpoly_3term}.
\medskip

\begin{theorem} The half-Bessel module $\Theta$ is isomorphic to the left $\mathcal{A}_2$-module
\[
\dfrac{\mathcal{A}_2}{\mathcal{A}_2[(2X_2-1)\partial_2^2
	+3\partial_2+X_1^2]
	+\mathcal{A}_2[X_1\partial_1+(1-2X_2)\partial_2-1-X_1]	}.
\]
\end{theorem}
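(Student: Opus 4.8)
The plan is to exploit that the two presentations share their second generator \emph{verbatim}, so that the entire content reduces to a single algebraic identity comparing the first generators modulo the common one. Write $g_2:=X_1\partial_1+(1-2X_2)\partial_2-1-X_1$, which is literally the element $X_1\partial_1-2X_2\partial_2-1-X_1+\partial_2$ defining $\Theta$ in \eqref{E:Theta}. Then it suffices to prove that, modulo the left ideal $\mathcal{A}_2 g_2$, the target's first generator $h_1:=(2X_2-1)\partial_2^2+3\partial_2+X_1^2$ and the first generator of $\Theta$ determine the same left ideal, whence the identity map on representatives will furnish the desired $\mathcal{A}_2$-linear isomorphism.

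First I would clear the denominator in the first generator $\partial_1-1+X_1/\partial_2$ of $\Theta$ by left multiplication by $\partial_2$, producing $\tilde{g}_1:=\partial_2\partial_1-\partial_2+X_1\in\mathcal{A}_2$ (using $\partial_2\,(X_1/\partial_2)=X_1$), exactly as in the proof of Theorem \ref{T:B_poly_mod_holonomic}. The heart of the argument is then the exact identity in $\mathcal{A}_2$
\[
X_1\big(\partial_2\partial_1-\partial_2+X_1\big)=\big[(2X_2-1)\partial_2^2+3\partial_2+X_1^2\big]+\partial_2\big(X_1\partial_1-2X_2\partial_2-1-X_1+\partial_2\big),
\]
that is, $X_1\tilde{g}_1=h_1+\partial_2 g_2$, which I would verify by a one-line expansion using only $\partial_2 X_2=X_2\partial_2+1$; this is precisely the cleared-denominator form of the reduction \eqref{E:B_poly_3th_generator} carried out there. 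Read from right to left, it exhibits $h_1=X_1\tilde{g}_1-\partial_2 g_2$ as an element of the defining ideal of $\Theta$; read the other way, and combined with $\tilde{g}_1=\partial_2(\partial_1-1+X_1/\partial_2)$, it recovers the original first generator of $\Theta$ from $h_1$ and $g_2$. Hence the two presentations cut out the same left ideal and the isomorphism follows.

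The one genuinely delicate point — and the step I expect to be the main obstacle — is the factor $X_1$ in this identity: recovering $\tilde{g}_1$ from $X_1\tilde{g}_1$ requires inverting $X_1$, just as the $1/\partial_2$ already present in the first generator of $\Theta$ requires inverting $\partial_2$. I would resolve this by performing the comparison with the denominator-cleared generators, i.e.\ in the localization in which the first relation of $\Theta$ literally lives, where the displayed identity becomes a genuine two-sided dictionary between the two generating sets and the equality of left ideals is immediate. This is the very same localization (division by $X_1$) already used implicitly in the proof of Theorem \ref{T:B_poly_mod_holonomic} to read off the Hilbert polynomial from the reduced generators $h_1/X_1$ and $g_2/X_1$, so the identification requires no new input; making this passage precise, rather than the routine commutator expansion, is where the care must go.
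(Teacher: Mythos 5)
Your proposal is correct and follows essentially the same route as the paper, which leaves this theorem without an explicit proof but performs exactly this elimination (left-multiply the first generator by $\partial_2$, then use the second generator to remove $\partial_1$) in the proof of Theorem~\ref{T:B_poly_mod_holonomic}; your identity $X_1\tilde{g}_1=h_1+\partial_2 g_2$ is precisely the denominator-cleared form of \eqref{E:B_poly_3th_generator}, and it checks out. If anything you are more careful than the paper, since you make explicit the localization at $X_1$ and $\partial_2$ that the paper's definition of $\Theta$ (with its $X_1/\partial_2$) already uses tacitly.
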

\medskip

\begin{theorem}
The map
\[
\dfrac{\mathcal{A}_2}{\mathcal{A}_2[(2X_2-1)\partial_2^2
	+3\partial_2+X_1^2]
	+\mathcal{A}_2[X_1\partial_1+(1-2X_2)\partial_2-1-X_1]	}
\longrightarrow \Theta
\]
is a left $\mathcal{A}_2$-linear isomorphism.
\end{theorem}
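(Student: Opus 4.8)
The plan is to exhibit the displayed arrow as the map induced by the identity of $\mathcal{A}_2$ on the two quotient presentations, and then promote it to an isomorphism by a holonomic multiplicity count rather than by matching the two defining left ideals term by term. Write $\Theta=\mathcal{A}_2/I$ with $I=\mathcal{A}_2 Q_1+\mathcal{A}_2 Q_2$, where (after clearing the denominator $\partial_2$ in Definition~\ref{D:Theta}) $Q_1=\partial_2\partial_1-\partial_2+X_1$ and $Q_2=X_1\partial_1-2X_2\partial_2+\partial_2-1-X_1$; and write the source module as $\mathcal{A}_2/J$ with $J=\mathcal{A}_2 P_1+\mathcal{A}_2 P_2$, where $P_1=(2X_2-1)\partial_2^2+3\partial_2+X_1^2$ and $P_2=X_1\partial_1+(1-2X_2)\partial_2-1-X_1$. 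Since the displayed arrow points from $\mathcal{A}_2/J$ to $\Theta$, the content to be checked is: (i) $J\subseteq I$, so the identity of $\mathcal{A}_2$ descends to a well-defined $\mathcal{A}_2$-linear surjection $\mathcal{A}_2/J\twoheadrightarrow\Theta$; and (ii) this surjection is injective.

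For (i) I would first note that the second generators literally agree, $P_2=Q_2$, and then recall the computation underlying \eqref{E:B_poly_3th_generator}, which rearranges to the operator identity $P_1=X_1 Q_1-\partial_2 Q_2$. Both $P_1$ and $P_2$ therefore lie in $I$, so $J\subseteq I$ and the natural map is well-defined, $\mathcal{A}_2$-linear, and visibly surjective (it sends the generator $\overline 1$ of $\mathcal{A}_2/J$ to the generator $\overline 1$ of $\Theta$).

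For (ii) the plan is to count multiplicities. The target $\Theta$ is holonomic of dimension two and multiplicity two by Theorem~\ref{T:B_poly_mod_holonomic}. I would show the source $\mathcal{A}_2/J$ is holonomic of the same dimension and multiplicity by repeating that filtration argument verbatim: the generator $P_1$ reduces every $\partial_2^{k}$ with $k\ge 2$ to the span of $1,\partial_2$, while $P_2$ removes $\partial_1$, so $\Gamma^k(\mathcal{A}_2/J)$ is spanned by $\{X_1^aX_2^b:a+b\le k\}\cup\{X_1^{a}X_2^{b}\partial_2:a+b\le k-1\}$ and hence has $\dim_{\mathbb C}\Gamma^k=(k+1)^2$, exactly as in Example~\ref{E:prime_integrable_eg_2}. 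Then for the short exact sequence $0\to K\to\mathcal{A}_2/J\to\Theta\to 0$ the kernel $K$ is a submodule of a holonomic module, hence zero or holonomic of dimension two, and additivity of the Hilbert polynomial along this sequence (which is immediate from the good-filtration setup of \S\ref{SS:integrable}) gives $2=\mathrm{mult}(K)+2$, forcing $\mathrm{mult}(K)=0$ and therefore $K=0$. Thus the natural surjection is an isomorphism.

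The step I expect to be the main obstacle is precisely injectivity, i.e.\ ruling out the kernel $K=I/J$. One would like to prove $I\subseteq J$ directly by recovering the first generator $Q_1$ of $I$ from $P_1,P_2$; but the identity in (i) only yields $X_1 Q_1=P_1+\partial_2 P_2\in J$, and peeling off the factor $X_1$ is not transparent at the level of generators, since $X_1$ is a non-unit in $\mathcal{A}_2$ (any discrepancy between $I$ and $J$ would be $X_1$-torsion supported on $X_1=0$). This is exactly why I would route injectivity through the multiplicity bookkeeping instead: a surjection of holonomic modules of equal multiplicity is automatically an isomorphism. Consequently the crux of the write-up is to make the multiplicity computation for $\mathcal{A}_2/J$ airtight, since that single input is what excludes a nonzero kernel.
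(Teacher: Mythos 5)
Your proposal is correct in outline, but it takes a genuinely different and more careful route than the paper, which in fact states this theorem with no proof at all: the authors rely implicitly on the computation \eqref{E:B_poly_3th_generator}, where the generator $\partial_2\partial_1-\partial_2+X_1$ is rewritten as $((2X_2-1)\partial_2^2+3\partial_2+X_1^2)/X_1$ and the two presentations are then treated as interchangeable — an argument that really lives in a localization where $X_1$ (and $\partial_2$) are invertible. You correctly isolate the point this glosses over: the operator identity $P_1=X_1Q_1-\partial_2Q_2$ together with $P_2=Q_2$ gives $J\subseteq I$ (hence a well-defined surjection in the displayed direction), but the reverse containment would require peeling the non-unit $X_1$ off $X_1Q_1=P_1+\partial_2P_2$, which is not available over $\mathcal{A}_2$ itself. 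Your substitute — equality of Bernstein multiplicities plus additivity of the Hilbert polynomial along $0\to K\to\mathcal{A}_2/J\to\Theta\to 0$, forcing $\mathrm{mult}(K)=0$ and hence $K=0$ — is a standard and valid way to upgrade the surjection to an isomorphism, and it buys a proof that the paper's informal generator-swap does not actually supply. The one soft spot, which you already flag, is the multiplicity computation for $\mathcal{A}_2/J$: the spanning-set argument you propose to repeat from Theorem \ref{T:B_poly_mod_holonomic} reduces $\partial_1$ and $\partial_2^2$ using generators whose leading coefficients $X_1$ and $2X_2-1$ are non-units, so as written it only controls $X_1\partial_1$ and $(2X_2-1)\partial_2^2$; to make the bound $\dim_{\mathbb{C}}\Gamma^k\le(k+1)^2$ airtight you should either pass to the symbol ideal and bound the characteristic cycle, or work with a filtration adapted to these generators. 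Since the paper's own holonomicity proof has exactly the same looseness, this is an inherited issue rather than a flaw specific to your argument.
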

\medskip

\begin{theorem}\label{T:bessel_poly_gf_map} Let 
	\[
		\mathcal{A}_{2}(\rho):=\frac{\mathcal{A}_2( \rho, {1}/{\rho})}{\langle\rho^2-1+2X_2\rangle}
	\] and let $S$ be the element
		\begin{equation}\label{E:bessel_poly_gf_map}
		S = C_1\rho^{-1}\E[X_1(1-\rho)]+C_2\rho^{-1}\E[X_1(1+\rho)],
		\end{equation}
		for every choice of $C_1,\, C_2\in\mathbb{C}$, where the $\E$ is the Weyl exponential defined in Example \ref{Eg:exp}. Then the map
		\begin{equation}
		   \begin{split}
			\Theta(\rho):=\dfrac{\mathcal{A}_2(\rho)}{\mathcal{A}_2(\rho)[(2X_2-1)\partial_2^2
			+3\partial_2+X_1^2]
			+\mathcal{A}_2(\rho)[X_1\partial_1+(1-2X_2)\partial_2-1-X_1]	}\\
		 \stackrel{\times S}{\longrightarrow}
		\overline{ \dfrac{\mathcal{A}_2(\rho)}
		{\mathcal{A}_2(\rho)\partial_1+\mathcal{A}_2(\rho)\partial_2}}
		\end{split}
		\end{equation}
		is left $\mathcal{A}_{2}(\rho)$-linear.
\end{theorem}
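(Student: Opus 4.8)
The asserted map is right multiplication by the fixed element $S$, and right multiplication always commutes with left multiplication, so left $\mathcal{A}_2(\rho)$-linearity is automatic; the entire content of the theorem is \emph{well-definedness}. Writing $L_1 = (2X_2-1)\partial_2^2+3\partial_2+X_1^2$ and $L_2 = X_1\partial_1+(1-2X_2)\partial_2-1-X_1$ for the two generators of the defining left ideal of $\Theta(\rho)$, and $\mathfrak{m}=\mathcal{A}_2(\rho)\partial_1+\mathcal{A}_2(\rho)\partial_2$ for the defining ideal of the target, the map $P\mapsto PS$ descends to $\Theta(\rho)$ exactly when $L_1 S$ and $L_2 S$ lie in (the closure of) $\mathfrak{m}$. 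Thus the plan is to reduce everything to the two congruences
\[
L_1 S \equiv 0 \pmod{\mathfrak{m}}, \qquad L_2 S \equiv 0 \pmod{\mathfrak{m}},
\]
exactly as in the verifications carried out in the proof of Theorem~\ref{T:bessel_gen_map_2}.

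Before computing I would record the commutation data forced by the quadratic extension. Differentiating the defining relation $\rho^2=1-2X_2$ against $\partial_2$ gives $[\partial_2,\rho^2]=-2$, hence $[\partial_2,\rho]=-1/\rho$ and $[\partial_2,\rho^{-1}]=\rho^{-3}$ inside the localization $\mathcal{A}_2(\rho)$; the element $X_1$ commutes with all of $\rho,\partial_2,X_2$. Consequently $\E[X_1(1\pm\rho)]$ is a legitimate Weyl exponential in $X_1$ with the commuting ``coefficient'' $1\pm\rho$, and by the Weyl-exponential property of Example~\ref{Eg:exp} we have $\partial_1\,\E[X_1(1\pm\rho)] \equiv (1\pm\rho)\,\E[X_1(1\pm\rho)] \pmod{\mathcal{A}\partial_1}$. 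The structural reason for the shape of $S$ is the characteristic substitution $\rho^2=1-2X_2$: writing $\partial_2=-\rho^{-1}\partial_\rho$ one finds $L_1 = -\partial_\rho^2-\tfrac{2}{\rho}\partial_\rho+X_1^2$, a spherical-Bessel-type operator whose solutions, via $g=\rho\cdot(\text{soln})$ satisfying $g''=X_1^2 g$, are precisely $\rho^{-1}\E(\pm X_1\rho)$; the common factor $\E(X_1)$ (so that $1\pm\rho$ rather than $\pm\rho$ appears) is then pinned down by the first-order generator $L_2$.

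The verification itself I would do in the manifestation: an element of $\overline{\mathcal{A}_2(\rho)/\mathfrak{m}}$ vanishes iff its image under $P\mapsto P\cdot 1$ in the function realization of Example~\ref{Eg:O_deleted_d} (extended by $\rho\mapsto u:=\sqrt{1-2x_2}$) vanishes, since applying any operator to the constant $1$ automatically annihilates all right factors of $\partial_1,\partial_2$, which is exactly the reduction modulo $\mathfrak{m}$. Here $S\cdot 1 = u^{-1}\big[C_1 e^{x_1(1-u)}+C_2 e^{x_1(1+u)}\big]$, and by linearity it suffices to treat $\phi_\epsilon = u^{-1}e^{x_1(1+\epsilon u)}$ for $\epsilon=\pm1$. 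Using $\partial_{x_2}u=-1/u$ one gets $\partial_{x_1}\phi_\epsilon=(1+\epsilon u)\phi_\epsilon$ and $\partial_{x_2}\phi_\epsilon=(u^{-2}-\epsilon x_1 u^{-1})\phi_\epsilon$; the $L_2$-check is then a one-line first-order cancellation, while for $L_1$ one computes $\partial_{x_2}^2\phi_\epsilon=(3u^{-4}-3\epsilon x_1 u^{-3}+x_1^2 u^{-2})\phi_\epsilon$ and observes that $-(1-2x_2)\partial_{x_2}^2=-u^2\partial_{x_2}^2$, $3\partial_{x_2}$, and $x_1^2$ cancel identically. This establishes both congruences and hence well-definedness.

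The main obstacle is the noncommutative bookkeeping inside the quadratic extension. Because $[\partial_2,\rho]=-1/\rho$, commuting $\partial_2$ past $\rho^{\pm1}$ repeatedly produces correction terms in $\rho^{-3}$ and $\rho^{-4}$, and the delicate point is the $L_1$ computation: the second application of $\partial_2$ is precisely where these lower-order terms appear, and they must cancel exactly against the $3\partial_2$ and $X_1^2$ contributions — this is what forces the coefficient $3$ and certifies that $S$ solves the reverse-Bessel-polynomial operator. Carrying this cancellation out faithfully in $\mathcal{A}_2(\rho)$ (so that the function-level computation above is genuinely an identity in the localized quadratic extension, not merely a formal analogy) is the technical heart of the argument.
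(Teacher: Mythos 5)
Your proposal is correct, but it reaches the conclusion by a genuinely different route from the paper. The paper's proof stays entirely inside the algebra: it introduces the characteristic variables $\Lambda_1=\partial_1$, $\Lambda_2=\rho\partial_2$, $Z_1=X_1$, $Z_2=-\rho$ (the same substitution as your $\partial_2=-\rho^{-1}\partial_\rho$), verifies the canonical commutation relations, factors the second-order generator as $(\Lambda_2+Z_1)(\Lambda_2-Z_1)Z_2$, and then uses the product rules for Weyl exponentials to solve the two generators simultaneously — so the derivation of $S=\frac{1}{Z_2}F(Z_1)\E(\pm Z_1Z_2)$ with $F=\mp\E(Z_1)$ and the proof of well-definedness are one and the same noncommutative computation. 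You instead take $S$ as given and verify the two congruences $L_iS\equiv 0$ modulo the closure of $\mathfrak{m}=\mathcal{A}_2(\rho)\partial_1+\mathcal{A}_2(\rho)\partial_2$ by passing to the analytic manifestation and evaluating on the constant $1$; your computations for $\phi_\epsilon=u^{-1}e^{x_1(1+\epsilon u)}$ are correct, and your ``structural reason'' paragraph recovers exactly the paper's characteristic change of variables. What your route buys is a much shorter calculation, with all the $[\partial_2,\rho]=-1/\rho$ bookkeeping hidden inside ordinary calculus. What it costs is the faithfulness step: you assert that $P\cdot 1=0$ in the realization if and only if $P$ lies in $\overline{\mathfrak{m}}$, but your ``since'' clause only justifies the easy inclusion ($\overline{\mathfrak{m}}$ annihilates $1$), whereas the direction you actually use is the converse. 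That direction needs the normal-form argument: every element of $\mathcal{A}_2(\rho)$ and of its completion can be written as $\sum_\beta f_\beta(X_1,X_2,\rho^{\pm1})\,\partial_1^{\beta_1}\partial_2^{\beta_2}$, because commuting $\partial_2$ past $\rho^{\pm1}$ only produces further multiplication operators, and evaluation at $1$ extracts $f_0$; injectivity of $f_0\mapsto f_0(x_1,x_2,u^{\pm1})$ then follows since $\{1,\rho\}$ is a basis of $\mathcal{A}_2(\rho)$ over the $\rho$-free part and $u=\sqrt{1-2x_2}$ is not rational in $x_2$. Add that one sentence of justification (together with a remark that the exponential series converge so the argument survives the completion) and your proof is complete.
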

\medskip
\medskip

\begin{proof} Let
	\begin{equation}\label{E:char05}
		\Lambda_{1}=\partial_1
		\quad \Lambda_{2}=\rho\partial_{2},\quad Z_{1}=X_{1},\quad Z_{2}=-\rho,
	\end{equation}where $\rho^2=1-2X_2$. Since
	\[
		\partial_2\rho-\rho\partial_2=-\frac1\rho, 
	\]
we deduce from direct computations that
	\begin{equation}\label{E:computation_1}
		[\Lambda_{1}, Z_{1}]=1, \quad [\Lambda_{2}, Z_{2}]=1,
		\quad
		[\Lambda_{1}, \Lambda_{2}]=0,\quad [Z_1, Z_{2}]=0,
		\quad [\Lambda_i,\, Z_j]=0,\quad i\not=j
	\end{equation}hold.
\medskip

It is routine to check that
	\begin{equation}\label{E:char_2}
		(2X_2-1)\partial_2^2
	+3\partial_2+X_1^2
	=-\Lambda_2^2-\frac{2}{Z_2}\Lambda_2+Z_1^2.
	\end{equation}
	Let us make use of the characteristic change of elements by  \eqref{E:char05}  in the second generator $X_1\partial_1+(1-2X_2)\partial_2-1-X_1$  to obtain
	\[
		Z_1\Lambda_1-Z_2 \Lambda_2-1-Z_1
		=Z_1(\Lambda_1-1)-\Lambda_2Z_2.
	\]

	Let $\widehat{\mathcal{A}_2}=\langle \Lambda_1,\, \Lambda_2,\, Z_1,\, Z_2\rangle$ and	
	\[
			\widehat{\Theta}:= \frac{\widehat{\mathcal{A}_2}}
			{\widehat{\mathcal{A}_2}[(Z_2\Lambda_2^2+2\Lambda_2-Z_2Z_1^2]+\widehat{\mathcal{A}_2}[(Z_1(\Lambda_1-1)-\Lambda_2Z_2)]}
	\]Note that the above quotient $\widehat{\mathcal{A}_2}$-module $\widehat{\Theta}$ is holonomic with dimension and multiplicity both equal to $2$, see e.g., Example \ref{E:prime_integrable_eg_2}.	
	
		In order to find the map
		\[
			\widehat{\Theta}\ 
			\stackrel{\times \widehat S}{\longrightarrow}\overline{
	 \frac{\widehat{\mathcal{A}_2}}
			{{\widehat{\mathcal{A}_2}}\Lambda_1+{\widehat{\mathcal{A}_2}}\Lambda_2}},
		\]we
	left multiply the expression \eqref{E:char_2} by $Z_2$.  Further simplification yield
	\[
		\begin{split}
			(Z_2\Lambda_2)\Lambda_2+2\Lambda_2-Z_2Z_1^2
			&=\Lambda_2^2 Z_2-Z_2Z_1^2\\
			&=(\Lambda_2^2-Z_1^2)Z_2\\
			&=(\Lambda_2+Z_1)(\Lambda_2-Z_1)Z_2.
		\end{split}
		\] 
		
	Since the two first-order operators above commute, so we deduce that
	\[
		\frac{1}{Z_2} F(Z_1) E(Z_1Z_2)+\frac{1}{Z_2} G (Z_1) E(-Z_1Z_2)
	\]is the general solution to the above operator
	for some $F(Z_1)$ and $G(Z_1)$. This is a sum of two solutions which also conform with our knowledge that the $\widehat{\Theta}$ has multiplicity $2$ discussed above. 
	
	We now make use of the second generator of $\widehat{\Theta}$ to determine the exact forms of the solutions $F$ and $G$.  We first note that 
	\begin{align}
		& \Lambda_1 \E(Z_1Z_2)=Z_{2}\E(Z_1Z_2)+\E(Z_1Z_2)\Lambda_1,\label{E:product_rule_1}\\
		& \Lambda_2 \E(Z_1Z_2)=Z_1\E(Z_1Z_2)+\E(Z_1Z_2)\Lambda_2.\label{E:product_rule_2}
	\end{align}It follows from \eqref{E:product_rule_1} and \eqref{E:product_rule_2} that
	\[
		\begin{split}
			&\big(Z_1(\Lambda_1-1)-\Lambda_2Z_2\big)
			\frac{1}{Z_2} F(Z_1) \E(Z_1Z_2)\\
			&=Z_1(\Lambda_1-1)F(Z_1) \E(Z_1Z_2) - \Lambda_2 F(Z_1) \E(Z_1Z_2)\\
			&=(Z_1/Z_2)(\Lambda_1-1)F(Z_1) \E(Z_1Z_2) - F(Z_1) Z_1 \E(Z_1Z_2)
				-F(Z_1)\E(Z_1Z_2)\Lambda_2\\
			&=(Z_1/Z_2)(\Lambda_1-1)F(Z_1) \E(Z_1Z_2) - F(Z_1) Z_1 \E(Z_1Z_2)\quad \mod \widehat{\mathcal{A}}_2 \Lambda_2\\
			&=(Z_1/Z_2)\big(\E(Z_1Z_2)\Lambda_1F(Z_1)-F(Z_1)\Lambda_1\E(Z_1Z_2)\big)
				-(Z_1/Z_2)F(Z_1)E(Z_1Z_2)\\
			&\qquad- F(Z_1) Z_1 \E(Z_1Z_2)\\
			&=(Z_1/Z_2)\E(Z_1Z_2)\Lambda_1F(Z_1)-(Z_1/Z_2)F(Z_1)Z_2\E(Z_1Z_2)
				-(Z_1/Z_2)F(Z_1)E(Z_1Z_2)\\
			&\qquad - F(Z_1) Z_1 \E(Z_1Z_2)\\
			&=(Z_1/Z_2)\E(Z_1Z_2)\Lambda_1F(Z_1)-(Z_1/Z_2)F(Z_1)E(Z_1Z_2)\\
			&=(Z_1/Z_2)\E(Z_1Z_2)(\Lambda_1-1)F(Z_1)\\
%			&=(Z_1/Z_2)(\Lambda_1-1-Z_2)F(Z_1) \sum_{n=0}^\infty 
%				\frac{Z_1^nZ_2^n}{n!}\quad \mod \Lambda_2\\
%			&=(Z_1/Z_2)  \sum_{n=0}^\infty \frac{(nZ_1^{n-1}+Z_1^n\Lambda_1)F(Z_1)Z_2^n-F(Z_1)Z_1^nZ_2^n-F(Z_1)Z_1^nZ_2^{n+1}}{n!}\\
	%		&=(Z_1/Z_2)  \sum_{n=0}^\infty \frac{(\Lambda_1 -1)F(Z_1) Z_1^nZ_2^n}{n!}\quad \mod \Lambda_2\\				
			&=0\quad \mod \widehat{\mathcal{A}}_2 \Lambda_1+\widehat{\mathcal{A}}_2 \Lambda_2,
		\end{split}
	\]where the last equal sign holds provided that
		\[
			(\Lambda_1 -1)F(Z_1)=0\quad \mod \widehat{\mathcal{A}}_2 \Lambda_1.
		\]But this equation could hold when we have chosen $F(Z_1)=-\E(Z_1)$, see Example \ref{Eg:exp}, except perhaps for a constant multiple. 

It remains to consider the following and a similar calculation yields 
		\[
		\begin{split}
			&\big(Z_1(\Lambda_1-1)-\Lambda_2Z_2\big)
			\frac{1}{Z_2} G(Z_1) \E(-Z_1Z_2)\\
			&={Z_1}/{Z_2}\Big(\E(-Z_1Z_2)\Lambda_1G(Z_1)-Z_2\E(-Z_1Z_2)G(Z_1)\Big)
			-{Z_1}/{Z_2}G(Z_1) \E(-Z_1Z_2)\\
			&\qquad -\Big(\E(-Z_1Z_2)\Lambda_2G(Z_1)-Z_1\E(-Z_1Z_2)G(Z_1)\Big)\qquad
			\mod \widehat{\mathcal{A}}_2 \Lambda_1+\widehat{\mathcal{A}}_2 \Lambda_2\\
			&=\E(-Z_1Z_2)/Z_2\Big(Z_1(\Lambda_1-1)+Z_2\Lambda_2\Big)G(Z_1)
			\qquad
			\mod \widehat{\mathcal{A}}_2 \Lambda_1+\widehat{\mathcal{A}}_2 \Lambda_2\\
			&=(1/Z_2)\E(-Z_1Z_2) Z_1(\Lambda_1-1)G(Z_1)\\
			&=0 \qquad \mod \widehat{\mathcal{A}}_2 \Lambda_1+\widehat{\mathcal{A}}_2 \Lambda_2,
		\end{split}
		\]where the last equal sign would hold if
			\[
				G(Z_1)=\E(Z_1) \quad \mod \widehat{\mathcal{A}}_2\Lambda_1,
			\]
		which may differ by a constant multiple. This establishes the \eqref{E:bessel_poly_gf_map} and hence completes the proof.
\end{proof}

\subsection{Bessel polynomials module}

In this subsection, we define the \textit{Weyl Bessel polynomial of degree} $n\ge 0$ by
\begin{equation}
Y_n(X)=X^n\Theta_n(1/X)=\sum_{k=0}^{n}\frac{(n+k)!}{(n-k)! k! 2^k}X^k.
\end{equation}
Right multiplication by $Y_n(X)$ gives  a left $\mathcal{A}$-linear map
\[
\mathcal{A}/\mathcal{A}(X^2\partial^2+(2X+2)\partial-n(n+1)
\stackrel{\times Y_n(X)\ \ }{\longrightarrow}\overline{\mathcal{A}/\mathcal{A}\partial}.
\]
\medskip

We may further define when $n=-1$, then $Y_{-1}(X)=1$.
\medskip

We deduce from Corollary \ref{C:Besselpoly_trans} the followings.

\begin{corollary}\label{C:Bpoly_trans}
For each $n$, let $Y_n$ be defined as above, which is a solution of $X^2\partial^2+(2X+2)\partial-n(n+1)$ in a certain left $\mathcal{A}$-module $\mathbb{C}[X]$. Then
\begin{align}
	(X^2\partial+(n+1)X+1)Y_n - Y_{n+1}=0,\label{bessel_poly_recursion_1 in a A}\\
	(X^2\partial-nX+1)Y_n - Y_{n-1}=0.\label{bessel_poly_recursion_2 in a A}
\end{align}
as elements in $N$.
\end{corollary}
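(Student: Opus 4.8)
The plan is to deduce both identities from Corollary~\ref{C:Besselpoly_trans} by transporting the reverse-polynomial relations through the reversal $X\mapsto 1/X$ that links $\Theta_n$ and $Y_n$. I would work in the polynomial manifestation $\mathbb{C}[X]=\mathcal{O}_d$ and introduce the transformation $T$ acting on polynomials of degree at most $n$ by $(Tp)(X)=X^n p(1/X)$, so that by definition $Y_n=T\Theta_n$. The crux is to record how $T$ conjugates the generators inside the localized Weyl algebra $\mathcal{A}(1/X)$: a one-line chain-rule computation (verifiable by testing on a single polynomial) gives $TXT^{-1}=X^{-1}$ and $T\partial T^{-1}=-X^2\partial+nX$, whence $T(X\partial)T^{-1}=X^{-1}(-X^2\partial+nX)=-X\partial+n$.

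The one subtle point, which I would isolate first, is the degree bookkeeping. Because $T$ is built from multiplication by the \emph{fixed} power $X^n$ while Corollary~\ref{C:Besselpoly_trans} involves $\Theta_{n\pm1}$ of degree $n\pm1$, the image $T\Theta_{n\pm1}$ is not $Y_{n\pm1}$ but $X^{\mp1}Y_{n\pm1}$. Granting this, applying $T$ to the first relation $(X\partial-X-2n-1)\Theta_n+\Theta_{n+1}=0$ turns it, via the conjugation formulas, into $(-X\partial-X^{-1}-n-1)Y_n+X^{-1}Y_{n+1}=0$; clearing denominators by multiplying through by $-X$ yields $(X^2\partial+(n+1)X+1)Y_n-Y_{n+1}=0$, the first asserted recurrence. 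Applying $T$ to the second relation $(\partial-1)\Theta_n+X\Theta_{n-1}=0$ gives $(-X^2\partial+nX-1)Y_n+Y_{n-1}=0$, and a sign change produces $(X^2\partial-nX+1)Y_n-Y_{n-1}=0$, the second asserted recurrence.

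For the preliminary claim that $Y_n$ solves $X^2\partial^2+(2X+2)\partial-n(n+1)$, I would conjugate the half-Bessel operator $L_n=X\partial^2-2(n+X)\partial+2n$ that annihilates $\Theta_n$. A direct but routine calculation, using $T\partial^2T^{-1}=(T\partial T^{-1})^2=(-X^2\partial+nX)^2$, gives $TL_nT^{-1}=X\bigl[X^2\partial^2+(2X+2)\partial-n(n+1)\bigr]$. Since $TL_nT^{-1}Y_n=T(L_n\Theta_n)=0$ and multiplication by $X$ is injective on $\mathbb{C}[X]$, the leading factor $X$ cancels, leaving $\bigl[X^2\partial^2+(2X+2)\partial-n(n+1)\bigr]Y_n=0$, as required.

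The main obstacle is not any hard estimate but the simultaneous handling of the two bookkeeping hazards: the index shift $T\Theta_{n\pm1}=X^{\mp1}Y_{n\pm1}$ forced by the fixed exponent in $T$, and the fact that the conjugated generators live in $\mathcal{A}(1/X)$, so one must clear the $X^{-1}$ factors at the very end. As a safeguard against sign errors, the entire argument can equivalently be carried out by brute force: substitute $Y_n(x)=x^n\theta_n(1/x)$ into each left-hand side, differentiate, and eliminate $\theta_{n\pm1}(1/x)$ using the evaluated forms of the Corollary~\ref{C:Besselpoly_trans} identities; the two computations agree.
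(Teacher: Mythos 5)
Your proof is correct and follows essentially the same route as the paper: the paper likewise transports the two relations of Corollary~\ref{C:Besselpoly_trans} through the reversal (replacing $X$ by $1/X$ and $\partial$ by $-X^2\partial$) and then left-multiplies by $X^{n+1}$ and $X^n$ respectively, which is exactly your conjugation by $T$ together with the index-shift bookkeeping $T\Theta_{n\pm1}=X^{\mp1}Y_{n\pm1}$. Your explicit isolation of that degree shift, and your conjugation check that $TL_nT^{-1}=X\bigl[X^2\partial^2+(2X+2)\partial-n(n+1)\bigr]$, are sound refinements of details the paper leaves implicit.
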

\medskip

\begin{proof}
Replacing $X$ by $1/X$ and $\partial$ by $-X^2\partial$ in Corollary~\ref{C:Besselpoly_trans}, we obtain
\begin{align*}
	(-X\partial-1/X-2n-1)\Theta_n(1/X) + \Theta_{n+1}(1/X)=0,\\
	(-X^2\partial-1) \Theta_n(1/X) + (1/X)\Theta_{n-1}(1/X)=0.
\end{align*}
Left-multiplying both sides of the first equation by $X^{n+1}$ and both sides of the second equation by $X^n$, we obtain
\begin{align*}
	(-X^2\partial-1-(n+1)X)X^n\Theta_n(1/X) + X^{n+1}\Theta_{n+1}(1/X)=0,\\
	(-X^2\partial-1+nX) X^n\Theta_n(1/X) + X^{n-1}\Theta_{n-1}(1/X)=0,
\end{align*}
and hence the result follows.
\end{proof}
\medskip

\begin{corollary} [(\textbf{Three-term recurrence})] \label{C:Bpoly_3term}
For each $n$, let $Y_n$ be defined as above, which is a solution of $X^2\partial^2+(2X+2)\partial-n(n+1)$ in a certain left $\mathcal{A}$-module $C[x]$. Then
		\begin{equation}\label{bessel_poly_recursion_3 in a A}
			Y_{n+1}-(2n+1)XY_n-Y_{n-1}=0
		\end{equation}
as an element in $N$.
\end{corollary}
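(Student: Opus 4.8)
The plan is to derive the three-term recurrence \eqref{bessel_poly_recursion_3 in a A} directly from the pair of relations supplied by Corollary~\ref{C:Bpoly_trans}, in exactly the same spirit in which Corollary~\ref{C:Besselpoly_3term} was obtained from Corollary~\ref{C:Besselpoly_trans}. First I would rewrite \eqref{bessel_poly_recursion_1 in a A} and \eqref{bessel_poly_recursion_2 in a A} in the resolved forms
\[
Y_{n+1}=\bigl(X^2\partial+(n+1)X+1\bigr)Y_n, \qquad Y_{n-1}=\bigl(X^2\partial-nX+1\bigr)Y_n,
\]
and observe that both right-hand sides share the common operator $X^2\partial+1$ applied to $Y_n$, differing only in their degree-one multiplication-by-$X$ coefficient. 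Consequently, subtracting the second identity from the first should cause the $X^2\partial$ and the constant contributions to cancel identically, leaving only the first-order terms.

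Carrying out the subtraction yields
\[
Y_{n+1}-Y_{n-1}=\bigl[(n+1)X-(-nX)\bigr]Y_n=(2n+1)X\,Y_n,
\]
which is precisely \eqref{bessel_poly_recursion_3 in a A} after transposing $Y_{n-1}$. The one point requiring care is that all these manipulations must be read as identities among elements of the left $\mathcal{A}$-module $N$ (here $\mathbb{C}[X]$), so that the cancellation of the operator part $X^2\partial+1$ is legitimate rather than merely formal; this is exactly what Corollary~\ref{C:Bpoly_trans} guarantees, since it asserts both relations hold as elements of $N$.

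I expect no genuine obstacle: the whole argument is a single subtraction, and the structural reason it succeeds is that the ``raising'' operator $X^2\partial+(n+1)X+1$ and the ``lowering'' operator $X^2\partial-nX+1$ differ only in their degree-one coefficients, whose difference is the constant $(2n+1)$ times $X$. As an optional consistency check on the coefficients, I would note that the same identity can be recovered a second way by transporting the three-term relation for $\Theta_n$ in Corollary~\ref{C:Besselpoly_3term} through the substitution $X\mapsto 1/X$, $\partial\mapsto -X^2\partial$ used in the proof of Corollary~\ref{C:Bpoly_trans} and multiplying by the appropriate power of $X$; this reproduces \eqref{bessel_poly_recursion_3 in a A} and confirms the sign and the factor $(2n+1)$.
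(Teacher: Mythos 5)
Your proof is correct and matches the paper's (implicit) argument: the corollary is stated as an immediate consequence of Corollary~\ref{C:Bpoly_trans}, obtained exactly by subtracting the two resolved relations $Y_{n+1}=(X^2\partial+(n+1)X+1)Y_n$ and $Y_{n-1}=(X^2\partial-nX+1)Y_n$ so that the $X^2\partial+1$ parts cancel and leave $(2n+1)XY_n$. Your optional cross-check via the substitution $X\mapsto 1/X$, $\partial\mapsto -X^2\partial$ applied to Corollary~\ref{C:Besselpoly_3term} is also consistent with how the paper derives the $Y_n$ relations from the $\Theta_n$ ones.
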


We introduce $(\partial_2f)_n=f_{n+1}$ and $ (X_2 f)_n=nf_{n-1}$ to constrruct an $\mathcal{A}_2$-module, such that 
	\[
  [\partial_1, X_1]	=1, \quad   [\partial_2, X_2]	=1, \quad [X_1, X_2]=0, \quad  [\partial_1, \partial_2]=0.
	\]

Replacing $n$ by $n-1$ in \eqref{bessel_poly_recursion_1 in a A} and representing the resulting formula in terms of $X_1, \partial_i, i=1, 2$ yield
	\begin{equation}\label{E:dBessel_1st_gen}
X_{1}^2\partial_{1}+X_{1}X_{2}\partial_{2}+1-\partial_{2}.
	\end{equation}
Similarly, the equation \eqref{bessel_poly_recursion_2 in a A} can be rewritten as 
\[
X_{1}^2\partial_{1}-X_{1}X_{2}\partial_{2}+1-\partial_{2}^{-1}.
\]
Right multiplying $\partial_2$ to the above formula gives 
	\begin{equation}\label{E:dBessel_2nd_gen}
X_{1}^2\partial_{1}\partial_{2}-X_{1}X_{2}\partial^2_{2}+\partial_{2}-1.
	\end{equation}

Thus the \textit{Bessel polynomial module} is given by

\begin{definition}\label{D:y-1}
	The \textit{Bessel polynomial module} is the left $\mathcal{A}_2$-module defined by
\begin{equation}\label{E:y-1}
\mathcal{Y}=\dfrac{\mathcal{A}_{2}}
{\mathcal{A}_{2}
	(X_{1}^2\partial_{1}\partial_{2}-X_{1}X_{2}\partial^2_{2}+\partial_{2}-1)
	+\mathcal{A}_{2}
	(X_{1}^2\partial_{1}+X_{1}X_{2}\partial_{2}+1-\partial_{2})}.
\end{equation}
\end{definition}
%\medskip

Similarly, we can represent \eqref{bessel_poly_recursion_3 in a A} in $\mathcal{A}_2$ by
\begin{equation}\label{bessel_poly_recursion_3 in a A becomes}
\partial_2^2-2X_1X_2\partial_2^2-X_1\partial_2-1.
\end{equation}
%\medskip

\begin{proposition}
The map 
\[
\dfrac{\mathcal{A}_2}{\mathcal{A}_2(X_1^2\partial_1^2+(2X_1+2)\partial_1-X^2_2\partial_2^2)}
\longrightarrow \mathcal{Y}
\]
is a well-defined left $\mathcal{A}_2$-linear surjection.
\end{proposition}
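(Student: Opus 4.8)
The plan is to treat this exactly like the earlier ``natural surjection'' statements, Proposition~\ref{P:bessel_ode_mod} and Proposition~\ref{P:Weyl_Bessel_Poly_Eqn}: surjectivity will be automatic, and the entire content is well-definedness. Writing $g_1=X_1^2\partial_1\partial_2-X_1X_2\partial_2^2+\partial_2-1$ and $g_2=X_1^2\partial_1+X_1X_2\partial_2+1-\partial_2$ for the two generators of $\mathcal{Y}$, and $I:=\mathcal{A}_2g_1+\mathcal{A}_2g_2$, the map $P+\mathcal{A}_2L\mapsto P+I$ (with $L:=X_1^2\partial_1^2+(2X_1+2)\partial_1-X_2^2\partial_2^2$) is a well-defined left $\mathcal{A}_2$-linear map precisely when $L\in I$, and as it then sends the cyclic generator $\bar1$ of $\mathcal{A}_2/\mathcal{A}_2L$ to the cyclic generator $\bar1$ of $\mathcal{Y}=\mathcal{A}_2/I$ it is surjective for free. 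So the only thing to do is to produce an explicit membership $L\in I$.

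The key step is a factorisation in the spirit of the computation in the proof of Proposition~\ref{P:bessel_ode_mod}. I would introduce the two factors
\[
a:=X_1^2\partial_1-X_1X_2\partial_2+1,\qquad b:=X_1^2\partial_1+X_1X_2\partial_2+1,
\]
and observe at once that $b=g_2+\partial_2$ and, by right-multiplication, that $a\partial_2=g_1+1$. Hence
\[
ab=a(g_2+\partial_2)=a\,g_2+a\partial_2=a\,g_2+g_1+1,
\]
so $ab-1\in I$. On the other hand, expanding $ab$ directly with $p:=X_1^2\partial_1$, $q:=X_1X_2\partial_2$ and using the single nontrivial commutator $[p,q]=X_1^2X_2\partial_2$ (together with $\partial_2X_2=X_2\partial_2+1$ for $q^2$), the mixed $X_1^2X_2\partial_2$ terms cancel and one finds
\[
ab=X_1^4\partial_1^2+2X_1^3\partial_1-X_1^2X_2^2\partial_2^2+2X_1^2\partial_1+1=X_1^2L+1.
\]
Comparing the two evaluations of $ab$ yields the clean identity $X_1^2L=g_1+a\,g_2$, which displays $X_1^2L$ as an explicit left-multiple combination of $g_1$ and $g_2$; thus $X_1^2L\in I$.

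The remaining, and genuinely delicate, point is the passage from $X_1^2L\in I$ to $L\in I$. Both generators carry the factor $X_1^2$ on their $\partial_1$-terms, so no honest $\mathcal{A}_2$-combination can produce the low-order summand $2\partial_1$ of $L$ without dividing by $X_1$; this is the same feature already present for the reverse-polynomial module $\Theta$, where the reduction to the three-term and second-order generators in Theorem~\ref{T:B_poly_mod_holonomic} is performed only after multiplying and dividing by $X_1$. I would resolve it by reading the statement over the localisation $\mathcal{A}_2(1/X_1)$ (equivalently, by checking that multiplication by $X_1$ is injective on $\mathcal{Y}$, i.e.\ that $\mathcal{Y}$ is $X_1$-torsion-free): left-dividing $X_1^2L=g_1+a\,g_2$ by $X_1^2$ gives
\[
L=X_1^{-2}g_1+\bigl(\partial_1-X_1^{-1}X_2\partial_2+X_1^{-2}\bigr)g_2\in I,
\]
which finishes well-definedness and hence surjectivity. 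The main obstacle is therefore not the factorisation $X_1^2L=g_1+a\,g_2$, which is a short computation, but the justification of the division by $X_1^2$: establishing the $X_1$-torsion-freeness of $\mathcal{Y}$, or else interpreting the map over $\mathcal{A}_2(1/X_1)$ as is done implicitly for $\Theta$, is the step requiring care.
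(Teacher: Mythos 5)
Your argument is correct, and the paper in fact prints no proof for this particular proposition, so there is nothing to compare it against line by line; what can be said is that your route is exactly the one the paper uses for the two analogous statements it does prove. Your factorisation $ab=X_1^2L+1$ together with $ab-1=g_1+a\,g_2\in I$ is the same "difference of ladder operators" device as the proof of Proposition~\ref{P:bessel_ode_mod} (where $(X_1\partial_1)^2+X_1^2-(\nu+X_2\partial_2)^2$ is pushed into the ideal by factoring through the two generators), and your $a$ and $b$ are precisely the Weyl avatars of the two operators in Corollary~\ref{C:Bpoly_trans}, so the identity is not an accident. I checked the computation: $b=g_2+\partial_2$, $a\partial_2=g_1+1$, $[p,q]=X_1^2X_2\partial_2$ cancelling against the cross term of $q^2$, and the expansion $ab=X_1^4\partial_1^2+2X_1^3\partial_1-X_1^2X_2^2\partial_2^2+2X_1^2\partial_1+1$ all hold.

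On the one delicate point you raise — passing from $X_1^2L\in I$ to $L\in I$ — your caution is warranted and your resolution is consistent with the paper's own practice: the proof of Theorem~\ref{T:B_poly_mod_holonomic} divides generators by $X_1$ without comment, the proof of Proposition~\ref{P:Weyl_Bessel_Poly_Eqn} silently clears the $1/\partial_2$ in a generator of $\Theta$, and the localised algebras $\mathcal{A}_2(1/X_1,1/X_2)$ are set up in Definition~\ref{D:A_2_mod} precisely so that such divisions are legitimate. Reading the proposition over $\mathcal{A}_2(1/X_1)$ (or invoking $X_1$-torsion-freeness of $\mathcal{Y}$, which you correctly identify as the alternative but do not prove) is therefore the right fix; if anything you are being more scrupulous than the paper. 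The surjectivity-for-free observation ($\bar 1\mapsto\bar 1$ between cyclic modules) is also exactly how the paper treats these "natural map" statements.
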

%\medskip

\subsubsection{Classical Bessel polynomials}

\begin{proposition} Let $n\in \mathbb{N}_0$. Then the classical Bessel polynomial
	\[
	y_{n}(x)=\sum_{k=0}^{n}\frac{(n+k)!}{2^k (n-k)!\, k!}\, x^{k},
	\]for $n\ge 0$ and $y_{-1}(x)=1$,
	is the image of $1$ via the left $\mathcal{A}$-linear map
	\[
	\begin{array}{rclll}
	\mathcal{A}/\mathcal{A}\big(X^2\partial^2+2(X+2)\partial -n(n+1)\big) 
	&\stackrel{\times Y_{n}(X)}{\longrightarrow} 
	& \mathcal{A}/ \mathcal{A}\partial
	&\stackrel{\times 1}{\longrightarrow} 
	&\mathcal{O}%,\\
%	1      & \mapsto    & Y_{n}(X) &  \mapsto  &y_{n}(x)
	\end{array}
	\]
\end{proposition}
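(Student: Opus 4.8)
The plan is to read the asserted composite as two left $\mathcal{A}$-linear arrows and to track the image of $1$ through both. The first arrow, right multiplication by $Y_n(X)$, is exactly the map recorded just before Definition~\ref{D:y-1}; to keep the argument self-contained I would verify that the Bessel polynomial operator $P_n:=X^2\partial^2+(2X+2)\partial-n(n+1)$ annihilates $Y_n(X)$ modulo $\mathcal{A}\partial$. The cleanest derivation reuses the reverse case: Theorem~\ref{T:theta_bessel_poly_map_2} exhibits $\Theta_n(X)=\sum_{k=0}^n \frac{(n+k)!}{2^k(n-k)!\,k!}X^{n-k}$ as a polynomial solution of $X\partial^2-2(n+X)\partial+2n$, and applying the Weyl-algebra substitution $X\mapsto 1/X$, $\partial\mapsto -X^2\partial$ in $\mathcal{A}(1/X)$ (the same one used in Corollary~\ref{C:Bpoly_trans}), followed by conjugation by $X^n$ to account for the factor in $Y_n(X)=X^n\Theta_n(1/X)$, carries this operator to $P_n$ up to left multiplication by a power of $X$ (a unit in $\mathcal{A}(1/X)$). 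Since $\Theta_n$ and $Y_n$ are genuine polynomials with $L^{\mathrm{rev}}_n\Theta_n=0$, this yields the honest identity $P_nY_n=0$, so the first arrow is a well-defined left $\mathcal{A}$-linear map.

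The second arrow $\mathcal{A}/\mathcal{A}\partial \xrightarrow{\times 1}\mathcal{O}$ is the manifestation of Example~\ref{Eg:O_deleted_d}, under which $\partial$ acts as $d/dx$ and $X$ as multiplication by $x$; since $\partial\cdot 1=0$ it descends to the quotient by $\mathcal{A}\partial$, it is $\mathcal{A}$-linear, and it sends the class of $X^k$ to $x^k$. A composite of $\mathcal{A}$-linear maps is $\mathcal{A}$-linear, so it remains only to follow $1$. Under $\times Y_n(X)$ the element $1$ maps to the class of $Y_n(X)=\sum_{k=0}^n \frac{(n+k)!}{2^k(n-k)!\,k!}X^k$ in $\mathcal{A}/\mathcal{A}\partial$, and under $\times 1$ this becomes $\sum_{k=0}^n \frac{(n+k)!}{2^k(n-k)!\,k!}x^k=y_n(x)$, as claimed. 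The degenerate case $n=-1$ is immediate: $P_{-1}=X^2\partial^2+(2X+2)\partial$, $Y_{-1}(X)=1$, and the image of $1$ is the constant function $1=y_{-1}(x)$.

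I expect the only nonroutine step to be transporting the annihilation identity from the reverse operator to $P_n$, because the substitution $X\mapsto 1/X$, $\partial\mapsto -X^2\partial$ is noncommutative and must be paired with the conjugation by $X^n$ so that the image of $X\partial^2-2(n+X)\partial+2n$ reduces to $P_n$ rather than to some extraneous lower-order remainder. If one prefers to avoid this ordering bookkeeping, the same conclusion follows from a finite direct check: writing $y_n=\sum_k c_kx^k$ with $c_k=\frac{(n+k)!}{2^k(n-k)!\,k!}$, the equation $P_ny_n=0$ is equivalent to the two-term recurrence $2(k+1)c_{k+1}=(n-k)(n+k+1)c_k$, which the $c_k$ satisfy by inspection. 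This direct route sidesteps the substitution entirely and is the safer computation, while the substitution route is conceptually cleaner and matches the pattern already used throughout this subsection.
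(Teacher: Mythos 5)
Your argument is correct and follows the route the paper leaves implicit: the first arrow is exactly the map asserted just before Definition~\ref{D:y-1}, justified either by transporting the reverse-polynomial identity through the substitution $X\mapsto 1/X$, $\partial\mapsto -X^2\partial$ of Corollary~\ref{C:Bpoly_trans} or by your direct recurrence check $2(k+1)c_{k+1}=(n-k)(n+k+1)c_k$ (which is the honest computation, since $\partial^2 Y_n\equiv Y_n''\bmod\mathcal{A}\partial$ reduces everything to the classical ODE for $y_n$), and the second arrow plus the tracking of $1$ is routine; the paper supplies no further proof of this proposition. The only point worth flagging is that you have, rightly, read the operator as $X^2\partial^2+(2X+2)\partial-n(n+1)$, matching the display preceding Definition~\ref{D:y-1} and the classical Bessel polynomial equation, rather than the ``$2(X+2)\partial$'' misprint appearing in the statement itself.
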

\medskip

The following PDEs (delay-differential relations) and three-term recurrence are direct consequences of the module $\mathcal{Y}$, as well as Corollaries~\ref{C:Bpoly_trans} and \ref{C:Bpoly_3term} with $N=\mathcal{O}_d$.
\medskip

\begin{theorem}[(\textbf{Delay-differential equations})]\label{T:differential-differential-classical bessel poly}
	The classical Bessel polynomials $\{y_n(x)\}$ \cite[p. 19]{Grosswald} satisfy the following (well-known) formulae
	\begin{equation}\label{E:classical_bessel_poly_trans}
	\begin{split}
	&x^2y'_{n-1}(x)-[(n-1)x-1]y_{n-1}(x)-y_{n-2}(x)=0,\\
	&x^2y'_{n-1}(x)-y_{n}(x)+(nx+1)y_{n-1}(x)=0.
	\end{split}
	\end{equation}
	for all $n$ and all $x$.
\end{theorem}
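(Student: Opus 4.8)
The plan is to obtain both identities in \eqref{E:classical_bessel_poly_trans} purely by manifesting the Weyl-algebraic recurrences of Corollary~\ref{C:Bpoly_trans} in the analytic function module $\mathcal{O}_d$. Recall that Corollary~\ref{C:Bpoly_trans} asserts, for the Weyl Bessel polynomials $Y_n(X)$ viewed as a solution of $X^2\partial^2+(2X+2)\partial-n(n+1)$ in a left $\mathcal{A}$-module, the two relations \eqref{bessel_poly_recursion_1 in a A} and \eqref{bessel_poly_recursion_2 in a A}, namely $(X^2\partial+(n+1)X+1)Y_n-Y_{n+1}=0$ and $(X^2\partial-nX+1)Y_n-Y_{n-1}=0$. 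The only structural input needed beyond these is that, under the $\mathcal{A}_1$-module structure on $\mathcal{O}_d$ from Example~\ref{Eg:O_deleted_d}, the image of $Y_n(X)$ is exactly the classical Bessel polynomial $y_n(x)$, which is precisely the content of the preceding Proposition, together with the convention $Y_{-1}(X)=1\mapsto y_{-1}(x)=1$ that anchors the low-index cases. Since $y_n$ is thereby a solution of the Bessel polynomial operator in $N=\mathcal{O}_d$, Corollary~\ref{C:Bpoly_trans} applies with $Y_n$ replaced by $y_n$ and the operators interpreted analytically.

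First I would apply this manifestation to \eqref{bessel_poly_recursion_1 in a A}. In $\mathcal{O}_d$ the operator $\partial$ acts as $d/dx$ and $X$ as multiplication by $x$, and the composite $X^2\partial$ means ``differentiate, then multiply by $x^2$''; hence $(X^2\partial+(n+1)X+1)Y_n-Y_{n+1}=0$ becomes $x^2y_n'(x)+(n+1)xy_n(x)+y_n(x)-y_{n+1}(x)=0$. Replacing $n$ by $n-1$ and collecting the terms linear in $y_{n-1}$ yields $x^2y_{n-1}'(x)-y_n(x)+(nx+1)y_{n-1}(x)=0$, which is the second identity of \eqref{E:classical_bessel_poly_trans}. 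Likewise, \eqref{bessel_poly_recursion_2 in a A} manifests as $x^2y_n'(x)-nxy_n(x)+y_n(x)-y_{n-1}(x)=0$, and the same shift $n\mapsto n-1$ gives $x^2y_{n-1}'(x)-[(n-1)x-1]y_{n-1}(x)-y_{n-2}(x)=0$, the first identity.

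There is no genuine obstacle here: the argument is entirely a change of manifestation followed by an index shift, so the remaining work is purely bookkeeping. The two points that require care are the ordering convention in $X^2\partial$ (the derivative is taken before multiplication by $x^2$, so that no extra commutator term is produced) and the admissible range of $n$: the shifted identities are stated for $n\ge 1$, with the boundary instance of \eqref{bessel_poly_recursion_2 in a A} at $n=0$ made meaningful by the convention $y_{-1}\equiv 1$. Checking that the numerical coefficients match the asserted forms is then immediate, completing the proof.
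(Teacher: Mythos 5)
Your proposal is correct and follows essentially the same route as the paper, which derives \eqref{E:classical_bessel_poly_trans} as the manifestation in $\mathcal{O}_d$ of the two recurrences in Corollary~\ref{C:Bpoly_trans} (equivalently, of the two generators of the module $\mathcal{Y}$, which are themselves built from those recurrences). Your index shift $n\mapsto n-1$ and the resulting coefficients match the stated identities, and your remarks on the operator ordering in $X^2\partial$ and the convention $y_{-1}\equiv 1$ correctly handle the only points needing care.
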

%\medskip

\begin{proof} This follows from the two generators \eqref{E:dBessel_1st_gen} and \eqref{E:dBessel_2nd_gen} of the Bessel polynomial module $\mathcal{Y}$ in the Definition \ref{D:y-1}, follows by the manifestation of the analytic function space $\mathcal{O}_d$ as a $\mathcal{A}_1$-module  given by the Example \ref{Eg:O_deleted_d}. 
\end{proof}

%{\color{blue}	
%\begin{proof}
%Construct the fundamental left $\mathcal{A}_2$-linear maps:
%	\begin{equation}\label{E:classical_bessel_poly_seq}
%			\begin{array}{rcl}
%		\mathcal{Y} & \stackrel{\times(y_{n-1}(x))}{\longrightarrow} & \mathcal{O}^{\mathbb{N}_0}
%			\end{array}
%	\end{equation}
%	and
%	\begin{equation}\label{E:classical_bessel_poly_gf}
%			\begin{array}{rcl}
%		\mathcal{Y} & \stackrel{\times f(x,\, t)}{\longrightarrow} & \mathcal{O}_{dd}
%			\end{array}
%	\end{equation}
%	Since $X_{1}^2\partial_{1}\partial_{2}-X_{1}X_{2}\partial^2_{2}+\partial_{2}-1$ and $X_{1}^2\partial_{1}-\partial_{2}+X_{1}X_{2}\partial_{2}+1$ are generators of $\mathcal{Y}$, they are both equivalent to $0$ in $\mathcal{Y}$. Thus \eqref{E:classical_bessel_poly_trans} follows from the left $\mathcal{A}_2$-linear map \eqref{E:classical_bessel_poly_seq}, where $\mathcal{O}^{\mathbb{N}_0}$ has the left $\mathcal{A}_2$-module structure as in Example \ref{Eg:seq-functions_poisson}; and \eqref{E:classical_bessel_poly_PDE} follows from the left $\mathcal{A}_2$-linear map \eqref{E:classical_bessel_poly_gf}, where $\mathcal{O}_{d d}$ has the left $\mathcal{A}_2$-module structure as in \eqref{E:O_dd_endow}.
%\end{proof}
%}

%\medskip
%
%The difference of the two generators of $\mathcal{Y}$ above yields the element
%\[
%(1-2X_1X_2)\partial_2^2-X_1\partial_2-1.
%\]
%As a result, we have following proposition.
%\medskip

\begin{proposition}[(\textbf{Three-term recurrence} \cite{Grosswald}, {p. 18})] The $(y_n)$ satisfies the (well-known) three-term recurrence formula
	\begin{equation}\label{E:classical_bessel_poly_3term}
	y_{n+1}(x)-(2n+1)xy_{\color{blue}n}(x)-y_{n-1}(x)=0.
	\end{equation}
	or every $n\in\mathbb{N}_0$ and every $x$.
\end{proposition}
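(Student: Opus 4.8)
The plan is to obtain \eqref{E:classical_bessel_poly_3term} by transporting the purely Weyl-algebraic three-term recurrence of Corollary \ref{C:Bpoly_3term} into the analytic manifestation $\mathcal{O}_d$ of Example \ref{Eg:O_deleted_d}, exactly as was done for the reverse Bessel polynomials $\theta_n$ via Corollary \ref{C:Besselpoly_3term}. The recurrence has already been secured at the level of the Weyl algebra, so no fresh computation with the explicit coefficients of $y_n$ is needed; one only has to read off what the abstract relation says once $X$ is interpreted as multiplication by $x$ and $\partial$ as $d/dx$.

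First I would recall that Corollary \ref{C:Bpoly_3term} gives, for the Weyl Bessel polynomials $Y_n(X)$, the identity
\[
Y_{n+1}(X)-(2n+1)X\,Y_n(X)-Y_{n-1}(X)=0
\]
as an element of the left $\mathcal{A}$-module $\mathbb{C}[X]$, i.e.\ modulo $\mathcal{A}\partial$. Next I would invoke the left $\mathcal{A}$-linear composition of the preceding proposition, $\mathcal{A}/\mathcal{A}(\cdots) \stackrel{\times Y_n(X)}{\longrightarrow} \mathcal{A}/\mathcal{A}\partial \stackrel{\times 1}{\longrightarrow} \mathcal{O}$, which sends $1$ to the classical Bessel polynomial $y_n(x)$. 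Because this map is $\mathcal{A}$-linear and the $\mathcal{A}_1$-module structure on $\mathcal{O}_d$ has $X$ acting as multiplication by $x$, applying it to the displayed operator identity turns each $Y_n(X)$ into $y_n(x)$ and the prefactor $X$ into the scalar $x$, which yields precisely \eqref{E:classical_bessel_poly_3term}.

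The step requiring the most care is the bookkeeping in the second paragraph: one must check that $Y_{n+1}$, $Y_n$, $Y_{n-1}$ are all carried to the corresponding $y_{n+1}$, $y_n$, $y_{n-1}$ by a single consistent evaluation, and that the identity is read modulo $\mathcal{A}\partial$ so that only the polynomial image survives. Since the three polynomials arise as images of the same element $1$ under maps differing solely in the index $n$, this compatibility is automatic and I anticipate no genuine obstacle. As a fallback one could derive \eqref{E:classical_bessel_poly_3term} directly from the reverse-polynomial recurrence \eqref{E:bessel_poly_3term} by substituting $y_n(x)=x^n\theta_n(1/x)$ and clearing powers of $x$; but this ad hoc manipulation is precisely what the $D$-module argument is designed to bypass, so I would present the manifestation argument as the main proof.
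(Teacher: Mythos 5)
Your proposal is correct and is essentially the paper's own proof: the paper likewise derives \eqref{E:classical_bessel_poly_3term} by taking the Weyl-algebraic recurrence of Corollary \ref{C:Bpoly_3term} and reading it in the manifestation $\mathcal{O}_d$ of Example \ref{Eg:O_deleted_d}, where $X$ acts as multiplication by $x$. Your added bookkeeping about the consistency of the evaluations $Y_n\mapsto y_n$ is a sound elaboration of the same one-line argument.
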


\begin{proof} This either follows from the last corollary or directly from the Corollary \ref{C:Bpoly_3term} with the manifestation of the analytic function space $\mathcal{O}_d$ as a $\mathcal{A}_1$-module given by the Example  \ref{Eg:O_deleted_d}. 
\end{proof}

\subsubsection{Difference Bessel polynomials}
\begin{proposition} Let $n\in \mathbb{N}_0$. Then the difference Bessel polynomial
	\[
	y^{\Delta}_{n}(x):=\sum_{k=0}^{n}\frac{(n+k)!}{2^k (n-k)!\, k!}\, (x)_k,
	\]where we define $y^\Delta_{-1}(x):=1$, is the image of $1$ via the left $\mathcal{A}$-linear map
	\[
	\begin{array}{rclll}
	\mathcal{A}/\mathcal{A}\big(X^2\partial^2+2(X+2)\partial -n(n+1)\big) 
	&\stackrel{\times Y_{n}(X)}{\longrightarrow} 
	& \mathcal{A}/ \mathcal{A}\partial
	&\stackrel{\times 1}{\longrightarrow} 
	&\mathcal{O}_\Delta.%,\\
%	1      & \mapsto    & Y_{n}(X) &  \mapsto  &y^{\Delta}_{n}(x).
	\end{array}
	\]
\end{proposition}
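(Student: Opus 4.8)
The plan is to follow exactly the pattern of the immediately preceding proposition for the classical Bessel polynomials $y_n$, replacing the target module $\mathcal{O}_d$ by $\mathcal{O}_\Delta$; the only computation that genuinely changes is the evaluation of the image of a monomial $X^k$. First I would recall that the right-multiplication map $\mathcal{A}/\mathcal{A}(X^2\partial^2+(2X+2)\partial-n(n+1)) \xrightarrow{\times Y_n(X)} \overline{\mathcal{A}/\mathcal{A}\partial}$ is well-defined and left $\mathcal{A}$-linear: by Corollary~\ref{C:Bpoly_trans} the Weyl Bessel polynomial $Y_n(X)=\sum_{k=0}^n \frac{(n+k)!}{(n-k)!\,k!\,2^k}X^k$ is a solution of the operator $X^2\partial^2+(2X+2)\partial-n(n+1)$, and this map carries the class of $1$ to $Y_n(X)$.

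Next I would treat the second arrow. The canonical map $\overline{\mathcal{A}/\mathcal{A}\partial}\xrightarrow{\times 1}\mathcal{O}_\Delta$ sends the class of $1$ to the constant function $1$, and it is well-defined and $\mathcal{A}$-linear because, under the $\mathcal{O}_\Delta$-structure of Example~\ref{Eg:O_delta}, one has $(\partial\cdot 1)(x)=1(x+1)-1(x)=0$; thus $\partial$ annihilates the constant function exactly as it annihilates the generator of $\mathcal{A}/\mathcal{A}\partial$. The whole content of the proposition therefore reduces to identifying the image of each monomial $X^k$ under this map, that is, the function $X^k\cdot 1$ computed in $\mathcal{O}_\Delta$.

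The key step is the claim that $X^k\cdot 1=(x)_k$, the falling factorial, which I would prove by induction on $k$. For $k=0$ this reads $1=(x)_0$. For the inductive step, using $(Xf)(x)=xf(x-1)$ together with the hypothesis $X^{k-1}\cdot 1=(x)_{k-1}$, I obtain $(X^k\cdot 1)(x)=x\,(x-1)_{k-1}=x(x-1)(x-2)\cdots(x-k+1)=(x)_k$, where $(x-1)_{k-1}$ denotes the polynomial $(x)_{k-1}$ with $x$ replaced by $x-1$. Substituting this into the conclusions of the previous two paragraphs, the composition sends $1\mapsto Y_n(X)\mapsto \sum_{k=0}^n \frac{(n+k)!}{(n-k)!\,k!\,2^k}(x)_k = y^\Delta_n(x)$, which is precisely the asserted difference Bessel polynomial (with the convention $y^\Delta_{-1}(x)=1$ handled as in the classical case).

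There is no serious obstacle here: once the $\mathcal{A}$-linearity of the two arrows is in hand (the first from Corollary~\ref{C:Bpoly_trans}, the second from $\Delta 1=0$), the argument is entirely formal and is a direct translation of the classical $\mathcal{O}_d$ computation, with the monomial $x^k$ replaced throughout by the Newton polynomial $(x)_k$. The one point that warrants explicit verification—and the natural place for an error to hide—is the induction $X^k\cdot 1=(x)_k$ in $\mathcal{O}_\Delta$; this is the difference-operator analogue of the trivial identity $X^k\cdot 1=x^k$ in $\mathcal{O}_d$ and mirrors the way the fractional falling factorial $(x)_\nu$ arises in Example~\ref{Eg:delta_Bessel_fn}.
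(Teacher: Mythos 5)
Your proposal is correct and is exactly the argument the paper intends: the paper states this proposition without proof, but your reduction to the identity $X^k\cdot 1=(x)_k$ in $\mathcal{O}_\Delta$ (via $(Xf)(x)=xf(x-1)$) is precisely the mechanism used in the paper's parallel constructions, e.g.\ Example~\ref{Eg:delta_Bessel_fn} and the Newton-transform identity $\mathfrak{N}:x^n\mapsto (x)_n$. No gaps; the only stylistic note is that the operator should read $X^2\partial^2+(2X+2)\partial-n(n+1)$ as in Corollary~\ref{C:Bpoly_trans}, which is the form you correctly use.
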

\medskip

The following delay-differential relations and three-term recurrence are direct consequences of the module $\mathcal{Y}$, as well as Corollaries~\ref{C:Bpoly_trans} and \ref{C:Bpoly_3term} with $M=\mathcal{O}_\Delta$.
\medskip

\begin{theorem}[(\textbf{Delay-difference equations})]\label{T:classical_Bessel_Poly_DD}  
The difference Bessel polynomials $(y_n)$ satisfy the following two delay-difference equations
	\begin{equation}\label{E:delta_classical_bessel_poly_trans}
	\begin{split}
x(x-1) \Delta y^{\Delta}_{n-1}(x-2)
-(nx-x)y^{\Delta}_{n-1}(x-1)+y^{\Delta}_{n-1}(x)-
y^{\Delta}_{n-2}(x)=0, \\
x(x-1) \Delta y^{\Delta}_{n-1}(x-2)-y^{\Delta}_{n}(x)+
nxy^{\Delta}_{n-1}(x-1)+y^{\Delta}_{n-1}(x)=0
	\end{split}
	\end{equation}
	for every $n$ and every $x$. 
%Their generating function $f(x,\, t)$ solves the following system of delay-differential equations
%	\begin{equation}\label{E:delta_classical_bessel_poly_PDE}
%	\begin{split}
%	&x(x-1)[f_t(x-1,\, t)-f_t(x-2, \, t)]-xtf_{tt}(x-1,\, t)+f_t(x,\, t)-f(x, \, t)=0,\\
%	&x(x-1)[f(x-1,\, t)-f(x-2, \, t)]-f_{t}(x,\, t)+xtf_t(x-1,\, t)+f(x, \, t)=0.
%	\end{split}
%	\end{equation}
\end{theorem}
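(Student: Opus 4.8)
The plan is to derive both delay-difference equations by transporting the two abstract operator identities of Corollary~\ref{C:Bpoly_trans} into the difference manifestation $\mathcal{O}_\Delta$ of Example~\ref{Eg:O_delta}, exactly as the classical statement Theorem~\ref{T:differential-differential-classical bessel poly} was obtained from the same corollary in the differential manifestation $\mathcal{O}_d$. Recall that Corollary~\ref{C:Bpoly_trans} gives, inside $\mathcal{A}$,
\[
(X^2\partial+(n+1)X+1)Y_n - Y_{n+1}=0, \qquad (X^2\partial-nX+1)Y_n - Y_{n-1}=0,
\]
and that under the composite map of the Proposition preceding the theorem the element $Y_n$ is sent to $y_n^\Delta$, since $X^k\cdot 1=(x)_k$ in $\mathcal{O}_\Delta$. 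As these are $\mathcal{A}$-linear identities among the $Y_n$, any $\mathcal{A}$-linear map, in particular the realization in $\mathcal{O}_\Delta$, preserves them.

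First I would record the action of the relevant operators in $\mathcal{O}_\Delta$, where $(\partial f)(x)=f(x+1)-f(x)=\Delta f(x)$ and $(Xf)(x)=xf(x-1)$. Applying $X$ twice after $\partial$ and tracking the backward argument shift built into the $X$-action yields
\[
(X^2\partial f)(x)=x(x-1)\,\Delta f(x-2),
\]
while $(Xf)(x)=xf(x-1)$ and the constant $1$ acts as the identity. The single backward shift contributed by each application of $X$ is precisely the mechanism that produces the ``delay'' appearing in the equations.

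Next I would substitute $Y_n\mapsto y_n^\Delta$ into the two identities above to obtain
\[
x(x-1)\Delta y_n^\Delta(x-2)+(n+1)x\,y_n^\Delta(x-1)+y_n^\Delta(x)-y_{n+1}^\Delta(x)=0
\]
and
\[
x(x-1)\Delta y_n^\Delta(x-2)-n x\,y_n^\Delta(x-1)+y_n^\Delta(x)-y_{n-1}^\Delta(x)=0.
\]
A reindexing $n\mapsto n-1$ in each, using $(n-1)x=nx-x$, then delivers precisely the two formulae of \eqref{E:delta_classical_bessel_poly_trans}, the second identity producing the first equation and the first identity producing the second.

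The argument is essentially bookkeeping, so I do not anticipate a genuine obstacle; the one place demanding care is the noncommutative order in which $X$ and $\partial$ act, together with the accumulation of backward argument shifts, since each application of $X$ in $\mathcal{O}_\Delta$ shifts the argument by $-1$. Correctly aligning the arguments $x$, $x-1$, $x-2$ with the operator word is where a shift error could most easily intrude.
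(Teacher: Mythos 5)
Your proof is correct and follows exactly the route the paper intends: it manifests the two operator identities of Corollary~\ref{C:Bpoly_trans} in $\mathcal{O}_\Delta$ (where $(X^2\partial f)(x)=x(x-1)\Delta f(x-2)$, as you verify) and reindexes $n\mapsto n-1$, which is precisely what the paper means when it declares the theorem a ``direct consequence'' of that corollary with $N=\mathcal{O}_\Delta$ — the paper's own proof environment is in fact left blank, so you have supplied the intended bookkeeping in full.
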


\begin{proof} 

\end{proof}
%{\color{blue}
%\begin{proof}
%Construct the fundamental left $\mathcal{A}_2$-linear maps:
%	\begin{equation}\label{E:delta_classical_bessel_poly_seq}
%			\begin{array}{rcl}
%		\mathcal{Y} & \stackrel{\times(y^\Delta_{n-1}(x))}{\longrightarrow} & \mathcal{O}^{\mathbb{N}_0}
%			\end{array}
%	\end{equation}
%	and
%	\begin{equation}\label{E:delta_classical_bessel_poly_gf}
%			\begin{array}{rcl}
%		\mathcal{Y} & \stackrel{\times f(x,\, t)}{\longrightarrow} & \mathcal{O}_{\Delta d}
%			\end{array}
%	\end{equation}
%	Since $X_{1}^2\partial_{1}\partial_{2}-X_{1}X_{2}\partial^2_{2}+\partial_{2}-1$ and $X_{1}^2\partial_{1}-\partial_{2}+X_{1}X_{2}\partial_{2}+1$ are generators of $\mathcal{Y}$, they are both equivalent to $0$ in $\mathcal{Y}$. Thus \eqref{E:delta_classical_bessel_poly_trans} follows from the left $\mathcal{A}_2$-linear map \eqref{E:delta_classical_bessel_poly_seq}, where $\mathcal{O}^{\mathbb{N}_0}$ has the left $\mathcal{A}_2$-module structure as in Example \ref{Eg:seq-functions_poisson}; and \eqref{E:delta_classical_bessel_poly_PDE} follows from the left $\mathcal{A}_2$-linear map \eqref{E:delta_classical_bessel_poly_gf}, where $\mathcal{O}_{d d}$ has the left $\mathcal{A}_2$-module structure as in \eqref{Eg:two-seq-functions}.
%\end{proof}
%}

\begin{proposition}[(\textbf{Three-term recurrence})] \label{P:3-term_bessel_poly} The difference Bessel polynomials $(y^\Delta_n)$ satisfy the three-term recurrence formula
	\begin{equation}\label{E:delta_bessel_poly_3term}
		y^{\Delta}_{n+1}(x)-x(2n+1)y^{\Delta}_{n}(x-1)-y^{\Delta}_{n-1}(x)=0
	\end{equation}
for every $n$ and $x$.
\end{proposition}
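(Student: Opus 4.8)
The plan is to obtain \eqref{E:delta_bessel_poly_3term} as the image of the Weyl-algebraic three-term recurrence of Corollary~\ref{C:Bpoly_3term} under the difference manifestation, in exactly the same way that the classical recurrence \eqref{E:classical_bessel_poly_3term} was obtained via the manifestation $\mathcal{O}_d$. The essential observation I would start from is that, by the proposition immediately preceding this statement, the difference Bessel polynomial $y^\Delta_n$ is precisely the image of the Weyl Bessel polynomial $Y_n(X)$ under the left $\mathcal{A}$-linear realization map $\phi:\mathcal{A}/\mathcal{A}\partial\to\mathcal{O}_\Delta$ that sends $X^k\mapsto (x)_k$; that is, $y^\Delta_n=\phi(Y_n)$ for every $n\ge -1$ (with $Y_{-1}=1$, so $y^\Delta_{-1}=\phi(1)=1$). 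Since the \emph{same} map $\phi$ realizes the entire sequence $(Y_n)$, any $\mathcal{A}$-module identity among the $Y_n$ transports verbatim to $\mathcal{O}_\Delta$.

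Concretely, I would first record that Corollary~\ref{C:Bpoly_3term} gives the identity $Y_{n+1}-(2n+1)XY_n-Y_{n-1}=0$ in the left $\mathcal{A}$-module $\mathbb{C}[X]$, where $X$ denotes the module action. Applying $\phi$ and using that $\phi$ commutes with the action of $X\in\mathcal{A}$, one gets $\phi(Y_{n+1})-(2n+1)\,\phi(XY_n)-\phi(Y_{n-1})=0$ with $\phi(XY_n)=X\cdot\phi(Y_n)$. Now in $\mathcal{O}_\Delta$ (Example~\ref{Eg:O_delta}) the action of $X$ is $(Xf)(x)=xf(x-1)$, so $\phi(XY_n)(x)=x\,y^\Delta_n(x-1)$. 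Substituting yields exactly
\[
	y^{\Delta}_{n+1}(x)-(2n+1)\,x\,y^{\Delta}_{n}(x-1)-y^{\Delta}_{n-1}(x)=0,
\]
which is \eqref{E:delta_bessel_poly_3term}.

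The only genuinely delicate point — the step I would guard most carefully — is the noncommutative one: it is essential that $\phi$ be invoked as a module homomorphism, so that the delay $x\mapsto x-1$ produced by the $\mathcal{O}_\Delta$-action of $X$ attaches to $y^\Delta_n$ (and not to $y^\Delta_{n\pm1}$), while the numerical factor $2n+1$, being inert under $\phi$, is unchanged. As a sanity check I would test the identity on low-order polynomials, e.g. $y^\Delta_0=1$, $y^\Delta_1=1+x$, $y^\Delta_2=1+3x^2$, for which $y^\Delta_2(x)-3x\,y^\Delta_1(x-1)-y^\Delta_0(x)=(1+3x^2)-3x\cdot x-1=0$. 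A more computational alternative is to note that $(y^\Delta_n)$ solves the Bessel polynomial module $\mathcal{Y}$ (Definition~\ref{D:y-1}) in $\mathcal{O}^{\mathbb{N}_0}_\Delta$ (Example~\ref{Eg:seq-functions_poisson_delta}), and that the three-term recurrence is encoded by an element of the defining left ideal of $\mathcal{Y}$; pushing that element through the difference manifestation and collecting the $n$-th component gives the same conclusion. I would nonetheless favour the direct route above, since the module-theoretic route demands careful tracking of the operator ordering $X_2\partial_2$ versus $\partial_2X_2$ and of the index shift incurred in clearing the formal inverse $\partial_2^{-1}$ — precisely the bookkeeping that the clean application of $\phi$ sidesteps.
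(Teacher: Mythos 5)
Your proposal is correct and follows exactly the route the paper intends: the paper states this recurrence as a "direct consequence of ... Corollary~\ref{C:Bpoly_3term} with $M=\mathcal{O}_\Delta$", which is precisely your argument of pushing $Y_{n+1}-(2n+1)XY_n-Y_{n-1}=0$ through the $\mathcal{A}$-linear realization into $\mathcal{O}_\Delta$, where the action $(Xf)(x)=xf(x-1)$ produces the shifted factor $x\,y^\Delta_n(x-1)$. Your numerical check with $y^\Delta_0,y^\Delta_1,y^\Delta_2$ is also consistent with the definition $y^\Delta_n(x)=\sum_{k=0}^n\frac{(n+k)!}{2^k(n-k)!\,k!}(x)_k$.
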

\medskip

\subsection{Characteristic of Bessel polynomial module}
We change one of the two generators from the  defining Bessel polynomial module \eqref{E:y-1} by the element from the three-term recursion \eqref{bessel_poly_recursion_3 in a A becomes} to obtain following theorem.
\begin{theorem}\label{T: iso bp-n-1} 
The natural map
\[
\dfrac{\mathcal{A}_{2}}
{\mathcal{A}_{2}
	((1-2X_1X_2)\partial_2^2-X_1\partial_2-1)
	+\mathcal{A}_{2}
	(X_{1}^2\partial_{1}-\partial_{2}+X_{1}X_{2}\partial_{2}+1)}
\longrightarrow \mathcal{Y}
\]
is a left $\mathcal{A}_2$-linear isomorphism.
\end{theorem}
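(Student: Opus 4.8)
The plan is to show that the two quotient modules are presented by the \emph{same} left ideal of $\mathcal{A}_2$, so that the natural map induced by $\mathrm{id}_{\mathcal{A}_2}$ is an isomorphism. Write the two generators of $\mathcal{Y}$ from \eqref{E:dBessel_1st_gen} and \eqref{E:dBessel_2nd_gen} as
\[
	P := X_1^2\partial_1+X_1X_2\partial_2+1-\partial_2,\qquad
	Q := X_1^2\partial_1\partial_2-X_1X_2\partial_2^2+\partial_2-1,
\]
and let $R := (1-2X_1X_2)\partial_2^2-X_1\partial_2-1$ be the three-term element \eqref{bessel_poly_recursion_3 in a A becomes}. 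The second generator of the target module, $X_1^2\partial_1-\partial_2+X_1X_2\partial_2+1$, is literally $P$; hence both presentations share the generator $P$, and it suffices to prove the equality of left ideals $\mathcal{A}_2Q+\mathcal{A}_2P=\mathcal{A}_2R+\mathcal{A}_2P$.

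The heart of the argument is the single operator identity
\[
	R = Q-\partial_2 P,
\]
which I would verify by a direct computation in $\mathcal{A}_2$. The only nontrivial point is the reordering in $\partial_2P$: since $\partial_2$ commutes with $X_1$ and $\partial_1$ but satisfies $\partial_2X_2=X_2\partial_2+1$, one finds
\[
	\partial_2 P = X_1^2\partial_1\partial_2+X_1X_2\partial_2^2+X_1\partial_2+\partial_2-\partial_2^2,
\]
and subtracting this from $Q$ cancels the $X_1^2\partial_1\partial_2$ and $\partial_2$ terms, doubles the $X_1X_2\partial_2^2$ contribution, and produces exactly $R$. It is precisely the commutator term $X_1\partial_2$ coming from $\partial_2X_2=X_2\partial_2+1$ that supplies the $-X_1\partial_2$ summand of $R$; a careless right multiplication $P\partial_2$ would drop it and the identity would fail. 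This is the step I expect to be the main (and essentially the only) obstacle.

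Granting $R=Q-\partial_2P$, both inclusions are immediate: $R=1\cdot Q+(-\partial_2)\cdot P\in\mathcal{A}_2Q+\mathcal{A}_2P$, and conversely $Q=R+\partial_2P\in\mathcal{A}_2R+\mathcal{A}_2P$. Hence $\mathcal{A}_2Q+\mathcal{A}_2P=\mathcal{A}_2R+\mathcal{A}_2P$, so the two quotients coincide as left $\mathcal{A}_2$-modules and the natural map induced by $\mathrm{id}_{\mathcal{A}_2}$ is the asserted isomorphism. This mirrors exactly the preceding pair of theorems establishing the analogous characteristic presentation of the reverse Bessel polynomial module $\Theta$ via its three-term element, so the same bookkeeping carries over here.
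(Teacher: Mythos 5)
Your proof is correct, and it takes the same route the paper intends: the paper states this theorem without an explicit proof, but the surrounding text ("we change one of the two generators ... by the element from the three-term recursion \eqref{bessel_poly_recursion_3 in a A becomes}") makes clear the argument is exactly the generator swap you carry out, and your identity $R=Q-\partial_2P$ (which I have checked, including the commutator term $X_1\partial_2$ from $\partial_2X_2=X_2\partial_2+1$) supplies the two ideal inclusions needed.
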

\medskip

\begin{theorem}\label{T:symbol gen yn-1}
	Let 
		\[
			\mathcal{A}_{2}(\eta):=\dfrac{\mathcal{A}_2\langle\eta\rangle}{\langle\eta^2-1+2X_1X_2\rangle},
		\]
	\[
	\mathcal{Y}(\eta)=\dfrac{\mathcal{A}_{2}(\eta)}
	{\mathcal{A}_{2}(\eta)(\eta^2\partial_2^2-X_1\partial_2-1)
		+\mathcal{A}_{2}(\eta)
		(X_{1}^2\partial_{1}-\partial_{2}+X_{1}X_{2}\partial_{2}+1)}
	\]
	and let
	\[
			S:=C_1\,\E(\frac{1-\eta}{X_1})+C_2\,\E(\frac{1+\eta}{X_1})
	\]
	for every choice of $C_1,\, C_2\in\mathbb{C}$, where $\E$ denote the Weyl exponential introduced in Example \ref{Eg:exp}.
	Then $\mathcal{Y}(\eta)$ is a left $\mathcal{A}_{2}(\eta)$-module and the map
	\begin{equation}\label{eta gen for bp}
	\mathcal{Y}(\eta)
	\xrightarrow[]{\times S}
	\overline{\mathcal{A}_{2}(\eta)/(\mathcal{A}_{2}(\eta)\partial_1+
		\mathcal{A}_{2}(\eta)\partial_2)},
	\end{equation}
	is left $\mathcal{A}_{2}(\eta)$-linear.
\end{theorem}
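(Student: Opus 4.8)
The plan is to follow the template of the proof of Theorem~\ref{T:bessel_poly_gf_map}, replacing the quadratic extension $\rho^2=1-2X_2$ by $\eta^2=1-2X_1X_2$. First, $\mathcal{Y}(\eta)$ is automatically a left $\mathcal{A}_2(\eta)$-module, being the quotient of $\mathcal{A}_2(\eta)$ by a left ideal; here $\eta$ commutes with $X_1$ and $X_2$, and the derivations are extended by the Leibniz-forced rules $[\partial_1,\eta]=-X_2\eta^{-1}$ and $[\partial_2,\eta]=-X_1\eta^{-1}$, exactly as for $\rho$. To prove that $\times S$ is $\mathcal{A}_2(\eta)$-linear it suffices to check that $G_1S$ and $G_2S$ lie in $\mathcal{A}_2(\eta)\partial_1+\mathcal{A}_2(\eta)\partial_2$, where $G_1=\eta^2\partial_2^2-X_1\partial_2-1$ and $G_2=X_1^2\partial_1-\partial_2+X_1X_2\partial_2+1$ are the two generators. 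By normal-ordering (pushing every $\partial$ to the right), $G_iS$ is congruent modulo that left ideal to $\widehat{G_i}S$, where $\widehat{G_i}$ is $G_i$ read as a differential operator acting on $S$ viewed as a series in the position variables. Thus the whole statement reduces to solving the two PDEs $\widehat{G_1}S=0$ and $\widehat{G_2}S=0$ in the completed solution space.

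The engine of the computation is the identity $(\eta\partial_2)^2=\eta^2\partial_2^2-X_1\partial_2$, which is immediate from $[\partial_2,\eta]=-X_1\eta^{-1}$; consequently $G_1=(\eta\partial_2)^2-1=(\eta\partial_2-1)(\eta\partial_2+1)$ splits into two commuting first-order factors. I would then pass to the coordinates $(X_1,\eta)$, which is legitimate because $X_2=(1-\eta^2)/(2X_1)$, and in which $\eta\partial_2=-X_1\partial_\eta$. The first PDE therefore becomes the parametric ODE $X_1^2 S_{\eta\eta}-S=0$, whose general solution is $S=A(X_1)\,\E(\eta/X_1)+B(X_1)\,\E(-\eta/X_1)$, the two summands being precisely the kernels of the factors $\eta\partial_2+1$ and $\eta\partial_2-1$. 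The appearance of a two-parameter family matches the fact that, as in Theorem~\ref{T:bessel_poly_gf_map}, the relevant module is holonomic of multiplicity two, cf. Example~\ref{E:prime_integrable_eg_2}.

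It remains to pin down $A$ and $B$ using the second generator. Expressed in the $(X_1,\eta)$ coordinates, $\widehat{G_2}$ collapses to $X_1^2 S_{X_1}+X_1\eta\,S_\eta+S=0$, and substituting each exponential branch forces the first-order ODE $X_1^2A'+A=0$, with solution $A=C_1\,\E(1/X_1)$, and similarly $B=C_2\,\E(1/X_1)$. Since $\E(1/X_1)\,\E(\pm\eta/X_1)=\E((1\pm\eta)/X_1)$, this produces exactly $S=C_1\,\E((1-\eta)/X_1)+C_2\,\E((1+\eta)/X_1)$ after the harmless relabelling of the arbitrary constants, which is \eqref{eta gen for bp}. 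I expect the main obstacle to be the bookkeeping in this last step: unlike the $\rho^2=1-2X_2$ situation of Theorem~\ref{T:bessel_poly_gf_map}, here $\eta$ commutes with neither $\partial_1$ nor $\partial_2$, so the change of variables is genuinely two-dimensional and rewriting $\partial_1$ spawns extra $\partial_\eta$-contributions that must be tracked carefully; organizing everything around the single operator $\Lambda_2:=\eta\partial_2$ and its factorization is what keeps the computation manageable.
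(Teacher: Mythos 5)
Your proposal is correct and follows essentially the same route as the paper's proof in Appendix \ref{A:BP_chara}: the commutators $[\partial_1,\eta]=-X_2/\eta$, $[\partial_2,\eta]=-X_1/\eta$, the factorization $\eta^2\partial_2^2-X_1\partial_2-1=(\eta\partial_2-1)(\eta\partial_2+1)$, and your passage to the $(X_1,\eta)$ coordinates are precisely the paper's change of generators $\Theta_1=\partial_1-\frac{1-X_1X_2}{X_1^2}\partial_2$, $\Theta_2=\eta\partial_2$, $Y_1=X_1$, $Y_2=-\eta/X_1$, after which both arguments determine the $X_1$-dependent prefactor $\E(1/X_1)$ from the second generator in the same way.
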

\medskip

Since the proof of Theorem \ref{T:symbol gen yn-1} is similar to that of Theorem \ref{T:bessel_poly_gf_map}, so we place it in the Appendix \ref{A:BP_chara}.

\section{Newton transformations} \label{S:Newton}
To handle more sophisticated generating functions from Bessel modules in relation to difference calculus, we would like to introduce a transformation that allows us to ``transform\rq\rq{} entities with respect to differential operators to difference operators under different sittings. An origin of the transform can be traced to spectral analysis and integral forms of gamma functions, see e.g., \cite[\S 12.22]{WW}:
	\[
	\Gamma(x)=-\frac{1}{2i \sin\pi x}\int_{\infty}^{(0+)}
	e^{-\lambda}(-\lambda)^{x-1} d\lambda\quad (|\arg(-\lambda)|<\pi)
	\]which is valid on  $\mathbb{C}$ except at negative integers including the origin, where the upper and lower limits denote the standard Hankel contour which starts from $+\infty$ above the real-axis, then circles around the origin in the counter-clockwise direction before returns to $+\infty$ below the real-axis. If we replace $x$ by $-x$, then the integral becomes
	
\begin{equation}\label{E: Gamma}
\Gamma(-x)=\frac{e^{-i\pi x}}{2i \sin\pi x}\int_{-\infty}^{(0+)}
e^\lambda(-\lambda)^{-x-1} d\lambda\quad (|\arg(\lambda)|<\pi)
\end{equation}
where the integral contour starts at $-\infty$ before the negative real-axis, then circle around the origin in the counter-clockwise direction, before returns to $-\infty$ above negative real-axis. 
 \medskip
 
\begin{theorem}[(\textbf{Newton transformations})]\label{T:Newton_trans}
Let $\mathcal{O}_\Delta$ be the space of entire functions endowed with the structure of a left $\mathcal{A}$-module as in Example \ref{Eg:O_delta_d}, and $\mathcal{O}_d$ be the space of entire functions $f$ with growth rate
\[
	|f(x)\, e^{mx}|\to 0\quad\mbox{ as }\quad x\to-\infty
\]
for some $m<1$, endowed with the structure of a left $\mathcal{A}$-module as in Example \ref{Eg:O_deleted_d}.
Then the map
	\begin{equation}\label{E:newton_trans}
\begin{array}{rcl}
	\mathfrak{N}: \mathcal{O}_d&\longrightarrow&\mathcal{O}_\Delta\\
f&\mapsto&\dfrac{e^{-i\pi x}}{2i\sin(\pi x) \Gamma(-x)}\displaystyle\int_{-\infty}^{(0+)}\, f(s)\, e^s(-s)^{-x-1}\,ds
\end{array}
	\end{equation}
is left $\mathcal{A}$-linear. This map is called the Newton transformation.
\end{theorem}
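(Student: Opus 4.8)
The plan is to verify directly the three conditions characterizing a left $\mathcal{A}$-linear map: $\mathbb{C}$-linearity, which is immediate from linearity of the integral; compatibility with $X$, i.e. $\mathfrak{N}(Xf)(x)=x\,\mathfrak{N}(f)(x-1)$; and compatibility with $\partial$, i.e. $\mathfrak{N}(\partial f)(x)=\mathfrak{N}(f)(x+1)-\mathfrak{N}(f)(x)$. One must keep in mind that $\partial$ and $X$ act by $(\partial f)(s)=f'(s),\ (Xf)(s)=sf(s)$ on the source $\mathcal{O}_d$, but by the difference operator $(\partial g)(x)=g(x+1)-g(x)$ and $(Xg)(x)=x\,g(x-1)$ on the target $\mathcal{O}_\Delta$. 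Throughout I would abbreviate $C(x)=\frac{e^{-i\pi x}}{2i\sin(\pi x)\Gamma(-x)}$ and $I_f(x)=\int_{-\infty}^{(0+)} f(s)\,e^s(-s)^{-x-1}\,ds$, so that $\mathfrak{N}(f)(x)=C(x)\,I_f(x)$; by \eqref{E: Gamma} the normalization is exactly $C(x)=1/I_1(x)$, whence $\mathfrak{N}(1)\equiv 1$.

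The first step is to isolate the single functional equation on which both module relations turn, namely $C(y)=-y\,C(y-1)$ for all $y$. This follows at once from $\Gamma(1-y)=-y\,\Gamma(-y)$ together with $\sin(\pi(y-1))=-\sin(\pi y)$ and $e^{-i\pi(y-1)}=-e^{-i\pi y}$; equivalently, it is the reciprocal of the recurrence $I_1(y-1)=-y\,I_1(y)$ supplied by \eqref{E: Gamma}.

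For the $X$-relation I would use the purely algebraic identity $s\,(-s)^{-x-1}=-(-s)^{-x}=-(-s)^{-(x-1)-1}$ inside the integrand, which gives $I_{Xf}(x)=-I_f(x-1)$ with no boundary contribution (the integrand is merely reweighted). Hence $\mathfrak{N}(Xf)(x)=-C(x)\,I_f(x-1)$, and the master equation in the form $-C(x)=x\,C(x-1)$ turns this into $x\,C(x-1)I_f(x-1)=x\,\mathfrak{N}(f)(x-1)$, as needed. For the $\partial$-relation I would integrate $I_{f'}(x)=\int f'(s)e^s(-s)^{-x-1}\,ds$ by parts, using
\[
\frac{d}{ds}\bigl[e^s(-s)^{-x-1}\bigr]=e^s(-s)^{-x-1}+(x+1)\,e^s(-s)^{-x-2},
\]
and discarding the integrated term to obtain $I_{f'}(x)=-I_f(x)-(x+1)I_f(x+1)$. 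Multiplying by $C(x)$ and invoking $C(x+1)=-(x+1)C(x)$ yields $\mathfrak{N}(f')(x)=-\mathfrak{N}(f)(x)+C(x+1)I_f(x+1)=\mathfrak{N}(f)(x+1)-\mathfrak{N}(f)(x)$, which is precisely the action of $\partial$ in $\mathcal{O}_\Delta$.

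The main obstacle is entirely analytic rather than algebraic. I must justify the absolute convergence of the Hankel integrals $I_f(x)$ and $I_{f'}(x)$, the legitimacy of integrating by parts, and above all the vanishing of the integrated boundary term. All three are controlled by the growth hypothesis defining $\mathcal{O}_d$, namely $|f(s)\,e^{ms}|\to 0$ as $s\to-\infty$ for some $m<1$: on the two rays of the contour the factor $e^s$ decays like $e^{\Re s}$ as $\Re s\to-\infty$, and the slack $1-m>0$ beats the admissible subexponential growth of $f$, forcing $f(s)e^s(-s)^{-x-1}\to 0$ at both ends; near the origin the contour is a small circle on which the integrand is bounded. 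This secures convergence and kills the boundary term. A final point to confirm is that $\mathfrak{N}(f)$ genuinely lands in $\mathcal{O}_\Delta$, i.e. that the apparent poles of $1/\sin(\pi x)$ are cancelled by the zeros of the entire function $1/\Gamma(-x)$ (and by the structure of $I_f$), so that $C(x)I_f(x)$ is analytic; once convergence and the two functional identities above are in place, this is the only remaining verification.
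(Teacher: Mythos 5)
Your proposal is correct and follows essentially the same route as the paper's proof: the $X$-relation via the reweighting $s(-s)^{-x-1}=-(-s)^{-x}$ and the $\partial$-relation via integration by parts, with the prefactor recurrence $C(x+1)=-(x+1)C(x)$ absorbing the extra factor. The paper simply performs these two computations without comment; your additional care about convergence, the vanishing of the boundary term, and the analyticity of the image is a welcome (and correct) supplement rather than a divergence in method.
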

\medskip

\begin{proof}
We just need to verify that $\mathfrak{N}(\partial f) = \partial(\mathfrak{N}f)$ and $\mathfrak{N}(X f) = X(\mathfrak{N}f)$ as follows.
\begin{align*}
	\mathfrak{N}(\partial f)(x) &= \dfrac{e^{-i\pi x}}{2i\sin(\pi x) \Gamma(-x)}\displaystyle\int_{-\infty}^{(0+)}\, f'(s)\, e^s(-s)^{-x-1}\,ds \\
	&= -\dfrac{e^{-i\pi x}}{2i\sin(\pi x) \Gamma(-x)}\displaystyle\int_{-\infty}^{(0+)}\, f(s)[e^s(-s)^{-x-1}+(x+1)e^s(-s)^{-x-2}]\,ds \\
	&=(\mathfrak{N}f)(x+1) - (\mathfrak{N}f)(x) = \partial(\mathfrak{N}f)(x);\\
	\mathfrak{N}(X f)(x) &= \dfrac{e^{-i\pi x}}{2i\sin(\pi x) \Gamma(-x)}\displaystyle\int_{-\infty}^{(0+)}\, sf(s)\, e^s(-s)^{-x-1}\,ds \\
	&= -\dfrac{e^{-i\pi x}}{2i\sin(\pi x) \Gamma(-x)}\displaystyle\int_{-\infty}^{(0+)}\, f(s)\, e^s(-s)^{-x}\,ds \\
	&= x(\mathfrak{N}f)(x-1) = X(\mathfrak{N}f)(x).
\end{align*}
\end{proof}
\medskip

\begin{theorem}
The Newton transformation defined above is an injection.
\end{theorem}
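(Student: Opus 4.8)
The plan is to show that $\mathfrak{N}$ sends $f$ to a function whose values at the non-negative integers encode the full Taylor data of $f(s)e^{s}$ at the origin; since those data determine $f$, injectivity follows. Concretely, I would establish the identity
\[
\mathfrak{N}(f)(n)=\frac{d^{n}}{ds^{n}}\big(f(s)e^{s}\big)\Big|_{s=0}\qquad(n\in\mathbb{N}_0),
\]
and then observe that $\mathfrak{N}(f)\equiv 0$ forces every Maclaurin coefficient of the entire function $f(s)e^{s}$ to vanish, whence $f(s)e^{s}\equiv 0$ and $f\equiv 0$.

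First I would simplify the prefactor of \eqref{E:newton_trans}. Using the reflection formula in the form $\sin(\pi x)\,\Gamma(-x)=-\pi/\Gamma(x+1)$, the kernel constant becomes $\tfrac{e^{-i\pi x}}{2i\sin(\pi x)\Gamma(-x)}=-\tfrac{e^{-i\pi x}\Gamma(x+1)}{2\pi i}$, which is analytic at every non-negative integer. In particular the apparent singularities of $\sin(\pi x)$ and of $\Gamma(-x)$ at $x=n$ cancel, so $\mathfrak{N}(f)$ is genuinely defined there, consistent with $\mathfrak{N}(f)\in\mathcal{O}_\Delta$ being entire. Next I would evaluate the contour integral at $x=n$: the multivalued factor $(-s)^{-x-1}$ then collapses to the single-valued meromorphic $(-1)^{n+1}s^{-n-1}$, so the integrand $f(s)e^{s}s^{-n-1}$ has its only singularity at $s=0$. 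Along the two rays of the Hankel contour the (now single-valued) integrand takes equal values, and the growth hypothesis $|f(x)e^{mx}|\to 0$ with $m<1$ gives $|f(-r)e^{-r}|=o\!\big(e^{(m-1)r}\big)$, so the ray contributions cancel and the contour collapses to a small positively oriented loop about the origin. The residue theorem then yields $\int_{-\infty}^{(0+)}f(s)e^{s}(-s)^{-n-1}\,ds=(-1)^{n+1}2\pi i\,\Res_{s=0}\frac{f(s)e^{s}}{s^{n+1}}=(-1)^{n+1}2\pi i\,[s^{n}]\{f(s)e^{s}\}$, which combined with $P(n)=(-1)^{n+1}n!/(2\pi i)$ gives the displayed identity $\mathfrak{N}(f)(n)=n!\,[s^{n}]\{f(s)e^{s}\}$. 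As sanity checks this is consistent with the $f\equiv 1$ case behind \eqref{E: Gamma} (giving $\mathfrak{N}(1)(n)=1$) and with $f(s)=s$ (giving $\mathfrak{N}(s)(n)=n$), matching the general pattern $\mathfrak{N}(s^{k})=(x)_{k}$.

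Finally, if $\mathfrak{N}f=0$ then $\frac{d^{n}}{ds^{n}}(f(s)e^{s})|_{0}=0$ for all $n\ge 0$, so the entire function $f(s)e^{s}$ has identically vanishing Maclaurin series and is therefore $\equiv 0$; dividing by the nowhere-zero $e^{s}$ gives $f\equiv 0$, proving injectivity.

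The main obstacle is the rigorous justification of the contour collapse at the integer points: the exact cancellation of the two rays for the single-valued integrand, together with the convergence and admissibility of the deformation supplied by the stated decay of $f$, and the verification that the $\Gamma$/$\sin$ singularities really do cancel so that evaluation at integers is legitimate. Both are dispatched by the reflection-formula rewriting and the growth condition. An alternative route would expand $f=\sum_{k}c_{k}s^{k}$ and integrate term by term to obtain the Newton series $\mathfrak{N}(f)(x)=\sum_{k}c_{k}(x)_{k}$, recovering $c_{n}=\Delta^{n}\mathfrak{N}(f)(0)/n!$; this reaches the same conclusion but requires justifying the interchange of the summation with the Hankel integral, which the residue argument sidesteps entirely.
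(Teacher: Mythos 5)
Your proof is correct, but it takes a genuinely different route from the paper's. The paper disposes of injectivity in one line by observing that the Hankel integral is the Mellin transform of $f(s)e^{s}$ and invoking the injectivity of the Mellin transformation. You instead evaluate $\mathfrak{N}f$ only at the non-negative integers: after the reflection-formula rewriting $\sin(\pi x)\Gamma(-x)=-\pi/\Gamma(x+1)$ removes the apparent singularity of the prefactor, the factor $(-s)^{-n-1}$ becomes single-valued, the two rays of the Hankel contour cancel, and the residue theorem yields $\mathfrak{N}(f)(n)=n!\,[s^{n}]\{f(s)e^{s}\}$ (your sign bookkeeping checks out, and the sanity checks against $\mathfrak{N}(s^{k})=(x)_{k}$ are consistent). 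Since $f$ is entire by hypothesis, the vanishing of all Maclaurin coefficients of $f(s)e^{s}$ forces $f\equiv 0$. What your argument buys is self-containedness — it needs only the residue theorem, the stated decay of $f$ at $-\infty$ to justify convergence and the ray cancellation, and the identity theorem, rather than an external injectivity result for the Mellin transform on the relevant space — and it proves the slightly stronger fact that $\mathfrak{N}f$ is already determined by its restriction to $\mathbb{N}_0$. What the paper's approach buys is brevity and the conceptual identification of $\mathfrak{N}$ with a Mellin transform, which is reused elsewhere. Both are sound; no gap.
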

\begin{proof}
It follows from the injectivity of the Mellin transformation and the observation that the integral above is nothing but the Mellin transformation of $f$ times the exponential function.
\end{proof}
\medskip

\begin{example} Let us revisit the Weyl exponential $\E(aX)$ from Example \ref{Eg:exp} derived when finding a $\mathcal{A}$-linear map as a solution $S$ to
			\[
		\mathcal{A}/\mathcal{A}(\partial-a)
		\stackrel{\times S }{\longrightarrow}
		\mathcal{A}/\mathcal{A}\partial.
	\]
Since it is known that $\mathcal{A}/\mathcal{A}(\partial-a) \xrightarrow{\times \E(aX)} \mathcal{A}/\partial\mathcal{A} \xrightarrow{\times 1}\mathcal{O}_d$ can be given by the classical function $e^{ax}$ where $\mathcal{O}_d$ endowed with the $\mathcal{A}$-module structure described in Example \ref{Eg:O_deleted_d}. Then the Newton transformation above completes the following commutative diagram
			\[
%				\begin{equation}\label{E:commute-1}
				 	  \begin{tikzcd}[row sep=large, column sep=large]
					    \mathcal{D}/\mathcal{D}(\partial-a)  \arrow{r}{\times e^{ax}} 
					    \arrow[swap]{dr}{\times \exp_\Delta(a;\,X)} 
					    & \mathcal{O}_d 
					    \arrow{d}{\mathfrak{N}} \\
     						& \mathcal{O}_{\Delta}
				  \end{tikzcd}
%			\end{equation}
		\]
where the space of analytic functions $\mathcal{O}_\Delta$ is endowed with a $\mathcal{A}$-module structure defined in Example \ref{Eg:O_delta}. It is straightforward to verify that
	\begin{equation}\label{E:newton_bases}
		\mathfrak{N}: x^n\ \ \longmapsto\ \  x(x-1)\cdots (x-n+1),
	\end{equation}for every integer $n$, so that the above commutative diagram gives the difference exponential function
	\[
	      \exp_\Delta(a;\, x):=(a+1)^x=\sum_{k=0}^{\infty} \frac{a^k}{k!}\, x(x-1)\cdots(x-k+1).
	\]	
converges absolutely for any complex $x$, if $|a|<1$ directly. The $\exp_\Delta$ was already derived in Example \ref{Eg:delta_exp}.
\end{example}
    Moreover, it is easy to see that
    \begin{equation}\label{E:newton_bases_2}
		\mathfrak{N}: x^\nu\ \  \longmapsto\ \  \frac{\Gamma(x+1)}{\Gamma(x+1-\nu)}= (x)_\nu,
	\end{equation}for every $\nu\in\mathbb{C}$.
\medskip

We shall apply similar rationale as the above example to derive generating functions for certain half-Bessel
 modules with respect to difference operator to be considered in subsequent sections. In fact, similar transformations already appeared in the literature in different applications spread over many decades. See for example \cite{BHPR_2019}, in integral form \cite[p. 5 (25)]{Truesdell_1948} for Laguerre polynomials, and again in \cite[p. 401]{Gessel_2003} in ad hoc manners.
 
\begin{remark}
Variants of the Newton transformation are possible. For example, if $\mathcal{O}_d$ in Theorem \ref{T:Newton_trans} is replaced by the space of entire functions $f$ with growth rate such that
\[
	|f(x)\, e^{mx}|\to 0\mbox{ as }x\to\pm i\infty,
\]
holds for every $ m\in\mathbb{C}$, then one replaces the Hankel-type contour in the above Newton transformation by, for example,
%	\[
	%	f\mapsto\dfrac{e^{-i\pi x}}{2i\sin(\pi x)\Gamma(-x)}\displaystyle\int_{-i\infty}^{+i\infty}f(s)\, e^s\, (-s)^{-x-1}\,ds.
%	\]
Then the conclusion of Theorem \ref{T:Newton_trans} remains valid. 
\end{remark}

\section{Poisson-type generating functions for (difference) Bessel polynomials}

We are ready to apply the Theorem \ref{T:bessel_poly_gf_map} derived from the last section to recover the well-known (Poisson-type) generating function for classical Bessel polynomials $\theta_n(x)$ and to derive new \textit{difference Bessel polynomials} $\theta_n^\Delta(x)$ in this section.

\medskip

%\begin{example}
%Let $\mathbb{C}^\mathbb{N}$ and $\mathcal{O}^\mathbb{N}$  be the space of sequences of complex numbers and sequence of analytic functions. Then they are equipped with the structure of $\mathcal{A}-$module by
%begin{equation}\label{E:poisson_module}
%\begin{array}{l}
%(Xa)_n=na_{n-1}\\
%(\partial a)_n=a_{n+1}\quad\mbox{for all sequence }(a_n).
%\end{array}
%\end{equation}
%\end{example}
%\medskip

\begin{theorem}[(\textbf{Poisson transformation I})] Let $\mathcal{O}_d^{\mathbb{N}_0}$ denote the space of sequences of analytic functions $(f_n(x))_0^\infty$ with appropriately restricted growth rate be endowed with the $\mathcal{A}_2$-module structure defined in Example \ref{Eg:seq-functions_poisson}. Let $\mathcal{O}_{dd}$ denote the space of analytic functions be endowed with the $\mathcal{A}_2$-module structure as in the Theorem \ref{T:Bessel_gf}. Then the map
\begin{equation}\label{E:poisson}
\begin{array}{rcl}
	\mathfrak{p}:\mathcal{O}^{\mathbb{N}_0}&\to&\mathcal{O}_{dd}
\\
(f_n)&\mapsto&\displaystyle \sum_{n=0}^\infty f_n\dfrac{t^n}{n!},
\end{array}
\end{equation}
called the ``Poisson transformation" is a left  $\mathcal{A}_2$-linear map. 
%It becomes a left $\mathcal{A}_1$-linear map if we have considered $\mathfrak{p}:\mathcal{\mathbb{C}}^{\mathbb{N}_0}\to\mathcal{O}$ on the space of sequences of constants $(a_n)$ instead.
\end{theorem}
\medskip

\begin{proof} This is a straightforward verification.
\end{proof}
\medskip

For finite difference operator $\Delta$, we have the following analogue.

\medskip

\begin{theorem}[(\textbf{Poisson transformation II})] Let $\mathcal{O}_\Delta^{\mathbb{N}_0}$ denote the space of bilateral sequences of analytic functions $(f_n(x))$ with appropriately restricted growth rate be endowed with the $\mathcal{A}_2$-module structure defined in Example \ref{Eg:seq-functions_poisson_delta}. Let $\mathcal{O}_{\Delta d}$ denote the space of analytic functions be endowed with the $\mathcal{A}_2$-module structure as in the Theorem \ref{T:Bessel_gf}. Then the map
\begin{equation}\label{E:poisson_2}
\begin{array}{rcl}
	\mathfrak{p}_\Delta:\mathcal{O}^{\mathbb{N}_0}&\to&\mathcal{O}_{\Delta d}
\\
(f_n)&\mapsto&\displaystyle \sum_{n=0}^\infty f_n\dfrac{t^n}{n!},
\end{array}
\end{equation}
called the ``difference Poisson transformation" is a left  $\mathcal{A}_2$-linear map.
% It becomes a left $\mathcal{A}_1$-linear map if we have considered $\mathfrak{p}_\Delta:\mathcal{\mathbb{C}}^{\mathbb{N}_0}\to\mathcal{O}$ on the space of sequences of constants $(a_n)$ instead.
\end{theorem}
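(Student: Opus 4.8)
The plan is to verify directly that $\mathfrak{p}_\Delta$ intertwines each of the four generators $\partial_1, X_1, \partial_2, X_2$ of $\mathcal{A}_2$, exactly in the spirit of the preceding (differential) Poisson transformation. Since $\mathfrak{p}_\Delta$ is visibly $\mathbb{C}$-linear, it suffices to check the four identities $\mathfrak{p}_\Delta(\partial_j f) = \partial_j\, \mathfrak{p}_\Delta(f)$ and $\mathfrak{p}_\Delta(X_j f) = X_j\, \mathfrak{p}_\Delta(f)$ for $j=1,2$, where the left-hand operators act via the $\mathcal{O}^{\mathbb{N}_0}_\Delta$ structure of Example \ref{Eg:seq-functions_poisson_delta} and the right-hand ones via the $\mathcal{O}_{\Delta d}$ structure of Example \ref{Eg:O_delta_d}.

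First I would treat $\partial_1$ and $X_1$, which act only on the $x$-variable and leave the summation index $n$ untouched. Since the forward difference $f_n(x+1)-f_n(x)$ and the shifted multiplication $x\,f_n(x-1)$ both pass termwise through the series $\sum_n f_n(x)\,t^n/n!$, one obtains $\mathfrak{p}_\Delta(\partial_1 f) = \sum_n [f_n(x+1)-f_n(x)]\,t^n/n! = \partial_1\,\mathfrak{p}_\Delta(f)$ and likewise $\mathfrak{p}_\Delta(X_1 f) = x\sum_n f_n(x-1)\,t^n/n! = X_1\,\mathfrak{p}_\Delta(f)$; both are immediate because $\partial_1$ and $X_1$ are defined identically (as a difference operator and a shifted multiplication in $x$) on the two modules.

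The substantive bookkeeping is in $\partial_2$ and $X_2$, whose manifestations on the two sides differ in form: on the sequence side they are the index operators $(\partial_2 f)_n = f_{n+1}$ and $(X_2 f)_n = n\,f_{n-1}$, whereas on the function side they are $\partial_t$ and multiplication by $t$. Here I would carry out the reindexing explicitly: differentiating $\sum_n f_n\,t^n/n!$ in $t$ shifts the factorial and reproduces $\sum_n f_{n+1}\,t^n/n!$, matching $\partial_2$; multiplying by $t$ and reabsorbing the extra factor of $n$ into $n!$ reproduces $\sum_n n\,f_{n-1}\,t^n/n!$, matching $X_2$. The normalisation $t^n/n!$ is precisely what makes these two identities hold, and this is the conceptual content of the statement.

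The only genuine obstacle is analytic rather than algebraic: one must know that the exponential generating function $\sum_n f_n(x)\,t^n/n!$ converges to an element of $\mathcal{O}_{\Delta d}$ and that the termwise manipulations above (in particular differentiation under the summation sign and the reindexing) are legitimate. This is what the phrase ``appropriately restricted growth rate'' secures; I would note that under the standing growth hypothesis the series converges locally uniformly in $(x,t)$, so it defines an analytic function of two variables and all rearrangements are justified, which completes the verification and shows $\mathfrak{p}_\Delta$ is left $\mathcal{A}_2$-linear.
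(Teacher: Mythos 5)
Your verification is correct and is exactly the ``straightforward verification'' the paper intends (it gives no written proof for this theorem, and for the companion Poisson transformation I it simply records that the check is routine). All four intertwining identities are computed correctly — in particular the reindexings for $\partial_2$ and $X_2$, which are the only points where the $t^n/n!$ normalisation matters — and your remark that the growth hypothesis is what licenses the termwise manipulations is the right place to locate the analytic content.
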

\medskip

\begin{remark}\label{R:poisson}
The left $\mathcal{A}$-module structure of $\mathbb{C}^\mathbb{Z}$ as in Example~\ref{Eg:two-seq-numbers} is specifically designed so that the z-transform $\mathfrak{z}:\mathbb{C}^\mathbb{Z}\to\mathcal{O}_d$ in Theorem~\ref{T:z-transform-1} is left $\mathcal{A}$-linear. In a similar way, the left $\mathcal{A}$-module structure of $\mathbb{C}^\mathbb{N}$ as in Example~\ref{Eg:seq-functions_poisson} and Example~\ref{Eg:seq-functions_poisson_delta} is specifically designed so that the above Poisson transforms are left $\mathcal{A}$-linear.
\end{remark}
\medskip

\subsection{Classical reverse Bessel polynomials $\theta_n(x)$}

\begin{example}\label{Eg:Bessel_mod_no_root}
Let $\mathcal{A}_{2}(\rho):=\dfrac{\mathcal{A}_2\langle \rho, {1}/{\rho}\rangle}{\langle\rho^2-1+2X_2\rangle}$ be defined as in Theorem \ref{T:bessel_poly_gf_map}, and
	\[
			\Theta (\rho):=\dfrac{\mathcal{A}_2(\rho)}{\mathcal{A}_2(\rho)(-\rho^2\partial_2^2
			+3\partial_2+X_1^2)
			+\mathcal{A}_2(\rho)(X_1\partial_1+\rho^2\partial_2-1-X_1)}.
		\]
Then $\Theta(\rho)$ is a left $\mathcal{A}_{2}(\rho)$-module. On the other hand, the space $\mathcal{O}_2$ of two-variable analytic functions with appropriate growth rate endowed with
		\begin{equation}\label{E:O_dd_endow_1}
			\begin{split}
				(\partial_1 f)(x, t)) &=f_x (x, t), \quad (X_1f)(x, t)=xf(x, t),\\
				(\partial_2 f)(x, t) &=f_t (x, t), \quad (X_2f)(x, t)=tf(x, t)
			\end{split}
		\end{equation}
		and by spectra analysis, $\rho^2f$ can be defined so that
\[
			(\rho^2 f)(x,t) = (1-2t)f(x,t),
	\]
$\mathcal{O}_2$ is a left $\mathcal{A}_2(\rho)$-module again denoted by $\mathcal{O}_{dd}$, and the same space $\mathcal{O}_2$ endowed with \eqref{E:O_delta_d}
	\begin{equation}\label{E:O_delta_d_2}
		\begin{array}{ll}
			\partial_1f(x,\, t)=f(x+1,\, t)-f(x,\, t),   & X_1f(x,\, t)=xf(x-1,\, t);\\
			\partial_2f(x,\, t) =f_t(x,\, t), 		& X_2 f(x,\, t)=tf(x,\, t),
		\end{array}	
	\end{equation} and 
		\[
			(\rho^2 f)(x, t)=(1-2t)f(x, t),
		\]
is a left $\mathcal{A}_2(\rho)$-module again denoted by $\mathcal{O}_{\Delta d}$; and the space $\mathcal{O}^{\mathbb{N}_0}$ of all sequences of analytic functions endowed with \eqref{E:seq-functions_poisson} and $\rho^2f$ can be defined so that
		\begin{equation}\label{E:O_dd_endow_2}
			(\rho^2 f)_n = f_n-2nf_{n-1}
	\end{equation}
	is also a left $\mathcal{A}_2(\rho)$-module.
\end{example}

We are ready to state our first main result in this subsection for which the generating function of the classical reverse Bessel polynomials derived by Burchnall \cite{Burchnall_1953} is a special case.
\medskip

\begin{theorem}\label{T:general_rev_bessel_poly_gf} 
	\begin{enumerate} 
		\item Let $(\vartheta_n)$ be a sequence of analytic functions which is a solution of the reverse Bessel polynomial module $\Theta$ in $\mathcal{O}_d^{\mathbb{N}_0}$. Then there exist complex constants $C_1,\, C_2$ such that
			\begin{equation}\label{E:gen_rev_bessel_poly_gf}
				\frac{1}{\sqrt{1-2t}}\Big\{ C_1\exp\big[x(1-\sqrt{1-2t})\big]
				+C_2 \exp\big[x(1+\sqrt{1-2t})\big]\Big\}
				=\sum_{n=0}^\infty
				\vartheta_n(x)\, \frac{t^n}{n!}
			\end{equation}
			which converges uniformly in each compact subset of $\mathbb{C}\times B(0,\, \frac12)$.
		\item Moreover, the generating function in the part $\mathrm{(i)}$ above satisfies the system of partial differential equations
	\begin{equation}\label{E:bessel_poly_PDE}
		\begin{split}
			&f_{xt}(x,\, t)-f_t(x,\, t)+xf(x, \, t)=0,\\
			&xf_x (x,\, t)+(1-2t)f_t(x,\, t)-(1+x)f(x,\, t)=0
		\end{split}
	\end{equation}
for all $x$ and $t$.\footnote{See also the Appendix \ref{SS:PDE_list}.}. 
	\end{enumerate}
\end{theorem}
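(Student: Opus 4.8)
The plan is to reprise the commutative-square method of Theorem~\ref{T:Bessel_gf}, now over the quadratic extension $\mathcal{A}_2(\rho)$ with $\rho^2=1-2X_2$ and with the Poisson transform $\mathfrak{p}$ replacing the $z$-transform. A solution $(\vartheta_n)$ of $\Theta$ in $\mathcal{O}_d^{\mathbb{N}_0}$ is precisely a left $\mathcal{A}_2$-linear map $\mathfrak{j}\colon\Theta\to\mathcal{O}^{\mathbb{N}_0}$ with $\mathfrak{j}(1)=(\vartheta_n)$; since the presentation of $\Theta$ in Theorem~\ref{T:bessel_poly_gf_map} has the same two generators read over $\mathcal{A}_2(\rho)$, this $\mathfrak{j}$ extends $\mathcal{A}_2(\rho)$-linearly to $\Theta(\rho)\to\mathcal{O}^{\mathbb{N}_0}$, using the $\mathcal{A}_2(\rho)$-structure on $\mathcal{O}^{\mathbb{N}_0}$ recorded in Example~\ref{Eg:Bessel_mod_no_root}. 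I would first record that $\mathfrak{p}$ is $\mathcal{A}_2(\rho)$-linear: from $(\rho^2 f)_n=f_n-2nf_{n-1}$ on $\mathcal{O}^{\mathbb{N}_0}$ one computes $\mathfrak{p}(\rho^2 f)=\sum_n(f_n-2nf_{n-1})t^n/n!=(1-2t)\mathfrak{p}(f)=\rho^2\mathfrak{p}(f)$, matching the action $(\rho^2 f)(x,t)=(1-2t)f(x,t)$ on $\mathcal{O}_{dd}$. This legitimises the square
\begin{equation*}
\begin{tikzcd}[row sep=large, column sep=huge]
\Theta(\rho) \arrow{r}{\mathfrak{j}} \arrow[swap]{d}{\times S} & \mathcal{O}^{\mathbb{N}_0} \arrow{d}{\mathfrak{p}} \\
\overline{\mathcal{A}_2(\rho)/\big(\mathcal{A}_2(\rho)\partial_1+\mathcal{A}_2(\rho)\partial_2\big)} \arrow{r}{\times 1} & \mathcal{O}_{dd}
\end{tikzcd}
\end{equation*}
whose left vertical arrow is the characteristic map of Theorem~\ref{T:bessel_poly_gf_map}.

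Next I would chase $1\in\Theta(\rho)$ through the diagram. Along the top-right path it maps to $\mathfrak{p}((\vartheta_n))=\sum_{n\ge0}\vartheta_n(x)\,t^n/n!$. Along the bottom-left path, the element $S$ of \eqref{E:bessel_poly_gf_map} is manifested in $\mathcal{O}_{dd}$ by $X_1\mapsto x$, $\rho\mapsto\sqrt{1-2t}$ and $\E(aX_1)\mapsto e^{ax}$, producing exactly $\tfrac{1}{\sqrt{1-2t}}\{C_1e^{x(1-\sqrt{1-2t})}+C_2e^{x(1+\sqrt{1-2t})}\}$, the left-hand side of \eqref{E:gen_rev_bessel_poly_gf}. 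Because $\Theta$ is holonomic of multiplicity two (Theorem~\ref{T:B_poly_mod_holonomic}), the local $\mathbb{C}$-solution space of the induced system in $\mathcal{O}_{dd}$ is two-dimensional; the manifestations of $S$ at $(C_1,C_2)=(1,0)$ and $(0,1)$ specialise at $t=0$ to $1$ and $e^{2x}$, hence are $\mathbb{C}$-linearly independent and span this space. Commutativity of the square then forces $\sum\vartheta_n t^n/n!$ to coincide with the displayed combination for suitable $C_1,C_2$, which is part~(i). For part~(ii), manifesting the two defining generators $\partial_2\partial_1-\partial_2+X_1$ and $X_1\partial_1-2X_2\partial_2-1-X_1+\partial_2$ of $\Theta$ in $\mathcal{O}_{dd}$ gives precisely $f_{xt}-f_t+xf$ and $xf_x+(1-2t)f_t-(1+x)f$; since these generators kill $1\in\Theta$ and the square commutes, the generating function satisfies \eqref{E:bessel_poly_PDE}.

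The convergence assertion I would settle by the Poincar\'e--Perron device of Theorem~\ref{T:Bessel_Gevrey}. The three-term recurrence $\vartheta_{n+2}-(2n+3)\vartheta_{n+1}-x^2\vartheta_n=0$ furnished by Proposition~\ref{P:Weyl_mod_half_bessel_3term} becomes, for $W_n:=\vartheta_n/n!$ in the Banach algebra of functions analytic on a fixed compact $K\subset\mathbb{C}$,
\begin{equation*}
W_{n+2}-\frac{2n+3}{n+2}W_{n+1}-\frac{x^2}{(n+2)(n+1)}W_n=0,
\end{equation*}
whose limiting characteristic equation $\lambda^2-2\lambda=0$ has roots $0$ and $2$ of distinct norm. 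Theorem~\ref{T:PP} then gives $\limsup_n\|W_n\|_K^{1/n}=2$, so $\sum\vartheta_n t^n/n!$ converges uniformly on $K\times\overline{B(0,r)}$ for every $r<\tfrac12$, the radius being dictated by the branch point of $\sqrt{1-2t}$ at $t=\tfrac12$. I expect the genuine difficulty to lie not in this estimate but in the multiplicity-two bookkeeping: in contrast to the rank-one Bessel case, one must confirm that the single extension $\mathcal{A}_2(\rho)$ is simultaneously compatible with all three module structures and that the two exponential branches of $S$ genuinely exhaust---rather than degenerate within---the solution space, which the specialisation at $t=0$ to $\{1,e^{2x}\}$ is designed to guarantee.
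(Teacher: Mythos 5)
Your proposal is correct and, for the main identity, follows essentially the same route as the paper: the paper proves the special case $(\vartheta_n)=(\theta_n)$ via exactly your commutative square (its diagram \eqref{E:commute-2}), invokes holonomicity and multiplicity two of $\Theta$ from Theorem~\ref{T:B_poly_mod_holonomic} to match the two-dimensional local solution space against the two exponential branches of $S$ from Theorem~\ref{T:bessel_poly_gf_map}, reads off the PDEs \eqref{E:bessel_poly_PDE} as the $\mathcal{O}_{dd}$-manifestation of the two generators of $\Theta$, and fixes the constants at $t=0$; it then declares the general statement to follow by the same argument. The one place you genuinely diverge is the convergence claim. The paper eliminates $f_x$ from the system \eqref{E:bessel_poly_PDE} to obtain the single second-order ODE $(1-2t)f_{tt}-3f_t-x^2f=0$ in $t$ (for fixed $x$), whose only finite singularity is the regular singular point $t=\tfrac12$, and concludes that the formal power-series solution has radius of convergence at least $\tfrac12$. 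You instead apply the Banach-algebra Poincar\'e--Perron theorem (Theorem~\ref{T:PP}) to the normalised three-term recurrence for $W_n=\vartheta_n/n!$, exactly as the paper does for $(J_{\nu+n})$ in Theorem~\ref{T:Bessel_Gevrey}. Both are sound; your route gives the uniform-on-compacta bound $\limsup_n\|W_n\|_K^{1/n}\le 2$ directly from the recurrence without forming the second-order ODE (note only the inequality, not the equality you assert, is guaranteed, since Poincar\'e--Perron allows the ratio to converge to the norm of either root, but the inequality is all that is needed), while the paper's route explains \emph{why} the radius is exactly governed by the branch point of $\sqrt{1-2t}$. Your explicit check that $\mathfrak{p}$ intertwines $(\rho^2f)_n=f_n-2nf_{n-1}$ with multiplication by $1-2t$ is a detail the paper leaves implicit in Example~\ref{Eg:Bessel_mod_no_root}, and is a worthwhile addition.
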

\medskip
	
We skip the proof to this theorem since it is very similar to that of the following special case.
\medskip
 
\begin{theorem}[(\cite{Grosswald}, p. 42)]\label{T:rev_bessel_poly_gf} 
	\begin{enumerate}
		\item The reverse Bessel polynomials $(\theta_n)$ have the generating function
		\begin{equation}\label{E:rev_bessel_poly_gf}
			\frac{1}{\sqrt{1-2t}} \exp\big[x(1-\sqrt{1-2t})\big]=\sum_{n=0}^\infty
			\theta_n(x)\, \frac{t^n}{n!}
		\end{equation}
		which converges uniformly in each compact subset of $\mathbb{C}\times B(0,\, \frac12)$.
		\item Moreover, the generating function in the part $\mathrm{(i)}$ above satisfies the system of partial differential equations \eqref{E:bessel_poly_PDE}\footnote{See also the Appendix \ref{SS:PDE_list}.}.
	\end{enumerate}
\end{theorem}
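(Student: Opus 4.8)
The plan is to run the three-step scheme used for Theorem~\ref{T:Bessel_gf}, replacing the $z$-transform by the Poisson transform \eqref{E:poisson} and using the quadratic extension $\mathcal{A}_2(\rho)$ to produce the closed form. First I would assemble the commutative square
\begin{equation*}
\begin{tikzcd}[row sep=large, column sep=huge]
\Theta \arrow{r}{\mathfrak{j}} \arrow[swap]{d}{\times S} & \mathcal{O}^{\mathbb{N}_0} \arrow{d}{\mathfrak{p}} \\
\overline{\mathcal{A}_2(\rho)/(\mathcal{A}_2(\rho)\partial_1+\mathcal{A}_2(\rho)\partial_2)} \arrow{r}{\times 1} & \mathcal{O}_{dd}
\end{tikzcd}
\end{equation*}
in which $\mathfrak{j}$ sends $1$ to the sequence $(\theta_n)$ (Example~\ref{Eg:bilateral_theta}), $\mathfrak{p}$ is the Poisson transform, the left vertical arrow is the characteristic map of Theorem~\ref{T:bessel_poly_gf_map} (factoring as $\Theta\to\Theta(\rho)\xrightarrow{\times S}\cdots$) with $S=C_1\rho^{-1}\E[X_1(1-\rho)]+C_2\rho^{-1}\E[X_1(1+\rho)]$, and $\mathcal{O}_{dd}$ carries the $\mathcal{A}_2(\rho)$-structure of Example~\ref{Eg:Bessel_mod_no_root} with $\rho^2f=(1-2t)f$. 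After restricting scalars, all arrows are left $\mathcal{A}_2$-linear, so the square commutes up to a complex scalar.

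Next I would manifest the two generators of $\Theta$ from \eqref{E:Theta}. Clearing $\partial_2^{-1}$ from $\partial_1-1+X_1/\partial_2$ exactly as in the proof of Theorem~\ref{T:B_poly_mod_holonomic} gives $\partial_2\partial_1-\partial_2+X_1$, which in $\mathcal{O}_{dd}$ reads $f_{xt}-f_t+xf$, while $X_1\partial_1-2X_2\partial_2-1-X_1+\partial_2$ reads $xf_x+(1-2t)f_t-(1+x)f$; these are precisely the two operators of \eqref{E:bessel_poly_PDE}. Chasing $1$ along the top-right path shows the Poisson sum $\sum_{n\ge0}\theta_n(x)\,t^n/n!$ is annihilated by both operators, which already settles part (ii); chasing $1$ along the bottom-left path shows that the manifestation of $S$, namely $\tfrac{1}{\sqrt{1-2t}}\{C_1\exp[x(1-\sqrt{1-2t})]+C_2\exp[x(1+\sqrt{1-2t})]\}$, is also a solution. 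Here I would fix the principal branch of $\sqrt{1-2t}$, which is single-valued on $B(0,\tfrac12)$ since $\Re(1-2t)>0$ there.

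Holonomicity then forces the two to agree. By Theorem~\ref{T:B_poly_mod_holonomic} the module $\Theta$ has multiplicity two, so the local solution space of \eqref{E:bessel_poly_PDE} in $\mathcal{O}_{dd}$ has $\mathbb{C}$-dimension two; the two manifested functions $\tfrac{1}{\sqrt{1-2t}}\exp[x(1\mp\sqrt{1-2t})]$ are plainly linearly independent and hence span it. Consequently $\sum_{n\ge0}\theta_n(x)\,t^n/n!$ is a combination of them for some $C_1,C_2$. To pin the constants I would compare the coefficient of $t^0$: the left side is $\theta_0(x)=1$, whereas the right side at $t=0$ equals $C_1+C_2e^{2x}$; linear independence of $1$ and $e^{2x}$ forces $C_1=1$, $C_2=0$, giving \eqref{E:rev_bessel_poly_gf}.

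Finally, for convergence I would obtain a growth bound on $\theta_n$ independently of the closed form: setting $Y_n=\theta_n/n!$ in the three-term recurrence \eqref{E:bessel_poly_3term} yields $Y_{n+2}-\tfrac{2n+3}{n+2}Y_{n+1}-\tfrac{x^2}{(n+2)(n+1)}Y_n=0$, whose limiting characteristic roots $0,2$ are distinct, so the Poincar\'e-Perron theorem (as in Theorem~\ref{T:Bessel_Gevrey}) gives $\|\theta_n/n!\|_K=O(2^n)$ on compacta and hence uniform convergence on compact subsets of $\mathbb{C}\times B(0,\tfrac12)$. The step I expect to be the main obstacle is the bookkeeping for the quadratic extension: one must check that the characteristic map of Theorem~\ref{T:bessel_poly_gf_map} descends to a genuinely single-valued solution in $\mathcal{O}_{dd}$ under $\rho^2\mapsto 1-2t$, so that the sign ambiguity $\rho\mapsto-\rho$ is absorbed by the two-dimensional solution space, and that the multiplicity-two count survives the extension of scalars to $\mathcal{A}_2(\rho)$.
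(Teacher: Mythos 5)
Your proposal is correct and follows the paper's own proof almost step for step: the same commutative square through $\Theta(\rho)$ and the Poisson transform, the same manifestation of the two generators as the system \eqref{E:bessel_poly_PDE}, the same appeal to holonomicity and multiplicity two from Theorem~\ref{T:B_poly_mod_holonomic} to identify the two-dimensional solution space, and the same evaluation at $t=0$ (using the linear independence of $1$ and $e^{2x}$) to force $C_1=1$, $C_2=0$. The only place you genuinely diverge is the convergence step: the paper eliminates between the two PDEs to obtain the single ODE $(1-2t)f_{tt}-3f_t-x^2f=0$, whose nearest singularity to the origin is the regular singular point $t=\tfrac12$, and reads off uniform convergence on compacta of $\mathbb{C}\times B(0,\tfrac12)$ from ODE theory; you instead normalize the three-term recurrence \eqref{E:bessel_poly_3term} and invoke the Banach-algebra Poincar\'e--Perron theorem to get $\|\theta_n/n!\|_K=O((2+\varepsilon)^n)$, exactly as in Theorem~\ref{T:Bessel_Gevrey}. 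Both are valid; the paper's route gives the location of the obstruction at $t=\tfrac12$ directly from the $D$-module, while yours gives an explicit coefficient growth bound and is more uniform with the Gevrey estimates used elsewhere in the paper. Your closing worry about single-valuedness of $\rho$ is already absorbed by the fact that the two branches together span the two-dimensional solution space, which is how the paper implicitly handles it.
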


\begin{proof} 
Recall the map $\mathfrak{p}$ as defined in \eqref{E:poisson} and the Example \ref{Eg:Bessel_mod_no_root} just discussed above that $\Theta(\rho)$ is a $\mathcal{A}_2(\rho)$-modules above. Then we have the diagram
	\begin{equation}\label{E:commute-2}
				  \begin{tikzcd} [row sep=huge, column sep=huge]
					    \Theta(\rho) \arrow{r}{\times(\theta_n(x))} 
					    \arrow[swap]{d}{\times \{C_1\rho^{-1}\E[X_1(1-\rho)]+C_2\rho^{-1}\E[X_1(1+\rho)]\}}
					    %\arrow[swap]{dr}{\mathfrak{g}} 
					    & \mathcal{O}^{\mathbb{N}_0} 
					    \arrow{d}{\mathfrak{p}} \\
     						\widetilde{A}_2(\rho) \arrow{r}{\times 1}& \mathcal{O}_{d d}
				  \end{tikzcd}
	\end{equation}
in which $\widetilde{A}_2(\rho):=\overline{\mathcal{A}_2(\rho)/\big[\mathcal{A}_2(\rho)\partial_1+\mathcal{A}_2(\rho)\partial_2\big]}$. Since the above diagram commutes, the sum $\sum_{n=0}^\infty\theta_n (x)\, t^n/n!$ being the image of $1$ in $\mathcal{O}_{dd}$ via the top-right path, is also a solution to the system of differential-difference equations
		\begin{equation}\label{E:bessel-poly-PDEs}
			\begin{split}
				&\theta_n^\prime(x)-\theta_n(x)+x\theta_{n-1}(x)=0,\\
				&x\theta_n^\prime(x)-(x+2n+1)\theta_n(x)+\theta_{n-1}(x)=0.
			\end{split}
		\end{equation}
		mentioned in the Theorem \ref{T:differential-differential}. Now the function $\frac{1}{\sqrt{1-2t}} \exp\big[x(1-\sqrt{1-2t})\big]$ is a solution to the system of PDEs \eqref{E:bessel_poly_PDE}
	\begin{align}
			&f_{xt}(x,\, t)-f_t(x,\, t)+xf(x, \, t)=0,\label{E:rev_bessel_poly_PDE_1}
			\\
			&xf_x (x,\, t)+ (1-2t)f_t(x,\, t)-(1+x)f(x,\, t)=0.\label{E:rev_bessel_poly_PDE_2}
	\end{align}
Since the PDEs \eqref{E:rev_bessel_poly_PDE_1} and \eqref{E:rev_bessel_poly_PDE_2} are images of the two generators of the Bessel module $\Theta$ in Definition \ref{D:Theta}, so the dimension of the local solution space of the PDEs coincides with the multiplicity two as stated in Theorem \ref{T:B_poly_mod_holonomic}.The Theorem \ref{T:bessel_poly_gf_map} directly verifies that the solution space of the PDEs has $\mathbb{C}$-dimension two by solving the explicit solution as a linear combination of $\rho^{-1}\E[X_1(1-\rho)]$ and $\rho^{-1}\E[X_1(1+\rho)]$ of $\Theta(\rho)$ in $\tilde{A}_2(\rho)$ that effects the bottom-left path in the commutative diagram \eqref{E:commute-2}. 
So
		\[
			\frac{1}{\sqrt{1-2t}}\big\{C_1 \exp\big[x(1-\sqrt{1-2t})\big]+C_2\exp\big[x(1+\sqrt{1-2t})\big]\big\}=\sum_{n=0}^\infty
			\theta_n(x)\, \frac{t^n}{n!}
		\]
	for complex scalars $C_1,\, C_2$. Substituting $t=0$ into the above equation yields $C_1=1$ and $C_2=0$. One way to verify the convergence of \eqref{E:rev_bessel_poly_gf} is to combine \eqref{E:rev_bessel_poly_PDE_1} and \eqref{E:rev_bessel_poly_PDE_2} into a single ODE. Subtracting the equation \eqref{E:rev_bessel_poly_PDE_1} from \eqref{E:rev_bessel_poly_PDE_2} after it being differentiated partially with respect to $t$ yields the ODE
		 \[
		 	(1-2t)f_{tt}(x,\, t)-3f_t(x,\, t)-x^2f(x,\, t)=0.
		\]For each fixed, but otherwise arbitrary, $x$ the ODE has a regular singularity at $t=\frac12$. Hence the infinite sum in \eqref{E:rev_bessel_poly_gf} converges uniformly in each compact subset of $\mathbb{C}\times B(0,\, \frac12)$. This completes the proof.	
\end{proof}
\medskip

\subsection{Difference reverse Bessel polynomials $\theta^\Delta_n(x)$} \label{SS:delta_reverse_bessel_poly}
\begin{example}\label{Eg:BP_delta_J} Let $\Theta(\rho)$ and 
 $(\theta^\Delta_n(x))$ to denote the Bessel polynomial module defined in Example \ref{Eg:Bessel_mod_no_root} and infinite sequence of difference Bessel polynomials respectively.  Suppose that $\mathcal{O}^{\mathbb{N}_0}=\mathcal{O}^{\mathbb{N}_0}_\Delta$ is endowed with the $\mathcal{A}_2$-module structure as defined in Example \ref{Eg:seq-functions_poisson_delta}.
 Then the map
	\begin{equation}\label{E:BP_delta_J}
		\begin{array}{rcl}
		\mathfrak{j}_\Delta:\Theta(\rho) & \stackrel{\times (\theta^\Delta_{n})}{\longrightarrow} &\mathcal{O}^{\mathbb{N}_0}_\Delta
		\end{array}
	\end{equation}  
is left $\mathcal{A}_2$-linear.  
\end{example}
\medskip

We are now ready to make use of the Example \ref{Eg:Bessel_mod_no_root} and to apply 
Newton's transform $\mathfrak{N}$ to ``map" the classical generating for the reverse Bessel polynomials derived  in the last subsection to the difference reverse Bessel polynomials.
\medskip

\begin{theorem}\label{T:bessel_poly_delta_gf}
	\begin{enumerate}
		\item The difference reverse Bessel polynomials have the generating function
		\begin{equation}\label{E:bessel_difference_poly_gf}
			\frac{e^{-i\pi x}}{2i\sin\pi x\Gamma(-x)}\int_{-\infty}^{(0+)}e^\lambda(-\lambda)^{-x-1} \frac{1}{\sqrt{1-2t}} \exp\big[\lambda(1-\sqrt{1-2t})\big]\,d\lambda=\sum_{n=0}^\infty
			\theta_n^\Delta (x)\, \frac{t^n}{n!},
		\end{equation}
		where the infinite series converges uniformly in each compact subset of $\mathbb{C}\times B(0,\, \frac12)$.
		\item Moreover, the generating function in part $\mathrm{(i)}$ above satisfies the delay-differential equations\footnote{See also the Appendix \ref{SS:PDE_list}.}
			\begin{equation}\label{E:delta_bessel_poly_PDE}
				\begin{split}
					&f_t(x+1,\, t)-2f_t(x,\, t)+xf(x-1,\, t)=0,\\
					&(1-2t)f_t(x,\, t)+(x-1)f(x,\, t)-2xf(x-1,\, t)=0.
				\end{split}
			\end{equation}
	\end{enumerate}
\end{theorem}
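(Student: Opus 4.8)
The plan is to derive \eqref{E:bessel_difference_poly_gf} by applying the Newton transformation $\mathfrak{N}$ of Theorem~\ref{T:Newton_trans} to the classical generating function of Theorem~\ref{T:rev_bessel_poly_gf}. The key observation is that the difference reverse Bessel polynomial $\theta_n^\Delta$ of \eqref{E:delta_Bessel_poly} arises from the classical $\theta_n$ of Theorem~\ref{T:rev_bessel_poly_gf} by replacing each monomial $x^{n-k}$ with the falling factorial $(x)_{n-k}$; by \eqref{E:newton_bases} this is exactly the term-by-term action of $\mathfrak{N}$, so $\theta_n^\Delta=\mathfrak{N}(\theta_n)$ for every $n$. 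First I would fix $t\in B(0,\frac12)$ and regard $g_t(x):=\frac{1}{\sqrt{1-2t}}\exp[x(1-\sqrt{1-2t})]$ as an entire function of $x$. Since $\Re\sqrt{1-2t}<2$ on $B(0,\frac12)$, some admissible $m<1$ exists with $|g_t(x)e^{mx}|\to0$ as $x\to-\infty$, so $g_t$ lies in the domain $\mathcal{O}_d$ prescribed in Theorem~\ref{T:Newton_trans}, and $\mathfrak{N}(g_t)$ is precisely the contour integral appearing on the left of \eqref{E:bessel_difference_poly_gf}.

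Next I would justify interchanging $\mathfrak{N}$ with the summation over $n$. Writing out the explicit integral form of $\mathfrak{N}$ and using that the classical series $\sum_n\theta_n(\lambda)\,t^n/n!$ converges uniformly for $\lambda$ on the Hankel contour and $t$ in a compact subset of $B(0,\frac12)$, term-by-term integration is legitimate and yields
\[
\mathfrak{N}(g_t)(x)=\sum_{n=0}^\infty\mathfrak{N}(\theta_n)(x)\,\frac{t^n}{n!}=\sum_{n=0}^\infty\theta_n^\Delta(x)\,\frac{t^n}{n!},
\]
which is \eqref{E:bessel_difference_poly_gf}. The asserted uniform convergence on compacta of $\mathbb{C}\times B(0,\frac12)$ is then inherited from the classical convergence of Theorem~\ref{T:rev_bessel_poly_gf}: on a compact set of $x$ the kernel $e^\lambda(-\lambda)^{-x-1}$ of $\mathfrak{N}$ is bounded along the fixed contour, so the $t$-series of Newton transforms converges uniformly there. (Alternatively, one can try the analogue of the ODE reduction used at the end of the proof of Theorem~\ref{T:rev_bessel_poly_gf}, but in the difference setting the two equations of \eqref{E:delta_bessel_poly_PDE} mix the arguments $x$ and $x-1$ and do not close into a single ODE in $t$ as cleanly, so the transform route is preferable.)

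For part (ii) I would argue purely module-theoretically. As in Example~\ref{Eg:BP_delta_J}, the sequence $(\theta_n^\Delta)$ is a solution of $\Theta(\rho)$ in $\mathcal{O}_\Delta^{\mathbb{N}_0}$, so composing with the difference Poisson transform $\mathfrak{p}_\Delta$ exhibits the generating function as the image of $1$ under $\Theta(\rho)\xrightarrow{\mathfrak{j}_\Delta}\mathcal{O}_\Delta^{\mathbb{N}_0}\xrightarrow{\mathfrak{p}_\Delta}\mathcal{O}_{\Delta d}$, in exact analogy with the diagram \eqref{E:commute-2} but landing in $\mathcal{O}_{\Delta d}$ rather than $\mathcal{O}_{dd}$. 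Consequently the generating function is annihilated by the images of the two generators of $\Theta$ under the manifestation \eqref{E:O_delta_d}. A direct computation then yields \eqref{E:delta_bessel_poly_PDE}: the generator $\partial_2\partial_1-\partial_2+X_1$ from \eqref{E:B_poly_3th_generator} gives $f_t(x+1,t)-2f_t(x,t)+xf(x-1,t)=0$, while the generator $X_1\partial_1+(1-2X_2)\partial_2-1-X_1$ gives $(1-2t)f_t(x,t)+(x-1)f(x,t)-2xf(x-1,t)=0$.

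The main obstacle I anticipate is the rigorous interchange of the Newton contour integral with the infinite sum in part (i). One must control the growth of $\theta_n(\lambda)\,t^n/n!$ uniformly along the unbounded Hankel contour, where the damping factor $e^\lambda(-\lambda)^{-x-1}$ decays but $\theta_n(\lambda)$ is a polynomial of growing degree, so that $\mathfrak{N}$ may be applied to the full generating function rather than merely to each $\theta_n$ separately. Verifying the growth hypothesis of Theorem~\ref{T:Newton_trans} uniformly in $t$ on compacta of $B(0,\frac12)$ is the decisive estimate legitimizing the otherwise formal manipulation, and everything else reduces to the routine manifestation computation above.
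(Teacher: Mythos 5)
Your overall strategy for part (i) — apply the Newton transformation $\mathfrak{N}$ to the classical generating function of Theorem~\ref{T:rev_bessel_poly_gf} and exchange it with the sum, using $\mathfrak{N}(\theta_n)=\theta_n^\Delta$ — is the same as the middle and right paths of the paper's commutative diagram \eqref{E:commute-dbessel-2}, and your part (ii) is exactly the paper's manifestation argument (the paper additionally carries out the integration-by-parts verification that the contour integral is annihilated by $(2X_2-1)\partial_2^2+3\partial_2+X_1^2$ and $X_1\partial_1+(1-2X_2)\partial_2-1-X_1$). So the route is right; the problem lies in how you establish convergence.

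The genuine gap is that the "decisive estimate" you name at the end is never supplied, and the convergence of the series on the right of \eqref{E:bessel_difference_poly_gf} cannot simply be "inherited" from Theorem~\ref{T:rev_bessel_poly_gf}: that theorem gives uniform convergence of $\sum_n\theta_n(\lambda)t^n/n!$ only on \emph{compact} subsets of $\mathbb{C}\times B(0,\tfrac12)$, whereas the Hankel contour is unbounded and $\theta_n(\lambda)$ has degree $n$, so the termwise bound $\left|\theta_n^\Delta(x)\right|=\left|\mathfrak{N}(\theta_n)(x)\right|$ requires controlling $\int e^{\Re\lambda}|\lambda|^{-\Re x-1}|\theta_n(\lambda)|\,|d\lambda|$ uniformly in $n$; note also that the constant term of $\theta_n$ is $(2n)!/(2^nn!)\sim 2^n n!/\sqrt{\pi n}$, so $\theta_n^\Delta(x)t^n/n!$ already behaves like $(2t)^n$ and the radius $\tfrac12$ is sharp rather than automatic. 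Moreover, your parenthetical dismissal of the ODE route is incorrect: the paper shows that the pair \eqref{E:delta_bessel_poly_PDE} \emph{does} close into a single second-order ODE in $t$ for each fixed $x$, namely
\begin{equation*}
(2t-1)(2t+3)\,f_{tt}(x,t)+\bigl[(1-2t)(2x-5)+12\bigr]f_t(x,t)+(x-1)(x-2)f(x,t)=0,
\end{equation*}
obtained by a chain of eliminations of the shifted arguments (equations \eqref{E:delta_rev_bessel_PDE_1}--\eqref{E:delta_rev_bessel_PDE_14}); since $t=0$ is an ordinary point and the nearest regular singularity is $t=\tfrac12$, this is precisely what yields uniform convergence on compacta of $\mathbb{C}\times B(0,\tfrac12)$. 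You should either carry out the uniform estimate along the unbounded contour that you flagged, or perform the elimination to reach the single ODE above; as written, part (i) of your argument is incomplete.
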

\medskip

\begin{proof} We first draw on the simple fact from the Example \ref{Eg:Bessel_mod_no_root}
that $\Theta (\rho)$ is a $\mathcal{A}_2(\rho)$-module and from the Example \ref{Eg:BP_delta_J}  that the map $\mathfrak{j}_\Delta:\Theta(\rho)  \stackrel{\times (\theta^\Delta_{n})}{\longrightarrow} \mathcal{O}^{\mathbb{N}_0}_\Delta
$ is $\mathcal{A}_2(\rho)$-linear. Then
the map $\mathfrak{p}_\Delta: \mathcal{O}^{\mathbb{N}_0} \longrightarrow\mathcal{O}_{\Delta d}$  as defined in \eqref{E:poisson_2}  and their composition is also  an $\mathcal{A}_2(\rho)$-linear. Thus $\mathfrak{p}_\Delta\circ \mathfrak{j}_\Delta$ acting on the identity yields the right-side of \eqref{E:bessel_difference_poly_gf}. Moreover, we have the commutative diagram
\begin{equation}\label{E:commute-dbessel-2}
			\begin{tikzcd}  [row sep=large, column sep=large] %[row sep=huge, column sep=huge]
			& \Theta (\rho) \arrow[swap]{dl}{\times\rho^{-1}\E[X_1(1-\rho)]} \arrow{d}{\mathfrak{j}} \arrow{dr}{\mathfrak{j}_\Delta} \\
			\widetilde{\mathcal{A}}_2(\rho) \arrow{dr}{\times 1} & \mathcal{O}_d^{\mathbb{N}_0} \arrow{d} {\mathfrak{p}}
& \mathcal{O}_\Delta^{\mathbb{N}_0} \arrow{d}{\mathfrak{p}_\Delta}\\
 			&  \mathcal{O}_{d d} \arrow{r}{\mathfrak{N}} &  \mathcal{O}_{\Delta d} 
		\end{tikzcd}
		\end{equation}
		in which $\widetilde{\mathcal{A}}_2(\rho):=\overline{\mathcal{A}_2(\rho)/\big[\mathcal{A}_2(\rho)\partial_1+\mathcal{A}_2(\rho)\partial_2\big]}$, and the growth of the analytic functions in $\mathcal{O}_{d d}$ and $\mathcal{O}_{\Delta d}$ are suitably restricted.  Let
	\[
		y(x,t)=\int_{-\infty}^{(0+)}e^\lambda(-\lambda)^{-x-1} \frac{1}{\sqrt{1-2t}} \exp\big[\lambda(1-\sqrt{1-2t})\big]\,d\lambda.
	\]
	Since $\frac{1}{\sqrt{1-2t}} \exp\big[x(1-\sqrt{1-2t})\big]$ is a solution to the system of PDEs \eqref{E:bessel_poly_PDE} according to the leftmost path of the above diagram, this $y(x,\, t)$ satisfies
	\[
		\begin{split}
			&[\partial_2\partial_1-\partial_2+X_1]y(x,\, t)=0,\\
			&\big[X_1\partial_2+(1-2X_2)\partial_2-1-X_1\big] y(x,\, t)=0,
		\end{split}
	\]by Theorem~\ref{T:Newton_trans}
in the analytic function space $\mathcal{O}_{\Delta d}$ endowed with a $\mathcal{A}_2$-module structure given by the Example \ref{Eg:O_delta_d}. Hence the $y(x,\, t)$ satisfies the system of delay-differential equations \eqref{E:delta_bessel_poly_PDE}. We instead offer an alternative approach by direct verification that the $y(x,\, t)$ to satisfies the equivalent system\footnote{This would make the verification shorter.}
	\[
		\begin{split}
					&[(2X_1-1)\partial_2^2+3\partial_2+X_1^2]y(x,\, t)=0,\\
					&\big[ X_1\partial_1+(1-2X_2)\partial_2-1-X_1\big]y(x,\, t)=0,
		\end{split}
	\]where the first expression above can be found in Proposition \ref{P:Weyl_mod_half_bessel_3term}. 
It is straightforward to note that
	\[
	%\begin{split}
		\partial_2 y=\frac{1}{\Gamma(-x)}\int_{-\infty}^{(+0)}e^\lambda(-\lambda)^{-x-1}
	\big[ (1-2t)^{-1}+\lambda(1-2t)^{-\frac{1}{2}}\big]
 		\frac{1}{\sqrt{1-2t}} \exp\big[\lambda(1-\sqrt{1-2t})\big]\,d\lambda
%\end{split}
	\]
and 
\[
\begin{split}
\partial^2_2y &=\frac{1}{\Gamma(-x)}\int_{-\infty}^{(+0)}e^\lambda(-\lambda)^{-x-1}
\big[ 3(1-2t)^{-2} +3\lambda(1-2t)^{-\frac{3}{2}}
+\lambda^{2}(1-2t)^{-1}\big]\\
&\quad\times 
\frac{1}{\sqrt{1-2t}} \exp\big[\lambda(1-\sqrt{1-2t})\big]\,d\lambda\\
\end{split}
\]
so that
\[
\begin{split}
(2X_2-1)\partial_2^2y
	=&\frac{1}{\Gamma(-x)}\int_{-\infty}^{(+0)}e^\lambda(-\lambda)^{-x-1}
		\big[ -3(1-2t)^{-1}-3\lambda(1-2t)^{-\frac{1}{2}}
-\lambda^{2}\big]\\
	&\quad \times\frac{1}{\sqrt{1-2t}} \exp\big[\lambda(1-\sqrt{1-2t})\big]\,d\lambda\\
&=(-3\partial_2-X_1^2) y.
\end{split}
\]
Hence
	\[
		\big[(2X_2-1)\partial_2^2+3\partial_2+X_1^2\big]y=0.
	\]
On the other hand, we have
\[
X_1\partial_1y=\frac{1}{\Gamma(-x)}\int_{-\infty}^{(+0)}e^\lambda(-\lambda)^{-x-1}
(x-\lambda)
\frac{1}{\sqrt{1-2t}} \exp\big[\lambda(1-\sqrt{1-2t})\big]\,d\lambda\\
\]and
	\[
\begin{split}
\big[(1-2X_2)\partial_2-1-X_1\big]
y& =\frac{1}{\Gamma(-x)}\int_{-\infty}^{(+0)}e^\lambda(-\lambda)^{-x}
(1-\sqrt{1-2t})
\frac{1}{\sqrt{1-2t}} \exp\big[\lambda(1-\sqrt{1-2t})\big]\,d\lambda\\
=&\frac{1}{\Gamma(-x)}\int_{-\infty}^{(+0)}e^\lambda(-\lambda)^{-x}
\frac{1}{\sqrt{1-2t}} 
d\exp\big[\lambda(1-\sqrt{1-2t})\big].
\end{split}
\]
Integration-by-parts of the above improper integral over an Hankel-type contour yields\[
\frac{1}{\Gamma(-x)}e^\lambda(-\lambda)^{-x}
\frac{1}{\sqrt{1-2t}} \exp\big[\lambda(1-\sqrt{1-2t})\big]\Big\vert_{-\infty}^{(0+)}
=0
\]so that
\[
\begin{split}
\big[(1-2X_2)\partial_2-1-X_1\big] y=&-\frac{1}{\Gamma(-x)}\int_{-\infty}^{(+0)} \frac{1}{\sqrt{1-2t}} \exp\big[\lambda(1-\sqrt{1-2t})\big]\,d(e^\lambda(-\lambda)^{-x})\\
=&-X_1\partial_1\frac{1}{\Gamma(-x)}\int_{-\infty}^{(+0)}e^\lambda(-\lambda)^{-x-1} \frac{1}{\sqrt{1-2t}} \exp\big[\lambda(1-\sqrt{1-2t})\big]\,d\lambda.
\end{split}
\]Hence
	\[
		\big[ X_1\partial_1+(1-2X_2)\partial_2-1-X_1\big]y=0
	\]as desired. 
	That is, $y(x,\, t)$ satisfies
		\begin{align}
			&(2t-1)f_{tt}(x,\, t)+3f_t(x, t)+x(x-1)f(x-2, t)=0, \label{E:delta_rev_bessel_PDE_1}\\
			&(1-2t) f_t(x,t)+(x-1)f(x, t)-2xf(x-1, t)=0.\label{E:delta_rev_bessel_PDE_2}
		\end{align}
	Hence $y(x,\, t)$ satisfies the original system of delay-differential equations \eqref{E:delta_bessel_poly_PDE}.

Let us return to the right-half of the commutative diagram above. We notice that the rightmost path of the commutative diagram \eqref{E:commute-dbessel-2}, the sum $\sum_{n=0}^\infty\theta_n^\Delta (x)\, t^n/n!$ is a (formal sum) solution to the system of delay-difference equations
		\begin{align*}
			\theta^\Delta_n(x+1)-2\theta^\Delta_n(x)+x\theta^\Delta_{n-1}(x-1)&=0, \\
		 \theta^\Delta_{n+1}(x)+(x-2n-3)\theta^\Delta_n(x)-2x\, \theta^\Delta_n(x-1)&=0
	\end{align*}
	for each $n$, which were 
	 derived in the Theorem \ref{T:difference_Bessel_Poly_DD}.

		Let us determine the region of convergence of \eqref{E:bessel_difference_poly_gf}. First we differentiate the equation \eqref{E:delta_rev_bessel_PDE_2} with respect to $t$. This yields
		\begin{equation}
			\label{E:delta_rev_bessel_PDE_4}
				(1-2t)f_{tt}(x,\, t)+(x-3)f_t(x,\, t)-2xf_t(x-1,\, t)=0.
		\end{equation}
Replace $x$ by $x-1$ in equation \eqref{E:delta_rev_bessel_PDE_2} and multiply the resulting equation by $x$ throughout yield
		\begin{equation}\label{E:delta_rev_bessel_PDE_5}
			x(1-2t)f_{t}(x-1,t) +x(x-2)f(x-1, t)-2x(x-1)f(x-2,\, t)=0.
		\end{equation}
Multiply the equation \eqref{E:delta_rev_bessel_PDE_1} throughout by $2$ and added to the \eqref{E:delta_rev_bessel_PDE_5} yield
		\begin{equation}\label{E:delta_rev_bessel_PDE_6}
			2(2t-1)f_{tt}(x,t) +6 f_t(x,t) +x(1-2t) f_t(x-1, t)+x(x-2) f(x-1, t)=0.
		\end{equation}
Multiply the equation  \eqref{E:delta_rev_bessel_PDE_4} throughout by $1-2t$ yields
		\begin{equation}\label{E:delta_rev_bessel_PDE_8}
			(1-2t)^2f_{tt}(x,t)+(1-2t)(x-3)f_t(x,t)-2(1-2t)x f_t(x-1,t)=0.
		\end{equation}
Multiply the equation \eqref{E:delta_rev_bessel_PDE_6} by $2$ and add the resulting equation to \eqref{E:delta_rev_bessel_PDE_8} yield
		\begin{equation}\label{E:delta_rev_bessel_PDE_10}
			[(1-2t)^2+4(2t-1)]f_{tt}(x,t)+[(1-2t)(x-3)+12]f_t(x,t)+2x(x-2)f(x-1,t)=0.
		\end{equation}

Now multiply the equation \eqref{E:delta_rev_bessel_PDE_2} throughout by the factor $x-2$ yields
		\begin{equation}\label{E:delta_rev_bessel_PDE_12}
			(x-2)(1-2t)f_t(x,t)+(x-2)(x-1)f(x,t)-2x(x-2)f(x-1,t)=0.
		\end{equation}

It is now clear that the result of adding \eqref{E:delta_rev_bessel_PDE_10} and \eqref{E:delta_rev_bessel_PDE_12} yields, after simplification, the equation
		\begin{equation}
			\label{E:delta_rev_bessel_PDE_14}
				(2t-1)(2t+3) f_{tt}(x,t)+[(1-2t)(2x-5)+12] f_t(x,t)+(x-1)(x-2)f(x,t)=0.
		\end{equation}
		For a fixed $x$, the above equation \eqref{E:delta_rev_bessel_PDE_14} is a second order linear ODE with two finite regular singularities and $t=\frac12$ is the one closest to the origin $t=0$ which is an ordinary point. Hence we conclude that the expansion in \eqref{E:bessel_difference_poly_gf} converges uniformly in each compact subset of $\mathbb{C}\times B(0,\, \frac12)$ as asserted.
 This completes the proof of the theorem.
\end{proof}

\subsection{Classical Bessel polynomials $y_n(x)$}
\begin{example}
Let $\mathcal{A}_{2}(\eta):=\dfrac{\mathcal{A}_2\langle\eta\rangle}{\langle\eta^2-1+2X_1X_2\rangle}$ be as defined in Theorem \ref{T:symbol gen yn-1}. Then $\mathcal{Y}(\eta)$ as defined in Theorem \ref{T:symbol gen yn-1} is a left $\mathcal{A}_{2}(\eta)$-module. Other examples of $\mathcal{A}_{2}(\eta)$-modules include $\mathcal{O}_{dd}$, $\mathcal{O}_{\Delta d}$, $\mathcal{O}^{\mathbb{N}_0}$ and so on.
\end{example}

In much the same spirit of the derivation of the generating function for the classical reverse Bessel polynomials in Theorem \ref{T:rev_bessel_poly_gf}, we have the corresponding theorems for the generating function of the classical Bessel polynomials $(y_n(x))$ \cite{Krall_Frink_1949, Grosswald}.
\medskip

\begin{theorem}\label{T:general_bessel_poly_gf}
	\begin{enumerate} 
		\item Let $(\mathscr{Y}_n)$ be a sequence of analytic functions which is a solution of the  Bessel polynomial module $\mathcal{Y}$ in $\mathcal{O}_d^{\mathbb{N}_0}$. Then there exist complex constants $C_1,\, C_2$ such that
		\begin{equation}\label{E:gen_bessel_poly_gf}
		C_1\exp\left(\frac{1-(1-2xt)^{1/2}}{x}\right)
		+C_2 \exp\left(\frac{1+(1-2xt)^{1/2}}{x}\right)
		=\sum_{n=0}^\infty
		\mathscr{Y}_{n-1}(x)\, \frac{t^n}{n!},
		\end{equation}
		which holds for  $|t|<\left|\frac{1}{2x}\right|$, with an appropriate choice of $\mathscr{Y}_{-1}$.
		\item Moreover, the holonomic system of PDEs of $\mathcal{Y}$ from the Definition \ref{D:y-1} when manifested in $\mathcal{O}_{dd}$ is given by 
	\begin{equation}\label{E:classical_bessel_poly_PDE}
	\begin{split}
	&x^2f_{xt}(x,\, t)	-xtf_{tt}(x,\, t)+f_t(x,\,t)-f(x,\,t)=0,\\
	&x^2f_x(x,\, t)-f_t(x,\, t)+xtf_t(x,\, t)+f(x,\,t)=0,
	\end{split}
	\end{equation}for which both sides of \eqref{E:gen_bessel_poly_gf} satisfy.
	\end{enumerate}
\end{theorem}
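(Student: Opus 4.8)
The plan is to follow verbatim the template of the proof of Theorem~\ref{T:rev_bessel_poly_gf}, replacing the reverse Bessel module $\Theta$ by the Bessel polynomial module $\mathcal{Y}$ and the square-root extension $\mathcal{A}_2(\rho)$ by the extension $\mathcal{A}_2(\eta)$ of Theorem~\ref{T:symbol gen yn-1}. Concretely, I would assemble the commutative square
\begin{equation*}
  \begin{tikzcd}[row sep=large, column sep=huge]
    \mathcal{Y}(\eta) \arrow{r}{\times(\mathscr{Y}_{n-1})} \arrow[swap]{d}{\times S}
    & \mathcal{O}^{\mathbb{N}_0} \arrow{d}{\mathfrak{p}}\\
    \widetilde{\mathcal{A}}_2(\eta) \arrow{r}{\times 1} & \mathcal{O}_{dd}
  \end{tikzcd}
\end{equation*}
in which $S=C_1\,\E(\tfrac{1-\eta}{X_1})+C_2\,\E(\tfrac{1+\eta}{X_1})$ is the element furnished by Theorem~\ref{T:symbol gen yn-1}, $\mathfrak{p}$ is the Poisson transformation \eqref{E:poisson}, the index shift to $\mathscr{Y}_{n-1}$ is dictated by the module structure \eqref{E:y-1}, and $\widetilde{\mathcal{A}}_2(\eta):=\overline{\mathcal{A}_2(\eta)/(\mathcal{A}_2(\eta)\partial_1+\mathcal{A}_2(\eta)\partial_2)}$. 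The image of $1$ along the top-right path is the Poisson sum $\sum_{n=0}^\infty \mathscr{Y}_{n-1}(x)\,t^n/n!$, while along the bottom-left path it is $\times1$ applied to $S$; commutativity of the square forces the two to agree up to the two free scalars $C_1,C_2$.

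I would first settle part (ii), since the PDE system is what anchors the bottom-left entry. Manifesting the two generators \eqref{E:dBessel_1st_gen} and \eqref{E:dBessel_2nd_gen} of $\mathcal{Y}$ in $\mathcal{O}_{dd}$ via the structure \eqref{E:O_dd_endow} (so $\partial_1\mapsto\partial_x$, $X_1\mapsto x\cdot$, $\partial_2\mapsto\partial_t$, $X_2\mapsto t\cdot$) turns $X_1^2\partial_1+X_1X_2\partial_2+1-\partial_2$ into $x^2f_x-f_t+xtf_t+f=0$ and $X_1^2\partial_1\partial_2-X_1X_2\partial_2^2+\partial_2-1$ into $x^2f_{xt}-xtf_{tt}+f_t-f=0$, which are precisely the two equations of \eqref{E:classical_bessel_poly_PDE}. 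Both sides of \eqref{E:gen_bessel_poly_gf} then satisfy this system: the right-hand side because it is the $\mathfrak{p}$-image of a solution of $\mathcal{Y}$, and the left-hand side by direct differentiation of the exponentials.

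For the explicit form in part (i), the only genuinely new ingredient relative to Theorem~\ref{T:rev_bessel_poly_gf} is the manifestation of the quadratic symbol. Since $\eta^2=1-2X_1X_2$ acts on $\mathcal{O}_{dd}$ as multiplication by $1-2xt$, the spectral square root is $\eta\mapsto(1-2xt)^{1/2}$, so that $\E(\tfrac{1\mp\eta}{X_1})$ manifests as $\exp\!\big(\tfrac{1\mp(1-2xt)^{1/2}}{x}\big)$, yielding exactly the left-hand side of \eqref{E:gen_bessel_poly_gf}. Because $\mathcal{Y}$ is holonomic of multiplicity two (the extension $\mathcal{Y}(\eta)$ exhibiting two independent exponential solutions, as in Theorem~\ref{T:symbol gen yn-1}), the local solution space of \eqref{E:classical_bessel_poly_PDE} is two-dimensional and is spanned by the two manifested exponentials; hence no solution is lost, and $C_1,C_2$ legitimately remain free since $(\mathscr{Y}_n)$ is an arbitrary solution of $\mathcal{Y}$.

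I expect the main obstacle to be the convergence claim together with the control of the branch of $(1-2xt)^{1/2}$ — exactly the step that is most delicate in the $\Theta$ case. To pin down the radius I would eliminate the mixed derivative: differentiating the second equation of \eqref{E:classical_bessel_poly_PDE} in $t$ and subtracting the first gives, for each fixed $x$, the second-order ODE
\[
  (2xt-1)\,f_{tt}(x,t)+x\,f_t(x,t)+f(x,t)=0,
\]
whose only finite singularity is the regular singular point $t=1/(2x)$, with $t=0$ ordinary. Standard Cauchy/Frobenius theory then gives uniform convergence on compacta of $|t|<|1/(2x)|$, matching the stated domain; on this disc one has $|2xt|<1$, so $1-2xt$ lies in the open disc of radius $1$ about $1$ and stays off the branch cut, which is the one point demanding genuine care rather than routine computation to guarantee that the square-root manifestation is single-valued and analytic there.
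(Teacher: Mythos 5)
Your proposal is correct and follows essentially the same route as the paper: the paper proves the special case (Theorem \ref{T:Bessel p_gf_1}) via the commutative diagram \eqref{E:commute-diff bess pol} with the map $\times S$ from Theorem \ref{T:symbol gen yn-1}, the Poisson transform, and the multiplicity-two holonomicity argument, completing the details in Appendix \ref{A:BP_gf_1}, where the same second-order ODE $(1-2xt)f_{tt}-xf_t-f=0$ you derive by eliminating the mixed derivative appears as \eqref{pde-1} and yields the convergence radius $|t|<|1/(2x)|$. Your manifestation of the generators \eqref{E:dBessel_1st_gen}--\eqref{E:dBessel_2nd_gen} and of $\eta\mapsto(1-2xt)^{1/2}$ matches the paper's computations exactly.
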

\medskip

\begin{theorem}\label{T:Bessel p_gf_1}
	\begin{enumerate}
		\item  The classical Bessel polynomials have the generating function
		\begin{equation}\label{E:bessel p_gf-1}
		\exp\left(\frac{1-(1-2xt)^{1/2}}{x}\right)
		=\sum_{n=0}^\infty y_{n-1}(x)\, \frac{t^n}{n!}
		\end{equation}
		which holds for $|t|<\left|\frac{1}{2x}\right|$.
		\item Moreover, the generating function from part \textrm{(i)} satisfies the PDE system \eqref{E:classical_bessel_poly_PDE}\footnote{See also the Appendix \ref{SS:PDE_list}.}.
	\end{enumerate}
\end{theorem}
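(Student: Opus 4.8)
The plan is to obtain \eqref{E:bessel p_gf-1} exactly as Theorem~\ref{T:rev_bessel_poly_gf} was obtained for the reverse Bessel polynomials, replacing the module $\Theta(\rho)$ and its characteristic by the Bessel polynomial module $\mathcal{Y}(\eta)$ of Theorem~\ref{T:symbol gen yn-1} and the Poisson transform $\mathfrak{p}$ of \eqref{E:poisson}. Concretely, I would regard the quadratic extension $\mathcal{A}_2(\eta)$ (with $\eta^2 = 1-2X_1X_2$) as acting on $\mathcal{O}_{dd}$ through the spectral prescription $\eta^2 f = (1-2xt)f$, and on $\mathcal{O}^{\mathbb{N}_0}$ through the structure induced from \eqref{E:seq-functions_poisson}, so that both $\times(y_{n-1})$ and $\mathfrak{p}$ are $\mathcal{A}_2(\eta)$-linear. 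Then I would assemble the commutative square
\[
\begin{tikzcd}[row sep=large, column sep=huge]
\mathcal{Y}(\eta) \arrow{r}{\times (y_{n-1})} \arrow[swap]{d}{\times S} & \mathcal{O}^{\mathbb{N}_0} \arrow{d}{\mathfrak{p}} \\
\widetilde{\mathcal{A}}_2(\eta) \arrow{r}{\times 1} & \mathcal{O}_{dd}
\end{tikzcd}
\]
with $\widetilde{\mathcal{A}}_2(\eta):=\overline{\mathcal{A}_2(\eta)/[\mathcal{A}_2(\eta)\partial_1+\mathcal{A}_2(\eta)\partial_2]}$ and $S=C_1\,\E\big(\frac{1-\eta}{X_1}\big)+C_2\,\E\big(\frac{1+\eta}{X_1}\big)$ the characteristic element supplied by Theorem~\ref{T:symbol gen yn-1}. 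The top--right path sends $1$ to the Poisson sum $\sum_{n\ge 0} y_{n-1}(x)\,t^n/n!$.

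Next I would evaluate the bottom--left path. Under the manifestation $\eta$ becomes $(1-2xt)^{1/2}$ and $1/X_1$ becomes $1/x$, so the Weyl exponentials $\E\big(\frac{1\mp\eta}{X_1}\big)$ map, via $\times 1$ into $\mathcal{O}_{dd}$ and using that $\E$ realizes the ordinary exponential as in Example~\ref{Eg:exp}, to $\exp\big(\frac{1\mp(1-2xt)^{1/2}}{x}\big)$. Since $\mathcal{Y}$ is holonomic of multiplicity two (checked exactly as for $\Theta$ in Theorem~\ref{T:B_poly_mod_holonomic}, its reduced presentation falling under Example~\ref{E:prime_integrable_eg_2}), the local solution space of the associated system is two-dimensional, so the two paths agree up to the displayed linear combination, giving
\[
C_1\exp\!\Big(\frac{1-(1-2xt)^{1/2}}{x}\Big) + C_2\exp\!\Big(\frac{1+(1-2xt)^{1/2}}{x}\Big) = \sum_{n=0}^\infty y_{n-1}(x)\,\frac{t^n}{n!}.
\]
Setting $t=0$ yields $C_1 + C_2 e^{2/x} = y_{-1}(x) = 1$ identically in $x$, which forces $C_2=0$ and $C_1=1$ and establishes \eqref{E:bessel p_gf-1}.

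Part (ii) is then immediate: applying the two generators \eqref{E:dBessel_2nd_gen} and \eqref{E:dBessel_1st_gen} of $\mathcal{Y}$ in the $\mathcal{O}_{dd}$ manifestation \eqref{E:O_dd_endow} turns them verbatim into the PDE system \eqref{E:classical_bessel_poly_PDE}, which the generating function therefore satisfies. The remaining point, and the one I expect to be the real work, is the convergence assertion $|t|<\left|\frac{1}{2x}\right|$. Mirroring the reverse-polynomial case, I would eliminate the cross terms between the two equations of \eqref{E:classical_bessel_poly_PDE} to produce, for each fixed $x\neq 0$, a single second-order linear ODE in $t$ whose coefficient of $f_{tt}$ vanishes precisely at $t=\frac{1}{2x}$ (the branch point $1-2xt=0$ of the square root), so that $t=0$ is an ordinary point and $t=\frac{1}{2x}$ is the nearest regular singularity. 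The main obstacle is carrying out this elimination cleanly and confirming that $\frac{1}{2x}$ is indeed the singularity closest to the origin (and accounting for the $x$-dependence of the singular set), after which uniform convergence on compacta in $|t|<\left|\frac{1}{2x}\right|$ follows from the standard estimate for a power series expanded about an ordinary point.
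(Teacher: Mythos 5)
Your proposal follows essentially the same route as the paper: the same commutative square built from $\mathcal{Y}(\eta)$, the characteristic element $S=C_1\,\E\big(\frac{1-\eta}{X_1}\big)+C_2\,\E\big(\frac{1+\eta}{X_1}\big)$ from Theorem~\ref{T:symbol gen yn-1}, the Poisson transform, holonomicity of multiplicity two to match the two paths, and the evaluation at $t=0$ forcing $C_1=1$, $C_2=0$ (the paper's Appendix~\ref{A:BP_gf_1} likewise reads off convergence from the second-order equation $(1-2xt)f_{tt}-xf_t-f=0$, whose only finite singularity is at $t=1/(2x)$). No substantive differences.
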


\begin{proof}
Recall the Poisson transform $\mathfrak{p}$ as defined in \eqref{E:poisson}. Then we have the commutative diagram
		\begin{equation}\label{E:commute-diff bess pol}
		\begin{tikzcd} [row sep=huge, column sep=huge]
		\mathcal{Y}(\eta) \arrow{r}{\times(y_{n-1}(x))} 
		\arrow[swap]{d}{\E(\frac{1-\eta}{X_1})}
		%\arrow[swap]{dr}{\mathfrak{g}_p} 
		& \mathcal{O}^{\mathbb{N}_0}_d 
		\arrow{d}{\mathfrak{p}} \\
		\widetilde{A}_2(\eta) \arrow{r}{\times 1}& \mathcal{O}_{d d}
		\end{tikzcd}
		\end{equation}
in which $\widetilde{\mathcal{A}_2}(\eta):=\overline{
			\mathcal{A}_2(\eta)/\big[\mathcal{A}_2(\eta)\partial_1+\mathcal{A}_2(\eta)\partial_2\big] }$ and the analytic functions in $\mathcal{O}_{d d}$ are suitably restricted. Due to the similarity of the proof of Theorem  \ref{T:rev_bessel_poly_gf}, the remaining of the proof will be completed in the Appendix \S\ref{A:BP_gf_1}.
			\end{proof}
\medskip

\subsection{Difference Bessel polynomials $y^\Delta_n(x)$} \label{S:detal_BP_gf_1}

\begin{theorem}\label{T:delta Bessel_gf_1} 
	\begin{enumerate}
		\item
The difference Bessel polynomials  $(y^\Delta_n(x))$ have the Poisson-type generating function
		\begin{equation}\label{E:bessel dp_gf-1}
		\displaystyle\frac{e^{-i\pi x}}{2i \sin \pi x\Gamma(-x)}\int_{-\infty}^{(0+)}
		e^{\lambda+\frac{1-\sqrt{1-2\lambda t}}{\lambda} }
		(-\lambda)^{-x-1}d\lambda
		=\sum_{n=0}^\infty y^{\Delta}_{n-1}(x)\, \frac{t^n}{n!},
		\end{equation}
		which converges uniformly in each compact subset of $\mathbb{C}\times B(0,\frac12)$.
		\item Moreover, the generating function from part $\mathrm{(i)}$ above satisfies the PDEs (delay-differential equations)\footnote{See also the Appendix \ref{SS:PDE_list}.}
		\begin{equation}\label{E:delta_classical_bessel_poly_PDE}
			\begin{split}
				&x(x-1)[f_t(x-1,\, t)-f_t(x-2, \, t)]-xtf_{tt}(x-1,\, t)+f_t(x,\, t)-f(x, \, t)=0,\\
				&x(x-1)[f(x-1,\, t)-f(x-2, \, t)]-f_{t}(x,\, t)+xtf_t(x-1,\, t)+f(x, \, t)=0.
			\end{split}
		\end{equation}
	\end{enumerate}
\end{theorem}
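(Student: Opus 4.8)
The plan is to mirror the proof of Theorem~\ref{T:bessel_poly_delta_gf}, producing the difference generating function by applying the Newton transform $\mathfrak{N}$ of Theorem~\ref{T:Newton_trans} to the classical Bessel-polynomial generating function of Theorem~\ref{T:Bessel p_gf_1}. First I would record, as the analogue of Example~\ref{Eg:BP_delta_J}, that right multiplication by the sequence $(y^\Delta_{n-1})$ gives a left $\mathcal{A}_2(\eta)$-linear map $\mathfrak{j}_\Delta\colon\mathcal{Y}(\eta)\to\mathcal{O}^{\mathbb{N}_0}_\Delta$. Then I would assemble a commutative diagram of the same shape as \eqref{E:commute-dbessel-2}, whose nodes are $\mathcal{Y}(\eta)$, the completion $\widetilde{\mathcal{A}}_2(\eta)$, the two sequence modules $\mathcal{O}^{\mathbb{N}_0}_d$ and $\mathcal{O}^{\mathbb{N}_0}_\Delta$, and the two-variable spaces $\mathcal{O}_{dd}$ and $\mathcal{O}_{\Delta d}$; the arrows are the generating map $\times\,\E\big((1-\eta)/X_1\big)$ of Theorem~\ref{T:symbol gen yn-1}, the Poisson transforms $\mathfrak{p}$ and $\mathfrak{p}_\Delta$ of \eqref{E:poisson} and \eqref{E:poisson_2}, and the Newton transform $\mathfrak{N}\colon\mathcal{O}_{dd}\to\mathcal{O}_{\Delta d}$.

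For part (i) I would trace the image of $1$ around the diagram. Because $\mathfrak{N}$ sends $x^\nu\mapsto(x)_\nu$ by \eqref{E:newton_bases_2}, applying it term-by-term to the classical expansion $\exp\big((1-(1-2xt)^{1/2})/x\big)=\sum_{n\ge0}y_{n-1}(x)\,t^n/n!$ of Theorem~\ref{T:Bessel p_gf_1} replaces each polynomial $y_{n-1}(x)$ by $y^\Delta_{n-1}(x)$, yielding the right-hand side of \eqref{E:bessel dp_gf-1}. Writing $\mathfrak{N}$ out explicitly on the function $f(\lambda)=\exp\big((1-\sqrt{1-2\lambda t})/\lambda\big)$, the factor $e^\lambda(-\lambda)^{-x-1}$ of the Newton kernel combines with $f(\lambda)$ to give exactly the integrand $e^{\lambda+(1-\sqrt{1-2\lambda t})/\lambda}(-\lambda)^{-x-1}$, so the bottom-right corner reproduces the left-hand side of \eqref{E:bessel dp_gf-1}, and commutativity gives the identity.

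For part (ii) I would observe that the two delay-differential equations are precisely the images of the two generators of $\mathcal{Y}$ in Definition~\ref{D:y-1} under the $\mathcal{O}_{\Delta d}$ manifestation of Example~\ref{Eg:O_delta_d}: substituting $\partial_1 f=f(x+1,t)-f(x,t)$ and $X_1f=xf(x-1,t)$ into $X_1^2\partial_1\partial_2-X_1X_2\partial_2^2+\partial_2-1$ and into $X_1^2\partial_1+X_1X_2\partial_2+1-\partial_2$ yields the first and second equation of part (ii) respectively. One may conclude directly from commutativity of the diagram, since the bottom-left path produces a solution of the classical system \eqref{E:classical_bessel_poly_PDE} whose image under the $\mathcal{A}$-linear $\mathfrak{N}$ solves the difference system; alternatively I would verify it by hand, differentiating the integral (call it $y(x,t)$) under the integral sign and integrating by parts along the Hankel contour, the boundary contribution $e^\lambda(-\lambda)^{-x}(1-2t)^{-1/2}\exp[\,\cdots\,]$ vanishing at both endpoints exactly as in the computation following \eqref{E:commute-dbessel-2}.

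It remains to pin down the region of convergence. Following the reduction that led to \eqref{E:delta_rev_bessel_PDE_14}, I would, for each fixed $x$, eliminate the delay terms in arguments $x-1$ and $x-2$ between the two equations of part (ii) to obtain a single second-order linear ODE in $t$, read off its finite regular singular points, and check that the one nearest the ordinary point $t=0$ is $t=\tfrac12$; this forces $\sum_n y^\Delta_{n-1}(x)\,t^n/n!$ to converge uniformly on compact subsets of $\mathbb{C}\times B(0,\tfrac12)$. The main obstacle is precisely this elimination: in contrast with the classical series, whose disc $|t|<|1/(2x)|$ shrinks with $x$, the difference series converges on an $x$-independent disc, so the dominant singularity of the reduced ODE must sit at $t=\tfrac12$ for every $x$; disentangling the mixed shifts $x,x-1,x-2$ from the $t$-derivatives to exhibit such an ODE, with an $x$-independent leading singularity, is where essentially all of the work lies.
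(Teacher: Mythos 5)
Your overall architecture is the same as the paper's: the paper's proof (commutative diagram \eqref{E:commute-dbp} plus the details in Appendix \ref{A:detal_BP_gf_1}) also runs the generating map $\times\,\E\big((1-\eta)/X_1\big)$, the Poisson transforms, and the Newton transform around the same diagram, and part (ii) is read off from the generators of $\mathcal{Y}$ exactly as you describe. There is, however, one step in your part (i) that would fail as stated. You propose to obtain \eqref{E:bessel dp_gf-1} by applying $\mathfrak{N}$ \emph{term-by-term} to the classical expansion of Theorem \ref{T:Bessel p_gf_1}. That expansion converges only for $|t|<|1/(2x)|$, a disc that shrinks to a point as $|x|\to\infty$; since the Newton transform integrates over a Hankel contour on which $\lambda\to-\infty$, for any fixed $t\neq 0$ the series $\sum_n y_{n-1}(\lambda)t^n/n!$ diverges on all but a bounded portion of the contour, so the interchange of $\sum$ and $\int$ cannot be justified by uniform convergence. (Contrast this with the Glaisher case, Theorem \ref{T:Delta_glaisher_gf}, where the region $2|t|<|\lambda|$ grows with $|\lambda|$ and the paper does perform exactly this interchange on truncated contours.) The paper instead argues that both the integral and the formal sum are solutions of the same holonomic system, whose local solution space has $\mathbb{C}$-dimension two, and then pins down the two constants by setting $t=0$; your proposal should be rerouted through that argument rather than through termwise application of $\mathfrak{N}$, and it should say explicitly how the constants $C_1=1$, $C_2=0$ are determined.

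Two smaller calibration points. First, for the convergence statement the paper's elimination does not produce a second-order ODE: after disentangling the shifts it arrives at the \emph{fourth-order} equation \eqref{diff-Bessel yn-1 eq7}, whose finite singular points are $t=0$ and $t=\tfrac12$; your instinct that the dominant singularity must sit at $t=\tfrac12$ independently of $x$ is right, but the reduction is heavier than you anticipate. Second, the paper's direct verification of part (ii) is carried out against the isomorphic presentation of $\mathcal{Y}$ from Theorem \ref{T: iso bp-n-1} (using the three-term-recursion generator $(1-2X_1X_2)\partial_2^2-X_1\partial_2-1$), which shortens the integration-by-parts computation; your plan to verify against the original generators of Definition \ref{D:y-1} is correct but will be longer.
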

\medskip

\begin{proof} Recall the maps $\mathfrak{p}$ and $\mathfrak{p}_\Delta$ as defined in \eqref{E:poisson} and \eqref{E:poisson_2} respectively. Then we have the diagram 
		\begin{equation}\label{E:commute-dbp}
			\begin{tikzcd}  [row sep=large, column sep=large] %[row sep=huge, column sep=huge]
			& \mathcal{Y}(\eta) \arrow[swap]{dl}{\E(\frac{1-\eta}{X_1})} \arrow{d}{\times(y_{n-1}(x))} \arrow{dr}{\times(y^\Delta_{n-1}(x))} \\
			\widetilde{\mathcal{A}}_2(\eta) \arrow{dr}{\times 1} & \mathcal{O}^{\mathbb{N}_0} \arrow{d} {\mathfrak{p}} 	&   \mathcal{O}^{\mathbb{N}_0} \arrow{d}{\mathfrak{p}_\Delta} \\
			& \mathcal{O}_{d d} \arrow{r}{\times\mathfrak{N}} &  \mathcal{O}_{\Delta d} 
			\end{tikzcd}
			\end{equation}
		in which $\widetilde{\mathcal{A}}_2(\eta):=\overline{\mathcal{A}_2/\big[\mathcal{A}_2(\eta)\partial_1+\mathcal{A}_2(\eta)\partial_2\big]}$ and the analytic functions in $\mathcal{O}_{\Delta d}$ have suitably restricted growth.	The remaining of the proof, being similar to that of the Theorem \ref{T:bessel_poly_delta_gf}, will be completed in the Appendix \S \ref{A:detal_BP_gf_1}.
\end{proof}

\section{Half-Bessel modules II: Glaisher's generating functions}\label{S:half_bessel_II}

In a number of publications between the years 1873-1881 (see e.g., \cite{Glaisher1881}), the derivations of trigonometric-type generating functions for Bessel functions 
	\[
			\begin{split}
		&\sqrt{\frac{2}{\pi x}} \cos\sqrt{x^2-2xt}
		=\sum_{n=0}^\infty J_{n-\frac{1}{2}}(x)\, \frac{t^n}{n!},\\
		&\sqrt{\frac{2}{\pi x}} \sin\sqrt{x^2+2xt}
		=\sum_{n=0}^\infty J_{-n+\frac{1}{2}}(x)\, \frac{t^n}{n!}
		\end{split}
	\] obtained by Glaisher are known to be ``formidable" to compute as noted by Watson \cite[p. 144]{Watson1944}. We shall show in this section that how a suitable ``characteristic change of variables" from the generators of the Bessel module of order $n+\frac12$ in Definition \ref{D:bessel_mod} can simplify Glaisher\rq{}s computation \cite[pp. 774-781]{Glaisher1881}. 
Moreover, in addition to recovering and extending Glaisher's results for classical Bessel functions, we also derive analogous generating functions for the difference Bessel functions in this section. 
Analogous generating functions for $q$-Bessel functions will be investigated in a separate publication. On the other hand, Glaisher's results have been superseded by Lommel's results from a different perspective, see also \cite[p. 144]{Watson1944}. Weisner \cite{Weisner_1959} re-derived Lommel's results with Lie-algebraic method, whose idea  can also be found in \cite{Mcbride}. We comment that no explicit Lie algebraic  method has been employed in our approach.
	
\subsection{Characteristics of Glaisher modules}
\subsubsection{Negative Glaisher module}

	The characteristic change of variables we shall apply are
	\begin{equation}\label{E:glaisher_char_1}
		\begin{array}{ll}
			\Xi_1=\partial_1, & W_1=X_1\\
	 		\Xi_2={1}/{X_2}, & W_2=X_2\partial_2 X_2.
		\end{array}
	\end{equation}
%	\textcolor{red}{
%		\begin{equation}\label{E:glaisher_char_1}
%		\begin{array}{ll}
%			\Xi_1=\pm \partial_1, & W_1=\pm X_1\\
%	 		\Xi_2={1}/{X_2}, & W_2=X_2\partial_2 X_2.
%		\end{array}
%	\end{equation}
%}
	Then one can easily verify that
	\[
		[\pm \Xi_{1}, \pm W_{1}]=1, \quad [\Xi_{2}, W_{2}]=1,
		\quad
		[\Xi_{1}, \Xi_{2}]=0,\quad [W_1, W_{2}]=0,
		\quad [\Xi_i,\, W_j]=0,\ i\not=j
	\]hold.
	
	Substitute the above characteristic change of variables into \eqref{E:bessel_PDE} and choose  
	$\nu=-\frac{1}{2}$  yield
%	\begin{equation}%\label{E:PDEs_neg_glaisher}
	\begin{align}
		&W_1 \Xi_1+(W_2\Xi_2-\frac{1}{2})-W_1\Xi_2^{-1},\label{E:PDEs_neg_glaisher_1}\\
	&W_1 \Xi_1-(W_2 \Xi_2-\frac{1}{2})+W_1 \Xi_2.\label{E:PDEs_neg_glaisher_2}
	\end{align}
%	\end{equation}
%	\textcolor{red}{
%		\begin{equation}\label{E:PDEs_half_bessel}
%		\begin{split}
%			&W_1 \Xi_1+(W_2\Xi_2\pm\frac{1}{2})\pm W_1\Xi_2^{-1},\\
%			&W_1 \Xi_1-(W_2 \Xi_2\pm\frac{1}{2})\mp W_1 \Xi_2.
%		\end{split}
%		\end{equation}
%	}
\medskip

\begin{definition}\label{D:neg_glaisher_module}
		Let $W_i$ and $\Xi_i$ be defined as in \eqref{E:glaisher_char_1}. The
		\begin{equation}\label{E:neg_glaisher_module}
			\mathcal{G}_{-}=
		\dfrac{\mathcal{A}_{2}}
		{\mathcal{A}_{2}
	\big(W_1 \Xi_1+(W_2\Xi_2-\frac{1}{2})- W_1\Xi_2^{-1}\big)
			+\mathcal{A}_{2}\big(W_1 \Xi_1-(W_2 \Xi_2-\frac{1}{2})+ W_1 \Xi_2\big)}
		\end{equation}
		is called the \textit{negative Glaisher module}.
\end{definition}
\medskip

\begin{theorem}[\textbf{(Integrability)}]\label{T:neg_glaisher_holonomic} 
The negative Glaisher module $\mathcal{G}_{-}$ is a left $\mathcal{A}_2$-module that has dimension two and multiplicity two. In particular, it is holonomic.
%The negative Glaisher module is an holonomic $\mathcal{A}_2$-module. In particular, the space of all left $\mathcal{A}_2$-linear maps $\mathcal{G}_{-}\to\mathcal{O}_2$ has  $\mathbb{C}$-dimension two and multiplicity two.
\end{theorem}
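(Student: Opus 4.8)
The plan is to mirror the proof of Theorem~\ref{T:B_poly_mod_holonomic}, since $\mathcal{G}_-$ is presented exactly like $\Theta$: a quotient of a Weyl algebra by two generators, one of which will turn out to be second order in a single ``momentum'' variable, which is the source of multiplicity two. First I would record that the symbols $\{\Xi_1,\Xi_2,W_1,W_2\}$ introduced in \eqref{E:glaisher_char_1} satisfy the canonical commutation relations $[\Xi_i,W_j]=\delta_{ij}$ and $[\Xi_i,\Xi_j]=[W_i,W_j]=0$, so they generate a copy of $\mathcal{A}_2$, and I would compute the Hilbert polynomial of $\mathcal{G}_-$ with respect to the Bernstein filtration attached to these generators. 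It is precisely because this filtration differs from the one in the original coordinates $X_i,\partial_i$ (note that $W_2=X_2\partial_2X_2$ is of higher order there) that $\mathcal{G}_-$ can carry multiplicity two, even though written in the $X_i,\partial_i$ variables its two generators formally coincide with those of $\mathcal{B}_{-1/2}$.

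Next I would reduce the defining pair to a more convenient one. Writing $g_1,g_2$ for the generators appearing in \eqref{E:neg_glaisher_module}, their difference $g_1-g_2=2W_2\Xi_2-1-W_1(\Xi_2+\Xi_2^{-1})$ is free of $\Xi_1$; left-multiplying by $\Xi_2$ and using $\Xi_2W_2=W_2\Xi_2+1$ together with $\Xi_2\Xi_2^{-1}=1$ clears the inverse and produces the genuine second-order element
\[
	e_2:=(2W_2-W_1)\Xi_2^2+\Xi_2-W_1 .
\]
Because $\Xi_2$ is invertible in the localized algebra, $\mathcal{A}_2 e_2=\mathcal{A}_2(g_1-g_2)$, so I may replace the defining pair by $\{g_2,e_2\}$, where $g_2=W_1\Xi_1-W_2\Xi_2+\tfrac12+W_1\Xi_2$ is already free of $\Xi_2^{-1}$ and is first order in $\Xi_1$.

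Then I would read off the associated graded module exactly as in Theorem~\ref{T:B_poly_mod_holonomic}. Passing, over $\mathcal{A}_2(1/W_1)$, to unit leading coefficients, the generator $g_2$ lets one express $\Xi_1$ in terms of $\Xi_2,W_1,W_2$ and lower-order terms, while $e_2$ reduces every occurrence of $\Xi_2^2$ to order at most one in $\Xi_2$. Hence $\Gamma^k\mathcal{G}_-$ is spanned by the monomials $\{W_1^aW_2^b:a+b\le k\}$ together with $\{W_1^{a}W_2^{b}\Xi_2:a+b\le k-1\}$, which gives
\[
	\dim_{\mathbb{C}}\Gamma^k\mathcal{G}_-=\binom{k+2}{2}+\binom{k+1}{2}=(k+1)^2 .
\]
This is the Hilbert polynomial of Example~\ref{E:prime_integrable_eg_2}, so $\mathcal{G}_-$ has dimension two and multiplicity two and is therefore holonomic.

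The main obstacle I anticipate is bookkeeping that is genuinely delicate rather than routine: justifying the generator replacements across the two localizations (adjoining $\Xi_2^{-1}$ and $1/W_1$) and, above all, checking that $\{g_2,e_2\}$ introduces no relations beyond those visible from their leading symbols, i.e.\ that the leading-symbol ideal really cuts out the commutative quotient with Hilbert function $(k+1)^2$, so that the spanning set above is in fact a basis. As in the cited examples this reduces to verifying that no $S$-polynomial of the two leading symbols yields an unexpected lower-order syzygy; once that is confirmed, the dimension and multiplicity count is immediate.
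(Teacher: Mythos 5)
Your proposal is correct in substance, but it follows a genuinely different route from the proof the paper actually writes down. The paper's argument for Theorem~\ref{T:neg_glaisher_holonomic} performs the substitution $\widetilde{W}_2=\Xi_2$, $\widetilde{\Xi}_2=-W_2$, after which adding and subtracting the two generators produces a pair of \emph{first-order} elements, $\Xi_1+\tfrac12\big(\widetilde{W}_2-1/\widetilde{W}_2\big)$ and $\widetilde{\Xi}_2+\tfrac12\big(1/\widetilde{W}_2+W_1+W_1/\widetilde{W}_2^{\,2}\big)$, satisfying the integrability condition of Example~\ref{EG:integrable}; this yields holonomicity cheaply, but---as the paper itself concedes---it does not control the multiplicity, since the change of variables need not respect the Bernstein filtration. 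The paper then merely asserts that dimension two and multiplicity two can be obtained ``directly'' by the method of Theorem~\ref{T:B_poly_mod_holonomic} and skips the details. Your proposal is precisely that skipped argument: your reduction of $g_1-g_2$ to the second-order element $e_2=(2W_2-W_1)\Xi_2^2+\Xi_2-W_1$ is computed correctly, the elimination of $\Xi_1$ via $g_2$ over $\mathcal{A}_2(1/W_1)$ parallels the $\Theta$ case, and the monomial count $\binom{k+2}{2}+\binom{k+1}{2}=(k+1)^2$ matches Example~\ref{E:prime_integrable_eg_2}. What your route buys is an actual proof of the multiplicity statement, which the paper's written argument does not supply; what the paper's route buys is a quick, essentially computation-free verification of holonomicity. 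The residual issue you flag---that the spanning set only bounds $\dim_{\mathbb{C}}\Gamma^k\mathcal{G}_-$ from above until one rules out extra syzygies among the leading symbols (the lower bound on the dimension itself coming from Bernstein's inequality, Theorem~\ref{T:bernstein})---is genuine, but it is exactly the same gap left open in the paper's own proof of Theorem~\ref{T:B_poly_mod_holonomic}, so your argument is not below the paper's own standard of rigor on this point.
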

\medskip
\begin{proof} Let us effect the following ``change of variables"
	\[
		\widetilde{W}_2=\Xi_2,\quad
		\widetilde{\Xi}_2=-W_2
	\]
	in the two generators of the negative Glaisher module \eqref{E:neg_glaisher_module}. Note that $[\widetilde{\Xi}_2,\, \widetilde{W}_2]=1$.
	This yields
		\[
	\begin{split}
		&W_1 \Xi_1+(\widetilde{W}_2\widetilde{\Xi}_2-\frac{1}{2})-W_1\widetilde{W}_2^{-1},\\
	&W_1 \Xi_1-(\widetilde{W}_2\widetilde{\Xi}_2-\frac{1}{2})+W_1 \widetilde{W}_2.
	\end{split}
	\]
Subtracting and adding the above two relations yield
	\[
		\begin{split}
			& \widetilde{\Xi}_2+\frac12\Big(\frac{1}{\widetilde{W}_2}+W_1+\frac{W_1}{\widetilde{W}_2^2}\Big)\\
			& \Xi_1+\frac12\Big(\widetilde{W}_2-\frac{1}{\widetilde{W}_2}\Big).
		\end{split}
	\]One immediately verifies that the two relations satisfy the \textit{integrability condition} discussed from Example \ref{EG:integrable}.
	
	A disadvantage of the above argument is although the ``change of variables" method preserves holonomicity, there is no guarantee that the multiplicity of the negative Glaisher's module remains unchanged. Indeed, we could follow the argument used in the proof of Theorem \ref{T:B_poly_mod_holonomic} to conclude directly that the dimension and multiplicity of the module are both two. We skip the details.
\end{proof} 
\medskip

We immediately obtain from the negative Glaisher module \eqref{E:neg_glaisher_module}
the following surjection.

	\begin{proposition}
		The natural map
		\[
		\dfrac{\mathcal{A}_2}
		{\mathcal{A}_2
			[(W_1\Xi_1)^2+W_1^2-(W_2 \Xi_2
			-\frac{1}{2})^2]}
		\rightarrow \mathcal{B}_{-\frac{1}{2}}
		\]
		is a well-defined $\mathcal{A}_2$-linear surjection. 
	\end{proposition}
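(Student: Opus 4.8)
The plan is to read this proposition as the negative-Glaisher avatar of the Bessel ODE module from Proposition \ref{P:bessel_ode_mod}, specialised to $\nu=-\tfrac12$. The first and decisive observation is that the characteristic substitution \eqref{E:glaisher_char_1} turns the displayed second-order element back into the ordinary half-order Bessel operator. Indeed $W_1\Xi_1=X_1\partial_1$, $W_1=X_1$, and, using $X_2\cdot(1/X_2)=1$,
\[
	W_2\Xi_2=(X_2\partial_2 X_2)\tfrac{1}{X_2}=X_2\partial_2,
\]
so that $(W_1\Xi_1)^2+W_1^2-(W_2\Xi_2-\tfrac12)^2=(X_1\partial_1)^2+X_1^2-(X_2\partial_2-\tfrac12)^2$, which is exactly the element appearing in Proposition \ref{P:bessel_ode_mod} at $\nu=-\tfrac12$. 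Likewise, since $\Xi_2^{-1}=X_2$ and $\Xi_2=1/X_2$, the two negative-Glaisher generators \eqref{E:PDEs_neg_glaisher_1} and \eqref{E:PDEs_neg_glaisher_2} coincide, as elements of the ambient algebra, with the two generators of $\mathcal{B}_{-1/2}$ in \eqref{E:bessel_PDE}; in other words $\mathcal{G}_-=\mathcal{B}_{-1/2}$ and their defining left ideals agree.

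With these identifications in place, the proposition reduces verbatim to Proposition \ref{P:bessel_ode_mod} with $\nu=-\tfrac12$: the computation carried out there exhibits $(X_1\partial_1)^2+X_1^2-(\nu+X_2\partial_2)^2$ as a left-$\mathcal{A}_2$-combination of the two Bessel generators, hence this element lies in the negative-Glaisher ideal $J$. A canonical map $\mathcal{A}_2/\mathcal{A}_2 L\to\mathcal{A}_2/J$ induced by the identity on $\mathcal{A}_2$ is well defined exactly when $\mathcal{A}_2 L\subseteq J$, which is what we have just checked, and it is automatically surjective since it is the canonical projection carrying the class of $1$ to the class of $1$, a generator of the cyclic module on the right. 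If one prefers a self-contained derivation that never leaves the Glaisher variables, one simply transcribes the five-line reduction of Proposition \ref{P:bessel_ode_mod} with $(W_i,\Xi_i)$ in place of $(X_i,\partial_i)$ and $\nu=-\tfrac12$; this is legitimate because, as recorded just before Definition \ref{D:neg_glaisher_module}, the quadruple $\{\Xi_1,\Xi_2,W_1,W_2\}$ obeys the Weyl relations, so every manipulation in that proof -- in particular the commutations $\Xi_i W_i=W_i\Xi_i+1$ -- remains valid.

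The step demanding the most attention -- really the only place anything can go wrong -- is the verification that $W_2\Xi_2=X_2\partial_2$ and that, consequently, the two Glaisher generators are literally the $\mathcal{B}_{-1/2}$ generators rather than merely analogues of them; this hinges on the telescoping $X_2\partial_2 X_2\cdot X_2^{-1}=X_2\partial_2$ and on reading $\Xi_2^{\pm1}$ correctly as $X_2^{\mp1}$. Once this bookkeeping is settled everything else is forced, and the surjectivity is formal. I do not anticipate any analytic or combinatorial difficulty here: the content is entirely the transcription of an already-established algebraic identity into the characteristic coordinates, together with the holonomicity already secured in Theorem \ref{T:neg_glaisher_holonomic}.
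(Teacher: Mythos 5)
Your proof is correct and is precisely the ``routine verification'' the paper omits: under the characteristic substitution \eqref{E:glaisher_char_1} one has $W_1\Xi_1=X_1\partial_1$, $W_2\Xi_2=X_2\partial_2 X_2\cdot X_2^{-1}=X_2\partial_2$, $W_1\Xi_2^{-1}=X_1X_2$ and $W_1\Xi_2=X_1/X_2$, so the negative-Glaisher left ideal coincides with the defining ideal of $\mathcal{B}_{-1/2}$ and the displayed quadratic element is exactly the one already shown to lie in that ideal in Proposition \ref{P:bessel_ode_mod} with $\nu=-\tfrac12$, after which well-definedness and surjectivity are formal. (Only your optional aside is slightly loose: a literal substitution of $(W_i,\Xi_i)$ for $(X_i,\partial_i)$ does not reproduce the Glaisher generators, since the correct dictionary in the second variable is $X_2\mapsto\Xi_2^{-1}$ and $1/X_2\mapsto\Xi_2$; this does not affect your main argument.)
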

\medskip

We omit its routine verification. 
\medskip

\begin{theorem}\label{T:neg_glaisher_cosine}
		Let $\mathcal{G}_{-}$ be the negative Glaisher module. Set $\omega_-^2=W_1^2- 2W_1W_2$ and
		\[
		\mathcal{A}_{2}(\omega_-)=\langle  \Xi_1, \Xi_2, 
		 W_1, W_2,  \omega_-^{\pm 1}\rangle.
		\]
		Then the
			\[
				S=C_1\Cos \omega_-+ C_2\Sin \omega_-
			\] for each choice of  complex constants $C_1, C_2$, where $\Sin$ and $\Cos$ are the Weyl sine and the Weyl cosine defined in \eqref{E:sine-map} and \eqref{E:cosine-map}, is an element of $\mathcal{A}_2(\omega_-)$, such that the map
		\[
			\mathcal{G}_{-}
		\xrightarrow 
		{\times S}
		\overline{\mathcal{A}_{2}(\omega_-)/[\mathcal{A}_{2}(\omega_-)
			(W_1\Xi_1+\frac{1}{2})
			+\mathcal{A}_{2}(\omega_-)\Xi_2]}
		\]
is well-defined and left $\mathcal{A}_2$-linear.
	\end{theorem}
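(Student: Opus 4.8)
The plan is to mirror the proof of Theorem~\ref{T:bessel_poly_gf_map}, replacing the exponential ansatz by the Weyl trigonometric series. The guiding principle is that the second-order operator $(W_1\Xi_1)^2+W_1^2-(W_2\Xi_2-\tfrac12)^2$ of the surjection recorded just before the theorem should, after a characteristic change of variables, become a unit multiple of $\partial_\omega^2+1$, whose Weyl solutions are exactly $\Cos\omega_-$ and $\Sin\omega_-$ by Example~\ref{Eg:trigo}. First I would construct the conjugate of $\omega_-$: since $\omega_-^2=W_1^2-2W_1W_2$ and $[\Xi_2,W_2]=1$ with $\Xi_2$ commuting with $W_1$, the operator $\Xi_2$ acts as $\partial/\partial W_2$, giving $[\Xi_2,\omega_-]=-W_1/\omega_-$; hence $\partial_\omega:=-(\omega_-/W_1)\Xi_2$ satisfies $[\partial_\omega,\omega_-]=1$ inside $\mathcal{A}_2(\omega_-)$. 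I would then complete this to two commuting Heisenberg pairs by taking $Z_1=W_1$, which commutes with $\omega_-$ since $[W_1,\omega_-^2]=0$, together with a corrected first-direction operator $\Lambda_1=\Xi_1+c(W_1,W_2)\,\Xi_2$, the coefficient $c$ being chosen precisely so that $\Lambda_1$ commutes with both $\omega_-$ and $\partial_\omega$ while retaining $[\Lambda_1,W_1]=1$.

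In these variables the second-order operator is, up to left multiplication by a power of $\omega_-$ playing the role that $Z_2$ played in Theorem~\ref{T:bessel_poly_gf_map}, a unit times $\partial_\omega^2+1$. The decisive point is the sign: whereas the reverse Bessel polynomial module factored as $\Lambda_2^2-Z_1^2=(\Lambda_2+Z_1)(\Lambda_2-Z_1)$ and produced the exponentials $\E(\pm Z_1Z_2)$, the positive sign in front of $W_1^2$ here forces the sum of squares $\partial_\omega^2+1$, whose Weyl series solutions in $\overline{\mathcal{A}_2(\omega_-)/\mathcal{A}_2(\omega_-)\partial_\omega}$ are the Weyl cosine and sine evaluated at $\omega_-$. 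This yields the two-dimensional solution space spanned by $\Cos\omega_-$ and $\Sin\omega_-$, consistent with the multiplicity two of $\mathcal{G}_-$ from Theorem~\ref{T:neg_glaisher_holonomic}. I note that no explicit $W_1^{-1/2}$ prefactor is needed in $S$, unlike the $\rho^{-1}$ of Theorem~\ref{T:bessel_poly_gf_map}, because the half-integer Frobenius exponent is already absorbed into the target ideal through its generator $W_1\Xi_1+\tfrac12$.

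It then remains to pin down the solutions against the remaining first-order generator of $\mathcal{G}_-$. Concretely, I would verify that this generator annihilates each of $\Cos\omega_-$ and $\Sin\omega_-$ modulo the target ideal $\mathcal{A}_2(\omega_-)(W_1\Xi_1+\tfrac12)+\mathcal{A}_2(\omega_-)\Xi_2$, using the product rules describing how $\Xi_1,\Xi_2$ and $\partial_\omega$ act on the Weyl trigonometric series and then reducing modulo $W_1\Xi_1+\tfrac12$ and $\Xi_2$, exactly as the coefficients $F$ and $G$ were determined in Theorem~\ref{T:bessel_poly_gf_map}. Since every step is $\mathbb{C}$-linear in $S$, the conclusion passes to the general combination $C_1\Cos\omega_-+C_2\Sin\omega_-$, giving the asserted left $\mathcal{A}_2$-linear map.

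The hard part will be the construction and bookkeeping of the characteristic variables. Unlike the reverse Bessel case, where $\rho$ depended on $X_2$ alone, here $\omega_-$ genuinely mixes $W_1$ and $W_2$, so $\Xi_1$ fails to commute with $\omega_-$ and must be corrected by an $\Xi_2$-term; checking that the corrected operators still satisfy the standard commutation relations, that the $\omega_-$-dependent coefficients are ordered correctly, and that the lower-order remainders of the second-order operator are genuinely absorbed by the $\omega_-$-multiplication, is where the routine-but-delicate commutator algebra concentrates.
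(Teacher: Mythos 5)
Your proposal follows essentially the same route as the paper's proof: you identify the conjugate momentum $\Xi=-(\omega_-/W_1)\Xi_2$ from $[\Xi_2,\omega_-]=-W_1/\omega_-$, correct $\Xi_1$ by an $\Xi_2$-term (the paper's $\aleph=\Xi_1+(1-W_2/W_1)\Xi_2$) to obtain two commuting Heisenberg pairs, reduce the combined generators to $W_1^2(\Xi^2+1)$, invoke the Weyl sine and cosine, and use the generator $W\aleph+\tfrac12$ to force the coefficient functions to be constants. The only cosmetic difference is that the paper clears denominators by left-multiplying by $\Xi_2$ and right-multiplying by $W_1$ rather than by a power of $\omega_-$, which does not change the argument.
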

\medskip
\medskip

\begin{proof} Suppose 
			\[
				\delta_1=\Xi_1\, \omega_--\omega_-\,\Xi_1,
			\qquad
				\delta_2=\Xi_2\, \omega_--\omega_-\, \Xi_2
			\]
			and 
			\begin{equation}\label{E:glaisher_char_3}
				\delta_1\, \omega=\omega_-\,\delta_1,\quad  \omega_-\delta_2=\delta_2\omega_-,
				\quad [\omega_-,\,  W_1]=0,\quad [\omega_-,\, W_2]=0.
			\end{equation}			
		Then it is easy to compute
		\[
		\Xi_1\, \omega_-^2-\omega_-^2\,\Xi_1=2(W_1-W_2), \quad
		\Xi_2\, \omega_-^2-\omega_-^2\, \Xi_2=-2W_1.
		\]
Hence
		\begin{equation}\label{E:glaisher_char_4}
		\delta_1=	\Xi_1\, \omega_- -\omega_-\, \Xi_1=\frac{W_1-W_2}{\omega_-},
			\quad 
			\delta_2=\Xi_2\, \omega_--\omega_-\, \Xi_2=-\frac{W_1}{\omega_-}.
		\end{equation}
	Subtracting the two elements   \eqref{E:PDEs_neg_glaisher_1} and  \eqref{E:PDEs_neg_glaisher_2} yields the expression
		\[
			-2(W_2\Xi_2-\frac12\big)+W_1\Xi_2+W_2/\Xi_2.
		\]Left multiplication to the above expression throughout by $\Xi_2$ on the left yields
		\[
			\Xi_2[(W_1-2W_2)\Xi_2+1+W_1/\Xi_2]=(W_1-2W_2)\Xi_2^2-\Xi_2+W_1.
		\]
		Right multiplication of the above expression by $W_1$ throughout yields
		\[
			W_1(W_1-2W_2)\Xi_2^2-W_1\Xi_2+W_1^2=\omega^2\, \Xi_2^2-W_1\Xi_2+W_1^2.
		\]
		Substituting the second expression from \eqref{E:glaisher_char_4} into the above expression and utilising the \eqref{E:glaisher_char_3} lead to
		\[
			(\omega_-\, \Xi_2)^2+W_1^2
			=W_1^2 \big[\big(\frac{\omega_-\, \Xi_2}{W_1}\big)^2+1\big]=W_1^2 (\Xi^2+1).
		\]Let us denote
			\begin{equation}\label{E:PDEs_neg_glaisher_3} 
				\Xi=-\frac{\omega_-\,  \Xi_2}{W_1}.
			\end{equation}It follows from the second expression of \eqref{E:glaisher_char_4} that
			\[
				[\Xi,\,  \omega_-]=1.
			\]
Therefore, it follows from the above commutator and \eqref{E:PDEs_neg_glaisher_2} that it is appropriate to set
		\begin{align}
			\aleph: &= \Xi_1+\Big(1-\frac{W_2}{W_1}\Big)\Xi_2,\label{E:PDEs_neg_glaisher_4} \\
			W: &= W_1.\label{E:PDEs_neg_glaisher_5} 
		\end{align} Then it is easy to check that
		\[ 
			[\aleph, W]=1,\quad
			[\Xi,\,  \omega_-]=1,\quad
			[\aleph, \Xi]=0,\quad
			[\aleph, \omega_-]=0,\quad
			[\Xi, W]=0,\quad
			[\omega_-, W]=0
		\]hold. Hence we may consider
		\[
			\widehat{\mathcal{A}}_2=\mathbb{C}\langle \aleph , \Xi, W, \omega_-\rangle,
		\]
and to rewrite the negative Glaisher module \eqref{E:neg_glaisher_module} in the form with the above ``change of variables\rq\rq{}:
		\[
			\widehat{\mathcal{G}}_- :=\frac{\widehat{\mathcal{A}}_2}
			{\widehat{\mathcal{A}}_2(W\aleph+\frac12)+\widehat{\mathcal{A}}_2 (\Xi^2+1)}.
		\]
	We observe from Example \ref{E:prime_integrable_eg_2} that the $\widehat{\mathcal{G}}_-$ is a holonomic module. In particular, its dimension and multiplicity are both equal to $2$. On the other hand, it follows from Example \ref{Eg:trigo} that there exists a left $\widehat{\mathcal{A}_2}$-linear map
	\[
			 \widehat{\mathcal{A}}_2/
			 \widehat{\mathcal{A}}_2 (\Xi^2+1) \ \xrightarrow {\times S}\
			\overline{ \widehat{\mathcal{A}}_2/
			 \widehat{\mathcal{A}}_2 \Xi}
	\]
	 of the form
			\[
				S= C_1\Cos (\omega_-)+C_2\Sin (\omega_-)
			\]for some complex numbers  $C_1,\, C_2$.

			Hence a solution map $S$ for
		\[
			\widehat{\mathcal{G}}_- 
			\ \xrightarrow {\times S}\
				\overline{\widehat{\mathcal{A}}_2/\big[
			\widehat{\mathcal{A}}_2(W\aleph+\frac12) +\widehat{\mathcal{A}}_2\Xi\big]}
		\]must assume the form
		\[
			S=f_1(W)\Cos (\omega_-)+f_2(W)\Sin (\omega_-),
		\]where the maps $f_1, f_2$ depends on $W$ only. 
		
		Without loss of generality, we may consider,  $f_1$ only. As a result,
	\[
	\begin{split}
	0=&\big(W\aleph+\dfrac12)f_1(W)\,\mathrm{Cos} \,\omega_-
	\ \mod \widehat{\mathcal{A}_2}(W\aleph+\dfrac12)\\
	=&\mathrm{Cos} \,\omega_-(W\aleph+\dfrac12\big)f_1(W)
	\ \mod  \widehat{\mathcal{A}_2}(W\aleph+\dfrac12)\\
	=&W\aleph f_1(W),
	\end{split}
	\]
	which implies $\aleph f_1(W)=0$, Hence  $f_1(W)$ reduces to a constant. Similarly $f_2(W)$ also reduces to a constant.
	
	Hence it follows from \eqref{E:PDEs_neg_glaisher_3}, \eqref{E:PDEs_neg_glaisher_4} and \eqref{E:PDEs_neg_glaisher_5} that we have derived the asserted map $S$
	\[
	\mathcal{G}_-(\omega_-)
	\xrightarrow 
	{\times (C_1\,\mathrm{Cos} \,\omega_-+ C_2\,\mathrm{Sin} \,\omega_-)}
	\overline{\mathcal{A}_2(\omega_-)/
		[\mathcal{A}_2(\omega_-)(W_1\Xi_1+1/2)+
		\mathcal{A}_2(\omega_-)\Xi_2}].
	\]This completes the proof.
	\end{proof}
\medskip

\subsubsection{Positive Glaisher module}

	The characteristic change of variables we shall apply are specific sign changes of  those of \eqref{E:glaisher_char_1}
	\begin{equation}\label{E:glaisher_char_2}
		\begin{array}{ll}
			\Xi_1=-\partial_1, & W_1=-X_1,\\
	 		\Xi_2={1}/{X_2}, & W_2=X_2\partial_2 X_2
		\end{array}
	\end{equation}with $\nu=\frac12$ 
	in  \eqref{E:bessel_PDE} and choose  
			\begin{equation}\label{E:PDEs+half_bessel}
	\begin{split}
		&W_1 \Xi_1+(W_2\Xi_2-\frac{1}{2})+W_1\Xi_2^{-1},\\
	&W_1 \Xi_1-(W_2 \Xi_2-\frac{1}{2})-W_1 \Xi_2.
	\end{split}
	\end{equation}

\begin{definition}\label{D:pos_glaisher_module} Let $W_i$ and $\Xi_i$ be defined as in \eqref{E:glaisher_char_2}. Then
	\begin{equation}\label{E:pos_glaisher_module}
		\mathcal{G}_{+}=
		\dfrac{\mathcal{A}_{2}}
		{\mathcal{A}_{2}
	\big(W_1 \Xi_1+(W_2\Xi_2-\frac{1}{2})+W_1\Xi_2^{-1}\big)
			+\mathcal{A}_{2}\big(W_1 \Xi_1-(W_2 \Xi_2-\frac{1}{2})-W_1 \Xi_2\big)}
		\end{equation}
		a \textit{positive  Glaisher module}.
\end{definition}
\medskip

We obviously have the following companion theorem for the negative Glaisher module.

\begin{theorem}\label{T:pos_glaisher_holonomic} The positive Glaisher module  $\mathcal{G}_{+}$ is  a left $\mathcal{A}_2$-module that has dimension two and multiplicity two. In particular, it is holonomic.
\end{theorem}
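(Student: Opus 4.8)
The positive Glaisher module $\mathcal{G}_+$ is the exact companion of the negative one, so the plan is to reduce the statement to Theorem \ref{T:neg_glaisher_holonomic}. I would first record that the positive substitution \eqref{E:glaisher_char_2} differs from the negative substitution \eqref{E:glaisher_char_1} only in the signs of $\Xi_1$ and $W_1$, while the signs on the terms $W_1\Xi_2^{-1}$, $W_1\Xi_2$ in the two defining generators are flipped to compensate. In particular the Weyl relations $[\Xi_1,W_1]=[\Xi_2,W_2]=1$ with vanishing cross-commutators hold exactly as in the display following \eqref{E:glaisher_char_1} (the $\pm$ there already covers the positive choice), so $\widehat{\mathcal{A}}_2:=\mathbb{C}\langle\Xi_1,\Xi_2,W_1,W_2\rangle$ is again a Weyl algebra, and its Bernstein filtration — the one adapted to the characteristic variables, in which each $\Xi_i,W_i$ has degree one — is the filtration with respect to which dimension and multiplicity are to be read off, exactly as in the proof of Theorem \ref{T:neg_glaisher_holonomic}.

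With this in hand the argument of Theorem \ref{T:neg_glaisher_holonomic} transfers line for line. Applying the auxiliary change of variables $\widetilde W_2=\Xi_2$, $\widetilde\Xi_2=-W_2$, rewriting the generators of \eqref{E:pos_glaisher_module}, and then adding and subtracting them produces a pair of relations of the form $\widetilde\Xi_2+q$ and $\Xi_1+p$ with $p,q$ in $W_1,\widetilde W_2$, which after localization meet the integrability condition of Example \ref{EG:integrable} and so force $\mathcal{G}_+$ to be holonomic of dimension two. For the multiplicity I would run the direct good-filtration count of Theorem \ref{T:B_poly_mod_holonomic}: subtracting the two generators and clearing $\Xi_2^{-1}$ by a left multiplication by $\Xi_2$ yields a generator that is second order in $\Xi_2$, while the companion relation eliminates $\Xi_1$; reducing modulo this pair shows that $\mathrm{Gr}^k\mathcal{G}_+$ is spanned by the images of $\{W_1^aW_2^b:a+b\le k\}$ together with $\{W_1^{a'}W_2^{b'}\Xi_2:a'+b'\le k-1\}$, so that $\dim_{\mathbb{C}}\mathrm{Gr}^k\mathcal{G}_+=k^2$ for large $k$, as in Example \ref{E:prime_integrable_eg_2}. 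The Hilbert polynomial then has degree two with leading coefficient $2/2!$, giving dimension two and multiplicity two.

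The step deserving care — and what I expect to be the only real obstacle — is fixing the multiplicity at exactly two rather than merely certifying holonomicity. As the closing remark of the proof of Theorem \ref{T:neg_glaisher_holonomic} warns, the change-of-variables route preserves dimension but need not preserve the leading coefficient of the Hilbert polynomial, so the multiplicity must come from the explicit count above rather than from a symbolic substitution; moreover that count has to be carried out in the characteristic variables $\Xi_1,\Xi_2,W_1,W_2$, since in the standard generators $X_i,\partial_i$ both generators are merely first order and the count would collapse to multiplicity one. The cleanest way to sidestep all of this is to observe that the assignment $\Xi_1\mapsto-\Xi_1$, $W_1\mapsto-W_1$, $\Xi_2\mapsto\Xi_2$, $W_2\mapsto W_2$ is a degree-preserving automorphism of $\widehat{\mathcal{A}}_2$ carrying the two generators of \eqref{E:neg_glaisher_module} onto those of \eqref{E:pos_glaisher_module}; it induces an isomorphism $\mathcal{G}_-\cong\mathcal{G}_+$ respecting the good filtration, so dimension and multiplicity transfer verbatim and the theorem follows immediately from Theorem \ref{T:neg_glaisher_holonomic}.
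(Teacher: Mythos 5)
Your proof is correct. The paper in fact offers no argument for this theorem at all --- it is introduced with ``We obviously have the following companion theorem for the negative Glaisher module'' --- so the intended proof is exactly the transfer from Theorem \ref{T:neg_glaisher_holonomic} that you carry out. Your first two paragraphs reproduce that proof (the auxiliary substitution $\widetilde W_2=\Xi_2$, $\widetilde\Xi_2=-W_2$, the integrability check of Example \ref{EG:integrable} for holonomicity, and the good-filtration count in the style of Theorem \ref{T:B_poly_mod_holonomic} for the multiplicity); the only quibble is notational, namely that your count of $\{W_1^aW_2^b:a+b\le k\}\cup\{W_1^{a'}W_2^{b'}\Xi_2:a'+b'\le k-1\}$ gives $(k+1)^2$ for the cumulative filtration $\Gamma^k$ as in Example \ref{E:prime_integrable_eg_2}, which you label $\mathrm{Gr}^k$ following the paper's own slightly loose usage in Theorem \ref{T:B_poly_mod_holonomic}. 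Your closing observation is the genuinely valuable addition: the sign flip $\Xi_1\mapsto-\Xi_1$, $W_1\mapsto-W_1$ is a filtration-preserving automorphism of the Weyl algebra on $\Xi_i,W_i$ carrying the two generators of \eqref{E:neg_glaisher_module} exactly onto those of \eqref{E:pos_glaisher_module}, so $\mathcal{G}_+\cong\mathcal{G}_-$ as filtered modules and both invariants transfer at once. This is cleaner than either route the paper uses for $\mathcal{G}_-$, and it directly resolves the caveat the paper itself raises there --- that a mere change of variables preserves holonomicity but need not preserve the multiplicity --- since a graded automorphism preserves the Hilbert polynomial outright.
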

\medskip

We state without giving details the theorem
	\begin{theorem}\label{T:pos_glaisher_sine}
		Let $\mathcal{G}_{+}$ be defined in Definition 
		\ref{D:pos_glaisher_module}. Set $\omega_+^2=W_1^2+2W_1W_2$ and
		\[
		\mathcal{A}_{2}(\omega_+)=\langle  -\Xi_1, \Xi_2, 
		 -W_1, W_2,  \omega_+^{\pm 1}\rangle.
		\]
		Then the expression
			\[
				S=C_1\Cos \omega_++ C_2\Sin \omega_+,
			\]
			where $C_1, C_2$ are constants, $\Sin$ and $\Cos$ are the Weyl sine and the Weyl cosine defined in \eqref{E:sine-map} and \eqref{E:cosine-map} respectively, is an element of $\mathcal{A}_2(\omega_+)$,  such that the map
		\[
		\overline{\mathcal{G}_{+}}
		\xrightarrow 
		{\times S}
		\overline{\mathcal{A}_{2}(\omega_+)/[\mathcal{A}_{2}(\omega_+)
			(W_1\Xi_1+\frac{1}{2})
			+\mathcal{A}_{2}(\omega_+)\Xi_2]}
		\]
		is left $\mathcal{A}_{2}(\omega_+)$-linear.
	\end{theorem}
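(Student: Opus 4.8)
The plan is to mirror verbatim the proof of Theorem \ref{T:neg_glaisher_cosine} for the negative Glaisher module, since the positive module $\mathcal{G}_{+}$ differs from $\mathcal{G}_{-}$ only by the sign changes $\Xi_1=-\partial_1,\ W_1=-X_1$ recorded in \eqref{E:glaisher_char_2}, together with the choice $\nu=+\tfrac12$ and the corresponding sign flips in the two generators \eqref{E:PDEs+half_bessel}. The commutation relations $[\pm\Xi_1,\pm W_1]=1$, $[\Xi_2,W_2]=1$ and the vanishing of all cross-commutators still hold because the two sign changes in the first pair cancel; this is exactly why the statement lists the algebra $\mathcal{A}_2(\omega_+)$ with generators $-\Xi_1,\Xi_2,-W_1,W_2,\omega_+^{\pm1}$. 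So the first step is simply to verify these commutators, which is immediate.

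First I would introduce $\omega_+$ via $\omega_+^2=W_1^2+2W_1W_2$ and set $\delta_1=\Xi_1\omega_+-\omega_+\Xi_1$, $\delta_2=\Xi_2\omega_+-\omega_+\Xi_2$, assuming as in the negative case that $\omega_+$ commutes with $W_1$, $W_2$ and with each $\delta_i$. Computing $\Xi_1\omega_+^2-\omega_+^2\Xi_1=2(W_1+W_2)$ and $\Xi_2\omega_+^2-\omega_+^2\Xi_2=2W_1$ (note the plus signs, reflecting the $+2W_1W_2$ in $\omega_+^2$ and the flipped generators) yields
\[
\delta_1=\frac{W_1+W_2}{\omega_+},\qquad \delta_2=\frac{W_1}{\omega_+}.
\]
Next I would add the two generators \eqref{E:PDEs+half_bessel}, left-multiply by $\Xi_2$ and right-multiply by $W_1$ to produce, after substituting $\delta_2$, the operator $(\omega_+\Xi_2)^2+W_1^2=W_1^2(\Xi^2+1)$ where $\Xi:=\omega_+\Xi_2/W_1$, so that $[\Xi,\omega_+]=1$. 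Setting $\aleph:=\Xi_1+(1+W_2/W_1)\Xi_2$ and $W:=W_1$, one checks $[\aleph,W]=1$ with all other commutators among $\{\aleph,\Xi,W,\omega_+\}$ vanishing, which recasts $\mathcal{G}_{+}$ as the holonomic module $\widehat{\mathcal{A}}_2/(\widehat{\mathcal{A}}_2(W\aleph+\tfrac12)+\widehat{\mathcal{A}}_2(\Xi^2+1))$ of multiplicity two (Example \ref{E:prime_integrable_eg_2}).

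The final step is identical in structure to the negative case: by Example \ref{Eg:trigo} the solution of $\Xi^2+1$ in $\overline{\widehat{\mathcal{A}}_2/\widehat{\mathcal{A}}_2\Xi}$ is a combination $C_1\Cos(\omega_+)+C_2\Sin(\omega_+)$, and imposing annihilation by $W\aleph+\tfrac12$ forces the coefficients $f_1(W),f_2(W)$ to be constants via the computation $(W\aleph+\tfrac12)f_i(W)\Cos\omega_+\equiv W\aleph f_i(W)\equiv0 \mod \widehat{\mathcal{A}}_2(W\aleph+\tfrac12)$. Translating back through \eqref{E:glaisher_char_2} and the definitions of $\Xi,\aleph,W$ then gives the asserted left $\mathcal{A}_2(\omega_+)$-linear map $S=C_1\Cos\omega_++C_2\Sin\omega_+$. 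The only point requiring genuine care—the main obstacle—is tracking the signs: with $\Xi_1=-\partial_1$ and $W_1=-X_1$ one must confirm that the intermediate expressions such as $\Xi_1\omega_+^2-\omega_+^2\Xi_1$ acquire the correct sign relative to $\omega_+^2=W_1^2+2W_1W_2$, so that the Weyl-trigonometric (rather than Weyl-hyperbolic) operator $\Xi^2+1$ emerges exactly as in the negative module; everything else is a verbatim transcription, which is why the statement is offered without details.
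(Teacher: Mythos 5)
Your overall strategy is exactly the one the paper intends: Theorem \ref{T:pos_glaisher_sine} is stated ``without giving details'' precisely because its proof is the sign-adjusted transcription of the proof of Theorem \ref{T:neg_glaisher_cosine}, and your computations of $[\Xi_1,\omega_+^2]=2(W_1+W_2)$, $[\Xi_2,\omega_+^2]=2W_1$, hence $\delta_1=(W_1+W_2)/\omega_+$, $\delta_2=W_1/\omega_+$, and the choice $\Xi=\omega_+\Xi_2/W_1$ with $[\Xi,\omega_+]=1$ (no minus sign needed here, unlike the negative case) are all correct.

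Two signs in your write-up are wrong, and one of them breaks a step. First, you must \emph{subtract} the two generators \eqref{E:PDEs+half_bessel}, not add them: adding gives $2W_1\Xi_1-W_1\Xi_2+W_1\Xi_2^{-1}$, which still contains $\Xi_1$ and cannot produce $(\omega_+\Xi_2)^2+W_1^2$; subtracting gives $2(W_2\Xi_2-\tfrac12)+W_1\Xi_2+W_1\Xi_2^{-1}$, and left multiplication by $W_1\Xi_2$ then yields $\omega_+^2\Xi_2^2+W_1\Xi_2+W_1^2=(\omega_+\Xi_2)^2+W_1^2$ as you claim. Second, and more seriously, your $\aleph=\Xi_1+(1+W_2/W_1)\Xi_2$ has the wrong sign: the second generator of $\mathcal{G}_+$ is $W_1\Xi_1-(W_1+W_2)\Xi_2+\tfrac12$, so the correct choice is $\aleph=\Xi_1-(1+W_2/W_1)\Xi_2$, for which $W_1\aleph+\tfrac12$ equals that generator and $[\aleph,\omega_+]=\delta_1-(1+W_2/W_1)\delta_2=0$, $[\aleph,\Xi]=0$. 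With your sign, $W\aleph+\tfrac12$ is not in the defining ideal and $[\aleph,\omega_+]=2(W_1+W_2)/\omega_+\neq 0$, so the key commutation $(W\aleph+\tfrac12)f(W)\Cos\omega_+=\Cos\omega_+\,(W\aleph+\tfrac12)f(W)$ that forces $f_1,f_2$ to be constants fails. Once these two signs are corrected the argument goes through verbatim as in the negative case.
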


\subsection{Glaisher's generating functions}

Combining the discussion from the previous subsections concerning the positive and negative Glaisher-modules, which are modified half-Bessel modules, yields the followings.

\begin{theorem}\label{T:classical_glaisher_gf}
	\begin{enumerate}
		\item Let $(\mathscr{C}_{n+\nu})_n$ be a bilateral sequence of analytic functions which is a solution of the Glaisher modules $\mathcal{G}_\mp$ in $\mathcal{O}^\mathbb{Z}$. Then there exist complex numbers $C_1,\, C_2$ such that
		\begin{equation}\label{E: general half bessel_gf}
			x^{-\frac12} 
			\big[C_1\cos\sqrt{x^2\pm 2xt}+C_2\sin \sqrt{x^2\pm 2xt}
\big]
			=\sum_{n=0}^\infty \mathscr{C}_{\mp n\pm \frac{1}{2}}(x)\, \frac{t^n}{n!},
		\end{equation}where the convergence of	the infinite sum in \eqref{E: general half bessel_gf} is uniform in each compact subset of $\mathbb{G}:=\{(x,\, t)\in \mathbb{C}^\dagger\times\mathbb{C}: 2|t|<|x|\}$, $\mathbb{C}^\dagger:=\mathbb{C}\backslash\{x:x\le 0\}$.
	\item Moreover, the holonomic systems of PDEs that generates the Glaisher's modules $\mathcal{G}_{-}$ \eqref{E:neg_glaisher_module} and $\mathcal{G}_{+}$ \eqref{E:pos_glaisher_module} with manifestation in $\mathcal{O}_{dd}$  as defined in \eqref{E:O_dd_endow} from Example \ref{Eg:O_deleted_d} are given, respectively,  by\footnote{See also the Appendix \ref{SS:PDE_list} for the list of holonomic PDE systems.}
			\begin{align}	
			&xf_{xt}(x,\, t)+tf_{tt}(x,\, t)+\frac12 f_t(x,\, t)-xf(x, t)=0,\label{E:glaisher_PDE_1}\\
			&xf_x(x,\, t)+(x-t)f_t(x,\, t)+ \frac12 f(x,\, t)=0\label{E:glaisher_PDE_2}
			\end{align}
		and
			\begin{align}
				&xf_{xt}(x,\, t)+tf_{tt}(x,\, t)+\frac12 f_t(x,\, t)+xf(x, t)=0,\label{E:glaisher_PDE_6}
				\\
				&xf_x(x,\, t)-(x+t)f_t(x,\, t)+\frac12 f(x,\, t)=0.\label{E:glaisher_PDE_7}
		\end{align}
	\end{enumerate}
\end{theorem}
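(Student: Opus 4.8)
The plan is to run the same machine that proves Theorem~\ref{T:rev_bessel_poly_gf}, with the reverse Bessel polynomial module $\Theta(\rho)$ replaced by the Glaisher modules $\mathcal{G}_\mp$, the quadratic extension $\rho^2=1-2X_2$ replaced by $\omega_\mp^2=W_1^2\mp 2W_1W_2$, and the Weyl exponential replaced by the Weyl trigonometric solution $S=C_1\Cos\omega_\mp+C_2\Sin\omega_\mp$ of Theorem~\ref{T:neg_glaisher_cosine} (resp.\ Theorem~\ref{T:pos_glaisher_sine}). The organizing observation is that the manifestation \eqref{E:O_dd_endow} in $\mathcal{O}_{dd}$ is to be taken \emph{with respect to the characteristic pairs} $(W_i,\Xi_i)$ of \eqref{E:glaisher_char_1} (resp.\ \eqref{E:glaisher_char_2}): one lets $(W_1,\Xi_1,W_2,\Xi_2)$ play the roles of $(X_1,\partial_1,X_2,\partial_2)$, so that $W_1\mapsto x\cdot$, $\Xi_1\mapsto\partial_x$, $W_2\mapsto t\cdot$, $\Xi_2\mapsto\partial_t$. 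Under this reading $\omega_\mp^2$ becomes multiplication by $x^2\pm2xt$, hence $\omega_\mp$ becomes multiplication by $\sqrt{x^2\pm 2xt}$, and $\Cos\omega_\mp,\Sin\omega_\mp$ become $\cos\sqrt{x^2\pm2xt},\sin\sqrt{x^2\pm2xt}$.

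First I would establish part (ii). The raw generators \eqref{E:PDEs_neg_glaisher_1}--\eqref{E:PDEs_neg_glaisher_2} contain the inverse $\Xi_2^{-1}$ and so do not manifest as honest differential operators; I would therefore pass to the two \emph{polynomial} generators already produced inside the proofs of Theorems~\ref{T:neg_glaisher_holonomic} and \ref{T:neg_glaisher_cosine}, namely the first order element $W_1\Xi_1+(W_1-W_2)\Xi_2+\tfrac12$ (the expression written there as $W\aleph+\tfrac12$) and the second order element $W_1(W_1-2W_2)\Xi_2^2-W_1\Xi_2+W_1^2$. Manifesting the first yields \eqref{E:glaisher_PDE_2}, i.e.\ $xf_x+(x-t)f_t+\tfrac12 f=0$; manifesting the second yields $(x-2t)f_{tt}-f_t+xf=0$. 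Differentiating \eqref{E:glaisher_PDE_2} in $t$ and adding this second equation reproduces \eqref{E:glaisher_PDE_1}. The positive module $\mathcal{G}_+$ is treated identically, starting from \eqref{E:glaisher_char_2} and $\omega_+^2=W_1^2+2W_1W_2$, the sign choices being arranged exactly so that $\omega_+^2$ manifests as $x^2+2xt$; this produces \eqref{E:glaisher_PDE_6}--\eqref{E:glaisher_PDE_7}.

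For part (i) I would assemble the commutative square whose top edge is $\mathfrak{p}\circ\big(\times(\mathscr{C}_{\mp n\pm\frac12})\big)$, with $\mathfrak{p}$ the Poisson transform \eqref{E:poisson} and $\mathcal{O}^{\mathbb{N}_0}$ carrying the structure of Example~\ref{Eg:seq-functions_poisson}, and whose left edge is $\times S$ into $\widetilde{\mathcal{A}}_2(\omega_\mp):=\overline{\mathcal{A}_2(\omega_\mp)/[\mathcal{A}_2(\omega_\mp)(W_1\Xi_1+\tfrac12)+\mathcal{A}_2(\omega_\mp)\Xi_2]}$ followed by $\times1$ into $\mathcal{O}_{dd}$. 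The top-right path sends $1$ to $\sum_{n\ge0}\mathscr{C}_{\mp n\pm\frac12}(x)\,t^n/n!$. Along the bottom-left path the residue class $1$ of $\widetilde{\mathcal{A}}_2(\omega_\mp)$ manifests as the joint solution of $(x\partial_x+\tfrac12)g=0$ and $\partial_t g=0$, namely $g=x^{-1/2}$, so that $\times S$ produces $x^{-1/2}\big[C_1\cos\sqrt{x^2\pm2xt}+C_2\sin\sqrt{x^2\pm2xt}\big]$, which is precisely the left side of \eqref{E: general half bessel_gf}. Since $\mathcal{G}_\mp$ is holonomic of multiplicity two by Theorems~\ref{T:neg_glaisher_holonomic} and \ref{T:pos_glaisher_holonomic}, the local solution space of the manifested system is two dimensional; commutativity of the square then forces the two images to coincide for suitable $C_1,C_2$, giving \eqref{E: general half bessel_gf}.

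Finally, convergence comes for free from the second order generator: for fixed $x$ the equation $(x-2t)f_{tt}-f_t+xf=0$ (resp.\ its $\mathcal{G}_+$ analogue) is a linear ODE in $t$ whose only finite singularity is at $t=x/2$ (resp.\ $t=-x/2$), i.e.\ at $2|t|=|x|$; hence the Poisson series converges uniformly on compacta of $\mathbb{G}=\{(x,t): 2|t|<|x|,\ x\notin(-\infty,0]\}$, the excised ray being where the branches of $x^{-1/2}$ and $\sqrt{x^2\pm2xt}$ are cut. The step I expect to be the real obstacle is making rigorous the functional-calculus reading of the characteristic operators: one must justify that adjoining $\omega_\mp$ and manifesting it as multiplication by $\sqrt{x^2\pm2xt}$ (in the spirit flagged in Example~\ref{Eg:Bessel_mod_no_root}) is compatible with the $\mathcal{A}_2$-module axioms, and that the Poisson transform $\mathfrak{p}$ genuinely intertwines the $(W_i,\Xi_i)$-action on $\mathcal{O}^{\mathbb{N}_0}$ with the $(W_i,\Xi_i)$-action on $\mathcal{O}_{dd}$; once this identification is secured, all remaining computations are the routine verifications indicated above.
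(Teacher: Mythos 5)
Your proposal is correct and follows essentially the same route as the paper: the same commutative square through the Poisson transform and $\times(C_1\Cos\omega_\mp+C_2\Sin\omega_\mp)$, the same appeal to holonomicity and multiplicity two of $\mathcal{G}_\mp$ to match the two paths, and the same reduction to the single ODE $(x-2t)f_{tt}-f_t+xf=0$ with regular singularity at $t=x/2$ for the convergence region $2|t|<|x|$. The only (immaterial) difference is order of operations in part (ii) — you recover \eqref{E:glaisher_PDE_1} by combining \eqref{E:glaisher_PDE_2} with the second-order generator, whereas the paper manifests both generators directly and only forms that second-order equation afterwards for the convergence argument — and your explicit reading of $(W_i,\Xi_i)$ as the manifested generators is exactly what the paper's equations implicitly assume.
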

\medskip

\begin{proof}
Recall the map $\mathfrak{p}$ as defined in \eqref{E:poisson}. Then we have the diagram 
		\begin{equation*}%\label{E:commute-glaisher}
			\begin{tikzcd} [row sep=large, column sep=large]
			\mathcal{G}_{\pm} \arrow{r}{\times(\mathscr{C}_{\pm n\mp\frac12})} 
			\arrow[swap]{d}{\Cos\omega_\pm\slash\Sin\omega_\pm} 
			&   \mathcal{O}^{\mathbb{N}_0} \arrow{d}{\mathfrak{p}}\\
 			 \widetilde{\mathcal{A}_2}(\omega_\pm)
\arrow{r}{\times x^{-\frac12}} &  \mathcal{O}_{d d}
			\end{tikzcd}
		\end{equation*}
in which 
	\[
		\widetilde{\mathcal{A}_2}(\omega_\pm):=\overline{\displaystyle
		{\mathcal{A}_{2}(\omega_\pm)}/{\big[\mathcal{A}_{2}(\omega_\pm)(W_1\Xi_1+\frac{1}{2})
			+\mathcal{A}_{2}(\omega_\pm)\Xi_2\big]}},
	\]where the left-vertical map given in terms of 
	Weyl cosine and sine $\Cos \omega_\pm\slash \Sin \omega_\pm$ which are defined as in Example \ref{Eg:trigo}, is guaranteed by the Theorem \ref{T:neg_glaisher_cosine}. We also note that the analytic functions in $\mathcal{O}_{dd}$ are suitably restricted. 

We first consider the case of negative Glaisher's module $\mathcal{G}_-$. Since $\omega_-^2 =W_1^2-2W_1W_2$, the lower left path of the above commutative diagram shows that
  		\begin{equation}\label{E:glaisher_fn}
			x^{-\frac12}
			\big(C_1\cos\sqrt{x^2- 2xt}+C_2\sin \sqrt{x^2- 2xt}
\big)
		\end{equation}solves the PDEs \eqref{E:PDEs_neg_glaisher_1} and \eqref{E:PDEs_neg_glaisher_2}
when manifested in the $\mathcal{A}_2$-module $\mathcal{O}_{dd}$ endowed with the interpretation in the Example \ref{Eg:O_deleted_d}, i.e., 
		\begin{align*}
%			\begin{split}
				&xf_{xt}(x,\, t)+tf_{tt}(x,\, t)+\frac12 f_t(x,\, t)-xf(x, t)=0,%\label{E:glaisher_PDE_1}
				\\
				&xf_x(x,\, t)+(x-t)f_t(x,\, t)+\frac12 f(x,\, t)=0%\label{E:glaisher_PDE_2}
%			\end{split}
		\end{align*}
	which are precisely the PDEs \eqref{E:glaisher_PDE_1} and \eqref{E:glaisher_PDE_2}.
On the other hand, via the top-right path of the diagram, the sum 
		\[
			\sum_{n=0}^\infty J_{n-\frac{1}{2}}(x)\,\frac{ t^n}{n!}
		\]is a solution to the system of PDEs \eqref{E:PDEs_neg_glaisher_1} and \eqref{E:PDEs_neg_glaisher_2}.
		Since we know from Theorem \ref{T:neg_glaisher_holonomic} that the $\mathcal{G}_-$ is holonomic with both dimension and multiplicity equal to two, so the dimension of the local solution space about $t=0$ for the PDEs   \eqref{E:PDEs_neg_glaisher_1} and \eqref{E:PDEs_neg_glaisher_2} is therefore two. We conclude that the formula \eqref{E: general half bessel_gf} holds. We next investigate the convergence region of the series \eqref{E: general half bessel_gf}.

Differentiating the equation \eqref{E:glaisher_PDE_2} with respect to $t$ yields
		\begin{equation}\label{E:glaisher_PDE_3}
			xf_{xt}(x,\, t)+(x-t)f_{tt}(x,\, t)-\frac12f_t(x,\, t)=0.
		\end{equation}
		Substituting the equation \eqref{E:glaisher_PDE_1} into equation \eqref{E:glaisher_PDE_3} yields the new equation
			\begin{equation}\label{E:glaisher_PDE_4}				
				(x-2t)f_{tt}(x,\, t) -f_t(x, t)+xf(x,\, t)=0.
			\end{equation}The equation is a second order linear ODE for a fixed $x$ and it has an ordinary point at $t=0$ and a regular singularity at $t={x}/{2}$. Hence the expansion \eqref{E: general half bessel_gf}  converges uniformly in each compact subset of $\mathbb{C}\times \mathbb{C}$ where $2|t|<|x|$.

The handling of the second case when $\omega_+=W_1^2+2W_1W_2$ when $\nu=\frac12$ was dealt with in the last subsection which is similar to that of the case $\mathcal{G}_-$ 
and is therefore omitted. We only note that the PDEs \eqref{E:glaisher_PDE_1} and \eqref{E:glaisher_PDE_2} are replaced by
		\begin{align*}
				&xf_{xt}(x,\, t)+tf_{tt}(x,\, t)+\frac12 f_t(x,\, t)+xf(x, t)=0,%\label{E:glaisher_PDE_6}
				\\
				&xf_x(x,\, t)-(x+t)f_t(x,\, t)+\frac12 f(x,\, t)=0%\label{E:glaisher_PDE_7}
		\end{align*}which are precisely the \eqref{E:glaisher_PDE_6} and \eqref{E:glaisher_PDE_7}
		 A similar procedure that has led to the equation \eqref{E:glaisher_PDE_4} 
		applied to the above equations \eqref{E:glaisher_PDE_6} and  \eqref{E:glaisher_PDE_7} yields
			 \begin{equation}\label{E:glaisher_PDE_8}				
				(x+2t)f_{tt}(x,\, t) + f_t(x,\, t)+xf(x,\, t)=0.
			\end{equation}This equation is similar to the equation \eqref{E:glaisher_PDE_4} which also has an ordinary point at $t=0$ and a regular singular point at $t=-x/2$. So we omit the details. \footnote{It is unclear to the authors of the reason why Watson indicated in \cite[p. 140]{Watson1944} that the restriction $2|t|<|x|$ in the above theorem only applies to the case $\nu=\frac12$, that is to $\mathcal{G}_+$ instead to both since $t=x/2$ is a common regular singularity for both of the differential equations  \eqref{E:glaisher_PDE_4}  and \eqref{E:glaisher_PDE_8}. }

% It becomes clear that  the $\widetilde{A}_2(\omega)$ defined in \eqref{E:glaisher-map} is  holonomic since it is  \textit{integrable} as explained in 

%Example \ref{EG:integrable}. It follows from Bernstein\rq{}s Theorem \ref{T:bernstein}  that the $\deg(\mathcal{G}_\pm)=2$. Hence \eqref{E: general half bessel_gf} holds  apart from two complex constant multiples which can easily be incorporated into the constants $C_1,\, C_2$. 

The above analysis shows that the commutative diagram \eqref{E:commute-glaisher} holds.
This completes the proof.
\end{proof}
\medskip

%\medskip

One can deduce the following Glaisher-type generating functions for other classical Bessel functions from the above Theorem.
 
\begin{corollary} \label{C:Graisher_gf}

Let $\mathbb{G}:=\{2|t|<|x|:\ (x,\, t)\in \mathbb{C}^\dagger\times\mathbb{C}\}$ where $\mathbb{C}^\dagger:=\mathbb{C}\backslash\{x:x\le 0\}$. Then the following series converge uniformly in any compact subset of $\mathbb{G}$.\footnote{Modified forms of the (i) and (ii) can be found from \cite[p.  439]{AS_1964} for $j_n(x)=\sqrt{\pi/2x}J_{n+\frac12}(x)$ and $y_n(x)=\sqrt{\pi/2x}Y_{n+\frac12}(x)$.} 
	\begin{enumerate}
		\item 
		\begin{equation}\label{E: first half bessel_gf}
			\begin{split}
		&\sqrt{\frac{2}{\pi x}} \cos(x^2-2xt)^{\frac{1}{2}}
		=\sum_{n=0}^\infty J_{n-\frac{1}{2}}(x)\, \frac{t^n}{n!},\\
		&\sqrt{\frac{2}{\pi x}} \sin(x^2+2xt)^{\frac{1}{2}}
		=\sum_{n=0}^\infty J_{-n+\frac{1}{2}}(x)\, \frac{t^n}{n!};
		\end{split}		\end{equation}
		
	\item
	\begin{equation}\label{E: second half bessel gf}
	\begin{split}
	&\sqrt{\frac{2}{\pi x}}\sin(x^2-2xt)^{\frac{1}{2}}
	=\sum_{n=0}^{\infty} Y_{n-\frac{1}{2}}(x)\,\frac{t^n}{n!},\\
	-&\sqrt{\frac{2}{\pi x}} \cos(x^2+2xt)^{\frac{1}{2}}
	=\sum_{n=0}^{\infty} Y_{-n+\frac{1}{2}}(x)\,\frac{t^n}{n!};
	\end{split}
	\end{equation}
	\item
	\begin{equation}\label{E: first modify half bessel gf}
	\begin{split}
	&\sqrt{\frac{2}{\pi x}}\cos(-x^2-2xt)^{\frac{1}{2}}=
	\sum_{n=0}^{\infty} I_{n-\frac{1}{2}}(x)\, \frac{t^n}{n!},\\
	-&i\sqrt{\frac{2}{\pi x}}\sin(-x^2-2xt)^{\frac{1}{2}}=
	\sum_{n=0}^{\infty} I_{-n+\frac{1}{2}}(x)\, \frac{t^n}{n!};
	\end{split}
	\end{equation}
	
\item 
\begin{equation}\label{E: second modify half bessel gf}
\sqrt{\frac{\pi}{2 x}}e^{i (-x^2+2xt)^{1/2}}=
\sum_{n=0}^{\infty} K_{n-\frac{1}{2}}(x)\, \frac{t^n}{n!}=
\sum_{n=0}^{\infty} K_{-n+\frac{1}{2}}(x)\, \frac{t^n}{n!}.
\end{equation}
	\end{enumerate}

\end{corollary}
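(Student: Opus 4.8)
The plan is to obtain all four statements as specialisations of the single master identity \eqref{E: general half bessel_gf} in Theorem~\ref{T:classical_glaisher_gf}, the only genuine work being the determination of the two free constants $C_1,C_2$ in each case and, for the modified functions, an analytic continuation $x\mapsto ix$.

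For parts (i) and (ii) I would apply Theorem~\ref{T:classical_glaisher_gf} directly. The bilateral sequences $(J_{n-\frac12})$ and $(J_{-n+\frac12})$ are solutions of $\mathcal{G}_-$ and $\mathcal{G}_+$ respectively, and the same is true with $J$ replaced by $Y$, since $Y_\nu$ is a cylindrical function obeying the three-term recurrence built into $\mathcal{B}_{\pm\frac12}$, hence into $\mathcal{G}_\mp$. To pin down $C_1,C_2$ I would simply evaluate \eqref{E: general half bessel_gf} at $t=0$: because $x\in\mathbb{C}^\dagger$ we have $\sqrt{x^2}=x$, so the left-hand side collapses to $x^{-\frac12}\bigl(C_1\cos x+C_2\sin x\bigr)$, while the right-hand side reduces to its $n=0$ term. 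Matching against the classical closed forms $J_{-\frac12}(x)=\sqrt{\frac{2}{\pi x}}\,\cos x$, $J_{\frac12}(x)=\sqrt{\frac{2}{\pi x}}\,\sin x$, $Y_{-\frac12}(x)=\sqrt{\frac{2}{\pi x}}\,\sin x$, $Y_{\frac12}(x)=-\sqrt{\frac{2}{\pi x}}\,\cos x$ and using the linear independence of $\cos x$ and $\sin x$ then forces the constants term by term, yielding \eqref{E: first half bessel_gf} and \eqref{E: second half bessel gf}.

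For parts (iii) and (iv) I would not re-solve the Glaisher module but instead analytically continue the formulae just proved. Substituting $x\mapsto ix$ together with $t\mapsto -it$ into the first identity of \eqref{E: first half bessel_gf} and invoking $J_\nu(ix)=i^\nu I_\nu(x)$ turns the radicand $x^2-2xt$ into $-x^2-2xt$; the factor $i^{-\frac12}$ that then appears on both sides — from the prefactor $\sqrt{\frac{2}{\pi (ix)}}$ and from $i^{\,n-\frac12}(-i)^n=i^{-\frac12}$ in the series — cancels, giving the first line of \eqref{E: first modify half bessel gf}, and the choice $t\mapsto it$ gives the second. Part (iv) follows by forming $K_{n-\frac12}=\tfrac{\pi}{2}(-1)^n\bigl(I_{n-\frac12}-I_{-n+\frac12}\bigr)$ from $K_\nu=\tfrac\pi2(I_{-\nu}-I_\nu)/\sin\nu\pi$: feeding the two series of \eqref{E: first modify half bessel gf} (with $t\mapsto -t$) into $\cos\theta+i\sin\theta=e^{i\theta}$ collapses the sum to $\sqrt{\frac{\pi}{2x}}\,e^{i\sqrt{-x^2+2xt}}$, and the equality of the two sums in \eqref{E: second modify half bessel gf} is merely $K_\nu=K_{-\nu}$.

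The convergence claim is inherited with no extra work: Theorem~\ref{T:classical_glaisher_gf} already gives uniform convergence on compacta of $\mathbb{G}$, and the substitutions $x\mapsto ix$, $t\mapsto\pm it$ preserve both $|x|$ and $|t|$, hence the inequality $2|t|<|x|$. I expect the main obstacle to be precisely the branch-cut and sign bookkeeping of step (iii)--(iv): one must fix compatible branches of the square root and of $i^{1/2}$ on $\mathbb{C}^\dagger$ so that $J_\nu(ix)=i^\nu I_\nu(x)$ holds with the intended half-integer exponents and so that the powers of $i$ in the prefactor cancel those in the series term by term. Everything else is substitution and two-dimensional linear algebra in the solution space, whose dimension is guaranteed to equal two by the holonomicity and multiplicity statements of Theorems~\ref{T:neg_glaisher_holonomic} and~\ref{T:pos_glaisher_holonomic}.
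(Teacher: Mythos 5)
Your proposal is correct and follows essentially the same route as the paper: parts (i)--(ii) are obtained by specialising the master identity \eqref{E: general half bessel_gf} at $t=0$ and matching against the closed forms of $J_{\pm\frac12}$ and $Y_{\pm\frac12}$, and parts (iii)--(iv) by the rotation $x\mapsto ix$, $t\mapsto\mp it$ (equivalently $I_\nu(x)=e^{-i\pi\nu/2}J_\nu(xe^{i\pi/2})$) together with $K_\nu=\tfrac{\pi}{2}(I_{-\nu}-I_\nu)/\sin\nu\pi$, with the branch bookkeeping you flag being exactly the point the paper handles by splitting into the sectors $-\pi<\arg x\le\tfrac{\pi}{2}$ and $\tfrac{\pi}{2}<\arg x\le\pi$. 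No gaps.
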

\medskip

	\begin{proof}
		\begin{enumerate}
		\item  By choosing $t=0$ in equation \eqref{E: general half bessel_gf}, we immediately have 
		\[
		x^{-\frac{1}{2}} \big[ C_1\cos x +C_2\sin x\big]=
		\mathscr{C}_{-\frac{1}{2}}(x)
		\] 
		and 
		\[
		x^{-\frac{1}{2}} \big[ C_1\sin x +C_2\cos x\big]
		=\mathscr{C}_{\frac{1}{2}}(x).
		\] 
		
		Since 
		\[
			J_{-\frac{1}{2}}(x)=  \sqrt{2/\pi} x^{-\frac{1}{2}} \cos x,
			\qquad
		J_{\frac{1}{2}}(x)=  \sqrt{2/\pi} x^{-\frac{1}{2}} \sin x.
		\]
		Hence $C_1=C_2=\sqrt{2/\pi}$ and we obtain  \eqref{E: first half bessel_gf}.
		 
		\item  
		Applying the well-known formulae
		\[
			Y_{-\frac{1}{2}}(x)=  \sqrt{2/\pi} x^{-\frac{1}{2}} \sin x,
			\qquad
			Y_{\frac{1}{2}}(x)=  -\sqrt{2/\pi} x^{-\frac{1}{2}} \cos x
		\]
	to $\mathscr{C}_{-\frac{1}{2}}(x)$ and 
	$\mathscr{C}_{\frac{1}{2}}(x)$,
	 we obtain \eqref{E: second half bessel gf} immediately.

\item We have
	\begin{align*}
	\sum_{n=0}^{\infty} I_{n-\frac{1}{2}}(x)\, \frac{t^n}{n!} &= 
	\begin{cases}
	\sum_{n=0}^{\infty}(e^{-\frac{ i\pi}{2}(n-\frac{1}{2}) }
	J_{n-\frac{1}{2}}(xe^{\frac{i\pi}{2}}))\, \frac{t^n}{n!},
	& -\pi <\arg x\leqslant \frac{1}{2}\pi\\
	\sum_{n=0}^{\infty}(e^{\frac{3}{2}i  \pi(n-\frac{1}{2})} J_{n-\frac{1}{2}}(xe^{\frac{-3i\pi}{2}}))\, \frac{t^n}{n!},
	& \frac{\pi }{2}<\arg x\leqslant \pi,
	\end{cases} \\
	&= 
	\begin{cases}
	e^{\frac{ i\pi}{4} }\sum_{n=0}^{\infty}
	J_{n-\frac{1}{2}}(xe^{\frac{i\pi}{2}}) \, \frac{(te^{-\frac{i\pi}{2}})^n}{n!},
	& -\pi <\arg x\leqslant \frac{1}{2}\pi,\\
	e^{-\frac{3}{4}i  \pi} \sum_{n=0}^{\infty}
	 J_{n-\frac{1}{2}}(xe^{\frac{-3i\pi}{2}})\, \frac{(te^{\frac{3i\pi}{2}})^n}{n!},
	& \frac{\pi }{2}<\arg x\leqslant \pi.
	\end{cases} \\
	&= 
	\begin{cases}
	\sqrt{2/\pi}x^{-\frac{1}{2}}
	\cos(-x^2-2xt)^{\frac{1}{2}} ,
	& -\pi <\arg x\leqslant \frac{1}{2}\pi,\\
	\sqrt{2/\pi}x^{-\frac{1}{2}}
	\cos(-x^2-2xt)^{\frac{1}{2}}
	& \frac{\pi }{2}<\arg x\leqslant \pi.
	\end{cases}
	\end{align*}
	from (i). Similarly, we have 
	\[
	\sum_{n=0}^{\infty} I_{-n+\frac{1}{2}}(x)\, \frac{t^n}{n!}= 
	\begin{cases}
	e^{-\frac{ i\pi}{4} }\sum_{n=0}^{\infty}
	J_{-n+\frac{1}{2}}(xe^{\frac{i\pi}{2}}) 
	\, \frac{(te^{\frac{i\pi}{2}})^n}{n!},
	& -\pi <\arg x\leqslant \frac{1}{2}\pi,\\
	e^{\frac{3}{4}i  \pi} \sum_{n=0}^{\infty}
	J_{-n+\frac{1}{2}}(xe^{\frac{-3i\pi}{2}})
	\, \frac{(te^{\frac{-3i\pi}{2}})^n}{n!},
	& \frac{\pi }{2}<\arg x\leqslant \pi.
	\end{cases}
	\]
We can now substitute the generating function for $J_{-n+\frac{1}{2}}(x)$ derived from (i) above to obtain the desired result.
		\item 
		Recall the relationships
		\[
		K_{n-\frac{1}{2}}(x)=\frac{\pi}{2} (-1)^n 
		\big[  I_{n-\frac{1}{2}}(x) -I_{-n+\frac{1}{2}}(x) \big],
		\]
		and 	
		\[
		K_{-n+\frac{1}{2}}(x)=\frac{\pi}{2} (-1)^n 
		\big[  I_{n-\frac{1}{2}}(x) -I_{-n+\frac{1}{2}}(x) \big]
		\]
	and	we obtain the corresponding generating functions.
		\end{enumerate}
	\end{proof}

\subsection{Difference Glaisher's generating functions}

\begin{theorem}\label{T:difference_glaisher_gf}
	\begin{enumerate}
		\item
		The difference Bessel functions have the Glaisher's generating function
			\begin{equation}\label{E:delta_glaisher_gf}
				\begin{split}
			\dfrac{e^{-i\pi x}}{2i\sin(\pi x)} \int_{-\infty}^{(0+)} 
			&	\lambda^{-\frac12} \frac{\big[C_1(x)\sin \sqrt{\lambda^2\mp 2\lambda t} +C_2(x)\cos \sqrt{\lambda^2\mp 2\lambda t}\big]}{\Gamma(-x)}  e^\lambda (-\lambda)^{-x-1}\, d\lambda\\
			&=\sum_{n=0}^\infty \mathscr{C}^\Delta_{\pm n\mp \frac{1}{2}}(x)\, \frac{t^n}{n!},
				\end{split}
			\end{equation}where the infinite sum converges uniformly in each compact subset of $\mathbb{C}\times\mathbb{C}$.
		\item The holonomic systems of PDEs (delay-differential equations) that generates Glaisher's modules $\mathcal{G}_{-}$ \eqref{E:neg_glaisher_module} and $\mathcal{G}_{+}$ \eqref{E:pos_glaisher_module} with manifestation in $\mathcal{O}_{\Delta d}$  as defined by \eqref{E:O_delta_d} from Example \ref{Eg:O_delta_d} are given, respectively,  by\footnote{See also the Appendix \ref{SS:PDE_list}.}
			\begin{align}
				& tf_{tt}(x,\, t)+\big(x+\frac12\big)f_t(x,\, t)-xf_t(x-1,\, t)-xf(x-1,\, t)=0,\label{E:PDeltaE_neg_glaisher_-1}\\
				&  tf_{t}(x,\, t)-xf_t(x-1,\, t)+xf(x-1,\, t)-\big(x+\frac12\big) f(x,\, t)=0\label{E:PDeltaE_neg_glaisher_-2}
			\end{align}
		and
		\begin{align}
				& tf_{tt}(x,\, t)+\big(x+\frac12\big)f_t(x,\, t)-xf_t(x-1,\, t)+xf(x-1,\, t)=0,\label{E:PDeltaE_pos_glaisher_+1}\\
				&  tf_{t}(x,\, t)+xf_t(x-1,\, t)+xf(x-1,\, t)-\big(x+\frac12\big) f(x,\, t)=0.\label{E:PDeltaE_pos_glaisher_+2}
		\end{align}
	\end{enumerate}
\end{theorem}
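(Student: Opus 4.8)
The plan is to transport Glaisher's classical generating functions, already established in Theorem~\ref{T:classical_glaisher_gf}, to the difference setting by means of the Newton transform $\mathfrak{N}$ of Theorem~\ref{T:Newton_trans}, following verbatim the pattern of Theorem~\ref{T:bessel_poly_delta_gf} and Theorem~\ref{T:delta Bessel_gf_1}. Working over the quadratic extension $\mathcal{A}_2(\omega_\mp)$ and using the trigonometric solution maps $\times(C_1\Cos\omega_\mp+C_2\Sin\omega_\mp)$ furnished by Theorem~\ref{T:neg_glaisher_cosine} (resp.\ Theorem~\ref{T:pos_glaisher_sine}), I would assemble a commutative diagram of $\mathcal{A}_2(\omega_\mp)$-linear maps in the shape of \eqref{E:commute-dbessel-2}: the Glaisher module $\mathcal{G}_\mp$ at the apex, the two Poisson transforms $\mathfrak{p}$ \eqref{E:poisson} and $\mathfrak{p}_\Delta$ \eqref{E:poisson_2} forming the descending legs into $\mathcal{O}_{dd}$ and $\mathcal{O}_{\Delta d}$, the classical and difference half-integer Bessel sequences $(\mathscr{C}_{\mp n\pm\frac12})$ and $(\mathscr{C}^\Delta_{\mp n\pm\frac12})$ giving the maps into $\mathcal{O}^{\mathbb{N}_0}$ (the latter being $\mathcal{A}_2$-linear by the difference analogues of the transmutation relations of Corollary~\ref{C:Bessel_trans} in the one-sided manifestation of Example~\ref{Eg:seq-functions_poisson_delta}), and $\mathfrak{N}\colon\mathcal{O}_{dd}\to\mathcal{O}_{\Delta d}$ closing the bottom.

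For part (i), I would chase the unit $1\in\mathcal{G}_\mp$ around this diagram. Along the right path it produces the formal Poisson sum $\sum_{n\ge0}\mathscr{C}^\Delta_{\mp n\pm\frac12}(x)\,t^n/n!$. Along the left path one first lands, by Theorem~\ref{T:classical_glaisher_gf}, on the classical Glaisher function $x^{-\frac12}\bigl[C_1\cos\sqrt{x^2\mp2xt}+C_2\sin\sqrt{x^2\mp2xt}\bigr]$, and then applying $\mathfrak{N}$ in the variable $x$ reproduces, directly from the defining integral \eqref{E:newton_trans}, exactly the Hankel-contour integral on the left of \eqref{E:delta_glaisher_gf}, with the factor $x^{-\frac12}$ becoming the $\lambda^{-\frac12}$ inside the integrand. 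Equality of the two images, hence \eqref{E:delta_glaisher_gf}, follows from commutativity together with the fact that $\mathcal{G}_\mp$ is holonomic of multiplicity two (Theorem~\ref{T:neg_glaisher_holonomic}, Theorem~\ref{T:pos_glaisher_holonomic}), so the local solution space is two-dimensional and the solution map is pinned down up to the pair $C_1,C_2$. The identification of the right legs, $\mathfrak{N}(\mathscr{C}_{\mp n\pm\frac12})=\mathscr{C}^\Delta_{\mp n\pm\frac12}$, is the property $\mathfrak{N}\colon x^\nu\mapsto(x)_\nu$ of \eqref{E:newton_bases_2} used in Example~\ref{Eg:delta_Bessel_fn}; the $1$-periodic coefficients $C_1(x),C_2(x)$ are then fixed by specialising, for instance by reading off the lowest-order term as $x\to0$ or evaluating at $t=0$ as in Corollary~\ref{C:Graisher_gf}.

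For part (ii), I would observe that the two classical systems \eqref{E:glaisher_PDE_1}--\eqref{E:glaisher_PDE_2} and \eqref{E:glaisher_PDE_6}--\eqref{E:glaisher_PDE_7} are the manifestations in $\mathcal{O}_{dd}$ \eqref{E:O_dd_endow} of fixed elements $A^\mp_1,A^\mp_2\in\mathcal{A}_2$ annihilating the classical Glaisher function (consequences of the generators of $\mathcal{G}_\mp$); for the negative module one may take $A^-_1=X_1\partial_1\partial_2+X_2\partial_2^2+\tfrac12\partial_2-X_1$ and $A^-_2=X_1\partial_1+X_1\partial_2-X_2\partial_2+\tfrac12$, and the sign-changed analogues for $\mathcal{G}_+$. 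Since $\mathfrak{N}$ is left $\mathcal{A}$-linear in $x$ and commutes with the $t$-operations $\partial_2,X_2$, it intertwines $\partial_1$ with $\Delta_x$ and $X_1$ with the operator $f(x)\mapsto x f(x-1)$; hence the relations $A^\mp_i\phi=0$ satisfied by the classical $\phi$ pass under $\mathfrak{N}$ to the same elements manifested in $\mathcal{O}_{\Delta d}$ via \eqref{E:O_delta_d}, which are precisely the delay-differential systems \eqref{E:PDeltaE_neg_glaisher_-1}--\eqref{E:PDeltaE_neg_glaisher_-2} and \eqref{E:PDeltaE_pos_glaisher_+1}--\eqref{E:PDeltaE_pos_glaisher_+2}. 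As a cross-check one verifies these directly on the integral by differentiating under the integral sign and integrating by parts along the Hankel contour, exactly as in Theorem~\ref{T:bessel_poly_delta_gf}.

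The main obstacle is the convergence assertion that \eqref{E:delta_glaisher_gf} holds on all of $\mathbb{C}\times\mathbb{C}$, in contrast to the classical restriction $2|t|<|x|$ of Theorem~\ref{T:classical_glaisher_gf}. The improvement is genuine and is produced by the Newton transform: for each fixed $\lambda$ the functions $\cos\sqrt{\lambda^2\mp2\lambda t}$ and $\sin\sqrt{\lambda^2\mp2\lambda t}$ are entire in $t$ (the classical singularity $t=\pm x/2$ has migrated onto the integration variable $\lambda$ and no longer yields a finite singularity in $t$), while along the Hankel contour the factor $e^{\lambda}(-\lambda)^{-x-1}$ decays exponentially as $\Re\lambda\to-\infty$ and dominates the at-most-polynomial growth of $\lambda^{-\frac12}\cos\sqrt{\lambda^2\mp2\lambda t}$. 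Thus the integral converges absolutely and locally uniformly in $(x,t)$ and defines an entire function of $t$. The delicate part is making this rigorous, in particular justifying the interchange of $\mathfrak{N}$ with the Poisson summation $\sum t^n/n!$ and with the $t$-differentiations used in part (ii); this rests on the uniform $1$-Gevrey growth of the half-integer difference Bessel sequence, supplied by the Banach-algebra Poincar\'e--Perron estimate of Theorem~\ref{T:PP} adapted as in Theorem~\ref{T:delta_Bessel_Gevrey}, together with the dominated-convergence control that the factor $e^{\lambda}$ affords on the contour.
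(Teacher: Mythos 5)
Your architecture for parts (i) and (ii) is the paper's own: the same three-legged commutative diagram with $\mathcal{G}_\mp(\omega_\mp)$ at the apex, the trigonometric solution maps from Theorem~\ref{T:neg_glaisher_cosine}, the Poisson transforms $\mathfrak{p}$, $\mathfrak{p}_\Delta$, the Newton transform closing the bottom, and the holonomicity/multiplicity-two argument pinning the identity down to the two $1$-periodic coefficients. Your part (ii) and the integration-by-parts cross-check likewise coincide with what the paper does (the direct verification appears there as a remark after the proof).

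The gap is in the convergence assertion. You establish that the \emph{integral} on the left of \eqref{E:delta_glaisher_gf} is entire in $t$ (correct, by the exponential decay of $e^\lambda(-\lambda)^{-x-1}$ on the Hankel contour), but that does not yet show the \emph{series} $\sum_{n\ge0}\mathscr{C}^\Delta_{\pm n\mp\frac12}(x)\,t^n/n!$ converges on all of $\mathbb{C}\times\mathbb{C}$; for that you need the series to be the everywhere-convergent Taylor expansion of that entire function, which is precisely what must be proved. The tool you propose for closing this — the uniform $1$-Gevrey bound $\|\mathscr{C}^\Delta_{n+\nu}\|\le C^n n!$ from the Poincar\'e--Perron argument of Theorem~\ref{T:delta_Bessel_Gevrey} — only gives $\|\mathscr{C}^\Delta_{\pm n\mp\frac12}/n!\|\le C^n$, i.e.\ a \emph{positive but finite} radius of convergence $\ge 1/C$; it cannot deliver an infinite radius, and indeed the same estimate applied to the classical sequence is consistent with the genuine restriction $2|t|<|x|$ in Theorem~\ref{T:classical_glaisher_gf}. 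The paper closes the gap differently: by eliminating between the two delay-differential equations \eqref{E:PDeltaE_neg_glaisher_-1}--\eqref{E:PDeltaE_neg_glaisher_-2} it produces the single third-order linear ODE in $t$ (for each fixed $x$)
\begin{equation*}
tf_{ttt}(x,t)+\bigl(x-2t+\tfrac32\bigr)f_{tt}(x,t)-(t+1)f_t(x,t)+\bigl(x+\tfrac12\bigr)f(x,t)=0,
\end{equation*}
whose only finite singularity is the regular singular point $t=0$ with integer indicial roots $\sigma=0,1$; consequently every local analytic solution at $t=0$ — in particular the Poisson sum, which solves the system by the right leg of the diagram — extends to an entire function of $t$, and its power series converges uniformly on compacta of $\mathbb{C}\times\mathbb{C}$. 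You would need to supply this (or an equivalent singularity analysis of the difference system) to justify the full-plane convergence; the Gevrey estimate alone does not suffice.
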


\begin{proof}
Recall the maps $\mathfrak{p}$, $\mathfrak{p}_\Delta$ and $\mathfrak{N}$ as defined in \eqref{E:poisson}, \eqref{E:poisson_2} and \eqref{E:newton_trans} respectively. Then we have the diagram
	\begin{equation}\label{E:commute-half besssel-2}
		\begin{tikzcd}  [row sep=large, column sep=large] %[row sep=huge, column sep=huge]
			& \mathcal{G}_{\pm}(\omega_\pm) \arrow[swap]{dl}{\Cos\omega\slash\Sin\omega} \arrow{d}{\times(\mathscr{C}_{\pm n\mp  \frac12})} \arrow{dr}{\times(\mathscr{C}^\Delta_{\pm n\mp \frac12})}
\\
			\widetilde{\mathcal{A}_2}(\omega_\pm) \arrow{dr}{\times x^{-\frac12}} &   \mathcal{O}^{\mathbb{N}_0} \arrow{d}{\mathfrak{p}} & \mathcal{O}^{\mathbb{N}_0} \arrow{d}{\mathfrak{p}_\Delta} \\
 			& \mathcal{O}_{d d} \arrow{r}{\times\mathfrak{N}} &  \mathcal{O}_{\Delta d} 
		\end{tikzcd}
\end{equation}
in which $\widetilde{\mathcal{A}_2}(\omega_\pm):=\overline{\displaystyle\frac{\mathcal{A}_{2}(\omega_\pm)}{\big[\mathcal{A}_{2}(\omega_\pm)(W_1\Xi_1+\frac{1}{2})
			+\mathcal{A}_{2}(\omega_\pm)\Xi_2\big]}}$, the Weyl cosine and sine $\Cos \omega_\pm\slash \Sin \omega_\pm$ are as defined in Example \ref{Eg:trigo},  and the analytic functions in $\mathcal{O}_{\Delta d}$ are suitably restricted.

		Again, it suffices to consider the case for the negative Glaisher module $\mathcal{G}_-$ for the proof of $\mathcal{G}_+$ being similar. Since $\omega_-^2 =W_1^2-2W_1W_2$, the leftmost path in the above commutative diagram asserts that the analytic function from \eqref{E:glaisher_fn}
		\begin{equation}\label{E:glaisher_fn_2}
			x^{-\frac12}
			\big(C_1\cos\sqrt{x^2- 2xt}+C_2\sin \sqrt{x^2- 2xt}
\big)
		\end{equation}
		satisfies the system of PDEs \eqref{E:glaisher_PDE_1} and \eqref{E:glaisher_PDE_2}. A further composition of this analytic function under the Newton transformation \eqref{E:newton_trans} gives
		\begin{equation}\label{E:Glaisher_dintegral_soln}
	f_-(x,\, t)= \int_{-\infty}^{(0+)} 
				\lambda^{-\frac12} \frac{\big[C_1(x)\sin \sqrt{\lambda^2- 2\lambda t}+C_2(x)\cos \sqrt{\lambda^2-2\lambda t}\big]}{\Gamma(-x)}  e^\lambda (-\lambda)^{-x-1}\, d\lambda,
	\end{equation}
	which satisfies the system of delay-differential equations \eqref{E:PDEs_neg_glaisher_1} and \eqref{E:PDEs_neg_glaisher_2} by Theorem~\ref{T:Newton_trans}, i.e.,
			\begin{align}
				& tf_{tt}(x,\, t)+\big(x+\frac12\big)f_t(x,\, t)-xf_t(x-1,\, t)-xf(x-1,\, t)=0,\label{E:PDeltaE_neg_glaisher_1}\\
				&  tf_{t}(x,\, t)-xf_t(x-1,\, t)+xf(x-1,\, t)-\big(x+\frac12\big) f(x,\, t)=0.\label{E:PDeltaE_neg_glaisher_2} 
			\end{align}Without loss of generality, we may assume that the $1$-periodic factor ${e^{-i\pi x}}/{2i\sin(\pi x)}$ is included in the unknown $1$-periodic functions $C_1(x),\, C_2(x)$ in the following computation. The handling of the second case when $\omega_+=W_1^2+2W_1W_2$ when $\nu=\frac12$ is similar and is therefore omitted.

On the other hand, the rightmost path in the above commutative diagram shows that the sum $\sum_{n=0}^\infty \mathscr{C}^\Delta_{n-\frac{1}{2}}(x)\, t^n/{n!}$ is a solution to the system of PDEs \eqref{E:PDeltaE_neg_glaisher_1} and  \eqref{E:PDeltaE_neg_glaisher_2}.  It is clear that  the $\mathcal{G}_\pm(\omega_\pm)$ defined above is holonomic with multiplicity two. Hence \eqref{E:delta_glaisher_gf} holds apart from two $1$-periodic functions in $x$ which can easily be incorporated into the constants $C_1,\, C_2$. 
\medskip

 We now derive a third order differential equation from the equations \eqref{E:PDeltaE_neg_glaisher_1} and \eqref{E:PDeltaE_neg_glaisher_2}
in order to determine the range of convergence of the \eqref{E:delta_glaisher_gf}. Adding and subtracting the equations \eqref{E:PDeltaE_neg_glaisher_1} and \eqref{E:PDeltaE_neg_glaisher_2} yields the equations
	\begin{equation}
		\label{E:PDeltaE_neg_glaisher_3}
			tf_{tt}(x,\, t)+\big(x+t+\frac12\big)f_t(x,\, t)-2x f_t(x-1,\, t)-\big(x+\frac12\big)f(x,\, t)=0
	\end{equation}
	and 
	\begin{equation}
		\label{E:PDeltaE_neg_glaisher_4}
			tf_{tt}(x,\, t)+\big(x-t+\frac12\big)f_t(x,\, t)-2xf(x-1,\, t)+\big(x+\frac12\big)f(x,\, t)=0
	\end{equation}
	respectively. Differentiating \eqref{E:PDeltaE_neg_glaisher_4} with respect to $t$ yields
	\begin{equation}
		\label{E:PDeltaE_neg_glaisher_5}
			tf_{ttt}(x,\, t)+\big(x-t+\frac32\big) f_{tt}(x,\, t)+\big(x-\frac12\big)f_t(x,\, t) -2xf_t(x-1,\, t)=0.
	\end{equation}Subtracting \eqref{E:PDeltaE_neg_glaisher_3} from \eqref{E:PDeltaE_neg_glaisher_5} yields the equation
	\begin{equation}
		\label{E:PDeltaE_neg_glaisher_6}
			tf_{ttt}(x,\, t)+\big(x-2t+\frac32\big) f_{tt}(x,\, t)-(t+1)f_t(x,\, t)+\big(x+\frac12\big)f(x,\, t)=0,
	\end{equation}
	which is a third order linear ordinary differential equation in $t$ for each fixed $x$. Clearly, the equation \eqref{E:PDeltaE_neg_glaisher_6} has only one finite regular singularity at $t=0$, so that any power series solution of it must converge unformly in each compact subset of $\mathbb{C}\times\mathbb{C}$.  Substituting a Frobenius solution 
$f(x,\cdot)=\sum_{k=0}^\infty a_kx^{\sigma+k}$ into the third-order equation \eqref{E:PDeltaE_neg_glaisher_6} further confirming that $\sigma=0,\, 1$ to be the only integer indicial roots. Hence the \eqref{E:delta_glaisher_gf} follows.
	
	The handling of the $\mathcal{G}_\frac12$ is similar to that for $\mathcal{G}_{-\frac12}$ and is therefore omitted.  This completes the proof.
\end{proof}
\medskip

\begin{remark} We offer a direct verification that the \eqref{E:Glaisher_dintegral_soln} solves the delay-differential equations \eqref{E:PDeltaE_neg_glaisher_1} and \eqref{E:PDeltaE_neg_glaisher_2}. Since the computation of the sine and cosine functions are similar, so it suffice to consider only the special case with cosine kernel. Moreover, the $1$-periodic function $C_1(x)$ can be factored out in the computation below since it is independent of $t$ and so it is also omitted. Let
		\[
			y(x, t)=\displaystyle\int_{-\infty}^{(0+)} e^{\lambda}
			\dfrac{\cos(\lambda^2-2\lambda t)^{\frac{1}{2}}}
			{\Gamma(-x)	(-\lambda)^{x+1}\lambda^\frac{1}{2}} d\lambda.
			\]We now show that the $y(x,\, t)$ is a solution to the  delay-differential equation \eqref{E:PDeltaE_neg_glaisher_2}. We split the computation of this  equation into two parts. We substitute this $y$ into the third and the fourth terms from \eqref{E:PDeltaE_neg_glaisher_2}. This yields
			\begin{equation}\label{E:three-terms}
%				\begin{split}
			xy(x, t)-xy(x-1, t)+\frac{1}{2}y(x, t)\\
			=\displaystyle\frac{1}{\Gamma(-x)}
			\int_{-\infty}^{(0+)} (x-\lambda+\frac{1}{2})e^{\lambda}
			(-\lambda)^{-x-1} 
			\cos(\lambda^2-2\lambda t)^{\frac{1}{2}}
			\lambda^{-\frac12}d\lambda.
%			\end{split}
			\end{equation}We now substitute the $y(x,\, t)$ into the first two terms in \eqref{E:PDeltaE_neg_glaisher_2} before performing an integration-by-parts once yield
			\[
  \begin{split}
	&x\frac{dy(x-1, t)}{dt}-t\frac{dy(x, t)}{dt}
			=\displaystyle\frac{1}{\Gamma(-x)}
			\int_{-\infty}^{(0+)} e^{\lambda}
			(-\lambda)^{-x-1} \lambda^{\frac{1}{2}} \,
			d[-\cos(\lambda^2-2\lambda t)^{\frac{1}{2}}]\\
			&=-e^{\lambda}
			\frac{(-\lambda)^{-x-1} }{\Gamma(-x)}\lambda^{\frac{1}{2}} 
			\cos(\lambda^2-2\lambda t)^{\frac{1}{2}}
			\Big|_{-\infty}^{(0+)}
			+\displaystyle\frac{1}{\Gamma(-x)}
			\int_{-\infty}^{(0+)} 
			\cos(\lambda^2-2\lambda t)^{\frac{1}{2}}
			d(e^{\lambda}
			(-\lambda)^{-x-1} \lambda^{\frac{1}{2}} ).
			\end{split}
\]Since $\Re(x)<-\frac{1}{2}$, then the limit of the first term above vanishes. That is, 
\[
-e^{\lambda}
\frac{(-\lambda)^{-x-1} }{\Gamma(-x)}\lambda^{\frac{1}{2}} 
\cos(\lambda^2-2\lambda t)^{\frac{1}{2}} \Big|_{-\infty}^{(0+)}=0.
\]Thus the above expression becomes, after this integration-by-parts, 
	\[
	\begin{split}
	&x\frac{dy(x-1, t)}{dt}-t\frac{dy(x, t)}{dt}\\
	&=\displaystyle\frac{1}{\Gamma(-x)}
	\int_{-\infty}^{(0+)} 
	\cos(\lambda^2-2\lambda t)^{\frac{1}{2}}
	e^{\lambda}(-\lambda)^{-x-1}\lambda^{-\frac{1}{2}}
	(\lambda-x-\frac{1}{2}) d\lambda\\
%	&=-(xy(x, t)-xy(x-1, t)+\frac{1}{2}y(x, t))
	\end{split}
	\]which is precisely the negative of the expression above \eqref{E:three-terms}. This verifies that $y(x,\, t)$ solves the first equation from \eqref{E:PDeltaE_neg_glaisher_2}. The verification of $y(x, t)$ to the  \eqref{E:PDeltaE_neg_glaisher_1} is similar and is therefore omitted.
\end{remark}
\medskip

The following difference analogues for the Glaisher-type generating functions \eqref{E: first half bessel_gf} for difference Bessel functions $J_{n+\frac12}^\Delta(x)$ are amongst the special cases described in the above theorem.
\medskip

	\begin{theorem}\label{T:Delta_glaisher_gf} The series
	\begin{enumerate}
		\item
		\begin{equation}\label{E: gf-Delta_glaisher_cos}
	\sqrt{\frac{2}{\pi}}\dfrac{e^{-i\pi x}} {2i \sin(\pi x)}
	\int_{-\infty}^{(0+)} 
	\frac{e^{\lambda}
	(-\lambda)^{-x-1}\lambda^{-\frac{1}{2}} 
	 \cos\sqrt{\lambda ^2-2\lambda t}}{\Gamma(-x)}\,  d\lambda
	=\sum_{n=0}^{\infty}J^{\Delta}_{n-\frac{1}{2}}(x)\frac{t^n}{n!},
	\end{equation}
			\item and
	\begin{equation}\label{E: gf-Delta_glaisher_sin}
	\sqrt{\frac{2}{\pi}}\dfrac{e^{-i\pi x}} {2i \sin(\pi x)}
	\int_{-\infty}^{(0+)} 
	\frac{e^{\lambda}
	(-\lambda)^{-x-1}\lambda^{-\frac{1}{2}} 
	 \sin\sqrt{\lambda ^2+2\lambda t}}{\Gamma(-x)}\,  d\lambda
	=\sum_{n=0}^{\infty}J^{\Delta}_{-n+\frac{1}{2}}(x)\frac{t^n}{n!}
	\end{equation}
		\end{enumerate}	
converge uniformly in each compact subset of $ \mathbb{C}\times\mathbb{C}$.
	\end{theorem}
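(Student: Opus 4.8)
The plan is to treat this statement as the difference-calculus analogue of the classical Glaisher identities in Corollary~\ref{C:Graisher_gf}(i), and to derive it from the general result already established in Theorem~\ref{T:difference_glaisher_gf}. First I would apply Theorem~\ref{T:difference_glaisher_gf} with the specific choice $\mathscr{C}^\Delta_{\pm n\mp\frac12}=J^\Delta_{\pm n\mp\frac12}$, the difference Bessel functions of half-integer order. This immediately yields the two asserted identities \eqref{E: gf-Delta_glaisher_cos} and \eqref{E: gf-Delta_glaisher_sin} in the form of \eqref{E:delta_glaisher_gf}, up to two undetermined $1$-periodic functions $C_1(x)$ and $C_2(x)$ (into which the prefactor $e^{-i\pi x}/(2i\sin\pi x)$ has been absorbed). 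The uniform convergence on compact subsets of $\mathbb{C}\times\mathbb{C}$ is part of the conclusion of Theorem~\ref{T:difference_glaisher_gf}, so the only genuine work is to pin down $C_1$ and $C_2$ and to identify them as $\sqrt{2/\pi}$ with the correct (vanishing) cross term.

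The cleanest route to the constants is through the commutative diagram \eqref{E:commute-half besssel-2}. Along its left-bottom path the image of $1$ is the classical Glaisher generating function, which by Corollary~\ref{C:Graisher_gf}(i) equals $\sqrt{2/\pi}\,x^{-1/2}\cos\sqrt{x^2-2xt}$ (respectively $\sqrt{2/\pi}\,x^{-1/2}\sin\sqrt{x^2+2xt}$); applying the Newton transform $\mathfrak{N}$ of Theorem~\ref{T:Newton_trans} (the bottom horizontal arrow) transports this to the Hankel-contour integral on the left of \eqref{E: gf-Delta_glaisher_cos} (resp.\ \eqref{E: gf-Delta_glaisher_sin}). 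Along the right path the same element maps to $\sum_{n\ge0}J^\Delta_{n-1/2}(x)\,t^n/n!$. Since the diagram commutes, it remains only to observe that $\mathfrak{N}$ carries $J_{\nu}$ to $J^\Delta_{\nu}$ term by term: by \eqref{E:pos_bessel-soln} and Example~\ref{Eg:delta_Bessel_fn} both arise from the \emph{same} Weyl--Bessel series $\mathcal{J}_\nu$ composed with multiplication by $x^\nu$ and by $(x)_\nu$ respectively, and $\mathfrak{N}$ sends $x^\nu\mapsto(x)_\nu$ by \eqref{E:newton_bases_2}. This forces the cosine kernel to enter with coefficient $\sqrt{2/\pi}$ and the sine kernel to drop out, giving exactly \eqref{E: gf-Delta_glaisher_cos}; the symmetric computation for the positive Glaisher module gives \eqref{E: gf-Delta_glaisher_sin}. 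As an independent check I would also carry out the $t=0$ specialization: there the right-hand side collapses to the single term $J^\Delta_{\mp1/2}(x)$, while the left-hand integral becomes the Newton transform of $x^{-1/2}\cos x$ (resp.\ $x^{-1/2}\sin x$), and a lowest-term comparison as $x\to0$, of the kind performed in Theorem~\ref{T:integral rep of db}, recovers the same constants.

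The main obstacle will be the analytic justification of the identity $\mathfrak{N}(J_{n-1/2})=J^\Delta_{n-1/2}$ together with the interchange of $\mathfrak{N}$ with the Poisson-type sum over $n$. Two points need care. First, one must verify that the half-order Bessel functions $J_{n-1/2}(x)$, which behave like $x^{-1/2}$ times a trigonometric factor, lie in the domain of the Newton transform, i.e.\ satisfy the growth hypothesis $|f(x)e^{mx}|\to0$ as $x\to-\infty$ for some $m<1$ required in Theorem~\ref{T:Newton_trans}, and that the Hankel-contour integral converges with the $\lambda^{-1/2}$ branch chosen consistently with the cuts used in the classical formula. Second, the integration-by-parts steps underlying the $\mathcal{A}$-linearity of $\mathfrak{N}$ produce boundary terms that vanish only under a half-plane restriction such as $\Re(x)<-\tfrac12$; I would therefore establish the constants first on that half-plane and then extend to all $x$ by analytic continuation, the convergence on all of $\mathbb{C}\times\mathbb{C}$ from Theorem~\ref{T:difference_glaisher_gf} guaranteeing that both sides are entire in $x$ for fixed $t$. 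Everything else is routine once these domain-of-definition and boundary-term issues are settled.
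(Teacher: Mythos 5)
Your proposal is correct and follows essentially the same route as the paper: the decisive step in both is to apply the Newton transform $\mathfrak{N}$ term by term to the classical Glaisher identities of Corollary~\ref{C:Graisher_gf}(i), using $\mathfrak{N}(J_{n-\frac12})=J^{\Delta}_{n-\frac12}$ (via the common Weyl--Bessel series and $\mathfrak{N}\colon x^\nu\mapsto(x)_\nu$), with the interchange of $\mathfrak{N}$ and the sum justified by uniform convergence on compacta of $\mathbb{G}$ and a truncated-Hankel-contour limit, followed by analytic continuation in $x$. The only cosmetic difference is that you first invoke Theorem~\ref{T:difference_glaisher_gf} to get the identity up to $1$-periodic factors and the convergence statement, whereas the paper notes that route as possible but carries out the direct Newton-transform computation instead.
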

\medskip

It may be possible to derive the two generating functions of the above theorem from the Theorem \ref{T:difference_glaisher_gf}, but we choose a more direct method by applying the Newton transform Theorem \ref{T:Newton_trans} to Glaisher\rq{}s formulae directly.
\medskip

\begin{proof}   
Due to the similarity of the two relations \eqref{E: gf-Delta_glaisher_cos} and \eqref{E: gf-Delta_glaisher_sin}, so it suffices to only establish the \eqref{E: gf-Delta_glaisher_cos}. 

Let 
	\[
		L=(X\partial)^2+(X^2-\nu^2),
	\]
and to recall that a $\mathcal{D}-$linear map \eqref{E:neg_nu_map}
	\[
		S_{-\nu}=\sum_{k=0}^{\infty} \frac{(-1)^kX^{2k}}{2^{\nu+2k}k!\Gamma(-\nu+k+1)}
	\]
	in the following diagram
\begin{equation}\label{E:commute-glaisher}
		\begin{tikzcd}  [row sep=large, column sep=large] %[row sep=huge, column sep=huge]
			\displaystyle\frac{\mathbb{C}[[X\partial, X]]}{\mathbb{C}[[X\partial, X]]L} \arrow{r}{\times S_{-\nu}} 
%			\arrow[swap]{d}{\Cos\omega\slash\Sin\omega} 
			&  \displaystyle\frac{\mathbb{C}[[X\partial, X]]}{\mathbb{C}[[X\partial, X]](X\partial+\nu)} 
			\arrow{d}{\times 1} 
			\arrow{dr}{\times 1}\\
 %			 \widetilde{\mathcal{A}_2}(\omega)\arrow{r}{\times x^{-\frac12}} 
 &  \mathcal{O}_{d } \arrow{r}{\times\mathfrak{N}} &  \mathcal{O}_{\Delta} 
		\end{tikzcd}
	\end{equation}where the $\mathfrak{N}$ again denote the Newton transformation. Since
\eqref{E:newton_bases_2}. Let us apply a truncated  Newton transformation $\mathfrak{N}_R$ of \eqref{E:newton_trans} which has the truncated Hankel-type contour $\Gamma_R\ (R>0)$ starts from $-R$ below the negative real-axis, goes around the origin in the counter-clockwise direction before returning to $x=-R$ 
	\[
		\begin{split}
		\mathfrak{N}_R(J_{-\nu}(x)) &= \dfrac{e^{-i\pi x}} {2i \sin(\pi x)\Gamma(-x)}
		 \int_{Ne^{-i\pi}}^{Ne^{i\pi}} \sum_{k=0}^{\infty} \frac{(-1)^k \lambda^{-\nu+2k}}{2^{\nu+2k}k!\Gamma(-\nu+k+1)}e^\lambda(-\lambda)^{-x-1}\, d\lambda
		 \\
		 &=\sum_{k=0}^{\infty}
		 \dfrac{e^{-i\pi x}} {2i \sin(\pi x)\Gamma(-x)}\int_{Ne^{-i\pi}}^{Ne^{i\pi}} 
		  \frac{(-1)^k \lambda^{-\nu+2k}}{2^{\nu+2k}k!\Gamma(-\nu+k+1)}e^\lambda(-\lambda)^{-x-1}\, d\lambda\\
		  &\to \sum_{k=0}^{\infty}
		 \dfrac{e^{-i\pi x}} {2i \sin(\pi x)\Gamma(-x)}\int_{-\infty}^{(0+)} 
		  \frac{(-1)^k \lambda^{-\nu+2k}}{2^{\nu+2k}k!\Gamma(-\nu+k+1)}e^\lambda(-\lambda)^{-x-1}\, d\lambda\\ 
		  &=\sum_{k=0}^{\infty} \frac{(-1)^k}{2^{-\nu+2k}k!\Gamma(-\nu+k+1)}(x)_{-\nu+2k}
		  =J^\Delta_{-\nu}(x),
		 \end{split}
	\]as $R\to\infty$, where the interchange of the integral and summation signs above is guaranteed by the uniform convergence of the classical Bessel function $J_{\nu}(x)$ in any compact subset of $\mathbb{C}^\dagger=\mathbb{C}\backslash\{x:\, x\le 0\}$.

Let $\nu=n-\frac12$. Apply a truncated  Newton transformation \eqref{E:newton_trans} over the truncated Hankel-type contour $\Gamma_R$ as defined above to both sides of \eqref{E: first half bessel_gf}  with respect to $\lambda$. This yields
\begin{equation}\label{E: gf-Delta_glaisher_cos_2}
	\begin{split}
		\sqrt{\frac{2}{\pi}}\dfrac{e^{-i\pi x}} {2i \sin(\pi x)}
		 &\int_{Ne^{-i\pi}}^{Ne^{i\pi}} 
		\frac{e^{\lambda}
		(-\lambda)^{-x-1}\lambda^{-\frac{1}{2}} 
	 	\cos\sqrt{\lambda ^2-2\lambda t}}{\Gamma(-x)}\,  d\lambda\\
		&
=\sum_{n=0}^{\infty} \Big(\dfrac{e^{-i\pi x}} {2i \sin(\pi x)}
		 \int_{Ne^{-i\pi}}^{Ne^{i\pi}} 
		\frac{e^{\lambda}
		(-\lambda)^{-x-1}J_{n-\frac12}(\lambda)}{\Gamma(-x)}\,  d\lambda\Big)\frac{t^n}{n!},\\
%		&=\sum_{n=0}^{\infty}J^{\Delta}_{n-\frac{1}{2}}(x)\frac{t^n}{n!}
	\end{split}
\end{equation}where the interchange of the integral and summation signs above is allowed because of the uniform convergence of the \eqref{E: first half bessel_gf} in each compact subset of $\mathbb{G}:=\{2|t|<|x|:\ (x,\, t)\in \mathbb{C}^\dagger\times\mathbb{C}\}$ where $\mathbb{C}^\dagger:=\mathbb{C}\backslash\{x:x\le 0\}$ as shown there. We finally obtain the formula \eqref{E: gf-Delta_glaisher_cos} after letting $R\to\infty$ in the truncated contour $\Gamma_R$. We may analytic continue the $x$ to the whole of $\mathbb{C}$ by deforming the Hankel contour when necessary. The proof of the \eqref{E: gf-Delta_glaisher_sin} is similar. This completes the proof.
\end{proof}

We give a direct derivation of the  integral representations of $J^\Delta_{\pm\frac12}$ from the \eqref{E: gf-Delta_glaisher_cos} which are of different type from those given in \eqref{E:dBessel integral rep} and \eqref{E:dBessel integral rep-nu}.

\begin{corollary} Let $x\in \mathbb{C}$. Then
	\begin{enumerate}
		\item 
			\[
				J^\Delta_{-\frac12}(x)=\sqrt{\frac{2}{\pi}}
						\frac{e^{-i\pi x}}{2i \sin\pi x}
		\int_{-\infty}^{(0+)} 
		\dfrac{e^{\lambda}}
		{\Gamma(-x)	(-\lambda)^{x+1}\lambda^\frac{1}{2}}\cos \lambda\, d\lambda
			\]
and
		\item 
			\[
				J^\Delta_{\frac12}(x)=\sqrt{\frac{2}{\pi}}
			\frac{e^{-i\pi x}}{2i \sin\pi x}
		\int_{-\infty}^{(0+)} 
		\dfrac{e^{\lambda}}
		{\Gamma(-x)	(-\lambda)^{x+1}\lambda^\frac{1}{2}}\sin \lambda\, d\lambda.
			\]
	\end{enumerate}
\end{corollary}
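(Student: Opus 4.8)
The plan is to obtain both formulae simply by specialising the two generating functions of Theorem~\ref{T:Delta_glaisher_gf} at $t=0$. By that theorem the identities \eqref{E: gf-Delta_glaisher_cos} and \eqref{E: gf-Delta_glaisher_sin} hold on all of $\mathbb{C}\times\mathbb{C}$, and for fixed $x$ each right-hand side is a power series in $t$ converging uniformly on compacta, hence an entire function of $t$. Thus I may read off its value at $t=0$ as the $n=0$ term of the series: for \eqref{E: gf-Delta_glaisher_cos} this is $J^\Delta_{-\frac12}(x)$, and for \eqref{E: gf-Delta_glaisher_sin} it is $J^\Delta_{\frac12}(x)$. (As in Theorem~\ref{T:Delta_glaisher_gf}, the resulting identities extend from the region where the integrals converge at face value to all $x\in\mathbb{C}$ by deforming the Hankel contour.)

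Next I would set $t=0$ on the left-hand sides. Since the parametrised Hankel integrals converge locally uniformly in $t$ near $t=0$ (the integrand is jointly continuous and its tail estimates along the contour are uniform for $t$ in a neighbourhood of $0$), the limit $t\to 0$ may be taken inside the integral sign. It then remains to evaluate $\cos\sqrt{\lambda^2-2\lambda t}$ and $\sin\sqrt{\lambda^2+2\lambda t}$ at $t=0$. The relevant branch of the square root is the one fixed in the proof of Theorem~\ref{T:Delta_glaisher_gf}, namely the analytic continuation in $t$ of $\sqrt{\lambda^2\mp 2\lambda t}$ whose value at $t=0$ is $\lambda$. Hence the cosine kernel collapses to $\cos\lambda$ and the sine kernel to $\sin\lambda$, which reproduces exactly the two displayed integrals.

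The one genuinely delicate point, and the step I expect to be the main obstacle, is the branch of the square root in the sine formula, where an incorrect choice would introduce a spurious factor $-1$ because $\sin$ is odd. For the cosine case there is nothing to check: $\cos$ is even, so $\cos\sqrt{\lambda^2}=\cos(\pm\lambda)=\cos\lambda$ for either branch. For the sine case the correct sign is pinned down by consistency with Glaisher's classical formula \eqref{E: first half bessel_gf}, from which \eqref{E: gf-Delta_glaisher_sin} is obtained by the Newton transform: at $t=0$ Glaisher's identity reads $J_{\frac12}(x)=\sqrt{2/\pi}\,x^{-1/2}\sin x$, forcing $(x^2)^{1/2}=x$ on $\mathbb{C}^\dagger$, and this is precisely the branch carried into the integrand.

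As an independent confirmation of the sign I would record the cleaner route through the Newton transform itself. By \eqref{E:newton_bases_2} and termwise application of $\mathfrak{N}$ to the Frobenius series one has $\mathfrak{N}(J_\nu)=J^\Delta_\nu$, so $J^\Delta_{\mp\frac12}=\mathfrak{N}(J_{\mp\frac12})$; substituting the closed forms $J_{-\frac12}(x)=\sqrt{2/\pi}\,x^{-1/2}\cos x$ and $J_{\frac12}(x)=\sqrt{2/\pi}\,x^{-1/2}\sin x$ into the defining integral \eqref{E:newton_trans} reproduces the two stated formulae verbatim. This auxiliary computation is routine and serves only to corroborate the branch selection, so I would relegate it to a remark rather than make it the primary argument.
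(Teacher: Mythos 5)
Your proposal is correct, but it takes a genuinely different route from the paper. You obtain the two formulae by specialising the generating functions \eqref{E: gf-Delta_glaisher_cos} and \eqref{E: gf-Delta_glaisher_sin} at $t=0$: the uniformly convergent series on the right collapses to its $n=0$ term, the parametrised Hankel integral on the left passes to the limit under the integral sign, and the only delicate issue — the branch of $\sqrt{\lambda^2\mp 2\lambda t}$ at $t=0$, which matters for the odd sine kernel — is pinned down exactly as you say, by consistency with Glaisher's classical identity $J_{1/2}(x)=\sqrt{2/\pi x}\,\sin x$ through the Newton transform. The paper instead gives a self-contained direct evaluation: it expands $\cos\lambda$ (resp.\ $\sin\lambda$) in its Maclaurin series, integrates term by term over the Hankel contour using the contour representation of $1/\Gamma$ together with Euler's reflection formula to compute $G_k(x)=\Gamma(x+1)/\Gamma(x+\tfrac32-2k)$, and then identifies the resulting Newton-type series with $\sqrt{\pi/2}\,J^\Delta_{\mp 1/2}(x)$ via $\Gamma(k+\tfrac12)\,k!\,2^{2k}=\sqrt{\pi}\,(2k)!$. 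Your argument is shorter and conceptually cleaner, but it inherits the full strength of Theorem~\ref{T:Delta_glaisher_gf} (in particular the normalisation $\sqrt{2/\pi}$ and the branch convention); the paper's computation is independent of that theorem, produces the explicit expansions \eqref{E:glaisher_cos_expansion} and \eqref{E:glaisher_sin_expansion} as a by-product, and thereby doubles as a consistency check on the generating function itself. Your secondary route via $J^\Delta_{\mp 1/2}=\mathfrak{N}(J_{\mp 1/2})$ applied to the closed forms of $J_{\pm 1/2}$ is also sound and is essentially the mechanism underlying the paper's Theorem~\ref{T:Delta_glaisher_gf} in the first place.
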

\medskip

\begin{proof}Expanding the $\cos\lambda$ into series in 
	\[
		\begin{split}
			&\frac{e^{-i\pi x}}{2i \sin\pi x}
		\int_{-\infty}^{(0+)} 
		\dfrac{e^{\lambda}}
		{\Gamma(-x)	(-\lambda)^{x+1}\lambda^\frac{1}{2}}\cos \lambda\, d\lambda\\
		&=\frac{e^{-i\pi x}}{2i \sin\pi x}\int_{-\infty}^{(0+)} 	\dfrac{e^{\lambda}}
		{\Gamma(-x)	(-\lambda)^{x+1}\lambda^\frac{1}{2}}
		\sum_{k=0}^{\infty} \frac{(-1)^k\lambda^{2k}}{(2k)!}\, d\lambda\\
		&=\sum_{k=0}^\infty \frac{(-1)^k}{(2k)!}\, G_k(x)
		\end{split}
	\]where, with applications of  the generalised Gamma function and Euler's reflection formula,
	\[
		\begin{split}
			G_k(x)&:=\frac{e^{-i\pi x}}{2i \sin\pi x}\frac{e^{-i\frac{\pi}{2}}}{\Gamma(-x)}
			\int_{-\infty}^{(0+)}e^{\lambda}(-\lambda)^{-x-\frac{1}{2}+2k-1}d\lambda\\
			&=\frac{e^{-i\pi x}}{2i \sin\pi x}\frac{e^{-i\frac{\pi}{2}}}{\Gamma(-x)}
			\frac{e^{-i\pi(x+\frac12-2k)}}{e^{-i\pi(x+\frac12-2k)}}
			\frac{\sin\pi(-x-\frac12+2k)}{\sin\pi(-x-\frac12+2k)}
			\frac{\Gamma(-x-\frac12+2k)}{\Gamma(-x-\frac12+2k)}
			\int_{-\infty}^{(0+)} e^\lambda(-\lambda)^{-x-\frac12+2k-1}\, d\lambda
			\\
			&=\frac{e^{-i\pi x}}{\sin\pi x}\frac{e^{-i\frac{\pi}{2}}}{\Gamma(-x)}
			\frac{\sin\pi(-x-\frac12+2k)\Gamma(-x-\frac12+2k)}{e^{-i\pi(x+\frac12-2k)}}\,
			\\
			&=\frac{\Gamma(-x-\frac{1}{2}+2k)}{\Gamma(-x)}
			\frac{\sin(x+\frac{1}{2}-2k)\pi}{\sin (-\pi x)}\\
			&=\frac{\Gamma(x+1)}{\Gamma(x+\frac{3}{2}-2k)}.
		\end{split}
	\]%Clearly $G_k(-\frac12)=0$ for each $k\ge 1$, and $G_0(-\frac12)=1/\sqrt{\pi}$. 
	Hence
	\begin{equation}\label{E:glaisher_cos_expansion}
		\frac{e^{-i\pi x}}{2i \sin\pi x}
		\int_{-\infty}^{(0+)} 
		\dfrac{e^{\lambda}}
		{\Gamma(-x)	(-\lambda)^{x+1}\lambda^\frac{1}{2}}\cos \lambda\, d\lambda\
		=\sum_{k=0}^\infty \frac{(-1)^k}{(2k)!}\, \frac{\Gamma(x+1)}{\Gamma(x+\frac{3}{2}-2k)}=\sqrt{\frac{\pi}{2}}\, J^\Delta_{-\frac12}(x)
	\end{equation}
	since, with applications of the well-known identity $\Gamma(k+\frac12)k!\, 2^{2k}=\sqrt{\pi}(2k)!$, see for example \cite[p. 211]{Copson_1935}, that
	\begin{equation}\label{E:difference_bessel_1/2}
		\begin{split}
			J^{\Delta}_{-\frac12}(x)&:=\sum_{k=0}^{\infty} \frac{(-1)^k}{2^{-\frac12+2k}k!\, \Gamma(\frac12+k)}(x)_{-\frac12+2k}
			=\sum_{k=0}^{\infty} 
			\frac{(-1)^k}{2^{-\frac12}\sqrt{\pi}(2k)!}(x)_{-\frac12+2k}\\
			&=\sqrt{\frac{2}{\pi}}\sum_{k=0}^{\infty} 
			\frac{(-1)^k}{(2k)!} \frac{\Gamma(x+1)}{\Gamma(x+\frac{3}{2}-2k)}.
			\end{split}
		\end{equation}

Similarly, we have
		\begin{equation}\label{E:glaisher_sin_expansion}
		\frac{e^{-i\pi x}}{2i \sin\pi x}
		\int_{-\infty}^{(0+)} 
		\dfrac{e^{\lambda}}
		{\Gamma(-x)	(-\lambda)^{x+1}\lambda^\frac{1}{2}}\sin \lambda\, d\lambda\
		=\sum_{k=0}^\infty \frac{(-1)^k}{(2k+1)!}\, \frac{\Gamma(x+1)}{\Gamma(x+\frac{1}{2}-2k)}=\sqrt{\frac{\pi}{2}}\, J^\Delta_{\frac12}(x).
	\end{equation}
\end{proof}

\section{Discussion}\label{S:discussion}

\subsection*{Truesdell\rq{}s $F$-equation theory}
 It had been observed by Lommel \cite{Lommel_1871},  Nielson \cite{Nielson} and Sonine \cite{Sonine},
 all from the nineteenth century, to characterize Bessel functions from the pair of recurrence formulae, which we call PDEs in this article (Proposition \ref{P:PDE_cylinderical_bessel}),
	\begin{equation}\label{E:any_bessel_recus_0}
		\begin{split}
			& x\mathscr{C}_{\nu}^\prime(x)+\nu\mathscr{C}_{\nu}(x)-
		x\mathscr{C}_{\nu-1}(x)=0,\\
			&x\mathscr{C}^{\prime}_{\nu}(x)-\nu\mathscr{C}_{\nu+n}(x)+
		x\mathscr{C}_{\nu+1}(x)=0,
		\end{split}
	\end{equation}
as analytic functions of two complex variables. They called the functions that satisfy both of the equations \textit{cylinder functions}, see also \cite[pp. 82-84]{Watson1944}. In fact, according to Watson \cite{Watson1944}, Sonine refrained from using Bessel's differential equation \eqref{E:Bessel_eqn} in his study. Truesdell published the book \cite{Truesdell_1948} in 1948 perfecting the viewpoint started by the aforementioned nineteenth century scholars as an analytic theory of differential-difference equation that can be written in the forms
	\begin{equation}\label{E:Truesdell}
		\frac{\partial}{\partial x}f(x,\alpha)=A(x,\alpha)f(x,\alpha)+B(x,\alpha)f(x,\alpha+1).
	\end{equation}If the $f(x,\alpha)$, assumes a specific form that after a suitable change of variables, can be written in an equivalent form
	\begin{equation}\label{E:F_eqn}
		\frac{\partial}{\partial x}F(x,\alpha)=F(x, \alpha+1),
	\end{equation}
	then he demonstrated that derived many known classical special function identities\footnote{Thirty five formulae have been listed in \cite{Truesdell_1947}.}
	 as their generating functions and to discover new ones for different $f$ for all sorts of classical special functions. Truesdell called the equation \eqref{E:F_eqn} an $F$-equation. We quote from \cite[p. 7] {Truesdell_1948}
	\begin{quote}
		``The aim of this essay is to provide a general theory which motivates, discovers, and coordinates such seemingly unconnected relations among familiar special functions as the formulas (1) through (35).\rq\rq{}
	\end{quote}

	 Truesdell's aimed to find ``rational methods of discovery" \footnote{Truesdell, p. 7.} of the numerous classical special functions formulae found in the literature that often left beginners, and even experienced researchers, wondering about their origins and the methods of their derivation. For example,  Truesdell commented that if a textbook has the following question:
	 \begin{quote}
	 	``Find a formula which gives Laguerre polynomials in terms of Bessel functions.\rq\rq{}\footnote{Truesdell, p. 7.}
	\end{quote} 
then, it would not be straightforward to any student unless he/she had a priori knowledge of such formulae. 
However, once the student sees such a formula, then it would be a typical textbook exercise for a student to give a rigorous proof. Thus Truesdell would like to ``coordinate", ``to bring order into the part of collection of known relations concerning special functions by showing that they are simple special cases of about a dozen general formulas and by adding to their number some of the missing analogues which do not seem to have been discovered thus far." \footnote{\cite{Truesdell_1948}.} 	 
	 
	 Truesdell further commented that it is not with the particular special function formulae (and some of them were claimed to be new) that was his focus, since 
	\begin{quote}
		``it is of no great task to construct ad hoc rigorous proofs and ... to any of the formulas we have just listed,"
	\end{quote}
but rather a general property, although it had been  observed by earlier researchers in various literature over a period of several decades in the second half of the nineteenth century, that was largely neglected \footnote{\cite[p. 8]{Truesdell_1948}.} is the result of applying his $F$-equation theory that he wanted to bring about. Indeed Truesdell was not alone that Ehrenpreis, without explicit mention, essentially also took the $F$-equation viewpoint in his study of classical analogues  of the Rogers-Ramanujan identities in \cite{Ehrenpreis_1990, Ehrenpreis_1993} in which a generalised system of \eqref{E:PDE_gf_bessel_0} that is related to multi-variable Bessel functions has been proposed. From the $D$-modules viewpoint employed in this article, although Truesdell and his predecessors certainly had the insight to study the classical Bessel functions $J_{\nu}$ via the recursions \eqref{E:any_bessel_recus_0}, the ``general and neglected property" is replaced by the algebraic property that the corresponding modules, such as the Bessel module studied in this paper, are holonomic in the sense of Bernstein-Kashiwara-Sato holonomic modules theory. Moreover, the essence of the $F$-equation theory which is about the existence and uniqueness of the \eqref{E:F_eqn} having analytic solutions is being replaced by the specific algebraic properties of the $D$-modules determined by their generators. In the case of the Bessel module $\mathcal{B}_\nu$ considered in this paper, the generators are given by \eqref{E:bessel_PDE}. What is definitely new are the difference analogues of \eqref{E:any_bessel_recus_0} given in \eqref{P:bilateral_Delta_PDE} and other formulae obtained in this article.

In addition to the well-known works of \cite{Paule_Schorn_1995, Chyzak_Salvy_1998, Chyzak_2000, Zeilberger_1990, Wilf_Zeilberger_1992} of applying $D$-modules to compute for ($q$-)hypergeometric type identities and the standard works on computations with $D$-modules \cite{SST_2000, IMST_2020}, in fact, rudiments of $D$-modules ideas had been employed by Boole in his seminar work \cite{Boole}\footnote{Boole was awarded the first gold medal in mathematics by the Royal Society in 1844 for this paper.}  in which a very general commutation relation on operational method has been applied to solve  a wide variety of linear differential equations including a variant of confluent hypergeometric equation. \footnote{See \cite{CCC_1} for a more detailed discussion.} Of course, in Boole's time, he could not possibly have the knowledge of the modern $D$-modules theory. However, the use of his generalised commutator in solving for series solutions of several linear differential equations is a vindication of his enormous insights into the nature.\footnote{Indeed, the paper together with other earlier works on differential and difference equations of Boole could be considered as preludes to his celebrated work on mathematical logic \cite{Boole_1847}, see \cite[\S 3]{Laita}.} In addition to the difference in notations used and problems tackled,  a major differences between our work and  those of Wilf and Zeilberger and his successors \cite{Zeilberger_1990, Wilf_Zeilberger_1992, Paule_Schorn_1995, Chyzak_Salvy_1998, Chyzak_2000} is that the way that the generators used to define our  $D$-modules  
which is based on an algebraization of Truesdell's $F$-equation theory. For example, the two generators \eqref{E:2_elements} of the Bessel module $\mathcal{B}_\nu$ are abstractions of the consequences of the gauge transformations on the Bessel operators parametrised by integers derived from the the corresponding transmutation formulae in Proposition \ref{P:bessel_transmutation}. The $\partial_1,\partial_2, X_1, X_2$ in the $\mathcal{B}_\nu$ carry different interpretations (manifestations) after appropriate $\mathcal{A}_2$-linear maps in the construction of Bessel's generating functions. Lemma 4.1 in \cite{Zeilberger_1990}, although not needed in this paper, provides a theoretical basis for the existence of characteristic change of variables made in the proof of Theorem \ref{T:bessel_gen_map_2}, Theorem \ref{T:bessel_poly_gf_map}, Theorem \ref{T:symbol gen yn-1}, Theorem \ref{T:neg_glaisher_cosine} and Theorem \ref{T:pos_glaisher_sine} in order to find (well-defined) $\mathcal{A}_2$-linear maps. The well-defined $\mathcal{A}_2$-linear maps found from these Theorems represent the generating functions derived (in $\mathcal{O}_{dd}$ and  $\mathcal{O}_{d\Delta}$) in the main theorems of this papers.

\subsection*{Umbral calculus}
We also note that the Weyl-exponential $\E(X)$, Weyl-sine $\Sin (X)$ and Weyl-cosine $\Cos (X)$  derived as ``solutions" to the holomorphic 
 $D$-modules $\mathcal{A}_1/\mathcal{A}_1(\partial-a)$ and $\mathcal{A}_1/\mathcal{A}_1(\partial^2+1)$  relative to $\mathcal{A}_1/\mathcal{A}_1\partial$ in Example \ref{Eg:exp} and Example \ref{Eg:trigo}. The manifestation of these ``solutions" in $\mathcal{O}_\Delta$ result in 
  the difference-exponential, difference-sine and cosine introduced in Example \ref{Eg:delta_exp} and Example \ref{Eg:dtrigo} respectively, can be found in various literature under ``umbral calculus", such as \cite{Curtright_Zachos_2013, Gessel_2003,LNOS_2008,Roman_1984}. In this spirit, it is interesting to ask if the 
  	\[
		\E[X_1(X_2-1/X_2)/2]
	\] constructed in the Theorem \ref{T:bessel_gen_map_2} for the generating function of Bessel module $\mathcal{B}_0$, the 
  	\[
		\rho^{-1}\E[X_1(1-\rho)],\qquad \rho^2=1-2X_2
	\] constructed in Theorem \ref{T:bessel_poly_gf_map} for the Bessel polynomials module $\Theta$, and the
	\[
		C_1\Cos\omega_\mp+C_2\Sin\omega_\mp,
		\qquad \omega_\mp^2=W_1^2\mp 2W_1W_2	
	\]constructed in Theorem \ref{T:neg_glaisher_cosine} for the Glaisher modules $\mathcal{G}_\mp$ can be interpreted from umbral calculus's viewpoint.

%\medskip

\section{Conclusion}\label{S:conclusion} This paper revisits system of PDEs satisfied by the generating function of the classical Bessel functions of the first kind $J_n(x)$ from an holonomic $D$-modules viewpoint. The approach unifies the generating functions of the classical Bessel functions and the recently found difference Bessel functions $J^\Delta_n(x)$.  One can view this as an algebraization of the $F$-equation theory, which is analytic in nature, proposed by Truesdell in 1948. The key ingredients of our arguments involve new transmutation formulae for the Bessel functions and Bessel polynomials in certain $\mathcal{A}_1$-modules, the derivation of well-defined $\mathcal{A}_2$-linear maps of the $\mathcal{A}_2$-modules, which includes the Bessel module $\mathcal{B}_\nu$, reverse Bessel polynomial module $\Theta$, Bessel polynomial module $\mathcal{Y}$, and the Glaisher modules $\mathcal{G}_\pm$. Although the transmutation formulae can be viewed as abstractions of the relatedf factorization method proposed by Infeld and Hull in \cite{Infeld_Hull_1951}, the scope of the study here and details involved are very different. 

 We have studied ``solutions" in each $\mathcal{A}_2$-module above (see also Appendix \ref{SS:holo_modules}) in the form of well-defined $\mathcal{A}_2$-linear maps that amount to solving systems of linear PDEs defining the holonomic $D$-modules in ``closed-forms". We have thus extended the classical Bessel's generating function to include $J_\nu(x)$ for arbitrary $\nu\in\mathbb{C}$ and its difference analogue by constructing $\mathcal{A}_2$-linear maps from the Bessel module $\mathcal{B}_\nu$ to analytic function spaces of two variables $\mathcal{O}_{dd}$ and $\mathcal{O}_{\Delta d}$ which are also $D$-modules with different ``manifestations" as indicated by their respective subscripts. The difference Bessel functions $J^{\Delta}_\nu(x)$ that appear in the aforementioned difference manifestation of Bessel's generating function coincide with those found by Bohner and Cuthta in 2017. These new generating functions are divergent  (1-Gevrey) series unless $\nu=0$ which are  Borel-resummable. These new Bessel's generating functions for general $\nu\not=0$ of asymptotic type explains the classical Schl\"afli-Sonine  integral representation  \eqref{E:Sonine} of the Bessel function $J_\nu(x)$ that can be obtained via the usual procedure of ``residue extraction" of $J_{\nu+n}$ for any $n$ from the asymptotic formula \eqref{E:gf_J_nu} instead of series manipulation technique usually used, see \cite[\S 6.2]{Watson1944}. In fact, no knowledge of the series expansion of the Bessel function $J_\nu(x)$ is needed with the ``residue extraction" from the \eqref{E:gf_J_nu}.  This viewpoint conforms with the way of studying the sequence of Bessel functions $(J_{\nu+n})$ as \textit{coefficients} of generating functions discussed in \cite[Chapter 2]{Watson1944}. The ``residue extraction" method has been applied to the generating function formula \eqref{gf-db-2} to obtain a \textit{difference analogue of Schl\"afli-Sonine  integral representation} for the difference Bessel functions $J^\Delta_\nu(x)$ in \eqref{E:dBessel integral rep}. 
The results obtained thus far (\S 3) in this article suggests that one can approach the classical Bessel functions as  (Weyl-)algebraic entities without necessarily recourse to Bessel's series expansions as has been done in most modern literature. 

Before we discuss about modifications of the half Bessel module $\mathcal{B}_{1/2}$, we would like to emphasize that although we have devoted this entire paper to Bessel functions, 
the choice of topics studied and results presented in this paper, are by no means exhaustive, only serves to illustrate our objectives and methodology of studying special functions, with the Bessel function as an example, by solving the holonomic system PDEs in the generalised sense represented by the respective $D$-modules. In fact, it is clear to us that the $D$-modules approach could apply to many other topics about Bessel functions found, for example,  in Watson's book \cite{Watson1944}, and to other type of special functions (see below). Thus the choice of topics studied in this paper is not limited by the range of applicability on the methodology, but is limited rather by confining the paper to within a reasonable scope.

We next turn to the modifications of the Bessel module $\mathcal{B}_\nu$ when $\nu=\frac12$, that would lead to a new generating functions to the new difference Bessel polynomials and new difference Glaisher trigonometric type generating functions in addition to their classical counterparts, which can be found from \S\ref{S:half_bessel_I} and \S\ref{S:half_bessel_II} respectively. Each modification of $\mathcal{B}_{1/2}$ requires certain ``quadratic extensions" in addition to their respective ``change of variables". One important feature of $\mathcal{B}_{1/2}$ comes from the fact that the half Bessel operators can be factorised (because it is \textit{solvable} \cite{Morales_Ruiz_1999}).

 In the case about the Weyl-Bessel polynomials studied in \S\ref{S:half_bessel_I}, the focus on $\mathcal{B}_{1/2}$ is that after a suitable ``change of variables" the reverse polynomial operator possesses the transmutation formula
	\[
		X\partial^2-(2+X)\partial+2n=(X\partial-2n)(\partial-1)=(\partial-1)(X\partial-2n-1)
	\]
that can be further derived from the transmutation formula \eqref{C:bessel_poly_gauge_2}. This factorization of the reverse polynomial operator shows the reason that one can find \textit{closed-form} solutions which can always be transformed to a terminated Frobenius  solution or a polynomial solution if needed. This is the case only when $\nu=1/2$ in $\mathcal{B}_\nu$. Indeed, most of the algebraic results derived for the Weyl-Bessel polynomials in \S\ref{SSS:bessel_poly_oper},  to some extent, can be found in \cite{Burchnall_1953, Burchnall_Chaundy_1931, Grosswald} without the benefit of $D$-modules language. However, the different manifestations of the reverse Bessel polynomial operator \S\ref{SSS:bessel_poly_oper} in the analytic function spaces $\mathcal{O}_{d}$ and $\mathcal{O}_{\Delta}$ give rise to the classical reverse Bessel polynomials $\theta_n(x)$ and the new difference reverse Bessel polynomials $\theta^\Delta_n(x)$ and their respectively recursive formulae. The introduction of the reverse Bessel polynomial module $\Theta$ in the same spirit as the Bessel module $\mathcal{B}_\nu$ allows us to solve the corresponding PDEs that leads to generating functions of the reverse classical Bessel polynomials and their difference analogue respectively. The same has been done for the Bessel polynomial module $\mathcal{Y}$ where the new difference Bessel polynomials which are denoted by ($y^\Delta_n(x)$). 

The consideration of Glaisher modules $\mathcal{G}_\pm$ in  \S\ref{S:half_bessel_II} is achieved after the ``change of variables" 
%	\begin{equation}\label{E:glaisher_char_1}
	\[
		\begin{array}{ll}
			\Xi_1=\partial_1, & W_1=X_1\\
	 		\Xi_2={1}/{X_2}, & W_2=X_2\partial_2 X_2.
		\end{array}
	\]
%	\end{equation}
in \eqref{E:glaisher_char_1} to $\mathcal{B}_{1/2}$  can be considered a kind of `` Borel transform" on $\mathcal{B}_{1/2}$, which is instrumental in solving the PDEs in the Glaisher modules $\mathcal{G}_\pm$ leading to the trigonometric type generating functions in Corollary \ref{C:Graisher_gf}, Theorem \ref{T:difference_glaisher_gf} and Theorem  \ref{T:Delta_glaisher_gf}.

To summarise, it is clear from within the limited scope of study made in this article that there is a close relationships amongst the systems of PDEs (in the generalised sense) for which the generating functions of various type of Bessel functions or polynomials satisfy are holonomic, and related integral representations of Bessel functions. This relationship enables us to derive the first integral representation for the recently discovered difference Bessel functions. It also follows from our study that these difference Bessel functions are on ``equal footing" with the classical Bessel functions. Similar relationship also applies to the newly discovered difference Bessel polynomials in this article and their classical counterparts.  This investigation show that the space of differential operators annihilating 
 some generating functions of Bessel functions have rich algebraic structures and that both the classical results and the new ones for the difference Bessel functions and polynomials can be derived from the same Bessel module $\mathcal{B}_\nu$, Bessel polynomial modules $\Theta, \mathcal{Y}$ and the Glaisher modules $\mathcal{G}_\pm$. 

\section{Outlook}
The findings from this article have brought up more questions than the problems solved. For example, it is very likely that our method could also derive the more sophisicated generating function
	\[
		(x+h)^{-\nu/2}J_\nu (2\sqrt{x+h})=\sum_{k=0}^\infty \frac{(-h)^k}{k!}
		x^{-(\nu+k)/2}J_{\nu+k}(2\sqrt{x}),\quad |h|<|x|,
	\]
found by Lommel \cite[\S5.22]{Watson1944}\footnote{Bessel actually discovered the formula when $\nu=0$ in 1826.}, its difference analogue and the likes. What are mathematical basis behind those systems of PDEs that allows for such generating functions? Since generating functions can be viewed as a natural way to  possess certain combinatorial structures that re-organise the various symbols $\partial_1,\partial_2, X_1, X_2$, and the generating functions are solutions to the specific systems of PDEs studied in this article, so a question is how these combinatorial structures are  described and understood against the algebraic structures hidden in these systems of holonomic PDEs. Another question is about the ``orthogonality" of the new difference Bessel polynomials found in this paper. The orthogonality issue of the new difference Bessel polynomials will be dealt with in a separate article \cite{CCL_2023_2}. It turns out that the classical orthogonal polynomial theory \cite{Ismail_2005} does not seem to apply to  these new  polynomials immediately which are naturally organised as Newton polynomials. Thus a new approach that treats ``orthogonality" as residue pairings has been developed in the forthcoming article \cite{CCL_2023_2}. An advantage of such an algebraic approach is that it could treat the orthogonality for both the classical Bessel polynomials, i.e., the classical theory, and the difference Bessel polynomials simultaneously. Finally, in spite of the lack of understanding of  deeper algebraic structures encoded in these systems of holonomic PDEs ($D$-modules) that lead to the Bessel's generating functions studied in this article, similar strategy has been applied to other types of $D$-modules of special functions in another forthcoming article \cite{CCC_2}.

%\vfill\eject
\appendix
%\chapter{List of tables}
\section{Proofs of some theorems/propositions}\label{A:proofs}
\subsection{Proof of Proposition \ref{P:bessel_convergence}}
\label{SS:bessel_convergence}

\begin{proof}
	We rewrite the series of $J^{\Delta}_{\nu}(x)$ \eqref{E:difference_bessel_fn} as
	\begin{equation}\label{E:difference_bessel_fn_2_a}
		J^{\Delta}_{\nu}(x)=\sum_{k=0}^{\infty}
		\dfrac{(-1)^k \Gamma(x+1)}
		{k! 2^{\nu+2k}\Gamma(\nu+k+1)\Gamma(x+1-\nu-2k)}.
	\end{equation}
	An asymptotic formula of Gamma function \cite[Theorem 1.4.2]{AAR}, \cite{WW}
	for a  complex number $x$ not equal to zero or a negative real number, can be derived from the formula
	\[
	\log \Gamma(x)=\frac{1}{2} \log 2\pi +(x-\frac{1}{2})\log x-x+\sum_{j=1}^{m}
	\frac{B_{2j}(0)}{(2j-1)2j}\frac{1}{x^{2j-1}}-\frac{1}{2m}\int_{0}^{\infty}
	\frac{B_{2m}(t-[t])}{(x+t)^{2m}}dt,
	\]where the $B_{2m}(t)$ is the Bernoulli polynomial of degree $2m$. 
	The value of $\log \Gamma(x)$ is the branch with $\log \Gamma(x)$ being real when $x$ is real and positive.
	
	Set 
	\[
	g(x)=\sum_{j=1}^{m}
	\frac{B_{2j}(0)}{(2j-1)2j}\frac{1}{x^{2j-1}}-\frac{1}{2m}\int_{0}^{\infty}
	\frac{B_{2m}(t-[t])}{(x+t)^{2m}}dt.
	\]
	
	Then
	\[
	\Gamma(x)=\sqrt{2\pi}x^{x-1/2} e^{-x+g(x)}.
	\]
We rewrite those Gamma functions that appear in the denominator of $	J^{\Delta}_{\nu}(x)$ as
	\[
	\begin{split}
	&\Gamma(k+1)=\sqrt{2\pi}\,k^{k+1/2}(1/k+1)^{k+1/2} e^{-k-1+g(k+1)},\\
	&\Gamma(\nu+k+1)=\sqrt{2\pi}\,k^{\nu+k+1/2}(\nu/k+1/k+1)^{\nu+k+1/2} e^{-\nu-k-1+g(\nu+k+1)},\\
	&\Gamma(x-\nu-2k+1)=\sqrt{2\pi}
	\dfrac{	(x/2k-\nu/2k+1/2k-1)^{x-\nu-2k+1/2} e^{g(x-\nu-2k+1)}}
	{(2k)^{-x+\nu+2k-1/2}e^{x-\nu-2k+1}}.
 \end{split}
 	\]
	Set
	\[
	u_k=\dfrac{2^{-\nu-2k}}{k!\Gamma(\nu+k+1)\Gamma(x-\nu-2k+1)}.
	\]
Since $g(x)\to 0$ as $x\to\infty$ with $\Re(x)>0$, so that it is routine to verify that
	\[
		u_k\approx \frac{ (-1)^{2k+\nu-\frac12-x}2^{-\nu-2k}}
		{\sqrt{2}k^{\frac32} (2k)^x2^{-\nu-2k}}
		=\frac{(\textrm{Const.}) (-1)^x}{k^{x+\frac32}}
		=\frac{(\textrm{Const.}) e^{\pi \Im x}}{k^{x+\frac32}}
				\]
	holds as $k\to\infty$. Hence 
		\[
			\sum_{k=1}^\infty u_k
		\]converges uniformly in each compact subset of $\mathbb{C}$ where $\Re (x)>-\frac12$. Therefore, the same holds for the \eqref{E:difference_bessel_fn_2_a}.
%	 which is also valid for replacing $k$ by $\nu+k, x-\nu-2k$. Hence for any  $\varepsilon>0$, there exists enough large $k_0$, such that
%	\textcolor{blue}{
%	\[
%	|u_{k_0}|<\left|\dfrac{(1+\varepsilon)e^{x+3}}
%	{(2\pi)^{3/2} k_0^{2/3+x}2^{x+1/2}}\right|, \quad k>k_0.
%	??\]}
%	Then,  we obtain
%	\textcolor{blue}{
%	\[
%	\sum_{k_0}^{\infty} \left|
%	(-1)^k \Gamma(x+1) u_k\right|
%%	<\sum_{k_0}^{\infty} \left|
%	\frac{(1+\varepsilon)e^{x+3}\Gamma(x+1)}
%	{(2\pi)^{3/2} 2^{x+1/2}k_0^{3/2+x}}\right|.
%	??\]}
%	which implies that $J^{\Delta}_{\nu}(x)$ convergesa for \rk{$\Re(x)>0$}.
\end{proof}
\medskip

\subsection{Proof of Proposition \ref{P:Weyl_Bessel_Poly_Eqn}}\label{A:Weyl_Bessel_Poly_Eqn}
\begin{proof}
Multiplying the expression
	\[
		\partial_1-1+\dfrac{X_1}{\partial_2} 
	\]
 by $\partial_1\partial_2$	 yields
	\[
		\partial_2\partial_1^2 -\partial_1\partial_2+\partial_1X_1.
	\]
Left multiplying the expression $X_1\partial_1-2X_2\partial_2-1-X_1+\partial_2$ by $\partial_1$ and taking into account of the last two relations yield
	\[
		\begin{split}
			\partial_1 X_1 \partial_1-2X_2\partial_1\partial_2-\partial_1-\partial_1X_1+\partial_1\partial_2
			&= X_1\partial_1^2-2X_2\partial_1\partial_2+\partial_2\partial_1^2\\
			&= X_1\partial_1^2-2X_2\partial_1\partial_2 +\partial_1\partial_2-\partial_1X_1\\
			&=X_1\partial_1^2-(X_1+2X_2\partial_2)\partial_1+\partial_1\partial_2-1\\
			&=X_1\partial_1^2-2(X_1+X_2\partial_2)\partial_1+X_1\partial_1+\partial_1\partial_2-1\\
			&=X_1\partial_1^2-2(X_1+X_2\partial_2)\partial_1+X_1\partial_1+\partial_2-X_1-1\\
						&=X_1\partial_1^2-2(X_1+X_2\partial_2)\partial_1+
						1+2X_2\partial_2-1\\
		\end{split}
	\]as desired.	
	
\end{proof}		
%We slightly modify Hankel's contour into the following form:

%\begin{theorem}[(Hankel]\cite{?}, \S12.22)\label{T:hankel}Let $t\not\in\mathbb{Z}$. Then
%	\begin{equation}
%		\label{E:hankel}
%			\Gamma (t)=\frac{1}{2i\sin \pi t} \int_{-\infty}^{(0-)} \lambda^{t-1}e^{\lambda}\, d\lambda,
%	\end{equation}
%	where the integration path starts at $-\infty$ below the real axis, encircles the origin in the negative direction and returns to infinity above the real axis.
%\end{theorem}
\medskip

%Watson attributed the following asymptotic estimates  to Horn (1899) and Nielsen (1899). 
%	\begin{proposition}Watson\cite[p. 44 and p. 225]{Watson1944}\label{P:large-order}
%		Let $\nu\in\mathbb{C}\backslash\{-\mathbb{N}\}$. Then for each fixed $x$, we have
%			\begin{equation}\label{E:large-order-1}
%				J_\nu (x)\sim \frac{1}{\sqrt{2\pi}}\exp\big\{\nu+\nu\log x/2-(\nu+1/2)\log\nu\big\}
%				\cdot \big[1+\frac{c_1}{x}+\frac{c_2}{x^2}+\cdots\big],
%			\end{equation}and
%			\begin{equation}\label{E:large-order-2}
%				J_\nu(x)=\frac{(x/2)^\nu}{\Gamma(\nu+1)} (1+R),\qquad
%				|R|<\exp\big(\frac{|x|^2/4}{|\nu_0+1|}\big)-1,
%			\end{equation}
%			where $|\nu_0+1|=\min_{k\ge 1}\{|\nu+k|\}$.
%	\end{proposition}
%\medskip

\subsection{Proof of Theorem \ref{T:symbol gen yn-1}}\label{A:BP_chara}

\begin{proof}
	It is easy to verify that
	\[
	\partial_{1}\eta^{2}-\eta^{2}\partial_1=-2X_2,\qquad 
	\partial_{2}\eta^{2}-\eta^{2}\partial_2=-2X_1
	\]hold. 	Suppose  that
	\[ 
	\delta_1=\partial_1\eta-\eta\partial_1,
	\qquad
	\delta_2=\partial_2\eta-\eta\partial_2,
	\]
	and $\eta$ commutes with $\delta_i, i=1, 2$, i.e.,
	\[
	\delta_1\eta=\eta	\delta_1, \qquad	\delta_2\eta=\eta	\delta_2.
	\]
	Then we obtain 
	\[
	\partial_1 \eta-\eta\partial_1=-\frac{X_2}{\eta},\qquad
	\partial_2\eta-\eta\partial_{2}=-\frac{X_1}{\eta}.
	\]
	Let
	\[
	\Theta_1=\partial_1-\dfrac{1-X_1X_2}{X_1^2}\partial_2,\qquad Y_1=X_1,\qquad
	\Theta_2=\eta\partial_2,\qquad Y_2=-\dfrac{\eta}{X_1}.
	\]
\medskip

	We deduce from direct computation that
	\[
	[\Theta_1, Y_1]=1,\quad [\Theta_2,Y_2]=1, \quad [Y_1, Y_2]=0,
	\quad [\Theta_1, Y_2]=0,\quad [\Theta_2, Y_1]=0, \quad [\Theta_1,\Theta_2]=0.
	\]
	Then $\Theta_i, Y_i, i=1, 2$ generate the following module
	\[
	\widehat{\mathcal{A}_{2}}=\mathbb{C}\langle \Theta_1, \,\Theta_2, \,Y_1, \,Y_2\rangle.
	\] The two generators  of $\mathcal{Y}(\eta)$ in Theorem \ref{T: iso bp-n-1} become, when written in terms of $\Theta_i, Y_i, i=1, 2$,
	\[
	(\Theta_2-1)(\Theta_2+1), \qquad  \Theta_1+ \dfrac{1}{Y^2_1}.
	\]
	Thus we define a modified $\widehat{\mathcal{A}}_2$-module
	\[
	\widehat{\mathcal{Y}}=\dfrac{\widehat{\mathcal{A}_{2}}}
	{\widehat{\mathcal{A}_{2}}((\Theta_2-1)(\Theta_2+1))
		+\widehat{\mathcal{A}_{2}}(\Theta_1+ \dfrac{1}{Y^2_1})}.
	\]
	It follows from Example \ref{E:prime_integrable_eg_2} that 
	$\widehat{\mathcal{Y}}$ is a holonomic module with dimension and multiplicity both equal to two.
	We apply  Example \ref{Eg:exp} to derive that the map
%	\rk{
	\[
	\widehat{\mathcal{A}}_{2}/\widehat{\mathcal{A}}_{2}(\Theta^2_2-1)
	\xrightarrow[]{\times [\E(\pm Y_2)f(Y_1)]}
	\overline{\widehat{\mathcal{A}}_{2}/\widehat{\mathcal{A}}_{2}\Theta_2}
	\]
	%}
	is well-defined left $\widehat{\mathcal{A}}_{2}$-linear if the unknown relations $f(Y_1), g(Y_1)$ depending only on $Y_1$ can be worked out. So we require
	\[
	\begin{split}
	(\Theta_1+ \dfrac{1}{Y^2_1})\E(Y_2)f(Y_1)
	=&\E(Y_2)\Theta_1f(Y_1)+\dfrac{1}{Y^2_1}\E(Y_2)f(Y_1)\\
	=&\E(Y_2)(\Theta_1+\dfrac{1}{Y^2_1})f(Y_1)\mod \widehat{\mathcal{A}_{2}}\Theta_1\\
	=&0.
	\end{split}
	\]
	This would hold if $f(X_1)=\E({1}/{Y_1})$. Similarly, we can obtain $g(X_1)=\E({1}/{Y_1})$. 
	
	Hence, for each choice of constants $C_1, C_2$, we have
%	\rk{
		\[
	\widehat{\mathcal{Y}}
	\xrightarrow[]{\times [C_1\E(Y_2+\frac{1}{Y_1})+C_2\E(-Y_2+\frac{1}{Y_1})]}
	\overline{\widehat{\mathcal{A}}_{2}/(\widehat{\mathcal{A}}_{2}\Theta_1+\widehat{\mathcal{A}}_{2}\Theta_2)},
	\]%}
	which gives rise to the $\mathcal{A}_2(\eta)$-linear map
	\[
	\mathcal{Y}(\eta)
	\xrightarrow[]{\times [C_1\E(\frac{1-\eta}{X_1})+C_2\E(\frac{1+\eta}{X_1})]}
	\overline{\mathcal{A}_{2}(\eta)/(\mathcal{A}_{2}(\eta)\partial_1+
		\mathcal{A}_{2}(\eta)\partial_2)}.
	\]
\end{proof}
\medskip
 
\subsection{Completion of proof of Theorem \ref{T:Bessel p_gf_1}}\label{A:BP_gf_1}

\begin{proof} We continue the proof that is left unfinished below the Theorem  \ref{T:Bessel p_gf_1}.
	\begin{enumerate}
		\item The $\mathcal{O}^{\mathbb{N}_0}$ being an $\mathcal{A}_2(\eta)$-module is essentially verified from Example \ref{Eg:seq-functions_poisson}. The  composition of the horizontal map  and the Poisson  transform $\mathfrak{p}$ of the top-right path from the commutative diagram \eqref{E:commute-diff bess pol} 
		\[
		\begin{array}{rlcll}
		\mathcal{Y}(\eta) & \xrightarrow[]{\times (y_{n-1})}
		& \mathcal{O}^{\mathbb{N}_0}
		&\xrightarrow[]{\times 1} & \mathcal{O}_{dd},\\
		&&&\\
%		1&\longmapsto & (y_{n-1})_n & \longmapsto
%		&  \displaystyle\sum_{n=0}^\infty y_{n-1}(x)\, \dfrac{t^n}{n!}\\
%		&&&
		\end{array}
		\]is a left $\mathcal{A}_{2}(\eta)$-linear map. Hence the composition map yields
			\[
				1 \longmapsto
				 \displaystyle\sum_{n=0}^\infty y_{n-1}(x)\, \dfrac{t^n}{n!},
			\]which implies that  the sum $\sum_n y_{n-1}(x)\, {t^n}/{n!}$ is a solution to the following system of PDEs \eqref{pde-1} and \eqref{pde-2} below.
		\item The first map from the bottom-left path of the commutative diagram \eqref{E:commute-diff bess pol}  has already been constructed in  Theorem \ref{T:symbol gen yn-1}. On the other hand, the following maps
		\[
		\begin{array}{rlcll}
		\mathfrak{g}_p: \mathcal{Y}(\eta) &\xrightarrow{ C_1\E(\frac{1-\eta}{X_1})+C_2\E(\frac{1+\eta}{X_1})} & \overline{\mathcal{A}_{2}(\eta)/\big[\mathcal{A}_{2}(\eta)\partial_1
			+\mathcal{A}_{2}(\eta)\partial_2\big] }
		&\xrightarrow{ \times 1}	\mathcal{O}_{d d}
		\end{array}
		\]
		and \eqref{E:commute-diff bess pol}  show that the 
		\begin{equation}\label{E:gen_bessel_poly_gf_1}
			C_1\exp\left(\frac{1-(1-2xt)^{1/2}}{x}\right)+C_2
			\exp\left(\frac{1+(1-2xt)^{1/2}}{x}\right)
			=\sum_{k=0}^\infty \frac{y_k(x)}{k!}t^k
		\end{equation}holds for some $C_1,\, C_2\in\mathbb{C}$ and that both sides satisfy the system of PDEs 
		\begin{equation}\label{pde-1}
		(1-2xt)f_{tt}(x, \,t)-f(x, \,t)-xf_t(x, \, t)=0,
		\end{equation}
		\begin{equation}\label{pde-2}
		x^2f_x(x,\, t)-f_{t}(x,\,t)+xtf_t(x, \, t)+f(x,\, t)=0.
		\end{equation}
		Putting $t=0$ in both sides of \eqref{E:gen_bessel_poly_gf_1} implies that $C_1=1$ and $C_2=0$.
%	The fact that 
%		the  $\mathcal{A}_2(\eta)$-module $\widehat{\mathcal{Y}}$ as defined in the proof of Theorem \ref{T:symbol gen yn-1}  has multiplicity two implies that the dimension of the local solution space of the PDEs \eqref{pde-1} and \eqref{pde-2} is also two. However, the Theorem \ref{T:symbol gen yn-1} verifies that there is only one solution map found. Thus it follows that the local solution space  of the PDE system \eqref{pde-1} and  \eqref{pde-2} is uni-dimensional. 
%		Since 	$\exp\left(\frac{1-(1-2xt)^{1/2}}{x}\right)$ is also a solution for \eqref{pde-1} and  \eqref{pde-2}. 
%		It follows that the formula \eqref{E:bessel p_gf-1} holds up to a non-zero constant multiple. This constant must be unity by substituting $t=0$ into both sides of \eqref{E:bessel p_gf-1}. On the other hand, both $\sum_n y_{n-1}(x)\, {t^n}/{n!}$ and 
%		$\exp\left(\frac{1-(1-2xt)^{1/2}}{x}\right)$ satisfy the second order ODE
%		\[
%		(1-2xt)_{tt}f(x, t)-xf_t(x, t)-f(x, t)=0
%%		\]
%		for which $t=\frac{1}{2x}$ is a regular singularity. Hence the infinite sum in \eqref{E:bessel p_gf-1}  converges uniformly in each compact subset of $\mathbb{C}\times \mathbb{C}$ wherever $|t|<|\frac{1}{2x}|$.
	\end{enumerate}
\end{proof}

We recall the well-known theorem that describes which half-plane does a Newton series and its associated series, called \textit{faculty series} by N\"orlund,  would converge. 

\subsection{Proof of Theorem \ref{T:delta Bessel_gf_1} }\label{A:detal_BP_gf_1}
Here we give full details of the proof to Theorem  \ref{T:delta Bessel_gf_1} in addition to the commutative diagram \eqref{E:commute-dbp}.

\begin{proof}
	\begin{enumerate}
		\item The composition of the following maps
			\[
				\mathcal{Y}(\eta)  \xrightarrow[]{\times (y^\Delta_{n-1})}
		 \mathcal{O}_\Delta^{\mathbb{N}_0}
		\xrightarrow[]{\times 1}  \mathcal{O}_{\Delta d}
			\]
		is left $\mathcal{A}(\eta)$-linear. Hence the infinite sum from
			\[
				1 \longmapsto
		  \displaystyle\sum_{n=0}^\infty y_{n-1}^\Delta (x)\, \frac{t^n}{n!}
		  	\]
			is a solution to the following delay-differential equations
		\begin{align}
			&x(x-1)[f(x-1, t)-f(x-2, t)]-f_t(x, t)+xtf_t(x-1, t)+f(x, t)=0,\label{diff-Bessel yn-1 eq1}\\
		&f_{tt}(x, t)-2xtf_{tt}(x-1, t)-f(x, t)-xf_t(x-1, t)=0.\label{diff-Bessel yn-1 eq2}
		\end{align}
		\item To deal with the left-side of the equation \eqref{E:bessel dp_gf-1}, we make use of the commutative diagram \eqref{E:commute-diff bess pol} from the proof of Theorem \ref{T:Bessel p_gf_1} from the last subsection.  Thus to complete the left-half the commutative diagram \eqref{E:commute-dbp}, the final step of the composition map
		\[
			\mathcal{Y}(\eta) \xrightarrow[]{\times \E[\frac{1-\eta}{X_1}]} {\tilde{\mathcal{A}}_2(\eta)}
			\xrightarrow[]{\times 1}\mathcal{O}_{dd}
			\xrightarrow[]{\times \mathfrak{N}} \mathcal{O}_{\Delta d}
		\]that is given by the Newton transformation \eqref{E:newton_trans} yields
		\[
		\mathfrak{N}\circ \E(\frac{1-\eta}{X_1})\cdot 1=
		\displaystyle\frac{e^{-i\pi x}}{2i \sin \pi x\Gamma(-x)}\int_{-\infty}^{(0+)}
		e^{\lambda+\frac{1-\sqrt{1-2\lambda t}}{\lambda} }
		(-\lambda)^{-x-1}d\lambda
		\]as asserted.
	
	Although the $\mathcal{A}_2(\eta)$-linearity  consideration of the above composition map is sufficient to complete the proof, we offer a direct verification that the above $	\mathfrak{N}\circ \E(\frac{1-\eta}{X_1})\cdot 1$ is indeed a solution to the system of PDEs 	\eqref{diff-Bessel yn-1 eq1} and \eqref{diff-Bessel yn-1 eq2} here.
		Let
		\[
		f(x, t)=\displaystyle\frac{e^{-i\pi x}}{2i \sin \pi x\Gamma(-x)}\int_{-\infty}^{(0+)}
		e^{\lambda+\frac{1-\sqrt{1-2\lambda t}}{\lambda} }.
		(-\lambda)^{-x-1}d\lambda.
		\]
		Then we verify 
		\[
		\begin{split}
		&(X_{1}^{2}\partial_1-\partial_2+X_1X_2\partial_2+1) f(x, t)=0,\\
		&(1-2X_1X_2)\partial_2^2-X_1\partial_2-1) f(x, t)=0.
		\end{split}
		\]
		We first compute 
		\[
		\begin{split}
		&\partial_{2}^2\displaystyle\frac{e^{-i\pi x}}{2i \sin \pi x\Gamma(-x)}\int_{-\infty}^{(0+)}
		e^{\lambda+\frac{1-\sqrt{1-2\lambda t}}{\lambda} }
		(-\lambda)^{-x-1}d\lambda\\
		&=\displaystyle\frac{e^{-i\pi x}}{2i \sin \pi x\Gamma(-x)}\int_{-\infty}^{(0+)}
		e^{\lambda+\frac{1-\sqrt{1-2\lambda t}}{\lambda} }
		(-\lambda)^{-x-1} 
		\big[(1-2\lambda t)^{-1}+
		\lambda(1-2\lambda t)^{-3/2}\big]
		d\lambda,
		\end{split}
		\]
		and
		\[
		\begin{split}
		&(1-2X_1X_2)\partial_{2}^2
		\displaystyle\frac{e^{-i\pi x}}{2i \sin \pi x\Gamma(-x)}\int_{-\infty}^{(0+)}
		e^{\lambda+\frac{1-\sqrt{1-2\lambda t}}{\lambda} }
		(-\lambda)^{-x-1}d\lambda\\
		&=\displaystyle\frac{e^{-i\pi x}}{2i \sin \pi x\Gamma(-x)}\int_{-\infty}^{(0+)}
		e^{\lambda+\frac{1-\sqrt{1-2\lambda t}}{\lambda} }
		(-\lambda)^{-x-1} 
		\big[1+\lambda(1-2\lambda t)^{-1/2}\big]
		d\lambda.
		\end{split}
		\]
		Combine the above computations yields
		\[
		((1-2X_1X_2)\partial_2^2-X_1\partial_2-1)
		\displaystyle\frac{e^{-i\pi x}}{2i \sin \pi x\Gamma(-x)}\int_{-\infty}^{(0+)}
		e^{\lambda+\frac{1-\sqrt{1-2\lambda t}}{\lambda} }
		(-\lambda)^{-x-1} d\lambda=0.
		\]
		Similarly, we can compute
		\[
		\begin{split}
		&X_1^2\partial_{1}\displaystyle\frac{1}{\Gamma(-x)}\int_{-\infty}^{0}
		e^{\lambda+\frac{1-\sqrt{1-2\lambda t}}{\lambda} }
		(-\lambda)^{-x-1}  d\lambda\\
		&=\displaystyle\frac{1}{\Gamma(-x)}\int_{-\infty}^{0}
		e^{\lambda+\frac{1-\sqrt{1-2\lambda t}}{\lambda} }
		(-\lambda)^{-x+1}  
		\big[\dfrac{x-1}{\lambda}-1\big]d\lambda\\
		&=-\displaystyle\frac{1}{\Gamma(-x)}\int_{-\infty}^{0}
		e^{\frac{1-\sqrt{1-2\lambda t}}{\lambda} }
		d(e^{\lambda} (-\lambda)^{-x+1}  ).
		\end{split}
		\]
		Integration-by-part yields
		\[
		\begin{split}
		&(X_1^2\partial_{1}+1)\displaystyle\frac{1}{\Gamma(-x)}\int_{-\infty}^{0}
		e^{\lambda+\frac{1-\sqrt{1-2\lambda t}}{\lambda} }
		(-\lambda)^{-x-1}  d\lambda\\
		&=(1-X_1X_2)\partial_2
		\displaystyle\frac{1}{\Gamma(-x)}\int_{-\infty}^{0}
		e^{\lambda+\frac{1-\sqrt{1-2\lambda t}}{\lambda} }
		(-\lambda)^{-x-1}  d\lambda,
		\end{split}
		\]
		which implies
		\[
		(X_1^2\partial_{1}-(1-X_1X_2)\partial_2+1)
		\displaystyle\frac{1}{\Gamma(-x)}\int_{-\infty}^{0}
		e^{\lambda+\frac{1-\sqrt{1-2\lambda t}}{\lambda} }
		(-\lambda)^{-x-1}  d\lambda=0.
		\]
		Hence $f(x, t)$ satisfies the system of differential-difference equations
		\eqref{diff-Bessel yn-1 eq1} and 
		\eqref{diff-Bessel yn-1 eq2}.
		Similar to the argument used in the proof of Theorem \ref{T:Bessel p_gf_1}  that 
		the solution space  of the PDEs \eqref{diff-Bessel yn-1 eq1} and  \eqref{diff-Bessel yn-1 eq2} is two-dimensional. Hence \eqref{E:gen_bessel_poly_gf} from Theorem \ref{T:general_bessel_poly_gf} holds for some constants $C_1, \, C_2$ with $\mathscr{Y}_{n-1}$ replaced by $y_{n-1}$.
	Substitute $t=0$ into both sides of \eqref{E:bessel dp_gf-1} yields $C_1=1,\, C_2=0$.
		Then we determine the region of convergence of \eqref{E:bessel dp_gf-1}.  
		The sum of the two generators yields
		\[
		X_1\partial_1\partial_2+X_1\partial_1-X_2\partial^2_2+X_2\partial_2
		\]
	which gives rise to the equation 
	\[
	x[f_t(x,\, t)-f_t(x-1, \,t)]+x[f(x, \, t)-f(x-1,\, t)]-tf_{tt}(x,\, t)+tf_t(x,\, t)=0.
	\]
		We differentiate above equation with respect to $t$ to obtain
		\begin{equation}\label{diff-Bessel yn-1 eq3}
			x[f_{tt}(x,\, t)-f_{tt}(x-1, \,t)]+x[f_t(x, \, t)-f_t(x-1,\, t)]-tf_{ttt}(x,\, t)+(t-1)f_{tt}(x,\, t)+f_t(x,\,t)=0.
		\end{equation}
		
		Since the manifestation of equation \eqref{bessel_poly_recursion_3 in a A becomes} 
		in $\mathcal{O}_{\Delta d}$ is given by
		
		\begin{equation}\label{diff-Bessel yn-1 eq4}
		f_{tt}(x,\,t)-2xtf_{tt}(x-1,\, t)-xf_t(x-1,\, t)-f(x, \,t)=0.
		\end{equation}
	
		Multiplying the equation \eqref{diff-Bessel yn-1 eq3} by $2t$ and subtract the resulting equation from \eqref{diff-Bessel yn-1 eq4} yield the first equation in \eqref{diff-Bessel yn-1 eq5}. Subtracting the \eqref{diff-Bessel yn-1 eq3} and \eqref{diff-Bessel yn-1 eq4} yields the second equation in \eqref{diff-Bessel yn-1 eq5}:
				\begin{equation}\label{diff-Bessel yn-1 eq5} 
		\begin{split}
	&xf_t(x-1,\, t)=\dfrac{1}{1-2t}[(1-2tx+2t-2t^2)f_{tt}(x, t)+2t^2f_{ttt}-2t(x+1)f_t(x, t)-f(x, t)],\\
	&xf_{tt}(x-1,\, t)=\dfrac{1}{1-2t}[(x-2+t)f_{tt}(x, t)-tf_{ttt}(x, t)+(x+1)f_t(x, t)+f(x, t)].
		\end{split}
		\end{equation}
	
		We  replace $x$ by $x-1$ in the equation \eqref{diff-Bessel yn-1 eq4} and differentiate the resulting equation  with respect to $t$ yield
	\begin{equation}\label{diff-Bessel yn-1 eq100}
			f_{ttt}(x-1,\,t)-2(x-1)tf_{ttt}(x-2,\, t)-3(x-1)f_{tt}(x-2,\, t)-f_t(x-1, \,t)=0.
	\end{equation}
	We multiply $x$ to the equation \eqref{diff-Bessel yn-1 eq100} and subtract the resulting equation to the equation  \eqref{diff-Bessel yn-1 eq1} after it being differentiated three times and multiply by the $2t$ throughout  the whole equation would eliminate the term $f_{ttt}(x-2,\, t)$ from the equation  \eqref{diff-Bessel yn-1 eq100}. Multiply the equation \eqref{diff-Bessel yn-1 eq100} by $x$ and subtract 3 times the \eqref{diff-Bessel yn-1 eq1} after it being differentiated twice would eliminate the term  $f_{tt}(x-2,\, t)$ from the equation \eqref{diff-Bessel yn-1 eq100}. This results in the following fourth order equation
		\begin{equation}\label{diff-Bessel yn-1 eq6}
		\begin{split}
		&(x-2tx^2-7xt)f_{ttt}(x-1, \, t)-2xt^2f_{tttt}(x-1,\, t)+(-3x^2-3x)f_{tt}(x-1,\, t)-xf_t(x-1,\, t)\\
		&+2tf_{tttt}(x,\, t)+(3-2t)f_{ttt}(x,\, t)-3f_{tt}(x,\, t)=0.
		\end{split}
	\end{equation}
We then eliminate the terms $f_{tttt}(x-1,\, t)$ and $f_{ttt}(x-1,\, t)$ from the \eqref{diff-Bessel yn-1 eq6} utilizing  the equation \eqref{diff-Bessel yn-1 eq4} after it being differentiated several times. The substitutions of the terms $f_{tttt}(x-1,\, t)$ and 
	$f_{ttt}(x-1,\, t)$ from the equation \eqref{diff-Bessel yn-1 eq6} yield
		\[
		\begin{split}
		\dfrac{-3x}{2t}f_{tt}(x-1, t)-xf_t(x-1, t)+&tf_{tttt}(x, t)+(\dfrac{4t-4t^2-2xt+1}{2t})
		f_{ttt}(x, t)\\
		&+(t-3)f_{tt}-\dfrac{1-2tx-2t}{2t}f_t(x, t)=0.
		\end{split}
		\]
	
		Substituting \eqref{diff-Bessel yn-1 eq5} into above equation yields
		\begin{equation}\label{diff-Bessel yn-1 eq7}
		\begin{split}
		&tf_{tttt}(x, \,t)+\big(\dfrac{-4t^3+12t^2-5t-4xt^2+2xt-1}{2t(2t-1)}\big)f_{ttt}(x, \,t)\\
		&+\big(\dfrac{-10t^2-4t^2x+11t-6+3x}{2t(2t-1)}\big)f_{tt}(x,\, t)
		+\big(\dfrac{3x+4-4t-2xt}{2t(2t-1)}\big)f_t(x, \,t)\\
		&+\dfrac{3-2t}{2t(2t-1)}f(x, \,t)=0,
		\end{split}
		\end{equation}
		which is a fourth order linear ordinary differential equation in $t$ for each fixed $x$. Since the equation \eqref{diff-Bessel yn-1 eq5} has a regular points at $t=0,\, 1/2$, so that every analytic solution \eqref{diff-Bessel yn-1 eq5} has a radius of convergence $1/2$ in the neighbourhood of $x=0$. Hence 
the \eqref{E:bessel dp_gf-1} converges uniformly in each compact subset of $\mathbb{C}\times B(0; \frac12)$. Substituting a Frobenius solution 
$f(x,\cdot)=\sum_{k=0}^\infty a_kx^{\sigma+k}$ into the equation \eqref{diff-Bessel yn-1 eq5} further confirming that $\sigma=0,\, 1,\, 2$ are the only integer indicial roots. Hence the \eqref{E:bessel dp_gf-1} holds.
		\end{enumerate}
\end{proof}
%\medskip

%\begin{theorem}[(Gelfond]\cite{?}, pp. 163-164; N\"orlund \cite{??}, p. 115)] \label{T:newton} The abscissa of convergence of the Newton series
%	\[
%		\sum_{k=0}^\infty a_k \frac{x(x-1)\cdots (x-k+1)}{k!}
%	\]is given by
%	\begin{enumerate}
%		\item
%			\[
%				\lambda=\limsup_{n\to \infty}\frac{\ln \big|\sum_{k=0}^n(-1)^ka_k\big|}{\ln n}
%			\]if $\lambda\ge 0$, 
%		\item 	
%			\[
%				\lambda=\limsup_{n\to \infty}\frac{\ln \big|\sum_{k=n}^\infty (-1)^ka_k\big|}{\ln n}
%			\]if $\lambda< 0$.
%	\end{enumerate} 
%\end{theorem}		
%\medskip

%\begin{theorem}[(N\"orlund]\cite{?}, p. 177)]\label{T:Norlund} The faculty series
%	\[
%		xF(x)=\sum_{k=0}^\infty \frac{a_k\, k!}{(x+1)\cdots (x+k)}
%	\] and its associated Newton series
%	\[
%		xF_1(x)=\sum_{k=0}^\infty (-1)^k a_{k+1} \frac{x(x-1)\cdots (x-k)}{(k+1)!}
%	\]share the same abscissa of convergence of half-plane. Moreover,  the faculty series converges uniformly in each compact subset of $\mathbb{C}\backslash \{-\mathbb{N}\}$.
%\end{theorem}
%\medskip

\section{Difference operators with arbitrary step sizes}\label{Append:arbitrary} In practical applications, one would probably like to consider those $D$-modules like $\mathcal{O}_\Delta$ from Example~\ref{Eg:O_deleted_d} or $\mathcal{O}_{\Delta d}$ from Example~\ref{Eg:O_delta_d} with any step size $h\in\mathbb{C}\setminus\{0\}$. So we gather some formulae about these $D$-modules and the corresponding realizations in various function spaces.

\begin{example}\label{Eg:O_delta_h}
Let $\mathcal{O}$ be the space of analytic functions. Then $\mathcal{O}$ becomes a left $\mathcal{A}$-module with
	\[
		\begin{split}
		&(\partial f)(x) =\frac1h \Delta_h f(x)=
		\frac1h \big(f(x+h)-f(x)\big), \\
		&(Xf)(x)=xf(x-h).
		\end{split}
	\]
	This left $\mathcal{A}$-module is denoted by $\mathcal{O}_{\Delta_h}$.
\end{example}
%\medskip

Clearly $\mathcal{O}_{\Delta_h}$ is the same as $\mathcal{O}_\Delta$ when $h=1$. Similarly in $\mathcal{O}_{\Delta_h}$,  if we denote 
	\[
		(x)_{n,h}:=x(x-h)\cdots \big(x-(n-1)h\big)=h^n\dfrac{\Gamma(x/h+1)}{\Gamma(x/h-n+1)},
	\] then we have
	\[
		X^n\cdot 1 =x(x-h)\cdots \big(x-(n-1)h\big)= (x)_{n, h}.
	\]
%	\medskip
	
	As a result, the element $L=(X\partial)^2+X^2-\nu^2$ gives the difference Bessel equation
	\begin{equation}\label{E:Bessel_eqn_delta_h}
			\begin{split}
				\Big(\frac{x+2h)^2}{h^2}-\nu^2\Big)f(x+2h) &-\frac{(x+2h)(2x+3h)}{h^2}f(x+h)\\
				&+2\Big(1+\frac{1}{h^2}\Big)(x+2h)(x+h)f(x)=0,
			\end{split}
		\end{equation}which converges to Bohner and Cuchta\rq{}s difference Bessel equation \eqref{E:difference_bessel_eqn}
with step  $h\to 1$. One can also recover the classical Bessel equation \eqref{E:Bessel_eqn} when the step  $h\to 0$ with a more careful analysis.  Classical solutions of \eqref{E:Bessel_eqn_delta_h} are solutions of $\mathcal{D}/\mathcal{D}L$ in $\mathcal{O}_{\Delta_h}$. One of them is
		\begin{equation}%\label{E:composition}
			\mathcal{D}/\mathcal{D}L\stackrel{\times S_\nu}{\longrightarrow} \mathcal{D}/\mathcal{D}(X\partial-\nu)\stackrel{\times (x)_{\nu,h}}{\longrightarrow}
			\mathcal{O}_{\Delta_h},
		\end{equation}
		in which $S_\nu=\displaystyle\sum_{k=0}^{\infty} \frac{(-1)^kX^{2k}}{2^{\nu+2k}k!\Gamma(\nu+k+1)}$, and gives rise to the Newton series
		\begin{equation}\label{E:difference_bessel_fn_h}
			J^{\Delta_h}_{\nu}(x):=\sum_{k=0}^{\infty} \frac{(-1)^kh^{2k}}{2^{\nu+2k}k!\Gamma(\nu+k+1)}(x)_{\nu+2k,h}.
		\end{equation}
	Another solution in $\mathcal{O}_{\Delta_h}$ corresponding to $S_{-\nu}$ yields the Newton series
			\begin{equation}%\label{E:difference_bessel_fn_h}
			J^{\Delta_h}_{-\nu}(x):=\sum_{k=0}^{\infty} \frac{(-1)^kh^{2k}}{2^{-\nu+2k}k!\Gamma(-\nu+k+1)}(x)_{-\nu+2k,h},
		\end{equation}
		which is ``linearly independent" to the first one in case that $2\nu$ is not an integer.
%\medskip

\begin{example}\label{Eg:O_delta_h_d} Let $\mathcal{O}_2$ be the space of  analytic functions in two variables. It becomes a left $\mathcal{A}_2$-module when endowed with the structure
	\begin{equation}\label{E:O_delta_h_d}
		\begin{array}{ll}
			\partial_1f(x,\, t)=\frac1h\big(f(x+h,\, t)-f(x,\, t)\big),   & X_1f(x,\, t)=xf(x-h,\, t);\\
			\partial_2f(x,\, t) =f_t(x,\, t), 		& X_2 f(x,\, t)=tf(x,\, t).
		\end{array}	
	\end{equation}
	Such a left $\mathcal{A}_2$-module is denoted by $\mathcal{O}_{\Delta_h d}$.
\end{example}
%\medskip

\begin{example}\label{Eg:two-seq-functions_2_h} Let $\mathcal{O}^\mathbb{Z}$ be the space of all bilateral sequences of analytic functions with an appropriately imposed growth restriction. Let 
	\[
		(f_n)=(\cdots, \, f_{-1},\, f_0,\, f_1,\,  \cdots)
	\]denote a bilateral infinite sequence of $f_k\in\mathcal{O}$.  Then $\mathcal{O}^\mathbb{Z}$ becomes a left $\mathcal{A}_2$-module by
	\begin{equation}
		\begin{array}{ll}
		(\partial_1 f)_n(x)=\frac1h\big(f_n(x+h)-f_n(x)\big), &  (X_1 f)_n(x)=xf_n(x-h),\\
		(\partial_2 f)_n(x)=(n+1)f_{n+1}(x),& (X_2 f)_n(x)=f_{n-1}(x),
		\end{array}
	\end{equation}for all bilateral sequences of analytic functions $(f_n)$. Such a left $\mathcal{A}_2$-module is sometimes denoted by $\mathcal{O}_{\Delta_h}^\mathbb{Z}$.
\end{example}
%\medskip

We state without proof the following extensions of the Theorem \ref{T:difference_Bessel_gf} with an arbitrary step size $h$:
%\medskip

\begin{theorem}\label{T:difference_h_Bessel_gf} Let $\nu\in\mathbb{C}$. 
	\begin{enumerate}
		\item Let $(\mathscr{C}^\Delta_{n+\nu})_n$ be a bilateral sequence of analytic functions which is a solution of the Bessel module $\mathcal{B}_\nu$ in $\mathcal{O}_{\Delta}^\mathbb{Z}$. Then there exists a $1$-periodic function $C_\nu$ such that
	\begin{equation}\label{gf-db_h-2}
		e^{i\pi\nu}
		\frac{\sin(x/h-\nu)\pi}{\sin(\pi x/h)}\,
		t^{-\nu}\big[\frac{h}{2}(t-\frac{1}{t})+1\big]^{x/h}
		\sim \sum_{n=-\infty}^{\infty}
J^{\Delta_h}_{n+\nu}(x)\,t^n,
\end{equation}			where the symbol $\sim$ means that the left-hand side is the Borel resummation \footnote{Definition \ref{D:borel}} of the right-hand side whenever it diverges.
		\item Moreover,  the manifestation of the holonomic system of PDEs (delay-differential equations) \eqref{E:bessel_PDE} in $\mathcal{O}_{\Delta d}$ in Example \ref{Eg:O_delta_h_d}  		defined by \eqref{E:O_delta_d}  is given by
	\begin{equation}\label{E:PDE_delta_h_d}
		y(x, t)-y(x-h, t)=
		\frac{h}{2}(t-\frac{1}{t}) y(x-h, t),\quad
		\nu y(x,t) +ty_t(x,t)=\frac{x}{2}\big(t+\frac{1}{t}\big)y(x-h, t).
	\end{equation}		
	\end{enumerate}
\end{theorem}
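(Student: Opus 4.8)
The plan is to mirror the proof of Theorem \ref{T:difference_Bessel_gf} with the difference operator $\Delta$ replaced by the step-$h$ operator $\Delta_h$, exploiting that the entire $D$-module machinery is manifestation-independent. First I would set up the commutative diagram exactly as in the proof of Theorem \ref{T:difference_Bessel_gf}, namely
\[
    \begin{tikzcd} [row sep=huge, column sep=huge]
        \mathcal{B}_\nu \arrow{r}{\mathfrak{j}_{\Delta_h}}
        \arrow[swap]{d}{\times\E[\frac{X_1}{2}(X_2-\frac{1}{X_2})]}
        & \mathcal{O}^{\mathbb{Z}}_{\Delta_h}
        \arrow{d}{\mathfrak{z}_{\Delta_h}} \\
        \widetilde{A}_2 \arrow{r}{\times t^{-\nu}} & \mathcal{O}_{\Delta_h d}
    \end{tikzcd}
\]
where $\widetilde{A}_2:=\overline{\mathcal{A}_2/[\mathcal{A}_2\partial_1+\mathcal{A}_2(X_2\partial_2+\nu)]}$, the left vertical map is the characteristic map $S=\E[\frac{X_1}{2}(X_2-\frac{1}{X_2})]$ from Theorem \ref{T:bessel_gen_map_2}, and $\mathfrak{j}_{\Delta_h}$ is right multiplication by $(J^{\Delta_h}_{\nu+n})$. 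The key point is that each map in the diagram is $\mathcal{A}_2$-linear, with the new $\mathcal{A}_2$-module structures on $\mathcal{O}^{\mathbb{Z}}_{\Delta_h}$ and $\mathcal{O}_{\Delta_h d}$ taken from Example \ref{Eg:two-seq-functions_2_h} and Example \ref{Eg:O_delta_h_d}; only the interpretation of $\partial_1$ and $X_1$ has changed, while $\partial_2, X_2$ are unchanged.

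Next I would establish the three ingredients that make the diagram usable. The first is the analogue of Lemma \ref{E:classical_app_2}: I would verify by direct substitution that the manifestation of the two generators \eqref{E:bessel_PDE} in $\mathcal{O}_{\Delta_h d}$ yields precisely the system \eqref{E:PDE_delta_h_d}, and that $t^{-\nu}[\frac{h}{2}(t-\frac1t)+1]^{x/h}$ solves it: the first equation gives $y(x,t)=g(t)[\frac{h}{2}(t-\frac1t)+1]^{x/h}$ since $\Delta_h$ acting on the binomial power produces exactly the factor $\frac{h}{2}(t-\frac1t)$, and substituting into the second forces $g(t)=C_\nu t^{-\nu}$. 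The second ingredient is that $(J^{\Delta_h}_{\nu+n})$ is $1$-Gevrey, so that the Borel-resummed $z$-transform $\mathfrak{z}_{\Delta_h}$ of Theorem \ref{T:z-transform-3} applies; this follows from the Banach-algebra Poincaré–Perron argument of Theorem \ref{T:delta_Bessel_Gevrey} transported through the step-$h$ Newton transform, using the three-term recurrence inherited from Corollary \ref{C:any_bessel_3_term}. The third ingredient is holonomicity of $\mathcal{B}_\nu$ (Proposition \ref{P:bessel_holonomic}), which pins the local solution space of \eqref{E:PDE_delta_h_d} to dimension equal to the multiplicity $1$; hence the two paths around the diagram agree up to a single $1$-periodic scalar, giving \eqref{gf-db_h-2} with some coefficient $C_\nu$.

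Finally I would identify the periodic coefficient $C_\nu(x)=e^{i\pi\nu}\sin(x/h-\nu)\pi/\sin(\pi x/h)$. The cleanest route is to reduce to the already-proved case $h=1$: the substitution $x\mapsto x/h$ and the scaling $(x)_{\nu+2k,h}=h^{\nu+2k}(x/h)_{\nu+2k}$ in \eqref{E:difference_bessel_fn_h} exhibit $J^{\Delta_h}_{\nu}(x)$ as a rescaled copy of $J^{\Delta}_{\nu}(x/h)$, so that both the generating function and its coefficient transform correctly under $x\mapsto x/h$, $t\mapsto t$; applying Theorem \ref{T:delta_bessel_gf} then reads off $C_\nu$. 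Alternatively one repeats the contour-extraction argument of Theorem \ref{T:delta_bessel_gf}: subtract the $n=0$ term, divide by $t$, integrate the Borel-resummed identity along a truncated Hankel contour $\Gamma_{R,\delta}$, and compare with the step-$h$ difference Schläfli–Sonine integral. I expect the main obstacle to be precisely this constant-determination step, since it requires either a careful bookkeeping of the branch factors $(-\lambda)^{-x/h}$ and the reflection-formula manipulations under the $x/h$ rescaling, or independently re-deriving a step-$h$ analogue of the integral representation \eqref{E:dBessel integral rep}; the holonomicity and diagram-chasing parts are essentially formal once the $\mathcal{A}_2$-linearity of the step-$h$ maps is checked.
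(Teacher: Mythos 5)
The paper gives no proof of this theorem: it is introduced in Appendix \ref{Append:arbitrary} with the words ``We state without proof the following extensions of the Theorem \ref{T:difference_Bessel_gf}'', so there is no official argument to compare yours against. Your overall plan — transplant the proof of Theorem \ref{T:difference_Bessel_gf} with only the manifestations of $\partial_1,X_1$ changed from $\Delta$ to $\Delta_h$ — is evidently the intended route, and the diagram chase, the direct verification that $t^{-\nu}[\frac{h}{2}(t-\frac1t)+1]^{x/h}$ solves \eqref{E:PDE_delta_h_d}, and the appeal to holonomicity to pin the solution space are all sound.

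The one step that fails as written is your ``cleanest route'' for identifying $C_\nu$. It is not true that $J^{\Delta_h}_{\nu}(x)$ is a rescaled copy of $J^{\Delta}_{\nu}(x/h)$: from \eqref{E:difference_bessel_fn_h} and $(x)_{\mu,h}=h^{\mu}(x/h)_{\mu}$ one gets
\[
J^{\Delta_h}_{\nu}(x)=h^{\nu}\sum_{k=0}^{\infty}\frac{(-1)^{k}\,h^{4k}}{2^{\nu+2k}\,k!\,\Gamma(\nu+k+1)}\,(x/h)_{\nu+2k},
\]
and the residual factor $h^{4k}$ in the $k$-th term cannot be absorbed into the argument, because Newton polynomials, unlike monomials, are not homogeneous: $(cy)_{\mu}\neq c^{\mu}(y)_{\mu}$. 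Equivalently, no substitution $t\mapsto s(t)$ carries $\frac{h}{2}(t-\frac1t)$ to $\frac12(s-\frac1s)$ (taking $s=ht$ produces $ht-\frac{1}{ht}$, not $ht-\frac{h}{t}$), so the case $h=1$ of Theorem \ref{T:delta_bessel_gf} cannot simply be read off after $x\mapsto x/h$, $t\mapsto t$. Your fallback is the route that actually works: repeat the residue extraction of Theorem \ref{T:delta_bessel_gf} together with the lowest-term comparison of Theorem \ref{T:integral rep of db}, with $x/h$ in place of $x$. Substituting $t=\frac{2}{h}e^{i\pi}u$ into $\frac{1}{2\pi i}\int t^{-\nu-1}(1+\frac{ht}{2})^{x/h}\,dt$ gives $e^{-i\pi\nu}(h/2)^{\nu}\,\Gamma(\nu-x/h)/[\Gamma(\nu+1)\Gamma(-x/h)]$; the factor $(h/2)^{\nu}$ cancels against the $h^{\nu}/2^{\nu}$ in the $k=0$ term of $J^{\Delta_h}_{\nu}(x)$, and the reflection formula then yields exactly $C_\nu(x)=e^{i\pi\nu}\sin(x/h-\nu)\pi/\sin(\pi x/h)$. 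The same caveat applies to the $1$-Gevrey estimate: it cannot be borrowed from $h=1$ by rescaling, but the Banach-algebra Poincar\'e--Perron argument of Theorem \ref{T:delta_Bessel_Gevrey} does go through verbatim once one writes down the step-$h$ Newton transform sending $x^{n}\mapsto(x)_{n,h}$, a routine modification of \eqref{E:newton_trans} which the paper does not spell out.
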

%\medskip

We note that although the above result is valid for each fixed $h\not=0$, one cannot take the limit $h\to 0$ so that the \eqref{E:PDE_delta_h_d} becomes the generating function \eqref{E:bessel_classical_gf_1} for the classical Bessel functions, unless when $\nu=0$, as in the following theorem.

\begin{theorem}\label{T:h_limit} 
	\begin{enumerate}
		\item Let $(J^{\Delta_h}_n(x))$ and $(J_n(x))$ be sequences in $\mathcal{O}_{\Delta_h}^{\mathbb{Z}}$ and ${\mathcal{O}_d}^\mathbb{Z}$. Then we have the following limits
			\begin{equation}\label{E:h_limit}
				\begin{tikzcd} [row sep=large, column sep=large]
				\displaystyle\big[\frac{h}{2}(t-\frac{1}{t})+1\big]^{x/h}\qquad = 
				\arrow[swap]{d}{h\to 0} 
				&  \displaystyle \sum_{n=-\infty}^{\infty}
J^{\Delta_h}_{n}(x)\,t^n \arrow{d}{h\to 0}\\
 			 	\displaystyle\exp\big[\frac{x}{2}(t-\frac{1}{t})\big]
				\qquad= & \displaystyle\sum_{n =-\infty}^{\infty} J_{n}(x)\,t^n
				\end{tikzcd}
			\end{equation}
		\item Moreover, the system of PDEs \eqref{E:O_delta_h_d} with $\nu=0$, that is
			\begin{equation}\label{E:O_delta_h_d_0}
		y(x, t)-y(x-h, t)=
		\frac{h}{2}(t-\frac{1}{t}) y(x-h, t),\quad
		ty_t(x,t)=\frac{x}{2}\big(t+\frac{1}{t}\big)y(x-h, t).
	\end{equation}		
 becomes the system of PDEs \eqref{E:PDE_gf_bessel_0} as $h\to 0$.
	\end{enumerate}
\end{theorem}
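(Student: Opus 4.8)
The plan is to establish Theorem \ref{T:h_limit} in two parts, treating the two asserted limits in (i) together with the PDE degeneration in (ii). For part (i), I would observe that the $h\to 0$ limit on the \emph{left-hand sides} is an elementary analytic fact: writing $u = \frac{1}{2}(t - 1/t)$, the quantity $\big[hu + 1\big]^{x/h}$ equals $\exp\big[(x/h)\log(1+hu)\big]$, and since $\log(1+hu) = hu - \frac{h^2u^2}{2} + O(h^3)$, the exponent tends to $xu = \frac{x}{2}(t-1/t)$ as $h\to 0$, uniformly on compact subsets of $\mathbb{C}\times(\mathbb{C}\setminus\{0\})$. This gives the left vertical arrow of \eqref{E:h_limit} directly. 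The horizontal equalities are the two generating-function identities: the top row is the $\nu=0$, step-$h$ case of Theorem \ref{T:difference_h_Bessel_gf} (where the $1$-periodic prefactor is $\equiv 1$ exactly as in the $h=1$ case recorded after Theorem \ref{T:delta_bessel_gf}), and the bottom row is the classical expansion \eqref{E:bessel_classical_gf_0}.

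The main content is then the \emph{right} vertical arrow: that the coefficients $J^{\Delta_h}_n(x) \to J_n(x)$ as $h\to 0$, and that this coefficientwise convergence is compatible with passing to the limit inside the infinite sum. First I would show termwise convergence of the Newton-series coefficients. From \eqref{E:difference_bessel_fn_h} we have
\[
J^{\Delta_h}_{\nu}(x) = \sum_{k=0}^{\infty} \frac{(-1)^k h^{2k}}{2^{\nu+2k}k!\,\Gamma(\nu+k+1)}\,(x)_{\nu+2k,h},
\]
and since $(x)_{m,h} = h^m \Gamma(x/h+1)/\Gamma(x/h-m+1)$ behaves like $x^m\big(1+O(h)\big)$ as $h\to 0$ for fixed $x$ (the falling factorial degenerating to the ordinary power), each term with integer order $\nu=n$ converges to $\frac{(-1)^k x^{2k+n}}{2^{n+2k}k!\,(n+k)!}$, reproducing the classical series for $J_n(x)$. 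The key estimate I would need is a uniform bound $|J^{\Delta_h}_n(x)| \le C^{|n|}|n|!$ on compacta, uniform in $h$ for small $h$; this is the step-$h$ analogue of the $1$-Gevrey bound in Theorem \ref{T:delta_Bessel_Gevrey}, and it guarantees that the Borel-resummed sum on the right of \eqref{gf-db_h-2} depends continuously on $h$ and that the limit may be exchanged with the summation. Once the $1$-Gevrey bound is uniform in $h$, a dominated-convergence argument on the Borel-resummation integral \eqref{E:BL} secures the right vertical arrow and completes (i).

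For part (ii), the degeneration of the PDE system \eqref{E:O_delta_h_d_0} to \eqref{E:PDE_gf_bessel_0} is a routine computation that I would carry out by dividing the first equation of \eqref{E:O_delta_h_d_0} by $h$ and rewriting it as $\frac{1}{h}\big(y(x,t)-y(x-h,t)\big) = \frac{1}{2}(t-1/t)\,y(x-h,t)$; as $h\to 0$ the backward difference quotient tends to $y_x(x,t)$ and the right-hand side tends to $\frac{1}{2}(t-1/t)\,y(x,t)$, yielding the first equation of \eqref{E:PDE_gf_bessel_0} (after noting $\frac12(t-1/t)=-(1/t-t)/2$, matching signs). The second equation of \eqref{E:O_delta_h_d_0} with $\nu=0$ passes to the second equation of \eqref{E:PDE_gf_bessel_0} simply by $y(x-h,t)\to y(x,t)$, since no difference quotient is involved there.

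The hard part will be the uniform-in-$h$ Gevrey estimate: the proof of Theorem \ref{T:delta_Bessel_Gevrey} routes through the Newton transform $\mathfrak{N}$ and a Banach-algebra Poincar\'e--Perron argument tailored to step size $1$, so I would need to check that the relevant three-term recurrence (the step-$h$ manifestation of Corollary \ref{C:any_bessel_3_term}) has characteristic roots whose norms stay uniformly separated as $h\to 0$, and that the implied constant $C$ can be chosen independent of $h$ on the relevant compacta. The delicate point is that the step-$h$ three-term recurrence degenerates as $h\to 0$ toward the classical Bessel recurrence, so one must verify the Poincar\'e--Perron separation hypothesis does not degenerate in the limit; I expect this to require keeping careful track of the $h$-dependence in the perturbation terms $(p_n),(q_n)$ of Theorem \ref{T:PP}, but no genuinely new idea beyond the machinery already assembled in the paper.
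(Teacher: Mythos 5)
The paper itself states Theorem \ref{T:h_limit} without proof (Appendix \ref{Append:arbitrary} only remarks that the $\nu\neq 0$ case is ``more delicate''), so there is no authorial argument to compare against; I can only judge your route on its own terms. Your part (ii) and your treatment of the left vertical arrow are correct and routine. The two horizontal equalities are also fine, though for the top row the cleanest source is the binomial-expansion proof of Theorem \ref{T:delta_bessel_0_gf} run with step $h$ (valid for $|\tfrac{h}{2}(t-\tfrac1t)|<1$), rather than Theorem \ref{T:difference_h_Bessel_gf}, which only yields a Borel-resummation ``$\sim$'' with an undetermined $1$-periodic prefactor that you would then have to pin down by that same series manipulation anyway.

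The genuine weak point is the right vertical arrow. You correctly identify the uniform-in-$h$ $1$-Gevrey bound as the hard step, but you do not supply it: you only assert that the Poincar\'e--Perron separation hypothesis ``should not degenerate'' as $h\to 0$, and as written the argument is conditional on an estimate you have not established. Moreover, this heavy machinery is unnecessary for the $\nu=0$ case actually claimed. Since both rows of \eqref{E:h_limit} are honest convergent Laurent expansions on the region $|\tfrac{h}{2}(t-\tfrac1t)|<1$ (which for small $h$ contains any fixed compact annulus $0<r\le|t|\le R$ once $h$ is small enough), and since $\big[\tfrac{h}{2}(t-\tfrac1t)+1\big]^{x/h}\to \exp\big[\tfrac{x}{2}(t-\tfrac1t)\big]$ uniformly there, the Cauchy coefficient formula
\begin{equation*}
J^{\Delta_h}_n(x)=\frac{1}{2\pi i}\oint_{|t|=1} t^{-n-1}\Big[\tfrac{h}{2}(t-\tfrac1t)+1\Big]^{x/h}\,dt
\end{equation*}
immediately gives $J^{\Delta_h}_n(x)\to J_n(x)$ locally uniformly in $x$, and the locally uniform convergence of the sums themselves is already the statement about the left-hand sides. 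No Borel resummation, dominated convergence, or $h$-uniform Gevrey estimate is needed. Separately, be careful with your termwise computation: the paper's formula \eqref{E:difference_bessel_fn_h} carries an extra factor $h^{2k}$ in front of $(x)_{\nu+2k,h}$, which, read literally with $(x)_{m,h}=x(x-h)\cdots(x-(m-1)h)$, would kill every term with $k\ge 1$ in the limit; the coefficient produced by the binomial expansion is $\sum_k\frac{(-1)^k(x)_{n+2k,h}}{2^{n+2k}k!(n+k)!}$ without that factor, and it is this expression whose terms converge to those of the classical series for $J_n(x)$. Your proposal silently uses the correct normalization while quoting the other, so you should reconcile the two before relying on the termwise limit.
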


The case of the above limits when $\nu\not=0$, while likely to hold, is more delicate that would be dealt with in a separate publication.
\vfill\eject

\section{List of tables} \label{A:list}

\subsection{List of commonly used $D$-modules}\label{SS:common_D_list}
%\listoftables
%In this section, some $D$-modules that are commonly used in this article are listed.

\begin{table}[h!]\label{T:common_D_list}
%\scriptsize{
{\small
%{\footnotesize
\begin{tabular}[h!]{|c|c|c|l|}
\hline
$D$-module & \begin{tabular}{c}``ground"\\ Weyl algebra\end{tabular} & \begin{tabular}{c}underlying\\ space\end{tabular} & Manifestation \\
\hline
$\mathcal{A}/\mathcal{A}L$ & $\mathcal{A}$ & $\mathcal{A}/\mathcal{A}L$ & same as multiplication in $\mathcal{A}$ \\
\hline
$\dfrac{\mathcal{A}_2}{\mathcal{A}_2L_1+\mathcal{A}_2L_2}$ & $\mathcal{A}_2$ & $\dfrac{\mathcal{A}_2}{\mathcal{A}_2L_1+\mathcal{A}_2L_2}$ & same as multiplication in $\mathcal{A}_2$ \\
\hline
$\mathbb{C}^{\mathbb{N}_0}$ & $\mathcal{A}$ & $\mathbb{C}^{\mathbb{N}_0}$ & \begin{tabular}{l}$(\partial a)_n=a_{n+1}$\\ $(Xa)_n=na_{n-1}$\end{tabular}\\
\hline
$\mathbb{C}^\mathbb{Z}$ & $\mathcal{A}$ & $\mathbb{C}^\mathbb{Z}$ & \begin{tabular}{l}$(\partial a)_n=(n+1)a_{n+1},$\\ $(Xa)_n=a_{n-1}$\end{tabular}\\
\hline
$\mathcal{O}_d$ & $\mathcal{A}$ & $\mathcal{O}$ & \begin{tabular}{l}$\partial f(x)=f'(x),$\\ $Xf(x)=xf(x)$\end{tabular}\\
\hline
$\mathcal{O}_\Delta$ & $\mathcal{A}$ & $\mathcal{O}$ & \begin{tabular}{l}$\partial f(x)=f(x+1)-f(x),$\\ $Xf(x)=xf(x-1)$\end{tabular}\\
\hline
$\mathcal{O}_{dd}$ & $\mathcal{A}_2$ & $\mathcal{O}_2$ & \begin{tabular}{l}$\partial_1 f(x,t)=f_x(x,t),$\\ $X_1 f(x,t)=xf(x,t)$\\ $\partial_2 f(x,t)=f_t(x,t),$\\ $X_2 f(x,t)=tf(x,t)$\end{tabular}\\
\hline
$\mathcal{O}_{\Delta d}$ & $\mathcal{A}_2$ & $\mathcal{O}_2$ & \begin{tabular}{l}$\partial_1 f(x,t)=f(x+1,t)-f(x,t),$\\ $X_1 f(x,t)=xf(x-1,t)$\\ $\partial_2 f(x,t)=f_t(x,t),$\\ $X_2 f(x,t)=tf(x,t)$\end{tabular}\\
\hline
$\mathcal{O}_d^{\mathbb{N}_0}$ & $\mathcal{A}_2$ & $\mathcal{O}^{\mathbb{N}_0}$ & \begin{tabular}{l}$(\partial_1 f)_n(x)=f_n'(x),$\\ $(X_1 f)_n(x)=xf_n(x)$\\ $(\partial_2 f)_n(x)=f_{n+1}(x),$\\ $(X_2 f)_n(x)=nf_{n-1}(x)$\end{tabular}\\
\hline
	$\mathcal{O}_d^\mathbb{Z}$  & $\mathcal{A}_2$ & $\mathcal{O}^\mathbb{Z}$ 
	& \begin{tabular}{l}$(\partial_1 f)_n(x)=f_n'(x),$\\ $(X_1 f)_n(x)=xf_n(x)$\\ 
		$(\partial_2 f)_n(x)=(n+1)f_{n+1}(x),$\\ 
		$(X_2 f)_n(x)=f_{n-1}(x)$
	     \end{tabular}
	     \\
\hline
	$\mathcal{O}_\Delta^\mathbb{Z}$ & $\mathcal{A}_2$  & $\mathcal{O}^\mathbb{Z}$ & \begin{tabular}{l} $(\partial_1 f)_n(x)=f_n(x+1)-f_n(x),$\\ $(X_1 f)_n(x)=xf_n(x-1)$;\\ $(\partial_2 f)_n(x)=(n+1)f_{n+1}(x),$\\ $(X_2 f)_n(x)=f_{n-1}(x)$\end{tabular}\\
\hline
	$\mathcal{O}_\Delta^{\mathbb{N}_0}$ & $\mathcal{A}_2$ & $\mathcal{O}^{\mathbb{N}_0}$ &
	 \begin{tabular}{l} $(\partial_1 f)_n(x)=f_n(x+1)-f_n(x),$\\ $(X_1 f)_n(x)=xf_n(x-1)$;\\ $(\partial_2 f)_n(x)=f_{n+1}(x),$\\ $(X_2 f)_n(x)=nf_{n-1}(x)$\end{tabular}\\
\hline
\end{tabular}\\
\begin{center}
\caption{\label{demo-table}List of $D$-modules used.\newline
%Each ground Weyl algebra above is allowed to be slightly modified, e.g. $\mathcal{A}$ can be modified into $\mathcal{A}(1/X)$, and $\mathcal{A}_2$ can be modified into $\mathcal{A}_2(1/X)$, etc.
}\end{center}
}
\end{table}
%\vskip-1.5cm
Each ground Weyl algebra above is allowed to be slightly modified, e.g. $\mathcal{A}$ can be modified into $\mathcal{A}(1/X)$, and $\mathcal{A}_2$ can be modified into $\mathcal{A}_2(1/X)$, etc.
\vfill\eject
%\medskip

\subsection{List of transmutation formulae}
\begin{table}[h!]
%\small{
\scriptsize{
\begin{tabular}[h!]{|c|c|c|}
\hline
 $D$-modules & Transmutation formulae  & No.\\
 \hline 
 $\mathcal{B}_\nu$ & 
  	\begin{tabular}{l}
  	   $[(X\partial)^2+X^2-(\nu-1)^2]\big(\partial+{\nu}/{X}\big)$\\
  	   \hskip4cm$=\big(\partial+(\nu-2)/{X}\big)[(X\partial)^2+X^2-\nu^2]$,
  	   %\label{E:bessel_transmutation_1}
  	   \\
      $[(X\partial)^2+X^2-(\nu+1)^2]\big(\partial-\nu/X\big)$\\
      \hskip4cm $=\big(\partial-(\nu+2)/X\big)[(X\partial)^2+X^2-\nu^2]$
      \end{tabular}
      & 
      \begin{tabular}{c}				\eqref{E:bessel_transmutation_1}\\
      -\eqref{E:bessel_transmutation_2}
      \end{tabular}\\
 \hline
 $\Theta$ & 
 	\begin{tabular}{l}
 				$\Big(X\partial^2-2(\nu+\frac12+X)\partial  +2(\nu +\frac12)\Big)(X\partial-X-2\nu)$\\
 				$=\big(X\partial-X-2(\nu+1)\big)\Big(X\partial^2-2(\nu-\frac12+X)\partial+2(\nu -\frac12)\Big)$, \\
		$\Big(X\partial^2-2(\nu-\frac12+X)\partial +2(\nu-\frac12)\Big)\frac1X(\partial-1)$\\
			$=\frac1X (\partial-1+\frac1X)\Big(X\partial^2-2(\nu+\frac12+X)\partial+2(\nu+\frac12)\Big)$
	\end{tabular}
	&
		\begin{tabular}{c}
	 		\eqref{E:bessel_poly_transmutation_1}\\
	 		-\eqref{E:bessel_poly_transmutation_2}
	   \end{tabular}
	\\
\hline
 \end{tabular}
}
\caption{\label{demo-table}List of transmutation formulae.}
\end{table}
% \hline

%\vfill\eject
%\newpage
\subsection{List of holonomic modules}\label{SS:holo_modules}
%\medskip

%\begin{table}
%\begin{sidewaystable}[h!]
%\small{
\scriptsize{
\begin{tabular}[h!]{|c|c|c|c|}
\hline
\begin{tabular}{l}
Quotient modules\\
 $\mathcal{A}_2/(\mathcal{A}_2L_1+\mathcal{A}_2L_2)$
 \end{tabular} & 
\begin{tabular}{l}$D$-module\\
 terminologies
 \end{tabular}
& Generators 
& Definitions no. \\
\hline
	$\mathcal{B}_\nu$ & Bessel modules &
	\begin{tabular}{c}
		$L_1=X_1\partial_1+(\nu+X_2\partial_2)-X_1X_2$, \\
	         $L_2=X_1\partial_1-(\nu+X_2\partial_2)+{X_1}/{X_2}$
	\end{tabular} &
	 Definition \ref{E:bessel_mod}
	\\
\hline
	$\Theta$ & 
	\begin{tabular}{l}
	Half-Bessel module:\\
	reverse Bessel\\
	 polynomial module 
	 \end{tabular}
	  &
	\begin{tabular}{l} 
		$L_1=\partial_1-1+{X_1}/{\partial_2}$,\\
		$L_2=X_1\partial_1-2X_2\partial_2-1-X_1+\partial_2$
	\end{tabular} &
	Definition \ref{D:Theta}\\
\hline
	$\mathcal{Y}$ & 
	\begin{tabular}{l}
	Half-Bessel module:\\
	Bessel polynomial\\
	module 
	 \end{tabular}
	  &
	\begin{tabular}{l} 
		$L_1=X_{1}^2\partial_{1}\partial_{2}-X_{1}X_{2}\partial^2_{2}+\partial_{2}-1$,\\
		$L_2=X_{1}^2\partial_{1}-\partial_{2}+X_{1}X_{2}\partial_{2}+1$
	\end{tabular} &
	Definition \ref{D:y-1}\\
\hline
	$\mathcal{G}_{-}$ & 
	\begin{tabular}{l}
	Half-Bessel module:\\
	Negative Glaisher\\
	module 
	 \end{tabular}
	  &
	\begin{tabular}{l} 
		$L_1=W_1 \Xi_1+(W_2\Xi_2-\frac{1}{2})-W_1/\Xi_2$,\\
%		\label{E:PDEs_neg_glaisher_1}\\
		$L_2=W_1 \Xi_1-(W_2 \Xi_2-\frac{1}{2})+W_1 \Xi_2$.
		%\label{E:PDEs_neg_glaisher_2}$
	\end{tabular} &
	Definition \ref{D:neg_glaisher_module}\\
\hline
	$\mathcal{G}_{+}$ & 
	\begin{tabular}{l}
	Half-Bessel module:\\
	Positive Glaisher\\
	module 
	 \end{tabular}
	  &
	\begin{tabular}{l} 
		$L_1=W_1 \Xi_1+(W_2\Xi_2+\frac{1}{2})+W_1/\Xi_2$,\\
	        $L_2=W_1 \Xi_1-(W_2 \Xi_2+\frac{1}{2})-W_1 \Xi_2$.
	\end{tabular} &
	Definition \ref{D:pos_glaisher_module}\\
\hline
\end{tabular}
}

%\vfill\eject

\subsection{List of formulae in $\mathcal{O}_d$, $\mathcal{O}_\Delta$}
%\medskip

%\begin{sidewaystable}[h!]
%\scriptsize{
\small{
\begin{tabular}[h!]{|c|c|c|}
\hline
$D$-modules & PDEs. & No. \\
\hline
$\mathcal{B}_\nu$ in $\mathcal{O}_d$ & 
%\label{E:any_bessel_recus_1}
		\begin{tabular}{l}
			$x\mathscr{C}_{\nu}^\prime(x)+\nu\mathscr{C}_{\nu}(x)-x\mathscr{C}_{\nu-1}(x)=0$,\\
		%\begin{equation}\label{E:any_bessel_recus_2}
			$x\mathscr{C}^{\prime}_{\nu}(x)-\nu\mathscr{C}_{\nu}(x)+
		x\mathscr{C}_{\nu+1}(x)=0$,\\
%			\begin{equation}\label{E:any_bessel_3_term}
			$2\nu\mathscr{C}_{\nu}(x)-x\mathscr{C}_{\nu-1}(x)-x\mathscr{C}_{\nu+1}(x)=0$
		\end{tabular}
		& \eqref {E:any_bessel_recus_1}, \eqref {E:any_bessel_recus_2}, \eqref {E:any_bessel_3_term}\\
\hline
$\mathcal{B}_\nu$ in $\mathcal{O}_\Delta$ & 
			\begin{tabular}{l}
				$x\Delta \mathscr{C}^{\Delta}_{\nu}(x-1)+\nu\mathscr{C}^{\Delta}_{\nu}(x)-
		x\mathscr{C}^{\Delta}_{\nu-1}(x-1)=0$,\\
			$x\Delta \mathscr{C}^{\Delta}_{\nu}(x-1)-\nu\mathscr{C}^{\Delta}_{\nu}(x)+
		x\mathscr{C}^{\Delta}_{\nu+1}(x-1)=0$,\\
			$2\nu\mathscr{C}_{\nu}^\Delta(x)-x\mathscr{C}_{\nu-1}^\Delta(x-1)-x\mathscr{C}_{\nu+1}^\Delta(x-1)=0$
		\end{tabular}
		& \eqref {E:delta_bessel_PDE_a}, \eqref {E:delta_bessel_PDE_b}, \eqref{E:delta_bessel_3_term_a}
		\\
\hline
$\Theta$ in $\mathcal{O}_d$ 
	&
	\begin{tabular}{l}  
   	$\theta_n^\prime(x)-\theta_n(x)+x\theta_{n-1}(x)=0$,\\
	$x\theta_n^\prime(x)-(x+2n+1)\theta_n(x)+\theta_{n+1}(x)=0$,\\
      $\theta_{n+2}(x)-(2n+3)\theta_{n+1}(x)-x^2\theta_n(x)=0$.
	\end{tabular}
	&\eqref{E:bessel_poly_trans}, 
		\eqref{E:bessel_poly_3term} \\
\hline
	$\Theta$ in $\mathcal{O}_\Delta$ &
	\begin{tabular}{l}
		$\theta^\Delta_n(x+1)-2\theta^\Delta_n(x)+x\theta^\Delta_{n-1}(x-1)=0$, \\
		$ \theta^\Delta_{n+1}(x)+(x-2n-1)\theta^\Delta_n(x)-2x\, \theta^\Delta_n(x-1)=0$,\\
	$\theta^\Delta_{n+2}(x)-(2n+3)\theta^\Delta_{n+1}(x)-x(x-1)\theta^\Delta_n(x-2)=0$	
	\end{tabular}
		& \eqref{E:delta_bessel_poly_trans},
		\eqref{E:delta_reverse_bessel_poly_3term}
	\\	
\hline
	$\mathcal{Y}$ in  $\mathcal{O}_d$ 
	&
	\begin{tabular}{l}  
		$x^2y'_{n-1}(x)-[(n-1)x-1]y_{n-1}(x)-y_{n-2}(x)=0$,\\
		$x^2y'_{n-1}(x)-y_{n}(x)+(nx+1)y_{n-1}(x)=0$,\\
		$y_{n+1}(x)-(2n+1)xy_{\color{blue}n}(x)-y_{n-1}(x)=0$.
	\end{tabular}
	& \eqref{E:classical_bessel_poly_trans},  \eqref{E:classical_bessel_poly_3term}
	\\
\hline
	$\mathcal{Y}$ in $\mathcal{O}_\Delta$ 
	&
	\begin{tabular}{l}
		$x(x-1) \Delta y^{\Delta}_{n-1}(x-2)-(nx-x)y^{\Delta}_{n-1}(x-1)+y^{\Delta}_{n-1}(x)-
y^{\Delta}_{n-2}(x)=0$, \\
		$x(x-1) \Delta y^{\Delta}_{n-1}(x-2)-y^{\Delta}_{n}(x)+nxy^{\Delta}_{n-1}(x-1)+y^\Delta_{n-1}(x)=0$,\\
		$y^{\Delta}_{n+1}(x)-x(2n+1)y^{\Delta}_{n}(x-1)-y^{\Delta}_{n-1}(x)=0$.
	\end{tabular}
	& \eqref{E:delta_classical_bessel_poly_trans}, \eqref{E:delta_bessel_poly_3term}
\\
\hline
\end{tabular}
}
%\caption{\label{demo-table}List of formulae in $\mathcal{O}_d$, $\mathcal{O}_\Delta$.}
%\end{sidewaystable}

%\vfill\eject
\subsection{List of PDE systems in complex domains}\label{SS:PDE_list}
%In this subsection, the holonomic PDEs systems that appear in this article are listed.
\vskip-.8cm
%\medskip
%\newpage
\begin{sidewaystable}
{\small
%\scriptsize{
\begin{tabular}[h!]{|c|c|c|c|c|}
\hline
$D$-modules 
& \begin{tabular}{c} ``target"\\ Weyl algebras\end{tabular} 
& PDEs & Gen. fns
& Eqn no.
\\
\hline
$\mathcal{B}_\nu$ & $\mathcal{O}_{dd}$ & 
\begin{tabular}{l}
			 $y_x(x,t)+(1/t-t)/2\, y(x,t)=0$,\\
			  $\nu y(x,t)+ty_t(x,t)-{x}/2\,(1/t+t)\, y(x,t)=0$. 
	\end{tabular}
	& $t^{-\nu}e^{\frac{x}{2}(t-\frac1t)}$
	& \eqref{E:PDE_bessel}
	\\
\hline
$\mathcal{B}_\nu$ & $\mathcal{O}_{\Delta d}$ & 
\begin{tabular}{l}
		$y(x+1, t)-y(x, t)=
		\frac{1}{2}(t-\frac{1}{t}) y(x, t)$,\\
		$ \nu y(x,t) +ty_t(x,t)={x}/2(t+{1}/{t})y(x-1, t)$.
			\end{tabular}	
& $t^{-\nu} \big[\frac12(t-\frac1t)+1\big]^x$ &
\eqref{gf-db-2}
		   \\
\hline
$\Theta$ & $\mathcal{O}_{dd}$ & 
	\begin{tabular}{l}
		$f_{xt}(x,\, t)-f_t(x,\, t)+xf(x, \, t)=0$,\\
		$xf_x (x,\, t)+ (1-2t)f_t(x,\, t)-(1+x)f(x,\, t)=0$.
	\end{tabular}
	& $(1-2t)^{-\frac12}\exp[x(1-\sqrt{1-2t)}]$ 
	& \eqref{E:bessel_poly_PDE}\\
\hline 
$\Theta$ & $\mathcal{O}_{\Delta d}$ & 
	\begin{tabular}{l}
		 $f_t(x+1,\, t)-2f_t(x,\, t)+xf(x-1,\, t)=0$,\\
		$(1-2t)f_t(x,\, t)+(x-1)f(x,\, t)-2xf(x-1,\, t)=0$
	\end{tabular}
	& $\mathfrak{N} (1-2t)^{-\frac12}\exp[x(1-\sqrt{1-2t)}]$
	& \eqref{E:delta_bessel_poly_PDE}\\
\hline 
$\mathcal{Y}$ & $\mathcal{O}_{d d}$ & 
	\begin{tabular}{l}
		 $x^2f_{xt}(x,\, t)	-xtf_{tt}(x,\, t)+f_t(x,\,t)-f(x,\,t)=0$,\\
		$x^2f_x(x,\, t)-f_t(x,\, t)+xtf_t(x,\, t)+f(x,\,t)=0$
	\end{tabular}
	& $\exp\left(\frac{1-(1-2xt)^{1/2}}{x}\right)$
	& \eqref{E:classical_bessel_poly_PDE}\\
\hline 
$\mathcal{Y}$ & $\mathcal{O}_{\Delta d}$ & 
	\begin{tabular}{l}
		 $x(x-1)[f_t(x-1,\, t)-f_t(x-2, \, t)]-xtf_{tt}(x-1,\, t)+f_t(x,\, t)-f(x, \, t)=0$,\\
		$x(x-1)[f(x-1,\, t)-f(x-2, \, t)]-f_{t}(x,\, t)+xtf_t(x-1,\, t)+f(x, \, t)=0.$
	\end{tabular}
	& $\mathfrak{N} \exp\left(\frac{1-(1-2xt)^{1/2}}{x}\right)$
	& \eqref{E:delta_classical_bessel_poly_PDE}\\
\hline 
$\mathcal{G}_-$ & $\mathcal{O}_{d d}$ & 
	\begin{tabular}{l}
		 $xf_{xt}(x,\, t)+tf_{tt}(x,\, t)+\frac12 f_t(x,\, t)-xf(x, t)=0$,\\
		$xf_x(x,\, t)+(x-t)f_t(x,\, t)+\frac12 f(x,\, t)=0$
	\end{tabular}
	& $\sqrt{2/\pi x}\cos\sqrt{x^2-2xt}$
	& 
		\begin{tabular}{c}
			\eqref{E:glaisher_PDE_1},\\
			 \eqref{E:glaisher_PDE_2}
		\end{tabular}
		\\
\hline 
$\mathcal{G}_+$ & $\mathcal{O}_{d d}$ & 
	\begin{tabular}{l}
		 $xf_{xt}(x,\, t)+tf_{tt}(x,\, t)+\frac12 f_t(x,\, t)+xf(x, t)=0$,\\
		$xf_x(x,\, t)-(x+t)f_t(x,\, t)+\frac12 f(x,\, t)=0$
	\end{tabular}
	& $\sqrt{2/\pi x}\sin\sqrt{x^2+2xt}$
	& 
		\begin{tabular}{c}
			\eqref{E:glaisher_PDE_6},\\
			 \eqref{E:glaisher_PDE_7}
		\end{tabular}
		\\
\hline 
$\mathcal{G}_-$ & $\mathcal{O}_{\Delta d}$ & 
	\begin{tabular}{l}
		 $tf_{tt}(x,\, t)+\big(x+\frac12\big)f_t(x,\, t)-xf_t(x-1,\, t)-xf(x-1,\, t)=0$,\\
		$tf_{t}(x,\, t)-xf_t(x-1,\, t)+xf(x-1,\, t)-\big(x+\frac12\big) f(x,\, t)=0$
	\end{tabular}
	& $\mathfrak{N}\sqrt{2/\pi x}\cos\sqrt{x^2-2xt}$
	& 
		\begin{tabular}{c}
			\eqref{E:PDeltaE_neg_glaisher_-1},\\
			 \eqref{E:PDeltaE_neg_glaisher_-2}
		\end{tabular}
		\\
\hline 
 $\mathcal{G}_+$ & $\mathcal{O}_{\Delta d}$ & 
	\begin{tabular}{l}
		 $tf_{tt}(x,\, t)+\big(x+\frac12\big)f_t(x,\, t)-xf_t(x-1,\, t)+xf(x-1,\, t)=0$,\\
		$ tf_{t}(x,\, t)+xf_t(x-1,\, t)+xf(x-1,\, t)-\big(x+\frac12\big) f(x,\, t)=0$
	\end{tabular}
	& $\mathfrak{N}\sqrt{2/\pi x}\sin\sqrt{x^2+2xt}$
	& 
		\begin{tabular}{c}
			\eqref{E:PDeltaE_pos_glaisher_+1},\\
			\eqref{E:PDeltaE_pos_glaisher_+2}
		\end{tabular}
		\\
\hline 
\end{tabular}
}\caption{\label{demo-table}List of systems of holonomic PDEs.}
\end{sidewaystable}

\vfill\eject
%\medskip
%\newpage

\subsection{List of generating functions}\label{SS:gf_list}
%In this subsection, the holonomic PDEs systems that appear in this article are listed.
%\vskip-.8cm
%\medskip
%\newpage
%\begin{sidewaystable}
%\small{
{\footnotesize
%{\scriptsize
\begin{tabular}[h!]{|l|c|c|}
\hline
Generating functions (${}^\ast$ denote likely new formulae)&  $D$-mod. & Eqn no.
\\
\hline	
	$\displaystyle
		t^{-\nu}\exp\big[\dfrac{x}{2}\big(t-\dfrac{1}{t})\big]\sim\sum_{n=-\infty}^\infty J_{\nu+n}(x)\, t^n$& $\mathcal{B}_\nu$ & $\eqref{E:gf_J_nu}^\ast$\\
\hline
	$\displaystyle
		\exp\big[\dfrac{x}{2}\big(t-\dfrac{1}{t})\big]=\sum_{n=-\infty}^\infty J_{n}(x)\, t^n$& $\mathcal{B}_0$ & \eqref{E:bessel_classical_gf}\\ 
\hline
	$\displaystyle e^{-i\pi/2}\,  t^{-\nu}\exp\big[\dfrac{x}{2}\big(t-\dfrac{1}{t})\big]\sim \sum_{n=-\infty}^\infty Y_{\nu+n}(x)\, t^n$
	& $\mathcal{B}_\nu$ & $\eqref{E:gf_Y_nu}^\ast$\\
\hline
	$\displaystyle 	 t^{-\nu}\exp\big[\dfrac{x}{2}\big(t+\dfrac{1}{t})\big]\sim \sum_{n=-\infty}^\infty I_{\nu+n}(x)\, t^n$
	& $\mathcal{B}_\nu$ & $\eqref{E:gf_I_nu}^\ast$\\
\hline
	$\displaystyle 	i\pi \,  t^{-\nu}\exp\big[-\dfrac{x}{2}\big(t+\dfrac{1}{t})\big]\sim \sum_{n=-\infty}^\infty K_{\nu+n}(x)\, t^n$	
	& $\mathcal{B}_\nu$ & $\eqref{E:gf_K_nu}^\ast$\\
\hline
	$\displaystyle e^{i\pi\nu} \frac{\sin(x-\nu)\pi}{\sin(\pi x)}\, t^{-\nu}\big[\frac{1}{2}(t-\frac{1}{t})+1\big]^{x} \sim \sum_{n=-\infty}^{\infty} J^{\Delta}_{n+\nu}(x)\,t^n$
	& $\mathcal{B}_\nu$ & $\eqref{gf-db-2}^\ast$\\
\hline
	$\displaystyle \big[\frac{1}{2}(t-\frac{1}{t})+1\big]^{x} = \sum_{n=-\infty}^{\infty} J^{\Delta}_{n}(x)\,t^n$, \quad $|\frac{1}{2}(t-\frac{1}{t})|<1, \ x\in \mathbb{C}$ 
	& $\mathcal{B}_0$ & $\eqref{gf-db-1}^\ast$\\
\hline
	$\displaystyle \frac{1}{\sqrt{1-2t}}\Big\{ C_1\exp\big[x(1-\sqrt{1-2t})\big]
				+C_2 \exp\big[x(1+\sqrt{1-2t})\big]\Big\}
				=\sum_{n=0}^\infty \vartheta_n(x)\, \frac{t^n}{n!},
				\ (\mathbb{C}\times B(0,1/2))$
	& $\Theta$ & $\eqref{E:gen_rev_bessel_poly_gf}^\ast$\\
\hline
	$\displaystyle \frac{1}{\sqrt{1-2t}} \exp\big[x(1-\sqrt{1-2t})\big]=\sum_{n=0}^\infty
			\theta_n(x)\, \frac{t^n}{n!}, \ (\mathbb{C}\times B(0,1/2))$
	& $\Theta$ & \eqref{E:rev_bessel_poly_gf}\\
\hline
	$\displaystyle \frac{e^{-i\pi x}}{2i\sin\pi x\Gamma(-x)}\int_{-\infty}^{(0+)}e^\lambda(-\lambda)^{-x-1} \frac{1}{\sqrt{1-2t}} \exp\big[\lambda(1-\sqrt{1-2t})\big]\,d\lambda=\sum_{n=0}^\infty \theta_n^\Delta (x)\, \frac{t^n}{n!}, \ (\mathbb{C}\times B(0,1/2))$
		& $\Theta$ & $\eqref{E:bessel_difference_poly_gf}^\ast$\\
\hline
	$\displaystyle C_1\exp\left(\frac{1-(1-2xt)^{1/2}}{x}\right)
		+C_2 \exp\left(\frac{1+(1-2xt)^{1/2}}{x}\right)
		=\sum_{n=0}^\infty
		\mathscr{Y}_{n-1}(x)\, \frac{t^n}{n!}, \ (|t|<1/|2x|)$
	& $\mathcal{Y}$ & $\eqref{E:gen_bessel_poly_gf}^\ast$\\
\hline	
	$\displaystyle \exp\left(\frac{1-(1-2xt)^{1/2}}{x}\right)
		=\sum_{n=0}^\infty y_{n-1}(x)\, \frac{t^n}{n!},\ (|t|<1/|2x|)$
	& $\mathcal{Y}$ & \eqref{E:bessel p_gf-1}\\
\hline
	$\displaystyle \frac{e^{-i\pi x}}{2i \sin \pi x\Gamma(-x)}\int_{-\infty}^{(0+)}
		e^{\lambda+\frac{1-\sqrt{1-2\lambda t}}{\lambda} }
		(-\lambda)^{-x-1}d\lambda
		=\sum_{n=0}^\infty y^{\Delta}_{n-1}(x)\, \frac{t^n}{n!}, \ (\mathbb{C}\times B(0,1/2))$
	& $\mathcal{Y}$ & $\eqref{E:bessel dp_gf-1}^\ast$\\
\hline
	$\displaystyle x^{-\frac12} 
			\big[C_1\cos\sqrt{x^2\pm 2xt}+C_2\sin \sqrt{x^2\pm 2xt}
\big]=\sum_{n=0}^\infty \mathscr{C}_{\mp n\pm \frac{1}{2}}(x)\, \frac{t^n}{n!}$,
	\  ($|t|<1/|2x|$) & $\mathcal{G}_\pm$ & $\eqref{E: general half bessel_gf}^\ast$\\
\hline
	$\displaystyle \sqrt{\frac{2}{\pi x}} \cos(x^2-2xt)^{\frac{1}{2}}
		=\sum_{n=0}^\infty J_{n-\frac{1}{2}}(x)\, \frac{t^n}{n!}$,\
		 ($|t|<1/|2x|$) & $\mathcal{G}_-$ & \eqref{E: first half bessel_gf}\\
\hline
	$\displaystyle \sqrt{\frac{2}{\pi x}} \sin(x^2+2xt)^{\frac{1}{2}}
		=\sum_{n=0}^\infty J_{-n+\frac{1}{2}}(x)\, \frac{t^n}{n!}$,
		\  ($|t|<1/|2x|$) & $\mathcal{G}_+$ &\eqref{E: first half bessel_gf}\\
\hline
	$\displaystyle \sqrt{\frac{2}{\pi x}}\sin(x^2-2xt)^{\frac{1}{2}}
	=\sum_{n=0}^{\infty} Y_{n-\frac{1}{2}}(x)\,\frac{t^n}{n!}$,
	\ ($|t|<1/|2x|$) & $\mathcal{G}_-$ & \eqref{E: second half bessel gf}\\
\hline
	$\displaystyle -\sqrt{\frac{2}{\pi x}} \cos(x^2+2xt)^{\frac{1}{2}}
	=\sum_{n=0}^{\infty} Y_{-n+\frac{1}{2}}(x)\,\frac{t^n}{n!}$,
		\  ($|t|<1/|2x|$) & $\mathcal{G}_+$ &\eqref{E: second half bessel gf}\\
\hline
	$\displaystyle \sqrt{\frac{2}{\pi x}}\cos(-x^2-2xt)^{\frac{1}{2}}=
	\sum_{n=0}^{\infty} I_{n-\frac{1}{2}}(x)\, \frac{t^n}{n!}$,
		\  ($|t|<1/|2x|$) & $\mathcal{G}_-$ &\eqref{E: first modify half bessel gf}\\
\hline
	$\displaystyle -i\sqrt{\frac{2}{\pi x}}\sin(-x^2-2xt)^{\frac{1}{2}}=
	\sum_{n=0}^{\infty} I_{-n+\frac{1}{2}}(x)\, \frac{t^n}{n!}$,
	\  ($|t|<1/|2x|$) & $\mathcal{G}_+$ &\eqref{E: second half bessel gf}\\
\hline
	$\displaystyle \sqrt{\frac{\pi}{2 x}}e^{i (-x^2+2xt)^{1/2}}= \sum_{n=0}^{\infty} K_{n-\frac{1}{2}}(x)\, \frac{t^n}{n!}=
\sum_{n=0}^{\infty} K_{-n+\frac{1}{2}}(x)\, \frac{t^n}{n!}$,
	\  ($|t|<1/|2x|$) & $\mathcal{G}_\mp$ &\eqref{E: second modify half bessel gf}\\
\hline
		$\displaystyle {}\int_{-\infty}^{(0+)} 
			\lambda^{-\frac12} \frac{\big[C_1(x)\sin \sqrt{\lambda^2\mp 2\lambda t} +C_2(x)\cos \sqrt{\lambda^2\mp 2\lambda t}\big]}{2i\sin(\pi x)\Gamma(-x)}  e^{-i\pi x+\lambda} (-\lambda)^{-x-1}\, d\lambda
			=\sum_{n=0}^\infty \mathscr{C}^\Delta_{\pm n\mp \frac{1}{2}}(x)\, \frac{t^n}{n!}$,
%			($\mathbb{C}\times\mathbb{C}$)
			($\mathbb{C}^2$)
			 & $\mathcal{G}_\mp$ &$\eqref{E:delta_glaisher_gf}^\ast$\\
\hline
			$\displaystyle \sqrt{\frac{2}{\pi}}\dfrac{e^{-i\pi x}} {2i \sin(\pi x)}
	\int_{-\infty}^{(0+)} 
	\frac{e^{\lambda}
	(-\lambda)^{-x-1}\lambda^{-\frac{1}{2}} 
	 \cos\sqrt{\lambda ^2-2\lambda t}}{\Gamma(-x)}\,  d\lambda
	=\sum_{n=0}^{\infty}J^{\Delta}_{n-\frac{1}{2}}(x)\frac{t^n}{n!}$,
       ($\mathbb{C}\times\mathbb{C}$) & $\mathcal{G}_-$ & $\eqref{E: gf-Delta_glaisher_cos}^\ast$\\
\hline
			$\displaystyle \sqrt{\frac{2}{\pi}}\dfrac{e^{-i\pi x}} {2i \sin(\pi x)}
	\int_{-\infty}^{(0+)} 
	\frac{e^{\lambda}
	(-\lambda)^{-x-1}\lambda^{-\frac{1}{2}} 
	 \sin\sqrt{\lambda ^2+2\lambda t}}{\Gamma(-x)}\,  d\lambda
	=\sum_{n=0}^{\infty}J^{\Delta}_{-n+\frac{1}{2}}(x)\frac{t^n}{n!}$,
	 ($\mathbb{C}\times\mathbb{C}$) & $\mathcal{G}_+$ & $\eqref{E: gf-Delta_glaisher_sin}^\ast$\\
\hline
\end{tabular}
}
%\caption{\label{demo-table}The ``$\sim$" denotes Borel-resummation.}

%\vfill\eject
%\appendix{Appendix B} \label{S:appendix_b}

\noindent{\textbf{Acknowledgement.}} The authors would like to thank Aimo Hinkkanen for pointing out an error found in an earlier draft to the statement of the Theorem \ref{T:Bessel_gf}. We thank Chun-Kong Law for his suggestion to include  tables of results obtained. The authors benefit from the critical and constructive comments from Henry Cheng that improve the readability of the original manuscript. Special thanks also go to Yum Tong Siu who spent time to explain to the first author of the importance of $D$-modules in relation to special functions.  
%\medskip
 
%    Bibliographies can be prepared with BibTeX using amsplain,
%    amsalpha, or (for "historical" overviews) natbib style.
\bibliographystyle{amsplain}
\bibliography{references_21Oct2022}

\end{document}